\documentclass[a4paper,10pt,reqno]{amsart}
\pdfoutput=1





\usepackage{amssymb}
\usepackage{amsmath}
\usepackage{amsthm}
\usepackage{latexsym}
\usepackage{mathrsfs}
\usepackage{eucal}
\usepackage[safe]{tipa}
\usepackage{slashed}
\usepackage{textalpha}
\usepackage{upgreek}

\usepackage[inline]{enumitem}
\usepackage{subcaption}
\captionsetup[subfigure]{labelfont=rm}

\usepackage[numbers]{natbib}

\DeclareSymbolFontAlphabet{\mathbb}{AMSb}

\setlength{\marginparwidth}{37mm}

\setlength{\textwidth}{148mm}		
\setlength{\textheight}{235mm}		
\setlength{\topmargin}{-5mm}		
\setlength{\oddsidemargin}{5mm} 
\setlength{\evensidemargin}{5mm}	
\setlength{\marginparwidth}{25mm}

\usepackage[nonewpage]{imakeidx}
\indexsetup{level=\section*,toclevel=section,noclearpage,firstpagestyle=headings,headers={\MakeUppercase\shortauthors}{\MakeUppercase\shorttitle}}

\usepackage{tikz}
\usetikzlibrary{shapes,decorations,backgrounds}
\usetikzlibrary{shapes.multipart}
\usetikzlibrary{cd}
\usetikzlibrary{arrows} 
\usetikzlibrary{graphs}
\usetikzlibrary{intersections}
\usetikzlibrary{positioning}
\usetikzlibrary{decorations.pathmorphing}
\usetikzlibrary{datavisualization} 
\usetikzlibrary{datavisualization.formats.functions}
\usetikzlibrary{intersections}
\usetikzlibrary{hobby}
\usetikzlibrary{through}
\usetikzlibrary{calc}
\usetikzlibrary{fadings}
\usetikzlibrary{patterns}

\usepackage{pgfmath}
\usepackage{pgfkeys}

\usetikzlibrary{automata,positioning}
\usetikzlibrary{decorations.markings}
\tikzset{->-/.style={decoration={
  markings,
  mark=at position #1 with {\arrow{>}}},postaction={decorate}}}
  \tikzset{-<-/.style={decoration={
  markings,
  mark=at position #1 with {\arrowreversed[red]{latex'}}},postaction={decorate}}}


\usepackage[breaklinks=true, linktocpage=true, pdfauthor={L. Andersson, T. Backdahl, P. Blue and S. Ma},
            pdftitle={Stability for linearized gravity on the Kerr spacetime}]{hyperref}

\usepackage{breakurl}
\usepackage{url}

\hypersetup{
    colorlinks=true,
    linkcolor=blue,
    filecolor=blue,      
    urlcolor=blue,
    bookmarksdepth=2
}

\usepackage{orcidlink}

\makeindex[columns=3, title=Index of symbols]
\makeatletter\def\imki@columns{3}\makeatother

\DeclareMathOperator{\tho}{\text{\rm\textthorn}}
\DeclareMathOperator{\thop}{\text{\rm\textthorn}^\prime\negthinspace}
\DeclareMathOperator{\edt}{\eth}
\DeclareMathOperator{\edtp}{\eth^\prime\negthinspace}
\newcommand{\Qop}{\text{\rm\textQoppa}}

\newcommand{\hROp}{\widehat{R}} 
\newcommand{\TMESOp}{\widehat{S}}      

\newcommand{\rem}{\text{\rm rem}}
\newcommand{\homo}{\text{\rm hom}}
\newcommand{\initial}{\text{\rm init}}
\newcommand{\interior}{\text{\rm int}} 
\newcommand{\ext}{\text{\rm ext}}
\newcommand{\near}{\text{\rm near}}
\newcommand{\early}{\text{\rm early}}

\newcommand{\AntiDeriv}{\mathbf{I}}

\newcommand{\widebar}[1]{\overline{#1}}

\newcommand{\Naturals}{\mathbb{N}}
\newcommand{\Integers}{\mathbb{Z}}
\newcommand{\Reals}{\mathbb{R}}
\newcommand{\Complexs}{\mathbb{C}}
\newcommand{\Sphere}{S^2}

\newcommand{\di}{\mathrm{d}}

\newcommand{\generalManifold}{\mathcal{N}}
\newcommand{\generalMetric}{h}

\newcommand{\ii}{i}
\newcommand{\iip}{i'}
\renewcommand{\ij}{j} 
\newcommand{\ireg}{k}

\newcommand{\diTwoVol}{\di^2\hspace{-1.5pt}\mu}
\newcommand{\diThreeVol}{\di^3\hspace{-1.5pt}\mu}
\newcommand{\diFourVol}{\di^4\hspace{-1.5pt}\mu}
\newcommand{\diThreeVolScri}{\di^3\hspace{-1.5pt}\mu_{\Scri}}

\newcommand{\bigOAnalytic}{O_\infty}

\newcommand{\unrescaledOps}{\mathbb{B}}
\newcommand{\rescaledOps}{\mathbb{D}}
\newcommand{\ScriOps}{\slashed{\mathbb{D}}}
\newcommand{\sphereOps}{\mathbb{S}}
\newcommand{\generalOps}{\mathbb{X}}
\newcommand{\generalOp}{D}

\newcommand{\setOfFields}{\Phi}

\providecommand{\abs}[1]{\lvert#1\rvert}
\providecommand{\norm}[1]{\lVert#1\rVert}

\providecommand{\absHighOrder}[3]{\abs{#1}_{#2,#3}} 
 
\providecommand{\absRescaled}[2]{\absHighOrder{#1}{#2}{\rescaledOps}} 
\providecommand{\absScri}[2]{\absHighOrder{#1}{#2}{\ScriOps}}

\makeatletter
  \def\moverlay{\mathpalette\mov@rlay}
  \def\mov@rlay#1#2{\leavevmode\vtop{%
     \baselineskip\z@skip \lineskiplimit-\maxdimen
     \ialign{\hfil$#1##$\hfil\cr#2\crcr}}}
\makeatother

\newcommand{\squareS}{\square}

\newcommand{\Scri}{\mathscr{I}}
\newcommand{\Horizon}{\mathscr{H}}

\newcommand{\GenVec}{\nu}

\def\undertilde#1{\mathord{\vtop{\ialign{##\crcr
$\hfil\displaystyle{#1}\hfil$\crcr\noalign{\kern1.5pt\nointerlineskip}
$\hfil\tilde{}\hfil$\crcr\noalign{\kern1.5pt}}}}}

\newcommand{\tHor}{{\timefunc_{\Horizon^+}}}
\newcommand{\tGen}{\tau}

\newcounter{mnotecount}[section]

\newcounter{mymnotecount}[section]
\renewcommand{\themymnotecount}{\thesection.\arabic{mymnotecount}}

\newcommand{\mymnote}[1]{\protect{\stepcounter{mymnotecount}}${\raisebox{0.5\baselineskip}[0pt]{\makebox[0pt][c]{\color{magenta}{\tiny\em$\bullet$\themymnotecount}}}}$\marginpar{\raggedright\tiny\em$\!\bullet$\themymnotecount:
#1}\ignorespaces}

\renewcommand{\mymnote}[1]{}


\newcommand{\raddegphi}[1]{\hat\varphi_{-2}^{(#1)}}
\newcommand{\radpsi}[1]{\hat\psi_{-2}^{(#1)}}
\newcommand{\radpsizero}[1]{\hat\psi_{+2}^{(#1)}}
\newcommand{\radpsizerores}[1]{{\hat\varphi_{+2}^{(#1)}}}

\def\VOp{V}
\newcommand{\hatVOp}{\hat{\VOp}}
\def\YOp{Y}

\def\LxiOp{\mathcal{L}_{\xi}}
\def\LzetaOp{\mathcal{L}_{\zeta}}
\def\LetaOp{\mathcal{L}_{\eta}}

\DeclareMathOperator{\hedt}{\mathring{\eth}{}}
\DeclareMathOperator{\hedtp}{\mathring{\eth}^\prime\negthinspace}

\newcommand{\ringtau}{\mathring{\tau}}

\newcommand{\rstar}{r_*}

\usepackage{color}


\newcommand{\timefunc}{t}

\newcommand{\rInv}{R}
\newcommand{\rCutOff}{R_0}

\newcommand{\vOne}{v_1}
\newcommand{\ingoingcone}{\mathcal{C}}
\newcommand{\tprimeingoing}{\timefunc_{\ingoingcone}(\vOne)}
\newcommand{\BEAMbulk}{B}

\newcommand{\Horgtt}[1]{\Horizon^+_{#1,\infty}}

\newcommand{\Dtau}{\Omega_{\timefunc_1,\timefunc_2}}
\newcommand{\Dtaut}{\Omega_{\timefunc,\infty}}

\newcommand{\Dtautext}{\Omega^{\ext}_{\timefunc,\infty}}
\newcommand{\Dtauitext}{\Omega^{\ext}_{\timefunc_0,\infty}}
\newcommand{\Dtautprimeext}{\Omega^{\ext}_{\timefunc',\infty}}
\newcommand{\Dtauext}{\Omega^{\ext}_{\timefunc_1,\timefunc_2}}
\newcommand{\Dtauint}{\Omega^{\interior}_{\timefunc_1,\timefunc_2}}

\newcommand{\Dtautimeint}{\Omega^{\interior}_{\timefunc,\infty}}
\newcommand{\Dtauitint}{\Omega^{\interior}_{\timefunc_0,\infty}}

\newcommand{\Dtautnear}[1]{\Omega^{\near}_{#1, \infty}}
\newcommand{\Dtautimenear}{\Omega^{\near}_{\timefunc, \infty}}
\newcommand{\Dtauitnear}{\Omega^{\near}_{\timefunc_0, \infty}}
\newcommand{\Dtauearly}{\Omega^\early_{\initial, \timefunc_0}}
\newcommand{\Dtautearly}{\Omega^\early_{\initial, \timefunc}}
\newcommand{\DtautearlyFurtherArguments}[1]{\Omega^{\early,#1}_{\initial,\timefunc}}
\newcommand{\DtautearlyR}{\DtautearlyFurtherArguments{\rCutOff}}
\newcommand{\DtautearlyRc}{\DtautearlyFurtherArguments{\rCutOff-M}}
\newcommand{\DtautearlyRcR}{\DtautearlyFurtherArguments{\rCutOff-M,\rCutOff}}
\newcommand{\DtauitearlyR}{\Omega^{\early,\rCutOff}_{\initial, \timefunc_0}}

\newcommand{\DtauitearlyRcR}{\Omega^{\early,\rCutOff-M,\rCutOff}_{\initial, \timefunc_0}}
\newcommand{\Boundext}{\Xi_{\timefunc_1,\timefunc_2}}
\newcommand{\Boundtext}{\Xi_{\timefunc,\infty}}
\newcommand{\Boundtprimeext}{\Xi_{\timefunc',\infty}}
\newcommand{\Boundtingoing}{\Xi_{\timefunc_{\ingoingcone}(\timefunc),\infty}}

\newcommand{\Staui}{\Sigma_{\timefunc_1}}
\newcommand{\Stau}{\Sigma_{\timefunc_2}}
\newcommand{\Stauiext}{\Sigma^{\ext}_{\timefunc_1}}

\newcommand{\Stauext}{\Sigma^{\ext}_{\timefunc_2}}
\newcommand{\Stauitext}{\Sigma^{\ext}_{\timefunc_0}}
\newcommand{\Stauiint}{\Sigma^{\interior}_{\timefunc_1}}
\newcommand{\Stauint}{\Sigma^{\interior}_{\timefunc_2}}

\newcommand{\Stauini}{\Sigma_{\initial}}

\newcommand{\DoDfuture}{D^+}

\newcommand{\DtauR}{\Omega^{\rCutOff}_{\timefunc_1,\timefunc_2}}
\newcommand{\DtauRc}{\Omega^{\rCutOff-M}_{\timefunc_1,\timefunc_2}}
\newcommand{\DtauRcR}{\Omega^{\rCutOff-M,\rCutOff}_{\timefunc_1,\timefunc_2}} 
\newcommand{\StauiR}{\Sigma^{\rCutOff}_{\timefunc_1}}
\newcommand{\StauiRc}{\Sigma^{\rCutOff-M}_{\timefunc_1}}
\newcommand{\StauiRcR}{\Sigma_{\timefunc_1}^{\rCutOff-M,\rCutOff}}
\newcommand{\StauR}{\Sigma^{\rCutOff}_{\timefunc_2}}
\newcommand{\StauRc}{\Sigma^{\rCutOff-M}_{\timefunc_2}}
\newcommand{\StauRcR}{\Sigma_{\timefunc_2}^{\rCutOff-M,\rCutOff}}
\newcommand{\Stauit}{\Sigma_{\timefunc_0}}
\newcommand{\StauitRcR}{\Stauit^{\rCutOff-M,\rCutOff}}
\newcommand{\Staut}{\Sigma_{\timefunc}}
\newcommand{\Stautext}{\Sigma^{\ext}_{\timefunc}}
\newcommand{\Stautimeint}{\Sigma^{\text{\rm int}}_{\timefunc}}
\newcommand{\StauitR}{\Sigma^{\rCutOff}_{\timefunc_0}}
\newcommand{\StautR}{\Sigma^{\rCutOff}_{\timefunc}}
\newcommand{\StautRc}{\Sigma_{\timefunc}^{\rCutOff-M}}
\newcommand{\StautRcR}{\Sigma_{\timefunc}^{\rCutOff-M,\rCutOff}}
\newcommand{\Scritau}{\Scri^+_{\timefunc_1,\timefunc_2}}

\newcommand{\Scritini}{\Scri^+_{-\infty,\timefunc}}
\newcommand{\veps}{\varepsilon}

\newcommand{\rCutOffInHardy}{\bar{R}_0}
\newcommand{\rCutOffInrpWave}{\bar{R}_0}
\newcommand{\rCutOffOtherBounds}{\bar{R}_0}

\newcommand{\rRethreaded}{{\tilde{r}}}
\newcommand{\thetaRethreaded}{{\tilde{\theta}}}
\newcommand{\phiRethreaded}{{\tilde{\phi}}}
\newcommand{\TparallelForRethreadedHardy}{\mathcal{X}}

\newcommand{\CInrpWave}{C}

\newcommand{\CInHyperboloids}{C_{\text{hyp}}}




\newcommand{\otherinireg}{k_1}

\newcommand{\inienergy}[2]{\mathbb{I}_{\initial}^{#1;#2}}

\newcommand{\inienergynoalpha}[1]{\mathbb{I}_{\initial}^{#1}}
\newcommand{\inienergyplustwo}[1]{\mathbb{I}_{\initial}^{+2,#1}}

\newcommand{\NEWinienergyminustwo}[1]{\mathbb{I}_{-2}^{#1}} 
\newcommand{\iniEnergyMinusTwoForDescent}[1]{\inienergy{#1}{9}(\psibase[-2])}
\newcommand{\inienergyplustwoForDescent}[1]{\inienergy{#1}{1}(\psibase[+2])}   
\newcommand{\inipointwise}[2]{\mathbb{P}_{\initial}^{#1;#2}}

\newcommand{\EspinNoArg}{s}
\newcommand{\EkNoArg}{k} 
\newcommand{\ElNoArg}{l} 
\newcommand{\EmNoArg}{m} 
\newcommand{\EalphaNoArg}{\alpha_1}
\newcommand{\EDNoArg}{D}
\newcommand{\Espin}[1]{\EspinNoArg[#1]}
\newcommand{\Ek}[1]{\EkNoArg[#1]} 
\newcommand{\El}[1]{\ElNoArg[#1]} 
\newcommand{\Em}[1]{\EmNoArg[#1]} 
\newcommand{\Ealpha}[1]{\EalphaNoArg[#1]}
\newcommand{\ED}[1]{\EDNoArg[#1]} 
\newcommand{\Lxitimes}{q}

\newcommand{\pt}{\LxiOp}

\def\G0{\widehat{G}_0}
\def\G1{\widehat{G}_1}
\def\G2{\widehat{G}_2}
\newcommand{\EnergyinrpEstimatepsizero}{\widetilde{F}}
\newcommand{\BulkinrpEstimatepsizero}{\widetilde{G}}

\usepackage{bbm}

\newcommand{\lesC}[1]{\lesssim_{#1}}
\newcommand{\gtrC}[1]{\gtrsim_{#1}}
\newcommand{\simC}[1]{\sim_{#1}}

\theoremstyle{plain}
\newtheorem{thm}{Theorem}[section]

\newtheorem{lemma}[thm]{Lemma}

\theoremstyle{definition}
\newtheorem{definition}[thm]{Definition}

\newtheorem{remark}[thm]{Remark}

\newcounter{step}
\renewcommand{\thestep}{\arabic{step}}
\newenvironment{steps}{\setcounter{step}{0}}{}
\newcommand{\step}[1]{\refstepcounter{step}\noindent\textbf{Step \thestep: #1.}}

\newcommand{\Normal}{\nu}

\newcommand{\LerayForm}[1]{\di^3\hspace{-1.5pt}\mu_{#1}}

\newcommand{\standardHypothesisOnDelta}{Let $\delta>0$ be sufficiently small}

\newcommand{\standardMinusTwoHypothesisDefinePsibasei}{Let $\psibase[-2]$ be a scalar of spin-weight $-2$ and $\{\psibase[-2][i]\}_{i=0}^{4}$ be as in definition \ref{def:psi(i)-2}}
\newcommand{\standardMinusTwoHypothesisNotBEAMOnlyTwo}{Let $\psibase[-2]$ be a scalar of spin-weight $-2$ and $\{\psibase[-2][i]\}_{i=0}^{2}$ be as in definition \ref{def:psi(i)-2}. Assume $\psibase[-2]$ satisfies the Teukolsky equation \eqref{eq:TeukolskyRegular-2}}
\newcommand{\standardMinusTwoHypothesisNotBEAM}{\standardMinusTwoHypothesisDefinePsibasei. Assume $\psibase[-2]$ satisfies the Teukolsky equation \eqref{eq:TeukolskyRegular-2}}
\newcommand{\standardMinusTwoHypothesisBEAM}{Assume the BEAM condition from definition \ref{def:BEAM} holds}

\newcommand{\standardPlusTwoHypothesisHelper}[1]{ Let $\psibase[+2]$ be a spin-weight $+2$ scalar#1.{} }
\newcommand{\standardPlusTwoHypothesisNotBEAMNotSolu}{\standardPlusTwoHypothesisHelper{{}}}
\newcommand{\standardPlusTwoHypothesisNotBEAM}
{\standardPlusTwoHypothesisHelper{ that is a solution of the Teukolsky equation \eqref{eq:TeukolskyRegular+2}}}

\newcommand{\standardPlusTwoHypothesis}{\standardPlusTwoHypothesisHelper{ that is a solution of the Teukolsky equation \eqref{eq:TeukolskyRegular+2}} Assume either of the BEAM conditions from definition \ref{ass:BEAMspin+2} holds.{} }
\newcommand{\standardPlusTwoHypothesisPointwiseCondition}
{the pointwise condition \eqref{condition:spin+2goestozero} from definition \ref{ass:BEAMspin+2} holds}


\newcommand{\PigeonTime}{\timefunc}
\newcommand{\iPigeonReg}{\iip}

\newcommand{\principal}{\textrm{principal}}
\newcommand{\error}{\textrm{error}}

\newcommand{\momentum}{P}
\newcommand{\momentumiX}{\momentum_{i,X}}
\newcommand{\momentumAV}{\momentum_{1,\VOp}}
\newcommand{\momentumBV}{\momentum_{2,\VOp}}
\newcommand{\momentumAY}{\momentum_{1,\YOp}}
\newcommand{\momentumEV}{\momentum_{5,\VOp}}
\newcommand{\momentumEY}{\momentum_{5,\YOp}}
\newcommand{\momentumDV}{\momentum_{4,\VOp}}
\newcommand{\momentumFY}{\momentum_{6,\YOp}}
\newcommand{\momentumGY}{\momentum_{7,\YOp}}
\newcommand{\momentumFV}{\momentum_{6,\VOp}}
\newcommand{\momentumGV}{\momentum_{7,\VOp}}
\newcommand{\energyComponent}{e}
\newcommand{\energyComponentiX}{\energyComponent_{i,X}}
\newcommand{\energyComponentiXPrincipal}{\energyComponent_{i,X,\principal}}
\newcommand{\energyComponentiXError}{\energyComponent_{i,X,\error}}
\newcommand{\energyComponentAVPrincipal}{\energyComponent_{1,\VOp,\principal}}
\newcommand{\energyComponentAVError}{\energyComponent_{1,\VOp,\error}}

\newcommand{\energyComponentAYPrincipal}{\energyComponent_{1,\YOp,\principal}}
\newcommand{\energyComponentAYError}{\energyComponent_{1,\YOp,\error}}

\newcommand{\energyComponentDVError}{\energyComponent_{4,\VOp,\error}}

\newcommand{\energyComponentEVError}{\energyComponent_{5,\VOp,\error}}

\newcommand{\energyComponentEYPrincipal}{\energyComponent_{5,\YOp,\principal}}
\newcommand{\energyComponentEYError}{\energyComponent_{5,\YOp,\error}}
\newcommand{\energyComponentFVError}{\energyComponent_{6,\VOp,\error}}
\newcommand{\energyComponentFYError}{\energyComponent_{6,\YOp,\error}}
\newcommand{\energyComponentFVVxi}{\energyComponent_{6,\VOp,(\VOp\xi)}}
\newcommand{\energyComponentFVxixi}{\energyComponent_{6,\VOp,(\xi\xi)}}
\newcommand{\energyComponentFYYxi}{\energyComponent_{6,\YOp,(\YOp\xi)}}
\newcommand{\energyComponentFYxixi}{\energyComponent_{6,\YOp,(\xi\xi)}}
\newcommand{\energyComponentGVError}{\energyComponent_{7,\VOp,\error}}
\newcommand{\energyComponentGYError}{\energyComponent_{7,\YOp,\error}}
\newcommand{\energyComponentGVVeta}{\energyComponent_{7,\VOp,(\VOp\eta)}}
\newcommand{\energyComponentGVxieta}{\energyComponent_{7,\VOp,(\xi\eta)}}
\newcommand{\energyComponentGYYeta}{\energyComponent_{7,\YOp,(\YOp\eta)}}
\newcommand{\energyComponentGYxieta}{\energyComponent_{7,\YOp,(\xi\eta)}}
\newcommand{\energyComponentPrincipal}{\energyComponent_{\principal}}
\newcommand{\bulkComponent}{\Pi}
\newcommand{\bulkComponentiX}{\bulkComponent_{i,X,\principal}}
\newcommand{\bulkComponentiXError}{\bulkComponent_{i,X,\error}}
\newcommand{\bulkComponentAV}{\bulkComponent_{1,\VOp,\principal}}
\newcommand{\bulkComponentAVError}{\bulkComponent_{1,\VOp,\error}}
\newcommand{\bulkComponentBV}{\bulkComponent_{2,\VOp,\principal}}
\newcommand{\bulkComponentBVError}{\bulkComponent_{2,\VOp,\error}}
\newcommand{\bulkComponentAY}{\bulkComponent_{1,\YOp,\principal}}
\newcommand{\bulkComponentAYError}{\bulkComponent_{1,\YOp,\error}}
\newcommand{\bulkComponentAYYY}{\bulkComponent_{1,\YOp,(\YOp,\YOp)}}
\newcommand{\bulkComponentAYYeta}{\bulkComponent_{1,\YOp,(\YOp,\eta)}}
\newcommand{\bulkComponentDV}{\bulkComponent_{4,\VOp,\principal}}
\newcommand{\bulkComponentDVError}{\bulkComponent_{4,\VOp,\error}}
\newcommand{\bulkComponentEV}{\bulkComponent_{5,\VOp,\principal}}
\newcommand{\bulkComponentEVError}{\bulkComponent_{5,\VOp,\error}}

\newcommand{\bulkComponentEY}{\bulkComponent_{5,\YOp,\principal}}
\newcommand{\bulkComponentEYError}{\bulkComponent_{5,\YOp,\error}}
\newcommand{\bulkComponentFVError}{\bulkComponent_{6,\VOp,\error}}
\newcommand{\bulkComponentFY}{\bulkComponent_{6,\YOp,\error}}
\newcommand{\bulkComponentGVError}{\bulkComponent_{7,\VOp,\error}}
\newcommand{\bulkComponentGY}{\bulkComponent_{7,\YOp,\error}}
\newcommand{\bulkComponentPrincipal}{\bulkComponent_{\principal}}

\newcommand{\varop}{\delta}
\newcommand{\half}{\frac{1}{2}}

\usepackage{xparse}
\usepackage{etoolbox}

\NewDocumentCommand{\psibase}{O{-2}O{}o}{\IfNoValueTF{#3}{\hat{\psi}}{\ifstrequal{#3}{bar}{\overline{\hat{\psi}}}{\GenericError{}{LaTex Warning: incorrect 3rd argument "#3" in \noexpand\psibase on input line \the\inputlineno}}}_{\ifblank{#1}{-2}{#1}}\IfValueT{#2}{\ifblank{#2}{}{^{(#2)}}}}



\usepackage{enumitem}


\newcommand{\NPl}{l}
\newcommand{\NPn}{n}
\newcommand{\NPm}{m}
\newcommand{\NPmbar}{\bar m}

\newcommand{\Mcal}{\mathcal M}

\newcommand{\met}{g} 

\newcommand{\SL}{\mathrm{SL}}



\newcommand{\alphalow}[1]{\underline{\alpha}{[}#1{]}}
\newcommand{\alphahigh}[1]{\overline{\alpha}[#1]}
\newcommand{\alphalo}{\underline{\alpha}}
\newcommand{\alphahi}{\overline{\alpha}}


\title{Stability for linearized gravity on the Kerr spacetime}
\author[L. Andersson]{Lars Andersson \orcidlink{0000-0002-6364-7384}}
\email{lars.andersson@bimsa.cn}
\address{Beijing Institute of Mathematical Sciences and Applications, Beijing 101408, China}
\author[T. B\"ackdahl]{Thomas B\"ackdahl \orcidlink{0000-0003-3240-2445}} 
\email{thomas.backdahl@chalmers.se}
\address{Mathematical Sciences, Chalmers University of Technology and University of Gothenburg, SE-412~96 Gothenburg, Sweden}
\author[P. Blue]{Pieter Blue \orcidlink{0000-0002-6195-4022}}
\email{p.blue@ed.ac.uk}
\address{School of Mathematics and Maxwell Institute for Mathematical Sciences, University of Edinburgh, Edinburgh EH9 3FD, UK}
\author[S. Ma]{Siyuan Ma \orcidlink{0000-0003-2893-7674}}
\email{siyuan.ma@amss.ac.cn}
\address{Academy of Mathematics and Systems Science, Chinese Academy of Sciences, Beijing 100190, China}

\allowdisplaybreaks[2]

\numberwithin{equation}{section}

\setcounter{tocdepth}{2}

\begin{document}
\begin{abstract}
In this paper we prove integrated energy and pointwise decay estimates for solutions of the vacuum linearized Einstein equation on the domain of outer communication of the Kerr black hole spacetime. The estimates are valid for the full subextreme range of Kerr black holes, provided integrated energy estimates for the Teukolsky equation hold. For slowly rotating Kerr backgrounds, such estimates are known to hold, due to the work of one of the authors. The results in this paper thus provide the first stability results for linearized gravity on the Kerr background, in the slowly rotating case, and reduce the linearized stability problem for the full subextreme range to proving integrated energy estimates for the Teukolsky equation. This constitutes an essential step towards a proof of the black hole stability conjecture, i.e. the statement that the Kerr family is dynamically stable, one of the central open problems in general relativity. 
\end{abstract} 
\maketitle
\tableofcontents

\section{Introduction} \label{sec:intro}
The Kerr family of asymptotically flat, stationary, and axially symmetric solutions of the vacuum Einstein equations is parametrized by mass $M$ and angular momentum per unit mass $a$. 
In ingoing Eddington-Finkelstein coordinates\footnote{See \cite[Box 33.2]{MTW}. The ingoing Eddington-Finkelstein coordinates are also known as Kerr coordinates. We work in signature $+---{}$, and use conventions and notations as in \cite{MR944085,MR917488}.}  $(v,r,\theta,\phi) \in \Reals \times (0,\infty) \times \Sphere,$ the Kerr metric takes the form 
\index{G1gab-metric@$g_{ab}$}
\begin{align}
g_{ab}={}&- 2(dr)_{(a} (dv)_{b)}
 + 2a \sin^2\theta (d\phi)_{(a} (dr)_{b)} 
 + \frac{4 M a  r \sin^2\theta}{\Sigma}(d\phi)_{(a} (dv)_{b)}
\nonumber\\
& + \frac{\Delta- a^2 \sin^2\theta}{\Sigma}(dv)_{a} (dv)_{b}
 + \frac{ a^2 \sin^2\theta \Delta - (a^2 + r^2)^2}{\Sigma}\sin^2\theta (d\phi)_{a} (d\phi)_{b}
 -  \Sigma (d\theta)_{a} (d\theta)_{b} , \label{eq:gab-intro}
\end{align} 
with volume element $\Sigma \sin\theta dv dr d\theta d\phi$.
Here 
\index{S4Sigma@$\Sigma$}
\index{D4Delta@$\Delta$}
$\Sigma=a^2 \cos^2\theta + r^2, 
\Delta=a^2 - 2 M r + r^2 .
$
The Killing vector fields of the Kerr metric are \index{X3xi@$\xi^a$} $\xi^a = (\partial_v)^a$, which has unit norm at infinity and expresses the fact that Kerr is stationary, and the axial Killing vector field \index{E3eta@$\eta^a$} $\eta^a = (\partial_\phi)^a$. 
In the subextreme case $|a|<M$, the maximally extended Kerr spacetime contains a black hole with a bifurcate event horizon whose future part 
\index{H2ScriptHorizon@$\Horizon$}
\index{R1rplus@$r_+$}
$\Horizon^+$ is located at $r=r_+$ where $r_+ = M + \sqrt{M^2-a^2}$ is the larger of the two roots of $\Delta$.  
\index{M2Mcal@$\mathcal{M}$}
The domain of outer communication of the Kerr black hole is the region $r > r_+,$ which we shall denote $\mathcal{M}$. 

In addition to being stationary and axially symmetric, the Kerr metric is algebraically special, of Petrov type D, or $\{2,2\}$. 
In particular, the Weyl curvature tensor of the Kerr spacetime has two repeated principal null vectors\footnote{Let $C_{abcd}$ be the Weyl tensor of $(\mathcal{M}, g_{ab})$. A null vector $k^a$ is a principal null vector if $k_{[a} C_{a]bc[d}k_{f]}k^b k^c = 0 $, cf. \cite[\S 4.3]{2003esef.book.....S}.}.
These may without loss of generality  be chosen to be real and future directed.
We shall here use the Znajek tetrad $(l^a, n^a, m^a, \bar m^a)$ \cite{1977MNRAS.179..457Z}, which in 
ingoing Eddington-Finkelstein coordinates takes the form 
\index{L1l@$l^a$}
\index{N1n@$n^a$}
\index{M1m@$m^a$}
\begin{subequations} \label{eq:Znajek-intro}
\begin{align}
l^{a}={}&\frac{\sqrt{2} a (\partial_{\phi})^{a}}{\Sigma}
 + \frac{\sqrt{2} (a^2 + r^2) (\partial_{v})^{a}}{\Sigma}
 + \frac{ \Delta(\partial_{r})^{a}}{\sqrt{2} \Sigma}, \label{eq:l-intro}\\
n^{a}={}&- \tfrac{1}{\sqrt{2}}(\partial_{r})^{a} , \label{eq:n-intro}\\
m^{a}={}&\frac{(\partial_{\theta})^{a}}{\sqrt{2} (r-i a \cos\theta)}
 + \frac{i  \csc\theta (\partial_{\phi})^{a}}{\sqrt{2} (r-i a \cos\theta)}
 + \frac{i a  \sin\theta (\partial_{v})^{a}}{\sqrt{2} (r-i a \cos\theta)}  \label{eq:m-intro} ,
\end{align}
\end{subequations}
with $\bar m^a$ the complex conjugate of the complex null vector $m^a$. We have $g_{ab} = 2 (l_{(a} n_{b)} - m_{(a} \bar m_{b)})$.
The vectors $l^a$ and $n^a$ are principal null vectors. The vector $n^a$ is ingoing, $n^b \nabla_b r < 0$. Furthermore, 
$n^a$ is scaled to be auto-parallel, i.e. so $n^b \nabla_b n^a = 0$ holds, and not merely $n^b \nabla_b n^a\propto n^a$, as is guaranteed by the Goldberg-Sachs theorem. The Znajek tetrad commutes with the Killing vector fields of the Kerr spacetime and extends smoothly through the future event horizon $\Horizon^+$. 

Following \cite{1974RSPSA.341...49S}, let $g_{ab}(\lambda)$ be a $1$-parameter family of metrics on $\mathcal{M}$,  with $g_{ab}(0) = g_{ab}$. The linearized metric 
$
\delta g_{ab} = \tfrac{d}{d\lambda} g_{ab}(\lambda) \big{|}_{\lambda = 0}
$
solves the linearized vacuum Einstein equations on $\mathcal{M}$ if 
\begin{align} \label{eq:linEin} 
\delta E_{ab} = 0,
\end{align} 
where $\delta E_{ab}$
is the linearization of the Einstein tensor at $g_{ab}$ in the direction of $\delta g_{ab}$. 
Due to the covariance of Einstein's equations, the space of solutions of 
the linearized Einstein equation
is invariant under gauge transformations 
\index{D3deltagtilde@$\delta \tilde g_{ab}$}
\begin{align}\label{eq:lin-gauge}
\delta g_{ab} \to \delta \tilde g_{ab} = \delta g_{ab} - 2 \nabla_{(a} \GenVec_{b)} .
\end{align} 
Upon introducing a suitable gauge condition,  e.g. harmonic gauge 
$
\nabla^a (\delta g_{ab} - \half \delta g_c{}^c g_{ab} ) = 0, 
$
the linearized Einstein equation becomes hyperbolic, and it follows from standard results  that the Cauchy problem for the linearized vacuum Einstein equation on $\mathcal{M}$ admits global solutions. A priori, these may have exponential growth.  

\index{D3deltag@$\varop g_{ab}$}
Let $\delta g_{ab}$ be a solution of the linearized vacuum Einstein equation on 
$\mathcal{M}$, and let $n^a$ be the ingoing principal null vector, cf.  \eqref{eq:n-intro}. The fact that Kerr is of Petrov type D implies there is a vector field $\GenVec^a$ such that the gauge transformed metric $\delta  \tilde g_{ab}$ satisfies \cite{2007CQGra..24.2367P}
\begin{equation}\label{eq:ORGcond} 
n^b\varop \tilde g_{ab}=0, \quad  g^{ab}\varop \tilde g_{ab}=0 . 
\end{equation} 
The resulting gauge condition is called the outgoing radiation gauge\footnote{Replacing $n^a$ by $l^a$ leads to the ingoing radiation gauge condition. The result of \cite{2007CQGra..24.2367P} is valid more generally for linearized gravity on vacuum background spacetimes of Petrov type II.}.  
For a linearized metric in outgoing radiation gauge, the only non-vanishing components are 
\index{G2Gcomp@$G_{i0'}$}
\begin{align} \label{eq:ORG-G-intro}
G_{00'} = \varop g_{ab} l^a l^b, \quad G_{10'} = \varop g_{ab} l^a m^b, \quad G_{20'} = \varop g_{ab} m^a m^b .
\end{align}

To state our main results, we define the time functions
\index{T1tHor@$\tHor$}
\index{T1timefunc@$\timefunc$}
\begin{subequations}
\begin{align} 
\tHor ={}& v - h/2, \\ 
\timefunc ={}& v - h \label{eq:timefun-intro} ,
\end{align}
\end{subequations}
where 
\index{C2CHyp@$\CInHyperboloids$}
\begin{align} \label{eq:h-def-intro} 
h(r)={}&2 (r- r_{+})
+ 4 M \log\left(\frac{r}{r_{+}}\right)
+ \frac{3 M^2 (r_{+} -  r)^2}{r_{+} r^2}
+ 2 M \arctan\left(\frac{(\CInHyperboloids-1) M}{r}\right)\nonumber\\
&-  2 M\arctan\left(\frac{(\CInHyperboloids-1) M}{r_{+}}\right)  
\end{align}
and where $\CInHyperboloids$ is sufficiently large, which for concreteness we take to be $\CInHyperboloids=10^6$.
The motivation behind these choices is explained in sections \ref{sec:foliation} and \ref{sec:rpForSpinWeightedWaveEquations}.
Let $\timefunc_0 = 10M$ and define  
\index{S4Sigmainit@$\Stauini$}
\begin{align} \label{eq:Sigmainit-def-intro} 
\Stauini = \{ \tHor = \timefunc_0 \} \cap \{ r > r_+\} . 
\end{align} 

Let $k \in \Naturals$ and $\alpha \in \Reals$. For tensors $\varpi_{a\cdots d}$ along $\Stauini$, let $H^k_\alpha(\Stauini)$ be the weighted Sobolev space with norm 
\index{H2Hkalpha@$H^k_\alpha$} 
\index{0normHkalpha@$\Vert \cdot \Vert_{H^k_\alpha(\Stauini)}$}  
\begin{align}\label{eq:Hkbeta-def}
\Vert \varpi \Vert^2_{H^k_\alpha(\Stauini)} = \int_{\Stauini} M^{-\alpha} \sum_{i=0}^k r^{\alpha +2i -1} \vert \nabla^i \varpi \vert^2_{g_E}  dr\sin\theta d\theta d\phi ,
\end{align}
where the squared modulus \index{0NormgE@$\vert\cdot\vert_{g_E}$} $\vert \varpi\vert^2_{g_E}$ of a tensor is defined in terms of the positive definite metric $g_{E\, ab} = 2T_a T_b - g_{ab}$, with $T^a$ the timelike unit normal of $\Stauini$. 

The following theorem provides the first proof of linearized stability for Kerr black holes with $|a|\ll M$, a major step towards the solution of the black hole stability problem. 
The proof has several novel features, which are discussed in section \ref{sec:strategy}.  Furthermore, our second main theorem, theorem \ref{thm:BEAMintro}, proves linear stability for the whole subextreme range $|a| < M$ provided that the basic decay condition in definition \ref{def:BEAM-intro} holds. This basic decay estimate relies on a Morawetz estimate that is expected to be valid for the full subextreme range. This thus reduces the linear stability problem for the full subextreme range of Kerr black hole spacetimes to the proof of  a Morawetz estimate. This modularity of the proof of our main result is an important and desirable feature of the present work.

\begin{thm}\label{thm:mainintro} Let  $(\mathcal{M}, g_{ab})$ be the domain of outer communication of a Kerr spacetime with $|a|/M \ll 1$. Let $\ireg \in \Naturals$ be sufficiently large and $\epsilon > 0$ be sufficiently small. 
Let  $\delta g_{ab}$ be a solution to the linearized vacuum Einstein equations on $(\mathcal{M}, g_{ab})$ in outgoing radiation gauge, with 
$
\Vert \delta g \Vert_{H^{\ireg}_{7}(\Stauini)} < \infty , 
$
and let $G_{i0'}$, $i=0,1,2$ be the components of $\delta g_{ab}$ defined by \eqref{eq:ORG-G-intro}, with respect to the Znajek tetrad.
Let $|\delta g|^2 = |G_{00'}|^2 + |G_{10'}|^2 + |G_{20'}|^2$. 
There is a constant $C = C(\ireg, |a|/M, \epsilon)$, such that the inequality 
\begin{align} 
\label{eq:mainintro}
|\delta g| \leq C M^{5/2-\epsilon} r^{-1} t^{-3/2+\epsilon} \Vert \delta g \Vert_{H^{\ireg}_{7}(\Stauini)},
\end{align}
holds for $\timefunc > 10M$. 
\end{thm}

\begin{remark}\label{rmk:ZnajekVsGHPIntro}
Within this introduction, for simplicity, the quantities in theorem \ref{thm:mainintro} are explicitly defined in terms of the Znajek tetrad. In the the main body of the paper, we use the corresponding quantities defined within the GHP formalism \cite{GHP,MR944085,MR917488}, which depends only on a choice of a pair of null directions (and of time orientation), rather than a choice of tetrad $(\NPl^a,\NPn^a,\NPm^a,\NPmbar^a)$. 
We then choose the unique pair of null directions given geometrically by the principal null vectors. This remark applies equally to theorem \ref{thm:BEAMintro} below. Hence, theorems \ref{thm:mainintro} and \ref{thm:BEAMintro} can be understood to hold in this more general setting. 
See section \ref{sec:background} for background on the GHP formalism, and section \ref{sec:notandconv} and in particular remark \ref{rmk:NPLocallyTrivialisesGHP} for details. 
\end{remark}

In order to be able to present a more refined result in theorem \ref{thm:BEAMintro}, we now discuss in more detail the Kerr geometry, curvature components, differential operators, the famous Teukolsky equations satisfied by the curvature components, energies, and a basic decay condition that was previously shown to hold when $|a|\ll M$ and that is the key assumption in theorem \ref{thm:BEAMintro}. 

\index{I2IScriPlus@$\Scri^+$}
\index{I1i0@$i_0$}
\index{I1iplus@$i^+$}
Considering the conformally rescaled metric $r^{-2} g_{ab}$ allows one to add a boundary at $r=\infty$ \cite{hawking_ellis_1973}. This contains the smooth null manifold $\Scri^+$, which is called future null infinity and which represents the limits of those future directed null geodesics that reach infinity. The conformal boundary contains a point $i^+$, which is called timelike infinity and which is the future end point of $\Scri^+$, $\Horizon^+$, and all inextendible timelike geodesics. There is also a point $i^0$, which is called spacelike infinity  and which is the limit of all spacelike geodesics that reach infinity.

As we will see in section \ref{sec:foliation}, the level sets of $\tHor$ are asymptotic to $i_0$ and induce a foliation of $\Horizon^+$. The level sets of $\timefunc$ are regular at both $\Horizon^+$ and $\Scri^+$, and they induce foliations of the future part of the event horizon $\Horizon^+$ and future null infinity $\Scri^+$, as do translated hyperboloids in Minkowski space. For these reasons, we refer to $\tHor$ as the horizon crossing time and $\timefunc$ as the hyperboloidal time function. 
We denote level sets of the hyperboloidal time function $\timefunc$ by $\Sigma_\timefunc$. For $\timefunc_1 < \timefunc_2$, let $\Omega_{\timefunc_1, \timefunc_2}$ denote the spacetime domain given by the intersection of the past of $\Sigma_{\timefunc_2}$, with the future of $\Staui$. $\Staui$, $\Sigma_{\timefunc_0}$ are illustrated in figure \ref{fig:Estimates:SigmaInit}, and $\Dtau$ in figure \ref{fig:Estimates:Teukolskyrp}. 

\begin{figure}[t]
\centering
\begin{subfigure}[t]{0.45\textwidth}
\centering 
\begin{tikzpicture}[xscale=0.80,yscale=0.80]

\coordinate (A0) at (9,1);
\coordinate (A1) at (6.5,1);
\coordinate (A2) at (5,1.1);
\coordinate (A3) at (2.04,2.54);

\coordinate (B0) at (7.5,2.5);
\coordinate (B1) at (6,2);
\coordinate (B2) at (5,1.5);
\coordinate (B3) at (2,2.5);

\coordinate (i+) at (5,5);

\draw[thick,blue,name path=horizon]  (1,1) -- node[sloped, above] {$\mathscr{H}^+$} (5,5);
\draw[name path=scri, ,thick, dash dot, red]  (5,5) -- node[sloped, above, near end] {$\Scri^+$} (B0);

\path[name path=consttau0, gray] (B0) .. controls (B1) and (B2) .. node[pos=0.6, sloped, above, black] {$\Sigma_{\timefunc_0}$} (B3); 

\path[name path=sigmainit, gray] (A0) .. controls (A1) and (A2) .. node[pos=0.6, sloped, below, black] {$\Stauini$} (A3);

\path[name intersections={of=consttau0 and horizon, by=tau0horizon}]  (1,1) -- (5,5);
\path[name intersections={of=sigmainit and horizon, by=sigmainithorizon}]  (1,1) -- (5,5);

\begin{scope}
  \clip [closed]  (1,1) -- (5,5) --  (10,5) -- (10,1) -- (5,0);
  \fill[lightgray]  (A0) .. controls (A1) and (A2) ..  (A3) -- (B3) .. controls (B2) and (B1) .. (B0) -- cycle;
\end{scope}
\begin{scope}
  \clip [closed]  (1,1) -- (5,5) --  (10,5) -- (10,1) -- (5,0);
  \draw[black, thick]  (A0) .. controls (A1) and (A2) ..  (A3) -- (B3) .. controls (B2) and (B1) .. (B0);
\end{scope}

\draw[black, thick,dashed] (A0) --  (B0);

\filldraw[fill=white,draw=black] (i+) circle[thick,radius=.7mm] node[anchor=south,yshift=.1cm]{$i_+$}; 
\filldraw[fill=white,draw=black] (A0) circle[thick,radius=.7mm] node[anchor=west,xshift=.1cm]{$i_0$}; 

\end{tikzpicture}%
\caption{Initial hypersurface $\Stauini$ and a level set of $\timefunc$.}
\label{fig:Estimates:SigmaInit}
\end{subfigure}%
\hspace{0.05\textwidth}
\begin{subfigure}[t]{0.45\textwidth}
\centering 
\begin{tikzpicture}[xscale=0.80,yscale=0.80]

\coordinate (A0) at (8.5,1.5);
\coordinate (A1) at (6.4,0.8);
\coordinate (A2) at (5,0.1);
\coordinate (A3) at (1,1.5);

\coordinate (B0) at (7.5,2.5);
\coordinate (B1) at (6,2);
\coordinate (B2) at (5,1.5);
\coordinate (B3) at (2,2.5);

\coordinate (i+) at (5,5);
\coordinate (C1) at (6,2);
\coordinate (C2) at (5,1);
\coordinate (C3) at (4.5,0);

\draw[thick,blue,name path=horizon]  (1,1) -- node[sloped, above] {$\mathscr{H}^+$} (5,5);
\draw[name path=scri, ,thick, dash dot, red]  (5,5) -- node[sloped, above] {$\Scri^+$} (9,1);

\path[name path=consttau0, gray] (A0) .. controls (A1) and (A2) .. node[pos=0.52, below, black] {$\Staui$} (A3);
\path[name path=consttau,gray] (B0) .. controls (B1) and (B2) .. node[pos=0.60, sloped, above, black] {$\Stau$} (B3);

\begin{scope}
  \clip [closed]  (1,1) -- (5,5) --  (10,5) -- (10,1) -- (5,0);
  \fill[fill=lightgray]  (A0) .. controls (A1) and (A2) ..  (A3) -- (B3) .. controls (B2) and (B1) .. (B0) -- cycle;
\end{scope}

\begin{scope}
  \clip [closed]  (1,1) -- (5,5) --  (10,5) -- (10,1) -- (5,0);
  \draw[draw=black, thick]  (A0) .. controls (A1) and (A2) ..  (A3) -- (B3) .. controls (B2) and (B1) .. (B0);
\end{scope}

\draw[black, thick, name intersections={of=consttau0 and horizon, by=htau0}, name intersections={of=consttau and horizon, by=htau}]  (htau0)  --  (htau);

\draw[black, thick,dashed] (A0) --  (B0);

\filldraw[fill=white,draw=black] (i+) circle[thick,radius=.7mm] node[anchor=south,yshift=.1cm]{$i_+$}; 
\filldraw[fill=white,draw=black] (9,1) circle[thick,radius=.7mm] node[anchor=west,xshift=.1cm]{$i_0$}; 

\node at (4.65,1.3) {$\Dtau$};

\end{tikzpicture}%
\caption{The region between two level sets of $\timefunc$, used in the $r^p$ argument applied to solutions of the Teukolsky equation.}
\label{fig:Estimates:Teukolskyrp}
\end{subfigure}
\caption{Regions in which estimates are proved.}
\label{fig:Regions}
\end{figure}

Let $\delta C_{abcd}$ be the linearized Weyl tensor. 
Due to the fact that Kerr is Petrov type D, the linearized Newman-Penrose scalars  
\index{T3thetaPsi0@$\vartheta \Psi_0$}
\index{T3thetaPsi4@$\vartheta \Psi_4$}
\begin{align} \label{eq:extreme-lin-intro}
 \vartheta \Psi_0 = - \delta C_{abcd} l^a m^b l^c m^d ,  \quad 
 \vartheta \Psi_4 = - \delta C_{abcd} n^a \bar m^b n^c \bar m^d, 
\end{align} 
are gauge invariant. 
For a solution of the linearized vacuum Einstein equation on the Kerr background spacetime, $\vartheta \Psi_0, \vartheta \Psi_4$ solve a pair of decoupled wave equations called the Teukolsky Master Equations  or simply the Teukolsky equations \cite{1972PhRvL..29.1114T}, 
and also satisfy a set of fourth-order differential relations called the Teukolsky-Starobinsky Identities \cite{1974ApJ...193..443T,1974JETP...38....1S}. 
We refer to $\vartheta\Psi_0, \vartheta\Psi_4$ (and any rescalings of them) as the Teukolsky variables. 
The Teukolsky equations and the Teukolsky-Starobinsky identities can be written in many equivalent forms, for example by rescaling the equations, rescaling the Teukolsky variables, or changing coordinates; in sections \ref{sec:TeukolskyEquations} and \ref{sec:TSI} we derive forms of these equations that are well suited to our analysis. 
Building on previous work, in section \ref{sec:transporteq}, we show that solutions of the linearized Einstein equation are uniquely determined by the Teukolsky equations for $\vartheta \Psi_0, \vartheta \Psi_4$, the Teukolsky-Starobinsky Identities, and a set of transport equations along $n^a$, for the metric components \eqref{eq:ORG-G-intro} as well as for tetrad components of the linearized connection coefficients. 

A classical approach to understanding decay is by investigating behaviour near null infinity in the rescaled geometry (see e.g.{} \cite{MR944085} for a textbook treatment). The compactified hyperboloidal coordinate system $(\timefunc,R,\theta,\phi)$, with $R = 1/r$ and $(\theta,\phi)$ as in the ingoing Eddington-Finkelstein coordinates, is regular at $\Scri^+$ considered as a null hypersurface in the conformally rescaled metric $r^{-2} g_{ab}$, as is the rescaled tetrad 
\begin{align} \label{eq:resc-Znajek}
(r^2l^a, n^a, rm^a,  r \bar m^a) ,
\end{align} 
where $(l^a, n^a, m^a, \bar m^a)$ are given by \eqref{eq:Znajek-intro}. 
The asymptotic behaviours at $\Scri^+$ of tensor fields on $\mathcal{M}$, often referred to as peeling, can be understood by passing to the conformal compactification, working with a conformally rescaled version of the field that is regular at $\Scri^+$, and using the rescaled tetrad \eqref{eq:resc-Znajek}. A peeling analysis \cite[section 9.7]{MR944085} indicates 
\begin{subequations} 
\begin{align} 
\vartheta \Psi_0 ={}& O(r^{-5}), \qquad \vartheta \Psi_4 = O(r^{-1}) , \\ 
G_{i0'} ={}& O(r^{-3+i}),  \quad i=0,1,2 .
\label{eq:indicatedPeeling}
\end{align} 
\end{subequations}
The scalars $\vartheta \Psi_0, \vartheta \Psi_4$ are properly weighted in the sense of Geroch, Held, and Penrose (GHP) \cite{GHP} and have boost- and spin-weights $+2, -2$, respectively. 

In the following, we transform properly weighted scalars and operators to boost-weight zero by rescaling with powers of a factor $\uplambda$ with boost weight $1$ and spin weight $0$, which takes the form $\uplambda=1$ in the Znajek tetrad\footnote{\label{foot:lambda}
$\uplambda = (\sqrt{2}(r-ia\cos\theta)\rho')^{-1}$ has the desired property, where $\rho' = \bar m^a m^b \nabla_b n_a$ is one of the GHP spin coefficients.}. 
\index{L3lambda@$\uplambda$}
Let
\index{P3psibase+2@$\psibase[+2]$}
\index{P3psibase-2@$\psibase[-2]$}
\begin{subequations} \label{eq:radfield-intro} 
\begin{align} 
\psibase[+2] ={}& \tfrac{1}{2} (a^2+r^2)^{1/2} (r-ia\cos\theta)^4 \uplambda^{-2}\vartheta\Psi_{0} , \label{eq:radfield(+2)-intro} \\
\psibase[-2] ={}& \tfrac{1}{2} (a^2 + r^2)^{1/2} \uplambda^{2} \vartheta \Psi_{4} .
\label{eq:radfield(-2)-intro}
\end{align} 
\end{subequations}
Then $\psibase[+2],\psibase[-2]$ have boost-weights $0$ and spin-weights $+2, -2$, respectively.  The fields $\psibase[+2], \psibase[-2]$ are the deboosted radiation fields of $\vartheta \Psi_0, \vartheta \Psi_4$, respectively, in particular they are regular, in the sense of spin-weighted fields, and non-degenerate on $\mathcal{M}$ including $\Horizon^+$ and $\Scri^+$. 
In the following, unless otherwise stated, we shall consider only fields with boost-weight $0$. 

In order to discuss our estimates for the Teukolsky Master Equations, 
we introduce operators acting on fields of spin-weight $s$, which, restricting to the Znajek tetrad and the ingoing Eddington-Finkelstein coordinate system, take the explicit form  
\index{Y2Y@$\YOp$}\index{V2V@$\VOp$}
\index{E3ethhat@$\hedt, \hedtp$}
\begin{subequations}\label{eq:ops-Znaj-IEF-intro}
\begin{align}
\VOp\varphi={}&\partial_{v} \varphi
 + \frac{\Delta \partial_{r} \varphi}{2 (a^2 + r^2)}
 + \frac{a \partial_{\phi} \varphi}{a^2 + r^2}, \label{eq:V-Znaj-IEF-intro} \\
\YOp\varphi={}&- \partial_{r} \varphi,\label{eq:Y-Znaj-IEF-intro}\\
\hedt\varphi={}&\tfrac{1}{\sqrt{2}}\partial_{\theta} \varphi
 + \tfrac{i}{\sqrt{2}}\csc\theta \partial_{\phi} \varphi
 - \tfrac{1}{\sqrt{2}}s \cot\theta \varphi,\\
\hedtp\varphi={}&\tfrac{1}{\sqrt{2}}\partial_{\theta} \varphi
 -  \tfrac{i}{\sqrt{2}}\csc\theta \partial_{\phi} \varphi
 + \tfrac{1}{\sqrt{2}}s \cot\theta \varphi
 .
, \\
\intertext{and} 
\LxiOp{}\varphi={}&\partial_{v} \varphi,
\end{align}
\end{subequations}
\index{B2BOpSet@$\unrescaledOps$}
\index{D2DOpSet@$\rescaledOps$}
\index{D2DSlashOpSet@$\ScriOps$}
We introduce the set of operators 
\begin{subequations}
\begin{align}
\unrescaledOps
={}&\{ \YOp, \VOp,r^{-1} \hedt, r^{-1} \hedtp\} ,\\
\intertext{related to the principal tetrad, and the set} 
\rescaledOps
={}&\{ M\YOp, r\VOp, \hedt, \hedtp\}, \label{eq:rescaledOps-intro} \\
\intertext{of rescaled operators. Finally, the set} 
\ScriOps={}&\{\hedt,\hedtp, M\LxiOp\} 
\end{align} 
\end{subequations}
is appropriate for controlling fields on $\Scri^+$. In stating integral estimates, we shall make use of the volume elements 
\index{D1di3@$\diThreeVol$}
\index{D1di4@$\diFourVol$}
\begin{align}
\di^4 \mu = \sin\theta d v \wedge d r \wedge  d \theta \wedge d \phi, \qquad \di^3 \mu = \sin\theta d r \wedge  d \theta \wedge d \phi .
\label{eq:referenceVolumeForms-intro}
\end{align} 

\begin{definition}
\label{def:basichighorderenergy-intro}
Let $\Sigma$ be a smooth, spacelike hypersurface, and let $\Normal_a$ be a 1-form normal to $\Sigma$. Let $\LerayForm{\Normal}$ denote a three form such that $\Normal\wedge\LerayForm{\Normal}=\di^4\mu$. Let $\varphi$ be a boost-weight zero field.
\index{E2Energy@$E^k_\Sigma$}
Let $\ireg$ be a positive integer and define
\index{D1di3Leray@$\LerayForm{\Normal}$}
\begin{subequations}
\begin{align} 
E^1_\Sigma(\varphi)
={}& M\int_{\Sigma}\left(
  (\Normal_aY^a)\abs{ \VOp\varphi}^2
  +(\Normal_a V^a)\abs{\YOp\varphi}^2
  +(\Normal_a(V^a+Y^a))r^{-2}(\abs{\hedt\varphi}^2+\abs{\hedtp\varphi}^2)
\right)\LerayForm{\Normal} , 
\label{eq:E0def-intro}\\
E^{\ireg}_\Sigma(\varphi)
={}& \sum_{i=0}^{\ireg-1} \sum_{X_1,\ldots,X_i\in \unrescaledOps} M^{2i} E^1_\Sigma(X_i\ldots X_1 \varphi),\\
\BEAMbulk^1_{\timefunc_1, \timefunc_2}(\varphi) = {}& 
\int_{\Dtau\cap\{r\geq 10M\}} M^3 r^{-3} \sum_{X \in \unrescaledOps}\abs{X \varphi}^2 \diFourVol
  +\int_{\Dtau} M r^{-3} \abs{\varphi}^2 \diFourVol,\\
\BEAMbulk^{\ireg}_{\timefunc_1, \timefunc_2}(\varphi)
={}& \sum_{i=0}^{\ireg-1} \sum_{X_1,\ldots,X_i\in \unrescaledOps} M^{2i} W^1_{\timefunc_1,\timefunc_2}(X_i\ldots X_1 \varphi).
\end{align}
\end{subequations}
\end{definition}
In order to discuss our second main result, we shall
need the fields
\begin{align} 
\psibase[-2][i]={}&\left ( \frac{a^2 + r^2}{M}  \VOp \right )^i \psibase[-2], 
\qquad 0\leq i\leq 4 , \label{eq:radpsii-intro-2} 
\end{align}
defined in terms of derivatives of $\psibase[-2]$.

\begin{definition}[Basic decay condition]
\label{def:BEAM-intro} \ \\
Let $\delta g_{ab}$ be a solution to the linearized Einstein equations on the domain of outer communication $\mathcal{M}$ of a Kerr black hole spacetime, and let $\psibase[+2]$ be as in \eqref{eq:radfield(+2)-intro}, and let $\psibase[-2][i]$, $i=0,1,2$ be as in \eqref{eq:radpsii-intro-2}. We shall say that $\varop g_{ab}$ satisfies the basic decay condition if the following holds for all sufficiently large $\ireg \in \Naturals$. 
\begin{enumerate}
\item \label{point:s=-2-BEAM} There is a positive constant $C$ such that for all $\timefunc_1 < \timefunc_2$ with $10M\leq \timefunc_1$,   
\begin{align}
\sum_{i=0}^2&\left(
  E^{\ireg}_{\Stau}(\psibase[-2][i])
  + \BEAMbulk^{\ireg}_{\timefunc_1, \timefunc_2}(\psibase[-2][i])\right )
\leq C \sum_{i=0}^2 E^{\ireg}_{\Staui}(\psibase[-2][i]) ,
\label{eq:BEAMassumptionSpin-2-intro}
\end{align}
\item \label{point:s=+2-BEAM}
\begin{align} \label{eq:s=+2-BEAM-intro}
\lim_{\timefunc\rightarrow\pm \infty} \left ( \absScri{
\psibase[+2]}{\ireg} \big{|}_{\Scri^+} \right )  = 0 .
\end{align}
\end{enumerate}
\end{definition}

\begin{remark} 
The spin-weight $-2$ case, point \ref{point:s=-2-BEAM}, of definition \ref{def:BEAM-intro} is an integrated energy estimate. The spin-weight $+2$ condition in point \ref{point:s=+2-BEAM}, on the other hand, is not in the form of an estimate, but rather a weak pointwise decay condition. 
In section \ref{sec:spin+2TeukolskyEstimates}, equation \eqref{eq:s=+2-BEAM-intro} is proved to follow from a basic integrated energy estimate analogous to the condition stated in inequality \eqref{eq:BEAMassumptionSpin-2-intro}.
\end{remark} 

We are now able to formulate the second main result of this paper. 
\begin{thm}\label{thm:BEAMintro} 
Let  $(\mathcal{M}, g_{ab})$ be the domain of outer communication of a subextreme Kerr spacetime. Let $\ireg \in \Naturals$ be sufficiently large and $\epsilon > 0$ be sufficiently small. Let  $\delta g_{ab}$ be a solution to the linearized vacuum Einstein equations on $(\mathcal{M}, g_{ab})$ in outgoing radiation gauge, with 
$\Vert \delta g \Vert_{H^{\ireg}_{7}(\Stauini)} < \infty$,
and let $G_{i0'}$, $i=0,1,2$ be the components of $\delta g_{ab}$ defined by \eqref{eq:ORG-G-intro}, with respect to the Znajek tetrad.

Assume that $\delta g_{ab}$ satisfies the basic decay conditions of definition \ref{def:BEAM-intro}. Then, there is a constant $C = C(\ireg,|a|/M,\epsilon)$, such that the following inequalities hold for $\timefunc > 10M$. 
\begin{enumerate} 
\item In the interior region $r < \timefunc$, 
\begin{subequations}
\begin{align}  
\abs{G_{20'}} \leq{}& C r^{-1} \timefunc^{-5/2 + \epsilon} \Vert \delta g \Vert_{H^{\ireg}_{7}(\Stauini)},  \\ 
\abs{G_{i0'}} \leq{}& C r^{-2} \timefunc^{-3/2 + \epsilon} \Vert \delta g \Vert_{H^{\ireg}_{7}(\Stauini)}, \quad \text{for $i\in \{0,1\}$.}  \label{eq:i=0,1-slow-intro} 
\end{align} 
\end{subequations} 

\item In the exterior region $r  \geq \timefunc$, 
\begin{align} \label{eq:peel-est-intro}
\abs{G_{i0'}} \leq C r^{i-3} \timefunc^{-i -1/2 + \epsilon} \Vert \delta g \Vert_{H^{\ireg}_{7}(\Stauini)}, \quad \text{for $i\in \{0,1,2\}$.}\end{align} 
\end{enumerate} 
\end{thm}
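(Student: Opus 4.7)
The plan is to proceed from the Teukolsky-level decay hypothesis to metric-level decay by a three-stage argument: (a) upgrade the assumed integrated energy bounds on $\psibase[-2]$ (and its $\VOp$-derivatives $\psibase[-2][i]$) to pointwise decay in both $\timefunc$ and $r$; (b) transfer this decay to $\psibase[+2]$ using the Teukolsky-Starobinsky identity \eqref{eq:7.39-intro} together with the asymptotic condition \eqref{eq:s=+2-BEAM-intro}; (c) integrate the transport equations for the linearized connection coefficients and for the metric components $G_{i0'}$ along the ingoing principal null direction $n^a$ to obtain the stated pointwise bounds.

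For step (a), I would first commute the Teukolsky operator with $\LxiOp$ and $\LetaOp$ (which preserve the equations since $\xi^a, \eta^a$ are Killing and commute with the Znajek tetrad) to obtain BEAM-type estimates for arbitrarily many time and angular derivatives of $\psibase[-2][i]$. I would then run a Dafermos-Rodnianski-type $r^p$ hierarchy, using the weights built into the $H^k_7(\Sigma_{\initial})$ norm and the fact that $\timefunc$ is hyperboloidal, to convert integrated energy to a chain of decaying fluxes through $\Staut$; pigeonholing in $\timefunc$ then yields $\timefunc^{-3/2+\epsilon}$-type pointwise decay after applying a weighted Sobolev embedding on the hyperboloidal slices. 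Commuting again with $\LxiOp$ and combining with transport in $\YOp$ gives the sharper peeling exponents as one approaches $\Scri^+$.

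For step (b), the differential identity \eqref{eq:7.39-intro} expresses $\hedt^4 \psibase[-2]$ as a combination of $\psibase[+2]$ with $(\YOp + r/(a^2+r^2))^4$ applied and lower-order terms in $\psibase[-2]$. Integrating this relation in $\YOp = -\partial_r$ from $\Scri^+$, where $\psibase[+2]$ is controlled by the vanishing condition \eqref{eq:s=+2-BEAM-intro} and its $\ScriOps$-derivatives, back into the bulk produces pointwise bounds for $\psibase[+2]$ with decay rates inherited from $\psibase[-2]$ plus the peeling weight $r^{-4}$. The main subtlety here is keeping the $\timefunc$-decay through the transport, which requires commuting the transport operator with $\LxiOp$ to trade one $\YOp$-integration for extra $\timefunc$-decay, exploiting the structure of $\ringtau$ and the commutators $[\YOp,\hedt]$.

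For step (c), the linearized Einstein equations in outgoing radiation gauge reduce to transport equations along $n^a$ for the linearized connection coefficients and for $G_{00'}, G_{10'}, G_{20'}$, with source terms built from $\psibase[\pm 2]$. I would integrate these transport equations from $\Scri^+$ inward (using $n^a = -\tfrac{1}{\sqrt{2}}\partial_r$), using the asymptotic data encoded by the Teukolsky scalars and the decay from steps (a)--(b). Carefully tracking the $r$-weights at each integration produces the peeling exponents $r^{i-3}$ for $G_{i0'}$ in the exterior region, while in the interior region the $r$-growth of the transport is cut off at $r\sim\timefunc$ and the $\timefunc$-decay inherited from $\psibase[\pm 2]$ (with the $\epsilon$-loss from pigeonholing) yields the stated interior rates, with the improved $\timefunc^{-5/2+\epsilon}$ for $G_{20'}$ coming from the extra $\LxiOp$-commutation available for that component. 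The main obstacle I expect is step (c): the transport equations for the connection coefficients can a priori lose powers of $\timefunc$, so one must carefully design the order in which they are integrated and exploit the $\LxiOp$-extra-decay mechanism (encoded by the parameter $\Lxiextradecay$ in the paper) to preserve the sharp decay rates simultaneously with the sharp $r$-peeling.
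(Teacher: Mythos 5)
Your step (a) is essentially the paper's route (the derived system for the $\psibase[-2][i]$, $r^p$ estimates, pigeonhole, Sobolev on hyperboloidal slices), but steps (b) and (c) contain a genuine gap. First, (b) cannot be carried out as described: the operator $\YOp=-\partial_r$ transports along ingoing null geodesics at fixed $v$, and these curves do not emanate from $\Scri^+$ (they reach $r=\infty$ at past null infinity or on $\Stauini$), so there is no way to ``integrate the Teukolsky--Starobinsky identity in $\YOp$ from $\Scri^+$ back into the bulk.'' Moreover, the only hypothesis on $\psibase[+2]$ in definition \ref{def:BEAM-intro} is the qualitative vanishing of the limits \eqref{eq:s=+2-BEAM-intro} along $\Scri^+$ as $\timefunc\to\pm\infty$, with no rate; no quantitative bulk decay for $\psibase[+2]$ can be extracted from it, and none is needed for theorem \ref{thm:BEAMintro}.

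Second, and more importantly, (c) misses the central obstruction at the metric level. The transport equations of lemma \ref{lem:TransportSystem} are integrated from $\Stauini$, and the weighted Hardy/transport estimates require $r$-fall-off at $\Scri^+$ that the quantities $\hat\sigma',\widehat{G}_2,\hat\tau',\widehat{G}_1,\hat\beta',\widehat{G}_0$ do not possess; this forces a Taylor expansion at $\Scri^+$, whose coefficients satisfy $\timefunc$-transport equations \emph{along} $\Scri^+$. Integrating those produces, a priori, non-decaying constants of integration --- exactly the linearized mass and angular momentum (a pure $\delta M$ perturbation has $\widehat{G}_0=-\tfrac{2}{81}\delta M$ constant and solves the same transport system), so no amount of bookkeeping of $r$-weights along the $\YOp$-integration can yield the claimed $\timefunc$-decay by itself. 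The missing ingredient is the set of integrability conditions on $\Scri^+$: the identity \eqref{eq:7.39-intro} restricted to $R=0$, combined with \eqref{eq:s=+2-BEAM-intro}, the decay of $\psibase[-2]$ at $\Scri^+$, and the triviality of the kernel of $\hedt^4$ on spin-weight $-2$ scalars, shows that the repeated $\timefunc$-integrals of the Taylor coefficients of $\psibase[-2]$ at $\Scri^+$ tend to zero, which is what makes the expansion coefficients of the geometric quantities decay and, in particular, forces $\delta M=\delta a=0$. One then needs the exterior/interior split at $r=\timefunc$, with the exterior expansion estimates providing the transition fluxes that seed the interior transport estimates. Without this $\Scri^+$-integration argument your scheme would at best bound the metric components modulo non-radiating stationary modes, which is strictly weaker than the theorem.
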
 

\begin{remark} \label{rem:1.7}
\begin{enumerate}
\item It follows from the work in \cite{2017arXiv170807385M} together with the arguments in section~\ref{sec:spin+2TeukolskyEstimates} that the conditions stated in definition~\ref{def:BEAM-intro} hold for a solution $\delta g_{ab}$ of the linearized Einstein equation with $\Vert \delta g \Vert_{H^{\ireg}_{7}(\Stauini)} < \infty$ on a Kerr spacetime that is very slowly rotating, in the sense that $|a|\ll M$. 
\item As part of the proof of theorems \ref{thm:mainintro} and \ref{thm:BEAMintro} we prove decay estimates for $\psibase[-2]$ which are stronger than those previously available.  For example, the rate of decay in $\timefunc$ for fixed $r$ in theorem \ref{thm:ImproEstipsi01} is stronger than the previous pointwise decay for the Teukolsky variable in \cite{2017arXiv171107944D} or in the spherically symmetric case in \cite{2016arXiv160106467D}. 
\item \label{point:rem:1.7:3}
The fall-off conditions on initial data in theorems \ref{thm:mainintro} and \ref{thm:BEAMintro} imply that the linearized mass and angular momentum vanish. 
To see this, first note that an explicit calculation shows that the mass and angular momentum for the Kerr spacetime are given by the corresponding ADM expressions \cite{1974AnPhy..88..286R}. The ADM mass is given as integrals over spheres of  partial derivatives of the metric, with respect to the standard area measure $r^2\diTwoVol$, and taking the limit as the radius of the sphere goes to infinity, $r\rightarrow\infty$. The ADM angular momentum is similar with an additional factor that is bounded by $r$. 
The condition that $\Vert \delta g \Vert_{H^{\ireg}_{7}(\Stauini)}$ is finite implies that, on the initial hypersurface, $|\delta g| \leq C r^{-7/2}$ and, more importantly, the partial derivatives fall off as $r^{-9/2}$; see lemmas \ref{lem:spherica-Sobolev} and \ref{lem:Ik+1alphaenergydominatesPkalphapointwise}. 
The fall-off in the partial derivatives is sufficiently strong that when integrated against the relevant weights, and using the fact that $M>0$ by assumption, one finds that $\delta M=0=\delta a$.
This does not restrict the dynamical degrees of freedom, due to the fact that in linearized gravity, variations of the mass and angular momentum are quasi-locally conserved and be treated separately \cite{2013CQGra..30o5016A}. 
In appendix \ref{sec:linpara}, we calculate the linearized perturbations from varying $M$ and $a$ in the metric \eqref{eq:gab-intro}. 
\end{enumerate}
\end{remark}

\subsection{Background and context} \label{sec:background} 
The work in the present paper is motivated by the black hole stability conjecture, i.e. the statement that the maximal Cauchy development of Cauchy data close to data for a subextreme Kerr black hole spacetime is future asymptotic to a subextreme member of the Kerr family. The black hole stability conjecture is, together with the black hole uniqueness conjecture and the Penrose Inequality,  one of three major conjectures in general relativity related to the Kerr black hole solution that were formulated in the early 1970's, cf. \cite{2015CQGra..32l4006T} and references therein.  These conjectures are fundamental not only from the point of view of cosmology and astrophysics,
but are among the most important open problems in mathematics in the present day, and have been the subject of intense work both in the physics and mathematics communities during the last half-century and are, in spite of tremendous progress,
still open in their full generality\footnote{See section \ref{sec:recent} for developments subsequent to the first submission of the present paper.}.

The black hole stability problem has some features in common with the problem of stability of Minkowski space \cite{MR1316662,1986CMaPh.107..587F}, but exhibits several new types of difficulties that constitute major obstacles to progress, and which all have to be overcome in order to solve the black hole stability problem.  These include the fact that the Kerr spacetime has only two Killing symmetries in contrast to the Poincar\'e symmetry of Minkowski space. Superradiance caused by the rotating geometry prevents the existence of a positive, conserved energy for waves on the Kerr background. The Kerr black hole solution constitutes a 2-parameter family, and the parameters of the ``final'' Kerr black hole cannot be determined a priori. In addition one has global gauge degrees of freedom corresponding to changes of reference frame that must be controlled during the evolution.  

The importance of the fact that the Kerr geometry is algebraically special, of Petrov type D (also called type $\{2\,2\}$) can hardly be overemphasized. A Petrov type D spacetime is characterized by the existence of two repeated principal null directions. The Petrov type D nature of the Kerr geometry leads to the existence of the Carter constant and associated second order symmetry operator, called the Carter operator.  

The Petrov type can be understood in a particularly simple manner using a formalism based on complex null tetrads, and the underlying spin dyad, such as the Newman-Penrose (NP) \cite{1962JMP.....3..566N} or its later development, the Geroch-Held-Penrose (GHP) formalism \cite{GHP}. See \cite{MR917488,MR944085} for a textbook treatment. The GHP formalism has the additional significant feature that it is covariant with respect to boost- and spin-rotations. 
The NP or GHP formalism expresses tensorial quantities in terms of complex scalar components. In terms of the GHP formalism, these can be understood as sections of complex line bundles.  
The NP and GHP formalisms have been widely applied and have led to many important results, several of which are of importance in this paper, and will be discussed below. These include 
the derivation of the Teukolsky equation \cite{1972PhRvL..29.1114T}, the introduction of the ORG \cite{Chrzanowski}, and the proof that the ORG can be imposed in type D spacetimes \cite{2007CQGra..24.2367P}.
The GHP formalism is closely analogous to the null decomposition, which originates in the proof of stability of Minkowski space \cite{MR1316662} (and the work leading up to it) and continues to many recent works such as \cite{2016arXiv160106467D,2017arXiv171107597K,KlainermanSzeftel:GCMSphereEffectiveResults,KlainermanSzeftel:GCMSphereConstruction,Klainerman:2021qzy,GioriKlainermanSzeftel:WaveEstimatesForKerr}.
When applying the GHP formalism in Petrov type D spacetime, it is natural to use a principal tetrad, where two of the tetrad legs are aligned with the principal null vectors determined by the geometry.

Of crucial importance for Kerr stability is the discovery by 
Teukolsky \cite{1972PhRvL..29.1114T} that the linearized Einstein equation on a Petrov type D background implies a decoupled, separable wave equation,   
the Teukolsky Master Equation (TME), for the linearized Weyl components of extreme spin weights known as the Teukolsky scalars.  Separability of the TME is due to existence of the Carter symmetry operator. The Teukolsky scalars have the important property of being gauge-invariant.  
These facts have made it possible to develop analytical approaches to the black hole stability problem.

For perturbations of a Petrov type D spacetime such as Kerr, it is convenient to use a radiation gauge, such as the ORG \eqref{eq:ORGcond}, defined in terms of one of the principal null directions. The radiation gauge is an algebraic gauge condition that appears naturally for linear perturbations of Kerr constructed using Debye potentials \cite{Chrzanowski}. The radiation gauge is consistent on Kerr, and more generally, Petrov type II spacetimes, in spite of the fact that the linear radiation gauge involves five gauge conditions in a four-dimensional spacetime \cite{2007CQGra..24.2367P}.  

The first major analytical result on Kerr stability was Whiting's 1989 proof of mode stability for the TME \cite{whiting:1989}. Mode stability is the statement that separated solutions to the TME satisfying an outgoing radiation condition do not grow exponentially. The proof of mode stability made use of the fact that the separated form of the TME is a confluent Heun equation, and integral transformations related to this fact.  Although mode stability is a strong indication that stability holds, there remains a significant difficulty in going from a mode stability result to a decay estimate for the field equation, in particular for the full, non-linear stability problem. For this reason, much of the subsequent work on the stability problem made use of different techniques. 
Further work, which can be viewed as being in this direction, includes pointwise decay estimates for various fields, including solutions of the wave and Teukolsky equations \cite{MR2215614,MR3783838}.

A key step in proving decay estimates for fields on black hole spacetimes was to prove Morawetz
estimates in this setting. For the wave equation, this was first accomplished on Schwarzschild \cite{MR1972492,blue2006errata,blue2006uniform,MR2527808}. 
In order to prove Morawetz estimates for fields on the rotating Kerr spacetime, it is necessary to make use of the additional symmetries provided by the Carter constant or the Carter operator. A Morawetz estimate for the wave equation on Kerr spacetimes with $|a|\ll M$ has been proved using Fourier techniques \cite{MR2764864}, and  also by physical space techniques using 
the second order Carter symmetry operator \cite{AnderssonBlue:KerrWave}. 
The problem for the full extreme range $|a|<M$ was also treated \cite{MR3488738} using Fourier techniques and work extending mode stability to the real line \cite{2015AnHP...16..289S}; mode stability on the real line has also been extended to the nonzero spin case \cite{2017JMP....58g2501A}. 
Morawetz estimates can be applied to the Maxwell and Regge-Wheeler equations \cite{MR3373052,MR3450059,MR3672902,2017arXiv170806943A}; see also  \cite{MR2864787}.
Furthermore, Morawetz estimates can be used as a hypothesis to prove pointwise decay using the $r^p$ estimate \cite{DafermosRodnianski:rp} 
and its refinement \cite{MR3859608}.

There has been progress on linear and non-linear stability of Schwarzschild.
In order to prove linear stability, it is necessary to prove estimates for the linearized metric, not merely the Teukolsky variables. 
Linear stability for the Schwarzschild has been proved using a combination of wave estimates for the Teukolsky variables and transport estimates for the metric and connection coefficients \cite{2016arXiv160106467D},  as well as using wave estimates for the metric directly \cite{2017arXiv170202843H}.
The non-linear stability of the Schwarzschild spacetime with respect to polarized axially symmetric perturbations is also known \cite{2017arXiv171107597K}. In particular, the assumptions in the just cited paper imply that the spacetime geometry is asymptotic at timelike infinity, to a Schwarzschild spacetime.

For the case of of slowly rotating Kerr-de Sitter black holes, non-linear stability is known \cite{2016arXiv160604014H}. The presence of a positive cosmological constant in the Kerr-de Sitter case provides exponential fall-off in time, which plays an important role in the proof.  

Energy and Morawetz estimates for the spin-2 Teukolsky equation on Kerr are the crucial hypothesis in theorem \ref{thm:BEAMintro}. This is significantly more difficult in the case of nonzero spin, for many reasons fundamental and technical, not least of which is the absence of a divergence-free stress-energy tensor which could be used in defining energies. Our theorem \ref{thm:mainintro} rests upon the proof of these estimates in this case $|a|\ll M$, which was achieved in \cite{ma2020uniform,2017arXiv170807385M}. See also \cite{2017arXiv171107944D}. 

\subsection{Strategy of the proof} \label{sec:strategy} 
We now summarize six key elements of our proof. The first three of these outline the choice of variables considered and the equations governing them. The remaining three outline the novel aspects of the methods we use to estimate these variables. 

\begin{enumerate}
\item \textbf{Teukolsky evolution and metric reconstruction:} Schematically speaking, the fundamental approach is first to study solutions $\psibase[\pm2]$ of the Teukolsky equation and then to reconstruct the metric (and some of the connection coefficients) from the Teukolsky variables $\psibase[\pm2]$. 
This approach goes back to the classical works of \cite{teukolsky:1973,1973ApJ...185..649P,Chrzanowski} and continues to recent work \cite{2016arXiv160106467D,2017arXiv171107597K,KlainermanSzeftel:GCMSphereEffectiveResults,KlainermanSzeftel:GCMSphereConstruction,Klainerman:2021qzy,GioriKlainermanSzeftel:WaveEstimatesForKerr}. 
\item \textbf{Deboosted GHP variables:} 
The GHP formalism allows for continuity with the classical work in this problem, most notably of Teukolsky et al. \cite{teukolsky:1973,1973ApJ...185..649P}, but also more recent work such as \cite{2007CQGra..24.2367P}. 
As explained in detail in section \ref{sec:notandconv}, the normalization 
$g_{ab} = 2 (l_{(a} n_{b)} - m_{(a} \bar m_{b)})$
is left invariant by a transformation 
\begin{align}
(\NPl^a, \NPn^a, \NPm^a, \NPmbar^a) \mapsto (|\mu|^2\NPl^a,|\mu|^{-2}\NPn^a,(\mu/\bar\mu)\NPm^a,(\bar\mu/\mu)\NPmbar^a) 
\end{align} 
for $\mu \ne 0$.
The GHP formalism is covariant with respect to such transformations. 
A component $\eta$ of a tensor defined in terms of a complex null tetrad will transform  as $\eta \to |\mu|^{2b} (\mu/\bar\mu)^s \eta$ for integers $b,s$ called the boost and spin weights. 
Such scalars are called properly weighted, see definition \ref{def:propweight} below. 

In order to have well-defined norms, we introduce a process we call deboosting. 
For a GHP scalar with non-zero boost weight, the natural pointwise norm given by $|\alpha|^2=\alpha\bar\alpha$ fails to be invariant under the transformations discussed in the previous paragraph. 
As already alluded to in the discussion of the deboosted variables $\psibase[\pm2]$, defined in equation \eqref{eq:radfield(-2)-intro}, the Kerr spacetime admits a nowhere vanishing quantity $\uplambda$ with boost weight $1$. Since boost weight is additive for products, for a properly weighted GHP scalar $\alpha$ with boost weight $b$, we can define a deboosted variable $\hat\alpha=\uplambda^{-b}\alpha$ with boost weight zero.
For properly weighted GHP scalars with boost weight zero and arbitrary spin weight, $|\alpha|$ has spin and boost weight zero, i.e. depending on the position, but invariant under the transformations in the previous paragraph.

In addition to deboosting, in definition \ref{def:BoostWeightZeroQuantities}, we find it useful to introduce additional rescaling factors (with zero boost weight) in the definition of our main variables to put system \eqref{eq:TransEqsKerr} in a more tractable form. 

\item \textbf{Hierarchy of transport equations for metric reconstructions in ORG:}
We use a hierarchy of transport equations to reconstruct the metric. In lemma \ref{lem:TransportSystem}, we introduce this hierarchy of transport equations, which are of the form 
\begin{align} \label{eq:transport-intro} 
\YOp \varphi ={}& \varrho ,
\end{align}
for an ordered list of  variables. 
For each $\varphi$ after the first in this list, there is a corresponding transport equation, and the corresponding source term $\varrho$ depends only on variables earlier in the list.
This list begins with $\psibase[-2]$ and includes all non-vanishing metric coefficients. This is discussed in section \ref{sec:transporteq} and illustrated in figure~\ref{fig:EstimatesDiagram}. 

The outgoing radiation gauge (ORG), equation \eqref{eq:ORGcond}, is crucial in allowing us to derive such a hierarchy of transport equations. 
This builds on a long history of using the ORG in the study of metric perturbations of the Kerr spacetime \cite{Chrzanowski,2007CQGra..24.2367P}. 
In spherical symmetry, the double null gauge implies conditions similar to the ORG, and there is a similar hierarchy, which was used in the treatment of the linear stability of Schwarzschild \cite{2016arXiv160106467D,KlainermanSzeftel:polarized}. 

\begin{figure}[t]
\centering
\begin{subfigure}[t]{0.45\textwidth}
\centering
\begin{tikzpicture}[xscale=0.80,yscale=0.80]

\coordinate (A0) at (9,1);
\coordinate (A1) at (6.5,1);
\coordinate (A2) at (5,1.1);
\coordinate (A3) at (2.04,2.54);

\coordinate (B0) at (4,4);
\coordinate (B1) at (7.5,0.5);

\coordinate (i+) at (5,5);
\coordinate (C1) at (6,2);
\coordinate (C2) at (5,1);
\coordinate (C3) at (4.5,0);

\draw[thick,blue,name path=horizon]  (1,1) -- node[sloped, above] {$\mathscr{H}^+$} (5,5);
\draw[name path=scri, thick, dash dot, red]  (5,5) -- node[sloped, above] {$\Scri^+$} (9,1);

\path[name path=sigmainit, gray] (A0) .. controls (A1) and (A2) .. node[pos=0.7, sloped, below, black] {$\Stauini$} (A3);

\path[name path=geod] (B0) -- (B1);

\draw[name path=reqt, dashed] (i+) .. controls (C1) and (C2) .. (C3) node[sloped, pos=0.25, below] {$r=\timefunc$};

\draw[thick, name intersections={of=geod and reqt, by=reqtgeod}, name intersections={of=geod and sigmainit, by=reqtinit}] (reqtgeod) --  node[sloped, above] {$v=\vOne$} (reqtinit);

\begin{scope}
  \clip [closed]  (1,1) -- (5,5) --  (10,5) -- (10,1) -- (5,0);
  \draw[draw=black, thick]  (A0) .. controls (A1) and (A2) ..  (A3);
\end{scope}

\filldraw[fill=white,draw=black] (i+) circle[thick,radius=.7mm] node[anchor=south,yshift=.1cm]{$i_+$}; 

\filldraw[fill=black,draw=black] (reqtgeod) circle[thick,radius=.7mm] ;

\filldraw[fill=white,draw=black] (A0) circle[thick,radius=.7mm] node[anchor=west,xshift=.1cm]{$i_0$}; 

\end{tikzpicture}
\caption{Ingoing null geodesics, tangent to $\NPn$, restricted to the exterior region $r\geq\timefunc$. Along these, the power-series expansion off $\Scri^+$ is used to estimate solutions of the transport equations arising in the reconstruction of the metric.}
\label{fig:Estimates:NPnExt}
\end{subfigure}%
\hspace{0.05\textwidth}
\begin{subfigure}[t]{0.45\textwidth}
\centering
\begin{tikzpicture}[xscale=0.80,yscale=0.80]

\coordinate (A0) at (9,1);
\coordinate (A1) at (6.5,1);
\coordinate (A2) at (5,1.1);
\coordinate (A3) at (2.04,2.54);

\coordinate (B0) at (4,4);
\coordinate (B1) at (6,2);

\coordinate (i+) at (5,5);
\coordinate (C1) at (6,2);
\coordinate (C2) at (5,1);
\coordinate (C3) at (4.5,0);

\draw[thick,blue,name path=horizon]  (1,1) -- node[sloped, above] {$\mathscr{H}^+$} (5,5);
\draw[name path=scri, thick, dash dot, red]  (5,5) -- node[sloped, above] {$\Scri^+$} (9,1);

\path[name path=sigmainit, gray] (A0) .. controls (A1) and (A2) .. node[pos=0.7, sloped, below, black] {$\Stauini$} (A3);

\path[name path=geod] (B0) -- (B1);

\draw[name path=reqt, dashed] (i+) .. controls (C1) and (C2) .. (C3) node[sloped, pos=0.25, above] {$r=\timefunc$};

\draw[thick, name intersections={of=geod and reqt, by=reqtgeod}] (B0) --  node[sloped, below] {$v=\vOne$} (reqtgeod);

\begin{scope}
  \clip [closed]  (1,1) -- (5,5) --  (10,5) -- (10,1) -- (5,0);
  \draw[draw=black, thick]  (A0) .. controls (A1) and (A2) ..  (A3);
\end{scope}

\filldraw[fill=white,draw=black] (i+) circle[thick,radius=.7mm] node[anchor=south,yshift=.1cm]{$i_+$}; 

\filldraw[fill=black,draw=black] (reqtgeod) circle[thick,radius=.7mm] ;

\filldraw[fill=black,draw=black] (B0) circle[thick,radius=.7mm];

\filldraw[fill=white,draw=black] (A0) circle[thick,radius=.7mm] node[anchor=west,xshift=.1cm]{$i_0$}; 

\end{tikzpicture}
\caption{Ingoing null geodesics, tangent to $\NPn$, in the interior region $r\leq\timefunc$. }
\label{fig:Estimates:NPnInt}
\end{subfigure}
\caption{Curves for transport estimates.}
\label{fig:Estimates}
\end{figure}

\item \textbf{5-component system for improved Teukolsky decay:}
\label{point:Strategy:FiveCompSystem}
A major step in the proof of theorem \ref{thm:BEAMintro} is to convert the basic energy and Morawetz estimates of definition \ref{def:BEAM-intro} into strong energy and pointwise decay estimates for the Teukolsky scalars $\psibase[\pm2]$ in theorems \ref{thm:ImproEstipsi01} and \ref{thm:DecayEstimatesSpin+2}. 

The Teukolsky variables $\psibase[-2]$ and its derivatives $\psibase[-2][i]$ along $\VOp^a$ satisfy a coupled system of equations. The Teukolsky equation
is notoriously difficult to treat as a wave equation, but a crucial breakthrough was the observation that\footnote{Technically, the variables in the system are a further rescaling given in definition \ref{def:raddegphi}.}
$\psibase[-2][0]=\psibase[-2]$, $\psibase[-2][1]$, and $\psibase[-2][2]$ can be treated as satisfying a $3\times3$ system of coupled equations for which energy and Morawetz estimates can be proved \cite{2017arXiv170807385M,2017arXiv171107944D}. 
The third equation in this $3\times3$ system is related to the Chandrasekhar transformation \cite{Chandrasekhar}. 
In particular, the right-hand side of this system has only first order derivatives $\LetaOp$ (and $\LetaOp$ is not applied to $\psibase[-2][2]$). 

As shown in section \ref{sec:foliation}, the level sets of $\tHor$ (in particular $\Stauini$) extend from the horizon $\Horizon^+$ to spacelike infinity $i_0$, whereas the level sets of $\timefunc$ go from the horizon $\Horizon^+$ to null infinity $\Scri^+$, as illustrated in figure \ref{fig:Estimates:SigmaInit}. A fairly simple argument shows that the energy on $\Stauini$ controls the energy on the level set $\Stauit=\{\timefunc=\timefunc_0\}$, also illustrated in the figure \ref{fig:Estimates:SigmaInit}. 

The $r^p$ argument \cite{DafermosRodnianski:rp} provides an important tool for obtaining decay. From \begin{enumerate*}[label=(\roman*)]\item an a priori estimate roughly of the form of the basic decay estimate for $\psibase[-2]$ in definition \ref{def:BEAM-intro}, \item using a multiplier of the $r^\alpha\VOp$ for $0< \alpha < 2$, and \item applying the  mean-value theorem, also known as the pigeonhole principle,
one can show energies decay at a rate of $t^{-\alpha}$. This $r^p$ argument is performed between level sets of $\timefunc$, as illustrated in figure \ref{fig:Estimates:Teukolskyrp}.  
\end{enumerate*} 

One of the crucial new features of this paper is that we obtain significantly stronger decay by introducing a new $5\times5$ system for $\psibase[-2][i]$ with $i\in\{0,\ldots,4\}$. This provides decay of energies at a rate of (almost) $\timefunc^{-2}$ for the variables with $i\in\{0,\ldots,4\}$. Following the improved $r^p$ argument of \cite{MR3859608}, from this decay and the $4\times4$ system for $\psibase[-2][i]$ with $i\in\{0,\ldots,3\}$, we obtain better decay for these variables. Even stronger decay is obtained by iterating again and using the known $3\times3$ system for $\psibase[-2][i]$ with $i\in\{0,\ldots,2\}$. This gives lemma \ref{lem:ImproEnerDecaypartialt}. To obtain yet stronger estimates for $\psibase[-2][i]$ with $i\in\{0,\ldots,1\}$, we are not aware of any way to apply energy estimates for a smaller subsystem, so, instead, we apply elliptic estimates, which completes the proof of theorem \ref{thm:ImproEstipsi01}. 

In our approach, the limit on the rate of decay for $\psibase[-2][i]$ arises from the size of the $5\times5$ system. To apply the $r^p$ argument, it is important that the angular part of the spin-weighted wave equation under consideration has a nonnegative spectrum for the angular operator. This consideration constrains the size of the derived system, the length of the hierarchy of weighted estimates, and consequently the fall-off rates provided by the estimates.

We do not need such strong decay in $\timefunc$ for $\psibase[+2]$ (nor is it expected to be true, in light of peeling arguments, see e.g.{} \cite{MR944085}), so we only use a $3\times3$ system for $\psibase[+2]$ in section \ref{sec:spin+2TeukolskyEstimates}. 

\item \label{point:strat5} \textbf{Finite-order expansion off null infinity for metric reconstruction:}
To estimate solutions of the transport equations arising in the metric reconstruction, we introduce a novel, finite-order, power-series expansion in $r^{-1}$ off $\Scri^+$ in definition \ref{def:asymptoticExpansion}. 

In this expansion, a finite number, $\ElNoArg$, (in fact zero to four) of leading-order terms are estimated pointwise with the remainder estimated in a weighted, spacetime Sobolev space. The Teukolsky variable $\psibase[-2]$ is a radiation field in the sense that it has been scaled so that with $\timefunc$ fixed, we expect there is generically a non-vanishing limit as $r\rightarrow\infty$. Since $\psibase[-2]$ does not vanish as $r\rightarrow\infty$ on the level sets of $\timefunc$, it is not possible to apply standard Sobolev and Hardy estimates. For this reason, and also because the quantities we wish to estimate are typically radiation fields, we have introduced this novel power-series expansion. 

These expansions fit neatly in our scheme for estimating solutions of the transport system arising in reconstructing the metric. The expansion is valid and useful where $r$ is large, which we take to be $r>\timefunc$, illustrated in figure \ref{fig:Estimates:NPnExt}. In this region, for a transport equation of the form \eqref{eq:transport-intro}, if the source term $\varrho$ has an expansion in $r^{-1}$ off $\Scri^+$, then we can compute the leading-order terms in the solution $\varphi$ by integrating the leading-order terms in $\varrho$ along $\Scri^+$. The remainder is computed as an integral along the ingoing null geodesics tangent to $\NPn$ in the region $r\geq\timefunc$. With the solutions of the transport system estimated at $r=\timefunc$, there is then a relatively quick argument in section \ref{sec:interiorEst} to integrate the solutions along the ingoing null geodesics tangent to $\NPn$ in the region $r\leq\timefunc$, illustrated in figure \ref{fig:Estimates:NPnInt}. 

As a technical point, we note that, for reasons that remain opaque, we have found that when choosing the variables so that relevant transport equations form a hierarchy in lemma \ref{lem:TransportSystem}, we are left with some variables that are not radiation fields, in that they generically vanish like $r^{-\EmNoArg}$ approaching $\Scri$. For this reason, definition \ref{def:asymptoticExpansion} for the expansion also includes a parameter $\EmNoArg\in\Naturals$ in the definition of an expansion.

\item \label{point:strat6} \textbf{Application of the TSI in metric reconstruction:} 
To complete the argument in the previous point, we use the Teukolsky-Starobinsky Identity (TSI) in a surprising and novel way. 
In brief, the TSI can be used to express $\psibase[-2]$ in terms of a fourth time derivative, which allows us to iterate the integration in the expansion off null infinity to reconstruct the other variables. 

With $\varrho_0$ denoting the leading-order term of the source $\varrho$ for the equation $\YOp\varphi=\varrho$, then the leading-order term of the solution $\varphi_0$ is given by $\varphi_0(\timefunc)=\int_{-\infty}^\timefunc \varrho_0(s)\di s$. If $\varrho_0$ converges rapidly to zero as $\timefunc\rightarrow\pm\infty$, then $\varphi_0$ will converge rapidly to a limit, but it need not converge to zero. The vanishing of the initial data at $i_0$ is sufficient to ensure that the leading-order term $\varphi_0$ converges to $0$ as $\timefunc\rightarrow-\infty$. To complete our analysis, it is necessary to show that $\varphi_0$ converges to $0$ as $\timefunc\rightarrow\infty$.

On $\Scri^+$, the TSI can be written schematically as being that the $\hedt^4\psibase[-2]$ is equal to the fourth derivative in $\timefunc$ of $\psibase[+2]$ plus less relevant terms, which we largely ignore in this introductory explanation, which are given explicitly in equation \eqref{eq:TSIpsihat}, and which are dealt with in detail in section \ref{sec:estimatesForTheMetric}. 
In particular, $\hedt^4\psibase[-2]$ is equal to a time derivative of derivatives of $\psibase[+2]$. Since $\hedt^4$ acting on $\psibase[-2]$ is a strongly elliptic operator and $\psibase[+2]$ satisfies decay estimates, $\psibase[-2]$ is itself a time derivative of a quantity that goes to zero as $\timefunc\rightarrow\pm\infty$. Thus, $\int_{-\infty}^\infty\psibase[-2](\timefunc')\di \timefunc'=0$, and similarly for the leading-order term in the expansion. Thus, when $\psibase[-2]$ appears as a source term $\varrho$ in a transport equation, the leading-order term of the solution $\varphi$ vanishes as $\timefunc\rightarrow\infty$. 
This process can be iterated, so that one finds $\psibase[-2]$ is equal to a fourth derivative in $\timefunc$ of variables that vanish at spacelike and timelike infinity. 
This means that $\psibase[-2]$ can be integrated up to four times to give zero, e.g. 
$\lim_{t\rightarrow\infty}\int_{-\infty}^{\timefunc}\ldots\int_{-\infty}^{\timefunc'''}\psibase[-2](\timefunc'''')\di\timefunc''''\ldots \di\timefunc'=0$. 
Since up to four integrations along $\Scri^+$ can be performed, it is possible to obtain estimates for all the quantities appearing in the transport hierarchy in lemma \ref{lem:TransportSystem}, which is used to reconstruct the metric. 
\end{enumerate}

\subsection{Overview of this paper.}
In section \ref{sec:geomprel}, we collect the geometric preliminaries needed in the paper, such as the GHP formalism and our choice of time functions. In section \ref{sec:linearizedEinstein}, we consider the linearized Einstein equation and the ORG, and, in particular, we derive from these the system of equations, solutions of which we estimate in the remained of the paper. Section \ref{sec:AnalyticPreliminaries} presents various analytic preliminaries, such as definitions of various norms and basic estimates for spin-weighted operators. Section \ref{sec:weightedenergy} presents convenient forms of lemmas for proving weighted energy estimates for transport and wave equations. 

Sections \ref{sec:spin-2TeukolskyEstimates} and \ref{sec:spin+2TeukolskyEstimates} present the decay estimates for the Teukolsky scalars $\psibase[-2]$ and $\psibase[+2]$ respectively. The estimates presented here assume a basic integrated energy decay estimate, but they do not require slow rotation, i.e. smallness of $|a|/M$. 

In section \ref{sec:estimatesForTheMetric}, we use the transport system derived in section \ref{sec:transporteq}, and the decay estimates proved in sections \ref{sec:spin-2TeukolskyEstimates} and \ref{sec:spin+2TeukolskyEstimates}, to prove estimates for linearized connection coefficients and metric components. The method used here involves the analysis of Taylor expansions at $\Scri^+$. 

Three appendices at the end provide some details that were delayed from earlier sections of the paper.

\subsection{Note on recent developments}\label{sec:recent}

Since the first submission of the present paper in early 2019,  
there have been significant developments on the black hole stability problem. We shall here briefly mention a few of these.

A major breakthrough on the black hole stability problem has been achieved by Klainerman, Szeftel, and collaborators, who in a series of works starting in 2020 \cite{KlainermanSzeftel:GCMSphereConstruction,KlainermanSzeftel:GCMSphereEffectiveResults,Klainerman:2021qzy,Shen2022GCMhypersurface,GioriKlainermanSzeftel:WaveEstimatesForKerr} provide a proof of the full nonlinear stability of the Kerr family of black holes for $|a|\ll M$. 

There have been further results for the case $|a|\ll M$. 
An independent proof of stability for linearized gravity on the Kerr background using a harmonic gauge has been presented in \cite{Hafner:2019kov} shortly after the first submission of this paper. 
Sharp decay estimates for the Teukolsky scalars $\vartheta \Psi_0$ and $\vartheta \Psi_4$,  also called the Price law in the physics literature, has  been proved \cite{MZ21Kerr}.

For the Teukolsky equation in the full subextreme range $|a|<M$,  recent results include a Morawetz estimate  
\cite{2020arXiv200707211S,2023arXiv230208916S} and close to optimal decay \cite{2023arXiv230206946M}. We expect that with minor modifications, these results will lead to the basic decay condition in definition \ref{def:BEAM-intro} and therefore, by the results of the present paper, to linear stability for the full subextreme range $|a|<M$.

The authors have recently shown \cite{Andersson:2021eqc} that a nonlinear version of the outgoing radiation gauge used in the present paper leads to a reduced system that can be put in first-order symmetric hyperbolic form, and which is therefore well-posed.

\section{Geometric preliminaries} \label{sec:geomprel}
We now set aside most of the content of the introduction and reintroduce the key variables and ideas in a more systematic and detailed way.

\subsection{Notation and conventions for spinors and the GHP formalism}\label{sec:notandconv}

The GHP formalism \cite{GHP} plays a central role in this paper. In particular, we use index and sign conventions following Penrose and Rindler \cite{MR944085,MR917488}, see also \cite{2015arXiv150402069A} for background. In \cite[Appendix A]{Andersson:2021eqc}, we have presented this in the language of gauge and principal-$G$ bundle theory. In this section, we first describe briefly the value of the GHP formalism. In the bulk of this section, we summarize both the theory of spinors and the theory of GHP scalars. We then conclude by emphasizing when it is possible to define an inner product. 

The GHP formalism allows us to connect our work with a broad literature, including classical results. The classical work includes the crucial work of Teukolsky et al. \cite{1972PhRvL..29.1114T,1973ApJ...185..649P,teukolsky:1973,1974ApJ...193..443T}, the additional Teukolsky-Starobinsky Identities \cite{1974JETP...38....1S}, and the original use of the outgoing radiation gauge \cite{Chrzanowski}. A more recent work using the GHP formalism gives conditions on when the outgoing radiation gauge can be applied \cite{2007CQGra..24.2367P}, which we describe further in section \ref{sec:ORG}. 

We also make use of recent developments in the GHP formalism. The first-order system of transport equations which is used here, cf. sections \ref{sec:connectcomp}, \ref{sec:transporteq} and appendix \ref{sec:fieldeq}, has been derived using the covariant formalism for calculus of variations with spinors introduced by B\"ackdahl and Valiente-Kroon in \cite{BaeVal15} and is closely related to the first-order form of the Einstein equations as a system of scalar equations  derived by Penrose and collaborators in \cite{GHP} and \cite{1962JMP.....3..566N}. The computer algebra tools for calculations in the 2-spinor and GHP formalisms developed by Aksteiner and B\"ackdahl \cite{Bae11a,SpinframesPackage}, and related packages, have played a central role in developing the approach and the system of equations used in this paper. 

We now summarize the theory of spinors \cite{MR944085}, so that we may recall the GHP formalism. If $(\generalManifold,\generalMetric)$ is an oriented, time-oriented, $1+3$-dimensional spacetime that admits four smooth vector fields that are linearly independent at each point, then $(\generalManifold,\generalMetric)$ is a spin manifold; the domain of outer communication of the Kerr black hole spacetime has these properties. 
The spin group in this case is $\SL(2,\Complexs)$, the double cover of $O^\dagger_+(1,3)$, the group of Lorentz transformations that preserve orientation and time-orientation. The spinor space at a point is $\Complexs^2$ with the vector representation of $\SL(2,\Complexs)$, and the complex conjugate representation is denoted $\overline{\Complexs}{}^2$. Sections of the spinor bundles associated to $\Complexs^2$ and $\overline{\Complexs}{}^2$ are denoted with capital Latin indices and primed capital Latin indices respectively. The term spinor is used for sections of these bundles as well as of their tensor products, e.g. $\varphi_{A\cdots C A' \cdots D'}$.

There is an isomorphism between spaces of tensor fields and certain spinor spaces. At a point $p\in\generalManifold$, there is an isomorphism $\Complexs\otimes T_p\generalManifold$ $\eqsim \Complexs\otimes \Reals^4$ $\eqsim \Complexs^2 \otimes \overline{\Complexs}{}^2$ so that the $O^\dagger_+(1,3)$ action on $(T_p\generalManifold,\generalMetric)$ is compatible with the $SL(2,\Complexs)$ action on $\Complexs^2$. This provides a correspondence at a point between vectors and spinors with one unprimed and one primed index. This isomorphism is expressed via the soldering form $g_a{}^{AA'}$, e.g. $\GenVec_{a} = g_a{}^{AA'} \GenVec_{AA'}$. On spin manifolds, this extends to an isomorphism of vector fields to spinor fields, which further extends to an isomorphism between tensor fields and spinor fields. It is convenient to write this correspondence in the abbreviated form $\GenVec_a = \GenVec_{AA'}$. The action of $\SL(2,\Complexs)$ on $\Complexs^2$ leaves an area element $\epsilon_{AB} = \epsilon_{[AB]}$ invariant. The  normalization $g_{ab} = \epsilon_{AB}\bar\epsilon_{A'B'}$ defines the spin metric $\epsilon_{AB}$ up to a phase. This has an inverse $\epsilon^{AB}$. These are used to raise and lower spinor indices via $\xi_B=\xi^A\epsilon_{AB}$.  
Using the tensor-spinor correspondence mentioned above, it is possible to express any tensor as a sum of symmetric spinors multiplied by $\epsilon_{AB}$ factors.
As an example and of particular importance in this paper, for the Weyl tensor, we have 
\begin{align} 
C_{abcd}  = \Psi_{ABCD} \bar\epsilon_{A'B'} \bar\epsilon_{C'D'} + \epsilon_{AB} \epsilon_{CD} \bar \Psi_{A'B'C'D'},
\end{align} 
where $\Psi_{ABCD}$ is the symmetric Weyl spinor. 

Central to the GHP formalism are bases aligned with a pair of null directions and the way such bases can be rescaled. Given a pair of null directions, one can construct an aligned real tetrad by first choosing a pair of future-directed, null vectors, $\NPl$ and $\NPn$, that are parallel to these null directions and normalized so that $\NPn_a\NPl^a=1$ and then choosing an orthonormal basis $e_1,e_2$ for the plane orthogonal to $\NPn$ and $\NPl$ such that $(\NPl,\NPn,e_1,e_2)$  is an oriented basis. One can construct an aligned complex null tetrad by choosing an aligned real tetrad $(\NPl,\NPn,e_1,e_2)$ and replacing it with $(\NPl,\NPn,\NPm,\NPmbar)$ where $\NPm=(1/\sqrt{2})(e_1+ie_2)$, or equivalently choosing $\NPl$ and $\NPn$ as for an aligned real tetrad and then $\NPm$ 
such that $(\NPl,\NPn,\Re\NPm,\Im\NPm)$ is an oriented (real) basis. This implies
\begin{align} \label{eq:metric=tetrad} 
g_{ab} = 2 (\NPl_{(a} \NPn_{b)} - \NPm_{(a} \NPmbar_{b)} ) .
\end{align}
In the spinor space, one can construct a dyad by taking a basis $o^A,\iota^A$ for the spinor space such that 
\begin{equation} \label{eq:dyad-normalization} 
o_A \iota^A = 1 .
\end{equation}
Such a dyad defines a complex tetrad by
\begin{equation}\label{eq:tetrad-dyad} 
\NPl^a = o^A o^{A'}, \quad \NPn^a = \iota^A \iota^{A'}, \quad \NPm^a = o^A \iota^{A'}, \quad \NPmbar^a = \iota^A o^{A'} .
\end{equation}
The dyad is said to be aligned with a pair of null directions if the complex tetrad \eqref{eq:tetrad-dyad} is aligned with the null directions. 
The normalization \eqref{eq:dyad-normalization} remains invariant under rescalings $o_A \to \mu o_A$, $\iota_A \to \mu^{-1} \iota_A$ where $\mu\neq 0$ is a complex scalar field. The corresponding aligned complex tetrad transforms to a new aligned complex tetrad $(|\mu|^2\NPl,|\mu|^{-2}\NPn,(\mu/\bar\mu)\NPm,(\bar\mu/\mu)\NPmbar)$, and the corresponding aligned real tetrad transforms so that $\NPl$ and $\NPn$ are scaled by $|\mu|^2$ and $|\mu|^{-2}$ respectively and $e_1$ and $e_2$ are rotated through an angle equal to twice the argument of $\mu$. At each point, the set of aligned dyads and the set of aligned complex tetrads (and hence also the set of aligned real tetrads) is isomorphic to $\Complexs\backslash\{0\}$; the set of aligned dyads forms a double cover of the aligned complex tetrads. Given a globally defined set of null directions, the set of all aligned dyads or all aligned complex tetrads forms a $\Complexs\backslash\{0\}$ bundle. 

It is now possible to define GHP scalars. A GHP scalar is a map from the bundle  of aligned dyads to $\Complexs$. Commonly, this is constructed by contracting legs of the dyad against a spinor field, for example $\Psi_{ABCD} o^A o^B \iota^C \iota^D$, but in certain cases GHP scalars are constructed from differentiating a local dyad. 
The following definition singles out a particularly important class of GHP scalars. 
\begin{definition}\label{def:propweight}
A GHP scalar $\varphi$ is properly weighted if there is an ordered pair of integers $(p,q)$ such that $\varphi$ transforms as $\varphi \to \mu^p \bar\mu^q \varphi$ under a transformation of the dyad $(o^A,\iota^A)\mapsto(\mu o^A,\mu^{-1}\iota^A)$; in this case, it is said to have type $\{p,q\}$.  For properly weighted GHP scalars of type $\{p,q\}$, the boost weight is $b=(p+q)/2$ and the spin weight is $s=(p-q)/2$. 
\end{definition}
In the language of bundles, for each $\{p,q\}$, the set of properly weighted GHP scalars with type $\{p,q\}$ form an associated line bundle for the principal-$\Complexs\backslash\{0\}$ bundle of aligned dyads.
GHP scalars with integer boost and spin weight can be treated as maps from the bundle of aligned complex tetrads rather than as maps from the bundle of aligned dyads. The notions of properly weighted scalar, type, as well as boost- and spin-weight extend to tensor and spinor fields. For example, $\NPm^a$ has type $\{1,-1\}$, boost-weight $0$, and spin-weight $1$. A field of GHP type $\{0,0\}$ is well-defined, independent of rescalings of the tetrad. Examples are the metric $g_{ab}$ and the middle Weyl scalar $\Psi_2 = \Psi_{ABCD} o^A o^B \iota^C \iota^D$. The GHP type and the boost- and spin-weights are additive under multiplication. 

A further index convention is used to compactify the GHP formalism. For a spinor $\varphi_{A_1\ldots A_k A_1'\ldots A'_l}$ that is symmetric in the primed indices and symmetric in the unprimed indices, scalar components  $\varphi_{ii'}$ are defined by contracting $i$ times with $\iota^A$, $i'$ times with $\iota^{A'}$, and contracting the remaining indices with $o^A$ or $o^{A'}$. The numbers $i$ or $i'$ are omitted if the spinor is of valence $(0,l)$ or $(k,0)$ respectively. In particular, the Weyl spinor $\Psi_{ABCD}$ corresponds to the five complex Weyl scalars $\Psi_i, i = 0, \dots, 4$. 

Calculations using the GHP formalism are simplified by using the prime and complex conjugation operations.\footnote{In addition, there is the Sachs $*$ operation, see \cite{GHP}.} Complex conjugation, $\varphi \to \bar\varphi$ takes fields of type $\{p,q\}$ to type $\{q,p\}$, i.e. it changes the sign of the spin-weight, and preserves the boost-weight. The prime  operation, $\varphi \to \varphi'$, interchanges $\NPl^a \leftrightarrow \NPn^a$, $\NPm^a \leftrightarrow \NPmbar^a$, and takes fields of type $\{p,q\}$ to fields of type $\{-p,-q\}$. 
The prime operation and complex conjugation commute and are symmetries in the sense that an equation valid in the GHP formalism remains valid after applying the prime operation or complex conjugation. 

As sections of line bundles, GHP scalars cannot be differentiated with partial derivatives and must be differentiated with a relevant connection. 
Properly weighted scalars are sections of complex line bundles; more generally, properly weighted tensor and spinor fields are sections of complex vector bundles. The lift of the Levi-Civita connection $\nabla_{a}$ to these bundles gives a covariant derivative denoted $\Theta_a$. Projecting on the null tetrad $\NPl^a, \NPn^a, \NPm^a, \NPmbar^a$ gives the GHP operators \cite{GHP},
\footnote{Following \cite{GHP,MR917488,MR944085}, we represent these by the Icelandic/ old English letters thorn $\tho$ and edth $\edt$.}
\index{T3tho@$\tho$, $\thop$}
\index{E3eth@$\edt, \edtp$}
\begin{align}
\label{eq:thoThopEdtEdtp}
\tho = \NPl^a \Theta_a, \quad \thop = \NPn^a\Theta_a, \quad \edt = \NPm^a \Theta_a , \quad 
\edtp = \NPmbar^a \Theta_a.
\end{align}
See \cite{1990CQGra...7.1681H} for discussion of the geometry of properly weighted scalars and the GHP covariant derivative. 
The GHP operators are properly weighted, in the sense that they take properly weighted fields to properly weighted fields, for example if $\varphi$ has type $\{p,q\}$, then $\tho \varphi$ has type $\{p+1, q+1\}$. This can be seen from the fact that $\NPl^a = o^A \bar{o}^{A'}$ has type $\{1,1\}$. 

The connection coefficients for the Levi-Civita connection can be expressed in the GHP formalism. There are twelve connection coefficients in a null frame, up to complex conjugation. Of these, eight are properly weighted, and are given by
\begin{align}
\kappa ={}&  \NPm^b \NPl^a \nabla_a \NPl_b , \quad
\sigma =  \NPm^b \NPm^a \nabla_a \NPl_b, \quad
\rho   =  \NPm^b \NPmbar^a \nabla_a \NPl_b , \quad
\tau   = \NPm^b \NPn^a \nabla_a \NPl_b ,
\label{eq:spincoeff-def}
\end{align}
together with their primes $\kappa', \sigma', \rho', \tau'$.
These are the GHP spin coefficients. The remaining four connection coefficients, given by 
\begin{align} 
\epsilon ={}& \half ( \NPn^a \NPl^b \nabla_b \NPl_a + \NPm^a \NPl^b \nabla_b \NPmbar_a) \quad  
\beta = \half ( \NPn^a \NPm^b \nabla_b  \NPl_a + \NPm^a \NPm^b \nabla_b  \NPmbar_a) 
\label{eq:eps-beta-def}
\end{align} 
and their primes, enter in the connection 1-form for the connection $\Theta_a$. Furthermore, the GHP connection $\Theta$ acting on a GHP scalar with boost weight $b$ and spin weight $s$ can be expressed with respect to a particular choice of local complex tetrad as
\begin{align} \label{eq:Theta-def}
\Theta_a \varphi = \nabla_a \varphi - b \NPn^b\nabla_a \NPl_b \varphi + s \NPmbar^b \nabla_a \NPm_b \varphi .
\end{align} 
This also extends to properly weighted tensor and spinor fields. 

GHP scalars with boost weight zero, for which there is an inner product and hence a norm, are of particular importance in our analysis.
To avoid cumbersome terminology, we introduce the following definition. 
\begin{definition}
\label{def:spinWeightedScalar}
\index{S1s@$s$}
A spin-weighted scalar is a properly weighted GHP scalar with boost weight zero. 
For spin-weighted scalars $\varphi$ and $\varrho$ with the same spin weight, define the inner product to be
\begin{align}
\langle \varphi , \varrho \rangle ={}& \varphi \bar \varrho .
\label{eq:ipdef}
\end{align}
\end{definition}
When there is no room for confusion, we shall use $s$ to denote spin weight, otherwise we shall use $\Espin{\varphi}$ to denote the spin weight of the spin-weighted scalar $\varphi$.

The inner product has several important properties. 
The inner product has boost and spin weight zero, so is simply a complex-valued function on the manifold rather than an element of a more complicated, complex line bundle. 
The inner product defines a norm, which appears in definition \ref{def:pointwisenormNoDerivatives}, in section \ref{sec:norms}, where we introduce all the norms that we use in this paper. 
The GHP covariant derivative $\Theta_a$ is real, in the sense that 
\begin{align} 
\Theta_a \bar\varphi = \overline{\Theta_a \varphi}
\end{align}     
and hence it is also metric, with respect to the inner product given by \eqref{eq:ipdef}, in the sense that
\begin{align} 
\nabla_a \langle \varphi, \varrho \rangle = \langle \Theta_a \varphi, \varrho \rangle + \langle \varphi, \Theta_a \varrho \rangle .
\end{align} 
There is an isomorphism between spin-weighted scalars and certain geometric quantities that appear in the null decomposition, which was used in its original form in the proof of stability of Minkowski space \cite{MR1316662} and which was recently refined \cite{Klainerman:2021qzy}. For $s=0$, spin-weighted scalars are simply complex-valued functions on the manifold. Within this paragraph, let $\mathcal{H}$ denote, at each point in the manifold, the plane orthogonal to the null directions, which is called the horizontal plane in \cite{Klainerman:2021qzy}. For $s=1$, the map $\xi_a \mapsto \xi_a\NPm^a$ defines an isomorphism from real vector fields taking values in $\mathcal{H}$ to spin-weighted scalars with spin weight $s=1$. This map is invariant under rescalings $(o^A,\iota^A)\mapsto(\mu o^A,\mu^{-1}\iota^A)$. For $s=2$, there is similarly an isomorphism from symmetric, traceless $2$-tensors on $\mathcal{H}$ to spin-weighted scalars with $s=2$. 

A further refinement to the GHP formalism, which plays a central role in our analysis, is a process we refer to as deboosting. In the Kerr spacetime, introduced in the following section, there is a nowhere vanishing GHP scalar $\uplambda$ with boost weight $1$ and spin weight zero, which is defined in definition \ref{def:uplambda}. For another GHP scalar $\varphi$ with boost weight $b$ and spin weight $s$, the quantity $\uplambda^{-b}\varphi$ has boost weight zero and spin weight $s$; in particular, as a spin weight zero GHP scalar, the rescaled quantity $\uplambda^{-b}\varphi$ is a quantity for which there is a well defined norm. 

\begin{remark}
\label{rmk:NPLocallyTrivialisesGHP}
In closing, we comment on the NP formalism, which can look much like the GHP formalism and which we used in the introduction as an oversimplification. 
In the NP formalism \cite{1962JMP.....3..566N}, one begins by introducing a specific, local complex-tetrad $(\NPl^a,\NPn^a,\NPm^a,\NPmbar^a)$ (or a dyad for spinors) and then calculates complex-valued functions using formulas such as \eqref{eq:spincoeff-def} for GHP scalars as if they were formulas with respect to the specific choice of tetrad. Thus, the NP formalism uses a particular choice of local basis and provides complex-valued functions, while the GHP formalism uses the bundle of aligned tetrads and provides sections of complex line bundles. In this way, the NP formalism can be viewed as a local trivialization of the GHP formalism. 

The theorems \ref{thm:mainintro} and \ref{thm:BEAMintro} are literally correct as stated in this oversimplified, NP form; however, properly within the GHP formalism, a better statement is that the GHP scalars $G_{i0'}$ defined in equation \eqref{eq:G-compos} satisfy the decay estimates \eqref{eq:mainGHP} and \eqref{eq:metricDecayExt}-\eqref{eq:metricDecayInt} when deboosted to $\uplambda^{i-2}G_{i0'}$. 
Since we construct $\uplambda$ so that it takes the value $1$ in the local trivialization given by the Znajek tetrad in equation \eqref{eq:Znajek-intro}, the NP estimates of theorems \ref{thm:mainintro}-\ref{thm:BEAMintro} are exactly equivalent, for $\theta\not\in\{0,\pi\}$ where they are defined, to the GHP decay estimates \eqref{eq:metricDecayExt}-\eqref{eq:metricDecayInt}.
\end{remark}

\subsection{Geometry of Kerr} 
\label{sec:Kerrgeom}

We first recall a small number of the key geometric features introduced in the introduction, section \ref{sec:intro}. In the opening paragraph of section \ref{sec:intro}, the domain of outer communication of the Kerr black hole was introduced as a Lorentzian metric $(\Mcal,\met)$, where the metric $\met$ is given in equation \eqref{eq:gab-intro} with respect to the ingoing Eddington-Finkelstein coordinates $(v,r,\theta,\phi)\in\Reals\times(r_+,\infty)\times(0,\pi)\times(0,2\pi)$
where
\begin{align}
\label{eq:r+def}
r_+ ={}& M + \sqrt{M^2-a^2} 
\end{align}
 The metric extends smoothly to $\Reals\times(r_+,\infty)\times\Sphere$. The vectors 
\index{X3xi@$\xi^a$}
\index{Z3zeta@$\zeta^a$}
\index{E3eta@$\eta^a$}
\begin{align}
 \xi^a={}&(\partial_v)^a, & 
\zeta^a={}&a^2 (\partial_v)^a
 + a (\partial_\phi)^a , &
\eta^a={}&a^{-1}\zeta^a - a \xi^a  = (\partial_\phi)^a 
\label{eq:KillingVecDefs}
\end{align}
are Killing vectors. 

There are a number of further properties that follow quickly from the form of the metric in equation \eqref{eq:gab-intro}. The vector field $-\partial_r$ is null throughout $\Mcal$, so it defines a time orientation. The volume element of $g_{ab}$ is given by 
\begin{align}
\sin\theta \Sigma dv dr d\theta d\phi ,
\end{align}
which differs from the reference volume form in equation \eqref{eq:referenceVolumeForms-intro} by a factor of $\Sigma$. 

Of particular importance in our analysis is the existence of a unique pair of principal null directions. There exists a dyad $(o_A,\iota_A)$ such that the Weyl curvature spinor takes the form 
\index{P4PsiSpinor@$\Psi_{ABCD}$}
\index{P4PsiMid@$\Psi_{2}$}
\begin{align} 
\Psi_{ABCD}={}&6 \Psi_{2} o_{(A}o_{B}\iota_{C}\iota_{D)},
\label{eq:typeII-II} 
\end{align}
where $\Psi_{2}$ is a complex-valued function on $\Mcal$. Such a dyad is called a principal dyad, and the principal null directions are the pair of directions parallel to $\NPl_a=o_A\bar{o}_{A'}$ and $\NPn_a=\iota_A\bar\iota_{A'}$. The principal null directions are uniquely determined by the condition \eqref{eq:typeII-II}. Because there exists a pair of spinors that each appear twice in the factorisation of the Weyl curvature, the Kerr spacetime is said to be of type D, also called type $\{2,2\}$, in the Petrov classification \cite{MR944085}. Furthermore, on the Kerr spacetime, there is a symmetric spinor $\kappa_{AB}$ found in \cite{walker:penrose:1970CMaPh..18..265W}, satisfying
\index{K3kappaKillingSpinor@$\kappa_{AB}$} 
\begin{align} 
\nabla_{(A}{}^{A'} \kappa_{BC)} = 0 .
\label{eq:KillingSpinorCondition}
\end{align} 
Such a spinor is called a Killing spinor. In a dyad aligned with the principal null directions, the Killing spinor takes the simple form 
\index{K3kappaComponent@$\kappa_1$}
\begin{align}
\label{eq:TypeDKS}
\kappa_{AB}={}&-2 \kappa_1 o_{(A}\iota_{B)} ,
\end{align}
where $\kappa_1$ is also a complex-valued function on $\Mcal$. In the Kerr spacetime, $\kappa_{AB}$ can be normalized so that the stationary Killing field with unit norm at infinity is given by 
\index{X3xi@$\xi^a$}
\begin{align} 
\label{eq:xifromkappa} 
\xi_{AA'} ={}& \nabla^B{}_{A'} \kappa_{AB} .
\end{align}
The spinor $\kappa_{AB}$ is uniquely determined by the Killing condition \eqref{eq:KillingSpinorCondition}, that $\xi_{AA'}$ is real, and that the norm of $\xi_{AA'}$ goes to $1$ as $r\rightarrow\infty$. In ingoing Eddington-Finkelstein coordinates we get
\index{P4PsiMid@$\Psi_{2}$}
\index{K3kappaComponent@$\kappa_1$}
\begin{align} \label{eq:kappa1}
\kappa_{1}{} ={}& -\tfrac{1}{3} (r -i a \cos\theta), 
&
\Psi_{2}={}&- M (r - i a \cos\theta)^{-3} .
\end{align} 
Note that $\kappa_1$ and $\Psi_2$ can be expressed covariantly via the relations $\kappa_{AB} \kappa^{AB} = -2 \kappa_{1}{}^2$ and $\Psi_{ABCD} \Psi^{ABCD}=6 \Psi_{2}^2$. Thus, they are independent of the choice of dyad. The quantities $\kappa_1$ and $\Psi_2$ are the only non-vanishing components, in a principal dyad, of the Killing spinor and curvature in the Kerr spacetime. 

\begin{remark}
\label{rmk:NoGlobalTetrad}
In the case $a=0$, that the principal null directions are orthogonal to the level sets of constant $v,r$, which are round spheres. Thus, if $(\NPl,\NPn,e_1,e_2)$ were a global tetrad aligned with the principal null directions, then $(e_1,e_2)$ would be a global basis for the tangent space for each such sphere, and, in particular, on each such sphere, $e_1$ would be a nowhere-vanishing vector field tangent to that sphere. However, there do not exist any nowhere vanishing vector fields tangent to spheres. Thus, in the case $a=0$, it is particularly clear that there can be no global tetrad aligned with the principal null directions. While this means the NP formalism in remark \ref{rmk:NPLocallyTrivialisesGHP} cannot be applied globally in the Kerr exterior with the principal null directions, the bundle of aligned tetrads remains well defined throughout the Kerr exterior, so one can apply the GHP formalism from section \ref{sec:notandconv}. 
\end{remark}

Many of the key quantities in the Kerr metric, which have so far appeared in terms of coordinate choices, can in fact be written geometrically in terms of the Killing spinor $\kappa_{AB}$. This way of presenting the quantities can be viewed as a more natural approach when one recalls that \cite[Theorem 6]{backdahl:etal:2010:MR2753388} provides a characterisation of the Kerr spacetime in terms of the existence of a Killing spinor and certain auxiliary conditions. The Eddington-Finkelstein (or Boyer-Lindquist) coordinates $r,\theta$ can be defined covariantly via 
\begin{subequations} 
\begin{align}
r={}&- \tfrac{3}{2} (\kappa_{1}{} + \overline{\kappa}_{1'}{}), \\
a \cos\theta ={}& - \tfrac{3}{2}i (\kappa_{1}{} -  \bar{\kappa}_{1'}{}) .
\end{align}
\end{subequations}
The geometric definition of the radial coordinate $r$ remains valid in the non-rotating case, $a=0$.
Similarly, the two function $\Delta$ and $\Sigma$ appearing in the metric \eqref{eq:gab-intro} can be expressed in a principal null tetrad as
\index{D4Delta@$\Delta$}
\index{S4Sigma@$\Sigma$}
\begin{align}
\Delta={}&-162 \kappa_{1}{}^3 \overline{\kappa}_{1'}{} \rho \rho',&
\Sigma={}&9 \kappa_{1}{} \overline{\kappa}_{1'}{}.
\end{align}
The Killing vector $\xi=\partial_v$ was already given in terms of the Killing spinor in equation \eqref{eq:xifromkappa}, and \cite{2015arXiv150402069A} provides expressions for the other Killing vectors $\zeta$ and $\eta$ in equation \eqref{eq:KillingVecDefs}. 
In the Schwarzschild case $a=0$, there is no geometrically preferred axis, so the $\theta$ and $\phi$ coordinates and the vector $\eta$ cannot be constructed from the Killing spinor. 
The remaining two coordinates $v,\phi$ can be chosen to correspond to the two Killing fields of the spacetime. In general, we try to work with the geometrically defined quantity $\kappa_1$, rather than the coordinate $r$. 

The connection coefficients can be computed with respect to a choice of local tetrad. Typically within this paper, we make use of the covariant GHP formalism and properly weighted scalars, and hence our calculations are independent of the specific coordinate system and principal tetrad used. However, it is sometimes convenient to make use of the ingoing Eddington-Finkelstein coordinate system and the explicit form of the Znajek tetrad, which was given in the introduction in equation \eqref{eq:Znajek-intro} and is aligned with the principal null directions. With respect to the Znajek tetrad, the connection coefficients are \cite{1977MNRAS.179..457Z}
\index{B3beta@${\beta}, {\beta}'$}\index{E3epsilon@${\epsilon}, {\epsilon}'$}\index{K3kappa@${\kappa}, {\kappa}'$}\index{R3rho@${\rho}, {\rho}'$}\index{S3sigma@${\sigma}, {\sigma}'$}\index{T3tau@${\tau}, {\tau}'$}
\begin{subequations}\label{eq:Znajek-spincoeff} 
\begin{align}
\kappa={}&0,&
\kappa'={}&0,&
\sigma={}&0,&
\sigma'={}&0,\\
\rho={}&\frac{\Delta}{3 \sqrt{2} \kappa_{1}{} \Sigma},&
\rho'={}&- \frac{1}{3 \sqrt{2} \kappa_{1}{}},&
\tau={}&- \frac{i a \sin\theta}{9 \sqrt{2} \kappa_{1}{}^2},&
\tau'={}&- \frac{i a \sin\theta}{\sqrt{2} \Sigma},\\
\epsilon '={}&0,&
\beta '={}&- \frac{\cot\theta}{6 \sqrt{2} \overline{\kappa}_{1'}{}},&
\beta={}&- \frac{i \csc\theta (2 a - 3i \cos\theta \overline{\kappa}_{1'}{})}{18 \sqrt{2} \kappa_{1}{}^2},\hspace{-10ex}\\
\epsilon={}&\frac{2 \Delta - 6 M \kappa_{1}{} - 9 \kappa_{1}{}^2 -  \Sigma}{6 \sqrt{2} \kappa_{1}{} \Sigma}.\hspace{-20ex}
\end{align}
\end{subequations}
Since $\kappa,\kappa',\sigma,\sigma'$ are properly weighted and vanish with respect to the Znajek tetrad, they are zero with respect to any tetrad aligned to the principal null directions and hence vanish as sections of the relevant complex line bundle. By smoothness, this vanishing extends to the axis $\theta\in\{0,\pi\}$, where the Znajek tetrad is not defined. Similarly, since $\rho,\rho',\tau,\tau'$ are properly weighted and non-vanishing with respect to the Znajek tetrad, they are non-vanishing in any tetrad aligned to the principal null directions and as sections of the line bundle. Since $\epsilon'=0$, the vector field $\NPn$ in the Znajek tetrad is normalized so that $\NPn$ is auto-parallel
\begin{align} 
n^b \nabla_b n^a ={}& 0, 
\end{align} 
i.e. it generates affinely parametrized geodesics. 

To relate the results in this paper to others in the literature, it is convenient to introduce other coordinate systems that occur commonly in the literature \cite{MTW}. These appear in the following definition. 

\begin{definition} 

\begin{enumerate}
\item 
The tortoise coordinate $\rstar = \rstar(r)$ is defined by  
\begin{align} 
\label{eq:r*def}
\frac{d\rstar}{dr} = \frac{r^2+a^2}{\Delta}, \quad \rstar(3M) = 0 .
\end{align}  
Further, the angular correction $r^\sharp = r^\sharp(r)$ is defined by
\begin{align} 
\frac{dr^\sharp}{dr} = \frac{a}{\Delta}, \quad r^\sharp(3M) = 0 .
\end{align}  
\item 
The Boyer-Lindquist time $\timefunc_{BL}$  and azimuthal angle $\phi_{BL}$
\index{T1tBoyerLindquist@$\timefunc_{BL}$}
\begin{subequations}
\begin{align}\label{eq:tBL-def}
\timefunc_{BL} ={}& v - \rstar ,\\
\phi_{BL} ={}& \phi - r^\sharp. 
\end{align}
\end{subequations}
The Boyer-Lindquist coordinate system is given by $(t_{BL}, r,\theta, \phi_{BL})$. 
\item 
The retarded time $u$ and retarded angle $\phi^\sharp$ are
\begin{subequations}
\begin{align} \label{eq:u-def}
u ={}& v - 2\rstar ,\\
\phi^\sharp ={}& \phi - 2r^\sharp .
\end{align}
\end{subequations}
The outgoing Kerr, or Eddington-Finkelstein coordinates are $(u,r,\theta,\phi^\sharp)$. 
\end{enumerate}
\end{definition} 

\begin{remark}We shall sometimes refer to $v$ as the advanced time. However, neither $u$ nor $v$ is a time function, in particular their level sets are non-spacelike, in the non-static Kerr case ($a\neq 0$). 
\end{remark} 

To understand regions where $r\rightarrow\infty$, it is convenient to work with an additional coordinate system, which is given in the following definition. 

\begin{definition}
\label{def:compactifiedR}
\index{R2R@$R$}
Define the compactified radial coordinate to be 
\begin{align}
\label{eq:R=1/r}
R={}&1/r.
\end{align} 
The compactified outgoing coordinates are defined to be $(u,R,\theta,\phi^\sharp)$. 
\end{definition}

\index{I2IScriPlus@$\Scri^+$}
\index{I1i0@$i_0$}
\index{I1iplus@$i^+$}
In closing, we note some properties of the boundary, including the boundary at infinity, for the manifold $\Mcal$, particularly with the conformally rescaled metric $r^{-2}g_{ab}$ \cite{hawking_ellis_1973}. The relevant features are illustrated in figure \ref{fig:KerrDOC}. Here, for simplicity, we will work with spherical coordinates, but the standard singularities at the axes $\theta\in\{0,\pi\}$ can be removed in the standard way. As already noted, in the ingoing Eddington-Finkelstein coordinates $(v,r,\theta,\phi)$, the manifold and metric extends smoothly to the future to $\Horizon^+=\{r=r_+\}$. Working in the compactified radial coordinates $(v,R,\theta,\phi)$, the manifold and conformal metric $r^{-2}g_{ab}$ extends smoothly to the past to $\Scri^-=\{R=0\}$. Similarly, in the outgoing Eddington-Finkelstein coordinates $(u,r,\theta,\phi^\sharp)$, the manifold and metric extend smoothly to the past to $\Horizon^-=\{r=r_+\}$, and, with compactified radial parametrization $(u,R,\theta,\phi^\sharp)$, the manifold and conformal metric $r^{-2}g_{ab}$ extend smoothly to $\Scri^+=\{R=0\}$. 
In the maximally extended Kerr spacetime, the past limits of $\Horizon^+$ coincide with the future limits of $\Horizon^-$ at a sphere $\mathcal{B}$, called the bifurcation sphere. Furthermore, the Kerr exterior can be extended (as a topological space) by the addition of three additional points. There is a point $i^+$, called (future) timelike infinity, which is the future end point of $\Scri^+$, $\Horizon^+$, and all future inextendible timelike geodesics. Similarly, there is a point $i^-$,  called past timelike infinity, which is the past end point of $\Scri^+$, $\Horizon^+$, and all future inextendible timelike geodesics. There is also a point $i_0$, called spacelike infinity, which is the limit of all spacelike geodesics that reach infinity.

\begin{figure}[ht] 
\centering
\begin{tikzpicture}[scale=.75] 

\coordinate (i+) at (4,4);  		
\coordinate (i0) at (8,0);			
\coordinate (B) at  (0,0);		
\coordinate (i-) at (4,-4);  		

\draw[name path=futurehorizon,thick,blue] (B) -- (i+) node[near end,anchor=east,xshift=-.3cm,rotate=45]{$\Horizon^+$};

\draw[name path=pasthorizon,thick,blue] (B) -- (i-) node[midway,rotate=-45,yshift=-.3cm]{$\Horizon^- =  \{v=-\infty\}$};

\draw[name path=scriplus,thick, dash dot, red](i+)--(i0) node[midway,above,xshift=.1cm,rotate=-45]{$\Scri^+ = \{v=\infty\}$};

\draw[name path=scriminus,thick, dash dot, red](i-)--(i0) node[midway,anchor=west,xshift=.5cm,rotate=45,xshift=-.3cm]{$\Scri^-$};

\filldraw[fill=white] (i0) circle[thick,radius=.7mm] node[anchor=west,xshift=.1cm]{$i_0$} ;

\filldraw[fill=white] (i+) circle[thick,radius=.7mm] node[anchor=south,yshift=.1cm]{$i_+$};

\filldraw[fill=white] (i-) circle[thick,radius=.7mm] node[anchor=west,xshift=-.1cm,yshift=-.3cm]{$i_-$};

\filldraw[fill=black] (B) circle[thick,radius=.7mm] node[anchor=east]{$\mathcal{B}$} ;

\draw[name path=timeslice,thick,dashed,black] (B) to[bend right=15]  node[midway,anchor=north]{{\tiny $\timefunc_{BL} = \text{constant}$}} (i0)  ;

\draw[name path=timeslice,thick,dashed,black] (B) to[bend right=-15]  (i0)  ;

\draw[thin,red](3.75,3.75)--(7.75,-.25) node[midway,below,rotate=-45,yshift=.05cm]{{\tiny $v=\text{constant}$}};

\draw[thin,red](3.5,3.5)--(7.5,-.5) ;

\draw[thin,red](3.75,-3.75)--(7.75,.25) node[midway,above,rotate=45,yshift=-.05cm]{{\tiny $u=\text{constant}$}};

\draw[thin,red](3.5,-3.5)--(7.5,.5) ;
\end{tikzpicture} 
\caption{A conformal diagram for the domain of outer communication of the Kerr back hole, with level sets of $t_{BL}$ and (at the poles $\theta\in\{0,\pi\}$) of $v$ and $u$ indicated. } 
\label{fig:KerrDOC}
\end{figure}

\subsection{Operators on spin-weighted scalars} 
\label{sec:operators}

In this section, we introduce operators that take spin-weighted scalars to spin-weighted scalars. Recall that spin-weighted scalars were introduced in definition \ref{def:spinWeightedScalar} and are sections of a complex line bundle of properly weighted GHP scalars with boost weight zero. As sections of a bundle, they must be differentiated with a connection rather than simply with partial derivatives. Within this section, we introduce the notion a spin-weighted operator, a deboosting factor which is used to convert general properly weighted GHP scalars to spin-weighted scalars, and many important examples of spin-weighted operators. The crucial property on a spin-weighted operator, which follows from the formal definition below, is that it takes spin-weighted scalars with one spin weight $s$ to spin-weighted scalars with possibly a different spin weight. The lemmas in this section follow by direct computation. 

\begin{definition} 
A properly weighted operator of boost-weight zero is called a spin-weighted operator.
\end{definition} 

Crucial in our analysis is the process of deboosting, which we now introduce. Many of the quantities appearing in the GHP formalism are properly weighted GHP scalars but fail to be spin-weighted scalars because they have a non-vanishing boost weight. This means that, for them, the formula for the inner product in definition \ref{def:spinWeightedScalar} fails to define a norm that is independent of the choice of local tetrad. For this reason, we wish to convert general properly weighted GHP scalars to spin-weighted scalars. To do so, we multiply by an appropriate power of a spin-weighted quantity, so that the product has boost weight zero and hence is a spin-weighted scalar. In the following definition, we introduce the deboosting factor, which we use to deboost our variables and operators. 

\begin{definition} \label{def:uplambda}
\index{L3lambda@$\uplambda$}
Define the deboosting factor to be
\begin{align} \label{eq:uplambda-def} 
\uplambda ={}& (-3\sqrt{2}\kappa_1\rho')^{-1} .
\end{align} 
\end{definition} 

\begin{remark} The spin coefficient $\rho'$ is properly weighted with boost-weight $-1$ and spin-weight zero. The scalar $\uplambda$ defined in \eqref{eq:uplambda-def} has boost-weight $1$, spin-weight zero. By multiplying with powers of $\uplambda$ we may deboost operators and scalars, so that they have boost weight zero. 

Furthermore, $\uplambda$ takes the value $1$ in the Znajek tetrad \eqref{eq:Znajek-intro} (or in any tetrad in which $\NPl$ and $\NPn$ coincide with the null vectors in the Znajek tetrad). This simplifies some calculations. 
\end{remark} 

We now introduce spin-weighted operators corresponding to the derivatives along the null legs of a tetrad aligned with the principal null directions. Recall equation \eqref{eq:thoThopEdtEdtp} gave $\tho$ and $\thop$ as the GHP derivative operators along $\NPl$ and $\NPn$. There is no need to deboost $\edt$ and $\edtp$, the derivative operators along $\NPm$ and $\NPmbar$, because they are already spin-weighted scalars. 

\begin{definition} 
\label{def:VY-cov} 
Let $\uplambda$ be as in definition \ref{def:uplambda}. 
\begin{enumerate} 
\item Define the following spin-weighted operators by their action on spin-weighted scalar $\varphi$ with spin $s$
\index{Y2Y@$\YOp$}\index{V2V@$\VOp$}
\begin{subequations}
\begin{align}
\VOp\varphi={}&\frac{\Sigma}{\sqrt{2} \uplambda (a^2 + r^2)} \tho \varphi
 + \frac{27 s \kappa_{1}{}^2 (\kappa_{1}{} -  \bar{\kappa}_{1'}{}) \rho \rho ' \varphi}{a^2 + r^2},
 \label{eq:VOpGHPDef}\\
\YOp\varphi={}&\sqrt{2} \uplambda \thop .
\label{eq:YOpGHPDef}
\end{align}
\end{subequations}
\item
Define the vector fields 
\index{V2Vvec@$\VOp^a$}
\index{Y2Yvec@$\YOp^a$}
\begin{align} \label{eq:Vdef-fol} 
\VOp^a ={}& 
\frac{\Sigma}{\sqrt{2} \uplambda (a^2 + r^2)}  l^a, \qquad 
\YOp^a = 
\sqrt{2} \uplambda  n^a .
\end{align} 
\end{enumerate}
\end{definition}

\begin{remark} 
The operators $\VOp$ and $\YOp$ represent derivatives along the principal null directions, and have boost- and spin-weight zero. In fact, when acting on scalars of boost- and spin-weight zero, the operators $\VOp$ and $\YOp$ reduce to $\VOp^a\nabla_a$ and $\YOp^a\nabla_a$.
\end{remark}

It is convenient to introduce a further set of angular derivatives. For $a\not=0$, the planes orthogonal to the principal null directions are not integrable in the sense of Frobenius; in particular, the planes spanned by $\NPm$ and $\NPmbar$ are not tangent to $2$-dimensional spheres. For this reason, the following angular operators are introduced, so that they correspond to differentiation tangent to the spheres arising as level sets of $v$ and $r$. The $\mathring{}$ accent is used to denote operators tangent to these spheres. Although it is not immediately obvious that these operators correspond to differentiation along the spheres, this fact is a consequence of equations \eqref{eq:hedt-explicit}-\eqref{eq:hedtp-explicit} below. 

\begin{definition}
\label{def:edtedt'-cov}
Define the following spin-weighted operators by their action on a spin-weighted scalar $\varphi$ with spin weight $s$
\index{E3ethhat@$\hedt, \hedtp$}
\begin{subequations}
\begin{align}
\hedt\varphi={}&3 \kappa_{1}{} \edt \varphi 
- 9 \LxiOp{}\varphi \kappa_{1}{}^2 \tau
 + 3 s \kappa_{1}{} \tau \varphi,
 \label{eq:relationhedtandedt}\\
\hedtp\varphi={}&3 \bar{\kappa}_{1'}{} \edtp \varphi
-9 \LxiOp{}\varphi \bar{\kappa}_{1'}{}^2 \bar{\tau}
 - 3 s \bar{\kappa}_{1'}{} \bar{\tau} \varphi
 \label{eq:relationhedtprimeandedtprime}.
\end{align}
\end{subequations}
\end{definition}

The Lie derivative of a GHP scalar along a Killing vector field is defined in \cite{MR917488}. Note that the Lie derivative of a GHP scalar along a general vector field is not defined. The following lemma gives the derivatives along the Killing vectors $\xi$, $\zeta$, and $\eta$ given in equation \eqref{eq:KillingVecDefs}. 

\begin{lemma}
The Killing vector fields $\xi^a$, $\zeta^a$, and $\eta^a$, defined by \eqref{eq:KillingVecDefs}, yield the following spin-weighted Lie derivative operators by their action on a spin-weighted scalar $\varphi$, 
\index{L2Lxi@$\LxiOp$}
\index{L2Lzeta@$\LzetaOp$}
\index{L2Leta@$\LetaOp$}
\begin{subequations}
\begin{align}
\LxiOp{}\varphi ={}&-3 \kappa_{1}{} \rho ' \tho \varphi
 + 3 \kappa_{1}{} \rho \thop \varphi
 + 3 \kappa_{1}{} \tau ' \edt \varphi
 - 3 \kappa_{1}{} \tau \edtp \varphi
 + \tfrac{3}{2} s (\Psi_{2} \kappa_{1}{} -  \bar\Psi_{2} \bar{\kappa}_{1'}{}) \varphi ,\\
\LzetaOp{}\varphi ={}&
  \tfrac{27}{4} \kappa_{1}{} (\kappa_{1}{} -  \bar{\kappa}_{1'}{})^2 (\rho ' \tho \varphi -  \rho \thop \varphi)
 -  \tfrac{27}{4} \kappa_{1}{} (\kappa_{1}{} + \bar{\kappa}_{1'}{})^2 (\tau ' \edt \varphi -  \tau \edtp \varphi)\nonumber\\
&- \tfrac{27}{8} s \bigl((\kappa_{1}{} + \bar{\kappa}_{1'}{})^2 (\Psi_{2} \kappa_{1}{} -  \bar\Psi_{2} \bar{\kappa}_{1'}{}) + 8 \kappa_{1}{}^2 (- \kappa_{1}{} + \bar{\kappa}_{1'}{}) \rho \rho '\bigr) \varphi,\\
\LetaOp\varphi ={}&a^{-1}\LzetaOp{}\varphi - a \LxiOp{}\varphi.
\end{align}
\end{subequations}
\end{lemma}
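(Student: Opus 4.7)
The plan is a direct computation in the GHP formalism. I would decompose each Killing vector in the principal tetrad and then combine its action along tetrad legs with the induced action on the dyad. Both the $\xi^a$ and $\zeta^a$ identities have the structure $\mathcal{L}_X\varphi = X^a\Theta_a\varphi + c_X(s,b)\varphi$, where the first term contributes the four GHP-derivative pieces and the second encodes the weight correction.

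For $\xi^a$, I would first exploit its defining identity $\xi_{AA'}=\nabla^B{}_{A'}\kappa_{AB}$ together with the principal form $\kappa_{AB}=-2\kappa_1 o_{(A}\iota_{B)}$. Expanding by Leibniz, substituting the spin-coefficient expressions for $\nabla o_A$ and $\nabla \iota_A$, and simplifying via the Killing spinor equation $\nabla_{(A}{}^{A'}\kappa_{BC)}=0$, yields the tetrad components of $\xi^a$ as the coefficients $-3\kappa_1\rho'$, $3\kappa_1\rho$, $3\kappa_1\tau'$, $-3\kappa_1\tau$ along $l^a$, $n^a$, $m^a$, $\bar m^a$, respectively. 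Applied to $\varphi$ through the definitions \eqref{eq:VOpGHPDef}--\eqref{eq:relationhedtprimeandedtprime}, this directly supplies the four GHP-derivative terms in $\LxiOp\varphi$. The weight correction is then obtained by observing that, since $\xi^a$ preserves the two principal null directions, it acts on the dyad as a pure spin-boost $\mathcal{L}_\xi o^A = \lambda_\xi o^A$, $\mathcal{L}_\xi \iota^A = -\lambda_\xi \iota^A$, for some complex scalar $\lambda_\xi$. Imposing the Lie invariance of $\kappa_{AB}$ along $\xi^a$ and using the explicit values \eqref{eq:kappa1} identifies $\lambda_\xi$ in terms of $\Psi_2\kappa_1$; its imaginary part controls the spin-weight correction and yields precisely $\tfrac{3}{2}s(\Psi_2\kappa_1-\bar\Psi_2\bar\kappa_{1'})\varphi$ on a boost-weight zero, spin-weight $s$ scalar, while the real part is zero because $\xi^a$ itself has boost weight zero. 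The $\zeta^a$ identity follows by the structurally identical scheme applied to the second Killing vector associated with the Killing spinor; the extra factors $(\kappa_1-\bar\kappa_{1'})^2$ and $(\kappa_1+\bar\kappa_{1'})^2$ arise naturally because the tetrad components of $\zeta^a$ are built from the combination $a^2\xi^a + a\eta^a$ together with the Kerr relations between $\kappa_1$, $a$, and $\cos\theta$.

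The $\LetaOp$ identity is an immediate algebraic consequence of the relation $\eta^a = a^{-1}\zeta^a - a\xi^a$ from \eqref{eq:KillingVecDefs} together with the linearity of the spin-weighted Lie derivative. The main obstacle is purely computational bookkeeping: obtaining the precise numerical coefficients $\tfrac{3}{2}$, $\tfrac{27}{4}$, $\tfrac{27}{8}$ requires careful tracking of the symmetrizations in $\kappa_{AB}$ and $\Psi_{ABCD}$, of how the factor $-2$ in $\kappa_{AB}=-2\kappa_1 o_{(A}\iota_{B)}$ propagates through the Leibniz expansion, and of the relative signs between the $\Psi_2\kappa_1$ and $\bar\Psi_2\bar\kappa_{1'}$ terms. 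As the authors note immediately before the lemma, this is precisely the type of covariant spinor computation the Aksteiner--B\"ackdahl computer algebra packages \cite{SpinframesPackage} were designed to automate.
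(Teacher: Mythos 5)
Your overall skeleton is sound and matches what the paper actually does (the authors only assert ``direct computation''): decompose each Killing field on the principal tetrad, so that the four GHP-derivative terms are just $\xi^a\Theta_a\varphi$ with the tetrad components $-3\kappa_{1}\rho'$, $3\kappa_{1}\rho$, $3\kappa_{1}\tau'$, $-3\kappa_{1}\tau$ coming out of $\xi_{AA'}=\nabla^{B}{}_{A'}\kappa_{AB}$ and the relations $\tho'\kappa_1=-\kappa_1\rho'$, $\edt\kappa_1=-\kappa_1\tau$, etc.; the $\LetaOp$ identity is then purely definitional from \eqref{eq:KillingVecDefs}. However, there is a genuine gap in the step that is supposed to produce the zeroth-order coefficient $\tfrac32 s(\Psi_2\kappa_1-\bar\Psi_2\bar\kappa_{1'})$. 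You propose to identify the spin-boost parameter $\lambda_\xi$ (with $\mathcal{L}_\xi o^A=\lambda_\xi o^A$, $\mathcal{L}_\xi\iota^A=-\lambda_\xi\iota^A$) by ``imposing the Lie invariance of $\kappa_{AB}$''. This condition is vacuous for that purpose: since $\kappa_{AB}=-2\kappa_1 o_{(A}\iota_{B)}$ and $\xi^a\nabla_a\kappa_1=0$, one has $\mathcal{L}_\xi\bigl(o_{(A}\iota_{B)}\bigr)=\lambda_\xi o_{(A}\iota_{B)}-\lambda_\xi o_{(A}\iota_{B)}=0$ for \emph{every} $\lambda_\xi$, so invariance of $\kappa_{AB}$ only enforces the opposite scaling of $o$ and $\iota$ (preservation of $o_A\iota^A=1$) and cannot determine $\lambda_\xi$, i.e.\ it cannot produce the numerical factor, the $\Psi_2\kappa_1$ dependence, or the extra $\rho\rho'$ piece in the $\LzetaOp$ correction.

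The fix is to compute the induced rotation from the derivative of the Killing field rather than from the Killing spinor's invariance: the weight term is controlled by the self-dual part $\phi_{AB}\propto\nabla_{(A}{}^{A'}\xi_{B)A'}$ of the Killing two-form, which, using $\xi_{AA'}=\nabla^{B}{}_{A'}\kappa_{AB}$, the Killing spinor equation and the vacuum Bianchi identity, is proportional to $\Psi_{ABCD}\kappa^{CD}$; its $o^A\iota^B$ component is a multiple of $\Psi_2\kappa_1$, and subtracting the conjugate gives exactly the purely imaginary combination $\Psi_2\kappa_1-\bar\Psi_2\bar\kappa_{1'}$ multiplying $s$ (boost corrections drop because the scalars here have boost weight zero, not because of any property of $\xi^a$ -- your stated reason is off). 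Equivalently, and more in the spirit of the paper, one can bypass the covariant argument entirely: in the Znajek tetrad $\LxiOp\varphi=\partial_v\varphi$, $\xi^a\Theta_a\varphi=\partial_v\varphi+s\,(\xi^a\bar m^b\nabla_a m_b)\varphi$ by \eqref{eq:Theta-def}, so the correction term is the explicit connection scalar $-s\,\xi^a\bar m^b\nabla_a m_b$, which one checks equals $\tfrac32 s(\Psi_2\kappa_1-\bar\Psi_2\bar\kappa_{1'})$ using \eqref{eq:Znajek-spincoeff} and \eqref{eq:kappa1}; the same verification (or the analogous covariant one with the spinorial characterization of $\zeta^a$) handles $\LzetaOp$, where your appeal to $\zeta^a=a^2\xi^a+a\eta^a$ should be backed by the explicit coordinate decomposition of $\eta^a=(\partial_\phi)^a$ so as not to presuppose the third identity you are proving.
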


The following relation will also turn out to be useful
\begin{align}
\LxiOp{}\varphi={}&
 \VOp\varphi
 + \frac{\Delta}{2 (a^2 + r^2)}\YOp\varphi
 - \frac{a}{a^2 + r^2}\LetaOp\varphi.
 \label{eq:LxiToVOpYOpLeta}
\end{align}

We now introduce a collection of spin-weighted operators that are useful when considering wave equations for spin-weighted scalars. 

\index{R2Rshat@$\hROp_s$}
\index{S2Ss@$\TMESOp_s$}
\index{S2Ss0@$\mathring{S}_{s}$}
\index{S5squareShats@$\widehat\squareS_s$}
\begin{definition}
\label{def:TMEOperators}
Define the following spin-weighted operators: 
\begin{subequations}
\begin{align}
\hROp_s  ={}& 2 (a^2 + r^2)\YOp\VOp
 - \frac{2 a r}{a^2 + r^2}\LetaOp  + \frac{(a^4 - 4 M a^2 r + a^2 r^2 + 2 M r^3) }{(a^2 + r^2)^2} , 
\label{eq:hatRs-sec2}\\ 
\TMESOp_s={}&
 2 (\hedt - 9 \kappa_{1}{}^2 \tau \LxiOp{})(\hedtp - 9 \bar{\kappa}_{1'}{}^2 \bar{\tau}\LxiOp{})
 -3 (2 s - 1) (\kappa_{1}{} -  \bar{\kappa}_{1'}{})\LxiOp{}, \label{eq:SOp-def} 
\\
\mathring{S}_{s}={}& 2\hedt\hedtp, \label{eq:mathringSbyhedthedt'}\\
\widehat\squareS_s ={}& \hROp_s - \TMESOp_s . 
\label{eq:rescaledWaveOperator}
\end{align}
\end{subequations} 
\end{definition} 

\begin{remark}
\begin{enumerate}
\item \label{point:Alembert} The standard d'Alembertian is related to $\widehat\squareS_s$ via
\begin{align}
\nabla^a\nabla_a\varphi ={}& \frac{1}{\Sigma \sqrt{a^2+r^2}} \widehat\squareS_{0}(\sqrt{a^2+r^2}\varphi) . 
\end{align}
\item The operator $\hROp_s$ has no explicit $s$-dependence. In particular, $\hROp_s$ coincides with the radial part of the d'Alembertian.
\item 
The operators $\hROp_s, \TMESOp_s$ are related to the Teukolsky radial and angular operators, cf. \cite{1972PhRvL..29.1114T,teukolsky:1973,1974ApJ...193..443T} and equations \eqref{eq:TeukolskyRegular-2}-\eqref{eq:TeukolskyRegular+2}.  In particular, the famous separability of the Teukolsky equation can be expressed as the commutativity of the radial and angular operators, i.e.{}
\begin{align}
[ \hROp_s, \TMESOp_s] = 0 .
\end{align}
\item Substituting $a=0$, one finds $\mathring{S}_s=\TMESOp_s$. 
\end{enumerate}
\end{remark}

The following three lemmas provide alternative expressions for some of the operators, expressions in terms of coordinates for local trivializations of the bundle via the choice of the Znajek tetrad, and expressions for commutators, respectively. Each is proved by direct computation. 

\begin{lemma} 
\label{lem:S-form} 
Let $\varphi$ be a spin-weighted scalar. 
\begin{subequations}
\begin{align}
\TMESOp_s \varphi ={}&2 \hedt\hedtp\varphi
 + 2 a \LetaOp\LxiOp{}\varphi   + \tfrac{1}{4}  \bigl(4 a^2 + 9 (\kappa_{1}{} -  \bar{\kappa}_{1'}{})^2\bigr)\LxiOp{}\LxiOp{}\varphi 
 - 3 s(\kappa_{1}{} -  \bar{\kappa}_{1'}{}) \LxiOp{}\varphi .
 \label{eq:expandedSssss} \\ 
\overline{\TMESOp_s \varphi}={}& \TMESOp_{-s}\bar{\varphi}-2 s \bar{\varphi}.
\end{align} 
\end{subequations} 
\end{lemma}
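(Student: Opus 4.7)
The plan is to verify both identities by direct expansion, using the explicit formulas \eqref{eq:relationhedtandedt}--\eqref{eq:relationhedtprimeandedtprime} relating $\hedt,\hedt'$ to $\edt,\edt',\LxiOp$, the Znajek spin coefficient values \eqref{eq:Znajek-spincoeff}, and the fact that $\xi^a$ is a Killing vector preserving the principal tetrad. The latter implies that $\LxiOp$ commutes with $\edt,\edt'$ and annihilates the stationary scalars $\kappa_{1}{},\bar{\kappa}_{1'}{},\tau,\bar\tau$.

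For the first identity, I would expand the product in \eqref{eq:SOp-def} and sort the result by the number of factors of $\LxiOp$. The $\LxiOp^2$-coefficient is a purely multiplicative scalar which, using \eqref{eq:Znajek-spincoeff}, reduces to $a^2\sin^2\theta = \tfrac{1}{4}\bigl(4a^2 + 9(\kappa_{1}{}-\bar{\kappa}_{1'}{})^2\bigr)$, in view of $\kappa_{1}{}-\bar{\kappa}_{1'}{} = \tfrac{2i}{3}a\cos\theta$. The $\LxiOp^1$ cross-terms, after commuting $\LxiOp$ past $\edt,\edt'$ and substituting the Znajek values, reorganise into $2a\LetaOp\LxiOp\varphi$ together with a zeroth-order multiple of $\LxiOp\varphi$; the latter combines with the explicit $-3(2s-1)(\kappa_{1}{}-\bar{\kappa}_{1'}{})\LxiOp$ term from \eqref{eq:SOp-def} to produce $-3s(\kappa_{1}{}-\bar{\kappa}_{1'}{})\LxiOp\varphi$. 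The remaining $\LxiOp^0$-part reassembles into $2\hedt\hedt'\varphi$ by the definitions \eqref{eq:relationhedtandedt}--\eqref{eq:relationhedtprimeandedtprime}.

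For the second identity, I apply complex conjugation to the first identity. Since $\bar\varphi$ has spin weight $-s$ and $\LxiOp,\LetaOp$ are real, while $(\kappa_{1}{}-\bar{\kappa}_{1'}{})^2$ is preserved and $(\kappa_{1}{}-\bar{\kappa}_{1'}{})$ flips sign under conjugation, all terms except $2\hedt\hedt'\varphi$ agree with the corresponding terms in $\TMESOp_{-s}\bar\varphi$ (the sign change in $(\kappa_{1}{}-\bar{\kappa}_{1'}{})$ balances the change $s\mapsto -s$). Using $\overline{\hedt\hedt'\varphi} = \hedt'\hedt\bar\varphi$, one obtains
\[
\overline{\TMESOp_s\varphi} - \TMESOp_{-s}\bar\varphi \;=\; 2\hedt'\hedt\bar\varphi - 2\hedt\hedt'\bar\varphi \;=\; -2[\hedt,\hedt']\bar\varphi.
\]
The spin-weighted spherical edth commutator, which can be checked by direct computation from the explicit form of $\hedt,\hedt'$ in \eqref{eq:ops-Znaj-IEF-intro}, gives $[\hedt,\hedt']\bar\varphi = s\bar\varphi$ on the spin-weight-$(-s)$ field $\bar\varphi$, yielding the claimed $-2s\bar\varphi$ remainder.

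The main obstacle is the careful bookkeeping of the spin-weight-dependent terms in the expansion of the first identity, in particular the identification of the $2a\LetaOp\LxiOp\varphi$ contribution from the $\LxiOp^1$ cross-terms; once this is done, the second identity follows quickly via conjugation and the spherical edth commutator.
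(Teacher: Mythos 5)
Your proposal is correct, and it is essentially the paper's own argument: the paper proves the lemmas of this section "by direct computation," which is precisely the expansion of \eqref{eq:SOp-def} in powers of $\LxiOp$ (using $\kappa_1-\bar\kappa_{1'}=\tfrac{2i}{3}a\cos\theta$ and the Znajek values of $\tau,\bar\tau$) followed by conjugation and the commutator \eqref{eq:commutatorofhedtandhedtprime} for the second identity. Your bookkeeping of the $\LxiOp^1$ terms, including the Leibniz contribution $-3(\kappa_1-\bar\kappa_{1'})\LxiOp\varphi$ from $\hedt$ acting on the coefficient $\bar\kappa_{1'}^2\bar\tau$, checks out.
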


\begin{lemma} \label{lem:Op-forms} Let $\varphi$ be a spin-weighted scalar. 
In the Znajek tetrad and ingoing Eddington-Finkelstein coordinates, we have  
\begin{subequations}\label{eq:ops-Znaj-IEF}
\begin{align}
\VOp\varphi={}&\partial_{v} \varphi
 + \frac{\Delta \partial_{r} \varphi}{2 (a^2 + r^2)}
 + \frac{a \partial_{\phi} \varphi}{a^2 + r^2}, \label{eq:V-Znaj-IEF} \\
\YOp\varphi={}&- \partial_{r} \varphi,\label{eq:Y-Znaj-IEF}\\
\hedt\varphi={}&\tfrac{1}{\sqrt{2}}\partial_{\theta} \varphi
 + \tfrac{i}{\sqrt{2}}\csc\theta \partial_{\phi} \varphi
 - \tfrac{1}{\sqrt{2}}s \cot\theta \varphi, \label{eq:hedt-explicit} \\
\hedtp\varphi={}&\tfrac{1}{\sqrt{2}}\partial_{\theta} \varphi
 -  \tfrac{i}{\sqrt{2}}\csc\theta \partial_{\phi} \varphi
 + \tfrac{1}{\sqrt{2}}s \cot\theta \varphi,\label{eq:hedtp-explicit} \\
\LxiOp{}\varphi={}&\partial_{v} \varphi,\\
\LetaOp\varphi={}&\partial_{\phi} \varphi, \\
\hROp_s\varphi ={}&-2 (a^2 + r^2) \partial_{v} \partial_{r} \varphi
 -  \Delta \partial_{r} \partial_{r} \varphi
 - 2 a \partial_{r} \partial_{\phi} \varphi
 + \frac{2 M (a^2 -  r^2) \partial_{r} \varphi}{a^2 + r^2}
 + \frac{2 a r \partial_{\phi} \varphi}{a^2 + r^2}\nonumber\\
& + \frac{(a^4 - 4 M a^2 r + a^2 r^2 + 2 M r^3) \varphi}{(a^2 + r^2)^2}, \\
\TMESOp_s \varphi ={}&a^2 \sin^2\theta \partial_{v} \partial_{v} \varphi
 + 2 a \partial_{v} \partial_{\phi} \varphi
 + \partial_{\theta} \partial_{\theta} \varphi
 + \csc^2\theta \partial_{\phi} \partial_{\phi} \varphi
 - 2i a s \cos\theta \partial_{v} \varphi
\nonumber\\
&  + \cot\theta \partial_{\theta} \varphi + 2i s \cot\theta \csc\theta \partial_{\phi} \varphi
 + s (s -  s \csc^2\theta - 1) \varphi . \label{eq:SOp-coords}
\end{align}

\end{subequations}
\end{lemma}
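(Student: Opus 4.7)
The statement is a purely computational unpacking of definitions, so the proof is by direct substitution of the Znajek tetrad \eqref{eq:Znajek} and its spin coefficients \eqref{eq:Znajek-spincoeff} into definition \ref{def:VYedtedt'-cov}, using lemma \ref{lem:S-form} for the last identity. The only subtlety is the careful handling of the GHP connection terms on spin-weighted fields. My plan is to treat the operators in the order $\YOp,\VOp,\hedt,\hedt',\LxiOp,\LetaOp$, and then to assemble the formula for $\TMESOp_s$.

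\textbf{Step 1: the null-direction derivatives.} In the Znajek tetrad, the footnote following \eqref{eq:resc-Znajek} gives $\uplambda=1$, so
\[
\VOp\varphi = \frac{\Sigma}{\sqrt{2}(a^2+r^2)}\tho\varphi + \frac{27 s\kappa_1^2(\kappa_1-\bar\kappa_{1'})\rho\rho'}{a^2+r^2}\varphi, \qquad \YOp\varphi = \sqrt{2}\,\tho'\varphi.
\]
Expanding $\tho = l^a\Theta_a$, $\tho' = n^a\Theta_a$ via \eqref{eq:Theta-def} on a boost-weight zero field gives $\tho\varphi = l^a\nabla_a\varphi - s\,\epsilon\varphi$ and $\tho'\varphi = n^a\nabla_a\varphi - s\,\epsilon'\varphi$ (here one uses that $\bar m^b l^a\nabla_a m_b = -\epsilon$ and $\bar m^b n^a\nabla_a m_b = -\epsilon'$ from \eqref{eq:eps-beta-def}). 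Since $\epsilon'=0$ in \eqref{eq:Znajek-spincoeff} and $n^a = -\tfrac{1}{\sqrt{2}}(\partial_r)^a$, the formula \eqref{eq:Y-Znaj-IEF} for $\YOp$ follows. For $\VOp$ one substitutes \eqref{eq:l-Znajek} to write $l^a\nabla_a = \tfrac{\sqrt{2}(a^2+r^2)}{\Sigma}\partial_v + \tfrac{\Delta}{\sqrt{2}\Sigma}\partial_r + \tfrac{\sqrt{2}a}{\Sigma}\partial_\phi$; the zeroth-order piece that arises from $-s\epsilon$, after multiplication by $\Sigma/(\sqrt2(a^2+r^2))$, cancels exactly against the $27 s\kappa_1^2(\kappa_1-\bar\kappa_{1'})\rho\rho'/(a^2+r^2)$ term (this is a short algebraic check using \eqref{eq:kappa1} and the explicit $\rho,\rho'$ of \eqref{eq:Znajek-spincoeff}, noting $\rho\rho' = -\Delta/(18\kappa_1\bar\kappa_{1'}\Sigma)$). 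One is left with \eqref{eq:V-Znaj-IEF}.

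\textbf{Step 2: the edth operators.} Substituting $\tau = -\tfrac{ia\sin\theta}{9\sqrt2\kappa_1^2}$ and $\bar\tau = \tfrac{ia\sin\theta}{9\sqrt2\bar\kappa_{1'}^2}$ from \eqref{eq:Znajek-spincoeff} into \eqref{eq:relationhedtandedt}--\eqref{eq:relationhedtprimeandedtprime} and expanding $\edt\varphi = m^a\nabla_a\varphi + s\,(\bar m^b m^a\nabla_a m_b)\varphi$ with $m^a$ as in \eqref{eq:m-Znajek}, the spin-correction term reduces (via $\beta$ in \eqref{eq:Znajek-spincoeff}) to $-\tfrac{s\cot\theta}{3\sqrt2\kappa_1}$ after collecting the $\partial_v$ contributions; multiplying by $3\kappa_1$ yields precisely \eqref{eq:hedt-explicit}, and the analogous computation (or prime/conjugation) gives \eqref{eq:hedtp-explicit}. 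The Killing field identities $\LxiOp\varphi=\partial_v\varphi$ and $\LetaOp\varphi=\partial_\phi\varphi$ are immediate from $\xi^a=(\partial_v)^a$ and $\eta^a=(\partial_\phi)^a$.

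\textbf{Step 3: assembling $\TMESOp_s$.} Starting from \eqref{eq:expandedSssss} and using \eqref{eq:kappa1} we have $\kappa_1-\bar\kappa_{1'} = \tfrac{2ia\cos\theta}{3}$, so $9(\kappa_1-\bar\kappa_{1'})^2 = -4a^2\cos^2\theta$ and hence $\tfrac{1}{4}(4a^2+9(\kappa_1-\bar\kappa_{1'})^2) = a^2\sin^2\theta$, while $-3s(\kappa_1-\bar\kappa_{1'}) = -2ias\cos\theta$. Composing \eqref{eq:hedt-explicit} with \eqref{eq:hedtp-explicit} (taking care that $\hedt'$ acts on a spin-weight $s+1$ object output by $\hedt$ with the appropriate sign of the $\cot\theta$ term) gives
\[
2\hedt\hedt'\varphi = \partial_\theta^2\varphi + \cot\theta\,\partial_\theta\varphi + \csc^2\theta\,\partial_\phi^2\varphi + 2is\cot\theta\csc\theta\,\partial_\phi\varphi + s(s-s\csc^2\theta-1)\varphi,
\]
and adding $2a\LetaOp\LxiOp\varphi = 2a\partial_v\partial_\phi\varphi$, $a^2\sin^2\theta\,\partial_v^2\varphi$, and $-2ias\cos\theta\,\partial_v\varphi$ produces \eqref{eq:SOp-coords}. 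The only ``work'' in the whole argument is the cancellation in Step 1 between the GHP connection term and the explicit zeroth-order term appended to $\tho\varphi$ in definition \eqref{eq:VOpGHPDef}, which I expect to be the only nontrivial bookkeeping.
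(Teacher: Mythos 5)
Your overall route is the same as the paper's: the paper offers no argument beyond ``direct computation'', and your plan of substituting the Znajek tetrad and the spin coefficients \eqref{eq:Znajek-spincoeff} into definition \ref{def:VYedtedt'-cov}, then composing \eqref{eq:hedt-explicit}--\eqref{eq:hedtp-explicit} and using lemma \ref{lem:S-form} for $\TMESOp_s$, is exactly that computation. However, there is a concrete slip in the piece you yourself single out as the only nontrivial bookkeeping. From \eqref{eq:Theta-def} together with $m^a\bar m_a=-1$ one gets $\bar m^b l^a\nabla_a m_b=\bar\epsilon-\epsilon$, not $-\epsilon$; equivalently, for a boost-weight zero, spin-weight $s$ scalar (GHP type $\{s,-s\}$) the correct formula is $\tho\varphi=l^a\nabla_a\varphi-s(\epsilon-\bar\epsilon)\varphi$, so only the combination $\epsilon-\bar\epsilon$ enters, never $\epsilon$ alone. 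This matters: with $-s\epsilon$ as written, the zeroth-order term does \emph{not} cancel against the explicit term in \eqref{eq:VOpGHPDef}, and you would be left with a spurious potential term in $\VOp\varphi$, contradicting \eqref{eq:V-Znaj-IEF}. With the corrected term the cancellation is exact: from \eqref{eq:Znajek-spincoeff} and $\Sigma=9\kappa_{1}\bar{\kappa}_{1'}$ one computes $\epsilon-\bar\epsilon=-\tfrac{3\Delta(\kappa_{1}-\bar{\kappa}_{1'})}{\sqrt{2}\,\Sigma^2}$, while $27\kappa_{1}^2(\kappa_{1}-\bar{\kappa}_{1'})\rho\rho'=-\tfrac{3\Delta(\kappa_{1}-\bar{\kappa}_{1'})}{2\Sigma}$, and the two contributions to $\VOp\varphi$ are equal and opposite after multiplication by $\tfrac{\Sigma}{\sqrt{2}(a^2+r^2)}$.

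The same imprecision appears in Step 2: the spin correction in $\edt\varphi$ for a type $\{s,-s\}$ scalar is $-s(\beta+\bar\beta')\varphi$, i.e.\ both $\beta$ and $\beta'$ from \eqref{eq:Znajek-spincoeff} enter, not $\beta$ alone, and the $\LxiOp$-- and $\tau$--terms in \eqref{eq:relationhedtandedt} must be combined with it before the coordinate form \eqref{eq:hedt-explicit} emerges. For $\YOp$ you are safe only by accident: the relevant combination is $\epsilon'-\bar\epsilon'$, which vanishes because $\epsilon'=0$ in the Znajek tetrad, so \eqref{eq:Y-Znaj-IEF} comes out correctly even from your (incorrect) identification. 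Once these connection terms are stated correctly, your Steps 1--3 go through and reproduce the lemma by the same direct computation the paper intends; as written, though, the claimed ``exact cancellation'' in Step 1 fails, so the verification is not yet complete.
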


\begin{remark} \label{rem:ops}
Restricting to the sphere, spin-weighted scalars can be viewed as sections of complex line bundles. Defining spin-weighted scalars in terms of a null tetrad corresponds to a choice of local trivialization for these bundles. The form of the operators $\hedt, \hedtp$ given in \eqref{eq:hedt-explicit} and \eqref{eq:hedtp-explicit} are expressions, in the given tetrad and coordinate system, of covariantly defined elliptic operators of order one, acting on spin-weighted scalars on the sphere,  cf. \cite{1982MPCPS..92..317E}.
\end{remark}

\begin{lemma} Let $\varphi$ be a spin-weighted scalar. 
We have the commutator relations
\begin{align}
\label{eq:CommutatorofYandMathcalV}
\YOp\VOp\varphi={}&\VOp\YOp\varphi
 + \frac{2 a r}{(a^2 + r^2)^2}\LetaOp\varphi
 + \frac{M (- a^2 + r^2)}{(a^2 + r^2)^2}\YOp\varphi,&
\mathcal{L}_\xi \YOp\varphi={}&\YOp\LxiOp\varphi ,
\end{align}
and 
\begin{subequations}
\label{eq:CommutAnguDeri}
\begin{align}
\YOp\hedt\varphi={}&\hedt\YOp\varphi,&
\YOp\hedtp\varphi={}&\hedtp\YOp\varphi,\\
\VOp\hedt\varphi={}&\hedt\VOp\varphi,&
\VOp\hedtp\varphi={}&\hedtp\VOp\varphi,\\
\LxiOp{}\hedt\varphi={}&\hedt\LxiOp{}\varphi,&
\LxiOp{}\hedtp\varphi={}&\hedtp\LxiOp{}\varphi,\\
\hedt\hedtp\varphi={}&\hedtp\hedt\varphi -  s \varphi.
\label{eq:commutatorofhedtandhedtprime}
\end{align}
\end{subequations}
\end{lemma}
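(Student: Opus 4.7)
The plan is to verify each identity by direct computation in the Znajek tetrad and ingoing Eddington–Finkelstein coordinates, using the explicit coordinate representations from Lemma \ref{lem:Op-forms}. Since both sides of each claimed identity are expressions built from spin-weighted operators (and so are covariantly defined, independent of the choice of principal tetrad), an equality verified in the Znajek tetrad holds in general. Moreover the identities we are after are linear in the input scalar, so it suffices to treat a generic spin-weighted scalar $\varphi$ of spin-weight $s$, keeping careful track of the shift in $s$ when an angular operator is applied.

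First I would handle $[\YOp,\VOp]$. Writing $\YOp = -\partial_r$ and $\VOp = \partial_v + \tfrac{\Delta}{2(a^2+r^2)}\partial_r + \tfrac{a}{a^2+r^2}\partial_\phi$, the commutator reduces to the two $r$-derivatives of the coefficients of $\VOp$:
\begin{align*}
\partial_r\!\left(\frac{\Delta}{2(a^2+r^2)}\right) = \frac{M(r^2-a^2)}{(a^2+r^2)^2}, \qquad
\partial_r\!\left(\frac{a}{a^2+r^2}\right) = -\frac{2ar}{(a^2+r^2)^2}.
\end{align*}
Substituting back and using $\YOp = -\partial_r$, $\LetaOp = \partial_\phi$ yields the stated formula \eqref{eq:CommutatorofYandMathcalV}. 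The second identity there, $\LxiOp\YOp\varphi = \YOp\LxiOp\varphi$, is immediate from $\LxiOp = \partial_v$ and $\YOp = -\partial_r$ since the two partial derivatives have constant coefficients and commute.

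Next the identities in \eqref{eq:CommutAnguDeri} involving $\hedt,\hedt'$ versus $\YOp,\VOp,\LxiOp$. Here the crucial observation is that in IEF coordinates the coefficients of $\hedt$ and $\hedt'$ depend only on $\theta$ and on $s$, while those of $\YOp$, $\VOp$, $\LxiOp$ depend only on $r$ (and $v$ for $\LxiOp$, but not on $\theta,\phi$). The only subtlety is the change of spin-weight: $\hedt\varphi$ has spin-weight $s+1$, so when $\VOp$ is applied to $\hedt\varphi$ it must still be computed with its $s$-free formula \eqref{eq:V-Znaj-IEF}, and similarly for $\YOp$ and $\LxiOp$, which carry no $s$ in their coordinate expressions. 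Consequently the $s \cot\theta$ term in $\hedt$ passes freely through these operators and the partial derivatives commute directly.

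Finally, for $[\hedt,\hedt']\varphi = -s\varphi$, I would apply $\hedt'$ (computed with spin-weight $s+1$) to $\hedt\varphi$ and $\hedt$ (with spin-weight $s-1$) to $\hedt'\varphi$, using \eqref{eq:hedt-explicit}--\eqref{eq:hedtp-explicit}. All the $\partial_\theta\partial_\theta$, $\partial_\phi\partial_\phi$ and mixed terms cancel in the subtraction, and the remaining contribution comes from the first-derivative cross terms acting on the $s\cot\theta$ pieces plus the shift of the weight $s \to s\pm 1$. Collecting and using $\partial_\theta\cot\theta = -\csc^2\theta$ together with $\csc^2\theta - \cot^2\theta = 1$ gives exactly $-s\varphi$.

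The computations are entirely mechanical; the only place to be cautious is the last one, because of the bookkeeping of the spin-weight shift across the two orderings of $\hedt,\hedt'$. No essential obstacle arises, and the Znajek-tetrad computation suffices by the covariance remarks above.
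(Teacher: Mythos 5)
Your proposal is correct and follows essentially the same route as the paper, which simply notes that the lemmas of that section "follow by direct computation" using the coordinate expressions of Lemma \ref{lem:Op-forms}; your derivative computations of the $\VOp$ coefficients and the careful spin-weight bookkeeping in $[\hedt,\hedt']$ reproduce exactly the stated identities, including $\hedt\hedt'\varphi=\hedt'\hedt\varphi-s\varphi$. The added remark that a verification in the Znajek tetrad suffices by covariance of the spin-weighted operators is sound and merely makes explicit what the paper leaves implicit.
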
 

\subsection{Time functions} 
\label{sec:foliation} 

In the first part of this section, we introduce new time functions, and, in the second, we use these to define regions of the Kerr exterior in which we integrate in later sections of the paper. 

Consider first choices of time functions. Recall from the illustration in figure \ref{fig:Estimates}, that we intend to use, in one part of the argument, time functions with level sets that go from the horizon $\Horizon^+$ to spacelike infinity $i_0$ and, in another, time functions with level sets going from the horizon $\Horizon^+$ to null infinity $\Scri^+$. We refer to the former as horizon crossing and the latter as hyperboloidal. We begin by introducing definitions that more precisely specify the desired properties for these time functions. 

Definition \ref{def:maintimedef} below introduces the technical conditions on the time functions. 
The time functions are required to have smooth, spacelike level sets. 
The level sets of the hyperboloidal time function are required to reach $\Scri^+$ and to be regular and sufficiently spacelike there, cf. point \eqref{point:Vlimit}.
The hyperboloidal time is also required to march forward along $\Scri^+$ at the same rate as the retarted time $u$, cf. point  \eqref{point:Ylimit}. Point \eqref{point:kprim} of definition \ref{def:maintimedef} requires that the horizon crossing time function  behaves like the Boyer-Lindquist time function near infinity.

\begin{definition} \label{def:maintimedef}
We consider time functions $\tGen$ defined in terms of height functions $k=k(r)$, 
\begin{equation}\label{eq:tGen-def}
\tGen = v - k(r)
\end{equation}
where $v$ is the advanced time coordinate in the ingoing Eddington-Finkelstein coordinate system. 
\begin{enumerate} 
\item \label{point:hyptime}
A time function $\tGen$ of the form \eqref{eq:tGen-def} is a regular, future hyperboloidal time function, if  
\begin{enumerate}  
\item \label{point:h-smooth} $k(r)$ is smooth in an open neighbourhood of $[r_+,\infty)$.
\item \label{point:H-smooth} $K(R)= k'(1/R)  =k'(r)$, where $R = 1/r$, is smooth in an open neighbourhood of $[0, 1/r_+]$.
\item \label{point:spacelike} The level sets of $\tGen$ are strictly spacelike in $\mathcal{M}$. 
\item \label{point:Vlimit} The limit 
\begin{align}\label{eq:CInHyperboloids-def}
\lim_{r\to \infty} \frac{r^2}{M^2} \VOp^a \nabla_a \tGen 
\end{align}  
exists and is positive. 
\item \label{point:Ylimit} 
\begin{align} \label{eq:Yt-def}
\lim_{r\to\infty} \YOp^a \nabla_a \tGen = 2 .
\end{align}
\end{enumerate}

\item \label{point:tHor} A time function $\tGen$ of the form \eqref{eq:tGen-def}
with height function $k = k(r)$ is horizon crossing if 
\begin{enumerate}
\item \label{point:kHor} $k(r)$ is smooth in an open neighbourhood of $[r_+,\infty)$. 
\item \label{point:kspace} The level sets of $\tGen$ are strictly spacelike in $\mathcal{M}$. 
\item \label{point:kprim} For large $r$, $k'(r) - (a^2+r^2)/\Delta = O(r^{-2})$.
\end{enumerate}
\end{enumerate}
\end{definition} 

Having introduced the general properties we would like time functions to possess, we now introduce some specific examples that we will show have the desired properties. In our choice of height function $h$ in the construction of these time functions, the first three terms on the right of equation \eqref{eq:hdef} are those that arise from integrating the terms used to define $r_*$ in equation \eqref{eq:r*def}; if only these three terms were present, then $t=v-h(r)$ would coincide with the retarded time $u$ from equation \eqref{eq:u-def}. Unfortunately, the normal to the level sets of $u$ are not timelike and, in fact, fail to even be null for $\theta\not\in\{0,\pi\}$. For this reason, we include the final two terms in equation \eqref{eq:hdef}, which include $\CInHyperboloids$. As shown in equation \eqref{eq:uMinusTimefunc}, a level set of $\timefunc$ will approach, and in the limit towards $\Scri^+$ reach, a level set of $u$, but for a fixed value of $t$, $u$ will be larger for $1/r$ small and positive. Thus, the level sets of the hyperboloidal time function $\timefunc$ can be thought of as bending upward from $\Scri^+$ away from the level sets of $u$. This ensures that the level sets of $\timefunc$ are spacelike. The coefficient $\CInHyperboloids$ can be viewed as a measure of this curvature near $\Scri^+$; we have chosen to use the coefficient $(\CInHyperboloids-1)$ in equation \eqref{eq:hdef} so that the measure of curvature on the right of equation \eqref{eq:Vt-lem} is $\CInHyperboloids$. 

\index{C2CHyp@$\CInHyperboloids$}
\begin{lemma} 
\label{lem:h-prop} 
Let $\CInHyperboloids \geq 1$ and let $\timefunc=v-h(r)$ on $\mathcal{M}$, where 
\index{H1h@$h(r)$}
\begin{align} \label{eq:hdef}
h(r)={}&2 (r- r_{+})
+ 4 M \log\left(\frac{r}{r_{+}}\right)
+ \frac{3 M^2 (r_{+} -  r)^2}{r_{+} r^2}
+ 2 M \arctan\left(\frac{(\CInHyperboloids-1) M}{r}\right)\nonumber\\
&-  2 M\arctan\left(\frac{(\CInHyperboloids-1) M}{r_{+}}\right),
\end{align}
where $r_+$ is given by \eqref{eq:r+def}. Then $\timefunc$ is a regular, future hyperboloidal time function as in definition \ref{def:maintimedef}. Further, 
\begin{subequations}\label{eq:hprop}
\begin{align}
h(r_+) ={}& 0,  \label{eq:h=0} \\
h'(r) \geq{}& 0, \quad \text{for $r \geq r_+$}  \label{eq:h'pos} \\  
\lim_{r\to \infty} \frac{h(r)}{r} ={}& 2, \label{eq:hlim}\\ 
\lim_{r\to \infty} \frac{r^2}{M^2}\VOp^a \nabla_a \timefunc  ={}& \CInHyperboloids . \label{eq:Vt-lem}
\end{align} 
\end{subequations}
\end{lemma}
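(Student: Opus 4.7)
The plan is to verify each of the four identities in \eqref{eq:hprop} by direct computation from the explicit formula for $h$, and then to check the five clauses of Definition \ref{def:maintimedef}\ref{point:hyptime}. All but one of these checks are routine manipulations; the only delicate point is the strict spacelikeness of level sets.

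For the four identities: $h(r_{+})=0$ follows by inspection, since the first and third summands carry an explicit factor $(r-r_{+})$, the second vanishes because $\log 1 = 0$, and the two arctangents cancel. Differentiating term-by-term yields
\[
h'(r)=2+\frac{4M}{r}+\frac{6M^{2}(r-r_{+})}{r^{3}}-\frac{2(\CInHyperboloids-1)M^{2}}{r^{2}+(\CInHyperboloids-1)^{2}M^{2}},
\]
and the AM--GM bound $r^{2}+(\CInHyperboloids-1)^{2}M^{2}\geq 2(\CInHyperboloids-1)Mr$ controls the last term by $M/r$, so $h'(r)\geq 2+3M/r>0$ for $r\geq r_{+}$, proving \eqref{eq:h'pos}. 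The limit $h/r\to 2$ is immediate since every summand other than $2(r-r_{+})/r$ tends to zero at infinity. For \eqref{eq:Vt-lem}, I apply \eqref{eq:V-Znaj-IEF} to the spin-weight zero scalar $\timefunc=v-h(r)$ to get $\VOp\timefunc=1-\Delta h'(r)/[2(a^{2}+r^{2})]$; the asymptotic expansions $\Delta/[2(a^{2}+r^{2})]=\tfrac12-M/r+O(r^{-3})$ and $h'(r)=2+4M/r+2(4-\CInHyperboloids)M^{2}/r^{2}+O(r^{-3})$ multiply to give a product whose $O(1)$ and $O(r^{-1})$ contributions cancel against the leading $1$, leaving $\VOp\timefunc=\CInHyperboloids M^{2}/r^{2}+O(r^{-3})$ and the stated limit.

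Turning to Definition \ref{def:maintimedef}\ref{point:hyptime}, clauses \ref{point:h-smooth}, \ref{point:H-smooth}, \ref{point:Vlimit}, \ref{point:Ylimit} are immediate: $h$ is visibly smooth on a neighborhood of $[r_{+},\infty)$; substituting $r=1/R$ turns $h'(r)$ into a rational function of $R$ that is smooth at $R=0$; \ref{point:Vlimit} is the identity just established; and \ref{point:Ylimit} follows from $\YOp\timefunc=h'(r)\to 2$. The main obstacle is clause \ref{point:spacelike}, strict spacelikeness. Using \eqref{eq:metric=tetrad} and the Znajek tetrad, and reading off $l^{a}\nabla_{a}\timefunc$ and $n^{a}\nabla_{a}\timefunc$ from $\VOp\timefunc$ and $\YOp\timefunc=h'(r)$, together with the direct computation $m^{a}\nabla_{a}\timefunc=ia\sin\theta/[\sqrt{2}(r-ia\cos\theta)]$, one obtains
\[
g^{ab}\nabla_{a}\timefunc\nabla_{b}\timefunc=\frac{h'(r)\bigl[2(a^{2}+r^{2})-\Delta h'(r)\bigr]-a^{2}\sin^{2}\theta}{\Sigma}.
\]
Positivity reduces to two strict inequalities: $2(a^{2}+r^{2})>\Delta h'(r)$ on $r>r_{+}$, and then $h'(r)\bigl[2(a^{2}+r^{2})-\Delta h'(r)\bigr]>a^{2}\sin^{2}\theta$. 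Rewriting $2(a^{2}+r^{2})=2\Delta+4Mr$, the first is equivalent to $\Delta(h'(r)-2)<4Mr$; the crude upper bound $h'(r)-2\leq 4M/r+6M^{2}/r^{2}$ (discarding the nonpositive arctan contribution) together with the elementary positivity of $2Mr^{2}+4r(3M^{2}-a^{2})-6Ma^{2}$ on $r\geq r_{+}\geq M$, $|a|<M$, supplies the inequality with margin. The second is then established at $r=r_{+}$ from $2h'(r_{+})(a^{2}+r_{+}^{2})\geq 4(a^{2}+r_{+}^{2})>a^{2}$, at large $r$ from the asymptotic $2(a^{2}+r^{2})-\Delta h'(r)\to 8M^{2}$, and on the intermediate range by exploiting the smallness of the arctan correction guaranteed by $\CInHyperboloids=10^{6}$. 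This last estimate is the principal technical step in the proof; all other parts of the argument are bookkeeping.
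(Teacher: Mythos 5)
Your reduction of spacelikeness to the positivity of $h'(r)\bigl[2(a^2+r^2)-\Delta h'(r)\bigr]-a^2\sin^2\theta$ is correct (it is algebraically the same quantity the paper bounds), and your verification of \eqref{eq:h=0}--\eqref{eq:Vt-lem} and of clauses \ref{point:h-smooth}, \ref{point:H-smooth}, \ref{point:Vlimit}, \ref{point:Ylimit} of definition \ref{def:maintimedef} is fine. The gap is precisely where you say the main work lies: the bound $h'\bigl[2(a^2+r^2)-\Delta h'\bigr]>a^2\sin^2\theta$ on the ``intermediate range'' is the content of clause \ref{point:spacelike}, and you do not prove it — you assert it can be handled ``by exploiting the smallness of the arctan correction guaranteed by $\CInHyperboloids=10^6$''. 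That appeal is not available: the lemma is stated for every $\CInHyperboloids\geq 1$, and the paper's proof (which passes to the sufficient two-sided condition \eqref{eq:t-spacelike-useful} via $x\leq\sqrt{x}$ and checks both sides by elementary polynomial estimates) uses only $\CInHyperboloids\geq 1$, with no largeness assumption and no splitting of the range of $r$. Your claimed asymptotic $2(a^2+r^2)-\Delta h'\to 8M^2$ is also incorrect: keeping the $O(r^{-2})$ contribution of the arctan term in $h'$, the limit is $2\CInHyperboloids M^2$ (consistent with $\VOp\timefunc\sim\CInHyperboloids M^2/r^2$), which for $\CInHyperboloids=10^6$ is nowhere near $8M^2$; this slip is harmless for positivity but shows the arctan term was not tracked in exactly the regime you deferred to it.

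The estimates you already proved in fact close the gap uniformly, for all $r\geq r_+$ and all $\CInHyperboloids\geq 1$, with no case distinction. Since the arctan contribution to $h'-2$ is nonpositive, $\Delta(h'-2)\leq\Delta\bigl(4M/r+6M^2/r^2\bigr)$, and your polynomial identity gives $4Mr-\Delta\bigl(4M/r+6M^2/r^2\bigr)=\tfrac{M}{r^2}\bigl[2Mr^2+4r(3M^2-a^2)-6Ma^2\bigr]\geq 2M^2$, because $4r(3M^2-a^2)\geq 8M^2r\geq 8M^3>6Ma^2$ for $r\geq r_+\geq M$, $|a|<M$. Hence $2(a^2+r^2)-\Delta h'=4Mr-\Delta(h'-2)\geq 2M^2$ everywhere in $\mathcal{M}$, and with your bound $h'\geq 2+3M/r\geq 2$ one gets $h'\bigl[2(a^2+r^2)-\Delta h'\bigr]\geq 4M^2>a^2\geq a^2\sin^2\theta$, which is the missing estimate. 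With that substitution your argument is complete and runs parallel to the paper's, the only difference being that you bound $g^{ab}\nabla_a\timefunc\nabla_b\timefunc$ directly rather than through the paper's sufficient condition on $h'$ alone.
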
 
\begin{proof}
It is straightforward to verify \eqref{eq:h=0}, \eqref{eq:hlim}, and \eqref{eq:Vt-lem}. We have 
\begin{align} \label{eq:hprim-explicit}
h'(r)={}&2  + \frac{4 M}{r} +  \frac{6 M^2 (r - r_{+})}{r^3} -  \frac{2 (\CInHyperboloids-1) M^2}{(\CInHyperboloids-1)^2 M^2 + r^2} .
\end{align}
Based on this and \eqref{eq:hdef}, it is straightforward to verify points \ref{point:h-smooth}, \ref{point:H-smooth} of definition \ref{def:maintimedef}. Next, we prove that $\timefunc$ has spacelike level sets. We have
\begin{align}
d\timefunc_{a} d\timefunc_{b} g^{ab} \frac{\Sigma}{\Delta}={}&- \frac{a^2 \sin^2\theta}{\Delta}
 + \frac{(a^2 + r^2)^2}{\Delta^2}
 -  \Bigl(h'(r) - \frac{a^2 + r^2}{\Delta}\Bigr)^2\nonumber\\
\geq{}&- \frac{a^2}{\Delta}
 + \frac{(a^2 + r^2)^2}{\Delta^2}
 -  \Bigl(h'(r) - \frac{a^2 + r^2}{\Delta}\Bigr)^2. \label{eq:t-spacelike-basic}
\end{align}
Hence, $\timefunc$ has spacelike level sets if and only if 
\begin{align}
0\leq\frac{a^2 + r^2}{\Delta}\left(1-\sqrt{1 - \frac{a^2\Delta}{(a^2 + r^2)^2}}\right)
&< h'(r) <
\frac{a^2 + r^2}{\Delta}\left(1 + \sqrt{1 - \frac{a^2\Delta}{(a^2 + r^2)^2}}\right).
\label{eq:spacelikecond1}
\end{align}
Using the inequality $x\leq \sqrt{x}$ for $0\leq x\leq 1$ one finds that a sufficient (but not necessary) condition for the level sets $\Sigma_\timefunc$ to be spacelike is given by 
\begin{align}
\frac{a^2}{a^2+r^2}
&< h'(r) <
\frac{2(a^2 + r^2)}{\Delta}  -\frac{a^2}{a^2+r^2}. \label{eq:t-spacelike-useful}
\end{align}
Since $\CInHyperboloids \geq 1$ by assumption, we have using \eqref{eq:hprim-explicit}
\begin{align} 
\frac{2(a^2 + r^2)}{\Delta}  -\frac{a^2}{a^2+r^2} - h'(r) >{}& 
\frac{6M^2r_+}{r^3} + J  
\end{align} 
where 
\begin{align} 
J = \frac{2(a^2 + r^2)}{\Delta}  -\frac{a^2}{a^2+r^2}
- 2  - \frac{4 M}{r} -  \frac{6 M^2}{r^2} .
\end{align} 
Collecting powers of $r$ in $\Delta(a^2+r^2)r^2 J$, and using $r > r_+ > M> |a|$, one finds $J > 0$ on $\mathcal{M}$ and the right inequality in \eqref{eq:t-spacelike-useful} follows. To see that the left inequality in \eqref{eq:t-spacelike-useful} holds, note that  
\begin{align} \label{eq:left-ineq}
h'(r) - \frac{a^2}{a^2+r^2} >{}& 2  + \frac{4 M}{r}  -  \frac{2 (\CInHyperboloids-1) M^2}{(\CInHyperboloids-1)^2 M^2 + r^2} - \frac{a^2}{a^2+r^2} .
\end{align} 
To bound the second term of the right-hand side from below, we note that it is of the form 
\begin{align} \label{eq:somefun}
-2Mx/(x^2+r^2), 
\end{align} 
with $x=(\CInHyperboloids-1)M$. For $x > 0$, \eqref{eq:somefun} is bounded from below by $-M/r$. 
Further,  $a^2/(a^2+r^2) < 1$ is monotone decreasing for $r> r_+$. This gives 
\begin{align}
h'(r) - \frac{a^2}{a^2+r^2} >{}& 2 + \frac{3M}{r} - \frac{a^2}{a^2+r_+^2} \nonumber \\
\geq{}& 1+\frac{3M}{r} > 0 .\label{eq:2.50}
\end{align} 
Hence, the level sets of $\timefunc$ are strictly spacelike in $\mathcal{M}$. The inequality \eqref{eq:2.50} yields \eqref{eq:h'pos}. 
The remaining points \ref{point:Vlimit}, \ref{point:Ylimit} of definition \ref{def:maintimedef} can be verified by straightforward calculations. 
\end{proof}

The time function $\tHor$ is constructed similarly but in such a way that its level sets approach $i_0$. 

\begin{lemma}
\label{lem:k-prop}
Let $k = h/2$ with $h$ given by \eqref{eq:hdef}. Then $\tHor = v - k$ is a horizon crossing time function and 
\begin{align}
k(r_+) = 0  . \label{eq:k=0}  
\end{align}
\end{lemma} 
\begin{proof} It is straightforward to verify points \ref{point:kHor}, \ref{point:kprim} of definition \ref{def:maintimedef}. For point \ref{point:kspace} we proceed as in the proof of lemma \ref{lem:h-prop}, and note that a sufficient condition for $\tHor$ to have spacelike level sets is given by \eqref{eq:t-spacelike-useful} with $h'$ replaced by $k'$, 
\begin{align}
\frac{a^2}{a^2+r^2}
&< k'(r) <
\frac{2(a^2 + r^2)}{\Delta}  -\frac{a^2}{a^2+r^2}. \label{eq:t-spacelike-useful-k}
\end{align}
It follows from the proof of lemma \ref{lem:h-prop} that $h' > 0$, and the second inequality in \eqref{eq:t-spacelike-useful-k} holds since from $k=h/2$ we have that $k' < h'$.  For the first inequality in \eqref{eq:t-spacelike-useful-k}, we have, following the proof of lemma \ref{lem:h-prop}, 
\begin{align} 
k' - \frac{a^2}{a^2+r^2} >{}& 1  + \frac{2 M}{r}  -  \frac{ (\CInHyperboloids-1) M^2}{(\CInHyperboloids-1)^2 M^2 + r^2} - \frac{a^2}{a^2+r^2} \nonumber \\ 
>{}& 1 + \frac{3}{2} \frac{M}{r} - \frac{a^2}{a^2+r_+^2} \nonumber \\
>{}& 0 .
\end{align} 
This completes the proof. 
\end{proof}

\begin{definition}[Time functions] \label{def:timefuncs}
Define the horizon-crossing time $\tHor$ and the hyperboloidal time $\timefunc$,
\begin{subequations} 
\begin{align}
\tHor ={}& v - h/2, \\
\timefunc ={}& v -h
\end{align}
\end{subequations}
with $h$ as in \eqref{eq:hdef}.
\end{definition}

\begin{remark}
\begin{enumerate} 
\item There is a constant $c_h$ such that the retarded time  $u$ and the hyperboloidal time $\timefunc$ satisfy, for large $r$, 
\begin{align} 
u-\timefunc ={}& c_h + 2\CInHyperboloids M^2/r + O(1/r^2).
\label{eq:uMinusTimefunc}
\end{align}
Thus, the level sets of $\timefunc$ are asymptotic to level sets of $u$ and intersect at $\Scri^+$. This is consistent with the fact that $\lim_{r\to \infty} \YOp^a \nabla_a u  = 2$ is the same as the limit $\lim_{r\to \infty} \YOp^a \nabla_a \timefunc$ given in \eqref{eq:Yt-def}. 
\item Similarly, there is a constant $c_k$ such that the Boyer-Lindquist time and the horizon crossing time $\tHor$ satisfy, for large $r$,  
\begin{align} 
t_{BL} - \tHor ={}& c_k + \CInHyperboloids M^2/r + O(1/r^2) .
\end{align} 
\item From $h(r_+) = 0$ and \eqref{eq:h'pos} we have that $h(r) \geq 0$ for $r \geq r_+$. It follows that $\Sigma_{\timefunc_2}$ is contained in the future of $\{\tHor = \timefunc_1\} \cap \{ r > r_+\}$ precisely when $\timefunc_2 \geq \timefunc_1$.    
\item Although the class of hyperboloidal time functions introduced in point \ref{point:hyptime} of definition \ref{def:maintimedef} could be employed in this paper, for simplicity we only make use of the explicit hyperboloidal time $\timefunc$. 
\end{enumerate}  
\end{remark} 

We now define several hypersurfaces and regions of the Kerr exterior in terms of the time functions. 
These are illustrated in figures  \ref{fig:FoliationHyperbolic}. 
We will pose initial data on an initial hypersurface $\tHor=10M$, with $10M$ taken fairly arbitrarily so that it is sufficiently large that it is positive, satisfies $10M>r_+$, and so forth. We also find it useful to use different arguments in the regions separated by the transition hypersurface $\Xi=\{\timefunc=r\}$. The interior of $\Xi$, where $r<t$ is denoted with a superscript $\interior$, and the exterior, where $r>t$ by $\ext$. 

\begin{definition}
\begin{enumerate}
\item 
The future domain of dependence of a hypersurface $\Sigma \subset \mathcal{M}$ is denoted $\DoDfuture(\Sigma)$. 
\index{D2DoD@$\DoDfuture$}
\item
For a subset $\Omega \in \mathcal{M}$, let $I^+(\Omega), I^-(\Omega)$ denote the time-like future and past of $\Omega$, respectively. 
\end{enumerate}
\end{definition}

\begin{figure}[t!]
\centering
\begin{subfigure}[t]{0.5\textwidth}
\centering 
\begin{tikzpicture}[xscale=0.80,yscale=0.80]

\coordinate (A0) at (8.5,1.5);
\coordinate (A1) at (6.4,0.8);
\coordinate (A2) at (5,0.1);
\coordinate (A3) at (1,1.5);

\coordinate (B0) at (7.5,2.5);
\coordinate (B1) at (6,2);
\coordinate (B2) at (5,1.5);
\coordinate (B3) at (2,2.5);

\coordinate (i+) at (5,5);
\coordinate (C1) at (6,2);
\coordinate (C2) at (5,1);
\coordinate (C3) at (4.5,0);

\draw[thick,blue,name path=horizon]  (1,1) -- node[sloped, above] {$\mathscr{H}^+$} (5,5);
\draw[name path=scri, ,thick, dash dot, red]  (5,5) -- node[sloped, above] {$\Scri^+$} (9,1);

\path[name path=consttau0, gray] (A0) .. controls (A1) and (A2) .. node[pos=0.52, below, black] {$\Staui$} (A3);
\path[name path=consttau,gray] (B0) .. controls (B1) and (B2) .. node[pos=0.60, sloped, above, black] {$\Stau$} (B3);

\begin{scope}
  \clip [closed]  (1,1) -- (5,5) --  (10,5) -- (10,1) -- (5,0);
  \fill[fill=lightgray!50]  (A0) .. controls (A1) and (A2) ..  (A3) -- (B3) .. controls (B2) and (B1) .. (B0) -- cycle;
\end{scope}

\begin{scope}
  \clip [closed]   (C3) .. controls (C2) and (C1) .. (i+) --  (10,5) -- (10,1) -- (5,0);
  \fill[fill=lightgray]  (A0) .. controls (A1) and (A2) ..  (A3) -- (B3) .. controls (B2) and (B1) .. (B0) -- cycle;
\end{scope}

\begin{scope}
  \clip [closed]  (1,1) -- (5,5) --  (10,5) -- (10,1) -- (5,0);
  \draw[draw=black, thick]  (A0) .. controls (A1) and (A2) ..  (A3) -- (B3) .. controls (B2) and (B1) .. (B0);
\end{scope}

\draw[black, thick, name intersections={of=consttau0 and horizon, by=htau0}, name intersections={of=consttau and horizon, by=htau}]  (htau0)  --  (htau);

\draw[black, thick,dashed] (A0) --  (B0);

\draw [dashed] (i+) .. controls (C1) and (C2) .. (C3) node[sloped, near start, above] {$r=\timefunc$};

\filldraw[fill=white,draw=black] (i+) circle[thick,radius=.7mm] node[anchor=south,yshift=.1cm]{$i_+$};

\node at (5.9,1.5) {$\Boundext$};

\node at (7,1.6) {$\Dtauext$};

\node at (3.5,1.6) {$\Dtauint$};

\end{tikzpicture}%
\end{subfigure}%
\begin{subfigure}[t]{0.5\textwidth}
\centering
\begin{tikzpicture}[xscale=0.80,yscale=0.80]

\coordinate (A0) at (8.5,1.5);
\coordinate (A1) at (6.4,0.8);
\coordinate (A2) at (5,0.1);
\coordinate (A3) at (1,1.5);

\coordinate (B0) at (3.5,3.5);
\coordinate (B1) at (5.5,1.5);

\coordinate (i+) at (5,5);
\coordinate (C1) at (6,2);
\coordinate (C2) at (5,1);
\coordinate (C3) at (4.5,0);

\begin{scope}
  \clip [closed]   (C3) .. controls (C2) and (C1) .. (i+) --  (B0) -- cycle;
  \fill[fill=lightgray]  (B0) -- (B1) -- (6,3.5) -- (i+) -- cycle;
\end{scope}

\draw[thick,blue,name path=horizon]  (1,1) -- node[sloped, above, pos=0.4] {$\mathscr{H}^+$} (5,5);
\draw[name path=scri, thick, dash dot, red]  (5,5) -- node[sloped, above] {$\Scri^+$} (9,1);

\path[name path=consttau0, gray] (A0) .. controls (A1) and (A2) .. node[pos=0.52, below, black] {$\Sigma_{\timefunc_0}$} (A3);

\path[name path=geod] (B0) -- (B1);

\draw[name path=reqt, dashed] (i+) .. controls (C1) and (C2) .. (C3) node[sloped, pos=0.25, above] {$r=\timefunc$};

\draw[thick, name intersections={of=geod and reqt, by=reqtgeod}] (B0) --  node[sloped, below] {$v=\vOne$} (reqtgeod);

\begin{scope}
  \clip [closed]  (1,1) -- (5,5) --  (10,5) -- (10,1) -- (5,0);
  \draw[draw=black, thick]  (A0) .. controls (A1) and (A2) ..  (A3);
\end{scope}

\filldraw[fill=white,draw=black] (i+) circle[thick,radius=.7mm] node[anchor=south,yshift=.1cm]{$i_+$}; 

\filldraw[fill=black,draw=black] (reqtgeod) circle[thick,radius=.7mm] node[anchor=west,xshift=.1cm]{$\tprimeingoing$}; 

\filldraw[fill=black,draw=black] (B0) circle[thick,radius=.7mm] node[anchor=south east]{$\vOne$}; 

\node at (4.5,3.5) {$\Dtautnear{\vOne}$};

\end{tikzpicture}
\end{subfigure} 
\caption{Hyperboloidal regions, cf. definition \ref{def:regions}, and surfaces used for interior estimates.}
\label{fig:FoliationHyperbolic}
\end{figure}

\index{O4Omegat@$\Dtau$}
\index{S4Sigmat@$\Staut$}
\begin{definition} \label{def:regions}
\begin{enumerate} 
\item
Define $\timefunc_0 = 10M$, and define the initial hypersurface $\Stauini$ by 
\begin{align} \label{eq:Sigmainit-def} 
\Stauini = \{ \tHor = \timefunc_0 \} \cap \{ r > r_+\} . 
\end{align}
\item
Given $\timefunc_1\in\Reals$, $\Staui$ denotes the corresponding level set of the hyperboloidal time function $\timefunc$, restricted to $\DoDfuture(\Stauini)$, 
\begin{align} 
\Staui = \{ \timefunc = \timefunc_1 \} \cap \DoDfuture(\Stauini). 
\end{align} 
\item 
Given $- \infty \leq \timefunc_1<\timefunc_2 \leq \infty$ and $r_+\leq r_1< r_2$, define 
\begin{subequations}
\begin{align}
\Staut^{r_1} ={}& \Staut \cap \{r_1\leq r\} ,\\
\Staut^{r_1,r_2} ={}& \Staut \cap \{r_1\leq r\leq r_2\} ,\\
\Dtau ={}& \bigcup_{\timefunc_1\leq\timefunc\leq\timefunc_2} \Staut ,\\
\Dtau^{r_1} ={}& \Dtau \cap \{r_1\leq r\} ,\\
\Dtau^{r_1,r_2} ={}& \Dtau \cap \{r_1\leq r\leq r_2\}.
\end{align}
\end{subequations}
\index{O4Omegatext@$\Dtauext$}
\index{O4Omegatint@$\Dtauint$}
\index{X4XiBoundext@$\Boundext$}
\index{S4Sigmatiext@$\Stauiext$}
\index{S4Sigmatiint@$\Stauiint$}
\index{H4Horgtt@$\Horgtt{\timefunc}$}

\item Given $- \infty \leq \timefunc_1<\timefunc_2 \leq \infty$, define 
the transition surface $\Xi$ and a subset thereof to be  
\begin{subequations}
\begin{align} 
\Xi ={}& \{r = \timefunc\} \cap \DoDfuture(\Stauini),\\
\Boundext={}&\Dtau \cap \Xi.
\end{align}
\end{subequations}
\item Given $- \infty \leq \timefunc_1<\timefunc_2 \leq \infty$, define 
\begin{subequations}
\begin{align}
\Stauiext={}& \Staui \cap \{r\geq \timefunc\},\\
\Stauiint={}& \Staui \cap \{r\leq \timefunc\},\\
\Dtauext={}&\Dtau \cap \{r\geq \timefunc\},\\
\Dtauint={}&\Dtau \cap \{r\leq \timefunc\}.
\end{align}
\end{subequations}
\item  
Given $\timefunc_1 < \infty$, define 
\begin{align}
\Horgtt{\timefunc_1} ={}& \Horizon^+ \cap \{\timefunc \geq \timefunc_1 \}. 
\end{align} 
\end{enumerate} 
\end{definition}

\begin{figure}[t!]
\centering
\begin{subfigure}[t]{0.5\textwidth}
\centering 
\begin{tikzpicture}[xscale=0.80,yscale=0.80]

\coordinate (A0) at (9,1);
\coordinate (A1) at (6.5,1);
\coordinate (A2) at (5,1.1);
\coordinate (A3) at (2.04,2.54);

\coordinate (B0) at (7.5,2.5);
\coordinate (B1) at (6,2);
\coordinate (B2) at (5,1.5);
\coordinate (B3) at (2,2.5);

\coordinate (i+) at (5,5);

\draw[thick,blue,name path=horizon]  (1,1) -- node[sloped, above] {$\mathscr{H}^+$} (5,5);
\draw[name path=scri, ,thick, dash dot, red]  (5,5) -- node[sloped, above, near end] {$\Scri^+$} (B0);

\path[name path=consttau0, gray] (B0) .. controls (B1) and (B2) .. node[pos=0.6, sloped, above, black] {$\Sigma_{\timefunc_0}$} (B3); 

\path[name path=sigmainit, gray] (A0) .. controls (A1) and (A2) .. node[pos=0.6, sloped, below, black] {$\Stauini$} (A3);

\path[name intersections={of=consttau0 and horizon, by=tau0horizon}]  (1,1) -- (5,5);
\path[name intersections={of=sigmainit and horizon, by=sigmainithorizon}]  (1,1) -- (5,5);

\begin{scope}
  \clip [closed]  (1,1) -- (5,5) --  (10,5) -- (10,1) -- (5,0);
  \fill[lightgray]  (A0) .. controls (A1) and (A2) ..  (A3) -- (B3) .. controls (B2) and (B1) .. (B0) -- cycle;
\end{scope}
\begin{scope}
  \clip [closed]  (1,1) -- (5,5) --  (10,5) -- (10,1) -- (5,0);
  \draw[black, thick]  (A0) .. controls (A1) and (A2) ..  (A3) -- (B3) .. controls (B2) and (B1) .. (B0);
\end{scope}

\draw[black, thick,dashed] (A0) --  (B0);

\filldraw[fill=white,draw=black] (i+) circle[thick,radius=.7mm] node[anchor=south,yshift=.1cm]{$i_+$}; 
\filldraw[fill=white,draw=black] (A0) circle[thick,radius=.7mm] node[anchor=west,xshift=.1cm]{$i_0$}; 

\node at (7,1.6) {$\Dtauearly$};

\end{tikzpicture}%
\end{subfigure}%
\begin{subfigure}[t]{0.5\textwidth}
\centering
\begin{tikzpicture}[xscale=0.80,yscale=0.80]

\coordinate (A0) at (9,1);
\coordinate (A1) at (6.5,1);
\coordinate (A2) at (5,1.1);
\coordinate (A3) at (2.04,2.54);

\coordinate (B0) at (7.5,2.5);
\coordinate (B1) at (6,2);
\coordinate (B2) at (5,1.5);
\coordinate (B3) at (2,2.5);

\coordinate (C0) at (8,2);
\coordinate (C1) at (6.2,1.4);
\coordinate (C2) at (5,0.8);
\coordinate (C3) at (1.4,2);

\coordinate (i+) at (5,5);

\draw[thick,blue,name path=horizon]  (1,1) -- node[sloped, above] {$\mathscr{H}^+$} (5,5);
\draw[name path=scri, ,thick, dash dot, red]  (5,5) -- node[sloped, above, near end] {$\Scri^+$} (C0);

\path[name path=consttau, black] (C0) .. controls (C1) and (C2) .. node[pos=0.2, sloped, above] {$\Sigma_{\timefunc}$} (C3); 

\path[name path=consttau0, gray] (B0) .. controls (B1) and (B2) .. node[pos=0.5, sloped, above, black] {$\Sigma_{\timefunc_0}$} (B3); 

\path[name path=sigmainit, gray] (A0) .. controls (A1) and (A2) .. node[pos=0.5, sloped, below,black] {$\Stauini$} (A3);

\path[name intersections={of=consttau0 and horizon, by=tau0horizon}]  (1,1) -- (5,5);
\path[name intersections={of=sigmainit and horizon, by=sigmainithorizon}]  (1,1) -- (5,5);

\begin{scope}
  \clip [closed]  (1,1) -- (5,5) --  (10,5) -- (10,1) -- (5,0);
  \clip [closed]  (A0) .. controls (A1) and (A2) ..  (A3) -- (B3) .. controls (B2) and (B1) .. (B0);
  \fill[lightgray]  (C0) .. controls (C1) and (C2) ..  (C3) -- (A3) .. controls (A2) and (A1) .. (A0) -- cycle;
\end{scope}
\begin{scope}
  \clip [closed]  (1,1) -- (5,5) --  (10,5) -- (10,1) -- (5,0);
  \clip [closed]  (A0) .. controls (A1) and (A2) ..  (A3) -- (B3) .. controls (B2) and (B1) .. (B0);
  \draw[black, thick]  (C0) .. controls (C1) and (C2) ..  (C3) -- (A3) .. controls (A2) and (A1) .. (A0);
\end{scope}
\begin{scope}
  \clip [closed]  (1,1) -- (5,5) --  (10,5) -- (10,1) -- (5,0);
  \draw[black, thick] (B0) .. controls (B1) and (B2) .. (B3); 
  \draw[black,thick] (A0) .. controls (A1) and (A2) .. (A3);
\end{scope}

\draw[black, thick,dashed] (A0) --  (C0);

\filldraw[fill=white,draw=black] (i+) circle[thick,radius=.7mm] node[anchor=south,yshift=.1cm]{$i_+$}; 
\filldraw[fill=white,draw=black] (A0) circle[thick,radius=.7mm] node[anchor=west,xshift=.1cm]{$i_0$}; 

\node at (7.5,1.4) {$\Dtautearly$};

\end{tikzpicture}
\end{subfigure} 
\caption{Early regions ($\timefunc<\timefunc_0$), cf. definition \ref{def:Sigmainit}}
\label{fig:EarlyEstSurfaces}
\end{figure}

\begin{remark} \label{rem:2.27}
\begin{enumerate} 
\item For $\timefunc_1 \geq \timefunc_0$, the level set $\{\timefunc = \timefunc_1\} \cap \{ r > r_+\}$ is contained in $\DoDfuture(\Stauini)$, i.e. $\Staui = \{\timefunc = \timefunc_1\} \cap \{r > r_+\}$.  This follows from the fact that on $\Stauini$, $\tHor = \timefunc_0$, hence at each point on $\Stauini$, $\timefunc= \tHor - h/2 =t_0-h/2\leq t_0$, which demonstrates that $\Stauini$ is always in the past of $\Staui$ for any $\timefunc_1\geq \timefunc_0$. See figures \ref{fig:FoliationHyperbolic} and \ref{fig:EarlyEstSurfaces} for illustration.

\item 
From the definition of the hyperboloidal time function, we have that on $\Xi$, $r+h(r) = v$. Due to \eqref{eq:h'pos}, we have that $r \mapsto r+h(r)$ defines a diffeomorphism $[\timefunc_0,\infty) \to  [\timefunc_0 + h(\timefunc_0), \infty)$. 
\end{enumerate}
\end{remark} 

At a certain point in our argument, we need to consider the division of the interior and exterior regions not in the hyperboloidal coordinates but in the outgoing Eddington-Finkelstein coordinates. In this situation, we use the superscript $\near$ and introduce the function $\tprimeingoing$ to denote the value of $\timefunc(\vOne)$ to denote the value of $\timefunc$ at the intersection of the level set $v=\vOne$ with the transition hypersurface $\Xi$. 

\begin{definition} \label{def:tprime} 
Let $\vOne\geq \timefunc_0 + h(\timefunc_0)$.
\begin{enumerate} 
\item 
Define 
 \index{O4Omegav1near@$\Dtautnear{\vOne}$}
\begin{align}
\Dtautnear{\vOne}={}&\Dtauitint\cap \{v\geq \vOne\},
\end{align}
where $v$ is the advanced time.
\item 
\index{T1tCv1@$\tprimeingoing$}
Let $\tprimeingoing$ be the solution to the equation 
\begin{align} 
\tprimeingoing + h(\tprimeingoing) = \vOne  .
\end{align} 
\end{enumerate} 
\end{definition}
\begin{remark} \label{rem:tprime}
For $\vOne$ as in definition \ref{def:tprime}, $\tprimeingoing$ is well defined, and the point with hyperboloidal coordinate $(\tprimeingoing, \tprimeingoing,\omega)$  lies on $\Xi$, and is the point in $\Xi$ with ingoing Eddington-Finkelstein coordinate $(v_1, \tprimeingoing,\omega)$. For $v_1 = \timefunc_0 + h(\timefunc_0)$, this point lies on $\Sigma_{\timefunc_0}$. Further, for $\vOne \geq \timefunc_0 + h(\timefunc_0)$, $\vOne \sim \tprimeingoing$. 
\end{remark}  

We refer to the regions between the initial hypersurface $\Stauini$ and the hyperboloids $\Staut$ as early and denote these with the superscript $\early$. These regions are illustrated in figure \ref{fig:EarlyEstSurfaces}. 

\index{O4Omegat@$\Dtautearly$} 
\index{S4Sigmainit@$\Stauini$}
\begin{definition} \label{def:Sigmainit}
For $\timefunc \in \Reals$, define $\Dtautearly$ to be the intersection of the future of $\Stauini$ and the past of $\Staut$,
\begin{align} 
\Dtautearly = \DoDfuture(\Stauini) \cap I^-(\Staut).
\end{align}  
Furthermore, for $r_2> r_1\geq r_+$, define
\begin{subequations}
\begin{align}
\DtautearlyFurtherArguments{r_1}={}&\Dtautearly\cap\{r_1\leq r\} ,\\
\DtautearlyFurtherArguments{r_1,r_2}={}&\Dtautearly\cap\{r_1\leq r\leq r_2\} .
\end{align}
\end{subequations}
\end{definition}

\subsection{Compactified hyperboloidal coordinates}

In the final part of this section, we introduce a final coordinate system which is well adapted to working near infinity $\Scri^+$. This consists of the hyperboloidal time function $\timefunc$, the inverted radial coordinate $R=1/r$, and angular position. 

\begin{definition} 
\label{def:hPrimeInR}
Let $(v,r,\theta,\phi)$ be the ingoing Eddington-Finkelstein coordinates, let $\timefunc$ be the hyperboloidal time function given by \eqref{eq:tGen-def} with $h(r)$ given by \eqref{eq:hdef}, and let $R=1/r$ be the compactified radial coordinate from \eqref{eq:R=1/r}. 

The compactified hyperboloidal coordinate system is $(\timefunc, R,\theta,\phi)$. We shall write 
\index{H2hPrimeInR@$H$}
\begin{align} \label{eq:H(R)}
H(R) = h'(r) .
\end{align} 
\end{definition} 
The domain of outer communication is parametrized by $(\timefunc,\rInv,\omega)\in\Reals\times(0,r_+^{-1})\times\Sphere$. 
For $\epsilon>0$, these coordinates can be extended to $\Reals\times(-\epsilon,r_+^{-1})\times\Sphere$.
From equation \eqref{eq:uMinusTimefunc}, the hyperboloidal time function $\timefunc$ differs from the retarded time $u$ by $c_h +O(R)$. 
Thus, in compactified hyperboloidal coordinates, 
\begin{align}
\Scri^+={}&\Reals\times \{0\}\times \Sphere 
\end{align}
coincides with the standard notion of future null infinity used in section \ref{sec:Kerrgeom}. In fact, the conformal metric $R^2 g_{ab}$ extends analytically not merely to $\Scri^+$ but beyond to $R\in(-\epsilon,r_+^{-1})$.

\begin{definition}
For $\timefunc_1<\timefunc_2$, in compactified hyperboloidal coordinates, let 
\index{I2IScriPlus@$\Scri^+$}
\index{I2IScritau@$\Scritau$}
\begin{align}
\Scritau={}& [\timefunc_1,\timefunc_2]\times\{0\}\times \Sphere .  
\end{align}
\end{definition}

\begin{remark} 
The angular coordinates in the compactified hyperboloidal coordinate system are those of the ingoing Eddington-Finkelstein coordinates.
\end{remark}

\begin{lemma} 
In the Znajek tetrad and the compactified hyperboloidal coordinates $(\timefunc,R,\theta,\phi)$, we have 
\begin{subequations}
\label{eq:tRthetaphi}
\begin{align}
\YOp\varphi={}&H \partial_{\timefunc} \varphi
 + R^2 \partial_{R} \varphi,
 \label{eq:YOptRthetaphi}\\
\VOp\varphi={}&\bigl(1 -  \frac{H R^2 \Delta }{2(1 + a^2 R^2)}\bigr) \partial_{\timefunc} \varphi
 -  \frac{R^4 \Delta \partial_{R} \varphi}{2(1 + a^2 R^2)}
 + \frac{a R^2 \partial_{\phi} \varphi}{1 + a^2 R^2}, \\
\partial_{R} \varphi ={}&\frac{2 a \LetaOp\varphi}{R^2 \Delta}
 -  \frac{2 (1 + a^2 R^2) \VOp\varphi}{R^4 \Delta}
 + \frac{\LxiOp{}\varphi \bigl(2 + 2 a^2 R^2 -  H R^2 \Delta \bigr)}{R^4 \Delta}.
 \label{eq:ddRasVOp}
\end{align}
\end{subequations}
The operators $\hedt, \hedtp, \LxiOp, 
\LetaOp, \TMESOp_s$ take the form given in \eqref{eq:ops-Znaj-IEF}. 
\end{lemma}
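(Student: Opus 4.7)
The proof is a direct coordinate change computation. The plan is as follows.

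First I would set up the chain rule between the ingoing Eddington-Finkelstein coordinates $(v,r,\theta,\phi)$ and the compactified hyperboloidal coordinates $(\timefunc,R,\theta,\phi)$ using the defining relations $\timefunc = v - h(r)$, $R=1/r$, and the notation $H(R)=h'(r)$ from definition~\ref{def:hPrimeInR}. Since the angular coordinates are unchanged and $\timefunc$ depends on $v$ only through the shift by $h(r)$, holding the angular coordinates fixed one gets
\begin{align*}
\partial_v \;=\; \partial_{\timefunc}, \qquad \partial_r \;=\; -H(R)\,\partial_{\timefunc} \;-\; R^{2}\,\partial_{R},
\end{align*}
where the second identity uses $\partial_{\timefunc}/\partial r|_v = -h'(r) = -H(R)$ and $\partial R/\partial r|_v = -R^{2}$.

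Next I would substitute these expressions into the formulas for $\YOp$ and $\VOp$ on spin-weighted scalars provided in \eqref{eq:Y-Znaj-IEF} and \eqref{eq:V-Znaj-IEF}. Formula \eqref{eq:YOptRthetaphi} is then immediate from $\YOp\varphi = -\partial_r\varphi$. For $\VOp$, after substitution I would simplify the coefficients using the identity
\begin{align*}
\frac{1}{a^{2}+r^{2}} \;=\; \frac{R^{2}}{1+a^{2}R^{2}},
\end{align*}
which converts $\Delta/(2(a^{2}+r^{2}))$ and $a/(a^{2}+r^{2})$ into the factors displayed in the statement. Collecting the $\partial_{\timefunc}$, $\partial_R$, and $\partial_{\phi}$ terms gives the second formula in \eqref{eq:tRthetaphi}.

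To obtain \eqref{eq:ddRasVOp} I would simply solve the formula just derived for $\partial_R\varphi$, using that $\partial_{\timefunc} = \partial_v = \LxiOp$ and $\partial_{\phi} = \LetaOp$ in the Znajek tetrad by \eqref{eq:ops-Znaj-IEF}; multiplying through by $2(1+a^{2}R^{2})/(R^{4}\Delta)$ and distributing the $\LxiOp$ coefficient $2(1+a^{2}R^{2}) - HR^{2}\Delta$ yields the stated form.

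Finally, the assertion that $\hedt,\hedt',\LxiOp,\LetaOp,\TMESOp_{s}$ retain the expressions from \eqref{eq:ops-Znaj-IEF} follows because every term in those formulas involves only $\partial_v$, $\partial_\theta$, $\partial_\phi$ and multiplicative functions of $(r,\theta)$, and since $\partial_v = \partial_{\timefunc}$ while $\partial_\theta,\partial_\phi$ and the coordinate $\theta$ are preserved by the change of variables, the explicit formulas are unchanged. There is no real obstacle here; the only minor bookkeeping step is keeping signs and the factors $R^{2}$ and $(1+a^{2}R^{2})$ straight when inverting the $\VOp$ formula to express $\partial_R$.
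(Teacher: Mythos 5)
Your proof is correct and is exactly the direct coordinate-change computation the paper intends (the lemma is stated without proof as an elementary consequence of lemma \ref{lem:Op-forms}): the chain-rule relations $\partial_v=\partial_{\timefunc}$, $\partial_r=-H\partial_{\timefunc}-R^2\partial_R$ substituted into \eqref{eq:V-Znaj-IEF}--\eqref{eq:Y-Znaj-IEF}, together with $\tfrac{1}{a^2+r^2}=\tfrac{R^2}{1+a^2R^2}$, give \eqref{eq:YOptRthetaphi} and the $\VOp$ formula, and solving the latter for $\partial_R\varphi$ yields \eqref{eq:ddRasVOp}; the invariance of $\hedt,\hedt',\LxiOp,\LetaOp,\TMESOp_s$ is justified correctly since their expressions involve only $\partial_v,\partial_\theta,\partial_\phi$ and functions of $\theta$.
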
 
\begin{lemma} In the Znajek tetrad and the compactified hyperboloidal coordinate system $(\timefunc, R, \theta, \phi)$, the operator $\hROp_s$ from definition~\ref{def:TMEOperators} takes the form
\begin{align}\label{eq:ROperatorsThroughScri}
\hROp_s(\varphi)={}&\frac{H \bigl(2 + 2 a^2 R^2 -  H R^2 \Delta \bigr) \partial_{\timefunc} \partial_{\timefunc} \varphi}{R^2}
 + 2 \bigl(1 + a^2 R^2 -  H R^2 \Delta \bigr) \partial_{\timefunc} \partial_{R} \varphi
  + 2 a H \partial_{\timefunc} \partial_{\phi} \varphi\nonumber\\
& -  R^4 \Delta \partial_{R} \partial_{R} \varphi
 + 2 a R^2 \partial_{R} \partial_{\phi} \varphi 
  -  \frac{2 R \bigl((1 + a^2 R^2)^2 -  M R (3 + a^2 R^2)\bigr) \partial_{R} \varphi}{1 + a^2 R^2}
 + \frac{2 a R \partial_{\phi} \varphi}{1 + a^2 R^2}\nonumber\\
& -  \bigl(2 M H (1 - \frac{2}{1 + a^2 R^2}) +  R^2 \Delta \partial_R H\bigr)\partial_{\timefunc} \varphi
 + \frac{R \bigl(a^2 R + a^4 R^3 + M (2 - 4 a^2 R^2)\bigr) \varphi}{(1 + a^2 R^2)^2} .
\end{align}
\end{lemma}

\section{The linearized Einstein equation}
\label{sec:linearizedEinstein}
In this section, we introduce the key variables that we use to study the linearized Einstein equation, review the outgoing radiation gauge  for the linearized Einstein equation, derive a hierarchy of equations used to reconstruct the linearized metric components from the Teukolsky variable, and present convenient forms of the Teukolsky equation and the Teukolsky-Starobinsky identities.

\subsection{First-order form of the linearized Einstein equations} 
\label{sec:connectcomp}

In this subsection, we first recall spinor fields arising in the study of the linearized Einstein equation, then present the spinorial equations for these quantities, and conclude by introducing the GHP scalars components of these spinor fields. This is a precursor to the later subsections, in which we will introduce equations for the GHP scalars. 

\index{D3deltag@$\varop g_{ab}$}
\index{G2GFourIndex@$G_{ABA'B'}$}
\index{G2Gslash@$\slashed{G}$}
We begin by introducing the spinor fields arising in the study of the linearized Einstein equation following \cite{BaeVal15}. 
Let $\delta g_{ab}$ be a solution of the linearized Einstein equations on $(\mathcal{M}, g_{ab})$. Define $\slashed{G}$ and $G_{ABA'B'}$ to be the trace and trace-free parts of $\delta g_{ab}$ respectively. Define the trace and trace-free parts of the linearized connection, $\slashed{\Qop}{}_{CA'}$ and $\Qop{}_{ABCA'}$, to be given by covariant derivatives of $G_{ABA'B'}$ and $\slashed{G}$ by\footnote{We follow the convention \cite{BaeVal15} of representing these by the early Greek letter qoppa $\Qop$.}
\index{Q4Qopslash@$\slashed{\Qop}{}_{AA'}$}
\index{Q4Qop@$\Qop{}_{ABCA'}$}
\begin{align}
\slashed{\Qop}{}_{CA'}={}&\tfrac{1}{4} \nabla^{AB'}G_{CAA'B'}
 -  \tfrac{3}{16} \nabla_{CA'}\slashed{G},&
\Qop{}_{ABCA'}={}&- \tfrac{1}{2} \nabla_{(A}{}^{B'}G_{BC)A'B'}.
\label{eq:QoppaToGCov}
\end{align}
The quantity $\Qop{}_{ABCA'}$ introduced in \eqref{eq:QoppaToGCov} is the symmetrized part of the spinor $\Qop{}_{AA'BC}$ used in \cite{BaeVal15}. 
\index{T3thetaPsiABCD@$\vartheta\Psi_{ABCD}$}
Define $\vartheta\Psi_{ABCD}$ to be the covariant linearized Weyl spinor in the sense of \cite{BaeVal15}. Recall that $\Psi_{ABCD}$ denotes the unperturbed Weyl curvature of the background Kerr metric. 

We now present the equations for these spinor variables. Following \cite{BaeVal15}, one finds that 
\begin{subequations}
\label{eq:spinorEinsteinEquationEtc}
\begin{align}
\nabla^{CA'}\Qop{}_{ABCA'}={}&- \tfrac{4}{3} \nabla_{(A}{}^{A'}\slashed{\Qop}{}_{B)A'},
\label{eq:DivQop31Cov}\\
\nabla^{AA'}\slashed{\Qop}{}_{AA'}={}&0,
\label{eq:DivQop11Cov}\\
\nabla^{C(A'}\Qop{}_{ABC}{}^{B')}={}&\tfrac{1}{2} G^{CDA'B'} \Psi_{ABCD}
 + \tfrac{2}{3} \nabla_{(A}{}^{(A'}\slashed{\Qop}{}_{B)}{}^{B')},
\label{eq:CurlDgQop31Cov}\\
\nabla_{(A}{}^{A'}\Qop{}_{BCD)A'}={}&- \tfrac{1}{4} \Psi_{ABCD}\slashed{G}
 -  \vartheta \Psi_{ABCD},
\label{eq:CurlQop31Cov}\\
\nabla^{D}{}_{A'}\vartheta \Psi_{ABCD}={}&2 \Psi_{ABCD} \slashed{\Qop}{}^{D}{}_{A'}
 + \tfrac{1}{2}  (\nabla_{FB'}\Psi_{ABCD})G^{DF}{}_{A'}{}^{B'}
 + 3 \Psi_{(AB}{}^{DF}\Qop{}_{C)DFA'} 
\label{eq:VacuumLinBianchiCov}
\end{align}
\end{subequations}
follow respectively from a commutator relation, the trace and tracefree parts of the linearized vacuum Einstein equation, the vacuum Ricci relations, and the vacuum Bianchi identity. The system \eqref{eq:VacuumLinBianchiCov} is clearly a first-order system, but, since no gauge has been imposed at this stage, it should not be expected to be a well-posed system. 

We now present the GHP scalar components of these spinor fields. For the linearized metric, we use the compactified index notation\footnote{Recall that $\varphi_{ii'}$ denotes the dyad component of a symmetric spinor $\varphi_{AB\cdots D A'B'\cdots D'}$ defined by contracting $i$ times with $\iota^A$ and $i'$ times with $\iota^{A'}$ as explained in section~\ref{sec:notandconv}. } for the trace-free part $G_{ab}$ of the linearized metric, i.e.
\begin{subequations} 
\label{eq:G-compos}  
\begin{align}  
G_{00'} ={}& G_{ab} \NPl^{a} \NPl^{b}, \quad G_{10'} = G_{ab} \NPl^{a} \NPmbar^{b}, \quad G_{11'} = G_{ab} \NPl^{a} \NPn^{b}, \\ 
G_{20'} ={}& G_{ab} \NPmbar^a \NPmbar^b, \quad 
G_{21'} = G_{ab} \NPn^a \NPmbar^b, \quad 
G_{22'} = G_{ab} \NPn^a \NPn^b
\end{align} 
\end{subequations} 
and their complex conjugates. We have that $G_{00'}, G_{11'}, G_{22'}$ are real, while the remaining components are complex. Define the following linear combinations of the components of the linearized connection, 
\index{B3betatilde@$\tilde{\beta}, \tilde{\beta}'$}\index{E3epsilontilde@$\tilde{\epsilon}, \tilde{\epsilon}'$}\index{K3kappatilde@$\tilde{\kappa}, \tilde{\kappa}'$}\index{R3rhotilde@$\tilde{\rho}, \tilde{\rho}'$}\index{S3sigmatilde@$\tilde{\sigma}, \tilde{\sigma}'$}\index{T3tautilde@$\tilde{\tau}, \tilde{\tau}'$}
\begin{subequations} \label{eq:tildespincoeff}
\begin{align}
\tilde{\beta}={}&- \tfrac{1}{3} \slashed{\Qop}{}_{01'}
 + \Qop{}_{11'},&
\tilde{\beta}'={}&- \tfrac{1}{3} \slashed{\Qop}{}_{10'}
 -  \Qop{}_{20'},&
\tilde{\epsilon}={}&- \tfrac{1}{3} \slashed{\Qop}{}_{00'}
 + \Qop{}_{10'},&
\tilde{\epsilon}'={}&- \tfrac{1}{3} \slashed{\Qop}{}_{11'}
 -  \Qop{}_{21'},\\
\tilde{\kappa}={}&\Qop{}_{00'},&
\tilde{\kappa}'={}&- \Qop{}_{31'},&
\tilde{\rho}={}&\tfrac{2}{3} \slashed{\Qop}{}_{00'}
 + \Qop{}_{10'},&
\tilde{\rho}'={}&\tfrac{2}{3} \slashed{\Qop}{}_{11'}
 -  \Qop{}_{21'},\\
\tilde{\sigma}={}&\Qop{}_{01'},&
\tilde{\sigma}'={}&- \Qop{}_{30'},&
\tilde{\tau}={}&\tfrac{2}{3} \slashed{\Qop}{}_{01'}
 + \Qop{}_{11'},&
\tilde{\tau}'={}&\tfrac{2}{3} \slashed{\Qop}{}_{10'}
 -  \Qop{}_{20'}.
\end{align}
\end{subequations}
The notation used here is inspired by the notation for the GHP spin coefficients in \eqref{eq:spincoeff-def}-\eqref{eq:eps-beta-def}.
We use the tilde $\tilde{}$ accent here to denote these components of the linearized connection. 
Note that in contrast to the linearized spin coefficients often used in applications of the NP  formalism \cite{1974RSPSA.341...49S}, the scalars defined in \eqref{eq:tildespincoeff} are components of the linearized connection with respect to a background tetrad. In particular, this avoids introducing a linearly perturbed tetrad and the associated additional degrees of freedom. 
In the terminology of \cite{BaeVal15}, this is refered to as invariance under linearized frame rotations. 
The compactified index notation can be applied to the linearized Weyl curvature, but the only components that are relevant in our analysis are \index{T3thetaPsi0@$\vartheta \Psi_0$}
\index{T3thetaPsi4@$\vartheta \Psi_4$}
\begin{align} 
\label{eq:extreme-lin}
 \vartheta \Psi_0 = - \delta C_{abcd} l^a m^b l^c m^d ,  \quad 
 \vartheta \Psi_4 = - \delta C_{abcd} n^a \bar m^b n^c \bar m^d . 
\end{align} 
Appendix \ref{sec:fieldeq} presents the first-order system \eqref{eq:spinorEinsteinEquationEtc} in terms of these GHP scalars.

\subsection{Outgoing radiation gauge}
\label{sec:ORG}

Here, we present the outgoing radiation gauge, which is the linearized gauge that we use throughout the rest of this paper. We begin by recalling the definition of this gauge from \cite{Chrzanowski}. We then recall a result \cite{2007CQGra..24.2367P} showing that this linearized gauge condition can be imposed. We conclude with a preliminary result that, as a consequence of our choice of gauge, several of the GHP scalars from section \ref{sec:connectcomp} vanish. The non-vanishing linearized metric, connection, and curvature components in the outgoing radiation gauge are illustrated with their $\{p,q\}$ type in figure \ref{fig:GHP}. In terms of these non-vanishing components, there are statements of the Einstein equation and various other relations, which we relegate to appendix~\ref{sec:LinGraSysteminORG}, because of their length and because they follow by direct computation. 

\begin{definition}
\label{def:ORG}
Let $\delta g_{ab}$ be a linearized metric on $(\mathcal{M}, g_{ab})$. 
We say that $\delta g_{ab}$ satisfies the $\delta g \cdot n$ condition if 
\begin{align}
\varop g_{ab} \NPn^b={}&0 , 
\label{eq:lhGauge}
\end{align}
and the trace-free condition if 
\begin{align}
g^{ab}\varop g_{ab}={}& 0 . 
\label{eq:traceFree}
\end{align}
If both \eqref{eq:lhGauge} and \eqref{eq:traceFree} hold, then $\delta g_{ab}$ is said to be in outgoing radiation gauge (ORG). Replacing $\NPn^a$ by $\NPl^a$ yields the ingoing radiation gauge (IRG) condition. 
\end{definition}

\begin{lemma}[Price, Shankar and Whiting \cite{2007CQGra..24.2367P}] 
\label{lem:ORG} 
Let $\delta g_{ab}$ be a solution of the linearized vacuum Einstein equation on $(\mathcal{M}, g_{ab})$. 
There is a vector field $\GenVec^a$ such that the gauge transformed metric  
\begin{align} \label{eq:gaugetrafo}
\varop g_{ab} - 2\nabla_{(a} \GenVec_{b)} 
\end{align} 
is in ORG. 
\end{lemma}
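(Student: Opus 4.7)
The plan is to construct $\GenVec^a$ as the solution of a system of transport equations along the integral curves of the repeated principal null direction $\NPn^a$, exploiting the Petrov type D structure of the background. By the Goldberg--Sachs theorem, $\NPn^a$ is geodesic and shear-free, so the GHP spin coefficients $\kappa'$ and $\sigma'$ vanish, cf.\ remark \ref{eq:spin-coeff}; in the Znajek tetrad one moreover has $\epsilon'=0$, so $\tho'$ is a pure first-order transport operator along $\NPn^a$ (proportional to $-\partial_r$, cf.\ \eqref{eq:Y-Znaj-IEF}). Decompose the gauge vector in the principal tetrad as $\GenVec^a = A \NPn^a + B \NPl^a + \bar C \NPm^a + C \NPmbar^a$, with $A, B$ real and $C$ complex, matching the four real degrees of freedom of the infinitesimal diffeomorphism group.

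Expanding $2 \nabla_{(a}\GenVec_{b)}$ in the principal tetrad and using the vanishing spin coefficients above, the four real components of $\varop \tilde g_{ab}\NPn^b = 0$ (condition \eqref{eq:lhGauge}) take the form of a triangular system in which $\tho' B$, $\tho' A$, $\tho' C$, $\tho' \bar C$ are successively expressed in terms of tetrad components of $\varop g_{ab}$ and angular derivatives of the previously determined components of $\GenVec^a$. Because $\tho'$ reduces to a coordinate derivative in $r$ and the integral curves of $\NPn^a$ foliate $\mathcal M$ smoothly, each of these is a well-posed linear ODE in $r$ which can be integrated from data prescribed on the transverse hypersurface $\Stauini$.

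The remaining trace condition \eqref{eq:traceFree} involves $\nabla_a \GenVec^a$ in all tetrad directions, not only along $\NPn^a$, and is therefore of a different character. The plan is to use the initial-data freedom on $\Stauini$ to enforce it there, and then to show that it is propagated along the flow of $\NPn^a$ whenever $\varop g_{ab}$ solves the linearized vacuum Einstein equation \eqref{eq:linEin}. Concretely, one differentiates $g^{ab}\varop \tilde g_{ab}$ along $\NPn^a$, substitutes the four transport equations already imposed, and uses the linearized Ricci and Bianchi identities \eqref{eq:DivQop31Cov}--\eqref{eq:VacuumLinBianchiCov} together with the fact that the only nonvanishing principal-dyad component of $\Psi_{ABCD}$ in Petrov type D is $\Psi_2$ (cf.\ \eqref{eq:kappa1}); the resulting identity should reduce the $\tho'$-derivative of the trace to a linear combination of components of $\varop E_{ab}$, which vanish by assumption.

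The main obstacle is exactly this compatibility computation: although the counting is formally over-determined (five scalar conditions for a four-component gauge vector), one must verify that the particular combination of the four already-imposed transport equations, the vanishing spin coefficients $\kappa', \sigma', \epsilon'$, and the linearized Einstein equations conspire to produce a linear, homogeneous $\tho'$-transport equation for the trace. Uniqueness for that homogeneous ODE, combined with vanishing trace on $\Stauini$, then yields the trace condition throughout $\mathcal M$ and completes the construction. The covariant GHP formalism of section \ref{sec:connectcomp}, supported by the computer-algebra tools referenced there, is the natural setting in which to carry out this check mechanically.
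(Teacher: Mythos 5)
Your first step (decomposing $\GenVec^a$ on the principal tetrad and imposing the four conditions $\varop \tilde g_{ab}\NPn^b=0$ as a triangular system of transport equations along $\NPn^a$, integrable from data on a transversal hypersurface) is sound and is exactly how the cited work of Price, Shankar and Whiting proceeds; note that the paper itself does not prove this lemma but quotes it, and the remark following it records precisely this structure. The genuine gap is in your treatment of the fifth, trace condition \eqref{eq:traceFree}.

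You propose to set the trace $g^{ab}\varop\tilde g_{ab}=\slashed{G}-2\nabla_a\GenVec^a$ to zero on $\Stauini$ and then derive, by differentiating along $\NPn^a$ and using the identities of appendix \ref{sec:fieldeq}, a \emph{first-order} homogeneous $\tho'$-transport equation for it modulo $\varop E_{ab}=0$. No such identity exists, and this is where the argument would fail. Once the four conditions $\varop\tilde g_{ab}\NPn^b=0$ hold (i.e.\ $G_{22'}=G_{21'}=G_{12'}=0$ and $G_{11'}=-\tfrac14\slashed{G}$ for the transformed metric), the only control the vacuum equations give over the trace along the rays is the component $\varop E_{ab}\NPn^a\NPn^b=0$; substituting the expressions \eqref{eq:Aspincoeff} for $\tilde\rho'$, $\tilde\epsilon'$, $\tilde\kappa'$ into the corresponding equation of the first-order system, one finds that it reduces to a homogeneous \emph{second-order} ordinary differential equation along $\NPn^a$, schematically $(\tho'-\rho')(\tho'-\bar\rho')\slashed{G}+\text{(first-order terms)}=0$. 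Consequently, vanishing of the trace on $\Stauini$ alone does not propagate: one must in addition use the residual gauge freedom (the functions of integration of your four transport equations, i.e.\ gauge vectors preserving $\varop\tilde g_{ab}\NPn^b=0$) to arrange that the first $\tho'$-derivative of the trace also vanishes on the initial hypersurface, and one must verify that the residual freedom actually suffices for these two conditions. This is the substantive content of the Price--Shankar--Whiting argument, and it is also where the hypothesis $\varop T_{ab}\NPn^a\NPn^b=0$ (automatic in vacuum) enters; your proposal both predicts the wrong structure for the propagation equation and explicitly defers the decisive compatibility computation to an unverified "mechanical check", so as written it does not establish the lemma.
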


\begin{remark} 
\begin{enumerate} 
\item The $\delta g \cdot n$ gauge condition \eqref{eq:lhGauge}, which consists of four conditions, can be imposed for a general linearized metric on a background vacuum spacetime with repeated principal null direction $n^a$, by sequentially solving a system of four scalar equations, cf. \cite[Eq. (15)]{2007CQGra..24.2367P}. 
The analogous statement is valid for the $\delta g \cdot l$ condition. 
This is in contrast to the ORG or IRG conditions, which contain five conditions, and which can be imposed only for linearized metrics on algebraically special background spacetimes, provided that the linearized Einstein tensor satisfies additional conditions. In \cite{2007CQGra..24.2367P}, it is shown to be possible to impose IRG for solutions of the linearized Einstein equations $\delta E_{ab} = 8\pi \delta T_{ab}$ on a Petrov type II or type D background with repeated principal vector $\NPl^a$, provided $\delta T_{ab} \NPl^a \NPl^b = 0$. Analogously the ORG condition can be imposed provided $\delta T_{ab} \NPn^a \NPn^b = 0$. Here we shall be interested only in the case of  solutions of the linearized vacuum Einstein equations $\delta E_{ab}=0$ on the Kerr spacetime, which is Petrov type D.
\item Imposing the gauge condition does not determine the vector field $\GenVec^a$ uniquely. In particular, there remains residual gauge degrees of freedom in $\GenVec^a$, subject to constraint equations. The vector field $\GenVec^a$ can determined uniquely along the flow lines of $\NPn^a$ by specifying its initial values on a hypersurface.
\item The gauge vector field $\GenVec^a$ plays no explicit role in this paper.   
\end{enumerate} 
\end{remark}

\begin{lemma}
Let $\varop g_{ab}$ be a solution to the vacuum linearized Einstein equations on $(\mathcal{M},  g_{ab})$, in ORG. Then, in the notation introduced in section \ref{sec:connectcomp}, the following holds. 
\begin{enumerate} 
\item 
\begin{subequations} 
\label{eq:ORGcondGHP}
\begin{align}
\slashed{G}={}&0,&
G_{11'}={}&0,&
G_{12'}={}&0,& \\ 
G_{21'}={}&0,&
G_{22'}={}&0 ,
\end{align}
\end{subequations}
and
\begin{align}
\tilde{\epsilon}'={}&0,&
\tilde{\kappa}'={}&0,&
\tilde{\rho}'={}&0.
\label{eq:ORGlinconnectioncoeffConseq}
\end{align}
\item 
The only non-vanishing components of the metric are 
\begin{equation}\label{eq:dg-G} 
G_{00'} = \delta g_{ab} l^a l^b , \quad  G_{10'} = \delta g_{ab}l^a \bar m^b  , \quad G_{20'} = \delta g_{ab}  \bar m^a \bar m^b .
\end{equation} 
\end{enumerate} 
\end{lemma}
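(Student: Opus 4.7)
The proof has two parts. The \emph{algebraic} part, namely all of part~(2) and the first block of part~(1), is immediate from the definition of ORG: \eqref{eq:traceFree} reads $\slashed{G}=0$, and then \eqref{eq:lhGauge} is $G_{ab}n^b=0$, i.e.\ $G_{ABA'B'}\iota^B\iota^{B'}=0$ as a spinor identity. Projecting on the dyad basis in the two remaining indices gives $G_{11'}=G_{12'}=G_{21'}=G_{22'}=0$, so only $G_{00'}, G_{10'}, G_{20'}$ can be nonzero.

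For the \emph{differential} part \eqref{eq:ORGlinconnectioncoeffConseq}, the definitions \eqref{eq:tildespincoeff} reduce the claim to $\Qop_{31'}=\Qop_{21'}=\slashed{\Qop}_{11'}=0$. Each of these is a dyad component of the first-order expressions \eqref{eq:QoppaToGCov} (with $\slashed{G}=0$) contracted against a product of $\iota^A$ and $\iota^{A'}$ factors. The plan is to argue the same way in each case: apply Leibniz to pull the contracting $\iota$'s past the derivative. The resulting \emph{direct} term becomes the derivative of a spinor formed from $G_{ABA'B'}$ by contracting three or more of its indices with $\iota$'s; the dyad components of such a spinor are all of the shape $G_{ij'}$ with $i\geq 1$ and $i'\geq 1$, which vanish by the first part. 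Hence the direct term is identically zero as a spinor, and so is its derivative.

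The remaining \emph{correction} terms take the form $(\nabla^{AB'}\iota^C)\cdots G_{CAA'B'}$ or $(\nabla^{AB'}\iota^{A'})\cdots G_{CAA'B'}$, with the other $\iota$ factors outside the derivative. These are controlled by the principal-tetrad conditions $\kappa'=\sigma'=0$ from \eqref{eq:Znajek-spincoeff} together with their complex conjugates: a dyad expansion yields $\nabla_{DE'}\iota^C = -\iota_D(\tau'\iota_{E'}+\rho' o_{E'})o^C$ and an analogous expression for $\nabla_{DE'}\iota^{A'}$, proportional to $o^{A'}$ and supported only on the $\iota_{E'}$ component of the primed slot. After raising indices with $\epsilon$ and performing the remaining contractions, the $G$-projection in every correction term invariably lands on one of $G_{11'}, G_{12'}, G_{21'}, G_{22'}$, and hence vanishes by the first part. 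This yields $\Qop_{31'}=\Qop_{21'}=\slashed{\Qop}_{11'}=0$.

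The main obstacle is bookkeeping rather than any conceptual difficulty; the number of terms generated by Leibniz is moderate but each one must be paired with the right vanishing dyad component. A cleaner alternative, which would be worth following in the actual writeup, is to read these three components directly from the scalar ORG-specialized equations of Appendix~\ref{sec:fieldeq}: in that form $\Qop_{31'}, \Qop_{21'}, \slashed{\Qop}_{11'}$ should be expressed manifestly as linear combinations of $\tho'$ and $\edt'$ derivatives of exactly the metric components that ORG annihilates, making \eqref{eq:ORGlinconnectioncoeffConseq} transparent.
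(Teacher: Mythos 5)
Your proposal is correct, and since the paper's own proof is just ``these follow by direct computation'', your write-up is in effect a fleshed-out version of it. The algebraic part is exactly right: $\slashed{G}=0$ from \eqref{eq:traceFree}, and then $G_{ABA'B'}\iota^B\iota^{B'}=0$ projects to $G_{11'}=G_{12'}=G_{21'}=G_{22'}=0$. For the differential part, the computation the paper has in mind is most directly the one you flag as the ``cleaner alternative'': the structure equations \eqref{eq:Aspincoeff} (which come from \eqref{eq:QoppaToGCov} alone and need no field equations) express $\tilde{\kappa}'$, $\tilde{\rho}'$, $\tilde{\epsilon}'$ as combinations of derivatives of precisely $G_{11'},G_{12'},G_{21'},G_{22'},\slashed{G}$ with spin-coefficient lower-order terms multiplying the same components, so \eqref{eq:ORGlinconnectioncoeffConseq} is immediate from \eqref{eq:ORGcondGHP}. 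Your primary, index-free Leibniz argument is a genuinely different and more structural route; it correctly isolates why the principal-tetrad conditions $\kappa'=\sigma'=0$ are what make the ORG-annihilated components close under the relevant derivatives.

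Two small imprecisions in that spinorial argument deserve amendment, though neither breaks it. First, the stated formula $\nabla_{DE'}\iota^C=-\iota_D(\tau'\iota_{E'}+\rho' o_{E'})o^C$ is incomplete for the Levi-Civita derivative: it omits the piece proportional to $\iota^C$ whose coefficients are the non-weighted connection scalars \eqref{eq:eps-beta-def}. That piece is harmless, since in every correction term it multiplies the same contraction of $G_{ABA'B'}$ with one unprimed and one primed $\iota$ that you have already shown vanishes identically; alternatively, the formula becomes exact (up to sign conventions) if you use the GHP derivative $\Theta_a$, which coincides with $\nabla_a$ on the weight-$\{0,0\}$ spinor $G_{ABA'B'}$, and for which the only surviving dyad derivatives are governed by $\rho',\tau'$ once $\kappa'=\sigma'=0$. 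Second, the blanket claim that every correction term's $G$-projection lands on a vanishing component is not literally true for the mixed component $\Qop{}_{21'}$: after symmetrizing, in the terms where the derivative index is contracted with $\iota^A$, differentiating the $\iota$ sitting in an unprimed slot of $G$ produces an $o$-proportional piece whose projection would be $G_{01'}$ or $G_{02'}$ (which do not vanish); those terms instead vanish because their scalar prefactor is proportional to $\iota_A\iota^A=0$. With these two remarks incorporated, the argument is complete, and, as in the paper, only the gauge conditions and the type~D background enter, not the linearized field equations.
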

\begin{proof}
Splitting $\delta g_{ab}$ into the trace and trace-free parts and expanding into components yields
\begin{align}
\delta g_{ab}={}&G_{22'} l_{a} l_{b}
 + G_{20'} m_{a} m_{b}
 + G_{02'} \bar{m}_{a} \bar{m}_{b}
 + G_{00'} n_{a} n_{b}
 - 2 G_{21'} l_{(a}m_{b)}
 - 2 G_{12'} l_{(a}\bar{m}_{b)}\nonumber\\
& + (\tfrac{1}{2} \slashed{G}_{} + 2 G_{11'}) l_{(a}n_{b)}
 + (- \tfrac{1}{2} \slashed{G}_{} + 2 G_{11'}) m_{(a}\bar{m}_{b)}
 - 2 G_{10'} m_{(a}n_{b)}
 - 2 G_{01'} \bar{m}_{(a}n_{b)}.
\end{align}
Contraction with $\NPn^a$ and $g^{ab}$ yield
\begin{align}
n^{b} \delta g_{ab}={}&G_{22'} l_{a}
 + (\tfrac{1}{4} \slashed{G}_{}
 + G_{11'}) n_{a}
 -  G_{21'} m_{a}
 -  G_{12'} \bar{m}_{a},
&
g^{ab} \delta g_{ab}={}&\slashed{G}_{}.
\end{align}
Both vanish due to the gauge condition, so \eqref{eq:ORGcondGHP} follows. The relations \eqref{eq:ORGlinconnectioncoeffConseq} then follow from \eqref{eq:Aepsilonp}, \eqref{eq:Akappap} and \eqref{eq:Arhop}. Noting that $G_{01'}$ and $G_{02'}$ are complex conjugates of $G_{10'}$ and $G_{20'}$ we see that $\delta g_{ab}$ is completely specified by $G_{00'}$, $G_{10'}$ and $G_{20'}$.
\end{proof}

\begin{figure}[t]
\centering
\begin{tikzpicture}[xscale=1,yscale=1]
\filldraw[draw=white,fill=yellow!10!white]
(-4.9,-3.9) rectangle (4.9,3.9);

\draw[step=1cm,gray, dotted, thin] (-4.9,-3.9) grid (4.9,3.9);
\draw[very thin, gray, ->] (-4.9,0) -- (4.9,0);
\draw[very thin,gray, ->] (0,-3.9) -- (0,3.9);
\draw (4.9,0) node[anchor=west]{$p$};
\draw (0,3.9) node[anchor=south]{$q$};\draw[white,fill] (1,-1) ellipse(10pt and 6pt) ;
\draw[blue] (1,-1)  node{$\tilde{\beta},\tilde{\tau}$};
\draw[white,fill] (-1,1) ellipse(13pt and 6pt) ;
\draw[blue] (-1,1)  node{$\tilde{\beta}',\tilde{\tau}'$};
\draw[white,fill] (1,1) ellipse(9pt and 6pt) ;
\draw[blue] (1,1)  node{$\tilde{\epsilon},\tilde{\rho}$};
\draw[white,fill] (3,1) ellipse(5pt and 6pt) ;
\draw[blue] (3,1)  node{$\tilde{\kappa}$};
\draw[white,fill] (3,-1) ellipse(5pt and 6pt) ;
\draw[blue] (3,-1)  node{$\tilde{\sigma}$};
\draw[white,fill] (-3,1) ellipse(6pt and 6pt) ;
\draw[blue] (-3,1)  node{$\tilde{\sigma}'$};
\draw[white,fill] (2,2) ellipse(12pt and 6pt) ;
\draw[blue] (2,2)  node{$G_{00'}$};
\draw[white,fill] (2,0) ellipse(23pt and 6pt) ;
\draw[blue] (2,0)  node{$G_{01'},\vartheta \Psi_{1}$};
\draw[white,fill] (2,-2) ellipse(12pt and 6pt) ;
\draw[blue] (2,-2)  node{$G_{02'}$};
\draw[white,fill] (0,2) ellipse(12pt and 6pt) ;
\draw[blue] (0,2)  node{$G_{10'}$};
\draw[white,fill] (-2,2) ellipse(12pt and 6pt) ;
\draw[blue] (-2,2)  node{$G_{20'}$};
\draw[white,fill] (4,0) ellipse(11pt and 6pt) ;
\draw[blue] (4,0)  node{$\vartheta \Psi_{0}$};
\draw[white,fill] (0,0) ellipse(11pt and 6pt) ;
\draw[blue] (0,0)  node{$\vartheta \Psi_{2}$};
\draw[white,fill] (-2,0) ellipse(11pt and 6pt) ;
\draw[blue] (-2,0)  node{$\vartheta \Psi_{3}$};
\draw[white,fill] (-4,0) ellipse(11pt and 6pt) ;
\draw[blue] (-4,0)  node{$\vartheta \Psi_{4}$};\filldraw[draw=white,fill=blue!4!white]
(2.8,1.8) rectangle (5.2,4.2);

\draw[very thin,gray, ->,shorten >=7pt,shorten <=2pt] (4,3) -- (5,4) node {$\tho$};
\draw[very thin,gray, ->,shorten >=7pt,shorten <=2pt] (4,3) -- (3,2) node{$\thop$};
\draw[very thin,gray, ->,shorten >=7pt,shorten <=2pt] (4,3) -- (5,2) node {$\edt$} ;
\draw[very thin,gray, ->,shorten >=7pt,shorten <=2pt] (4,3) -- (3,4) node {$\edtp$} ;

\end{tikzpicture}
\caption{GHP weights of the non-vanishing components in ORG.}
\label{fig:GHP}
\end{figure}

\subsection{Equations of linearized gravity in the boost-weight zero formalism} 
\label{sec:transporteq}
Here, we derive the system \eqref{eq:TransEqsKerr} that we use to obtain decay of the metric coefficients $G_{i0'}$ from the decay of the extreme curvature components $\vartheta\Psi_0$ and especially $\vartheta\Psi_4$. 

Our approach is driven by several key ideas. 
\begin{enumerate*}[label=(\roman*)]
\item It is possible to reconstruct the metric from the extreme curvature components in the ORG. However, 
we have a large amount of freedom in choosing a system of evolution equations to construct the metric. 
\item Since our goal is to construct the metric coefficients, it is not necessary to construct all the connection coefficients. 
\item We wish to have an ordered hierarchy in this system, so that we can construct each variable from either the curvature or other variables that have appeared previously in the ordering; the choice of variables and their ordering is illustrated in figure \ref{fig:EstimatesDiagram}. 
\item We wish to construct the variables through evolution equations. In particular, we wish for these evolution equations to be transport equations along the null direction $\NPn$, although in some cases the right-hand side contains derivatives of variables that appeared previously in the ordering. 
\item \label{point:v} We wish to work not merely with GHP scalars but with spin-weighted scalars; for this reason, we use deboosted variables. 
\end{enumerate*}

Our approach is also influenced by a number of other, more technical ideas. 
\begin{enumerate*}[label=(\roman*), resume]
\item \label{point:vi} The choice of rescaling of the extreme curvature variables, also called the Teukolsky variables, is not driven by our goal of deriving system \eqref{eq:TransEqsKerr}, but instead by the goal of achieving a convenient form of the Teukolsky equation, as explained in section \ref{sec:TeukolskyEquations}. 
\item \label{point:vii} We wish to further rescale the remaining variables so that each is governed by a linear equation in which the transport operator in the homogeneous part is the deboosted operator $\YOp$. Equation \eqref{eq:linearizedEinsteinEquationInORG} gives transport equations for the relevant variables, but typically these also include lower-order coefficients, such as $\rho'$. In definition \ref{def:BoostWeightZeroQuantities}, we have rescaled the variables to eliminate these lower-order terms in the system \eqref{eq:TransEqsKerr}. 
\item \label{point:viii} We need to be able to derive decay estimates for our variables. For most of our variables, it is sufficient to deboost and rescale to eliminate the lower-order terms, as in points \ref{point:v} and \ref{point:vii}, but, we have found that for $\tilde{\tau}'$ and $\tilde{\beta}'$, we needed to introduce certain linear combinations to cancel terms we were not otherwise able to control. 
\end{enumerate*}

In passing, we briefly comment on the notion of radiation field. A radiation field is a variable that has been rescaled so that it is neither divergent nor (generically) vanishing at $\Scri^+$ (see e.g.{} \cite{MR944085}). While our rescaling  for the extreme curvature (Teukolsky) variables, $\psibase[\pm2]$, was chosen to obtain a convenient form for the resulting Teukolsky equations in section \ref{sec:TeukolskyEquations}, it so happens that $\psibase[\pm2]$ are radiation fields. Our choice of rescaling for the remaining variables was driven by points \ref{point:vii}-\ref{point:viii} above, and it so happens that $\hat\sigma'$, $\widehat{G}_{2}$, and $\widehat{G}_{1}$ in definition \ref{def:BoostWeightZeroQuantities} are radiation fields, while $\hat{\tau}'$, $\hat{\beta}'$, and $\widehat{G}_0$ are not. Within our analysis, the notion of radiation field only arises as the vanishing of the index $\Em{\varphi}$ in definition \ref{def:alpha1Values} and lemma \ref{lem:allquantitiesextestimates}. 

We now define the deboosted and rescaled variables used to derive system \eqref{eq:TransEqsKerr}. These all are denoted with a hat accent. 
Following \cite{Chandrasekhar}, at this stage, we switch from indexing the curvature components in the compactified GHP index convention, and instead we index by spin weight. For the deboosted and rescaled metric components, we drop the redundant $0'$ index. 

\begin{definition}
\label{def:psibase}
Let $\varop g_{ab}$ be a solution to the linearized vacuum Einstein equation on the Kerr exterior $(\mathcal{M}, g_{ab})$ and let $\vartheta \Psi_0, \vartheta \Psi_4$ be the components of the linearized Weyl spinor $\vartheta \Psi_{ABCD}$ of boost- and spin-weights $(2,2), (-2,-2)$. Define
\index{P3psibase-2@$\psibase[-2]$}
\index{P3psibase+2@$\psibase[+2]$}
\begin{subequations}
\begin{align}
\psibase[-2]={}& \tfrac{1}{2}\sqrt{a^2 + r^2}\uplambda^{2} \vartheta \Psi_{4},\\
\psibase[+2]={}& \tfrac{1}{2}\sqrt{a^2 + r^2}(3\kappa_{1}{})^4\uplambda^{-2}\vartheta \Psi_{0} ,
\end{align}
where $\uplambda$ is given by definition~\ref{def:uplambda}.
\end{subequations}
\end{definition}

\begin{remark} 
The Weyl scalars $\vartheta \Psi_0, \vartheta \Psi_4$ are given in terms of the linearized Weyl tensor by equation \eqref{eq:extreme-lin}. The fields $\psibase[-2]$ and $\psibase[+2]$ have boost-weight zero and spin-weights $-2$ and $+2$, respectively. 
\end{remark} 

\index{S3sigmaprimhat@$\hat{\sigma}'$}\index{G2G0hatG1hatG2hat@$\widehat{G}_0, \widehat{G}_1, \widehat{G}_2$}\index{T3tauprimhat@$\hat{\tau}'$}\index{B3betaprimhat@$\hat{\beta}'$}
\begin{definition}
\label{def:BoostWeightZeroQuantities}
Define the spin-weighted scalars 
\begin{subequations}
\label{eq:BoostWeightZeroQuantities}
\begin{align}
\hat{\sigma}'={}&\frac{\tilde{\sigma}'}{\bar{\rho}'}, & 
\widehat{G}_2={}&G_{20'} \overline{\kappa}_{1'}{}, \\
\hat{\tau}'={}&
 \bigl (1 + \frac{\kappa_{1}{}}{2 \overline{\kappa}_{1'}{}}\bigr) \tilde{\tau}'
 - \tilde{\beta}',&
\widehat{G}_1={}&\frac{G_{10'} \kappa_{1}{}^3 \overline{\kappa}_{1'}{} \rho'}{r}=- \frac{G_{10'} \kappa_{1}{} \Sigma}{27 \sqrt{2} \uplambda r},
\label{eq:BoostWeightZeroQuantitiesG1}\\
\hat{\beta}'={}&\overline{\kappa}_{1'}{}(\tilde{\beta}'
 -  \tfrac{1}{2} G_{10'} \bar{\rho}'
 + \tfrac{1}{2} G_{20'} \bar{\tau}'
 -  \tilde{\tau}'),& 
\widehat{G}_0={}&\frac{G_{00'} \kappa_{1}{}^3 \overline{\kappa}_{1'}{} \rho'^2}{r}=\frac{G_{00'} \Sigma}{162 \uplambda^2 r}.
\end{align}
\end{subequations}
\end{definition}

The quantities $\hat{\sigma}', \widehat{G}_2, \hat{\tau}',\widehat{G}_1, \hat{\beta}', \widehat{G}_0$ have spin-weights $-2, -2, -1, -1, -1, 0$, respectively. 
The definition of the quantities $\widehat{G}_0$ and $\widehat{G}_1$ has the consequence that the linearized mass $\delta M$ and angular momentum per unit mass $\delta a$ appear as constants of integration in equations \eqref{eq:TransEqG00} and \eqref{eq:TransEqG10}, respectively. In section~\ref{sec:exteriorest} we show that our assumptions imply that these constant vanish. 
The choice of $\hat{\tau}'$ happens to be such that it vanishes for a linearized mass or angular momentum perturbation in ORG. See appendix~\ref{sec:linpara}.

\begin{lemma}
\label{lem:TransportSystem} 
Given a solution to the linearized vacuum Einstein equation in ORG on $(\mathcal{M}, g_{ab})$, let the quantities $\hat{\sigma}', \widehat{G}_2, \hat{\tau}',\widehat{G}_1, \hat{\beta}', \widehat{G}_0$ be as in definition \ref{def:BoostWeightZeroQuantities}, and let $\psibase[-2]$ be as in definition \ref{def:psibase}. Then we have 
\begin{subequations}
\label{eq:TransEqsKerr}
\begin{align}
\YOp(\hat{\sigma}')={}&- \frac{12 \bar{\kappa}_{1'}{} \psibase[-2]}{\sqrt{r^2 + a^2}},\label{eq:TransEqTildeSigma}\\
\YOp(\widehat{G}_2)={}&- \tfrac{2}{3} \hat{\sigma}',\label{eq:TransEqG20}\\
\YOp(\hat{\tau}')={}&- \frac{\kappa_{1}{} (\edt - 2 \tau + 2 \bar{\tau}')\hat{\sigma}'}{6 \overline{\kappa}_{1'}{}^2},\label{eq:TransEqHatTau}\\
\YOp(\widehat{G}_1)={}&\frac{2 \kappa_{1}{}^2 \overline{\kappa}_{1'}{}^2 \hat{\tau}'}{r^2}
 + \frac{\kappa_{1}{}^2 \overline{\kappa}_{1'}{} (\edt -  \tau + \bar{\tau}')\widehat{G}_2}{2 r^2},\label{eq:TransEqG10}\\
\YOp(\hat{\beta}')={}&\frac{r \widehat{G}_1}{6 \kappa_{1}{}^2 \overline{\kappa}_{1'}{}^2}
 + \frac{\kappa_{1}{} \tau \widehat{G}_2}{6 \overline{\kappa}_{1'}{}^2},\label{eq:TransEqHatBeta'}\\
\YOp(\widehat{G}_0)={}&-  \frac{(\edt -  \tau)\widehat{G}_1}{3 \kappa_{1}{}}
 - \frac{\tau \widehat{G}_1}{r}
 -  \frac{\bar{\tau}\overline{\widehat{G}_1}}{r}
 + \frac{2 \kappa_{1}{}^2 \overline{\kappa}_{1'}{} (\edt -  \bar{\tau}')\hat{\beta}'}{r^2}
 -  \frac{(\edtp -  \bar{\tau})\overline{\widehat{G}_1}}{3 \overline{\kappa}_{1'}{}}
 + \frac{2 \kappa_{1}{} \overline{\kappa}_{1'}{}^2 (\edtp -  \tau')\overline{\hat{\beta}'}}{r^2}.\label{eq:TransEqG00}
\end{align}
\end{subequations}
\end{lemma}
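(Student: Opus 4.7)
The plan is a direct calculation. The identities in \eqref{eq:TransEqsKerr} are transcriptions of a subset of the linearized Ricci and Bianchi relations \eqref{eq:DivQop31Cov}--\eqref{eq:VacuumLinBianchiCov} --- specifically the $\tho'$-components, specialized to ORG via \eqref{eq:ORGcondGHP}--\eqref{eq:ORGlinconnectioncoeffConseq} --- and then rewritten in terms of the spin-weighted (boost-weight zero) quantities from Definition \ref{def:BoostWeightZeroQuantities}. I therefore organize the argument in three passes.

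First, I read off from Appendix \ref{sec:fieldeq} the $\tho'$-equations for $\tilde\sigma'$, $G_{20'}$, $\tilde\tau'-\tilde\beta'$, $G_{10'}$, $\tilde\beta'$, and $G_{00'}$. In ORG, the simplifications \eqref{eq:ORGcondGHP} eliminate $\slashed{G}$ and $G_{11'},G_{12'},G_{21'},G_{22'}$, and \eqref{eq:ORGlinconnectioncoeffConseq} eliminates $\tilde\epsilon'$, $\tilde\kappa'$, $\tilde\rho'$. The result is a triangular system: the top equation (for $\tho'\tilde\sigma'$, coming from the linearized Ricci identity that sources $\sigma'$) has $\vartheta\Psi_4$ on the right-hand side and nothing else; each subsequent $\tho'$-equation has on its right-hand side only quantities that have already appeared higher in the hierarchy, together with angular ($\edt,\edt'$) derivatives of them and algebraic factors built from the background spin coefficients listed in \eqref{eq:Znajek-spincoeff}.

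Second, I convert each $\tho'$-equation into a $\YOp$-equation on the corresponding hatted variable. Using \eqref{eq:YOpGHPDef}, $\YOp = \sqrt{2}\,\uplambda\,\tho'$, and applying the Leibniz rule to the rescalings in \eqref{eq:BoostWeightZeroQuantities}, every extra term that arises is of the form (rescaling)$\times\tho'(\kappa_1^a\bar\kappa_{1'}^b\rho'^c)$. These derivatives are known for the Kerr background in principal tetrad: $\kappa_1$, $\bar\kappa_{1'}$, and $\rho,\rho',\tau,\tau'$ are all explicit functions of $r,\theta$ given by \eqref{eq:kappa1} and \eqref{eq:Znajek-spincoeff}, so their $\tho'=-\partial_r/\sqrt{2}$ derivatives are purely algebraic and can be absorbed back into the hatted variables using $-9\kappa_1\bar\kappa_{1'}=\Sigma$ and $\uplambda=1$ in the Znajek tetrad. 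The source in the top equation becomes $-6\,\bar\kappa_{1'}\uplambda^2\vartheta\Psi_4$, which by Definition \ref{def:psibase} is exactly $-12\,\bar\kappa_{1'}\psibase[-2]/\sqrt{a^2+r^2}$, matching \eqref{eq:TransEqTildeSigma}.

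Third, I verify the two places where extra structure appears. Equation \eqref{eq:TransEqHatTau} uses the combination $\hat\tau'=(1+\tfrac{\kappa_1}{2\bar\kappa_{1'}})\tilde\tau'-\tilde\beta'$ rather than $\tilde\tau'$ or $\tilde\beta'$ alone; this particular linear combination is chosen so that the $\edt\tilde\sigma'$-type source terms coming from the $\tho'$-equations for $\tilde\tau'$ and $\tilde\beta'$ combine into the single term $\edt\hat\sigma'$ (the remaining discrepancy is taken up by the $-2\tau+2\bar\tau'$ correction). Equation \eqref{eq:TransEqG00} is the genuinely involved one: because $G_{00'}$ is real, its $\tho'$-equation contains symmetric contributions from both $\edt G_{10'}$ and $\edt'\overline{G_{10'}}$, as well as from $\tilde\beta'$ and its complex conjugate, producing the six source terms in \eqref{eq:TransEqG00} after rescaling.

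The main obstacle is purely bookkeeping in the third pass, in particular ensuring that no stray $\LxiOp$ terms appear: the operators on the right-hand sides of \eqref{eq:TransEqsKerr} are the un-rescaled $\edt,\edt'$, not their de-boosted counterparts $\hedt,\hedt'$, so one must resist the temptation to use \eqref{eq:relationhedtandedt}--\eqref{eq:relationhedtprimeandedtprime} prematurely. Because every substitution is algebraic once the background GHP data are inserted, the computation is mechanical and is naturally carried out using the computer algebra tools mentioned in Section \ref{sec:notandconv}; no analytic input beyond the listed background identities and the ORG conditions is needed.
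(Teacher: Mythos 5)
Your overall strategy---obtain each identity by direct computation from the first-order GHP system of appendix \ref{sec:fieldeq} specialized to ORG, convert $\tho'$ to $\YOp$ via \eqref{eq:YOpGHPDef}, and absorb lower-order terms into the hatted variables using the background relations for $\kappa_1,\overline{\kappa}_{1'},\rho',\tau$---is the same computation the paper carries out, and it does go through essentially as you describe for \eqref{eq:TransEqTildeSigma}--\eqref{eq:TransEqHatBeta'}; in particular your check that the source becomes $-12\overline{\kappa}_{1'}\psibase[-2]/\sqrt{r^2+a^2}$ is correct. One caveat even there: $\hat\tau'$ and $\hat\beta'$ are linear combinations involving $\tilde\tau'$, $\tilde\beta'$ and metric components, not pure rescalings, so to close \eqref{eq:TransEqG10} and \eqref{eq:TransEqHatBeta'} you must also invoke the algebraic structure equations \eqref{eq:Abetap}, \eqref{eq:Ataup} (or, as the paper does, the reality conditions \eqref{eq:Areality} together with \eqref{eq:tauORG}, which identify $\hat\beta'=\overline{\kappa}_{1'}\overline{\tilde\beta}$), and not only the $\tho'$-transport block; your "Leibniz rule on the rescalings" description underestimates this, though it is fixable mechanically.

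The genuine gap is at \eqref{eq:TransEqG00}, where the claimed triangular structure fails. The raw $\tho'$-equation for $G_{00'}$ (the first equation of \eqref{eq:linearizedEinsteinEquationInORG}) has $\tilde\epsilon$, $\tilde\rho$, $\overline{\tilde\rho}$ on its right-hand side, and these are \emph{not} among the controlled quantities $\hat\sigma',\widehat{G}_2,\hat\tau',\widehat{G}_1,\hat\beta'$; moreover, eliminating them with the structure equations \eqref{eq:Aepsilon}, \eqref{eq:Arho} is circular, since substituting those ORG expressions back reproduces $\tho' G_{00'}$ identically, so that equation by itself carries no independent transport content. Your sketch ("symmetric contributions from $\edt G_{10'}$ and $\edt'\overline{G_{10'}}$, as well as from $\tilde\beta'$ and its conjugate") therefore does not identify where \eqref{eq:TransEqG00} comes from---$\tilde\beta'$ does not even occur in that raw equation, and the quantity that ultimately enters is $\hat\beta'=\overline{\kappa}_{1'}\overline{\tilde\beta}$, an identification which is itself part of the argument. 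The missing input is the genuinely dynamical combination of the linearized Einstein equations \eqref{eq:AEinsteinA} and \eqref{eq:AEinsteinH} with the commutator relation \eqref{eq:ACommutatorB}, which yields $0=-\rho'\tilde{\epsilon}-\bar{\rho}'\tilde{\epsilon}+\rho'\tilde{\rho}+2\tau'\tilde{\beta}-\bar{\tau}'\tilde{\tau}'-\edt\tilde{\beta}'+\edt\tilde{\tau}'-\edt'\tilde{\beta}$; one then uses \eqref{eq:Areality} and the $\edt'$-derivative of the ORG relation \eqref{eq:tauORG} for $\tilde\tau$ to express this in terms of $G_{00'}$, $G_{10'}$, $G_{01'}$, $\hat\beta'$, $\overline{\hat\beta'}$, and finally \eqref{eq:Aepsilon}, \eqref{eq:Arho} to convert the $\tilde\epsilon$, $\tilde\rho$ terms into $\tho' G_{00'}$ plus controlled terms. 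Without this extra step your hierarchy does not close at the last equation.
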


\begin{figure}[t]
\begin{tikzpicture}[node distance=10mm,>=latex,
StrongEst/.style={
circle,minimum size=10mm, very thick,draw=black!50,
top color=white,bottom color=black!20,
align=center}]]
\node (NodeHypothesis) [StrongEst]{$\psibase[-2]$};
\node (NodeSigmahatprim) [StrongEst, right=of NodeHypothesis]{$ \hat{\sigma}'$};
\node (NodeG2hat) [StrongEst, right=of NodeSigmahatprim]{$\widehat{G}_2$};
\node (NodeTauhatprim) [StrongEst, below=of NodeG2hat]{$\hat{\tau}'$};
\node (NodeG1hat) [StrongEst,  right=of NodeG2hat]{$\widehat{G}_1$};
\node (NodeBetahatprim) [StrongEst, right=of NodeTauhatprim]{$\hat{\beta}'$};
\node (NodeG0hat) [StrongEst, right=of NodeG1hat]{$\widehat{G}_0$};
\path(NodeHypothesis) edge[->]  node[above] {\eqref{eq:TransEqTildeSigma}} (NodeSigmahatprim);
\path(NodeSigmahatprim) edge[->] node[above] {\eqref{eq:TransEqG20}} (NodeG2hat);
\path(NodeSigmahatprim) edge[->] node[above right=-2ex and -2ex] {\rotatebox{-45}{\eqref{eq:TransEqHatTau}}} (NodeTauhatprim);
\path(NodeG2hat) edge[->] node[above] {\eqref{eq:TransEqG10}} (NodeG1hat);
\path(NodeTauhatprim) edge[->, out=45,in=180] (NodeG1hat);
\path(NodeG1hat) edge[->] node[right] {\rotatebox{-90}{\eqref{eq:TransEqHatBeta'}}} (NodeBetahatprim);
\path(NodeG2hat) edge[->,out=-45,in=90] (NodeBetahatprim);
\path(NodeG1hat) edge[->] node[above] {\eqref{eq:TransEqG00}} (NodeG0hat);
\path(NodeBetahatprim) edge[->,out=45,in=180] (NodeG0hat);
\end{tikzpicture}
\caption{Structure of transport equations.}
\label{fig:EstimatesDiagram}
\end{figure}

\begin{proof}
Throughout the proof 
we will use the relations
\begin{subequations}\label{eq:GHPkappa}
\begin{align}
\thop \kappa_{1}{}={}&- \kappa_{1}{} \rho',&
\thop \overline{\kappa}_{1'}{}={}&- \overline{\kappa}_{1'}{} \bar{\rho}',&
\thop \rho'={}&\rho'^2,&
\thop \bar{\rho}'={}&\bar{\rho}'^2,\\
\edt \kappa_{1}{}={}&- \kappa_{1}{} \tau,&
\edt \overline{\kappa}_{1'}{}={}&\kappa_{1}{} \tau,&
\edt \rho'={}&2 \rho' \tau,&
\edt \bar{\rho}'={}&\bar{\rho}' \tau
 + \bar{\rho}' \bar{\tau}',\\
\overline{\kappa}_{1'}{} \bar{\rho}={}&\kappa_{1}{} \rho,&
\overline{\kappa}_{1'}{} \bar{\rho}'={}&\kappa_{1}{} \rho',&
\overline{\kappa}_{1'}{} \bar{\tau}'={}&- \kappa_{1}{} \tau,&
\overline{\kappa}_{1'}{} \bar{\tau}={}&- \kappa_{1}{} \tau'.
\end{align}
\end{subequations}
For some calculations it might also be worth to notice
\begin{subequations}
\begin{align}
\edt \tau ={}&\tau^2,&
\edtp \tau ={}&\tfrac{1}{2} \Psi_{2}
 -  \frac{\bar\Psi_{2} \bar{\kappa}_{1'}{}}{2 \kappa_{1}{}}
 + \rho \rho '
 -  \frac{\kappa_{1}{} \rho \rho '}{\bar{\kappa}_{1'}{}}
 + \tau \tau '
 =\frac{(a^2 + r^2) (\kappa_{1}{} -  \bar{\kappa}_{1'}{})}{162 \kappa_{1}{}^3 \bar{\kappa}_{1'}{}^2}
 + \tau \tau ',\\
\edtp \tau '={}&\tau '^2,&
\edt \tau '={}&\tfrac{1}{2} \Psi_{2}
 -  \frac{\bar\Psi_{2} \bar{\kappa}_{1'}{}}{2 \kappa_{1}{}}
 + \rho \rho '
 -  \frac{\kappa_{1}{} \rho \rho '}{\bar{\kappa}_{1'}{}}
 + \tau \tau '
 =\frac{(a^2 + r^2) (\kappa_{1}{} -  \bar{\kappa}_{1'}{})}{162 \kappa_{1}{}^3 \bar{\kappa}_{1'}{}^2}
 + \tau \tau '.
\end{align}
\end{subequations}
The ORG condition reduces equations \eqref{eq:APsi4} and \eqref{eq:Asigmap} to the transport equations 
\begin{align}
(\thop -  \bar{\rho}')\tilde{\sigma}'={}&\vartheta \Psi_{4},& 
(\thop -  \bar{\rho}')G_{20'}={}&2 \tilde{\sigma}'.
\end{align}
The choices made in definition~\ref{def:BoostWeightZeroQuantities} are explained below.
First we note that $\tilde{\sigma}'$ has boost weight, which is eliminated by defining $\hat{\sigma}'=\frac{\tilde{\sigma}'}{\bar{\rho}'}$. Similarly the definition $\widehat{G}_2= \overline{\kappa}_{1'}{}G_{20'}$ compensates for the lower-order term in the left-hand side. We then re-express the transport equations in terms of the spin-weighted operator $Y$ defined in \eqref{eq:YOpGHPDef}, from which we get \eqref{eq:TransEqTildeSigma} and \eqref{eq:TransEqG20}.

Under the ORG conditions, the equations \eqref{eq:AEinsteinJ} and \eqref{eq:ACommutatorC} will yield expressions \eqref{eq:linearizedEinsteinEquationInORGtauprime} and \eqref{eq:linearizedEinsteinEquationInORGbetaprime} for $\thop \tilde{\tau}'$ and $\thop \tilde{\beta}'$. 
However, these transport equations are coupled, so we need to change variables. We found that the variable $\hat{\tau}'$ in definition \ref{def:BoostWeightZeroQuantities} satisfies a good transport equation 
\begin{align}
\thop \hat{\tau}'={}&\frac{\kappa_{1}{} (\edt - 3 \tau + \bar{\tau}')\tilde{\sigma}'}{2 \overline{\kappa}_{1'}{}},
\end{align}
which can be written as \eqref{eq:TransEqHatTau}. This was derived just from the definition of $\hat{\tau}'$, \eqref{eq:linearizedEinsteinEquationInORGtauprime} and \eqref{eq:linearizedEinsteinEquationInORGbetaprime}.
Using equations \eqref{eq:Abetap} and \eqref{eq:Ataup} to express $\hat{\tau}'$ in terms of $G_{10'}$ and $G_{20'}$ yields
\begin{align}
\hat{\tau}'={}& -\frac{r \thop G_{10'}}{6 \overline{\kappa}_{1'}{}} - \frac{(\kappa_{1}{}^2 + \kappa_{1}{} \overline{\kappa}_{1'}{} + 2 \overline{\kappa}_{1'}{}^2) \rho' G_{10'} }{4 \overline{\kappa}_{1'}{}^2}
 -  \tfrac{1}{4} (\edt -  \tau)G_{20'},
\end{align}
which can be rewritten as a transport equation for $G_{10'}$.
Furthermore, one can rescale $G_{10'}$ to produce a boost-weight zero quantity $\widehat{G}_1$ in \eqref{eq:BoostWeightZeroQuantitiesG1} such that the contribution from the linearized angular momentum in ORG gauge is $r$ independent, cf. \eqref{eq:LinAngMomentumG} and \eqref{eq:LinAngMomentumGhat}. This also eliminates the lower-order terms to yield the transport equation \eqref{eq:TransEqG10}.

The transport equation for $\tilde{\beta}'$ is complicated. $\hat{\beta}' = \bar\kappa_{1'}\overline{\tilde{\beta}}$ satisfies a much simpler equation arising from the complex conjugate of \eqref{eq:AEinsteinC} subject to the ORG conditions \eqref{eq:ORGcondGHP} and \eqref{eq:ORGlinconnectioncoeffConseq}. The rescaling eliminates the lower-order term.
However, $\hat{\beta}'$ can be reexpressed in terms of $\tilde{\beta}'$, and the already controlled quantities $\hat{\tau}'$, $\hat{G}_1$, and $\hat{G}_0$ using \eqref{eq:Areality} and
\begin{align}
\tilde{\tau}={}&- \tfrac{1}{2}  \rho' G_{01'} + \tfrac{1}{2}  \tau' G_{02'},\label{eq:tauORG}
\end{align} 
which follows from \eqref{eq:Atau} and the ORG conditions.
 
Taking a derivative of equation \eqref{eq:tauORG} and using the relations \eqref{eq:Aspincoeff}, \eqref{eq:Areality} and the definitions of $\hat{\tau}'$ and $\hat{\beta}'$ yield 
\begin{align}
\edtp \tilde{\tau}={}&\tfrac{1}{2}  \rho' \bar{\rho}' G_{00'}
 + \rho' \overline{\tilde{\rho}}
 + \tfrac{1}{2}  \bar{\rho}' \tau G_{10'}
 + \frac{4 r  \bar{\tau} \overline{\hat{\beta}'}}{3 \kappa_{1}{}^2}
 + \tfrac{1}{2}  \bar{\rho}' \tau' G_{01'}
 +  \bar{\tau} \tau' G_{02'}
 - 2 \bar{\tau} \overline{\hat{\tau}'}. \label{eq:ethptau}
\end{align}
The relations \eqref{eq:AEinsteinA}, \eqref{eq:AEinsteinH} and \eqref{eq:ACommutatorB} together give
\begin{align}
0={}&- \rho'\tilde{\epsilon}
 -  \bar{\rho}' \tilde{\epsilon}
 + \rho' \tilde{\rho}
 + 2  \tau'\tilde{\beta}
 -  \bar{\tau}' \tilde{\tau}'
 -  \edt \tilde{\beta}'
 + \edt \tilde{\tau}'
 -  \edtp \tilde{\beta}.
\end{align}
This together with the definitions of $\hat{\tau}'$ and $\hat{\beta}'$ and \eqref{eq:ethptau} yields
\begin{align}
0={}&-  \rho'\tilde{\epsilon}
 -  \bar{\rho}'\tilde{\epsilon}
 + \tfrac{1}{2}  \rho' \bar{\rho}' G_{00'}
 + \rho' \tilde{\rho}
 + \bar{\rho}' \tilde{\rho}
 -  \frac{ \kappa_{1}{} \tau \hat{\beta}'}{\overline{\kappa}_{1'}{}^2}
 + \tfrac{1}{2}  \bar{\rho}' \tau G_{10'}
 + \frac{ \tau'\overline{\hat{\beta}'}}{\kappa_{1}{}}
 -  \tfrac{1}{2}  \bar{\rho}' \tau'G_{01'}
 -  \frac{\edt \hat{\beta}'}{\overline{\kappa}_{1'}{}}
 -  \frac{\edtp \overline{\hat{\beta}'}}{\kappa_{1}{}}.
\end{align}
With the help of equations \eqref{eq:Aepsilon} and \eqref{eq:Arho}, this can be rewritten as a transport equation for $G_{00'}$
\begin{align}
\thop G_{00'}={}&- \frac{3 (\kappa_{1}{}^2 + \overline{\kappa}_{1'}{}^2) \rho'G_{00'}}{2 r \overline{\kappa}_{1'}{}}
 -  \frac{3 (\kappa_{1}{}^2 - 3 \kappa_{1}{} \overline{\kappa}_{1'}{} - 2 \overline{\kappa}_{1'}{}^2) \tau G_{10'}}{2 r \overline{\kappa}_{1'}{}}
 -  \frac{6  \kappa_{1}{} \tau \hat{\beta}'}{r \overline{\kappa}_{1'}{} \rho'}\nonumber\\
& -  \frac{3 (2 \kappa_{1}{}^2 + 3 \kappa_{1}{} \overline{\kappa}_{1'}{} -  \overline{\kappa}_{1'}{}^2) \tau' G_{01'}}{2 r \overline{\kappa}_{1'}{}}
 + \frac{6  \overline{\kappa}_{1'}{} \tau' \overline{\hat{\beta}'}}{r \kappa_{1}{} \rho'}
 + \edt G_{10'}
 -  \frac{6 \edt \hat{\beta}'}{r \rho'}
 + \edtp G_{01'}
 -  \frac{6 \overline{\kappa}_{1'}{} \edtp \overline{\hat{\beta}'}}{r \kappa_{1}{} \rho'}.
\end{align}
This can then be expressed as \eqref{eq:TransEqG00} in terms of spin-weighted quantities, where the scaling of $\widehat{G}_0$ was chosen to eliminate the lower-order terms. This also has the effect that a variation of the linearized mass in the ORG corresponds to adding a constant to $\widehat{G}_0$, cf. \eqref{eq:G0hatlinmass}. 
\end{proof}

\subsection{The Teukolsky equations}
\label{sec:TeukolskyEquations}
We now present the Teukolsky equations in a form that is convenient as a precursor for proving decay estimates in sections \ref{sec:spin-2TeukolskyEstimates}-\ref{sec:spin+2TeukolskyEstimates} for the extreme curvature components $\psibase[\pm2]$. The original work of Teukolsky \cite{1972PhRvL..29.1114T} on the existence of decoupled equations for the extreme components of the linearized curvature is one of the key breakthroughs from what is called the ``golden age'' of black hole physics. The original equations of Teukolsky were written in the NP formalism with respect to particular choices of coordinates, tetrad, and scaling, and many others forms are possible. In particular, the source-free Teukolsky equations can be written in the GHP formalism relative to a principal tetrad as \cite[Eqs. (A.2)]{2016arXiv160106084A}
\begin{subequations}\label{eq:TME:spin-2}
\begin{align}
 \big((\tho - 3 \rho -\bar{\rho})\thop   - (\edt - 3 \tau  -  \bar{\tau}')\edtp  - 3 \Psi_{2} \big) (\kappa_1 \vartheta\Psi_0) ={}&0,\\
 \big((\thop - 3 \rho'-  \bar{\rho}')\tho  - (\edtp -3 \tau'- \bar{\tau}) \edt  - 3 \Psi_{2} \big) (\kappa_1\vartheta\Psi_4)={}&0 .
\end{align}
\end{subequations}

The lemma below gives a further form of the Teukolsky equation, which is obtained by transforming to the rescaled and deboosted variables $\psibase[\pm2]$ from definition \ref{def:psibase}. The rescaling is chosen, following \cite{2017arXiv170807385M}, so that in the limit as $r\rightarrow\infty$, the $\psibase[\pm2]$ are radiation fields that have non-vanishing limits on $\Scri^+$. 
In particular, the equations are such that it is possible to apply the $r^p$ estimate (for example in lemma \ref{lem:GeneralrpSpinWave}) as $r\rightarrow\infty$, although, in sections \ref{sec:spin-2TeukolskyEstimates}-\ref{sec:spin+2TeukolskyEstimates}, we introduce a further rescaling factor that converges to $1$ as $r\rightarrow\infty$. 
In fact, due to different tetrad choices and different further scalings, the scalars $\psibase[\pm 2]$ differ from the Teukolsky scalars $\psi_{\text{Teu}, \pm2}$ solving the classic Teukolsky equations derived in \cite{1972PhRvL..29.1114T}  via the formulas  $\psibase[+2]=\tfrac{1}{4}\sqrt{r^2+a^2} \Delta^2 \psi_{\text{Teu}, +2}$ and $\psibase[-2]= \sqrt{r^2+a^2} \Delta^{-2} \psi_{\text{Teu}, -2}$.
The proof of the following lemma appears in appendix \ref{sec:DeBoostingTME}.

\begin{lemma}[Teukolsky equation]\label{lem:TeukolskyRegular}
Let $\psibase[-2], \psibase[+2]$ be as in definition \ref{def:psibase}. 
\begin{subequations} 
\begin{align}
\widehat\squareS_{-2}(\psibase[-2])={}&- \frac{8 a r}{a^2 + r^2}\LetaOp\psibase[-2]
 + 8 r\VOp\psibase[-2]
 + \frac{4 M (a^2 -  r^2)}{a^2 + r^2}\YOp\psibase[-2]
 - \frac{4 r (r - M) \psibase[-2]}{a^2 + r^2},
\label{eq:TeukolskyRegular-2} \\
\widehat\squareS_{2}(\psibase[+2])={}&\frac{8 a r}{a^2 + r^2}\LetaOp\psibase[+2]
 - 8 r\VOp\psibase[+2]
 -  \frac{4 M (a^2 -  r^2)}{a^2 + r^2}\YOp\psibase[+2]
 + \frac{4 r (r - M) \psibase[+2]}{a^2 + r^2}.
\label{eq:TeukolskyRegular+2} 
\end{align}
\end{subequations} 
\end{lemma}

\subsection{The Teukolsky-Starobinsky Identities} 
\label{sec:TSI} 
Another classical equation in this field is the Teukolsky-Starobinsky Identities (TSI) \cite{1974JETP...38....1S,1974ApJ...193..443T}. The GHP form of the TSI \cite[Eqs. (A.5a), (A.5e)]{2016arXiv160106084A} is 
\begin{subequations}\label{eq:A.5-TSI}
\begin{align}
0={}&\thop \thop \thop \thop (\kappa_1^4\vartheta\Psi_{0})
 -  \edt \edt \edt \edt (\kappa_1^4\vartheta\Psi_{4})
 -  \tfrac{M}{27} \LxiOp\overline{\vartheta\Psi_4},
 \label{eq:A.5a-TSI} \\
0={}&\edtp \edtp \edtp \edtp (\kappa_1^4\vartheta\Psi_{0}) -  \tho \tho \tho \tho (\kappa_1^4\vartheta\Psi_{4})
 -  \tfrac{M}{27} \mathcal{L}_{\xi}\overline{\vartheta\Psi_0} \label{eq:A.5e-TSI} .
\end{align}
\end{subequations}
See \cite{2016arXiv160106084A} for the complete set of $5$ TSI for linearized gravity on Petrov type D spacetimes. 

The following lemma expresses the TSI \eqref{eq:A.5a-TSI} in terms of the $\psibase[\pm2]$. It is proved by a calculation, which is simplified by noting that, if we define the spin-weight $1$ quantity $\ringtau$  by  
\index{T3tauring@$\ringtau$}
\begin{align}
\ringtau={}&-9 \kappa_{1}{}^2 \tau , 
\end{align}
where $\tau$ is the GHP spin-coefficient, then $\ringtau$  
satisfies  
\begin{align} \label{eq:tau0ids}
\hedt(\ringtau)={}&0,&
\LxiOp(\ringtau)={}&0,&
\YOp(\ringtau)={}&0. 
\end{align}

\begin{lemma}[Teukolsky-Starobinsky Identity]\label{lem:TSIRegular} In terms of the variables $\psibase[-2]$ and $\psibase[+2]$ introduced in definition \ref{def:psibase} and the spin-weighted operators introduced in definitions \ref{def:VY-cov} and \ref{def:edtedt'-cov}, we have 
\begin{align} \label{eq:TSIpsihat}
\hedt^{4}\psibase[-2]={}&-3 M \LxiOp(\overline{\psibase[-2]})
  - \sum_{k=1}^4\binom{4}{k}  \ringtau^k \hedt^{4-k} \LxiOp{}^k\psibase[-2]
  +  \tfrac{1}{4} \Bigl(Y + \frac{r}{a^2 + r^2}\Bigr)^4\psibase[+2].
\end{align}
\end{lemma} 
The details of the proof can be found in appendix \ref{sec:DeBoostingTME}.

\section{Analytic preliminaries}
\label{sec:AnalyticPreliminaries}
\subsection{Conventions and notation}
In this subsection, we primarily state our conventions and notation to treat common techniques in analysis.

While most of this section is standard, our choice of the constant $\CInHyperboloids$ is motivated by technical considerations. 
Recall that $\CInHyperboloids$ appeared as a constant in lemma \ref{lem:h-prop} and definition \ref{def:timefuncs} for the time functions $\timefunc$. From equation \eqref{eq:uMinusTimefunc}, $2\CInHyperboloids$ is the coefficient of the leading-order, $M^2/r$ term by which the level sets of $\timefunc$ curve above the level sets of the retarded time $u$. As such, $\CInHyperboloids$ can be thought of as a measure of the curvature towards future null infinity $\Scri^+$ of the hyperboloids of constant $\timefunc$. As explained in the paragraph before the statement of lemma \ref{lem:GeneralrpSpinWave}, we require $\CInHyperboloids$ to be sufficiently large, that is that the hyperboloids be sufficiently curved near $\Scri^+$. The results of this paper are valid for any sufficiently large choice of $\CInHyperboloids$, but we state a specific value here, since the choice of $\CInHyperboloids$ affects the choice of some of the constants appearing in, for example, the basic estimates of section \ref{sec:BasicEstimates}. 

\index{C2CHyp@$\CInHyperboloids$}
\begin{definition}
\label{def:timefunc0CInHyperboloids}
Throughout the rest of the paper, let $\CInHyperboloids= 10^6$ be fixed. 
\end{definition}

The set of natural numbers $\{0,1,\dots\}$ is denoted $\Naturals$, the integers $\Integers$, and the positive integers $\Integers^+$. Recall that $\timefunc_0= 10M$ was set in definition \ref{def:Sigmainit}.

\begin{definition}
\label{def:volumeForms}
\index{D1di2@$\diTwoVol$}
\index{D1di3@$\diThreeVol$}
\index{D1di4@$\diFourVol$}
\index{D1di3Scri@$\diThreeVolScri$}
The reference volume forms are
\begin{subequations}
\begin{align}
\diTwoVol ={}& \sin\theta\di\theta \wedge \di\phi,\\
\diThreeVol ={}& \di r \wedge \diTwoVol ,\\
\diFourVol ={}& \di \timefunc \wedge \diThreeVol,\\
\diThreeVolScri={}& \di \timefunc \wedge \diTwoVol  . \label{eq:diThreeVolScri}
\end{align}
\end{subequations}
Given a 1-form $\nu$, let \index{D1di3Leray@$\LerayForm{\Normal}$}
 $\LerayForm{\Normal}$ denote a Leray 3-form such that $\Normal\wedge\LerayForm{\Normal}=\di^4\mu$, see \cite{MR0460898}. 
\end{definition}

\begin{remark} 
\label{rem:rescaleMToOne}
The family of Kerr metrics, when written for example in ingoing Eddington-Finkelstein coordinates, are such that, for any $\Lambda>0$, the rescaling 
\begin{align} \label{eq:Mrescale} 
(M,a,v,r,\theta,\phi) \mapsto (\Lambda M,\Lambda a,\Lambda v, \Lambda r,\theta,\phi)
\end{align} 
takes a Kerr solution to a Kerr solution. Thus, if an estimate can be proved for a given value of $M=M_1$, then the same estimate can be proved for another value $M=M_2$ by rescaling with $\Lambda =M_2/M_1$.  Furthermore, any statement in this paper involving $(a,v,r)$ can be restated for any given $M$ as a statement in terms of $(a/M,v/M,r/M)$. It follows from the definition of the  hyperboloidal time function that it scales as 
\begin{align} 
\timefunc \to \Lambda \timefunc
\end{align} 
with respect to  the rescaling \eqref{eq:Mrescale}.   
\end{remark}

\begin{definition}\label{def:massnormalization}
\begin{enumerate} 
\item We say that a quantity $Q$ has dimension $M^\mu$ if $Q \to \Lambda^\mu Q$ under a rescaling of the type \eqref{eq:Mrescale}. In particular, $Q$ is said to be dimensionless if $\mu = 0$. 
\item In view of remark \ref{rem:rescaleMToOne} it is sufficient to consider $M=1$. This procedure will be referred to as mass normalization. 
\end{enumerate}
\end{definition} 
All results in the paper are stated in terms of a general mass parameter. However, mass normalization will be used in some proofs  for simplicity. 
\begin{definition} \label{def:universal} 
\begin{enumerate} 
\item Let $\delta$ denote a sufficiently small, positive constant.  
\item We shall use regularity parameters, generally denoted $\ireg$, and sufficiently large regularity constants $K$, independent of  $\ireg,|a|/M, \delta$.
\item Unless otherwise specified,  we shall in estimates use constants $C = C(\ireg,|a|/M, \delta)$.
\item Let $P$ be a set of parameters. A constant $C(P)$ is a constant of the form 
\begin{align} 
C(P) = C(P;\ireg,|a|/M, \delta). 
\end{align} 
\end{enumerate}
\end{definition}

\begin{remark}
\begin{enumerate} 
\item  
Throughout this paper, it is necessary to have many small parameters. It is sufficient to replace all of these small parameters by the smallest of them and, hence, to treat them all as a single parameter. This small parameter is denoted $\delta>0$ as stated in the previous definition. 
\item Unless otherwise stated, constants such as $C, K$ can change value from line to line, as needed, and the allowed range of values for $\delta$ may decrease as needed. 
\end{enumerate} 
\end{remark} 

\begin{definition} 
\label{def:lesssim} 
\index{1Lesssim@$\lesssim$}
\begin{enumerate}
\item 
Let $F_1, F_2$ be dimensionless quantities, and let $\delta$ be a positive dimensionless constant. We say that  $F_1\lesssim F_2$ if there exists a constant $C$ such that $F_1\leq C F_2$.
\item Let $F_1, F_2$ be such that $F_1/F_2$ has dimension $M^\gamma$.  We say that $F_1\lesssim F_2$ if $F_1 \leq M^\gamma C F_2$. 
\item  Let $P$ be a set of parameters. We say that $F_1\lesC{P} F_2$ if there is a constant $C(P)$ such that $F_1 \lesssim C(P) F_2$. 
\item We say that $F_1 \gtrsim F_2$ and $F_1 \gtrC{P} F_2$ if $F_2 \lesssim F_1$ and $F_2 \lesC{P} F_1$, respectively, and further that $F_1 \sim F_2$ if it holds that $F_1 \lesssim F_2$ and $F_2 \lesssim F_1$. For a set of parameters $P$,  $F_1 \simC{P} F_2$ is defined analogously.  
\end{enumerate}  
 \end{definition} 

\begin{definition} Let $m\in \Naturals$. 
\index{O2ObigAnalytic@$\bigOAnalytic(\cdot)$}
\begin{enumerate}
\item 
Let $R$ be the compactified radial coordinate. We say that $f(R,\omega) = \bigOAnalytic(R^m)$ if $\forall  j \in \Naturals$,     
\begin{align} 
| \partial_R^j f(R)| \leq C(j) R^{\max\{m-j,0\}} \quad \text{for $R \in (0,1/10M]$} .
\end{align} 
\item We say that $f(r,\omega) = \bigOAnalytic(r^{-m})$ if $f(R) = \bigOAnalytic(R^m)$.

\end{enumerate}
\end{definition}

\index{0zdotplus@$\cdot{+}$}
\index{0zdotminus@$\cdot{-}$}
\begin{definition}
For any $\gamma\in\Reals$, a bound involving the expression $\gamma-$ means that there is a constant $C>0$, not depending on $\ireg,|a|/M, \delta$, such that the bound holds with $\gamma-$ replaced by $\gamma-C\delta$. Similarly, a bound involving the expression $\gamma+$ means that there is a constant $C>0$ such that the bound holds with $\gamma+$ replaced by $\gamma+C\delta$. 
\end{definition}

\begin{definition}
Let $\timefunc$ be the hyperboloidal time function from definition \ref{def:timefuncs}. Define 
\index{0tbracket@$\langle \cdot \rangle$}
\begin{align}\label{eq:jpbrack}
\langle \timefunc \rangle = (M^2 + \timefunc^2)^{1/2}.
\end{align} 
\end{definition}

\subsection{Conformal regularity}
\label{sec:ConformalRegularity} 

Here we state a definition and basic properties of conformal regularity.  Lemma \ref{lem:ddRasVOpstructure} is used in the proof of lemma \ref{lem:ConformalExtension} and elsewhere in this paper.  

\begin{definition}
\label{def:conformallyRegular}
A spin-weighted scalar $\varphi$ is said to be conformally regular if it is smooth in the future domain of dependence of $\Stauini$ and extends smoothly to  $\Reals\times [-\epsilon,r_+^{-1}) \times \Sphere$ in the compactified hyperboloidal coordinates $(\timefunc,\rInv,\omega)$, for some $\epsilon > 0$. A differential operator is conformally regular if it has an extension that maps conformally regular scalars to conformally regular scalars.
\end{definition}

\index{C2CHyp@$\CInHyperboloids$}
\begin{lemma}
\label{lem:ddRasVOpstructure}
The coefficient $H$ from definition \ref{def:hPrimeInR} which arises in considering $\Staut$ satisfies
\begin{align} 
(2 + 2a^2 R^2 - H) R^2\Delta = 2\CInHyperboloids M^2 R^2 + M^3\bigOAnalytic(R^3) .
\end{align} 
In the Znajek tetrad and the compactified hyperboloidal coordinate system $(\timefunc, R, \theta, \phi)$, we have for a spin-weighted scalar $\varphi$, which is smooth at $R=0$, 
\begin{align}
\partial_{R} \varphi ={}&-2R^{-2}\VOp\varphi+MR^{-1}\bigOAnalytic(1)\VOp\varphi+M\bigOAnalytic(1) \LetaOp\varphi
 + M^2\bigOAnalytic(1)\LxiOp{}\varphi .
\label{eq:ddRasVOpstructure} 
\end{align}
\end{lemma}
\begin{proof}These follow by direct computation.
\end{proof}

\begin{lemma}
\label{lem:ConformalExtension}
Let  $b_\phi,b_0$ be conformally regular functions,  let $b_{\VOp}$ be such that $R b_{\VOp}$ is conformally regular, and let $\vartheta$ be a conformally regular spin-weighted scalar. 
If $\varphi$ is a solution of 
\begin{align}
\widehat\squareS_{s} \varphi
 +b_\VOp  \VOp\varphi
 +b_\phi \LetaOp \varphi
 +b_0 \varphi
={}&
 \vartheta  ,
\label{eq:eqnForConformalRegularity}
\end{align}
and if the initial data for $\varphi$ on $\Stauini$ is smooth and compactly supported, then $\varphi$ is conformally regular. 
\end{lemma}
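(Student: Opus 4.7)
The plan is to reformulate the equation in the compactified hyperboloidal coordinates $(\timefunc, R, \omega)$, extend it smoothly across $\Scri^+ = \{R=0\}$, and invoke standard Cauchy theory for linear hyperbolic equations on the extended domain. Using the formulas from \eqref{eq:ROperatorsThroughScri}, \eqref{eq:tRthetaphi} and \eqref{eq:ops-Znaj-IEF}, the equation $\widehat\squareS_s\varphi + b_\VOp\VOp\varphi + b_\phi\LetaOp\varphi + b_0\varphi = \vartheta$ is a second-order PDE in $(\timefunc,R,\omega)$. The identity $(2+2a^2R^2-H)R^2\Delta = 2\CInHyperboloids M^2 R^2 + M^3\bigOAnalytic(R^3)$, together with $R^2\Delta = 1-2MR+a^2R^2$, shows that the coefficients of the principal part extend smoothly to $R \in (-\epsilon, 1/r_+)$ for some $\epsilon > 0$. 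The coefficient of $\partial_R^2$ vanishes quadratically at $R=0$, expressing the familiar fact that $\Scri^+$ is a null, characteristic hypersurface for the wave operator.

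For the lower-order terms, in compactified coordinates
\begin{align*}
\VOp = \Bigl(1 - \tfrac{HR^2\Delta}{2(1+a^2R^2)}\Bigr)\partial_\timefunc - \tfrac{R^4\Delta}{2(1+a^2R^2)}\partial_R + \tfrac{aR^2}{1+a^2R^2}\partial_\phi .
\end{align*}
Using the identity above, the coefficient of $\partial_\timefunc$ equals $2MR + \bigOAnalytic(R^2)$, while those of $\partial_R$ and $\partial_\phi$ are $\bigOAnalytic(R^2)$; hence $\VOp = R\widetilde{\VOp}$ for some smooth vector field $\widetilde{\VOp}$ on the extended domain, nonvanishing at $R=0$. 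Therefore $b_\VOp\VOp\varphi = (Rb_\VOp)\widetilde{\VOp}\varphi$ preserves conformal regularity because $Rb_\VOp$ is conformally regular by hypothesis. The terms $b_\phi\LetaOp\varphi$, $b_0\varphi$, and the source $\vartheta$ are manifestly conformally-regularity-preserving and all extend smoothly across $R=0$.

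Next I would verify that, after shrinking $\epsilon$ if necessary, the level sets of $\timefunc$ remain strictly spacelike throughout $\Reals\times(-\epsilon,1/r_+)\times\Sphere$. This is a continuity argument: the proof of lemma \ref{lem:h-prop} yields strict inequality \eqref{eq:t-spacelike-useful} in the physical region, and the limits $\lim_{R\to 0}\VOp^a\nabla_a\timefunc = \CInHyperboloids M^2$ and $\lim_{R\to 0}\YOp^a\nabla_a\timefunc = 2$ from definition \ref{def:maintimedef} ensure the inequality persists across $R=0$. The extended PDE is then hyperbolic with $\timefunc$ as time function. Extending the initial data by zero (it is smooth and compactly supported in $\{r>r_+\} \subset \{R \in (0,1/r_+)\}$) and the source $\vartheta$ smoothly across $R=0$, standard existence and uniqueness theory for linear hyperbolic equations produces a smooth solution $\widetilde\varphi$ on the future of $\Stauini$ in the extended region. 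Since $\Scri^+$ is null and outflow from $\{R>0\}$, finite speed of propagation together with uniqueness of the Cauchy problem in the physical region force $\widetilde\varphi|_{R>0} = \varphi$, whence $\varphi$ extends smoothly across $\Scri^+$ and is conformally regular.

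The main obstacle is setting up the symmetric-hyperbolic structure uniformly across $R=0$, where the principal symbol degenerates in the $\partial_R$ direction. Concretely, one must select a first-order reduction (or an energy multiplier adapted to the null direction tangent to $\Scri^+$, with the pair $\VOp+\YOp$ a natural candidate) so that energy estimates close without picking up ill-signed boundary terms at $R=0$. Once this is arranged, smoothness follows from propagation of regularity for smooth data and smooth sources together with the smoothness of the extended coefficients established above.
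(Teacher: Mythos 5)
Your overall strategy coincides with the paper's proof: pass to the compactified hyperboloidal coordinates, check that all coefficients of the equation extend smoothly across $R=0$, note that the level sets of $\timefunc$ remain spacelike, solve the Cauchy problem in the extended domain with the compactly supported data, and identify the extended solution with $\varphi$ on $\{R>0\}$ by uniqueness. However, the "main obstacle" you flag in your last paragraph is not actually there, and since you leave it "to be arranged", it is the one point that keeps your argument from closing. The principal symbol does \emph{not} degenerate at $R=0$: although the $\partial_R\partial_R$ coefficient vanishes like $R^2$, the cross term survives, and the principal part is $\generalMetric_0^{ab}\partial_{x^a}\partial_{x^b}+R\,\generalMetric_1^{ab}\partial_{x^a}\partial_{x^b}$ with $\generalMetric_0^{ab}\partial_{x^a}\partial_{x^b}=(4\CInHyperboloids M^2-a^2\sin^2\theta)\partial_\timefunc\partial_\timefunc-2\partial_\timefunc\partial_R+4a\partial_\timefunc\partial_\phi-\partial_\theta\partial_\theta-\sin^{-2}\theta\,\partial_\phi\partial_\phi$, whose inverse is a smooth Lorentzian metric on all of $\Reals\times(-\epsilon,r_+^{-1})\times\Sphere$. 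Thus $\{R=0\}$ is merely a null hypersurface in the \emph{interior} of the extended spacetime — being characteristic is not a degeneration of the symbol — it is not a boundary, and no boundary terms arise. Since the $\timefunc$-level sets are spacelike for this extended metric, the standard local well-posedness and propagation-of-regularity theory you invoke in the middle of your proposal applies verbatim; no adapted multiplier such as $\VOp+\YOp$ and no special symmetric-hyperbolic reduction at $R=0$ is required.

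A second, smaller imprecision: you cannot literally "extend the initial data by zero across $R=0$" on $\Stauini$, because $\Stauini$ is a level set of the horizon-crossing time $\tHor$ and approaches $R=0$ only as $\timefunc\to-\infty$; it never crosses $\Scri^+$ at finite $\timefunc$. The repair is what the paper does: using compact support of the data and finite speed of propagation in the physical region, choose a time and a smooth spacelike surface $\Sigma$ in the extended spacetime that agrees with the hyperboloidal slice $\Staut$ for large $r$ (hence does cross $R=0$), on which $\varphi$ is smooth and compactly supported in $\{R>0\}$, extend the data by zero there, and whose future domain of dependence contains the needed portion of $\Scri^+$; uniqueness then identifies the extended solution with $\varphi$ in the physical region. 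With these two corrections your argument is the paper's proof.
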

\begin{proof}
The essence of this proof is to apply standard local well-posedness results for linear wave equations in both the hyperboloidal coordinates $(\timefunc,r,\omega)$ and the compactified hyperboloidal coordinates $(x^a) = (\timefunc,\rInv,\omega)$. 
Working in the compactified coordinate system, one  finds
\begin{align}
\widehat\squareS_s(\varphi)
={}&(4 \CInHyperboloids  + M\bigOAnalytic(R))M^2\partial_{\timefunc} \partial_{\timefunc} \varphi 
+ (-2 + M^2\bigOAnalytic(R^2))\partial_{\timefunc} \partial_{R} \varphi \nonumber\\
& + (4 a + M^2\bigOAnalytic(R))\partial_{\timefunc} \partial_{\phi} \varphi 
+ \bigOAnalytic(R^2)\partial_{R} \partial_{R} \varphi 
+ M\bigOAnalytic(R^2)\partial_{R} \partial_{\phi} \varphi \nonumber\\
&
+ (-2 R + M\bigOAnalytic(R^2))\partial_{R} \varphi 
 + (2 a R + M^2\bigOAnalytic(R^2))\partial_{\phi} \varphi \nonumber \\
& + (4 \CInHyperboloids  R + M\bigOAnalytic(R^2))M^2\partial_{\timefunc} \varphi 
+ (2 M R + M^2\bigOAnalytic(R^2))\varphi 
- \TMESOp_s(\varphi), 
\end{align}
where $\TMESOp_s$ is given by \eqref{eq:SOp-def}. 
The principal part of $\widehat\squareS_s$ can be written 
\begin{align} 
\generalMetric^{ab} \partial_{x^a}\partial_{x^b} = \generalMetric^{ab}_0 \partial_{x^a}\partial_{x^b} + R\generalMetric_1^{ab} \partial_{x^a} \partial_{x^b}
\end{align} 
where 
\begin{align} 
\generalMetric^{ab}_0 \partial_{x^a} \partial_{x^b} ={}& (4\CInHyperboloids M^2 - a^2 \sin^2 \theta) \partial_\timefunc\partial_\timefunc -2\partial_\timefunc\partial_R+4a\partial_\timefunc\partial_\phi - \partial_\theta\partial_\theta -\sin^{-2}\theta\partial_\phi\partial_\phi 
\end{align}
and $\generalMetric_1^{ab}$ has conformally regular components. One finds that $\generalMetric_{ab} $ extends as a  Lorentzian metric across $\Scri^+$ and that the level sets of $\timefunc$ are spacelike with respect to $h_{ab}$. 

The lower-order terms $b_\phi \LetaOp + b_0$ in \eqref{eq:eqnForConformalRegularity} are conformally regular. Further, in view of \eqref{eq:ddRasVOpstructure}, we have that $b_{\VOp} V$ is conformally regular. Thus, the operator on the left-hand side of \eqref{eq:eqnForConformalRegularity} is conformally regular and has principal part with symbol given by the inverse conformal metric $h^{ab}$. Thus, equation \eqref{eq:eqnForConformalRegularity} is a spin-weighted wave equation in the extended spacetime. 

Since the initial data for $\varphi$ is assumed to be compactly supported, there is some $\timefunc$ and a smooth, spacelike surface $\Sigma$ in the extended spacetime, which agrees with $\Sigma_\timefunc$ for large $r$, such that $\varphi$ is smooth and compactly supported on $\Sigma \cap \{R > 0\}$, and such that the future domain of dependence of $\Sigma$ includes $\Scri^+_{\timefunc,\infty} = \{R=0\} \cap (\timefunc,\infty)$. 
It follows that $\varphi$ is smooth in the domain of dependence of $\Sigma$ in $ \Reals\times (-\epsilon,r_+^{-1})\times \Sphere$ with inverse metric $\generalMetric^{ab}$, and in particular conformally regular. 
\end{proof}

\subsection{Norms} 
\label{sec:norms}  

This subsection introduces various norms at a point, on hypersurface, and in space-time regions. To define various Sobolev norms, families of differential operators are also defined in definition \ref{def:diffOps}. 

\index{0Norm@$\abs{{{}\cdot{}}}$}
\begin{definition}
\label{def:pointwisenormNoDerivatives}
Let $\varphi$ be a spin-weighted scalar. Its norm is defined to be 
\begin{align}
\abs{\varphi}^2 ={}& \bar\varphi \varphi .
\end{align}
\end{definition}

Recall from section \ref{sec:notandconv} that, if $\varphi$ is a spin-weighted scalar, then $\abs{\varphi}^2 = \bar\varphi \varphi$ has GHP type $\{0,0\}$ and hence is a real-valued function on the manifold. It follows that $\abs{\varphi}$ and expressions like $\nabla_a \abs{\varphi}^2$ have an invariant sense, and we may use this fact to define Sobolev type norms on spaces of spin-weighted scalars. 

\begin{definition}
Let $n \geq 1$ be an integer, and let $\generalOps=\{X_1,\ldots,X_n\}$ be spin-weighted operators. 
Define a multi-index to be either the empty set or an ordered set $\mathbf{a} = (a_1,\ldots,a_m)$ with $m\in \Integers^+$ and $a_i \in \{1,\dots, n\}$ for $i\in\{1,\dots,m\}$. 
If $\mathbf{a} =\emptyset$, define $|\mathbf{a}|=0$ and define $\generalOps^{\mathbf{a}}$ to be the identity operator. If $\mathbf{a}=(a_1,\ldots,a_m)$, define $|\mathbf{a}|=m$ and define the operator 
\begin{align}
\generalOps^{\mathbf{a}}={}& X_{a_1} X_{a_2}\dots X_{a_m}.
\end{align}
\end{definition} 

\begin{definition}
\label{def:pointwisenorm}
Let $\generalOps$ be a set of spin-weighted first-order operators, and let $\varphi$ be a spin-weighted scalar. For $k \in \Naturals$, we define the order $k$ pointwise norm 
\index{0NormXk@$\absHighOrder{{}\cdot{}}{k}{\generalOps}$}
\begin{align}
\absHighOrder{\varphi}{k}{\generalOps}^2
={}& \sum_{\abs{\mathbf{a}}\leq k}\abs{\generalOps^{\mathbf{a}} \varphi}^2 .
\end{align}
\end{definition}

We now introduce  sets of operators to be used in the norms from the previous definition.
The operators in $\unrescaledOps$ have dimensions $M^{-1}$ as is standard for derivative operators. The operators in the remaining sets have been scaled so that they are dimensionless. The operators in $\unrescaledOps$ have been scaled so that, for large $r$, the corresponding vector fields have bounded components with respect to an orthonormal basis for which one vector is parallel to $\xi$. The operators $\rescaledOps$ are such that the three operators $r\VOp$, $\hedt$, $\hedtp$ are rescaled by a factor of $r$, which is useful for obtaining additional decay in $r$; this is the set of derivatives operators that we typically use in our Sobolev norms. The operators $\sphereOps$ are tangent to spheres of constant $\timefunc,r$. The operators $\ScriOps$  are tangent to $\Scri^+$. Mnemonically, these are $\unrescaledOps$ for bounded, $\rescaledOps$ for derivative, $\sphereOps$ for spherical derivatives, and $\ScriOps$ for derivatives on null infinity. 

\begin{definition}
\label{def:diffOps}
Define
\index{B2BOpSet@$\unrescaledOps$}
\index{D2DOpSet@$\rescaledOps$}
\index{S2SOpSet@$\sphereOps$}
\index{D2DSlashOpSet@$\ScriOps$}
\begin{subequations}
\begin{align}
\unrescaledOps
={}&\{ \YOp,  \VOp,  r^{-1}\hedt,  r^{-1}\hedtp \} , \label{eq:unrescop} \\
\rescaledOps
={}&\{ M\YOp, r\VOp, \hedt, \hedtp\},\\
\sphereOps={}& \{\hedt,\hedtp\}, \label{eq:SphereOps}\\
\ScriOps={}&\{\hedt,\hedtp, M\LxiOp\} .
\end{align}
\end{subequations}
\end{definition}

The following definition introduces weighted Sobolev spaces. Because the mass $M$ provides a natural length scale, we are able to ensure that the integrands in the weighted Sobolev norms are dimensionless. 

\begin{definition} 
Let $\varphi$ be a spin-weighted scalar. Let $\Omega$ denote a four-dimensional subset of the domain of outer communication, and let $\Sigma$ denote a hypersurface in the domain of outer communication that can be parametrized by $(r,\omega)$. For an $k \in \Naturals$ and $\gamma \in \Reals$, define 
\index{0WnormBulk@$\norm{{}\cdot{}}_{W^{k}_\gamma(\Omega)}$}
\index{0WnormSurface@$\norm{{}\cdot{}}_{W^{k}_\gamma(\Sigma)}$}
\index{0WnormSphere@$\norm{{}\cdot{}}_{W^{k}(\Sphere)}$}
\index{W2WnormBulk@$W^{k}_\gamma(\Omega)$}
\index{W2WnormSurface@$W^{k}_\gamma(\Sigma)$}
\index{W2WnormSphere@$W^{k}_\gamma(\Sphere_{t,r})$}
\begin{subequations}
\begin{align}
\norm{\varphi}_{W^{k}_\gamma(\Omega)}^2 ={}&  
\int_\Omega M^{-\gamma-2} r^\gamma \absHighOrder{\varphi}{k}{\rescaledOps}^2  \diFourVol ,\\
\norm{\varphi}_{W^{k}_\gamma(\Sigma)}^2 ={}&  
\int_\Sigma M^{-\gamma-1} r^\gamma \absHighOrder{\varphi}{k}{\rescaledOps}^2  \diThreeVol,\\
\norm{\varphi}_{W^{k}(\Sphere)}^2 ={}&  
\int_{\Sphere} \absHighOrder{\varphi}{k}{\sphereOps}^2  \diTwoVol . \label{eq:WkS2}
\end{align}
\end{subequations}
We shall refer to norms $\norm{\varphi}_{W^k_\gamma(\Dtau)}$ and $\norm{\varphi}_{W^{k}_\gamma(\Sigma_{\timefunc})}$ as weighted Morawetz and energy norms, respectively.  
We say that $\varphi \in W_{\gamma}^{k}(\Dtau)$ if $\norm{\varphi}_{W_{\gamma}^{k}(\Dtau)}<\infty$ and similarly for $W_{\gamma}^{k}(\Staut)$, $W_{\gamma}^{k}(\Boundtext)$, $W^{k}(\Sphere)$, and so on.  
\end{definition}

\begin{remark} \label{rem:secsob}
Since definition \ref{def:pointwisenormNoDerivatives} (following definition \ref{def:spinWeightedScalar}) introduces a pointwise norm on spin-weighted scalars, the spaces $W^k_\gamma(\Omega), W^k_\gamma(\Sigma), W^k(S^2)$ etc., are Sobolev spaces of sections of Riemannian vector bundles, and by remark \ref{rem:ops}, when restricting to the sphere $S^2$, the operators $\hedt, \hedtp$ are elliptic operators of order one, acting on sections of these bundles. In the following we shall freely make use of these facts.  
\end{remark}

\begin{definition} 
\label{def:generalSpinorInitialDataNorm}
Let $\varphi$ be a spin-weighted scalar, and let $\ireg\in\Naturals$ and $\alpha\in \Reals$.
\begin{enumerate}
\item 
\index{I2Ikinitial@$\inienergy{\ireg}{\alpha}$}
\index{0normHkalpha@$\Vert \cdot \Vert_{H^\ireg_\alpha(\Sigma)}$}
Let $\Sigma$ denote a hypersurface in the domain of outer communication that can be parametrized by $(r,\omega)$.
Define 
\begin{align} 
\norm{\varphi}_{H^\ireg_\alpha(\Sigma)}^2 = \sum_{|\mathbf{a}|\leq \ireg} \int_{\Sigma} M^{-\alpha} r^{\alpha+2|\mathbf{a}|-1}
\abs{\unrescaledOps^\mathbf{a} \varphi}^2\diThreeVol
\label{eq:ininormdef}
\end{align}
and introduce the quantity 
\begin{align}
\inienergy{\ireg}{\alpha}(\varphi)
=  \norm{\varphi}_{H^\ireg_\alpha(\Stauini)}^2 .
\end{align}
\item \label{point:Pnorm}
Define the following norm on the surface $\Stauini$ \index{P2Pkinitial@$\inipointwise{\ireg}{\alpha}$}
\begin{align}
\inipointwise{\ireg}{\alpha}(\varphi)
=  \sup_{r\in [r_+,\infty)} \sum_{|\mathbf{a}| \leq \ireg} M^{-\alpha} r^{\alpha+2|\mathbf{a}|} \int_{\Sphere}\abs{\unrescaledOps^{\mathbf{a}}\varphi(\timefunc_0-h(r)/2,r,\omega)}^2\diTwoVol. 
\end{align}
\end{enumerate} 
\end{definition}

\begin{remark}
\label{rem:HWrel} 
We have 
\begin{align} \label{eq:HWrel}
\Vert \varphi \Vert_{W^k_{\alpha}(\Stauini)} \lesssim \Vert \varphi \Vert_{H^k_{\alpha+1}(\Stauini)} .
\end{align}
\end{remark}

\begin{definition}\label{def:generalSpinorScriNorm} 
Let $\varphi$ be a spin-weighted scalar, and let  $\ireg\in\Naturals$. Define 
\index{0normF@$\Vert \cdot \Vert_{F^\ireg(\Scritini)}$}
\begin{align} \label{eq:flux-norm}
\norm{\varphi}_{F^\ireg(\Scritini)}^2 ={}& \int_{\Scritini} M\absHighOrder{\YOp\varphi}{\ireg}{\ScriOps}^2 \diThreeVolScri .
\end{align} 
\end{definition}

\subsection{Basic estimates}
\label{sec:BasicEstimates}

In this subsection, we state several classical estimates. These require some slight adaption from their most common formulations, since they are applied to spin-weighted scalars rather than real- or complex-valued functions. These include results for integration by parts, the equivalence of different definitions of the Sobolev norm, eigenvalues for derivatives on spheres, Hardy estimates, Sobolev estimates, and Taylor expansions. 

The operators $\hedt, \hedtp$ are the spherical edth operators, see 
\cite{1982MPCPS..92..317E} for background. In particular, they are elliptic first order operators acting on properly weighted functions on the sphere. For completeness, we recall some useful facts about $\hedt, \hedtp$. Lemma \ref{lem:Op-forms} gives coordinate expressions.
\begin{lemma}\label{lem:hedt'hedtnormrelation}
Let $\varphi,\psi$ be scalars with spin-weight $s$ and $s+1$ respectively. 
Then,
\begin{enumerate}
\item \label{point:IBPhedtGeneral}
\begin{align}
\int_{\Sphere}   \psi (\overline{\hedt \varphi}) \diTwoVol 
={}& - \int_{\Sphere} (\hedtp \psi) \overline{\varphi} \diTwoVol .
\label{eq:IBPhedtGeneral} 
\end{align}
\item \label{point:IBPhedt}
if $s=-1$ it holds that
\begin{align}
\int_{\Sphere} \hedt \varphi \diTwoVol
={}& 0 ;
\label{eq:IBPhedt} 
\end{align}
\item \label{point:IBPhedt'}
if $s=1$, it holds that
\begin{align}
\int_{\Sphere}  \hedtp\varphi \diTwoVol
={}& 0 ;
\label{eq:IBPhedt'}
\end{align}
\item \label{point:hedt'hedtnormrelation}
we have the following relation between $\Vert \hedt \varphi \Vert_{L^2(\Sphere)}$ and $\Vert\hedtp \varphi\Vert_{L^2(\Sphere)}$:
\begin{align}
\int_{\Sphere}|\hedt\varphi|^2 \diTwoVol = \int_{\Sphere}|\hedtp\varphi|^2 \diTwoVol - s\int_{\Sphere}|\varphi|^2 \diTwoVol.
\label{eq:hedt'hedtnormrelation}
\end{align}
\end{enumerate}
\end{lemma}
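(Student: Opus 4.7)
The plan is to handle the four claims in order, each by direct computation in the coordinate expressions \eqref{eq:hedt-explicit}--\eqref{eq:hedtp-explicit}, together with the commutator \eqref{eq:commutatorofhedtandhedtprime} and the conjugation rules $\overline{\hedt\varphi}=\hedt'\bar\varphi$, $\overline{\hedt'\varphi}=\hedt\bar\varphi$ (immediate from the coordinate formulas). All integrals are over the closed sphere with smooth integrands, so no functional-analytic subtlety arises.

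First I would prove \ref{point:IBPhedt}. Specialising \eqref{eq:hedt-explicit} to $s=-1$, the algebraic term combines with the $\theta$-derivative into a divergence: $\tfrac{1}{\sqrt{2}}\partial_{\theta}\varphi+\tfrac{1}{\sqrt{2}}\cot\theta\,\varphi = (\sqrt{2}\sin\theta)^{-1}\partial_\theta(\sin\theta\,\varphi)$. Hence $(\hedt\varphi)\sin\theta\,d\theta\,d\phi=\tfrac{1}{\sqrt{2}}\partial_\theta(\sin\theta\,\varphi)\,d\theta\,d\phi+\tfrac{i}{\sqrt{2}}\partial_\phi\varphi\,d\theta\,d\phi$, and both terms integrate to zero: the first by vanishing of $\sin\theta\,\varphi$ at the poles, the second by $2\pi$-periodicity. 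Part \ref{point:IBPhedt'} follows identically by the prime symmetry, or by applying \ref{point:IBPhedt} to $\bar\varphi$ (of spin-weight $-1$) via the conjugation rule.

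Next, part \ref{point:IBPhedtGeneral} is obtained by applying the Leibniz rule, which for the first-order operators $\hedt,\hedt'$ follows immediately from the coordinate forms since $\partial_\theta,\partial_\phi$ satisfy Leibniz and spin-weights add under products. Integrating a suitable Leibniz identity and discarding the resulting $\hedt$- or $\hedt'$-exact terms via parts \ref{point:IBPhedt}--\ref{point:IBPhedt'} yields the claim. Concretely I would compute $\int_{\Sphere}\bigl[\psi\,\hedt\varphi - (\hedt'\psi)\varphi\bigr]\diTwoVol$ term by term in $(\theta,\phi)$, showing it is the integral of a total $\theta$-divergence (after multiplying by $\sin\theta$) plus a $\phi$-derivative, and that the algebraic $\cot\theta$ pieces cancel in view of the spin-weight assignment $s_\psi=s-1$, $s_\varphi=s$.

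For part \ref{point:hedt'hedtnormrelation} I would use the conjugation rules to write
\begin{equation*}
\int_{\Sphere}|\hedt\varphi|^2\diTwoVol = \int_{\Sphere}(\hedt\varphi)(\hedt'\bar\varphi)\diTwoVol,\qquad
\int_{\Sphere}|\hedt'\varphi|^2\diTwoVol = \int_{\Sphere}(\hedt'\varphi)(\hedt\bar\varphi)\diTwoVol,
\end{equation*}
and then apply \ref{point:IBPhedtGeneral} twice (with the spin-weight pairings $(s,-s-1)$ and $(s,-s+1)$, both of which are of the form covered by the IBP identity) to move $\hedt$ and $\hedt'$ off $\varphi$ in each integral. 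This rewrites the difference as $\int_{\Sphere}\bar\varphi\bigl(\hedt'\hedt\varphi-\hedt\hedt'\varphi\bigr)\diTwoVol$, which by \eqref{eq:commutatorofhedtandhedtprime} (rearranged as $\hedt'\hedt\varphi-\hedt\hedt'\varphi=s\varphi$) equals $s\int_{\Sphere}|\varphi|^2\diTwoVol$, giving \eqref{eq:hedt'hedtnormrelation} after a sign check. The only real obstacle is careful bookkeeping of the spin-weights in each integration by parts, so that the boundary-like contributions reduce to the closed-integral identities \ref{point:IBPhedt}--\ref{point:IBPhedt'}; beyond that, the proof is purely mechanical.
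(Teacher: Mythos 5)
Your direct proofs of parts \ref{point:IBPhedt} and \ref{point:IBPhedt'} are fine: for $s=-1$ one indeed has $\sqrt{2}\,\sin\theta\,\hedt\varphi=\partial_\theta(\sin\theta\,\varphi)+i\partial_\phi\varphi$, and \ref{point:IBPhedt'} follows by priming/conjugation. (The paper instead gets \ref{point:IBPhedt} from \ref{point:IBPhedtGeneral} with $\psi=1$, cites \ref{point:IBPhedtGeneral} from the literature, and proves \ref{point:hedt'hedtnormrelation} by multiplying \eqref{eq:commutatorofhedtandhedtprime} by $\bar\varphi$, using Leibniz to write $\bar\varphi\,\hedt\hedt'\varphi=\hedt(\bar\varphi\,\hedt'\varphi)-(\hedt\bar\varphi)(\hedt'\varphi)$ and its primed analogue, and killing the exact terms by \ref{point:IBPhedt}--\ref{point:IBPhedt'}, since $\bar\varphi\,\hedt'\varphi$ and $\bar\varphi\,\hedt\varphi$ have spin-weights $-1$ and $+1$. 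Your part \ref{point:hedt'hedtnormrelation}, once the integrations by parts carry the correct signs, is essentially this same argument.)

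The genuine gap is your step for part \ref{point:IBPhedtGeneral}. The combination $\psi\,\hedt\varphi-(\hedt'\psi)\varphi$ is \emph{not} a total divergence: its $\theta$-part is the Wronskian $\tfrac{1}{\sqrt2}\bigl(\psi\,\partial_\theta\varphi-(\partial_\theta\psi)\varphi\bigr)$, not $\tfrac{1}{\sqrt2}\partial_\theta(\psi\varphi)$, and the algebraic terms add up to $-\tfrac{2s-1}{\sqrt2}\cot\theta\,\psi\varphi$ rather than cancelling. In fact no bookkeeping can rescue the identity in that form: with respect to $\diTwoVol$ the formal adjoint of $\hedt$ is $-\hedt'$, so every correct integration-by-parts formula carries a minus sign, and the usable non-conjugated version is $\int_{\Sphere}\psi\,\hedt\varphi\,\diTwoVol=-\int_{\Sphere}(\hedt\psi)\,\varphi\,\diTwoVol$ precisely when $s_\psi+s_\varphi=-1$, so that $\psi\varphi$ has spin-weight $-1$ and part \ref{point:IBPhedt} applies to $\hedt(\psi\varphi)$. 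A concrete counterexample to the version you set out to prove: take $s=0$, $\varphi\equiv1$, $\psi=\sin\theta\cos\theta$ (a smooth spin-weight $-1$ scalar, proportional to ${}_{-1}Y_{2,0}$); then the left side vanishes while $\hedt'\psi=-\tfrac{1}{\sqrt2}\sin^2\theta$ has nonzero integral. So the identity must be read in the corrected form above (the paper only cites it and never uses it except through \eqref{eq:IBPhedt}). Note that the pairings $(s,-s-1)$ and $(s,-s+1)$ you invoke in part \ref{point:hedt'hedtnormrelation} are exactly of the sum-to-$\mp1$ type, not of the form $(s,s-1)$, which signals that the corrected identity is what your argument actually needs: with it, each of the two integrations by parts produces a minus sign and one lands on $\int_{\Sphere}|\hedt\varphi|^2\diTwoVol-\int_{\Sphere}|\hedt'\varphi|^2\diTwoVol=-\int_{\Sphere}\bar\varphi\,(\hedt'\hedt-\hedt\hedt')\varphi\,\diTwoVol=-s\int_{\Sphere}|\varphi|^2\diTwoVol$, which is \eqref{eq:hedt'hedtnormrelation}; with the plus-sign identity as you stated it the signs do not come out. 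Alternatively, avoid part \ref{point:IBPhedtGeneral} altogether and prove \ref{point:hedt'hedtnormrelation} by the paper's Leibniz-plus-commutator route, which only uses \ref{point:IBPhedt} and \ref{point:IBPhedt'}.
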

\begin{proof}
The first point follows from integration by parts, see \cite[(A13)]{leonard:poisson:1997}. The second follows from taking $\psi=1$, and the third follows from complex conjugation. For the fourth point, we multiply both sides of the commutator relation \eqref{eq:commutatorofhedtandhedtprime} by $\bar{\varphi}$ and use the Leibniz rule to obtain
\begin{align}
\hedt (\hedtp \varphi \bar{\varphi})-\hedt\bar{\varphi} \hedtp\varphi={}& \hedtp(\hedt \varphi \bar{\varphi})-\hedtp\bar\varphi\hedt \varphi-s|\varphi|^2.
\end{align}
By integrating over $\Sphere$ and noting the facts that $\hedtp \varphi \bar{\varphi}$ has  boost- and spin-weight $0,-1$ and $\hedt \varphi \bar{\varphi}$ has  boost- and spin-weight $0,1$, the integrals over $\Sphere$ of the first term on the left and the first term on the right are both vanishing, hence the relation \eqref{eq:hedt'hedtnormrelation} follows.
\end{proof}

\begin{lemma}
\label{lem:Ellipticforsecondordersphericalangularop}
Let $\varphi$ be a scalar of spin-weight $s$. For any $\ireg \geq 0$, it holds
\begin{align}
\int_{\Sphere}\absHighOrder{\varphi}{2\ireg}{\sphereOps}^2 \diTwoVol
\simC{s}{} \sum_{i=0}^\ireg \int_{\Sphere}\abs{\mathring{S}^{i}_{s}\varphi}^2 \diTwoVol .
\label{eq:ellipestispinweightedsphericalLaplacian}
\end{align}
\end{lemma}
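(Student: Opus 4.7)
The assertion is the standard equivalence, up to $s$-dependent constants, between the order-$2\ireg$ Sobolev norm built from the first-order operators $\sphereOps = \{\hedt, \hedt'\}$ and the norm built from powers of the self-adjoint, second-order elliptic operator $\mathring{S}_s = 2\hedt\hedt'$ on the spin-$s$ line bundle over $\Sphere$ (cf.\ remark~\ref{rem:secsob}). I would prove the two directions separately.

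The direction $\gtrsim$ is immediate: for each $0 \leq i \leq \ireg$, the operator $\mathring{S}_s^i = (2\hedt\hedt')^i$ is a polynomial of order $2i \leq 2\ireg$ in $\hedt, \hedt'$, hence
\begin{align*}
\int_{\Sphere}\abs{\mathring{S}_s^i \varphi}^2 \diTwoVol \lesssim \int_{\Sphere}\absHighOrder{\varphi}{2i}{\sphereOps}^2 \diTwoVol \leq \int_{\Sphere}\absHighOrder{\varphi}{2\ireg}{\sphereOps}^2 \diTwoVol,
\end{align*}
and summing over $i$ yields the bound.

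For the direction $\lesssim$, the most transparent route is via spectral decomposition. Expand $\varphi = \sum_{\ell\geq \abs{s},\, \abs{m}\leq\ell} c_{\ell m}\, {}_sY_{\ell m}$ in the orthonormal basis of spin-weighted spherical harmonics of $L^2(\Sphere; E_s)$. They diagonalize $\mathring{S}_s$ with eigenvalue $-(\ell-s)(\ell+s+1)$, and the raising/lowering operators act as
\begin{align*}
\hedt\, {}_sY_{\ell m} = \mu^+_{\ell,s}\, {}_{s+1}Y_{\ell m}, \qquad \hedt'\, {}_sY_{\ell m} = \mu^-_{\ell,s}\, {}_{s-1}Y_{\ell m},
\end{align*}
with $\abs{\mu^{\pm}_{\ell,s}} \lesC{s} 1+\ell$ for $\ell \geq \abs{s}$. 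Orthonormality of the harmonics of each spin weight then gives, for every multi-index $\mathbf{a}$ with $\abs{\mathbf{a}} = k \leq 2\ireg$,
\begin{align*}
\int_{\Sphere}\abs{\sphereOps^{\mathbf{a}}\varphi}^2 \diTwoVol \lesC{s} \sum_{\ell,m} (1+\ell)^{2k}\abs{c_{\ell m}}^2,
\end{align*}
while
\begin{align*}
\sum_{i=0}^{\ireg}\int_{\Sphere}\abs{\mathring{S}_s^i\varphi}^2\diTwoVol \simC{s} \sum_{\ell,m}\bigl(1+(1+\ell)^{4\ireg}\bigr)\abs{c_{\ell m}}^2.
\end{align*}
Summing the first estimate over $\abs{\mathbf{a}} \leq 2\ireg$ produces the factor $\sum_{k=0}^{2\ireg}(1+\ell)^{2k}$, which is comparable to $1 + (1+\ell)^{4\ireg}$; this closes the estimate.

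The only input not contained in the excerpt is the explicit action of $\hedt, \hedt'$ on $\{{}_sY_{\ell m}\}$, which is classical, cf.\ the reference given in remark~\ref{rem:ops}. If one prefers to avoid spin-weighted harmonics, an alternative route is induction on $\ireg$: use integration by parts \eqref{eq:IBPhedtGeneral} to rewrite $\int_{\Sphere}\abs{\sphereOps^{\mathbf{a}}\varphi}^2 \diTwoVol$ as $(-1)^{\abs{\mathbf{a}}}\int_{\Sphere}\bar\varphi\, Q_{\mathbf{a}}\varphi \diTwoVol$ with $Q_{\mathbf{a}}$ a differential operator of order $2\abs{\mathbf{a}}$, then apply the commutator $[\hedt, \hedt']\varphi = -s\varphi$ from \eqref{eq:commutatorofhedtandhedtprime} iteratively to rewrite $Q_{\mathbf{a}}$ as a polynomial in $\mathring{S}_s$ of the appropriate order plus lower-order commutator corrections absorbed by the inductive hypothesis. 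In either route, the only slightly delicate point is the bookkeeping of the $s$-dependent constants produced by the commutators, which the spectral argument handles automatically.
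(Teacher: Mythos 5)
Your argument is correct, but it follows a genuinely different route from the paper. The paper's proof is essentially a two-line appeal to general elliptic theory on vector bundles: since $\hedt,\hedt'$ are first-order elliptic operators acting on sections of the spin-weight-$s$ line bundle over $\Sphere$ and $\mathring{S}_s=2\hedt\hedt'$ by \eqref{eq:mathringSbyhedthedt'}, the norm equivalence is quoted from \cite[Theorem III.5.2]{MR1031992}. Your spectral proof via spin-weighted spherical harmonics is self-contained (given the classical ladder action of $\hedt,\hedt'$ on ${}_sY_{\ell m}$, which the paper itself invokes in the proof of lemma~\ref{lem:ellip}), makes the $s$-dependence of the constants completely explicit, and avoids citing the elliptic machinery; the price is the extra bookkeeping with the harmonics, which the abstract argument bypasses. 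Two small points to tidy up: with the paper's convention $\mathring{S}_s=2\hedt\hedt'$, the eigenvalue on ${}_sY_{\ell m}$ is $-(\ell+s)(\ell-s+1)$ rather than $-(\ell-s)(\ell+s+1)$ (the two differ by $2s$, via \eqref{eq:commutatorofhedtandhedtprime}, so nothing changes); and for $s\leq 0$ the lowest modes $\ell=\abs{s}$ lie in the kernel of $\mathring{S}_s$, so your pointwise comparison of $\sum_{i\leq\ireg}$ of the eigenvalue powers with $1+(1+\ell)^{4\ireg}$ holds only up to a constant depending on $s$ and $\ireg$ — which is harmless, since only the single value $\ell=\abs{s}$ is affected and the lemma's constants are allowed to depend on $s$ (and on the fixed $\ireg$).
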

\begin{proof} 
Since $\hedt$ and $\hedtp$ are both in $\sphereOps$, this follows from the relation \eqref{eq:mathringSbyhedthedt'} and the fact that $\hedt, \hedtp$ are elliptic operators of order one \cite[Theorem III.5.2]{MR1031992}. 
\end{proof} 

\begin{lemma}[Eigenvalue estimates for $\hedt, \hedtp$]
\label{lem:ellip}
If $\varphi$ is a scalar of spin-weight $s$, then 
\begin{subequations}
\label{eq:SphereHardyBoth}
\begin{align} 
\frac{|s|-s}{2} \int_{\Sphere} |\varphi |^2 \diTwoVol
\leq{}&
\int_{\Sphere} |\hedt \varphi |^2 \diTwoVol, 
\label{eq:SphereHardyNegaSpin}\\
\frac{|s|+s}{2} \int_{\Sphere} |\varphi |^2 \diTwoVol
\leq{}&
 \int_{\Sphere} |\hedtp \varphi |^2 \diTwoVol , 
 \label{eq:SphereHardyPosSpin}
\end{align}
\end{subequations}
and for a four dimensional spacetime region $\Omega$,
\begin{subequations}
\begin{align} 
\frac{|s|-s}{2} \Vert \varphi \Vert_{W^k_\gamma(\Omega)}^2 
\leq{}&
\Vert \hedt \varphi \Vert_{W^k_\gamma(\Omega)}^2, \\
\frac{|s|+s}{2} \Vert \varphi \Vert_{W^k_\gamma(\Omega)}^2 
\leq{}&
\Vert \hedtp \varphi \Vert_{W^k_\gamma(\Omega)}^2 .
\end{align}
\end{subequations}
The first (second) case gives an estimate if $\varphi$ has negative (positive) spin-weight.
\end{lemma}
\begin{proof}
We will prove the statement for $\hedt$. The statement for $\hedtp$ follows by complex conjugation. 
Expand $\varphi$ in terms of spin-weighted spherical harmonics (see \cite[section 4.15]{MR917488})
\begin{align}
\varphi(\theta,\phi)={}&\sum_{l=|s|}^\infty\sum_{m=-l}^{l} a_{l,m}\thinspace {}_{s} Y_{lm}(\theta,\phi). 
\end{align}
From \cite[Eq. (4.15.106)]{MR917488} we have
\begin{align}
\hedt \varphi (\theta,\phi)={}&- \sum_{l=|s|}^\infty\sum_{m=-l}^{l} a_{l,m} \frac{\sqrt{(l+s+1)(l-s)}}{\sqrt{2}}{}_{s+1} Y_{lm}(\theta,\phi).
\end{align}
Through the orthogonality conditions \cite[Eq. (4.15.99)]{MR917488} we get 
\begin{subequations}
\begin{align}
\int_{\Sphere} |\varphi|^2 \diTwoVol={}&4\pi \sum_{l=|s|}^\infty\sum_{m=-l}^{l} |a_{l,m}|^2,
\label{eq:Plancherel}\\
\int_{\Sphere} |\hedt \varphi|^2 \diTwoVol={}&4\pi \sum_{l=|s|}^\infty\sum_{m=-l}^{l} |a_{l,m}|^2 \frac{(l+s+1)(l-s)}{2}.
\label{eq:eigenvalueOfhedt}
\end{align}
\end{subequations}
As $(l+s+1)(l-s)\geq|s|-s$, this proves \eqref{eq:SphereHardyNegaSpin}. 
Integrating in $\timefunc,r$ gives the remaining results. 
\end{proof}

\begin{lemma}[Control of $\LetaOp$ in $L^2(\Sphere)$]
\label{lem:controlOfLetaOpInL2Sphere}
If $\varphi$ is a scalar of spin weight $s$, then
\begin{align*}
\frac12 \int_{\Sphere} \abs{\LetaOp\varphi}^2 \diTwoVol
\leq{}& \int_{\Sphere} \left(
  \abs{\hedt\varphi}^2
  +\frac{s^2}{2}\abs{\varphi}^2
\right) \diTwoVol .
\end{align*}
\end{lemma}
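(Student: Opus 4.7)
\medskip
\noindent\textbf{Proof proposal.} My plan is to reduce everything to a term-by-term inequality via spin-weighted spherical harmonics, in the same spirit as the proof of lemma~\ref{lem:ellip}. First, I would expand $\varphi$ in the basis $\{{}_{s}Y_{lm}\}$:
\begin{align*}
\varphi(\theta,\phi)={}&\sum_{l=|s|}^{\infty}\sum_{m=-l}^{l} a_{l,m}\,{}_{s}Y_{lm}(\theta,\phi),
\end{align*}
which is valid since $\varphi$ has spin-weight $s$. Using that ${}_{s}Y_{lm}$ depends on $\phi$ only through the factor $e^{im\phi}$, I would get $\LetaOp\,{}_sY_{lm}=\partial_\phi\,{}_sY_{lm}=im\,{}_sY_{lm}$; combined with the orthogonality relations \cite[Eq. (4.15.99)]{MR917488} already recalled in the proof of lemma~\ref{lem:ellip}, this yields
\begin{align*}
\int_{\Sphere}\abs{\LetaOp\varphi}^2\diTwoVol={}&4\pi\sum_{l,m}m^2\abs{a_{l,m}}^2.
\end{align*}

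Next, I would reuse the Parseval identity \eqref{eq:Plancherel} and the eigenvalue computation \eqref{eq:eigenvalueOfhedt} from lemma~\ref{lem:ellip} to rewrite the right-hand side of the claimed inequality as
\begin{align*}
\int_{\Sphere}\left(\abs{\hedt\varphi}^2+\tfrac{s^2}{2}\abs{\varphi}^2\right)\diTwoVol
={}&4\pi\sum_{l,m}\left(\tfrac{(l+s+1)(l-s)}{2}+\tfrac{s^2}{2}\right)\abs{a_{l,m}}^2
=4\pi\sum_{l,m}\tfrac{l^2+l-s}{2}\abs{a_{l,m}}^2,
\end{align*}
after expanding $(l+s+1)(l-s)+s^2=l^2+l-s$. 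The desired inequality therefore reduces to the pointwise estimate
\begin{align*}
m^2\leq{}& l^2+l-s, \qquad \text{for all } l\geq|s|,\; |m|\leq l.
\end{align*}

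The only thing remaining is to verify this algebraic inequality, which is the unique point requiring a small case split rather than a mechanical step. Since $|m|\leq l$ implies $m^2\leq l^2$, it suffices to show $l-s\geq 0$. If $s\geq 0$, this follows from $l\geq|s|=s$; if $s<0$, then $-s>0$ and the inequality is strict. In both cases the bound holds, completing the proof. There is no serious obstacle: the main (minor) subtlety is tracking the sign of $s$ in the final algebraic step so that the cancellation $s^2$ against the eigenvalue term leaves a nonnegative remainder.
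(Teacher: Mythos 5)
Your proposal is correct and follows essentially the same route as the paper: decomposition into spin-weighted spherical harmonics, the Plancherel identity and $\hedt$ eigenvalue computation from lemma \ref{lem:ellip}, and the algebraic bound $(l+s+1)(l-s)+s^2=l^2+l-s\geq l^2\geq m^2$ using $|m|\leq l$ and $|s|\leq l$. Your explicit sign check on $l-s\geq 0$ is just a slightly more spelled-out version of the paper's one-line inequality, so there is nothing to add.
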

\begin{proof}
This follows from decomposing into spin-weighted spherical harmonics ${}_s{}Y_{l,m}$, the relations $|m|\leq l$ and $|s|\leq l$, from equations \eqref{eq:Plancherel}-\eqref{eq:eigenvalueOfhedt}, and the fact that
$(l+s+1)(l-s) +s^2
=l^2 +l-s
\geq l^2
\geq m^2 $.
\end{proof}

\begin{lemma}[Spherical Sobolev estimate]  
\label{lem:spherica-Sobolev} 
If $\varphi$ is a scalar of spin-weight $s$, then 
\begin{align} \label{eq:sobineq}
|\varphi|^2 \lesC{s} \int_{\Sphere} \absHighOrder{\varphi}{2}{\sphereOps}^2 \diTwoVol .
\end{align}
\end{lemma}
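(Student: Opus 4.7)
The statement is a spin-weighted analogue of the standard 2D Sobolev embedding $H^2(S^2) \hookrightarrow L^\infty(S^2)$, applied to sections of the complex line bundle $E_s \to S^2$ whose sections are spin-weight-$s$ scalars. The plan is to reduce to the standard embedding, using the facts collected in remarks \ref{rem:varphi2}--\ref{rem:secsob} and remark \ref{rem:ops}: the pointwise norm $|\varphi|$ is a true scalar, and $\hedt,\hedt'$ are elliptic first-order operators on $E_s$.

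First I would invoke lemma \ref{lem:Ellipticforsecondordersphericalangularop} (with $\ireg=1$) to replace the right-hand side by an equivalent norm built from $\varphi$ and $\mathring{S}_s\varphi = 2\hedt\hedt'\varphi$, at the cost of an $s$-dependent constant. Combined with the ellipticity of $\hedt\hedt'$ on the compact Riemannian manifold $S^2$, this is precisely the $H^2$-norm of $\varphi$ as a section of the Hermitian bundle $E_s$. Standard Sobolev embedding for sections of vector bundles over compact manifolds then gives $H^2(S^2,E_s)\hookrightarrow C^0(S^2,E_s)$ (since $2 = \dim S^2 < 2\cdot 2$), and the embedding constant depends only on $s$, yielding the claim.

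A more concrete route, which avoids quoting bundle Sobolev theory, is via spin-weighted spherical harmonics: expand $\varphi = \sum_{l\geq|s|}\sum_{|m|\leq l} a_{l,m}\,{}_sY_{l,m}$; from the computation in the proof of lemma \ref{lem:ellip}, the right-hand side controls $\sum_{l,m}(1+l)^4|a_{l,m}|^2$ up to an $s$-dependent constant. Cauchy--Schwarz then gives
\begin{align*}
|\varphi(\omega)|^2 \;\leq\; \Bigl(\sum_{l,m} (1+l)^4 |a_{l,m}|^2\Bigr) \Bigl(\sum_{l,m} \frac{|{}_sY_{l,m}(\omega)|^2}{(1+l)^4}\Bigr),
\end{align*}
and the pointwise addition-type bound $\sum_{|m|\leq l}|{}_sY_{l,m}(\omega)|^2 \lesssim_s 1+l$ (uniform in $\omega$) makes the second factor bounded by $\sum_l (1+l)^{-3} < \infty$.

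The main (and only) obstacle is ensuring that the elliptic constants, or equivalently the bound on $\sum_m |{}_sY_{l,m}|^2$, are uniform in $\omega$ with a constant depending only on $s$. In the bundle-theoretic approach this is immediate from the invariance of $\hedt,\hedt'$ and the compactness of $S^2$; in the spherical-harmonics approach it follows from the standard asymptotics of the ${}_sY_{l,m}$, so nothing essentially new is required beyond what has already been used in lemmas \ref{lem:Ellipticforsecondordersphericalangularop} and \ref{lem:ellip}.
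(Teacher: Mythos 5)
Your first (bundle-theoretic) route is exactly the paper's proof: the right-hand side is the $W^2(\Sphere)$ norm of $\varphi$ as a section of the Riemannian vector bundle of spin-weight $s$ scalars, and the standard Sobolev embedding for sections of vector bundles over a compact manifold (using that $\hedt,\hedt'$ are first-order elliptic, cf.\ remarks \ref{rem:varphi2}, \ref{rem:ops}, \ref{rem:secsob}) gives the pointwise bound with an $s$-dependent constant. Your alternative via spin-weighted spherical harmonics and the addition-type bound is a correct but unnecessary extra; the main argument coincides with the paper's.
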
  
\begin{proof} 
The right-hand side of \eqref{eq:sobineq} is the norm on the space $W^2(S^2)$. The standard Sobolev estimate for sections of vector bundles applies. See \cite[Theorems III.2.15 and II.5.2]{MR1031992}. 
\end{proof}

\begin{lemma}[Integration by parts] 
\label{lem:IntegrationByParts}
If $f$ is a smooth scalar with spin- and boost-weight zero and if $f$ vanishes at $\rCutOff$, then 
\begin{subequations}
\begin{align}
\int_{\DtauR} \YOp f \diFourVol
={}& \left[ \int_{\StautR} (\di\timefunc_a \YOp^a) f \diThreeVol \right]_{\timefunc=\timefunc_1}^{\timefunc_2} ,
\label{eq:IBPY}\\
\int_{\DtauR} \VOp f \diFourVol
={}& \left[ \int_{\StautR} f (\di\timefunc_a \VOp^a) \diThreeVol \right]_{\timefunc=\timefunc_1}^{\timefunc_2} 
-\int_{\DtauR} M \frac{r^2-a^2}{(r^2+a^2)^2} f \diFourVol 
+\frac12\int_{\Scritau} f \diThreeVolScri 
\label{eq:IBPV} .
\end{align}
\end{subequations}
\end{lemma}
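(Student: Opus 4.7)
The plan is to pass to the hyperboloidal coordinates $(\timefunc,r,\theta,\phi)$, in which $dv\wedge dr = d\timefunc\wedge dr$ and hence $\diFourVol = d\timefunc \wedge dr \wedge \sin\theta\, d\theta\, d\phi$. Substituting $\partial_r|_v = \partial_r|_\timefunc - h'(r)\partial_\timefunc$ into \eqref{eq:ops-Znaj-IEF} casts the two operators as
\begin{align*}
\YOp = h'(r)\partial_\timefunc - \partial_r, \qquad
\VOp = V^\timefunc \partial_\timefunc + V^r\partial_r + V^\phi \partial_\phi,
\end{align*}
with $V^\timefunc = 1 - h'\Delta/(2(r^2+a^2)) = \di\timefunc_a \VOp^a$, $V^r = \Delta/(2(r^2+a^2))$, and $V^\phi = a/(r^2+a^2)$. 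In particular $\di\timefunc_a \YOp^a = h'(r)$, and the coordinate divergence of $\YOp$ vanishes identically.

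For \eqref{eq:IBPY} I would split $\YOp f = h'(r)\partial_\timefunc f - \partial_r f$ and integrate each piece in the product measure. Fubini together with the fundamental theorem of calculus in $\timefunc$ converts the first summand directly into $[\int_{\StautR}(\di\timefunc_a\YOp^a)f\diThreeVol]_{\timefunc_1}^{\timefunc_2}$. Integrating the second summand first in $r$ produces only boundary contributions: the one at $r=\rCutOff$ vanishes by the hypothesis $f|_{\rCutOff} = 0$, and the one as $r\to\infty$ is absorbed into the ambient conformal-regularity conventions under which this lemma is applied. Since the coordinate divergence of $\YOp$ vanishes, no bulk error is produced.

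For \eqref{eq:IBPV}, the $V^\timefunc\partial_\timefunc f$ piece delivers $[\int_{\StautR} f(\di\timefunc_a\VOp^a)\diThreeVol]_{\timefunc_1}^{\timefunc_2}$ by the same fundamental-theorem argument. Integration by parts in $r$ on the $V^r\partial_r f$ piece produces the bulk contribution $-\int_{\DtauR}(\partial_r V^r)f\diFourVol$ together with a one-dimensional boundary term $[V^r f]_{r=\rCutOff}^{r=\infty}$. A direct computation gives
\begin{align*}
\partial_r V^r = \partial_r\!\left(\frac{\Delta}{2(r^2+a^2)}\right) = \frac{M(r^2 - a^2)}{(r^2+a^2)^2},
\end{align*}
which is the announced bulk term; the inner boundary vanishes because $f$ does, while the outer limit $V^r\to\tfrac12$ as $r\to\infty$, together with the conformal regularity of $f$ at $\Scri^+$, produces the $\tfrac{1}{2}\int_{\Scritau} f\diThreeVolScri$ contribution. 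Finally, the $V^\phi\partial_\phi f$ term drops out since $V^\phi$ is $\phi$-independent and $f$, being of spin-weight zero, is $\phi$-periodic, so $\int_0^{2\pi}\partial_\phi f\,d\phi = 0$ on each orbit.

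The main technical point is that $\diFourVol$ is not the metric volume, so the radial integrals must be interpreted as improper; verifying that the boundary evaluation at $r=\rCutOff$ and the limit as $r\to\infty$ produce exactly the Leray forms on $\StautR$ and $\Scritau$ appearing in the statement requires the conformal-regularity framework of Section~\ref{sec:ConformalRegularity}.
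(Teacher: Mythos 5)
Your hyperboloidal-slicing argument is a genuinely different route from the paper's, which works at fixed $v$ in ingoing Eddington--Finkelstein coordinates, where $\YOp=-\partial_r$ and $\VOp$ is read off from \eqref{eq:V-Znaj-IEF}. For \eqref{eq:IBPV} your computation is correct and, if anything, more transparent than the paper's one-line proof: the bulk term from $\partial_r\bigl(\Delta/(2(r^2+a^2))\bigr)$ and the flux $\tfrac12\int_{\Scritau}f\,\diThreeVolScri$ from $V^r\to\tfrac12$ appear directly, whereas in the constant-$v$ slicing they only emerge after a careful limiting argument in the $v$-integration.

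For \eqref{eq:IBPY}, however, there is a genuine gap. Your own decomposition $\YOp=h'\partial_\timefunc-\partial_r$ makes the radial piece, integrated at fixed $\timefunc$ over $[\rCutOff,\infty)$, produce the boundary contribution $-\int_{\Scritau}f|_{\Scri^+}\,\diThreeVolScri$, and you discard it by appeal to ``conformal-regularity conventions''. Conformal regularity means only that $f$ extends smoothly to $\Scri^+$, not that it vanishes there; for $f=1-\rCutOff/r$ the left-hand side of \eqref{eq:IBPY} equals $-4\pi(\timefunc_2-\timefunc_1)$, your discarded term equals exactly this, and each hyperboloid integral $\int_{\StautR}h'f\,\diThreeVol$ on the right diverges, so the term cannot be waved away. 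Note also the internal inconsistency: the very same $r\to\infty$ evaluation that you keep for $\VOp$ (coefficient $\to\tfrac12$, giving the $\Scri^+$ flux) is silently dropped for $\YOp$ (coefficient $\to-1$). To close the gap you must either show that under the implicit convergence hypotheses of the lemma (finiteness of the integrals appearing in \eqref{eq:IBPY}, as holds in all of its applications, where $f$ is $O(r^{-2})$) a conformally regular $f$ necessarily satisfies $f|_{\Scri^+}=0$, so that your extra term vanishes; or follow the paper's slicing: integrate $-\partial_r f$ at fixed $v$, where $\DtauR$ meets each $\{v=\mathrm{const}\}$ in a compact $r$-interval whose endpoints lie on $\Sigma_{\timefunc_1}$, $\Sigma_{\timefunc_2}$ or $\{r=\rCutOff\}$, so no boundary at infinity arises at all, and the factor $\di\timefunc_a\YOp^a=h'$ appears when the induced measure $\di v$ on the hyperboloids is converted to $\diThreeVol$ via $\di v=h'\,\di r$.
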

\begin{proof}
In ingoing Eddington-Finkelstein coordinates, $\diFourVol =\sin\theta\di\phi\di\theta\di r\di v$. The first claim follows from the fact that $\YOp $ is $-\partial_r$ in ingoing Eddington-Finkelstein coordinates. The second claim follows from equation \eqref{eq:V-Znaj-IEF} and that 
\begin{equation}
\partial_r\left(\frac{\Delta}{2(r^2+a^2)}\right)=M\frac{r^2-a^2}{(r^2+a^2)^{2}}.
\end{equation}
\end{proof}

\begin{lemma}[Weighted integration by parts]
Let $f$ be a smooth, real-valued function of $r$ and $\theta$ that vanishes at $\rCutOff$ and $\varphi$ be a spin-weighted scalar. 
\begin{subequations}
\begin{align}
\int_{\DtauR} \Re\left(f \bar\varphi \YOp\varphi\right)  \diFourVol
={}& \left[ \int_{\StautR} (\di\timefunc_a \YOp^a) \frac12 f|\varphi|^2 \diThreeVol \right]_{\timefunc=\timefunc_1}^{\timefunc_2} 
+\int_{\DtauR} \frac12 (\partial_r f) |\varphi|^2 \diFourVol , 
\label{eq:IBPYInnerProduct}\\
\int_{\DtauR} \Re\left(f\bar\varphi \VOp\varphi\right) \diFourVol
={}& \left[ \int_{\StautR} (\di\timefunc_a \VOp^a) \frac12 f|\varphi|^2 \diThreeVol \right]_{\timefunc=\timefunc_1}^{\timefunc_2} 
-\int_{\DtauR} \partial_r \left(f\frac{\Delta}{4(r^2+a^2)}\right) |\varphi|^2 \diFourVol \nonumber\\
&+\frac14\int_{\Scritau} f \abs{\varphi}^2 \diThreeVolScri .
\label{eq:IBPVInnerProduct} 
\end{align}
\end{subequations}
\end{lemma}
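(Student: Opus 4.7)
The plan is to reduce the weighted identities to the unweighted identities of Lemma 4.14 (equations \eqref{eq:IBPY}--\eqref{eq:IBPV}) by rewriting the integrand $\Re(f\bar\varphi X\varphi)$ for $X \in \{Y,V\}$ as the $X$-derivative of a spin- and boost-weight zero scalar, modulo a pointwise error. The key input is Remark \ref{rem:varphi2}: the GHP covariant derivative $\Theta_a$ is real, hence commutes with complex conjugation, and $|\varphi|^2 = \bar\varphi\varphi$ is a scalar of GHP type $\{0,0\}$, on which $V,Y$ act simply as $V^a\nabla_a, Y^a\nabla_a$. Consequently $Y|\varphi|^2 = 2\Re(\bar\varphi Y\varphi)$ and $V|\varphi|^2 = 2\Re(\bar\varphi V\varphi)$, and since $f$ depends only on $(r,\theta)$, the product $f|\varphi|^2$ also has GHP type $\{0,0\}$, so Lemma \ref{lem:IntegrationByParts} applies to it.

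The first identity is essentially immediate. Using the Leibniz rule,
\begin{align*}
Y\!\left(\tfrac12 f|\varphi|^2\right) = \tfrac12(Yf)|\varphi|^2 + \Re(f\bar\varphi Y\varphi).
\end{align*}
From \eqref{eq:Y-Znaj-IEF} one has $Yf = -\partial_r f$, so
\begin{align*}
\Re(f\bar\varphi Y\varphi) = Y\!\left(\tfrac12 f|\varphi|^2\right) + \tfrac12(\partial_r f)|\varphi|^2.
\end{align*}
Integrating over $\DtauR$ and applying \eqref{eq:IBPY} to the first term on the right-hand side yields \eqref{eq:IBPYInnerProduct}. The hypothesis that $f$ vanishes at $\rCutOff$ is used to discard the boundary at $r=\rCutOff$, and no $\Scri^+$ term appears because $Y$ is tangent to $\Scri^+$.

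The second identity requires an extra computation, but is structurally the same. By Leibniz and \eqref{eq:V-Znaj-IEF},
\begin{align*}
\Re(f\bar\varphi V\varphi) = V\!\left(\tfrac12 f|\varphi|^2\right) - \tfrac12(Vf)|\varphi|^2
= V\!\left(\tfrac12 f|\varphi|^2\right) - \frac{\Delta}{4(a^2+r^2)}(\partial_r f)|\varphi|^2,
\end{align*}
where the last equality uses $\partial_v f = \partial_\phi f = 0$. Integrating over $\DtauR$ and applying \eqref{eq:IBPV} to the first term produces the hyperboloidal and $\Scri^+$ boundary integrals together with the bulk term $-\int_{\DtauR} \tfrac12 M (r^2-a^2)(r^2+a^2)^{-2} f|\varphi|^2 \diFourVol$. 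Combining this with the $\Delta(\partial_r f)$ bulk term above and noting the Leibniz identity
\begin{align*}
\partial_r\!\left(\frac{\Delta}{4(a^2+r^2)}\,f\right)
= \frac{\Delta}{4(a^2+r^2)}\partial_r f + \frac{M(r^2-a^2)}{2(a^2+r^2)^2}\,f,
\end{align*}
which one verifies by direct differentiation of $\Delta = r^2-2Mr+a^2$, yields \eqref{eq:IBPVInnerProduct}.

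There is no real obstacle here beyond bookkeeping; the main point is the reality of the GHP derivative that underlies the identity $X|\varphi|^2 = 2\Re(\bar\varphi X\varphi)$ for $X \in \{V,Y\}$, which is what permits reducing a sesquilinear bulk expression to a derivative of a true scalar and hence to the unweighted statement of Lemma \ref{lem:IntegrationByParts}.
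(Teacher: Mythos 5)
Your proposal is correct and follows exactly the paper's argument: rewrite $\Re(f\bar\varphi X\varphi)=X\bigl(\tfrac12 f|\varphi|^2\bigr)-\tfrac12(Xf)|\varphi|^2$ for $X\in\{\YOp,\VOp\}$ and apply the unweighted Lemma \ref{lem:IntegrationByParts} to the true scalar $\tfrac12 f|\varphi|^2$, with the coefficient bookkeeping (including $\partial_r\bigl(\tfrac{\Delta}{4(r^2+a^2)}\bigr)=\tfrac{M(r^2-a^2)}{2(r^2+a^2)^2}$) done correctly. The paper states this in one line; you have simply filled in the same computation.
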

\begin{proof}
This follows from the previous lemma and the fact that $\Re(f\bar{\varphi}\VOp\varphi)$ $=\VOp\left(\frac12f\abs{\varphi}^2\right) -(\VOp f)\abs{\varphi}^2/2$, and similarly for $\YOp$. 
\end{proof}

The following lemma gives a standard one-dimensional Hardy inequality on bounded intervals. The subsequent lemma applies this to obtain a similar estimate on each $\StautRc$ with an estimate in terms of the operators $\VOp$ and $\YOp$. Since we consider a bounded interval, we must include terms arising from the end points. When the exponent $\gamma$ is negative, the (nonnegative) contribution from the right endpoint $r_1$ is one of the terms that is bounded above, and the contribution from the left endpoint $r_0$ appears as a term in the upper bound; in contrast, when $\gamma$ is positive, the contribution from the left endpoint $r_0$ is bounded above, and the contribution from the right endpoint $r_1$ appears as part of the upper bound. 

\begin{lemma}[One-dimensional Hardy estimates]
\label{lem:HardyIneq}
Let $\gamma \in \mathbb{R}\setminus \{0\}$  and $h: [r_0,r_1] \rightarrow \mathbb{R}$ be a $C^1$ function. 
\begin{enumerate}
\item \label{point:lem:HardyIneqLHS} If $r_0^{\gamma}\vert h(r_0)\vert^2 \leq D_0$ and $\gamma<0$, then
\begin{subequations}
\begin{align}\label{eq:HardyIneqLHS}
-2\gamma^{-1}r_1^{\gamma}\vert h(r_1)\vert^2+\int_{r_0}^{r_1}r^{\gamma -1} \vert h(r)\vert ^2 \di r \leq \frac{4}{\gamma^2}\int_{r_0}^{r_1}r^{\gamma +1} \vert \partial_r h(r)\vert ^2 \di r-2\gamma^{-1}D_0.
\end{align}
\item \label{point:lem:HardyIneqRHS} If $r_1^{\gamma}\vert h(r_1)\vert^2 \leq D_0$ and $\gamma>0$, then
\begin{align}\label{eq:HardyIneqRHS}
2\gamma^{-1}r_0^{\gamma}\vert h(r_0)\vert^2+\int_{r_0}^{r_1}r^{\gamma -1} \vert h(r)\vert ^2 \di r \leq \frac{4}{\gamma^2}\int_{r_0}^{r_1}r^{\gamma +1} \vert \partial_r h(r)\vert ^2 \di r +2\gamma^{-1}D_0.
\end{align}
\end{subequations}
\end{enumerate}
\end{lemma}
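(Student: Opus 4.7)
The plan is to derive both inequalities from a single integration-by-parts identity, followed by a weighted Cauchy--Schwarz (Young) estimate on the resulting cross term, and then to dispose of the boundary terms using the stated boundary hypothesis on $r_0$ or $r_1$. Since $h$ is real, the identity $r^{\gamma-1}=\gamma^{-1}\partial_r(r^\gamma)$ together with the product rule $\partial_r(r^\gamma h^2)=\gamma r^{\gamma-1}h^2+2r^\gamma h\,\partial_r h$ yields, after integration over $[r_0,r_1]$,
\begin{equation}\label{eq:planIBP}
\int_{r_0}^{r_1} r^{\gamma-1}\abs{h}^2\,\di r
=\frac{1}{\gamma}\bigl[r^\gamma\abs{h}^2\bigr]_{r_0}^{r_1}
-\frac{2}{\gamma}\int_{r_0}^{r_1} r^\gamma h\,\partial_r h\,\di r.
\end{equation}

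Next I would estimate the cross term by splitting $r^\gamma=r^{(\gamma-1)/2}\cdot r^{(\gamma+1)/2}$ and applying $2|AB|\leq \tfrac{1}{2}A^2+2B^2$ with $A=r^{(\gamma-1)/2}\abs{h}$ and $B=|\gamma|^{-1}r^{(\gamma+1)/2}\abs{\partial_r h}$. This gives
\begin{equation}\label{eq:planYoung}
\Bigl|\tfrac{2}{\gamma}\int_{r_0}^{r_1} r^\gamma h\,\partial_r h\,\di r\Bigr|
\leq \tfrac{1}{2}\int_{r_0}^{r_1} r^{\gamma-1}\abs{h}^2\,\di r
+\tfrac{2}{\gamma^2}\int_{r_0}^{r_1} r^{\gamma+1}\abs{\partial_r h}^2\,\di r.
\end{equation}
Substituting \eqref{eq:planYoung} into \eqref{eq:planIBP}, absorbing $\tfrac{1}{2}\int r^{\gamma-1}\abs{h}^2\di r$ into the left-hand side, and multiplying through by $2$ produces the core inequality
\begin{equation*}
\int_{r_0}^{r_1}r^{\gamma-1}\abs{h}^2\,\di r
-\tfrac{2}{\gamma}\bigl[r^\gamma\abs{h}^2\bigr]_{r_0}^{r_1}
\leq \tfrac{4}{\gamma^2}\int_{r_0}^{r_1}r^{\gamma+1}\abs{\partial_r h}^2\,\di r.
\end{equation*}

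Finally, I would dispose of the boundary terms by inspecting the sign of $\gamma^{-1}$ in each case. In case \ref{point:lem:HardyIneqLHS}, $\gamma<0$ so $-2/\gamma>0$ and $2/\gamma<0$; applying $r_0^\gamma\abs{h(r_0)}^2\leq D_0$ to the boundary term at $r_0$ (which carries a negative coefficient) lowers the left side by at most $-2\gamma^{-1}D_0$, yielding \eqref{eq:HardyIneqLHS}. In case \ref{point:lem:HardyIneqRHS}, $\gamma>0$ so the roles reverse and $r_1^\gamma\abs{h(r_1)}^2\leq D_0$ bounds the boundary term at $r_1$ (now carrying the negative coefficient), producing \eqref{eq:HardyIneqRHS}. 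There is no serious obstacle here: the result is a textbook one-dimensional weighted Hardy inequality, and the only care needed is in tracking signs of $\gamma^{-1}$ when converting boundary-term inequalities into the form stated in the two cases.
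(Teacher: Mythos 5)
Your proposal is correct and follows essentially the same route as the paper: integrate $\partial_r(r^\gamma\vert h\vert^2)$ over $[r_0,r_1]$, estimate the cross term by a weighted Cauchy--Schwarz/Young inequality with the same weight split, absorb half of the $r^{\gamma-1}\vert h\vert^2$ integral, and use the sign of $\gamma^{-1}$ together with the boundary hypothesis to handle the endpoint terms. The only difference is cosmetic (you divide by $\gamma$ before applying Young, the paper after), and the constants come out identically.
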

\begin{proof}
We integrate $\partial_r (r^{\gamma} \vert h\vert^2)$ over $[r_0,r_1]$ to obtain:
\begin{align}
r_1^{\gamma}\vert h(r_1)\vert^2-r_0^{\gamma}\vert h(r_0)\vert^2={}&\gamma\int_{r_0}^{r_1} r^{\gamma-1}\vert h(r)\vert^2 \di r +2\int_{r_0}^{r_1}r^{\gamma}\Re\{\bar{h}\partial_r h\} \di r.
\end{align}
In the first case where $\gamma <0$, we apply a Cauchy-Schwarz inequality to estimate the last integral term 
\begin{align}
\left\vert 2\int_{r_0}^{r_1}r^{\gamma}\Re\{\bar{h}\partial_r h\} \di r \right\vert \leq \frac{-\gamma}{2}\int_{r_0}^{r_1} r^{\gamma-1}\vert h(r)\vert^2 \di r +\frac{2}{-\gamma}\int_{r_0}^{r_1} r^{\gamma+1}\vert \partial_r h(r)\vert^2 \di r
\end{align}
Collecting the above two estimates implies \eqref{eq:HardyIneqLHS}. The estimate \eqref{eq:HardyIneqRHS} follows in the same way.
\end{proof}

\begin{lemma}[Hardy estimate on hypersurfaces]
\label{lem:HardyOnStautRc} 
Let $\veps>0$. There is an $\rCutOffInHardy\geq 10M$ such that for $\rCutOff\geq\rCutOffInHardy$ and all spin-weighted scalars $\varphi$, 
\begin{align}
\norm{\varphi}_{W^{0}_{-2}(\StautRc)}^2
\leq{}& (16+\veps) \norm{r\VOp \varphi}_{W^{0}_{-2}(\StautRc)}^2
+\veps\norm{M\YOp \varphi}_{W^{0}_{-2}(\StautRc)}^2
+\norm{\varphi}_{W^{0}_{0}(\StautRcR)}^2 .
\end{align}

Similarly for $\delta>0$ and $\alpha\in[\delta,2-\delta]$, there is a constant $\rCutOffInHardy=\rCutOffInHardy(\delta)\geq 10M$ such that for $\rCutOff\geq\rCutOffInHardy$ and all spin-weighted scalars $\varphi$, 
\begin{align}
\label{eq:HardyOnHypersurfaceBulkWeights}
\norm{\varphi}_{W^{0}_{\alpha-3}(\StautRc)}^2
\lesssim{}& \norm{r\VOp \varphi}_{W^{0}_{\alpha-3}(\StautRc)}^2
+\norm{M\YOp \varphi}_{W^{0}_{-\delta-1}(\StautRc)}^2
+\norm{\varphi}_{W^{0}_{0}(\StautRcR)}^2 .
\end{align}
\end{lemma}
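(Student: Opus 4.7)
The plan is to derive both inequalities from a weighted radial integration-by-parts identity on $\Staut$, exploiting the antisymmetry of $\LetaOp=\partial_\phi$ under integration over $\Sphere$ so that no angular-derivative norm needs to appear on the right-hand side. First, parametrizing $\Staut$ by $(r,\omega)$ and using $\timefunc=v-h(r)$, the radial tangent vector on the slice satisfies $\partial_r|_{\Staut}=\partial_r+h'(r)\partial_v=-\YOp+h'(r)\LxiOp$, and by \eqref{eq:LxiToVOpYOpLeta} this becomes
\begin{equation*}
\partial_r|_{\Staut}\;=\;A(r)\VOp+B(r)\YOp-C(r)\LetaOp,
\end{equation*}
with $A=h'$, $B=h'\Delta/[2(a^2+r^2)]-1$ and $C=h'a/(a^2+r^2)$. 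Using the explicit form \eqref{eq:hdef} and $|a|<M$, one verifies $A(r)\to 2$ and $B(r),C(r)=O(M^2/r^2)$ as $r\to\infty$.

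Next, fix a smooth cutoff $\chi(r)$ with $\chi\equiv 0$ for $r\leq\rCutOff-M$, $\chi\equiv 1$ for $r\geq\rCutOff$, and $|\chi'|\lesssim M^{-1}$. Integrating
\begin{equation*}
\partial_r\bigl(r^{\alpha-2}\chi^2|\varphi|^2\bigr)=(\alpha-2)r^{\alpha-3}\chi^2|\varphi|^2+2r^{\alpha-2}\chi\chi'|\varphi|^2+2r^{\alpha-2}\chi^2\Re\bigl(\bar\varphi\,\partial_r|_{\Staut}\varphi\bigr)
\end{equation*}
in $r$ from $\rCutOff-M$ to infinity, where the boundary term at infinity vanishes for $\alpha<2$ by conformal regularity (lemma \ref{lem:ConformalExtension}) or a density argument, yields
\begin{equation*}
(2-\alpha)\int r^{\alpha-3}\chi^2|\varphi|^2\,dr\;=\;2\int r^{\alpha-2}\chi\chi'|\varphi|^2\,dr+2\int r^{\alpha-2}\chi^2\Re\bigl(\bar\varphi\,\partial_r|_{\Staut}\varphi\bigr)dr.
\end{equation*}
Inserting the decomposition of $\partial_r|_{\Staut}\varphi$ and integrating over $\Sphere$ eliminates the $\LetaOp$ contribution, since $\int_\Sphere 2\Re(\bar\varphi\partial_\phi\varphi)\diTwoVol=\int_\Sphere\partial_\phi|\varphi|^2\diTwoVol=0$ by $2\pi$-periodicity of the true scalar $|\varphi|^2$. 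Young's inequality with parameters $\mu,\nu>0$ bounds $2r^{\alpha-2}A\Re(\bar\varphi\VOp\varphi)\leq\mu r^{\alpha-3}|\varphi|^2+\mu^{-1}A^2 r^{\alpha-1}|\VOp\varphi|^2$, and analogously with $B,\YOp,\nu$; absorbing the $r^{\alpha-3}|\varphi|^2$ terms into the left-hand coefficient $2-\alpha$ is legitimate once $\mu+\nu<2-\alpha$.

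Choosing $\mu=(2-\alpha)/2$ minimizes the resulting coefficient of $\int r^{\alpha-1}|\VOp\varphi|^2$ at $4A(r)^2/(2-\alpha)^2$; at $\alpha=1$ this tends to $16$, so taking $\rCutOff$ large enough that $A(r)^2\leq 4(1+\veps/32)$ and then $\nu$ sufficiently small produces the coefficient $16+\veps$. The $\YOp$ coefficient $\nu^{-1}B^2 r^{\alpha-1}=O(M^4 r^{\alpha-5})$ is dominated by $\veps M^2 r^{-2}$ at $\alpha=1$ and by $C(\delta)M^2 r^{-\delta-1}$ for $\alpha\leq 2-\delta$, in both cases by choosing $\rCutOff\geq\rCutOffInHardy(\delta)$. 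The cutoff term is supported on $\StautRcR$, where $r^{\alpha-2}|\chi\chi'|\lesssim M^{-1}$, so after multiplication by $M$ and integration it is bounded by a constant times $\norm{\varphi}^2_{W^0_0(\StautRcR)}$; the gap between $\int_{\StautR}$ and $\int_{\StautRc}$ on the left-hand side is controlled by the same transition-region norm since $Mr^{\alpha-3}\leq M^{-1}$ there. The main obstacle is extracting the precise constant $16$ in the first inequality: it requires both the exact limit $h'(r)\to 2$ and the optimal Young parameter $\mu=1/2$, together with the spherical integration-by-parts that eliminates $\LetaOp$, which is what keeps any angular-derivative norm from appearing on the right-hand side.
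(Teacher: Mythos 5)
Your argument is correct and lands on the same core mechanism as the paper — a sharp radial Hardy estimate along $\Staut$ exploiting $h'(r)\to 2$, with the $\YOp$-component of the slice-tangent derivative of size $M^2\bigOAnalytic(r^{-2})$ — but the implementation is genuinely different. The paper chooses the slice-tangent operator $\TparallelForRethreadedHardy$ to be a pure $\VOp$–$\YOp$ combination (your $\partial_r|_{\Staut}$ plus the azimuthal component $\tfrac{h'a}{a^2+r^2}\LetaOp$), re-threads the angular coordinate so that $\TparallelForRethreadedHardy=\partial_{\rRethreaded}$, and then quotes the one-dimensional Hardy lemma with $\gamma=-1$; no angular term ever appears. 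You instead stay in the $(r,\theta,\phi)$ parametrization, accept the explicit $-C(r)\LetaOp$ term, and kill it by integrating over $\Sphere$ first, since $C$ depends only on $r$ and $2\Re(\bar\varphi\LetaOp\varphi)=\partial_\phi|\varphi|^2$ integrates to zero; the Hardy step is then redone from scratch as a weighted integration by parts in $r$ with a Young-inequality optimization, which at $\alpha=1$, $\mu=(2-\alpha)/2$ reproduces the constant $4A^2/(2-\alpha)^2\to 16$ and, for $\alpha\in[\delta,2-\delta]$, gives the $\delta$-dependent constant with the $\YOp$ error on the weaker weight $-\delta-1$ exactly as in \eqref{eq:HardyOnHypersurfaceBulkWeights}. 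What the paper's route buys is reusability: the re-threaded coordinate and the operator $\TparallelForRethreadedHardy$ are used again (e.g.\ in the Sobolev lemma \ref{lem:SobolevOnStaut} and lemma \ref{lem:Ik+1alphaenergydominatesPkalphapointwise}), and the abstract 1-d Hardy lemma is applied off the shelf. What your route buys is that no change of coordinates is needed, both weight ranges come out of one computation, and the role of the azimuthal piece is made explicit rather than hidden in the re-threading.

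Two minor remarks. First, the boundary term at large $r$ in your identity enters with a favourable sign once you integrate up to a finite radius $R_1$ and rearrange, so you do not need conformal regularity or a density argument to discard it — which is preferable, since the lemma is asserted for all spin-weighted scalars, and conformal regularity is not available for a general $\varphi$; the standard finite-$R_1$ truncation also legitimizes the absorption of the $r^{\alpha-3}|\varphi|^2$ terms. Second, your bookkeeping of the quantifiers is the right one: fix $\mu=(2-\alpha)/2$, then $\nu$ small to protect the constant $16+\veps$, then $\rCutOff$ large so that $(h')^2$ is close to $4$ and $\nu^{-1}B^2r^{\alpha-1}$ is dominated by the stated $\YOp$ weight; this makes $\rCutOffInHardy$ depend on $\veps$ (respectively on $\delta$), exactly as in the statement.
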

\begin{proof}
Let 
\begin{align} \label{eq:Xcal-def}
\TparallelForRethreadedHardy=h'\VOp +(1-\Delta h'/(2(r^2+a^2)))\YOp .
\end{align} 
Then the vector field $\TparallelForRethreadedHardy^a$ corresponding to $\TparallelForRethreadedHardy$ is tangent to $\Staut$. 
We may  introduce new coordinates $(\rRethreaded,\thetaRethreaded,\phiRethreaded)$ on $\Staut$ by taking $\rRethreaded=r$, $\thetaRethreaded=\theta$, and $\phiRethreaded$ is constant along the flow lines of $\TparallelForRethreadedHardy^a$ such $\phiRethreaded$ agrees with $\phi$ on $r=\rCutOff$. In such coordinates and the Znajek tetrad, one finds that $\TparallelForRethreadedHardy$ is $\partial_\rRethreaded$. 

From the one-dimensional Hardy estimate \eqref{eq:HardyIneqLHS} with $\gamma=-1$, one finds for sufficiently large~$r$ 
\begin{align}
\int_{r}^\infty (r')^{-2} \abs{\varphi(r',\omega)}^2 \di r'
\leq{}& 4 \int_{r}^\infty \abs{\TparallelForRethreadedHardy\varphi(r',\omega)}^2 \di r' 
+2 r^{-1}\abs{\varphi(r,\omega)}^2 .
\end{align}
Integrating this over $r\in(\rCutOff-M,\rCutOff)$, and since $\rCutOff \geq 10M$, one finds
\begin{align}
M\int_{\rCutOff}^\infty r^{-2} \abs{\varphi}^2 \diThreeVol
\leq{}& 4M \int_{\rCutOff-M}^\infty \abs{\TparallelForRethreadedHardy\varphi}^2 \diThreeVol
+4 M\rCutOff^{-1} \int_{\StautRcR} \abs{\varphi}^2 \diThreeVol .
\end{align}
From the definition of $\TparallelForRethreadedHardy$ in equation \eqref{eq:Xcal-def}, the expansion for $h'$ in equation \eqref{eq:hprim-explicit}, and the observation that the $\YOp$ coefficient in $\TparallelForRethreadedHardy$ satisfies
\begin{align}
1-\frac{\Delta h'}{2(r^2+a^2)}
=M^2\bigOAnalytic(r^{-2}) ,
\end{align}
it follows that for sufficiently large $r$, there is the bound $4\abs{\TparallelForRethreadedHardy\varphi}^2 \leq (16+\veps)\abs{\VOp\varphi}^2 +\veps M^2r^{-2}\abs{\YOp\varphi}^2$, which completes the proof. 

For $\alpha\in[\delta,2-\delta]$, a similar argument applies, except the bound $\alpha-3\leq-\delta-1$ is used. The constant in the one-dimensional Hardy estimate \eqref{eq:HardyIneqLHS} diverges as $\gamma=\alpha-2$ goes to zero, but, if $\alpha$ is restricted to an interval $[\delta,2-\delta]$ the constant is uniform in $\alpha$, but depends upon $\delta$. 
\end{proof}

\begin{lemma}[Sobolev estimate on hypersurfaces]
\label{lem:SobolevOnStaut}
Assume $\varphi$ is a scalar of spin-weight $s$, and let  $\TparallelForRethreadedHardy$ be the operator from the proof of the Hardy lemma \ref{lem:HardyOnStautRc}.
For $\gamma \in \Reals$, 
we have, for $\timefunc\geq\timefunc_0$, 
\begin{align}
\sup_{\Staut} \abs{\varphi}^2
\lesC{s}{}& \left(\int_{\Staut} r^{-1-\gamma}
  \abs{\varphi}_{2,\sphereOps}^2 
  \diThreeVol
  \int_{\Staut} r^{-1+\gamma}\abs{r\TparallelForRethreadedHardy\varphi}_{2,\sphereOps}^2
\diThreeVol\right)^{1/2} 
+\int_{\Staut^{r_+,10M}} M^{-1}
  \abs{\varphi}_{2,\sphereOps}^2 
\diThreeVol  \nonumber\\
\lesC{s} {}& \norm{\varphi}_{W^{2}_{-1-\gamma}(\Staut)}\norm{r\TparallelForRethreadedHardy\varphi}_{W^{2}_{-1+\gamma}(\Staut)}+\norm{\varphi}_{W^{2}_{0}(\Staut^{r_+,10M})}.
\label{eq:SobolevOnStautWithgenneralgamma}
\end{align}
In the case that $\gamma=0$, we have
\begin{align}
\sup_{\Staut} \abs{\varphi}^2
\lesC{s}{}& \norm{\varphi}_{W^{3}_{-1}(\Staut)}^2 .
\label{eq:SobolevOnStautWithgammazero}
\end{align}
If $0<\gamma\leq 1$, we also have 
\begin{align}
\sup_{\Staut} \abs{\varphi}^2
\lesC{\gamma,s} {}&(\norm{\varphi}_{W^{3}_{-2}(\Staut)}^2 +\norm{r\VOp\varphi}_{W^{2}_{-1+\gamma}(\Staut)}^2)^{1/2}(\norm{\varphi}_{W^{3}_{-2}(\Staut)}^2 +\norm{r\VOp\varphi}_{W^{2}_{-1-\gamma}(\Staut)}^2)^{1/2}.
\label{eq:SobolevOnStautHolder}
\end{align}
\end{lemma}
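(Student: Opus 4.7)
The approach is to combine a two-dimensional Sobolev embedding on the spheres of constant $r$ with a one-dimensional fundamental-theorem-of-calculus argument in the direction tangent to $\Staut$, handling the near-horizon region $\Staut^{r_+,10M}$ separately by standard compact Sobolev embedding. I would first pass to the adapted coordinates on $\Staut$ introduced in the proof of lemma~\ref{lem:HardyOnStautRc}, in which $\TparallelForRethreadedHardy$ reduces to $\partial_{\rRethreaded}$ with $\rRethreaded = r$; since $[\TparallelForRethreadedHardy,\hedt] = [\TparallelForRethreadedHardy,\hedt']=0$ by the commutation relations in section~\ref{sec:operators}, the operator $\TparallelForRethreadedHardy$ preserves pointwise $\sphereOps$-norms.

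For the exterior region $r \geq 10M$, the spherical Sobolev estimate (lemma~\ref{lem:spherica-Sobolev}) yields $|\varphi(r,\omega)|^2 \lesC{s} F(r)$, where $F(r) := \int_{\Sphere}\absHighOrder{\varphi}{2}{\sphereOps}^2(r,\omega')\diTwoVol$. By the mean value theorem, choose $r_0 \in [9M,10M]$ with $F(r_0) \lesssim M^{-1}\int_{\Staut^{r_+,10M}}\absHighOrder{\varphi}{2}{\sphereOps}^2\diThreeVol$, which is the compact boundary contribution. For $r \geq 10M$, the FTC gives $F(r) \leq F(r_0) + \int_{r_0}^\infty |F'(\rRethreaded)|\,\mathrm{d}\rRethreaded$, and the product rule together with $[\TparallelForRethreadedHardy,\hedt] = [\TparallelForRethreadedHardy,\hedt']=0$ gives $|F'(\rRethreaded)| \lesC{s} \int_{\Sphere}\absHighOrder{\varphi}{2}{\sphereOps}\,\absHighOrder{\TparallelForRethreadedHardy\varphi}{2}{\sphereOps}\diTwoVol$. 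A weighted Cauchy--Schwarz in $(\rRethreaded,\omega')$ using the split $1 = \rRethreaded^{-(1+\gamma)/2} \cdot \rRethreaded^{(1+\gamma)/2}$ then produces the claimed product of weighted integrals, with weight $r^{-1-\gamma}$ on $\absHighOrder{\varphi}{2}{\sphereOps}^2$ and weight $r^{-1+\gamma}$ on $\absHighOrder{r\TparallelForRethreadedHardy\varphi}{2}{\sphereOps}^2$. Using $\sphereOps \subset \rescaledOps$ and peeling off the $M$-factors in the definition of $W^2_\alpha(\Staut)$ gives the first claim.

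For $\gamma=0$, the general estimate specializes to $\sup|\varphi|^2 \lesssim \norm{\varphi}_{W^2_{-1}(\Staut)}\norm{r\TparallelForRethreadedHardy\varphi}_{W^2_{-1}(\Staut)} + \norm{\varphi}^2_{W^2_0(\Staut^{r_+,10M})}$. Since $\TparallelForRethreadedHardy = h'\VOp + M^2\bigOAnalytic(r^{-2})\YOp$ with $h'$ uniformly bounded on $[r_+,\infty)$, the pointwise bound $\absHighOrder{r\TparallelForRethreadedHardy\varphi}{2}{\rescaledOps} \lesssim \absHighOrder{r\VOp\varphi}{2}{\rescaledOps} + (M/r)\absHighOrder{M\YOp\varphi}{2}{\rescaledOps}$ shows $\norm{r\TparallelForRethreadedHardy\varphi}_{W^2_{-1}(\Staut)} \lesssim \norm{\varphi}_{W^3_{-1}(\Staut)}$; AM--GM then yields \eqref{eq:SobolevOnStautWithgammazero}. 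For the H\"older-type bound with $0 < \gamma \leq 1$, I would apply the general estimate with parameter $\gamma$ and trade weights for derivatives on both factors. The Hardy estimate \eqref{eq:HardyOnHypersurfaceBulkWeights} with $\alpha = 2-\gamma \in [\delta,2-\delta]$ gives $\norm{\varphi}^2_{W^2_{-1-\gamma}(\Staut^{\geq \rCutOff-M})} \lesssim \norm{r\VOp\varphi}^2_{W^2_{-1-\gamma}} + \norm{M\YOp\varphi}^2_{W^2_{-\delta-1}} + \norm{\varphi}^2_{W^2_0(\Staut^{\rCutOff-M,\rCutOff})}$, while the same $\TparallelForRethreadedHardy$-to-$\VOp$ reduction (valid since $\gamma \leq 1$ makes $r^{-3+\gamma} \leq r^{-2}$ on $r \geq 10M$) produces $\norm{r\TparallelForRethreadedHardy\varphi}^2_{W^2_{-1+\gamma}} \lesssim \norm{r\VOp\varphi}^2_{W^2_{-1+\gamma}} + \norm{\varphi}^2_{W^3_{-2}}$. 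After absorbing the Hardy $\YOp$-error and shell/compact terms into $\norm{\varphi}^2_{W^3_{-2}}$, multiplying the two resulting bounds gives the desired symmetric form.

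The main obstacle is the weight-bookkeeping in the H\"older case: the $\YOp$-error $\norm{M\YOp\varphi}^2_{W^2_{-\delta-1}}$ produced by the Hardy estimate carries weight $r^{-\delta-1}$, which at infinity is strictly weaker (less decay) than the target weight $r^{-2}$ appearing in $\norm{\varphi}^2_{W^3_{-2}}$. Resolving this requires exploiting the additional derivative that $W^3_{-2}$ affords over $W^2_{-2}$, either by reapplying Hardy to $M\YOp\varphi$ itself (trading the extra $\rescaledOps$-derivative for a factor of $r^2$ and reaching the $r^{-2}$ target weight), or by splitting the $\YOp$-error into near- and far-field contributions with a judicious choice of $\rCutOff$ so that the near-field is folded into the shell term and only the far-field requires the additional derivative. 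Care is needed to ensure that the implicit constant remains uniform in $\gamma$ on compact subintervals of $(0,1]$, matching the $\lesC{\gamma,s}$ dependence in the statement.
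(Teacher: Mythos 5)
Your arguments for the general estimate \eqref{eq:SobolevOnStautWithgenneralgamma} and for the $\gamma=0$ case \eqref{eq:SobolevOnStautWithgammazero} are correct and essentially identical to the paper's proof: fundamental theorem of calculus along the $\TparallelForRethreadedHardy$ flow on $\Staut$, Cauchy--Schwarz with the $r^{-1-\gamma}/r^{-1+\gamma}$ split, the spherical Sobolev lemma, a near-horizon anchor (your single $r_0\in[9M,10M]$ versus the paper's average over $r_1\in[r_+,10M]$ is an immaterial difference), and then the expansion of $r\TparallelForRethreadedHardy$ in terms of $r\VOp$ and $(M^2/r)\YOp$ for $\gamma=0$.

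For the H\"older case \eqref{eq:SobolevOnStautHolder}, however, there is a genuine gap, and it is exactly the one you flag yourself. The hypersurface Hardy estimate \eqref{eq:HardyOnHypersurfaceBulkWeights} is the wrong tool here: it produces the error $\norm{M\YOp\varphi}^2_{W^{2}_{-\delta-1}(\Staut)}$, and this is \emph{not} controlled by $\norm{\varphi}^2_{W^{3}_{-2}(\Staut)}$, since the integrand weight $r^{-1-\delta}$ dominates $r^{-2}$ as $r\to\infty$ and an additional $\rescaledOps$-derivative in $W^3$ provides no extra $r$-decay. Neither of your suggested repairs closes this: reapplying the Hardy lemma to $M\YOp\varphi$ only yields weighted norms of second derivatives at the same too-weak weights (and risks circularity), while splitting at a cutoff $\rCutOff$ does nothing because the problem lives at $r\to\infty$, not near the cutoff.

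The repair --- and the paper's actual route --- is to stay one-dimensional: inside the Cauchy--Schwarz step, apply the one-dimensional Hardy inequality of lemma \ref{lem:HardyIneq} (point \ref{point:lem:HardyIneqLHS}, with exponent $-\gamma<0$, which is where the $\gamma$-dependence of the constant enters) along the $\TparallelForRethreadedHardy$ flow lines in the rethreaded coordinates. This converts the zeroth-order factor $\int_{\Staut} r^{-1-\gamma}\abs{\varphi}_{2,\sphereOps}^2\diThreeVol$ into $\int_{\Staut} r^{-1-\gamma}\abs{r\TparallelForRethreadedHardy\varphi}_{2,\sphereOps}^2\diThreeVol$ plus a near-region term, i.e.\ a first-order term at the \emph{same} weight $-1-\gamma$, with no $\YOp$-error at weight $-\delta-1$ ever appearing. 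Only after both factors are of $r\TparallelForRethreadedHardy\varphi$ type does one expand $r\TparallelForRethreadedHardy$ as $\bigOAnalytic(1)\,r\VOp+\bigOAnalytic(1)\,(M^{2}/r)\YOp$; the resulting $\YOp$-contributions then carry weights $-3\pm\gamma\leq-2$ (here $\gamma\leq1$ is used), so they are absorbed into $\norm{\varphi}^2_{W^{3}_{-2}(\Staut)}$, giving \eqref{eq:SobolevOnStautHolder} with constant $\lesC{\gamma,s}$.
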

\begin{proof}
Let $\TparallelForRethreadedHardy$ be as in the proof of lemma \ref{lem:HardyOnStautRc}. For $r_1,r_2\in [r_+,\infty)$, one has
\begin{align}
\int_{\Sphere}|\varphi(r_2)|^2 \diTwoVol
={}& \Big\vert\int_{r_1}^{r_2} \int_{\Sphere} \partial_r |\varphi(r)|^2 \diThreeVol\Big\vert +\int_{\Sphere}|\varphi(r_1)|^2 \diTwoVol, \nonumber\\
={}& \Big\vert\int_{r_1}^{r_2} \int_{\Sphere}\TparallelForRethreadedHardy |\varphi(r)|^2 \diThreeVol\Big\vert +\int_{\Sphere}|\varphi(r_1)|^2 \diTwoVol, \nonumber\\
\leq {}& \left( \int_{r_1}^{\infty}\int_{\Sphere} r^{-1-\gamma} |\varphi(r)|^2 \diThreeVol \int_{r_1}^{\infty}\int_{\Sphere} r^{-1+\gamma}|r\TparallelForRethreadedHardy \varphi(r)|^2 \diThreeVol\right)^{1/2}
+\int_{\Sphere}|\varphi(r_1)|^2\diTwoVol ,
\label{eq:SobolevOnStaut:intermediatestep}
\end{align}
where in the last step we have used H\"{o}lder inequality. 
We integrate over $r_1$ from $r_+$ to $10M$ and the first line of  \eqref{eq:SobolevOnStautWithgenneralgamma} holds from the spherical Sobolev lemma \ref{lem:spherica-Sobolev} where the integral is taken to be over the sphere with given $\timefunc$ and $r$. The second line of \eqref{eq:SobolevOnStautWithgenneralgamma} holds since $\sphereOps\subset\rescaledOps$.

The estimate \eqref{eq:SobolevOnStautWithgammazero} when $\gamma=0$ follows from applying Cauchy-Schwarz inequality to the right of \eqref{eq:SobolevOnStautWithgenneralgamma} and the fact that $r\TparallelForRethreadedHardy$ is in the span of $r\VOp$ and $M^2r^{-1}\YOp$ with $\bigOAnalytic(1)$ coefficients.

We now prove the estimate \eqref{eq:SobolevOnStautHolder}.
Since $\gamma>0$, one can use the Hardy inequality \eqref{eq:HardyIneqLHS} to arrive at
\begin{align}
\int_{r_+}^{\infty}\int_{\Sphere} r^{-1-\gamma} |\varphi(r)|^2 \diThreeVol
\lesC{\gamma} {}&\int_{r_+}^{\infty}\int_{\Sphere}r^{-1-\gamma}|r\TparallelForRethreadedHardy \varphi(r)|^2 \diThreeVol +\int_{r_+}^{10M}\int_{\Sphere} r^{-1-\gamma} |\varphi(r)|^2\diThreeVol.
\label{eq:Sobolev:OneDimHardy}
\end{align}
Hence, by integrating \eqref{eq:SobolevOnStaut:intermediatestep} over $r_1$ from $r_+$ to $10M$, one finds for any $r\in [r_+,\infty)$
\begin{align}
\int_{\Sphere}|\varphi(r)|^2 \diTwoVol
\lesC{\gamma} {}& \left( \int_{r_+}^{\infty}\int_{\Sphere} 
 r^{-1-\gamma}|r\TparallelForRethreadedHardy \varphi(r)|^2
 \diThreeVol \int_{r_+}^{\infty}\int_{\Sphere} r^{-1+\gamma}|r\TparallelForRethreadedHardy \varphi(r)|^2 \diThreeVol\right)^{1/2} \nonumber\\
&+\int_{r_+}^{10M}\int_{\Sphere}M^{-1}|\varphi(r)|^2\diThreeVol.
\end{align}
Since $r\TparallelForRethreadedHardy$ is in the span of $r\VOp$ and $M^2r^{-1}\YOp$ with $\bigOAnalytic(1)$ coefficients, and since $\sphereOps\subset\rescaledOps$ and the assumption $\gamma\leq 1$, the estimate \eqref{eq:SobolevOnStautHolder} then follows. 
\end{proof}

\begin{lemma}[Anisotropic, spacetime Sobolev inequality]
\label{lem:anisotropicSpaceTimeSobolev}
Let $\varphi$ be a scalar of spin-weight $s$. 

If $\lim_{\timefunc\rightarrow\infty}\abs{r^{-1}\varphi}=0$ pointwise in $(r,\omega)$, then
\begin{align}
\abs{r^{-1}\varphi}^2
\lesC{s}{}& 
\norm{\varphi}_{W^{3}_{-3}(\Dtaut)}
\norm{\LxiOp\varphi}_{W^{3}_{-3}(\Dtaut)} .
\label{eq:anisotropicSpaceTimeSobolev}
\end{align}
\end{lemma}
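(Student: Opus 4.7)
The target estimate has the shape of an anisotropic Sobolev–Gagliardo–Nirenberg bound: a pointwise value at $(\timefunc_0,r_0,\omega_0)$ controlled by a product $\norm{\varphi}\norm{\LxiOp\varphi}$ where the asymmetric $\LxiOp$ in the second factor plays the role of a ``time derivative''. The plan is to dispose of the time direction by a fundamental theorem of calculus argument, using the hypothesis $\lim_{\timefunc\to\infty}|r^{-1}\varphi|=0$, and then to handle the remaining three spatial directions by the already established surface Sobolev estimate of Lemma~\ref{lem:SobolevOnStaut}.

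More precisely, since in the compactified hyperboloidal coordinates $(\timefunc,R,\theta,\phi)$ one has $\partial_{\timefunc}=\LxiOp$, the pointwise decay assumption allows me to write
\begin{align*}
|r_0^{-1}\varphi|^2(\timefunc_0,r_0,\omega_0)
={}&-2\int_{\timefunc_0}^{\infty}\Re\!\left(\overline{r_0^{-1}\varphi}\cdot r_0^{-1}\LxiOp\varphi\right)\!(\timefunc,r_0,\omega_0)\,d\timefunc
\leq 2\int_{\timefunc_0}^{\infty}(r_0^{-1}|\varphi|)(r_0^{-1}|\LxiOp\varphi|)\,d\timefunc,
\end{align*}
where the Cauchy–Schwarz step has been applied pointwise in $\timefunc$. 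Next, for each fixed $\timefunc$ I bound the integrand by the supremum over $\Staut$ and apply Lemma~\ref{lem:SobolevOnStaut} in its $\gamma=0$ form \eqref{eq:SobolevOnStautWithgammazero} to the spin-weighted scalar $r^{-1}\varphi$, obtaining $\sup_{\Staut}|r^{-1}\varphi|^2\lesC{s}\norm{r^{-1}\varphi}_{W^{3}_{-1}(\Staut)}^{2}$. Since $\hedt,\hedt'$ commute with $r^{-1}$, and since $r\VOp$, $M\YOp$ acting on $r^{-1}\varphi$ produce, by Leibniz, the corresponding rescaled derivative of $\varphi$ plus a lower-order term of size $r^{-1}|\varphi|$, expanding $|r^{-1}\varphi|_{3,\rescaledOps}^{2}\lesssim r^{-2}|\varphi|_{3,\rescaledOps}^{2}$ and comparing the weights $r^{-1}$ and $r^{-3}$ in the two norms (together with the appropriate $M$-powers) gives $\norm{r^{-1}\varphi}_{W^{3}_{-1}(\Staut)}\lesssim M^{-1}\norm{\varphi}_{W^{3}_{-3}(\Staut)}$, and identically for $\LxiOp\varphi$.

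Inserting these two estimates into the $\timefunc$-integral and applying Cauchy–Schwarz in $L^2(d\timefunc)$ gives
\begin{align*}
|r^{-1}\varphi|^2(\timefunc_0,r_0,\omega_0)
\lesC{s}{}& M^{-2}\!\left(\int_{\timefunc_0}^{\infty}\!\norm{\varphi}_{W^{3}_{-3}(\Staut)}^{2}d\timefunc\right)^{\!1/2}\!\!\left(\int_{\timefunc_0}^{\infty}\!\norm{\LxiOp\varphi}_{W^{3}_{-3}(\Staut)}^{2}d\timefunc\right)^{\!1/2}.
\end{align*}
A final application of Fubini (using $\diFourVol=d\timefunc\wedge\diThreeVol$ and comparing the weights $M^{2}r^{-3}$ versus $Mr^{-3}$ in the hypersurface and spacetime versions of the norm) converts each factor into the corresponding $W^{3}_{-3}(\Dtaut)$ norm, at the cost of one factor of $M^{1/2}$ per factor, which exactly cancels the $M^{-2}$ out front modulo the dimensionally necessary $M^{-1}$. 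Mass normalization then yields the stated inequality.

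The main obstacle, modest given the setup, is bookkeeping in the second step: one must verify carefully that the rescaled operators $\rescaledOps$ interact with the factor $r^{-1}$ only through controllable lower-order terms, so that the three-derivative $W^3$-norm of $r^{-1}\varphi$ on $\Staut$ is genuinely dominated by the $W^{3}_{-3}(\Staut)$-norm of $\varphi$ without losing a factor of $r$. The assumption of pointwise vanishing of $|r^{-1}\varphi|$ (rather than some stronger Sobolev vanishing) is precisely what is needed and enough, since the fundamental theorem is applied to $|r^{-1}\varphi|^{2}$ itself rather than to any of its derivatives.
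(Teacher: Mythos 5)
Your argument is correct and follows essentially the same route as the paper: the fundamental theorem of calculus in $\timefunc$ using the decay hypothesis, Cauchy--Schwarz, and the slice-wise Sobolev estimate \eqref{eq:SobolevOnStautWithgammazero} applied to $r^{-1}\varphi$, with the interchange of $r^{-1}$ and the operators in $\rescaledOps$ producing only controllable lower-order terms. The only differences are the order in which you apply Cauchy--Schwarz in $\timefunc$ versus the hypersurface Sobolev bound, and the more explicit weight/$M$-factor bookkeeping, neither of which changes the substance.
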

\begin{proof}
Using the fundamental theorem of calculus and the Cauchy-Schwarz inequality, we have 
\begin{align}
\abs{r^{-1}\varphi}^2
={}& -\int_{\timefunc}^\infty \LxiOp \abs{r^{-1}\varphi}^2 \di\timefunc' \nonumber\\
\leq{}& 2\int_{\timefunc}^\infty \abs{\LxiOp r^{-1}\varphi} \abs{r^{-1}\varphi}\di\timefunc' \nonumber\\
\leq{}& 2
\left(\int_{\timefunc}^\infty \abs{r^{-1}\LxiOp \varphi}^2 \di\timefunc'\right)^{1/2}
\left(\int_{\timefunc}^\infty \abs{r^{-1}\varphi}^2 \di\timefunc'\right)^{1/2} .
\end{align}
Now, from applying the Sobolev inequality \eqref{eq:SobolevOnStautWithgammazero} on each $\Staut$, the result holds. 
\end{proof}

\begin{lemma}[Transition flux is controlled by bulk]
\label{lem:Transitionfluxcontrolledbybulkexterior}
Let $f(\timefunc, r)$ be a spin-weighted scalar. For any real value $\gamma$ and $\timefunc\geq \timefunc_0\geq 1$, it holds true that
\begin{align}
\int_{\timefunc}^{\infty} (\timefunc')^{\gamma}\abs{f(\timefunc', \timefunc')}^2 \di \timefunc'
\lesC{\gamma} \int_{\timefunc}^{\infty}\int_{\timefunc'}^{\infty} r^{\gamma-1}(\abs{f(\timefunc', r)}^2 +\abs{r\TparallelForRethreadedHardy f (\timefunc', r)}^2 )  \di r \di \timefunc'.
\label{eq:Transitionfluxcontrolledbybulkexterior}
\end{align}
\end{lemma}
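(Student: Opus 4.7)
The plan is to reduce the claim to a one-dimensional weighted trace inequality on each slice $\Sigma_{\timefunc'}$, and then to integrate the resulting pointwise bound in the outer variable $\timefunc'$.

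First, I would fix a point $\tilde\omega\in\Sphere$ and work in the rethreaded coordinates $(\tilde r,\tilde\theta,\tilde\phi)$ on $\Sigma_{\timefunc'}$ introduced in the proof of lemma~\ref{lem:HardyOnStautRc}, so that $\tilde r=r$ and $\TparallelForRethreadedHardy$ acts as $\partial_{\tilde r}$ along curves of constant $(\tilde\theta,\tilde\phi)$. Along any such curve one has the elementary identity
\[
\partial_r\bigl(r^\gamma\abs{f}^2\bigr)=\gamma\, r^{\gamma-1}\abs{f}^2+2r^\gamma\Re\bigl(\bar f\,\TparallelForRethreadedHardy f\bigr),\qquad r\in[\timefunc',\infty).
\]

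Next, I would integrate this identity in $r$ from $\timefunc'$ to $\infty$—or first to a truncation $R$ and then send $R\to\infty$ along a subsequence along which $R^\gamma\abs{f(\timefunc',R,\tilde\omega)}^2\to 0$; such a subsequence exists whenever the bulk side of~\eqref{eq:Transitionfluxcontrolledbybulkexterior} is finite, which is the only interesting case. This gives the boundary identity
\[
(\timefunc')^\gamma\abs{f(\timefunc',\timefunc')}^2=-\gamma\int_{\timefunc'}^\infty r^{\gamma-1}\abs{f}^2\,\di r-2\int_{\timefunc'}^\infty r^\gamma\Re\bigl(\bar f\,\TparallelForRethreadedHardy f\bigr)\di r.
\]
A weighted Cauchy--Schwarz bound $2 r^\gamma\abs{f}\abs{\TparallelForRethreadedHardy f}\leq r^{\gamma-1}\abs{f}^2+r^{\gamma-1}\abs{r\,\TparallelForRethreadedHardy f}^2$ then produces the pointwise one-dimensional estimate
\[
(\timefunc')^\gamma\abs{f(\timefunc',\timefunc')}^2\leq\bigl(\abs{\gamma}+1\bigr)\int_{\timefunc'}^\infty r^{\gamma-1}\Bigl(\abs{f(\timefunc',r)}^2+\abs{r\,\TparallelForRethreadedHardy f(\timefunc',r)}^2\Bigr)\di r,
\]
with an absolute constant $C_\gamma=\abs{\gamma}+1$ depending only on $\gamma$. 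The change of angular variable $\omega\leftrightarrow\tilde\omega$ is smooth with Jacobian bounded above and below uniformly in $\timefunc'\geq\timefunc_0$, so this step survives unchanged pointwise in $\omega$ (and, if desired, after integration over the sphere).

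Finally, integrating the last display over $\timefunc'\in[\timefunc,\infty)$ yields~\eqref{eq:Transitionfluxcontrolledbybulkexterior} at once. The only point requiring any care is the vanishing of the far boundary term $r^\gamma\abs{f}^2$ as $r\to\infty$, which is handled by the truncation-and-subsequence device indicated above and is entirely routine under the implicit hypothesis that the right-hand side is finite; no further analytic input is needed.
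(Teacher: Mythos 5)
Your proof is correct, but it follows a genuinely different route from the paper. The paper never integrates the boundary identity out to $r=\infty$: instead it shifts to the variable $r=\timefunc'+\zeta$, uses the mean-value (pigeonhole) principle to find a good slice $\zeta'\in[\timefunc'',2\timefunc'']$ on which the shifted flux is controlled by the bulk, transfers from $r=\timefunc'+\zeta'$ back to the transition surface $r=\timefunc'$ by the fundamental theorem of calculus in $r$, and finally lets $\timefunc''\to\infty$; in this way the question of decay of $r^{\gamma}\abs{f}^2$ as $r\to\infty$ never arises. You instead prove a pointwise-in-$\timefunc'$ trace inequality, $(\timefunc')^{\gamma}\abs{f(\timefunc',\timefunc')}^2\leq(\abs{\gamma}+1)\int_{\timefunc'}^{\infty}r^{\gamma-1}(\abs{f}^2+\abs{r\TparallelForRethreadedHardy f}^2)\di r$, by integrating $\partial_r(r^{\gamma}\abs{f}^2)$ and using weighted Cauchy--Schwarz, handling the far boundary term by a liminf/subsequence extraction, which is legitimate: when the right-hand side of \eqref{eq:Transitionfluxcontrolledbybulkexterior} is finite, the inner integral is finite for a.e.\ $\timefunc'$, and finiteness of $\int r^{\gamma-1}\abs{f}^2\di r$ forces $\liminf_{R\to\infty}R^{\gamma}\abs{f(\timefunc',R)}^2=0$, while the infinite case is trivial (the smoothness of fields assumed throughout the paper justifies the fundamental theorem of calculus, and the a.e.\ restriction is harmless since you only integrate in $\timefunc'$ at the end). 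Your argument is shorter and yields the explicit constant $\abs{\gamma}+1$, essentially by the same mechanism as the one-dimensional Hardy lemma \ref{lem:HardyIneq}; the paper's averaging argument buys the same estimate without any discussion of behaviour at $r=\infty$, at the cost of the dyadic bookkeeping. Your remark about the rethreaded angular coordinate is harmless but unnecessary here, since $f$ depends only on $(\timefunc,r)$, so $\TparallelForRethreadedHardy f$ is just the radial derivative at fixed $\timefunc$.
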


\begin{proof}
We make a change of coordinate $r=\timefunc' +\zeta$. Fix any $\timefunc''\geq \timefunc$. From the mean-value principle, we can find a $\zeta'\in [\timefunc'', 2\timefunc'']$ such that
\begin{align}
\int_{\timefunc}^{\timefunc+\timefunc''}(\timefunc'+\zeta')^{\gamma-1}\abs{f(\timefunc', \timefunc'+\zeta')}^2 \di \timefunc' \leq {}& (\timefunc'')^{-1}\int_{\timefunc''}^{2\timefunc''}\int_{\timefunc}^{\timefunc+\timefunc''} (\timefunc'+\zeta)^{\gamma-1}\abs{f(\timefunc', \timefunc'+\zeta)}^2 \di \timefunc' \di \zeta \nonumber\\
\leq {}& (\timefunc'')^{-1} \int_{\timefunc}^{\infty}\int_{0}^{\infty} (\timefunc'+\zeta)^{\gamma-1}\abs{f(\timefunc', \timefunc'+\zeta)}^2\di \zeta \di \timefunc' .
\end{align}
Therefore, for the $\zeta'$ chosen above,
\begin{align}
\int_{\timefunc}^{\timefunc+\timefunc''}(\timefunc'+\zeta')^{\gamma}\abs{f(\timefunc', \timefunc'+\zeta')}^2 \di \timefunc' \leq {}& 4\timefunc''\int_{\timefunc}^{\timefunc+\timefunc''}(\timefunc'+\zeta')^{\gamma-1}\abs{f(\timefunc', \timefunc'+\zeta')}^2 \di \timefunc'\nonumber\\
\leq{}&4 \int_{\timefunc}^{\infty}\int_{0}^{\infty} (\timefunc'+\zeta)^{\gamma-1}\abs{f(\timefunc', \timefunc'+\zeta)}^2\di \zeta \di \timefunc '\nonumber\\
\leq{}&4 \int_{\timefunc}^{\infty}\int_{t'}^{\infty} r^{\gamma-1}\abs{f(\timefunc',r)}^2\di r \di \timefunc '.
\end{align}
Since $\timefunc''\geq \timefunc$, $\timefunc'\in [\timefunc,\timefunc+\timefunc'']$ and $\zeta'\in [\timefunc'', 2\timefunc'']$, we have $\timefunc'+\zeta'\in [\timefunc',4\timefunc'']$. It then follows from the fundamental theorem of calculus that
\begin{align}
\hspace{4ex}&\hspace{-4ex}\int_{\timefunc}^{\timefunc+\timefunc''} (\timefunc')^{\gamma}\abs{f(\timefunc', \timefunc')}^2 \di \timefunc'\nonumber\\
\leq {}&\int_{\timefunc}^{\timefunc+\timefunc''}(\timefunc'+\zeta')^{\gamma}\abs{f(\timefunc', \timefunc'+\zeta')}^2 \di \timefunc'
+ \int_{\timefunc}^{\timefunc+\timefunc''}
\int_{\timefunc'}^{4\timefunc''}\abs{\TparallelForRethreadedHardy (r^{\gamma}\abs{f(\timefunc', r)}^2) }\di r \di \timefunc' \nonumber\\
\leq{}& C(\gamma) \int_{\timefunc}^{\infty}\int_{\timefunc'}^{\infty} r^{\gamma-1}(\abs{f(\timefunc', r)}^2+\abs{r\TparallelForRethreadedHardy(f(\timefunc', r))}^2)\di r \di \timefunc' .
\end{align}
Letting $\timefunc''$ go to infinity proves the estimate \eqref{eq:Transitionfluxcontrolledbybulkexterior}.
\end{proof}

\begin{lemma}[Taylor expansion in $L^2$]
\label{thm:Taylor-est}
Let $A>0$, $n\in \Naturals$, $f\in C^{n+1}([0,A])$, and
\begin{align}
P_n(x)={}& \sum_{k=0}^n \frac{x^k}{k!}f^{(k)}(0).
\end{align}
Then
for any $-1 < \alpha < 1$, there exists a constant $C=C(n,\alpha)$ such that
\begin{align}
\Bigl\Vert \frac{f(x)-P_n(x)}{x^{n+1+\alpha/2}} \Bigr\Vert_{L^2((0,A))}
\leq{}&C\Vert x^{-\alpha/2} f^{(n+1)}\Vert_{L^2((0,A))}.
\label{eq:generalremainderestimate}
\end{align}
\end{lemma}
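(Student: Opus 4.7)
The plan is to reduce the estimate to a one-dimensional integral operator bound via the integral form of Taylor's theorem, and then apply Minkowski's integral inequality after rescaling to turn the operator into a superposition of $L^2$-bounded dilations.

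First I would invoke the Taylor remainder formula: since $f\in C^{n+1}([0,A])$,
\begin{align*}
R_n(x):=f(x)-P_n(x)=\frac{1}{n!}\int_0^x (x-t)^n f^{(n+1)}(t)\,dt
=\frac{x^{n+1}}{n!}\int_0^1(1-u)^n f^{(n+1)}(ux)\,du,
\end{align*}
where in the second equality I set $t=ux$ with $u\in(0,1)$. Dividing by $x^{n+1+\alpha/2}$ and inserting compensating factors of $u^{\alpha/2}$, I obtain, with $h(y):=y^{-\alpha/2}f^{(n+1)}(y)$, the representation
\begin{align*}
\frac{R_n(x)}{x^{n+1+\alpha/2}}=\frac{1}{n!}\int_0^1 (1-u)^n\,u^{\alpha/2}\,h(ux)\,du.
\end{align*}

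Next, I would take the $L^2((0,A))$ norm in $x$ and apply Minkowski's integral inequality to commute the norm past the $u$-integral:
\begin{align*}
\Bigl\|\frac{R_n(x)}{x^{n+1+\alpha/2}}\Bigr\|_{L^2((0,A))}\leq \frac{1}{n!}\int_0^1(1-u)^n u^{\alpha/2}\,\|h(u\,\cdot)\|_{L^2((0,A))}\,du.
\end{align*}
The scaling identity for $L^2$ under dilations gives $\|h(u\,\cdot)\|_{L^2((0,A))}^2=u^{-1}\int_0^{uA}|h(y)|^2\,dy\leq u^{-1}\|h\|_{L^2((0,A))}^2$, so $\|h(u\,\cdot)\|_{L^2((0,A))}\leq u^{-1/2}\|h\|_{L^2((0,A))}$. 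Inserting this yields
\begin{align*}
\Bigl\|\frac{R_n(x)}{x^{n+1+\alpha/2}}\Bigr\|_{L^2((0,A))}\leq \frac{\|h\|_{L^2((0,A))}}{n!}\int_0^1(1-u)^n u^{(\alpha-1)/2}\,du
=\frac{B\bigl(n+1,\tfrac{\alpha+1}{2}\bigr)}{n!}\,\|x^{-\alpha/2}f^{(n+1)}\|_{L^2((0,A))}.
\end{align*}
The Beta-function integral converges precisely when $(\alpha+1)/2>0$, i.e. $\alpha>-1$, which gives the finite constant $C=C(n,\alpha)$ claimed.

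The only delicate point is the choice of method at the operator-bound step: a direct Cauchy--Schwarz inside the Taylor remainder formula produces a logarithmic divergence after integration in $x$, so the sharp estimate requires exploiting the dilation structure $t=ux$ together with Minkowski's inequality, which replaces a two-dimensional $L^2$ estimate by a one-dimensional Beta-function computation. The upper bound $\alpha<1$ plays no role in the argument; it merely ensures that the right-hand side is non-trivially finite for generic $f\in C^{n+1}([0,A])$, since otherwise $\int_0^A x^{-\alpha}|f^{(n+1)}(x)|^2\,dx$ is automatic only when $\alpha<1$.
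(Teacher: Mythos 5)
Your proof is correct, but it follows a genuinely different route from the paper. The paper proves the lemma by induction on the order of differentiation: it observes that $\partial_x^i(f-P_n)$ vanishes like $x^{n+1-i}$ at $x=0$, which makes the boundary term in the one-dimensional Hardy inequality (the paper's lemma on Hardy estimates, applied with $\gamma=-2n+2i-1-\alpha<0$) vanish, and then chains the resulting weighted estimates from $i=0$ up to $i=n$, where $\partial_x^{n+1}(f-P_n)=f^{(n+1)}$. This is where the hypothesis $\alpha<1$ is actually used, namely to guarantee $\lim_{x\to0^+}x^{-2n+2i-1-\alpha}\bigl(\partial_x^i(f-P_n)\bigr)^2=0$. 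You instead start from the integral form of the Taylor remainder, rescale via $t=ux$, and apply Minkowski's integral inequality together with the dilation bound $\Vert h(u\,\cdot)\Vert_{L^2((0,A))}\leq u^{-1/2}\Vert h\Vert_{L^2((0,A))}$, reducing everything to the Beta integral $B\bigl(n+1,\tfrac{\alpha+1}{2}\bigr)$, which converges exactly for $\alpha>-1$. Your argument is self-contained (no Hardy lemma needed), produces the explicit constant $C=B\bigl(n+1,\tfrac{\alpha+1}{2}\bigr)/n!$, and, as you correctly note, does not use the upper restriction $\alpha<1$ at all — that restriction only serves to make the right-hand side finite for generic $f\in C^{n+1}$. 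What the paper's route buys is economy within its own framework: it recycles the weighted Hardy machinery that is used repeatedly elsewhere (e.g.\ in the hypersurface Hardy and initial-data estimates), at the cost of an induction and of genuinely needing $\alpha\in(-1,1)$.
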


\begin{proof}
From the assumptions on the function $f(x)$, we have for any integer $0\leq i\leq n$, there exist constants $C(n,i)$ such that
\begin{align}
\lim_{x\rightarrow 0^+} \frac{\partial_x^i (f(x)-P_n(x))}{x^{n+1-i}}=C(n,i)f^{(n+1)}(0).
\label{eq:RemainderAssupconsequence}
\end{align}
Given any integer $0\leq i\leq n$, we do the replacements
\begin{align}
(r,r_0,r_1,h(r),\gamma)\mapsto (x,0,A,\partial_x^{i}(f(x)-P_n(x)),-2n+2i-1-\alpha)
\end{align}
in point \eqref{point:lem:HardyIneqLHS} of lemma \ref{lem:HardyIneq}, and note from the assumption $\alpha \in (-1,1)$ and the fact \eqref{eq:RemainderAssupconsequence} that
\begin{subequations}
\begin{align}
\gamma=-2n+2i-1-\alpha<{}&0,\\
\lim_{x\rightarrow 0^+}  x^{-2n+2i-1-\alpha}( \partial_x^{i}(f(x)-P_n(x)))^2 ={}&0.
\end{align}
\end{subequations}
Therefore, it follows from point \eqref{point:lem:HardyIneqLHS} of lemma \ref{lem:HardyIneq} that for any integer $0\leq i\leq n$ and any $\alpha\in (-1,1)$,
\begin{align}
\int_{0}^A \frac{( \partial_x^{i}(f(x)-P_n(x)))^2}{x^{2n-2i+2+\alpha}}\di x \leq C(n,i,\alpha)\int_{0}^A \frac{( \partial_x^{i+1}(f(x)-P_n(x)))^2}{x^{2n-2i+\alpha}}\di x.
\label{eq:remaindergeneralj}
\end{align}
Thus, by induction, one finds, for $i\in\{0,\ldots,n\}$, that 
\begin{align}
\| x^{-n-1-\alpha/2}(f-P_n)\|_{L^2((0,A))} 
\lesssim{}& \|x^{-n+i-\alpha/2} \partial_x^{i+1}(f-P_n)\|_{L^2((0,A))}.
\end{align} 
The case $i=n$ gives the desired result. 
\end{proof}

\begin{lemma}
\label{lem:Ik+1alphaenergydominatesPkalphapointwise}
For a spin-weighted scalar $\varphi$ and for any $\ireg\in\Naturals$ and $\alpha\in \Reals$, there is the bound
\begin{align}
\inipointwise{\ireg}{\alpha}(\varphi) 
\lesC{\alpha}{}&  \inienergy{\ireg+1}{\alpha}(\varphi).
\label{eq:Ik+1alphaenergydominatesPkalphapointwise}
\end{align}
\end{lemma}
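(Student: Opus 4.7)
The plan is to prove this by a weighted one-dimensional Sobolev trace inequality along the radial direction of $\Stauini$, in the spirit of the hypersurface Sobolev estimate of Lemma~\ref{lem:SobolevOnStaut}. The key observation is that $\Stauini$ is parametrized by $(r,\omega)\in[r_+,\infty)\times \Sphere$ with $v = \timefunc_0 + h(r)/2$, so that the radial derivative intrinsic to $\Stauini$ is
$$\frac{d}{dr}\bigl(\psi|_{\Stauini}\bigr) = \bigl(-\YOp + \tfrac{1}{2}h'(r)\,\LxiOp\bigr)\psi\Big|_{\Stauini},$$
where $h'$ is bounded uniformly on $[r_+,\infty)$ by \eqref{eq:hprim-explicit}; via \eqref{eq:LxiToVOpYOpLeta} and the explicit formulas for $\LetaOp$ in terms of $\hedt,\hedt'$, the operator $\LxiOp$ is controlled pointwise by elements of $\unrescaledOps$ with bounded coefficients.

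First I fix a multi-index $\mathbf{a}$ with $|\mathbf{a}|=j\leq \ireg$ and consider
$$F_\mathbf{a}(r):=\int_{\Sphere}|\unrescaledOps^\mathbf{a}\varphi|^2\,\diTwoVol, \qquad G_\mathbf{a}(r):=\int_{\Sphere}\Bigl|\tfrac{d}{dr}\unrescaledOps^\mathbf{a}\varphi\Bigr|^2\,\diTwoVol,$$
evaluated on $\Stauini$. By the preceding discussion, the weighted integrals $\int_{r_+}^\infty r^{\alpha+2j-1}F_\mathbf{a}\,dr$ and $\int_{r_+}^\infty r^{\alpha+2j+1}G_\mathbf{a}\,dr$ are both dominated by $M^\alpha\,\inienergy{\ireg+1}{\alpha}(\varphi)$. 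The mean-value principle applied to the integrable density $s^{\alpha+2j-1}F_\mathbf{a}(s)$ yields, for each $R\geq r_+$, some $r_R^*\in[R,2R]$ satisfying
$$r_R^{*\,\alpha+2j}F_\mathbf{a}(r_R^*) \leq 2\int_R^{2R} s^{\alpha+2j-1}F_\mathbf{a}(s)\,ds \longrightarrow 0 \quad\text{as } R\to\infty.$$

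Next I write, by the fundamental theorem of calculus along $\Stauini$,
$$r^{\alpha+2j}F_\mathbf{a}(r) = r_R^{*\,\alpha+2j}F_\mathbf{a}(r_R^*) - \int_r^{r_R^*} \Bigl[(\alpha+2j)s^{\alpha+2j-1}F_\mathbf{a}(s) + s^{\alpha+2j}F_\mathbf{a}'(s)\Bigr]ds,$$
and note that angular Cauchy--Schwarz gives $|F_\mathbf{a}'(s)| \leq 2F_\mathbf{a}^{1/2}G_\mathbf{a}^{1/2}$, hence
$$s^{\alpha+2j}|F_\mathbf{a}'(s)| \leq 2\bigl(s^{\alpha+2j-1}F_\mathbf{a}\bigr)^{1/2}\bigl(s^{\alpha+2j+1}G_\mathbf{a}\bigr)^{1/2}.$$
Another application of Cauchy--Schwarz in $s$, followed by AM--GM and sending $R\to\infty$, yields
$$r^{\alpha+2j}F_\mathbf{a}(r) \lesC{\alpha} \int_{r_+}^\infty\bigl(s^{\alpha+2j-1}F_\mathbf{a} + s^{\alpha+2j+1}G_\mathbf{a}\bigr)ds \lesC{\alpha} M^\alpha\,\inienergy{\ireg+1}{\alpha}(\varphi),$$
uniformly in $r\in[r_+,\infty)$. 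Summing over $|\mathbf{a}|\leq\ireg$ and dividing by $M^\alpha$ yields the stated bound.

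The only genuine obstacle is bookkeeping: ensuring that $G_\mathbf{a}$, which arises from the intrinsic radial derivative and therefore involves $\LxiOp\unrescaledOps^\mathbf{a}\varphi$, is bounded with the correct $r^{\alpha+2j+1}$ weight by a finite sum of $\int_\Sphere|\unrescaledOps^{\mathbf{a}'}\varphi|^2\diTwoVol$ with $|\mathbf{a}'|\leq j+1$. This is where the explicit decomposition \eqref{eq:LxiToVOpYOpLeta} is used; the factor $h'/2$ and the coefficients $\Delta/(a^2+r^2)$, $a/(a^2+r^2)$ multiplying $\YOp$ and $\LetaOp$ are uniformly bounded on $[r_+,\infty)$, so no additional weight is lost and the matching of powers of $r$ goes through cleanly.
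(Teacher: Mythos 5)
Your proposal is correct and follows essentially the same route as the paper: both arguments restrict to $\Stauini$, express the tangential radial derivative through operators in $\unrescaledOps$ with bounded coefficients, use integrability of the energy to select a sequence on which the weighted sphere integrals vanish at infinity, and then run a one-dimensional weighted radial trace estimate matching the $r^{\alpha+2|\mathbf{a}|}$ weights. The only difference is presentational: you prove the trace inequality by hand (fundamental theorem of calculus plus Cauchy--Schwarz and AM--GM), while the paper absorbs the $r$-weights into the field and invokes point \ref{point:lem:HardyIneqRHS} of lemma \ref{lem:HardyIneq} with $\gamma=1$, which encodes the same computation.
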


\begin{proof} 
From the definition of $\inienergy{\ireg+1}{\alpha}$ and commuting $r$ through the $\unrescaledOps$ derivatives, it follows that
\begin{align}
\inienergy{\ireg+1}{\alpha}(\varphi) 
={}& \sum_{|\mathbf{a}|\leq \ireg+1} \int_{\Stauini} M^{-\alpha}r^{\alpha+2|\mathbf{a}|-1}\abs{\unrescaledOps^{\mathbf{a}} \varphi}^2 \diThreeVol \nonumber\\
\gtrC{\alpha}{}& \sum_{|\mathbf{a}| \leq \ireg+1} \int_{\Stauini} M^{-\alpha}r^2 
\abs{\unrescaledOps^{\mathbf{a}}(r^{\alpha/2+|\mathbf{a}|-3/2}\varphi)}^2
 \diThreeVol . 
\end{align}

There are two important consequences of this. First, one finds, from ignoring the case $|\mathbf{a}|=0$  and the divergence of $\int r^{-1}\di r$, that 
\begin{align}
r  \sum_{|\mathbf{a}| \leq \ireg} \int_{\Sphere} M^{-\alpha}r^2 \abs{\unrescaledOps^{\mathbf{a}} (r^{\alpha/2+|\mathbf{a}|-3/2}\varphi)}^2 \diThreeVol\rightarrow 0 
\label{eq:Ik+1ControlPk:sequentialStep}
\end{align} 
as $r\rightarrow\infty$, at least along some sequence. Before considering the second, observe that there is a vector field $\TparallelForRethreadedHardy^a$ that is parallel to $\Stauini$ and the corresponding operator $\TparallelForRethreadedHardy$ has an expansion solely in terms of $\VOp$ and $\YOp$ with $\bigOAnalytic(1)$ coefficients. As in lemma \ref{lem:HardyOnStautRc}, this can be used to define a radial coordinate $\tilde{r}$ such that, in the Znajek tetrad, $\TparallelForRethreadedHardy = \partial_{\tilde{r}}$ on $\Stauini$. Thus, there is the second observation that
\begin{align}
\inienergy{\ireg+1}{\alpha}(\varphi) 
\gtrC{\alpha}{}&  \sum_{|\mathbf{a}|\leq \ireg} \int_{\Stauini} M^{-\alpha}r^2 \abs{\TparallelForRethreadedHardy \unrescaledOps^{\mathbf{a}} (r^{\alpha/2+|\mathbf{a}|-1/2}\varphi)}^2 \diThreeVol . 
\end{align}
where we have taken into account the shift in $i$. Now applying the pointwise control in point \ref{point:lem:HardyIneqRHS} of Lemma \ref{lem:HardyIneq} with $\gamma=1$, and using the limit \eqref{eq:Ik+1ControlPk:sequentialStep} to drop the right endpoint, one concludes for any $(\timefunc,r,\omega)\in \Stauini$, 
\begin{align}
\inienergy{\ireg+1}{\alpha}(\varphi) 
\gtrC{\alpha}{}& r\sum_{|\mathbf{a}| \leq \ireg} M^{-\alpha} \int_{\Sphere}  \abs{\unrescaledOps^{\mathbf{a}} r^{\alpha/2+|\mathbf{a}|-1/2}\varphi(\timefunc,r,\omega)} ^2
\diThreeVol \nonumber\\
\gtrC{\alpha}{}& \sum_{|\mathbf{a}| \leq \ireg} M^{-\alpha} r^{\alpha+2|\mathbf{a}|}\int_{\Sphere}  \abs{\unrescaledOps^{\mathbf{a}} \varphi(\timefunc,r,\omega)}^2 \diThreeVol . 
\end{align}
By taking the supremum in $r\in [r_+,\infty)$ and $\timefunc=\timefunc_0-h(r)/2$, this completes the proof. 
\end{proof}

\section{Weighted energy estimates} \label{sec:weightedenergy}
\subsection{A hierarchy of pointwise and integral estimates implies decay}
\label{sec:HierarchyImpliesDecay}
This subsection provides some simple lemmas for treating hierarchies of decay estimates. Such hierarchies arise both in the analysis of the Teukolsky equation and in the analysis of transport equations. The proof of these results relies on the (continuous) pigeonhole principle. 

For transport equations, the hierarchy of estimates is generally fairly straightforward, with a weighted integral of a solution being controlled by a weighted integral of a source. However, for wave-like equations, such as the Teukolsky equation, one finds that the weighted integral of a function at one level of regularity is estimated in terms of another weighted integral at a different level of regularity. For this reason, lemma \ref{lem:hierarchyImpliesDecay} involves a function $f(\iPigeonReg,\alpha,\PigeonTime)$, which should be thought of as being an integral involving a regularity $\iPigeonReg$, a weight $\alpha$, and a time $\PigeonTime$. 

The following lemma uses a single application of the pigeonhole principle and is used in the proof of lemma \ref{lem:hierarchyImpliesDecay}. 

\begin{lemma}[Single step]
\label{lem:HierarchyImpliesDecay:OneStep} Let $f:\{-1,0,1\}\times[\PigeonTime_0,\infty)\rightarrow[0,\infty)$ be such that $f(\iPigeonReg,\PigeonTime)$ is Lebesgue measurable in $\PigeonTime$ for each $\iPigeonReg$. If there is a $D\geq0$ and $\alpha\in \Reals$  such that, for all $\iPigeonReg\in\{0,1\}$ and $\PigeonTime_2\geq\PigeonTime_1\geq \PigeonTime_0$, 
\begin{align}
f(\iPigeonReg,\PigeonTime_2)
+\int_{\PigeonTime_1}^{\PigeonTime_2} f(\iPigeonReg-1,\PigeonTime) \di \PigeonTime
\lesssim f(\iPigeonReg,\PigeonTime_1) +\PigeonTime_1^{\alpha+\iPigeonReg}D , 
\label{eq:Rev:HierarchyToDecay:Hypothesis}
\end{align}
then, for all $\PigeonTime\geq 2 \PigeonTime_0$, 
\begin{align}
f(0,\PigeonTime)
\lesC{\alpha}{}& \PigeonTime^{-1} f(1,\PigeonTime/2)+\PigeonTime^{\alpha}D .
\label{eq:Rev:HierarchyToDecay}
\end{align}
\end{lemma}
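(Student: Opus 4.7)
The plan is to carry out a classical mean-value/pigeonhole argument, using the hypothesis at the top level $\iPigeonReg = 1$ to produce integrability of $f(0,\cdot)$, then picking a good intermediate time, and finally propagating forward using the hypothesis at the bottom level $\iPigeonReg = 0$.

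First I would apply \eqref{eq:Rev:HierarchyToDecay:Hypothesis} with $\iPigeonReg = 1$, $\PigeonTime_1 = \PigeonTime/2$, and an arbitrary $\PigeonTime_2 > \PigeonTime/2$, and drop the nonnegative boundary term $f(1,\PigeonTime_2)$ on the left. Letting $\PigeonTime_2 \to \infty$ (or, more properly, taking $\sup$ over $\PigeonTime_2$) yields
\begin{align*}
\int_{\PigeonTime/2}^{\infty} f(0,\PigeonTime') \, \di \PigeonTime' \;\lesssim\; f(1,\PigeonTime/2) + (\PigeonTime/2)^{\alpha+1} D.
\end{align*}
Since the integrand is nonnegative and the integral over a set of length $\PigeonTime/2$ (namely $[\PigeonTime/2,\PigeonTime]$) is bounded by the full integral, the mean-value theorem for integrals (the pigeonhole principle) produces a point $\PigeonTime^{*} \in [\PigeonTime/2,\PigeonTime]$ with
\begin{align*}
\tfrac{\PigeonTime}{2}\, f(0,\PigeonTime^{*}) \;\leq\; \int_{\PigeonTime/2}^{\PigeonTime} f(0,\PigeonTime') \, \di \PigeonTime' \;\lesssim\; f(1,\PigeonTime/2) + \PigeonTime^{\alpha+1} D,
\end{align*}
so that $f(0,\PigeonTime^{*}) \lesssim_\alpha \PigeonTime^{-1} f(1,\PigeonTime/2) + \PigeonTime^{\alpha} D$.

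Next I would apply \eqref{eq:Rev:HierarchyToDecay:Hypothesis} at the bottom level $\iPigeonReg = 0$ with $\PigeonTime_1 = \PigeonTime^{*}$ and $\PigeonTime_2 = \PigeonTime$, discarding the nonnegative integral of $f(-1,\cdot)$, to obtain
\begin{align*}
f(0,\PigeonTime) \;\lesssim\; f(0,\PigeonTime^{*}) + (\PigeonTime^{*})^{\alpha} D.
\end{align*}
Because $\PigeonTime^{*} \in [\PigeonTime/2,\PigeonTime]$, we have $(\PigeonTime^{*})^{\alpha} \lesssim_{\alpha} \PigeonTime^{\alpha}$ uniformly (the implicit constant absorbs a factor of $2^{|\alpha|}$). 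Combining the two displays gives precisely \eqref{eq:Rev:HierarchyToDecay}.

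There is no real obstacle here: everything is driven by the pigeonhole principle applied on the dyadic interval $[\PigeonTime/2,\PigeonTime]$, with the small subtlety that one must bound $(\PigeonTime^{*})^{\alpha}$ by $\PigeonTime^{\alpha}$ up to an $\alpha$-dependent constant when $\alpha < 0$ (which is the reason for the $\lesssim_{\alpha}$ in the conclusion), and that the integrability of $f(0,\cdot)$ is obtained only after discarding the boundary term $f(1,\PigeonTime_2)$ and letting $\PigeonTime_2 \to \infty$, which is legitimate by monotone convergence. The role of the hypothesis at level $\iPigeonReg = 1$ is to give us a good intermediate time, while the role at level $\iPigeonReg = 0$ is to propagate that pointwise smallness forward in time without loss.
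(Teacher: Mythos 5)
Your proposal is correct and follows essentially the same argument as the paper: a pigeonhole/mean-value selection of an intermediate time in $[\PigeonTime/2,\PigeonTime]$ using the level-$1$ integral bound, followed by forward propagation via the level-$0$ pointwise bound, with the same $\alpha$-dependent absorption of $(\PigeonTime^{*})^{\alpha}$ into $\PigeonTime^{\alpha}$. The only cosmetic difference is your detour through $\PigeonTime_2\to\infty$, which is unnecessary since taking $\PigeonTime_2=\PigeonTime$ directly already gives the needed bound on $\int_{\PigeonTime/2}^{\PigeonTime}f(0,\cdot)\,\di\PigeonTime'$.
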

\begin{proof}
From the mean-value principle, for any $\timefunc\geq2\timefunc_0$, there is a $\tilde{\PigeonTime}\in[\PigeonTime/2,\PigeonTime]$ such that
\begin{align}
f(0,\tilde{\PigeonTime})
\leq{}&\frac{2}{\PigeonTime}\int_{\PigeonTime/2}^{\PigeonTime} f(0,\PigeonTime') \di\PigeonTime' .
\end{align}
Combining this with the integral estimate for $\iPigeonReg=1$ in hypothesis \eqref{eq:Rev:HierarchyToDecay:Hypothesis}, one can control $f$ at $\tilde\PigeonTime$ by
\begin{align}
f(0,\tilde{\PigeonTime})
\lesssim{}& 2\PigeonTime^{-1} 
\left(f(1,\PigeonTime/2) +(\PigeonTime/2)^{\alpha+1}D\right)
\lesC{\alpha} \PigeonTime^{-1}f(1,\PigeonTime/2) +\PigeonTime^\alpha D .
\label{eq:SinglePigeonMiddle}
\end{align}
From the pointwise estimate for $\iPigeonReg=0$ in hypothesis \eqref{eq:Rev:HierarchyToDecay:Hypothesis}, one can control $f$ at $\PigeonTime$ by
\begin{align}
f(0,\PigeonTime)
\lesssim{}&f(0,\tilde{\PigeonTime}) +\tilde{\PigeonTime}^\alpha D
\lesC{\alpha} f(0,\tilde{\PigeonTime}) +\PigeonTime^\alpha D.
\label{eq:SinglePigeonFinal}
\end{align}
The lemma follows from combining estimates \eqref{eq:SinglePigeonMiddle} and \eqref{eq:SinglePigeonFinal}. 
\end{proof}

The following lemma proves that a hierarchy of decay estimates implies a decay rate for the terms in the hierarchy. In applications, $\iPigeonReg$ represents a level of regularity, $\alpha$ represents a weight, and $\PigeonTime$ represents a time coordinate. The weights take values in an interval, whereas the levels of regularity are discrete. 

\begin{lemma}[A hierarchy of estimates implies decay rates]
\label{lem:hierarchyImpliesDecay}
Let $D\geq 0$. Let $\alpha_1,\alpha_2\in\Reals$ and $i\in\Integers^+$ be such that $\alpha_1\leq\alpha_2-1$, and $\alpha_2-\alpha_1\leq i$. Let $F:\{-1,\ldots,i\}\times[\alpha_1-1,\alpha_2]\times[\timefunc_0,\infty)\rightarrow[0,\infty)$ be such that $F(\iPigeonReg,\alpha,\PigeonTime)$ is Lebesgue measurable in $\PigeonTime$ for each $\alpha$ and $\iPigeonReg$. Let $\gamma\geq 0$. 

If 
\begin{subequations}
\begin{enumerate}
\item{} [monotonicity] \label{assump:HierarchyToDecay(1)}for all $\iPigeonReg,\iPigeonReg_1,\iPigeonReg_2\in\{-1,\ldots,i\}$ with $\iPigeonReg_1\leq \iPigeonReg_2$, all $\beta, \beta_1,\beta_2\in[\alpha_1,\alpha_2]$ with $\beta_1\leq\beta_2$, and all $\PigeonTime\geq \timefunc_0$, 
\begin{align}
F(\iPigeonReg_1,\beta,\PigeonTime)\lesssim{}& F(\iPigeonReg_2,\beta,\PigeonTime) ,
\label{eq:Rev:HierarchyToDecayReal:MonotonicityHypothesis:j}\\
F(\iPigeonReg,\beta_1,\PigeonTime)\lesssim{}& F(\iPigeonReg,\beta_2,\PigeonTime) ,
\label{eq:Rev:HierarchyToDecayReal:MonotonicityHypothesis:beta}
\end{align}
\item{} [interpolation] \label{assump:HierarchyToDecay(2)}for all $\iPigeonReg\in\{-1,\ldots,i\}$, all $\alpha,\beta_1,\beta_2\in[\alpha_1,\alpha_2]$ such that $\beta_1\leq\alpha\leq\beta_2$, and all $\PigeonTime\geq \timefunc_0$,
\begin{align}
F(\iPigeonReg,\alpha,\PigeonTime)
\lesssim{}& 
F(\iPigeonReg,\beta_1,\PigeonTime)^{\frac{\beta_2-\alpha}{\beta_2-\beta_1}}
F(\iPigeonReg,\beta_2,\PigeonTime)^{\frac{\alpha-\beta_1}{\beta_2-\beta_1}} ,
\label{eq:Rev:HierarchyToDecayReal:InterpolationHypothesis}
\end{align}
\item{} [energy and Morawetz estimate] for all $\iPigeonReg\in\{0,\ldots,i\}$, $\alpha\in[\alpha_1,\alpha_2]$, and $\PigeonTime_2\geq \PigeonTime_1\geq \timefunc_0$,
\begin{align}
F(\iPigeonReg,\alpha,\PigeonTime_2)
+\int_{\PigeonTime_1}^{\PigeonTime_2} F(\iPigeonReg-1,\alpha-1,t) \di \PigeonTime
\lesssim F(\iPigeonReg,\alpha,\PigeonTime_1) +D \PigeonTime_1^{\alpha-\alpha_2-\gamma} , 
\label{eq:Rev:HierarchyToDecayReal:EvolutionHypothesis}
\end{align}
and 
\item{} [initial decay rate] if $\gamma>0$, then for any  $\PigeonTime\geq \timefunc_0$, 
\begin{align}
F(i,\alpha_2,\PigeonTime)
\lesssim \PigeonTime^{-\gamma} \left(F(i,\alpha_2,\timefunc_0) +D\right) , 
\label{eq:Rev:HierarchyToDecayReal:InitialDecay}
\end{align}
\end{enumerate}
\end{subequations}
then, for all $\iPigeonReg\in\{0,\ldots,i\}$, all $\alpha\in[\max\{\alpha_1,\alpha_2-\iPigeonReg\},\alpha_2]$, and all $\PigeonTime\geq \timefunc_0$, 
\begin{align}
F(i-\iPigeonReg,\alpha,\PigeonTime) \lesssim{}& \PigeonTime^{\alpha-\alpha_2-\gamma}  (F(i,\alpha_2,\timefunc_0) +D),
\label{eq:Rev:HierarchyToDecayReal}
\end{align}
where the implicit constant in $\lesssim$ can depend on $\alpha_2$ and $\alpha_1$. 
\end{lemma}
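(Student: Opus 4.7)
The plan is to induct on $\iPigeonReg \in \{0,\dots,i\}$ the statement
\begin{align*}
(*)_\iPigeonReg:\quad F(i-\iPigeonReg,\alpha,\PigeonTime) \lesssim \PigeonTime^{\alpha-\alpha_2-\gamma}\bigl(F(i,\alpha_2,\timefunc_0)+D\bigr) \quad \text{for all } \alpha \in [\max\{\alpha_1,\alpha_2-\iPigeonReg\},\alpha_2],
\end{align*}
which is exactly the desired conclusion. The base case $(*)_0$ forces $\alpha=\alpha_2$: if $\gamma>0$ this is the initial decay rate hypothesis, and if $\gamma=0$ it follows at once from the pointwise part of the energy/Morawetz estimate applied with $\iPigeonReg=i$, $\alpha=\alpha_2$, $\PigeonTime_1=\timefunc_0$.

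For the inductive step at level $\iPigeonReg \in \{1,\dots,i\}$, I would first handle the sub-range $\alpha \in [\max\{\alpha_1,\alpha_2-\iPigeonReg\},\alpha_2-1]$, which is non-empty since $\alpha_1 \leq \alpha_2-1$. On this sub-range, I would apply Lemma~\ref{lem:HierarchyImpliesDecay:OneStep} to the auxiliary function
\begin{align*}
f(\iPigeonReg',\PigeonTime) := F(i-\iPigeonReg+\iPigeonReg',\alpha+\iPigeonReg',\PigeonTime), \quad \iPigeonReg' \in \{-1,0,1\}.
\end{align*}
A direct matching shows that the energy/Morawetz hypothesis at the index pairs $(i-\iPigeonReg,\alpha)$ and $(i-\iPigeonReg+1,\alpha+1)$ reproduces the single-step hypothesis with the parameter there equal to $\alpha-\alpha_2-\gamma$. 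The resulting conclusion
\begin{align*}
F(i-\iPigeonReg,\alpha,\PigeonTime) \lesssim \PigeonTime^{-1} F(i-\iPigeonReg+1,\alpha+1,\PigeonTime/2) + D\PigeonTime^{\alpha-\alpha_2-\gamma}
\end{align*}
then closes the case once the inductive hypothesis $(*)_{\iPigeonReg-1}$ at the shifted weight $\alpha+1$ is inserted into the first term.

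For the remaining weights $\alpha \in (\alpha_2-1,\alpha_2]$ I would interpolate between the two endpoints $\alpha_2-1$ and $\alpha_2$. The left endpoint bound $F(i-\iPigeonReg,\alpha_2-1,\PigeonTime)\lesssim\PigeonTime^{-1-\gamma}(F(i,\alpha_2,\timefunc_0)+D)$ comes from the sub-range just handled, while the right endpoint bound $F(i-\iPigeonReg,\alpha_2,\PigeonTime)\lesssim\PigeonTime^{-\gamma}(F(i,\alpha_2,\timefunc_0)+D)$ follows from monotonicity in the regularity index combined with the base case. The interpolation hypothesis with $\beta_1=\alpha_2-1$, $\beta_2=\alpha_2$ then produces the exponent $-(1+\gamma)(\alpha_2-\alpha)-\gamma(\alpha-\alpha_2+1)=\alpha-\alpha_2-\gamma$, completing $(*)_\iPigeonReg$.

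The hard part is essentially bookkeeping: verifying that the index pairs $(i-\iPigeonReg+\iPigeonReg',\alpha+\iPigeonReg')$ invoked by Lemma~\ref{lem:HierarchyImpliesDecay:OneStep} remain in the domain $\{-1,\dots,i\}\times[\alpha_1-1,\alpha_2]$ of $F$ and that the shifted weight $\alpha+1$ stays within the range admitted by the inductive hypothesis. The standing assumptions $\alpha_1\leq\alpha_2-1$ and $\alpha_2-\alpha_1\leq i$ are precisely what make this possible; no new analytic input beyond the pigeonhole principle already packaged in Lemma~\ref{lem:HierarchyImpliesDecay:OneStep} is required.
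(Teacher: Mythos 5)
Your proposal is correct, and it reorganizes the paper's argument rather than reproducing it: your matching of the single-step hypotheses (the parameter in lemma \ref{lem:HierarchyImpliesDecay:OneStep} being $\alpha-\alpha_2-\gamma$) and the interpolation exponent both check out, and the base case is handled as in the paper. The paper runs lemma \ref{lem:HierarchyImpliesDecay:OneStep} only along the integer diagonal $(i-\iPigeonReg,\alpha_2-\iPigeonReg)$ for $\iPigeonReg\le\lfloor\alpha_2-\alpha_1\rfloor$, then interpolates across the whole interval $[\alpha_2-\iPigeonReg,\alpha_2]$, and finally needs a separate pigeonhole-plus-interpolation treatment of the residual strip below $\alpha_2-\lfloor\alpha_2-\alpha_1\rfloor$, closed off by monotonicity in the regularity index. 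You instead run one downward induction on the regularity level, applying the single-step lemma at every real weight $\alpha\le\alpha_2-1$ (legitimate, since the energy hypothesis \eqref{eq:Rev:HierarchyToDecayReal:EvolutionHypothesis} is assumed for all $\alpha\in[\alpha_1,\alpha_2]$) and interpolating only over the top unit strip $(\alpha_2-1,\alpha_2]$; this eliminates the paper's special bottom-strip case and is arguably cleaner. Two small points should be added to make the write-up complete. First, lemma \ref{lem:HierarchyImpliesDecay:OneStep} only gives the bound for $\PigeonTime\ge 2\timefunc_0$; for $\PigeonTime\in[\timefunc_0,2\timefunc_0]$ the claim follows directly from \eqref{eq:Rev:HierarchyToDecayReal:EvolutionHypothesis} with $\PigeonTime_1=\timefunc_0$ together with the monotonicity hypotheses, exactly as the paper notes for its diagonal step. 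Second, your induction runs over all $i$ regularity levels and the implicit constant compounds at each level, so as written it depends on $i$ rather than only on $\alpha_1,\alpha_2$ as the statement asserts; this is harmless in the paper's applications, where constants may depend on the regularity, but to recover the stated dependence you should stop the pigeonholing once $\alpha_2-\iPigeonReg\le\alpha_1$, i.e.\ after at most $\lceil\alpha_2-\alpha_1\rceil\le i$ steps (this is where the hypothesis $\alpha_2-\alpha_1\le i$ earns its keep), and obtain the remaining lower-regularity cases from \eqref{eq:Rev:HierarchyToDecayReal:MonotonicityHypothesis:j}, since their weight range is no larger.
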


\begin{proof}
Let $I=\lfloor\alpha_2-\alpha_1\rfloor\geq 1$. If $\gamma=0$, then from the energy hypothesis \eqref{eq:Rev:HierarchyToDecayReal:EvolutionHypothesis}, one finds that the initial decay hypothesis estimate \eqref{eq:Rev:HierarchyToDecayReal:InitialDecay} holds. Thus, in all cases, one finds for $\PigeonTime\geq \timefunc_0$, 
\begin{align}
F(i,\alpha_2,\PigeonTime)
\lesssim{}&\PigeonTime^{-\gamma}\left(F(i,\alpha_2,\timefunc_0)+D\right) .
\label{eq:Rev:HierarchyToDecayReal:AtTop}
\end{align}

First, consider $\alpha_2-\alpha\in\Naturals$. For $\iPigeonReg\in\{1,\ldots,I\}$ and $k\in\{0,1\}$, observe that $F(i-\iPigeonReg+k,\alpha_2-\iPigeonReg+k,\PigeonTime)$ satisfy 
\begin{align}
F(i-\iPigeonReg+k,\alpha_2-\iPigeonReg+k,\PigeonTime_2)
&+\int_{\PigeonTime_1}^{\PigeonTime_2}F(i-\iPigeonReg+k-1,\alpha_2-\iPigeonReg+k-1,\PigeonTime')\di\PigeonTime' \nonumber\\
\lesssim{}&F(i-\iPigeonReg+k,\alpha_2-\iPigeonReg+k,\PigeonTime_1)
+\PigeonTime_1^{-\gamma-\iPigeonReg+k} D.
\end{align} 
This combined with lemma \ref{lem:HierarchyImpliesDecay:OneStep} implies, for $\timefunc>2\timefunc_0$, 
\begin{align}
F(i-\iPigeonReg,\alpha_2-\iPigeonReg,\PigeonTime)
\lesssim{}& \PigeonTime^{-1}F(i-\iPigeonReg+1,\alpha_2-\iPigeonReg+1,\PigeonTime/2) +\PigeonTime^{-1-\gamma-\iPigeonReg}D .
\label{eq:Rev:HierarchyToDecayReal:OneStep}
\end{align} 
By induction, taking equation \eqref{eq:Rev:HierarchyToDecayReal:AtTop} as the base case and estimate \eqref{eq:Rev:HierarchyToDecayReal:OneStep} to justify the inductive step, we that, for all $\iPigeonReg\in\{0,\ldots,I\}$ and $\PigeonTime\geq \timefunc_0$, there is the bound
\begin{align}\label{eq:induction}
F(i-\iPigeonReg,\alpha_2-\iPigeonReg,\PigeonTime)\lesssim \PigeonTime^{-\iPigeonReg-\gamma}(F(i,\alpha_2,\timefunc_0)+D) .
\end{align}
The same bound holds for $\timefunc\in[\timefunc_0,2\timefunc_0]$ from \eqref{eq:Rev:HierarchyToDecayReal:OneStep} for $\PigeonTime\geq 2 \timefunc_0$ and from the basic energy hypothesis \eqref{eq:Rev:HierarchyToDecayReal:EvolutionHypothesis} and the monotonicity hypotheses \eqref{eq:Rev:HierarchyToDecayReal:MonotonicityHypothesis:j}-\eqref{eq:Rev:HierarchyToDecayReal:MonotonicityHypothesis:beta}. 

Now, consider the case $\alpha\geq \alpha_2-\lfloor\alpha_2-\alpha_1\rfloor$. Consider $\iPigeonReg\in\{0,\ldots,I\}$ and $\zeta\in[0,\iPigeonReg]$. From the interpolation hypothesis \eqref{eq:Rev:HierarchyToDecayReal:InterpolationHypothesis} with $\alpha \rightarrow \alpha_2-\zeta$, $\beta_1 \rightarrow \alpha_2-\iPigeonReg$, $\beta_2 \rightarrow \alpha$, we get that for all $\PigeonTime\geq \timefunc_0$, 
\begin{align}\label{eq:apply-interpolation}
F(i-\iPigeonReg,\alpha_2-\zeta,\PigeonTime)
\lesssim{}&F(i-\iPigeonReg,\alpha_2-\iPigeonReg,\PigeonTime)^{\frac{\zeta}{\iPigeonReg}}F(i-\iPigeonReg,\alpha_2,\PigeonTime)^{\frac{\iPigeonReg-\zeta}{\iPigeonReg}}\nonumber\\
\lesssim{}& \PigeonTime^{-\zeta-\gamma}(F(i,\alpha_2,\timefunc_0)+D)^{\frac{\zeta}{\iPigeonReg}}F(i,\alpha_2,\PigeonTime)^{\frac{\iPigeonReg-\zeta}{\iPigeonReg}}\nonumber\\
\lesssim{}& \PigeonTime^{-\zeta-\gamma}(F(i,\alpha_2,\timefunc_0)+D) .
\end{align}
Making the substitution $k=i-\iPigeonReg$ and $\alpha=\alpha_2-\zeta\geq\alpha_2-\iPigeonReg\geq \alpha_2+k-i$ and using the monotonicity hypothesis \eqref{eq:Rev:HierarchyToDecayReal:MonotonicityHypothesis:beta}, one finds, for $k\in\{i-I,\ldots,i\}$, $\alpha\in[\alpha_2+k-i,\alpha_2]$, and $\PigeonTime\geq\timefunc_0$, there is the bound
\begin{align}
F(k,\alpha,\PigeonTime)\lesssim{}&\PigeonTime^{\alpha-\alpha_2-\gamma}(F(i,\alpha_2,\timefunc_0)+D) .
\label{eq:Hierarchyimpliesdecay:largeklargealpha}
\end{align}
This gives the desired estimate for the cases $k\geq i-I$. 

Finally, consider $\alpha<\alpha_2-\lfloor\alpha_2-\alpha_1\rfloor$. Since $\alpha_2-\alpha_1<i$, one finds $I< i$, and since $\alpha_1+1\geq \alpha_2+(i-I)-i$, we have from the conclusion of the previous paragraph
\begin{align}
F(i-I,\alpha_1+1,\PigeonTime)
\lesssim{}& \PigeonTime^{\alpha_1+1-\alpha_2-\gamma}(F(i,\alpha_2,\timefunc_0)+D). 
\end{align}
Combining this with the energy and Morawetz hypothesis \eqref{eq:Rev:HierarchyToDecayReal:EvolutionHypothesis} and lemma \ref{lem:HierarchyImpliesDecay:OneStep}, one finds
\begin{align}
F(i-I-1,\alpha_1,\PigeonTime)
\lesssim{}& \PigeonTime^{\alpha_1-\alpha_2-\gamma}(F(i,\alpha_2,\timefunc_0)+D) .
\end{align}
Interpolation now gives for all $\alpha\in[\alpha_1,\alpha_1+1]$ and $\PigeonTime\geq \timefunc_0$
\begin{align}
F(i-I-1,\alpha,\PigeonTime)
\lesssim{}&\PigeonTime^{\alpha-\alpha_2-\gamma}(F(i,\alpha_2,\timefunc_0)+D) .
\end{align}
This combined with \eqref{eq:Hierarchyimpliesdecay:largeklargealpha} implies for all $\iPigeonReg\in\{0,\ldots,I+1\}$, all $\alpha\in[\max\{\alpha_1,\alpha_2-\iPigeonReg\},\alpha_2]$, and all $\PigeonTime\geq \timefunc_0$, 
\begin{align}
F(i-\iPigeonReg,\alpha,\PigeonTime) \lesssim{}& \PigeonTime^{\alpha-\alpha_2-\gamma}  (F(i,\alpha_2,\timefunc_0) +D).
\end{align} 
The other monotonicity hypothesis \eqref{eq:Rev:HierarchyToDecayReal:MonotonicityHypothesis:j} then gives 
the desired estimate in the remaining cases. 
\end{proof}

The following lemma states the $W^{\iPigeonReg}_\alpha$ norms squared satisfy the monotonicity and interpolation conditions for $f(\iPigeonReg,\alpha,\PigeonTime)$ in lemma \ref{lem:hierarchyImpliesDecay}. 

\begin{lemma}
\label{lem:Walphaknormsproperties}
Let $\iPigeonReg,\iPigeonReg_1,\iPigeonReg_2\in\Naturals$ and $\alpha,\beta,\beta_1,\beta_2\in\Reals$. Let $\varphi$ be a spin-weighted scalar. Let $\timefunc,\timefunc_1,\timefunc_2\in[\timefunc_0,\infty)$. 

\begin{enumerate}
\item{} [monotonicity] If $\iPigeonReg_1\leq \iPigeonReg_2$ and $\beta_1\leq \beta_2$, then
\begin{subequations}
\begin{align}
\| \varphi\|_{W^{\iPigeonReg_1}_{\beta}(\Sigma_\timefunc)}^2 \lesssim{}& \| \varphi\|_{W^{\iPigeonReg_2}_{\beta}(\Sigma_\timefunc)}^2 , \\
\| \varphi\|_{W^{\iPigeonReg}_{\beta_1}(\Sigma_\timefunc)}^2 \lesssim{}& \| \varphi\|_{W^{\iPigeonReg}_{\beta_2} (\Sigma_\timefunc)}^2.
\end{align}
\end{subequations}
\item{} [interpolation] If $\beta_1\leq\alpha\leq\beta_2$, then
\begin{align}
\| \varphi\|_{W^{\iPigeonReg}_{\alpha}(\Sigma_\timefunc)}
\lesssim \| \varphi\|_{W^{\iPigeonReg}_{\beta_1}(\Sigma_\timefunc)}^{\frac{\alpha-\beta_1}{\beta_2-\beta_1}}
\| \varphi\|_{W^{\iPigeonReg}_{\beta_2}(\Sigma_\timefunc)}^{\frac{\beta_2-\alpha}{\beta_2-\beta_1}} .
\end{align}
\item{} [relation of spatial and spacetime norms] 
\begin{align}
\|\varphi\|_{W^{\iPigeonReg}_{\beta}(\Dtau)}^2
={}&M^{-1} \int_{\timefunc_1}^{\timefunc_2} \|\varphi\|_{W^{\iPigeonReg}_{\beta}(\Sigma_\timefunc)}^2 \di\timefunc .
\end{align}
\end{enumerate}
\end{lemma}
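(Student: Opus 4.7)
My plan for lemma \ref{lem:Walphaknormsproperties} is to verify each of the three parts by direct computation from the definitions of the weighted norms in section \ref{sec:norms}. Each part is essentially a basic measure-theoretic statement, so I expect no deep obstacle; the only point requiring care is the precise exponent assignment in the interpolation step.

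For the monotonicity statements, I would first observe that $\|\varphi\|^2_{W^{\iPigeonReg_1}_{\beta}(\Sigma_\timefunc)}$ is a partial sum of the non-negative terms comprising $\|\varphi\|^2_{W^{\iPigeonReg_2}_{\beta}(\Sigma_\timefunc)}$ whenever $\iPigeonReg_1 \leq \iPigeonReg_2$, which gives monotonicity in the regularity index. For monotonicity in the weight with $\beta_1 \leq \beta_2$, the key observation is that on $\Sigma_\timefunc$ one has $r \geq r_+ \sim M$, so writing
\begin{align*}
M^{-\beta_1-1} r^{\beta_1} = M^{-\beta_2-1} r^{\beta_2} \cdot (r/M)^{\beta_1-\beta_2}
\end{align*}
and using $(r/M)^{\beta_1-\beta_2} \leq (r_+/M)^{\beta_1-\beta_2} \lesssim 1$ gives a pointwise comparison of the integrands term by term.

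For the interpolation, I would apply H\"older's inequality in its log-convex form. Setting $\theta = (\alpha-\beta_1)/(\beta_2-\beta_1) \in [0,1]$ so that $\alpha = (1-\theta)\beta_1 + \theta \beta_2$, I can factor each term of the integrand as
\begin{align*}
M^{-\alpha-1} r^\alpha \abs{\rescaledOps^{\mathbf{a}}\varphi}^2 = \bigl(M^{-\beta_1-1} r^{\beta_1}\abs{\rescaledOps^{\mathbf{a}}\varphi}^2\bigr)^{1-\theta} \bigl(M^{-\beta_2-1} r^{\beta_2}\abs{\rescaledOps^{\mathbf{a}}\varphi}^2\bigr)^\theta,
\end{align*}
and then H\"older with conjugate exponents $1/(1-\theta)$ and $1/\theta$, applied termwise and summed over multi-indices, yields $\|\varphi\|_{W^{\iPigeonReg}_\alpha(\Sigma_\timefunc)} \leq \|\varphi\|_{W^{\iPigeonReg}_{\beta_1}(\Sigma_\timefunc)}^{1-\theta} \|\varphi\|_{W^{\iPigeonReg}_{\beta_2}(\Sigma_\timefunc)}^\theta$. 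This is the sharp form, with exponent $(\beta_2-\alpha)/(\beta_2-\beta_1)$ on the $\beta_1$ factor and $(\alpha-\beta_1)/(\beta_2-\beta_1)$ on the $\beta_2$ factor; combined with the monotonicity from the previous step (which gives $\|\varphi\|_{W^{\iPigeonReg}_{\beta_1}(\Sigma_\timefunc)} \lesssim \|\varphi\|_{W^{\iPigeonReg}_{\beta_2}(\Sigma_\timefunc)}$), this dominates the form written in the statement.

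Finally, the relation between spatial and spacetime norms follows from Fubini's theorem after observing that $\diFourVol = d\timefunc \wedge \diThreeVol$. This identity holds because, in ingoing Eddington-Finkelstein coordinates, $\diFourVol = \sin\theta \, dv \wedge dr \wedge d\theta \wedge d\phi$, and since $\timefunc = v - h(r)$ one has $d\timefunc \wedge dr = dv \wedge dr$. The extra factor of $M^{-1}$ in the asserted identity is then exactly accounted for by the difference between the weight $M^{-\beta-2}$ in the four-dimensional definition and $M^{-\beta-1}$ in the three-dimensional definition. The integrand $\absHighOrder{\varphi}{\iPigeonReg}{\rescaledOps}^2$ involves only operators tangent to the standard spacetime directions and is identical in the two settings, so the resulting iterated integral is literally equal to the claimed product, not merely $\lesssim$.
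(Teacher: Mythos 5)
Your treatment of the monotonicity statements and of the spatial/spacetime relation is correct and matches the paper's (one-line) argument: dropping non-negative terms for the regularity index, $r^{\beta_1}\lesssim r^{\beta_2}$ on $\Sigma_\timefunc$ since $r\geq r_+\gtrsim M$, and Fubini together with $\diFourVol=\di\timefunc\wedge\diThreeVol$ and the shift of the mass weight from $M^{-\beta-2}$ to $M^{-\beta-1}$. Your H\"older factorization is also exactly what the paper means by ``the interpolation result follows from H\"older's inequality'' (strictly, apply H\"older with respect to the product of counting measure over multi-indices and the volume measure, rather than ``termwise and then summed'', but that is cosmetic).

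The genuine gap is your final bridging step in the interpolation part. H\"older, as you apply it, gives the log-convexity bound
\begin{align*}
\| \varphi\|_{W^{\iPigeonReg}_{\alpha}(\Sigma_\timefunc)}
\leq \| \varphi\|_{W^{\iPigeonReg}_{\beta_1}(\Sigma_\timefunc)}^{1-\theta}
\| \varphi\|_{W^{\iPigeonReg}_{\beta_2}(\Sigma_\timefunc)}^{\theta},
\qquad \theta=\tfrac{\alpha-\beta_1}{\beta_2-\beta_1},
\end{align*}
and you claim that, combined with $A:=\| \varphi\|_{W^{\iPigeonReg}_{\beta_1}(\Sigma_\timefunc)}\lesssim B:=\| \varphi\|_{W^{\iPigeonReg}_{\beta_2}(\Sigma_\timefunc)}$, this dominates the displayed inequality, which carries the exponent $\theta$ on the $\beta_1$-norm and $1-\theta$ on the $\beta_2$-norm. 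But $A^{1-\theta}B^{\theta}=A^{\theta}B^{1-\theta}\,(B/A)^{2\theta-1}$, and $A\lesssim B$ only controls the factor $(B/A)^{2\theta-1}$ when $\theta\leq 1/2$; for $\alpha>(\beta_1+\beta_2)/2$ the ratio $B/A$ is unbounded over admissible $\varphi$ and the deduction fails. In fact the inequality with the exponents as literally printed is false: for $\varphi$ concentrated near a large radius $R$ each norm scales like $R^{\beta/2}$, and the printed form would force $\alpha\leq(\beta_1+\beta_2)/2$. The resolution is that the exponents in the lemma as stated are transposed (a typo): the statement actually needed, and the one this lemma is meant to verify, is the interpolation hypothesis \eqref{eq:Rev:HierarchyToDecayReal:InterpolationHypothesis} of lemma \ref{lem:hierarchyImpliesDecay}, which is precisely the standard form your H\"older computation yields. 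So your H\"older step already constitutes the intended proof; the attempted repair to match the swapped exponents should be removed rather than fixed, since no correct argument can produce that form.
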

\begin{proof}
The first monotonicity result follows from summing fewer non-negative terms. The second monotonicity result follows from the fact that $\beta_1\leq \beta_2$ implies $r^{\beta_1}\lesssim r^{\beta_2}$. The interpolation result follows from H\"o{}lder's inequality. The relation between the spatial and spacetime norms follows from the definition of $\diThreeVol$ and $\diFourVol$. 
\end{proof}

\subsection{Spin-weighted transport equations}
\label{sec:EnergyMoraTwoTransEqs}

Now we state a general lemma which provides energy and Morawetz estimates for the ingoing transport equation with source term satisfying energy and Morawetz estimates. 

\begin{lemma}[\texorpdfstring{$\YOp$}{Y} estimate]
\label{lem:TransEqGeneralRegionMora}
Let $\gamma\in(0,\infty)$ and $k\in\Naturals$. Let $b_0(r)$ be a non-negative, smooth function defined in $\mathcal{M}$ such that $b_0(r)=M\bigOAnalytic(r^{-1})$. 

If $\varphi$ and $\varrho$ are scalars with spin weight $s$ and $\varphi$ satisfies
\begin{equation}\label{eq:AssuTransIngoing}
M\YOp\varphi+b_0(r)\varphi=\varrho,
\end{equation}
then for all $\timefunc_2>\timefunc_1\geq \timefunc_0$,
\begin{subequations}
\label{eq:TransIngoingMoraall}
\begin{align}
\Vert \varphi \Vert^2_{W_{\gamma}^k(\Sigma_{\timefunc_2})} +\Vert \varphi \Vert^2_{W_{\gamma-1}^k(\Dtau)}
\lesC{s}{}&\Vert \varphi \Vert^2_{W_{\gamma}^k(\Staui)} + \Vert \varrho \Vert^2_{W_{\gamma+1}^k(\Dtau)},
\label{eq:TransIngoingMora}\\
\Vert \varphi \Vert^2_{W^k_\gamma (\Stauint)}+\Vert \varphi \Vert^2_{W^k_{\gamma-1} (\Dtauint)}
\lesC{s}{}&
\Vert \varphi \Vert^2_{W^k_\gamma (\Stauiint)}+\Vert \varrho \Vert^2_{W^k_{\gamma+1} (\Dtauint)}+\Vert \varphi \Vert^2_{W^k_\gamma (\Boundext)},
\label{eq:TransIngoingMoraint}\\
\Vert \varphi \Vert^2_{W^k_\gamma (\Boundext)}+\Vert \varphi \Vert^2_{W^k_\gamma (\Stauext)}+\Vert \varphi \Vert^2_{W^k_{\gamma-1} (\Dtauext)}
\lesC{s}{}&
\Vert \varphi \Vert^2_{W^k_\gamma (\Stauiext)}+\Vert \varrho \Vert^2_{W^k_{\gamma+1} (\Dtauext)},
\label{eq:TransIngoingMoraext} \\
\Vert \varphi \Vert^2_{W^k_\gamma (\Stauitext)}+\Vert \varphi \Vert^2_{W^k_{\gamma-1} (\Dtauearly)}
\lesC{s}{}&
\Vert \varphi \Vert^2_{W^k_\gamma (\Stauini)}+\Vert \varrho \Vert^2_{W^k_{\gamma+1} (\Dtauearly)},
\label{eq:TransIngoingMorafar}
\end{align}
\end{subequations}
and, for $\timefunc \geq \timefunc_0 + h(\timefunc_0)$ as in definition \ref{def:tprime},
\begin{align}
\norm{\varphi}_{W^{k}_{\gamma} (\Horgtt{\timefunc-h(r_+)})}^2
+
\norm{\varphi}_{W^{k}_{\gamma}(\Stautimeint)}^2
+ \norm{\varphi}_{W^{k}_{\gamma-1}(\Dtautnear{\timefunc})}^2 
\lesC{s}{}&\norm{\varphi}_{W^{k}_{\gamma} (\Boundtingoing)}^2
+ \norm{\varrho}_{W^{k}_{\gamma+1}(\Dtautnear{\timefunc})}^2.
\label{eq:TransIngoingMoranearregion}
\end{align}
The implicit constants in the above estimates depend on only $\gamma$ and $k$.
\end{lemma}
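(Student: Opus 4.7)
The plan is to establish all of \eqref{eq:TransIngoingMoraall}--\eqref{eq:TransIngoingMoranearregion} from a single weighted-multiplier argument applied to \eqref{eq:AssuTransIngoing}, localized to each region, with higher regularity obtained by commutation. For the base case $k=0$, I would multiply \eqref{eq:AssuTransIngoing} by $M^{-\gamma-2}r^{\gamma}\bar\varphi$, take real parts, and use $\YOp=-\partial_r$ in ingoing Eddington--Finkelstein coordinates together with $\YOp(r^\gamma|\varphi|^2)=-\gamma r^{\gamma-1}|\varphi|^2+2r^\gamma\Re(\bar\varphi\YOp\varphi)$ to derive the pointwise divergence identity
\begin{align*}
\tfrac12\YOp\bigl(M^{-\gamma-1}r^{\gamma}|\varphi|^2\bigr)+\tfrac{\gamma}{2}M^{-\gamma-1}r^{\gamma-1}|\varphi|^2+M^{-\gamma-2}r^{\gamma}b_0|\varphi|^2=M^{-\gamma-2}r^{\gamma}\Re(\bar\varphi\varrho).
\end{align*}
Both non-divergence bulk terms on the left are non-negative since $\gamma>0$ and $b_0\geq 0$. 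Splitting the right-hand side by Cauchy--Schwarz as $M^{-\gamma-2}r^{\gamma}|\bar\varphi\varrho|\leq\tfrac{\gamma}{4}M^{-\gamma-1}r^{\gamma-1}|\varphi|^2+C_\gamma M^{-\gamma-3}r^{\gamma+1}|\varrho|^2$ lets me absorb the first summand into the $W^0_{\gamma-1}$ Morawetz bulk term and recognize the second as the weight of the $W^0_{\gamma+1}$ source norm, producing at once the structure of every right-hand side in \eqref{eq:TransIngoingMoraall}--\eqref{eq:TransIngoingMoranearregion}.

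Next, I would integrate the identity over each region and apply lemma \ref{lem:IntegrationByParts} (suitably adapted to the boundary at hand) to convert the $\YOp(\cdots)$ term into boundary fluxes of the form $\int (d\timefunc_a Y^a) M^{-\gamma-1}r^{\gamma}|\varphi|^2\,\diThreeVol$. On any $\Sigma_\timefunc$-piece, $d\timefunc_a Y^a=h'(r)$ is uniformly bounded above and below away from zero by lemma \ref{lem:h-prop}, so such a flux is comparable to $\|\varphi\|^2_{W^0_\gamma(\Sigma_\timefunc)}$; on $\Boundext$, $\Boundtingoing$ and $\Htime$, the analogous flux is comparable to the $W^0_\gamma$ norm of that surface. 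Because the $\YOp$-characteristics flow from large $r$ toward the horizon, future in $\timefunc$ along them coincides with the $-\partial_r$ direction, so the positive-flux boundary for each of $\Dtau$, $\Dtauint$, $\Dtauext$, $\Dtauearly$, $\Dtautnear{\timefunc}$ is precisely the set of surfaces placed on the left of \eqref{eq:TransIngoingMoraall}--\eqref{eq:TransIngoingMoranearregion}, with the past-boundary flux placed on the right. The $\Scri^+$-flux, whose weight $r^\gamma$ diverges, vanishes by truncating at finite $r=R_0$, proving the estimate for compactly supported $\varphi$, and then extending by density using the a priori finiteness of the $W^0_{\gamma-1}$ Morawetz norm and the $W^0_\gamma$ initial norm.

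For the higher-order estimates ($k\geq 1$) I would induct on $k$. Applying an operator from $\rescaledOps=\{M\YOp,r\VOp,\hedt,\hedt'\}$ to \eqref{eq:AssuTransIngoing} yields a transport equation of the same form for the derivative of $\varphi$, with a modified source. The angular operators $\hedt,\hedt'$ commute with $\YOp$ exactly, by \eqref{eq:CommutAnguDeri}, so they introduce no error; commuting with $M\YOp$ produces only the scalar correction $-(M\YOp b_0)\varphi$, whose coefficient is $O(Mr^{-2})$ because $b_0=M\bigOAnalytic(r^{-1})$, and is absorbed into the source norm at the shifted weight using the $k=0$ estimate already in hand; commuting with $r\VOp$, using \eqref{eq:CommutatorofYandMathcalV} and $\YOp r=-1$, produces a first-order commutator in $\YOp$ and $\LetaOp$ with $O(r^{-1})$ coefficients, also inductively controlled. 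The main technical obstacle is the careful verification that the boundary fluxes on the transition surface $\Boundext$ and on the near-region surfaces $\Boundtingoing$, $\Htime$ carry the correct sign to be placed on the left of \eqref{eq:TransIngoingMoraext} and \eqref{eq:TransIngoingMoranearregion}, rather than needing to be dominated by the right-hand side; this rests on the orientation of the boundary co-normal relative to $Y^a$ on these non-spacelike pieces and is where the role of $Y$ as the ingoing null direction of the principal tetrad is essential.
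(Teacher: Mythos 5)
Your strategy is the same as the paper's: the $k=0$ case by the multiplier $r^{\gamma}\bar\varphi$ giving a divergence identity with non\-negative bulk (using $\gamma>0$, $b_0\geq0$) and Cauchy--Schwarz on the source, region-by-region integration with the boundary fluxes signed by the orientation of $\YOp$ (the paper uses $\di\timefunc_a\YOp^a=h'$, $(\di\timefunc_a-\di r_a)\YOp^a=1+h'$ uniformly of size one and $\di v_a\YOp^a=0$ on the ingoing cone), and induction on $k$ by commuting with the angular operators, a time translation or $M\YOp$, and a $V$-type derivative, with the $\LetaOp$ commutator absorbed into angular norms via lemma \ref{lem:controlOfLetaOpInL2Sphere}.

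One step would fail as literally described, for the full range $\gamma\in(0,\infty)$. When you commute with $r\VOp$, the term generated by $\YOp r=-1$ is $-M\VOp\varphi=-\tfrac{M}{r}\,(r\VOp\varphi)$: it involves the new top-order unknown itself, not a lower-order quantity, so it is not "inductively controlled". Placed in the source norm it contributes $\norm{r\VOp\varphi}^2_{W^k_{\gamma-1}}$ with a constant of order $\gamma^{-1}$, which is exactly the Morawetz bulk being proved and cannot be absorbed by the $\tfrac{\gamma}{4}r^{\gamma-1}$ coefficient when $\gamma$ is small, nor is it bounded by the order-$k$ hypothesis. The fix is a sign observation: moved to the left it augments the potential, giving $M\YOp(r\VOp\varphi)+(b_0+\tfrac{M}{r})(r\VOp\varphi)=\ldots$ with $b_0+\tfrac{M}{r}\geq0$, so the structural hypotheses of your own $k=0$ estimate are preserved; this is precisely why the paper commutes with $\VOp(r\,\cdot\,)$ in \eqref{eq:TransIngoingCommuteOneDeri}, where the potential $\tfrac{M}{r}+b_0$ is manifestly non-negative (and $\VOp(r\varphi)$ differs from $r\VOp\varphi$ only by a bounded zeroth-order term). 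The genuine commutator terms $\bigOAnalytic(M^2r^{-2})\YOp\varphi$ (rewritten via \eqref{eq:AssuTransIngoing} as $\bigOAnalytic(1)\varrho+\bigOAnalytic(Mr^{-1})\varphi$) and $\bigOAnalytic(M^2r^{-2})\LetaOp\varphi$ are controlled as you say. A smaller remark: the truncation/compact-support/density detour for the contribution at $\Scri^+$ is unnecessary and, as stated, circular — the outer truncation boundary is an inflow boundary for $\YOp$, so its flux does not vanish unless $\varphi$ is already supported away from it; the paper instead arranges the $\YOp$-integration by parts (cf. lemma \ref{lem:IntegrationByParts}, \eqref{eq:IBPY}) so that the only contribution at future null infinity has a favorable sign and is simply dropped.
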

\begin{proof} 
Consider the case $k=0$ first. Multiplying \eqref{eq:AssuTransIngoing} by $M^{-1}r^{\gamma}\bar{\varphi}$, taking the real part, and applying the Cauchy-Schwarz inequality, one obtains
\begin{align}
\YOp(\tfrac{1}{2}r^{\gamma}|\varphi|^2)+(\tfrac{\gamma}{2}+M^{-1}rb_0(r))r^{\gamma-1}|\varphi|^2
={}&r^{\gamma}\Re\{M^{-1}\varrho\bar{\varphi}\} \nonumber\\
\leq{}& \tfrac{\gamma}{4}r^{\gamma-1}|\varphi|^2+\gamma^{-1}M^{-2}r^{\gamma+1}|\varrho|^2.
\end{align}
Absorbing the $|\varphi|^2$ on the right in to the left and multiplying with $M^{-\gamma-1}$ gives
\begin{align}
\YOp(\tfrac{1}{2}M^{-\gamma-1}r^{\gamma}|\varphi|^2)+(\tfrac{\gamma}{4}+M^{-1}rb_0(r))M^{-(\gamma-1)-2}r^{\gamma-1}|\varphi|^2
\leq{}&\gamma^{-1}M^{-(\gamma+1)-2}r^{\gamma+1}|\varrho|^2.
\label{eq:YTransportk=0}
\end{align}
The energy and Morawetz estimate \eqref{eq:TransIngoingMora} for $k=0$ then follows from  integrating over $\Dtau$ with the measure $\diFourVol$ and the fact that $\di \timefunc_a \YOp^a=h'(r)$ is uniformly equivalent to $1$. Here, we have dropped the positive flux at future null infinity.
 In an analogous way, the $k=0$ case of the remaining estimates in \eqref{eq:TransIngoingMoraall} follows by integrating \eqref{eq:YTransportk=0} with the measure $\diFourVol$ over $\Dtauint$, $\Dtauext$, and $\Dtauearly$, respectively, and the $k=0$ case of \eqref{eq:TransIngoingMoranearregion} follows from integrating with the measure $\diFourVol$ over $\Dtautnear{\timefunc}$ and 
$\Dtautnear{\timefunc} \cap\{\timefunc'\leq \timefunc\}$ such that the first integration gives the first and third term on the left of \eqref{eq:TransIngoingMoranearregion} and the second integration gives the second term on the left. Here, we made use of the facts that $(\di\timefunc_a-\di r_a) \YOp^a=1+h'(r)$ and $\di \timefunc_a \YOp^a=h'(r)$ are both uniformly equivalent to $1$ and $\di v_a \YOp^a=0$.

Now assume the result holds for some $k\geq 0$. From the fact that the operators $M\LxiOp$, $\hedt$, $\hedtp$ commute with $Y$, it follows that the estimates \eqref{eq:TransIngoingMoraall} and \eqref{eq:TransIngoingMoranearregion} hold for $k$ but with $(\varphi,\varrho)$ replaced by these derivatives operated on $(\varphi,\varrho)$. Hence, the estimates \eqref{eq:TransIngoingMoraall} and \eqref{eq:TransIngoingMoranearregion} hold for $k$ but with $(\varphi,\varrho)$ replaced by any of $\{(\varphi,\varrho), (M\LxiOp \varphi,M\LxiOp \varrho), (\hedt \varphi,\hedt \varrho), (\hedtp\varphi, \hedtp \varrho)\}$.
 
 If we commute \eqref{eq:AssuTransIngoing} with $\VOp(r\cdot)$, then, because of the first relation in
 \eqref{eq:CommutatorofYandMathcalV}, we have 
\begin{align}\label{eq:TransIngoingCommuteOneDeri}
\hspace{4ex}&\hspace{-4ex}M\YOp\VOp(r\varphi)+(\tfrac{M}{r}+b_0(r))\VOp(r\varphi)\nonumber\\
={}&\VOp(r\varrho)+\tfrac{Mr(r^2-a^2)}{(r^2+a^2)^2}\varrho
+\left(\tfrac{\Delta}{2(r^2+a^2)}(M-r^2\partial_r(b_0(r)))-\tfrac{rM(r^2-a^2)}{(r^2+a^2)^2}(M+r b_0(r))\right)\tfrac{\varphi}{r}
+\tfrac{2a M r^2 \LetaOp\varphi}{(r^2+a^2)^2}\nonumber\\
={}&\VOp(r\varrho) +\tfrac{Mr(r^2-a^2)}{(r^2+a^2)^2}\varrho + M\bigOAnalytic(r^{-1}) \varphi +M^2\bigOAnalytic(r^{-2})\LetaOp\varphi .
\end{align}
This equation is in the form of equation of \eqref{eq:AssuTransIngoing}, so it remains to control the $W^k_{\gamma+1}(\Omega)$ norm squared of the right-hand side of \eqref{eq:TransIngoingCommuteOneDeri}, $\Omega$ being the region $\Dtau$, $\Dtauint$, $\Dtauext$, $\Dtauearly$, or $\Dtautnear{\timefunc}$, which one integrates over. The $W^k_{\gamma+1}(\Omega)$ norm squared of the first two terms is clearly bounded by the $W^{k+1}_{\gamma+1}(\Omega)$ norm squared of $\varrho$ itself. 
The last two terms are bounded by 
\begin{align}
\hspace{4ex}&\hspace{-4ex} \norm{Mr^{-1}\varphi}^2_{W_{\gamma+1}^k(\Omega)} 
+\norm{{M^2r^{-2}\LetaOp\varphi}}^2_{W_{\gamma+1}^k(\Omega)}\nonumber\\
&{}\lesssim\norm{{\varphi}}^2_{W_{\gamma-1}^k(\Omega)}
+\norm{\LetaOp\varphi}^2_{W_{\gamma-3}^k(\Omega)}\nonumber\\
&{}\lesC{s} \norm{{\varphi}}^2_{W_{\gamma-1}^k(\Omega)}
+\norm{\hedt\varphi}^2_{W_{\gamma-1}^k(\Omega)}
+\norm{\hedtp\varphi}^2_{W_{\gamma-1}^k(\Omega)}.
\end{align}
Therefore, the right-hand side is bounded by
\begin{align}
\norm{f}^2_{W_{\gamma+1}^{k+1}(\Omega)}
+\norm{{\varphi}}^2_{W_{\gamma-1}^k(\Omega)}
+\norm{\hedt\varphi}^2_{W_{\gamma-1}^k(\Omega)}
+\norm{\hedtp\varphi}^2_{W_{\gamma-1}^k(\Omega)}.
\end{align}
We have estimates for the last four terms from the previous paragraph, and by adding those estimates, the desired estimates \eqref{eq:TransIngoingMoraall} and \eqref{eq:TransIngoingMoranearregion} hold for $k+1$. By induction, the estimates \eqref{eq:TransIngoingMoraall} and \eqref{eq:TransIngoingMoranearregion} hold for all $k\in\Naturals$. 
\end{proof}

\subsection{Spin-weighted wave equations}
\label{sec:rpForSpinWeightedWaveEquations}

The following is a standard $r^p$ argument following the ideas originally given in \cite{DafermosRodnianski:rp}. Essentially one uses the vector-field method with the vector $M(1+M^\delta r^{-\delta})\YOp  +M^{-\alpha+1}r^\alpha \VOp $ with $\delta>0$ small and $\alpha\in [\delta,2-\delta]$. Since we use $p$ for a spinoral weight, we use $\alpha$ for the exponent traditionally denoted by $p$ in the $r^p$ argument. 

There are some technical differences between the statement here and similar results appearing elsewhere, such as \cite{DafermosRodnianski:rp}. Largely, these differences arise because we are not working in spherical symmetry. First, we work with a hyperboloidal foliation rather than a null foliation. Most obviously, this means that the coefficient of the ingoing derivatives $\abs{\YOp\varphi}^2$ in the energy (i.e. the flux integral on hypersurfaces) is strictly positive in equation \eqref{eq:energyComponentAYPrincipal}. Second, we restrict the exponent $\alpha$ to lie in the interval $[\delta,2-\delta]$ rather than $[0,2]$. It is well known that, in the space-time bulk integral, the coefficient of the outgoing derivatives $\abs{\VOp \varphi}^2$ (appearing in equation \eqref{eq:rpEstimate:VSquaredTerm}) and the coefficient of the angular derivatives (appearing in \eqref{eq:rpEstimate:AngularSquaredTerm}) have factors of $\alpha$ and $2-\alpha$ respectively. We restrict the range of $\alpha$ so that these coefficients have a lower bound in terms of $\delta$. Third, there are additional terms that must be estimated away and that arise because the background is not spherically symmetric. In any $r^p$ argument, there are several indefinite terms that must be estimated in terms of quantities with positive coefficients. We refer to the indefinite terms as ``error'' terms and those with positive coefficients as ``principal'' terms. Some of the error terms vanish in spherical symmetry, when $a=0$. Of the terms that vanish for $a=0$, the ones that we found hardest to estimate were those in equation \eqref{eq:rpEstimate:Energy:Harder}. As shown in equation \eqref{eq:reasonForChoiceOfCInHyperboloids}, we found that by taking $\CInHyperboloids$ sufficiently large, the contribution from these terms could be made small relative to the principal terms in the energy. This dictated our choice of $\CInHyperboloids$ in definition \ref{def:timefunc0CInHyperboloids}, although we expect that there are other methods that could be used to estimate these terms. In particular, for theorem \ref{thm:mainintro}, which is stated for $|a|/M\ll 1$, the smallness of $|a|/M\ll 1$ is sufficient to control the terms in equation \eqref{eq:reasonForChoiceOfCInHyperboloids}, but, for our more general theorem \ref{thm:BEAMintro}, which is valid for all $|a|/M<1$ for which BEAM estimates are known, we use the largeness of $\CInHyperboloids$ instead of the smallness of $|a|/M$. 

\begin{lemma}[$r^p$ estimates for spin-weighted waves in weighted energy spaces]
\label{lem:GeneralrpSpinWave}
\standardHypothesisOnDelta. 
Let\footnote{The range of $s$ is essentially arbitrary, but a larger range of $s$ requires redefining $\timefunc$ with larger values of $\CInHyperboloids$.} $|s| \leq 3$. 
Let $b_\VOp $, $b_\phi$, and $b_0$ be real, smooth functions of $r$ such that
\begin{enumerate}
\item $\exists b_{\VOp ,-1}\in\Reals$ such that $b_\VOp =b_{\VOp ,-1}r +M\bigOAnalytic(1)$
and $b_{\VOp,-1}\geq 0$, 
\item $b_\phi =M\bigOAnalytic(r^{-1})$, and 
\item \label{Assumption:b0:Generalrp} $\exists b_{0,0}\in\Reals$ such that $b_0 = b_{0,0} +M\bigOAnalytic(r^{-1})$
and $b_{0,0}+|s|+s \geq 0$. 
\end{enumerate}

Given these, there are constants $\rCutOffInrpWave=\rCutOffInrpWave(b_0,b_{\phi},b_{\VOp})$ and $\CInrpWave=\CInrpWave(b_0,b_{\phi},b_{\VOp})$ such that for all scalars $\varphi$ and $\vartheta$ with spin weight $s$, and if 
\begin{align}
\widehat\squareS_s \varphi
 +b_\VOp  \VOp\varphi
 +b_\phi \LetaOp \varphi
 +b_0 \varphi
={}&
 \vartheta  , 
\label{eq:GeneralrpSpinWave:Assumption}
\end{align}
then for all $\rCutOff\geq\rCutOffInrpWave$, $\timefunc_2\geq \timefunc_1 \geq\timefunc_0$, and $\alpha \in [\delta,2-\delta]$,
\begin{align}
&\norm{r\VOp\varphi}_{W^{0}_{\alpha-2}(\StauR)}^2
+\norm{\varphi}_{W^1_{-2}(\StauR)}^2\nonumber\\
&+\norm{\varphi}_{W^{1}_{\alpha-3}(\DtauR)}^2
+\norm{M\YOp\varphi}_{W^{0}_{-1-\delta}(\DtauR)}^2\nonumber\\
&+\norm{\varphi}_{F^{0}(\Scritau)}^2 \nonumber\\
&{}\leq \CInrpWave \bigg(
\norm{r\VOp\varphi}_{W^{0}_{\alpha-2}(\StauiR)}^2
+\norm{\varphi}_{W^1_{-2}(\StauiR)}^2\nonumber\\
&\qquad\quad+\norm{\varphi}_{W^{1}_{0}(\DtauRcR)}^2 
  +\sum_{\timefunc\in\{\timefunc_1,\timefunc_2\}} \norm{\varphi}_{W^{1}_{\alpha}(\StautRcR)}^2 
  +\|\vartheta\|_{W^0_{\alpha-3}(\DtauRc)}^2 \bigg).
\label{eq:GeneralrpSpinWave:Conclusion}
\end{align}
\end{lemma}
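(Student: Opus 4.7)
The plan is to run an $r^p$-type energy identity in the spirit of Dafermos--Rodnianski, with multiplier
\[
X = M\bigl(1+M^\delta r^{-\delta}\bigr)\YOp \;+\; M^{-\alpha+1} r^{\alpha}\chi\, \VOp,
\]
and to use the spin-weighted angular ellipticity provided by lemma \ref{lem:hedt'hedtnormrelation} and lemma \ref{lem:ellip}. The reason for the cutoff $\chi$ on the $\VOp$ piece is that the $r^\alpha \VOp$ multiplier is only useful in the asymptotic region $r \geq \rCutOff-M$: near the horizon we rely on the redshift-type estimate coming from the $(1+M^\delta r^{-\delta})\YOp$ piece, which produces positive bulk contributions both at the horizon (through the surface gravity, encoded in the spin coefficients of the Znajek tetrad) and in the region of small $r$. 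The weight $M^\delta r^{-\delta}$ in front of $\YOp$ produces the coercive bulk term $\norm{M\YOp\varphi}_{W^0_{-1-\delta}(\DtauR)}^2$ appearing on the left-hand side of \eqref{eq:GeneralrpSpinWave:Conclusion}, which is exactly what one expects from a $\delta$-fractional redshift commutator.

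First I would multiply the equation \eqref{eq:GeneralrpSpinWave:Assumption} by $\overline{X\varphi}$, take the real part, and integrate over $\DtauR$ against $\diFourVol$. Using the form of $\widehat\squareS_s = \hROp_s - \TMESOp_s$ from definition \ref{def:TMEOperators}, together with lemma \ref{lem:S-form} and the commutator identities of section \ref{sec:operators}, the principal part yields, after integration by parts (cf.\ lemma \ref{lem:IntegrationByParts} and its weighted version), the following boundary/bulk decomposition: on $\Staui, \Staut$ one obtains energies equivalent to $\norm{r\VOp\varphi}_{W^0_{\alpha-2}}^2 + \norm{\varphi}_{W^1_{-2}}^2$ (plus the angular contributions controlled by $\hedt, \hedt'$); on $\Scritau$ one obtains the flux $\norm{\varphi}_{F^0(\Scritau)}^2$ from the $\VOp$-piece, using lemma \ref{lem:IntegrationByParts} which shows that $\tfrac12\int_{\Scritau} r^\alpha|\VOp\varphi|^2\diThreeVolScri$-type boundary appears; and in the bulk the $\VOp$-multiplier produces a positive term of order $\alpha r^{\alpha-1}|\VOp\varphi|^2$ plus angular and $|\varphi|^2$ contributions from $\TMESOp_s$, while the $\YOp$-multiplier produces the coercive $\YOp$-bulk near the horizon and in the intermediate region. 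The hypothesis $b_{\VOp,-1} \geq 0$ ensures that the lower-order contribution from $b_\VOp \VOp\varphi$ paired with $r^\alpha\VOp$ is nonnegative modulo terms with one fewer power of $r$; those can be absorbed using Cauchy--Schwarz into the principal positive bulk for $\rCutOff$ sufficiently large.

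The main obstacle will be the zeroth-order term $b_0\varphi$ and the angular operator $\TMESOp_s$ producing a potentially wrong-sign contribution through $2\hedt\hedt'\varphi$. Here the role of assumption \eqref{Assumption:b0:Generalrp}, $b_{0,0}+|s|+s \geq 0$, becomes clear: expanding $2\hedt\hedt'$ via \eqref{eq:commutatorofhedtandhedtprime} as $2\hedt'\hedt - 2s$, integration by parts on the sphere (lemma \ref{lem:hedt'hedtnormrelation}) turns the leading angular contribution into $2|\hedt\varphi|^2$, while the remaining zeroth-order mass $b_{0,0} - s$ combined with the spectral bound of lemma \ref{lem:ellip}
\[
\tfrac{|s|-s}{2}\int_{\Sphere}|\varphi|^2\diTwoVol \;\leq\; \int_{\Sphere}|\hedt\varphi|^2\diTwoVol
\]
yields the required positivity precisely under the stated assumption. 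The $b_\phi \LetaOp\varphi$ term is handled via lemma \ref{lem:controlOfLetaOpInL2Sphere} and absorbed, using that $b_\phi = M \bigOAnalytic(r^{-1})$ makes it subcritical with respect to the $r^\alpha$ weight.

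To close the estimate I would then combine the resulting bulk and boundary positivity with a Cauchy--Schwarz on the source term to produce $\|\vartheta\|_{W^0_{\alpha-3}(\DtauRc)}^2$ (choosing the weight so that the absorption into the bulk term $\norm{\varphi}_{W^1_{\alpha-3}(\DtauR)}^2$ works for $\alpha \in [\delta, 2-\delta]$, which explains the strict restriction on $\alpha$: at $\alpha=0$ the Hardy weight degenerates and at $\alpha=2$ the coercive constant from the $r^p$ method vanishes). The cutoff $\chi$ generates error terms supported in $\StautRcR$ and $\DtauRcR$, which are estimated by the non-degenerate local terms $\norm{\varphi}_{W^1_0(\DtauRcR)}^2$ and $\sum_\timefunc \norm{\varphi}_{W^1_\alpha(\StautRcR)}^2$ appearing on the right-hand side of \eqref{eq:GeneralrpSpinWave:Conclusion}. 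Finally, invoking lemma \ref{lem:HardyOnStautRc} (the weighted Hardy estimate on hypersurfaces, valid for $\alpha \in [\delta,2-\delta]$ and $\rCutOff \geq \rCutOffInHardy$) upgrades the $r\VOp\varphi$-control into full control of $\varphi$ in $W^0_{\alpha-3}$, which together with the angular control from $\hedt,\hedt'$ on the bulk and the $W^1_{-2}$ control on the slices yields every term on the left of \eqref{eq:GeneralrpSpinWave:Conclusion}. Taking $\rCutOffInrpWave \geq \rCutOffInHardy$ large enough that all absorbed terms have small coefficient, and $\CInrpWave$ large enough to cover the collected constants (depending on $b_0, b_\phi, b_\VOp$), gives the stated inequality.
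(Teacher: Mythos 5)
Your overall strategy is the same as the paper's (an $r^p$ multiplier $M^{1-\alpha}r^\alpha\VOp$ combined with $M(1+M^\delta r^{-\delta})\YOp$ on the hyperboloidal foliation, spherical ellipticity of $\hedt,\hedt'$ to exploit $b_{0,0}+|s|+s\geq 0$, cutoff errors on the right, Hardy on the slices), but the proposal omits the step that is actually the crux and where the estimate could fail: the $a$-dependent part of $\TMESOp_s$, i.e.\ the terms $a^2\sin^2\theta\,\LxiOp\LxiOp\varphi$, $2a\,\LetaOp\LxiOp\varphi$ and $-2ias\cos\theta\,\LxiOp\varphi$. When these are paired with the $\YOp$-multiplier and integrated by parts, they deposit on each hyperboloidal slice energy densities of the form $r^{-2}\Re\bigl((\YOp\bar\varphi)\LxiOp\varphi\bigr)$, $r^{-2}\abs{\LxiOp\varphi}^2$, $r^{-2}\Re\bigl((\YOp\bar\varphi)\LetaOp\varphi\bigr)$, which carry exactly the same $r$-weight as the principal slice density $r^{-2}(\abs{\YOp\varphi}^2+\abs{\hedt'\varphi}^2)$; they cannot be absorbed by taking $\rCutOff$ large, and no smallness of $|a|/M$ is assumed. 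In the paper this is resolved only because the hyperboloidal time function is built with the large constant $\CInHyperboloids=10^6$, so that the $\YOp$-multiplier slice density is $\CInHyperboloids\,r^{-2}\abs{\YOp\varphi}^2$ and $\abs{\LxiOp\varphi}^2$ is bounded, via \eqref{eq:LxiToVOpYOpLeta}, by $\CInHyperboloids^{-1}$ times the principal energy; this is also the reason for the restriction $|s|\leq3$. Your write-up only treats the genuinely lower-order $b_\phi\LetaOp\varphi$ term, so the coercivity of your slice energies is not established. (A smaller slip in the same vein: the commutator produces an extra $+2s\varphi$, not $-s\varphi$, but the final condition $b_{0,0}+|s|+s\geq0$ does come out the same either with $\abs{\hedt'\varphi}^2$ or with $\abs{\hedt\varphi}^2$.)

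Two further points as written would not go through. First, the flux on $\Scritau$: $\norm{\varphi}_{F^{0}(\Scritau)}^2=\int_{\Scritau}M\abs{\YOp\varphi}^2\diThreeVolScri$, and in the identity it arises from the $\YOp$-multiplier momentum $\propto(1+r^{-\delta})\abs{\YOp\varphi}^2\VOp^a$ after the $\VOp$ integration by parts, while every $\VOp$-multiplier momentum vanishes at $\Scri^+$ for conformally regular $\varphi$ (which is why the paper first reduces to conformally regular data by density); a boundary term of the type $r^\alpha\abs{\VOp\varphi}^2$ does not survive at $\Scri^+$, so your claimed source of the $\Scritau$ term has the mechanism reversed, and the nonnegativity of that flux is exactly what needs checking. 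Second, the localization: the paper cuts off both multiplier pieces with $\chi^2$ and integrates over $\DtauRc$, so the only cost is the bulk and slice terms on $[\rCutOff-M,\rCutOff]$ appearing on the right of \eqref{eq:GeneralrpSpinWave:Conclusion}. With the cutoff only on the $\VOp$ piece you either integrate over $\DtauR$ and are left with uncontrolled boundary terms on the timelike surface $r=\rCutOff$ coming from the $\YOp$ identity, or you integrate over the full exterior and then need a genuine near-horizon (redshift-type) estimate, which is neither available at this stage (no BEAM input, no smallness of $|a|/M$, trapping and superradiance untreated) nor needed, since the conclusion makes no claim for $r<\rCutOff-M$. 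Restoring the cutoff on the $\YOp$ piece removes the horizon issue entirely.
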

\begin{proof}
The proof uses the method of multipliers with a multiplier that is a cut-off version of $M(1+M^\delta r^{-\delta})\YOp  +M^{1-\alpha}r^\alpha \VOp $ with $\alpha\in[2\delta,2-2\delta]$ and a rescaling $\delta\mapsto\delta/2$ will be made at the end of the proof. Within this proof, the relation $\lesssim$ is used to denote $\lesC{b_0,b_\phi,b_{\VOp},\rCutOffInrpWave}$, and we use mass normalization as in definition \ref{def:massnormalization}. 

Because the conformally regular functions are dense in the $W^{k}_{\alpha}$ spaces, by applying a density argument, it is sufficient to assume that $\vartheta$ and the initial data for $\varphi$ are conformally regular. In particular, it is sufficient to assume that $\varphi$ is conformally regular. This simplifies the treatment of certain terms on $\Scri^+$. 

\begin{steps}
\step{Set up the method of multipliers}
From equations \eqref{eq:rescaledWaveOperator} and \eqref{eq:expandedSssss}, the spin-weighted wave equation \eqref{eq:GeneralrpSpinWave:Assumption} can be expanded out as 
\begin{align}
\left(
  2 (r^2+a^2) \YOp\VOp
  +b_\VOp \VOp
  + (b_\phi+c_\phi)\LetaOp
  + (b_0+c_0) 
\right)\varphi &\nonumber\\
+\left(
  - 2 \hedt\hedtp
  - f_1(\theta)\LxiOp\LxiOp
  - f_2(\theta) \LetaOp\LxiOp
  - f_3(\theta) \LxiOp
\right)\varphi &
- \vartheta 
= 0 ,
\label{eq:ExpandSWWE}
\end{align}
where
\begin{subequations}
\begin{align}
c_\phi ={}&- \frac{2 a r }{a^2 + r^2} , &
c_0 ={}& \frac{(a^4 - 4 M a^2 r + a^2 r^2 + 2 M r^3)}{(a^2 + r^2)^2},\\
f_1(\theta)={}&
a^2 \sin^2\theta,&
f_2(\theta)={}& 2 a, \qquad \ \qquad 
f_3(\theta)={}
-2i a s \cos\theta . 
\end{align}
\label{eq:ExpandSWWE:coefficients}
\end{subequations}
Observe that, for each $s$, the $f_i$ are smooth functions on the sphere such that $\mathcal{L}_\eta f_i=0$. Thus, the spin-weighted wave equation \eqref{eq:GeneralrpSpinWave:Assumption} can be rewritten as 
\begin{align}
\sum_{i=1}^9 I_i ={} 0 , 
\label{eq:intermediateWaveOperator} 
\end{align}
where 
\begin{align}
I_1={}&2 (r^2+a^2) \YOp\VOp\varphi ,&
I_2={}& b_\VOp \VOp\varphi,&
I_3={}& (b_\phi+c_\phi)\LetaOp\varphi,&
I_4={}& (b_0+c_0) \varphi,\nonumber\\
I_5={}& -2 \hedt\hedtp\varphi ,&
I_6={}& - f_1(\theta)\LxiOp\LxiOp\varphi,&
I_7={}& - f_2(\theta) \LetaOp\LxiOp\varphi,&
I_8={}& - f_3(\theta) \LxiOp\varphi ,\nonumber\\
I_9={}& - \vartheta .
\label{eq:SWWEIdefs}
\end{align}

Let $\chi_1$ be decreasing, smooth, equal to $1$ on $(-\infty,0)$, and equal to $0$ on $(1,\infty)$, and let $\chi=\chi_1((\rCutOff-r)/M)$. This implies that $\chi$ vanishes for $r\leq \rCutOff-M$ and is identically $1$ for $r\geq\rCutOff$. 

Following the standard method-of-multipliers procedure, one can multiply the spin-weighted wave equation \eqref{eq:intermediateWaveOperator}  by $\chi^2 M^{1-\alpha} r^{\alpha} (\VOp \bar\varphi) +\chi^2 M(1+ M^\delta r^{-\delta}) (\YOp \bar\varphi)$, multiply by a further factor of $M^2/(r^2+a^2)$, take the real part, integrate the resulting equation over $\DtauRc$, and then estimate the various terms. To do so, it is convenient to introduce, for $i\in\{1,\ldots,9\}$, 
\begin{align}
I_{i,\VOp } ={}& \Re\left(\chi^2 M^{1-\alpha} r^{\alpha} (\VOp \bar\varphi) \frac{M^2}{r^2+a^2} I_i \right), &
I_{i,\YOp } ={}& \Re\left(\chi^2 M(1+ M^\delta r^{-\delta}) (\YOp \bar\varphi) \frac{M^2}{r^2+a^2} I_i \right).
\end{align}

For $i\in\{1,\ldots,9\}$ and $X\in\{\VOp,\YOp\}$, the term $I_{i,X}$ is said to be put in standard form when there are $\momentumiX$, $\bulkComponentiX$, and $\bulkComponentiXError$ such that, for any region $\Omega=\Omega_{\timefunc,r}\times\Sphere$ with $\Omega_{\timefunc,r}\subset\Reals\times(2M,\infty)$ and with boundary $\partial\Omega$,
\begin{align}
\int_{\Omega} I_{i,X} \diFourVol
={}& \int_{\partial\Omega} \Normal_ a \momentumiX^a  \LerayForm{\Normal}
+\int_{\Omega} (\bulkComponentiX +\bulkComponentiXError) \diFourVol . 
\end{align}

After the method of multipliers presented in the first step of this proof, the purpose of step \ref{step:rp:favourableTerms} is to isolate the principal terms, both in the bulk $\DtauRc$ and in energies on $\StautRc$. The $I_{1,\VOp }$ and $I_{2,\VOp }$ terms contribute the dominant $\abs{\VOp \varphi}^2$ terms both on $\StautRc$ and in $\DtauRc$, the $I_{1,\YOp }$ term contributes the dominant $\abs{\YOp \varphi}$ term on $\StautRc$ and in $\DtauRc$, the $I_{5,\YOp }$ term contributes the dominant $\abs{\hedtp\varphi}^2$ term on $\StautRc$, but the $I_{4,\VOp}$ and $I_{5,\VOp }$ terms together contribute the dominant $\abs{\varphi}^2$ and $\abs{\hedtp\varphi}^2$ term in $\DtauRc$. Step \ref{step:rp:remainingTerms} is to define the remaining, nonprincipal terms.  

The $I_6$ and $I_7$ terms are particularly difficult to treat. The $I_{6,\YOp}$ and $I_{7,\YOp}$ contribute terms that do not decay in $r$ faster than those that arise in the principal terms. To handle these, it is necessary to exploit the largeness of $\CInHyperboloids$, 
which is set in definition \ref{def:timefunc0CInHyperboloids}. 
Step \ref{step:rp:treatPrincipalBulk} treats the principal part in $\DtauRc$. Step \ref{step:rp:TreatTheEnergyOnHyperboloids} treats the energy on each $\Staut$, and in particular the $I_{6,\VOp}$ and $I_{7,\VOp}$ terms. Step \ref{step:rp:ScriFlux} treats the flux through $\Scri^+$. Step \ref{step:rp:treatRemainingBulk} treats the remaining bulk terms, which completes the proof. 

The remainder of this proof uses mass normalization, as in definition \ref{def:massnormalization}. 

\step{Definition of the principal terms}
\label{step:rp:favourableTerms}
Within this proof, the principal terms are those that contribute a nonnegative, leading-order term, either in the bulk or on hypersurfaces. To isolate pure powers of $r$ in the principal bulk terms, instead of powers of $r^2+a^2$, it is useful to observe
\begin{align}
\frac{1}{r^2} -\frac{1}{r^2+a^2}
={}& \frac{a^2}{r^2(r^2+a^2)} . 
\end{align}

Integrating $I_{1,\VOp}$ $=\chi^2 r^\alpha (r^2+a^2)^{-1} \Re((\VOp\bar\varphi)(2(r^2+a^2)\YOp\VOp\varphi))$ and applying $\YOp $ in\-te\-gra\-tion-by-parts formula \eqref{eq:IBPYInnerProduct}, one finds $I_{1,\VOp}$ is in standard form with
\begin{subequations}
\begin{align}
\momentumAV^a
={}&\chi^2  r^\alpha |\VOp\varphi|^2 \YOp^a,\\
\bulkComponentAV
={}& \chi^2 r^{\alpha-1} \alpha  \abs{\VOp\varphi}^2 ,
\label{eq:rpEstimate:VSquaredTerm}\\
\bulkComponentAVError
={}& \partial_r (\chi^2) r^{\alpha} |\VOp\varphi|^2 
.
\end{align}
\end{subequations}

The term $I_{2,\VOp}$ $=\chi^2 r^\alpha (r^2+a^2)^{-1} \Re((\VOp\bar\varphi)(b_{\VOp}\VOp\varphi))$ can immediately be put in standard form with 
\begin{subequations}
\begin{align}
\momentumBV^a 
={}& 0,\\
\bulkComponentBV
={}& \chi^2 r^{\alpha-1} b_{\VOp,-1}  \abs{\VOp\varphi}^2 ,\\
\bulkComponentBVError
={}& \chi^2 r^\alpha\left(
  \frac{-b_{\VOp,-1}r a^2}{r^2(r^2+a^2)}
  +\frac{b_{\VOp}-rb_{\VOp,-1}}{r^2+a^2}
\right)|\VOp\varphi|^2 
.
\end{align}
\end{subequations}

Integrating $I_{1,\YOp }$ and applying commutator formula \eqref{eq:CommutatorofYandMathcalV}, one finds
\begin{align}
\int_{\DtauRc} I_{1,\YOp } \diFourVol 
={}&\int_{\DtauRc} 2\chi^2(1+ r^{-\delta})\Re\left((\YOp \bar\varphi)(\YOp \VOp \varphi)\right) \diFourVol \nonumber\\
={}& \int_{\DtauRc} 2\chi^2(1+ r^{-\delta})\Re\left((\YOp \bar\varphi)(\VOp \YOp \varphi)\right) \diFourVol \nonumber\\
&+\int_{\DtauRc} 2\chi^2(1+ r^{-\delta})(\YOp \bar\varphi)\frac{r^2-a^2}{(r^2+a^2)^2}(\YOp \varphi) \diFourVol\nonumber\\
&+\int_{\DtauRc} 2\chi^2(1+ r^{-\delta})\Re\left((\YOp \bar\varphi)\frac{2ar}{(r^2+a^2)^2}(\mathcal{L}_\eta\varphi)\right) \diFourVol .
\end{align}
Now, applying $\VOp $ integration-by-parts formula \eqref{eq:IBPVInnerProduct} to the first term on the right, one finds $I_{1,\YOp}$ in standard form 
\begin{subequations}
\begin{align}
\momentumAY^a
={}& \chi^2  (1+ r^{-\delta}) \abs{\YOp \varphi}^2 \VOp^a ,\\
\bulkComponentAY
={}&   \frac12 \delta \chi^2  r^{-\delta-1} \abs{\YOp \varphi}^2 ,\\
\bulkComponentAYError
={}& \bulkComponentAYYY +\bulkComponentAYYeta ,\\
\bulkComponentAYYY
={}& \bigg(
  -\frac12\delta \chi^2  r^{-\delta-1}
  -\partial_r\left(\chi^2 r^{-\delta}\frac{\Delta}{2(r^2+a^2)}\right)\nonumber\\
&\qquad  +2\chi^2(1+ r^{-\delta})\frac{r^2-a^2}{(r^2+a^2)^2}
\bigg)\abs{\YOp \varphi}^2 ,\\
\bulkComponentAYYeta
={}& 2\chi^2(1+ r^{-\delta})\frac{2ar}{(r^2+a^2)^2}\Re\left((\YOp \bar\varphi)(\mathcal{L}_\eta\varphi)\right) . 
\label{eq:rp:DefBAYError} 
\end{align}
\end{subequations}

The term $I_{5,\VOp }$ can be rewritten, using $\hedt\bar\varphi=\widebar{\hedtp\varphi}$, as
\begin{align}
I_{5,\VOp }
={}&-2\Re\left(\hedt\left(\chi^2 r^{\alpha}\frac{1}{r^2+a^2}(\VOp\bar\varphi)(\hedtp\varphi)\right)\right)
+2\Re\left(\chi^2 r^{\alpha}\frac{1}{r^2+a^2}(\VOp\widebar{\hedtp\varphi})(\hedtp\varphi)\right) .
\end{align}
Thus, applying the $\hedt$ and $\VOp $ integration-by-parts formulas \eqref{eq:IBPhedt} and \eqref{eq:IBPVInnerProduct}, one finds $I_{5,\VOp}$ is in standard form with
\begin{subequations}
\begin{align}
\momentumEV^a
={}& \chi^2 r^{\alpha}\frac{1}{r^2+a^2} |\hedtp\varphi|^2 \VOp^a ,\\
\bulkComponentEV
={}&\frac{2-\alpha}{2}\chi^2 r^{\alpha-3}\abs{\hedtp\varphi}^2 ,
\label{eq:rpEstimate:AngularSquaredTerm}\\
\bulkComponentEVError
={}&\left(
    \frac{2-\alpha}{2} \chi^2 r^{\alpha-3}
    -\partial_r\left(\chi^2 r^{\alpha}\frac{\Delta}{2(r^2+a^2)^2}\right)
  \right)\abs{\hedtp\varphi}^2 .
\end{align}
\end{subequations}

Similarly, for the $I_{5,\YOp}$ term, 
\begin{align}
I_{5,\YOp}
={}& -2\Re\left(\hedt\left(\chi^2 \frac{1+ r^{-\delta}}{r^2+a^2} (\YOp \bar\varphi)(\hedtp\varphi)\right)\right) 
+2\Re\left(\chi^2 \frac{1+ r^{-\delta}}{r^2+a^2} (\YOp \hedt\bar\varphi)(\hedtp\varphi)\right) ,
\label{eq:rp:I5YIntegral}
\end{align}
so that $\YOp$ integration-by-parts formula \eqref{eq:IBPYInnerProduct} gives the standard form with
\begin{subequations}
\begin{align}
\momentumEY^a
={}& \chi^2 \left(1+\frac{1}{r^{\delta}}\right)\frac{1}{r^2+a^2} \abs{\hedtp\varphi}^2 \YOp^a ,\\
\bulkComponentEY
={}&0, \\
\bulkComponentEYError
={}& \left(\partial_r\left( \chi^2(1+ r^{-\delta})\frac{1}{r^2+a^2}\right)\right)\abs{\hedtp\varphi}^2. 
\label{eq:rp:DefBEYError}
\end{align}
\end{subequations}

Integrating $I_{4,\VOp}$ $=\chi^2r^\alpha (r^2+a^2)^{-1}\Re((\VOp\bar\varphi)(b_0+c_0)\varphi)$ and applying $\VOp$ integration-by-parts formula \eqref{eq:IBPVInnerProduct}, one obtains the standard form with
\begin{subequations}
\begin{align}
\momentumDV^a 
={}& \frac12 \chi^2\frac{r^\alpha}{r^2+a^2} (b_0+c_0)\abs{\varphi}^2 \VOp^a ,\\
\bulkComponentDV
={}& \frac{2-\alpha}{4}  b_{0,0}\chi^2 r^{\alpha-3} \abs{\varphi}^2 ,\\
\bulkComponentDVError
={}&  \left(
  -\frac{2-\alpha}{4} b_{0,0}\chi^2 r^{\alpha-3}
  +\partial_r\left(r^\alpha\frac{\Delta}{4(r^2+a^2)^2} (b_0+c_0)\right)
\right)\abs{\varphi}^2 .
\end{align}
\end{subequations}

For $(i,X)$ $\in \{(2,\YOp),(3,\VOp),(3,\YOp),(4,\YOp),(6,\VOp),(6,\YOp),(7,\VOp),(7,\YOp),(8,\VOp),(8,\YOp),(9,\VOp),(9,\YOp)\}$ -that is, for all $(i,X)$ for which $\bulkComponentiX$ has not yet been defined- define
\begin{align}
\bulkComponentiX ={}& 0 .
\end{align}

\step{Define the remaining terms}
\label{step:rp:remainingTerms}
Considering $I_{6,\YOp}$ and isolating a total $\xi$ derivative, one finds
\begin{align}
I_{6,\YOp} 
={}&- \chi^2 (r^2+a^2)^{-1}(1+ r^{-\delta})\Re\left((\YOp\bar\varphi)f_1(\theta)
\LxiOp\mathcal{L}_\xi\varphi\right)\nonumber\\
={}&-\mathcal{L}_\xi\left( \chi^2 \frac{1+ r^{-\delta}}{r^2+a^2}\Re\left((\YOp\bar\varphi)f_1(\theta)\mathcal{L}_\xi\varphi \right) \right) \nonumber\\
&+\chi^2 \frac{1+ r^{-\delta}}{r^2+a^2}\Re\left((\YOp\mathcal{L}_\xi\bar\varphi)f_1(\theta)\mathcal{L}_\xi\varphi \right)  .
\end{align}
Now integrating and applying $\YOp$ integration-by-parts formula \eqref{eq:IBPYInnerProduct}, one obtains the standard form for $I_{6,\YOp}$ with
\begin{subequations}
\begin{align}
\momentumFY^a
={}& -\chi^2 \frac{1+ r^{-\delta}}{r^2+a^2} f_1(\theta) \Re\left((\YOp\bar\varphi)\mathcal{L}_\xi\varphi\right) \xi^a \nonumber\\
&+\frac12\chi^2 \frac{1+ r^{-\delta}}{r^2+a^2} f_1(\theta) \abs{\mathcal{L}_\xi\varphi}^2 \YOp^a ,\\
\bulkComponentFY
={}&\left(\partial_r\left(\frac12\chi^2 \frac{1+ r^{-\delta}}{r^2+a^2}\right)\right)\abs{\mathcal{L}_\xi\varphi}^2f_1(\theta) 
\label{eq:rp:DefBFY}.
\end{align}
\end{subequations}
(Recall all the principal terms were defined in the previous step.) 

The term $I_{7,\YOp}$ can be rewritten, using the Leibniz rule in $\LxiOp$, $\YOp$, and $\LetaOp$, as
\begin{align}
&-\chi^2\frac{1+ r^{-\delta}}{r^2+a^2}\Re\left((\YOp\bar\varphi)f_2(\theta)
\mathcal{L}_\eta\mathcal{L}_\xi\varphi\right) \nonumber\\
={}& -\mathcal{L}_\xi\left(\chi^2\frac{1+ r^{-\delta}}{r^2+a^2}\Re\left((\YOp\bar\varphi)f_2(\theta)\mathcal{L}_\eta\varphi\right)\right)
+\chi^2\frac{1+ r^{-\delta}}{r^2+a^2}\Re\left((\mathcal{L}_\xi\YOp\bar\varphi)f_2(\theta)\mathcal{L}_\eta\varphi\right) \nonumber\\
={}& -\mathcal{L}_\xi\left(\chi^2\frac{1+ r^{-\delta}}{r^2+a^2}\Re\left((\YOp\bar\varphi)f_2(\theta)\mathcal{L}_\eta\varphi\right)\right)
+\YOp\left(\chi^2\frac{1+ r^{-\delta}}{r^2+a^2}\Re\left((\mathcal{L}_\xi\bar\varphi)f_2(\theta)\mathcal{L}_\eta\varphi\right)\right) \nonumber\\
&+\partial_r\left(\chi^2 \frac{1+ r^{-\delta}}{r^2+a^2}\right)f_2(\theta)\Re\left((\mathcal{L}_\xi\bar\varphi)\mathcal{L}_\eta\varphi\right)
-\chi^2\frac{1+ r^{-\delta}}{r^2+a^2}\Re\left((\mathcal{L}_\xi\bar\varphi)f_2(\theta)\YOp\mathcal{L}_\eta\varphi\right) \nonumber\\
={}& -\mathcal{L}_\xi\left(\chi^2\frac{1+ r^{-\delta}}{r^2+a^2}\Re\left((\YOp\bar\varphi)f_2(\theta)\mathcal{L}_\eta\varphi\right)\right)
+\YOp\left(\chi^2\frac{1+ r^{-\delta}}{r^2+a^2}\Re\left((\mathcal{L}_\xi\bar\varphi)f_2(\theta)\mathcal{L}_\eta\varphi\right)\right) \nonumber\\
&+\partial_r\left(\chi^2\frac{1+ r^{-\delta}}{r^2+a^2}\right)f_2(\theta)\Re\left((\mathcal{L}_\xi\bar\varphi)\mathcal{L}_\eta\varphi\right)
-\mathcal{L}_\eta\left(\chi^2\frac{1+ r^{-\delta}}{r^2+a^2}\Re\left((\mathcal{L}_\xi\bar\varphi)f_2(\theta)\YOp\varphi\right)\right)
\nonumber\\
&+\chi^2\frac{1+ r^{-\delta}}{r^2+a^2}\Re\left((\mathcal{L}_\xi\mathcal{L}_\eta\bar\varphi)f_2(\theta)\YOp\varphi\right) .
\end{align}
Now, identifying the final term as the opposite of the term on the first line, one can integrate to obtain the standard form for $I_{7,\YOp}$ with 
\begin{subequations}
\begin{align}
\momentumGY^a
={}& -\frac12 \chi^2\frac{1+ r^{-\delta}}{r^2+a^2}\Re\left((\YOp\bar\varphi)f_2(\theta)\mathcal{L}_\eta\varphi\right) \xi^a\nonumber\\
&+\frac12\chi^2\frac{1+ r^{-\delta}}{r^2+a^2}\Re\left((\mathcal{L}_\xi\bar\varphi)f_2(\theta)\mathcal{L}_\eta\varphi\right)\YOp^a ,\\
\bulkComponentGY
={}& \frac12\partial_r\left(\chi^2\frac{1+ r^{-\delta}}{r^2+a^2}\right) \Re\left((\mathcal{L}_\xi\bar\varphi)f_2(\theta)\LetaOp\varphi \right) .
\label{eq:rp:DefBGY} 
\end{align}
\end{subequations}

$I_{6,\VOp}$ can be rewritten, by isolating a total $\xi$ derivative, as
\begin{align}
I_{6,\VOp}
={}& \LxiOp\left(-\chi^2 f_1(\theta)\frac{r^\alpha}{r^2+a^2} \Re\left((\VOp\overline{\varphi})\LxiOp\varphi\right)\right)
+\chi^2 f_1(\theta)\frac{r^\alpha}{r^2+a^2}\Re((\VOp\widebar{\mathcal{L}_\xi\varphi})(\mathcal{L}_\xi\varphi)) .
\end{align}
Since $\mathcal{L}_\xi$ acting on a scalar and in the $(\timefunc,r,\omega)$ parametrization is just $\partial_\timefunc$, if one integrates the first term in $\timefunc$ and applies $V$ integration-by-parts formula \eqref{eq:IBPVInnerProduct} on the second, then one obtains $I_{6,\VOp}$ in standard form with
\begin{subequations}
\begin{align}
\momentumFV^a
={}& -\chi^2 f_1(\theta)\frac{r^\alpha}{r^2+a^2} \Re\left((\VOp\overline{\varphi})\LxiOp\varphi\right)\xi^a
  +\frac12\chi^2f_1(\theta)\frac{r^\alpha}{r^2+a^2} |\mathcal{L}_\xi\varphi|^2 \VOp^a,\\
\bulkComponentFVError
={}& -\frac14 \partial_r\left(\chi^2f_1(\theta)r^{\alpha}\frac{\Delta}{(r^2+a^2)^2}\right)|\mathcal{L}_\xi\varphi|^2 .
\end{align}
\end{subequations}

This type of analysis can be applied to $I_{7,\VOp}$. Term $I_{7,\VOp}$ can be rewritten using the Leibniz rule in $\mathcal{L}_\xi$, $V$, and $\LetaOp$ as
\begin{align}
I_{7,\VOp}
={}& -\chi^2 f_2(\theta)\frac{r^\alpha}{r^2+a^2} \Re\left((\VOp\overline{\varphi})\LetaOp\LxiOp\varphi\right) \notag\\
={}& \LxiOp\left(-\chi^2 f_2(\theta)\frac{r^\alpha}{r^2+a^2} \Re\left((\VOp\overline{\varphi})\LetaOp\varphi\right)\right)
+\chi^2 f_2(\theta)\frac{r^\alpha}{r^2+a^2}\Re\left((\LxiOp\VOp \overline\varphi)\LetaOp\varphi\right)\notag\\
={}& \LxiOp\left(-\chi^2 f_2(\theta)\frac{r^\alpha}{r^2+a^2}\ \Re\left((\VOp\overline{\varphi})\LetaOp\varphi\right)\right)
+\VOp\left(\chi^2 f_2(\theta)\frac{r^\alpha}{r^2+a^2}\Re\left((\LxiOp\overline\varphi)\LetaOp\varphi\right)\right)\notag\\
&-\VOp\left( \chi^2 f_2(\theta)\frac{r^\alpha}{r^2+a^2}\right)\Re\left((\LxiOp\overline\varphi)\LetaOp\varphi\right)
- \chi^2 f_2(\theta)\frac{r^\alpha}{r^2+a^2}\Re\left((\LxiOp\overline\varphi)\LetaOp\VOp\varphi\right)\notag\\
={}& \LxiOp\left(-\chi^2 f_2(\theta)\frac{r^\alpha}{r^2+a^2}\ \Re\left((\VOp\overline{\varphi})\LetaOp\varphi\right)\right)
+\VOp\left(\chi^2 f_2(\theta)\frac{r^\alpha}{r^2+a^2}\Re\left((\LxiOp\overline\varphi)\LetaOp\varphi\right)\right)\notag\\
&-\VOp\left( \chi^2 f_2(\theta)\frac{r^\alpha}{r^2+a^2}\right)\Re\left((\LxiOp\overline\varphi)\LetaOp\varphi\right)
-\LetaOp\left(\chi^2 f_2(\theta)\frac{r^\alpha}{r^2+a^2}\Re\left((\LxiOp\overline\varphi)\VOp\varphi\right)\right)\notag\\
&+ \chi^2 f_2(\theta)\frac{r^\alpha}{r^2+a^2}\Re\left((\LetaOp\LxiOp\overline\varphi)\VOp\varphi\right).
\end{align}
Identifying the last term on the right with the opposite of the term on the left, one obtains
\begin{align}
2I_{7,\VOp}
={}& -2\chi^2 f_2(\theta)\frac{r^\alpha}{r^2+a^2} \Re\left((\VOp\overline{\varphi})\LetaOp\LxiOp\varphi\right) \notag\\
={}& \LxiOp\left(-\chi^2 f_2(\theta)\frac{r^\alpha}{r^2+a^2}\Re\left((\VOp\overline{\varphi})\LetaOp\varphi\right)\right)
+\VOp\left(\chi^2 f_2(\theta)\frac{r^\alpha}{r^2+a^2}\Re\left((\LxiOp\overline\varphi)\LetaOp\varphi\right)\right)\notag\\
&
-\LetaOp\left(\chi^2 f_2(\theta)\frac{r^\alpha}{r^2+a^2}\Re\left((\LxiOp\overline\varphi)\VOp\varphi\right)\right)
-\VOp\left( \chi^2 f_2(\theta)\frac{r^\alpha}{r^2+a^2}\right)\Re\left((\LxiOp\overline\varphi)\LetaOp\varphi\right).
\end{align}
Since $\mathcal{L}_\xi$ and $\LetaOp$ are $\partial_\timefunc$ and $\partial_\phi$ in the $(\timefunc,r,\theta,\phi)$ coordinates, from the $V$ integration by parts formula \eqref{eq:IBPV}, one obtains the standard form for $I_{7,\VOp}$ with
\begin{subequations}
\begin{align}
\momentumGV^a
={}&-\frac12\chi^2 f_2(\theta) \frac{r^\alpha}{r^2+a^2} \Re((\VOp\overline\varphi)\LetaOp\varphi) \xi^a
+\frac12\chi^2 f_2(\theta) \frac{r^\alpha}{r^2+a^2}\Re((\LxiOp\overline\varphi) \LetaOp\varphi ) \VOp^a,\\
\bulkComponentGVError
={}&-\left(\partial_r\left(\frac12\chi^2 f_2(\theta) \frac{r^\alpha}{r^2+a^2} \frac{\Delta}{2(r^2+a^2)}\right)\right) \Re((\LxiOp\overline\varphi) \LetaOp\varphi) .
\end{align}
\end{subequations}

For $(i,X)$ $\in \{(2,\YOp),(3,\VOp),(3,\YOp),(4,\YOp),(8,\VOp),(8,\YOp),(9,\VOp),(9,\YOp)\}$ -that is, for all $(i,X)$ for which $\momentumiX^a$ has not yet been defined- define
\begin{align}
\momentumiX^a ={}& 0 .
\end{align}

\step{Treat the principal bulk term}
\label{step:rp:treatPrincipalBulk}
Let
\begin{align}
\bulkComponentPrincipal
={}& \sum_{i\in\{1,\ldots,9\},X\in\{\VOp,\YOp\}} \bulkComponentiX \nonumber\\
={}& \bulkComponentAV
+ \bulkComponentBV
+ \bulkComponentAY
+ \bulkComponentEV 
+ \bulkComponentDV \nonumber\\
={}& (\alpha+b_{\VOp,-1}) \chi^2  r^{\alpha-1}\abs{\VOp\varphi}^2 
+\frac12\delta \chi^2  r^{-\delta-1} \abs{\YOp\varphi}^2 \nonumber\\
&+\frac{2-\alpha}{2}\chi^2r^{\alpha-3} \abs{\hedtp\varphi}^2 
+\frac{2-\alpha}{4}b_{0,0} \chi^2r^{\alpha-3} \abs{\varphi}^2 
. 
\end{align}

Since $b_{\VOp,-1}\geq0$, by assumption, that term can be dropped. It is convenient to rewrite $\abs{\hedtp\varphi}^2$ as $\left(\abs{\hedtp\varphi}^2-\frac{|s|+s}{2}\abs{\varphi}^2\right) +\frac{|s|+s}{2}\abs{\varphi}^2$ and to observe that, when integrated over spheres, both these summands are nonnegative from the lower bound on $\hedtp$ in lemma \ref{lem:ellip}. 
Thus, 
\begin{align}
\int_{\DtauRc} \bulkComponentPrincipal \diFourVol 
\geq{} \int_{\DtauRc}\chi^2\bigg(
& \alpha  r^{\alpha-1}\abs{\VOp\varphi}^2 
+\frac12\delta   r^{-\delta-1} \abs{\YOp\varphi}^2 \nonumber\\
&+\frac{2-\alpha}{2}r^{\alpha-3} \left(\abs{\hedtp\varphi}^2-\frac{|s|+s}{2}\abs{\varphi}^2\right) \nonumber\\
&+\frac{2-\alpha}{4}\left(b_{0,0}+|s|+s\right) r^{\alpha-3} \abs{\varphi}^2 
\bigg) \diFourVol .
\label{eq:rp:bulkPrincipalIntemediate}
\end{align}
Thus, using the Hardy inequality \eqref{eq:HardyOnHypersurfaceBulkWeights}, one finds, for some positive constants $C_1$, $C_2$, $C_3$, $C_4$, 
\begin{align}
\int_{\DtauRc} \bulkComponentPrincipal \diFourVol &
+C_1 \norm{\varphi}_{W^{1}_0(\DtauRcR)}^2  \nonumber\\
\geq{} \int_{\DtauRc}\bigg(
& C_2  r^{\alpha-1}\abs{\VOp\varphi}^2 
+C_3   r^{-\delta-1} \abs{\YOp\varphi}^2 \nonumber\\
&+\frac{2-\alpha}{2}r^{\alpha-3} \left(\abs{\hedtp\varphi}^2-\frac{|s|+s}{2}\abs{\varphi}^2\right) \nonumber\\
&+\frac{2-\alpha}{4}\left(b_{0,0}+|s|+s+C_4\right) r^{\alpha-3} \abs{\varphi}^2 
\bigg) \diFourVol  . 
\label{eq:rp:bulkPrincipalIntemediateB}
\end{align}
Since the hypothesis of the theorem assumes that $b_{0,0}+|s|+s\geq0$ and $C_4>0$, all the coefficients are strictly positive. Furthermore, the terms that they multiply are all nonnegative. Given that there are positive multiples of $\abs{\hedtp\varphi}^2-\frac{|s|+s}{2}\abs{\varphi}^2$ and $\abs{\varphi}^2$, both with coefficients of $r^{\alpha-3}$, these can be lower bounded by positive multiples of $\abs{\hedtp\varphi}^2$ and $\abs{\varphi}^2$. Hence, there are constants $C_1,C_2$ such that
\begin{align}
&\int_{\DtauRc} \bulkComponentPrincipal \diFourVol 
+C_1 \norm{\varphi}_{W^{1}_0(\DtauRcR)}^2  \nonumber\\
&{}\geq C_2 \int_{\DtauRc}\bigg(
   r^{\alpha-1}\abs{\VOp\varphi}^2 
+  r^{-\delta-1} \abs{\YOp\varphi}^2 
+ r^{\alpha-3} \abs{\hedtp\varphi}^2
+ r^{\alpha-3} \abs{\varphi}^2 
\bigg) \diFourVol .
\label{eq:rp:bulkPrincipal}
\end{align}

The relation \eqref{eq:LxiToVOpYOpLeta} gives $\mathcal{L}_\xi$ $=\VOp +\xi^\YOp\YOp +\xi^\eta \mathcal{L}_\eta$ with 
 coefficients satisfying the bounds $\abs{\xi^{\YOp}}\lesssim 1$, and $\abs{\xi^\eta}\lesssim Mr^{-2}$, from which it follows that
\begin{align}
\int_{\DtauRc} r^{-1-\delta} \abs{\mathcal{L}_\xi\varphi}^2 \diFourVol
\lesssim{}& \int_{\Dtau} \bulkComponentPrincipal \diFourVol 
+\norm{\varphi}_{W^{1}_0(\DtauRcR)}^2  .
\end{align}

\step{Treat the energy on hyperboloids}
\label{step:rp:TreatTheEnergyOnHyperboloids}
On hyperboloids, the energies can be decomposed into the principal and error terms. Unfortunately, the error energies for the $I_{6,\YOp}$ and $I_{7,\YOp}$ terms have coefficients that are of the same order in $r$ as those in the principal part. Fortunately, we can use $\CInHyperboloids$ as a large parameter to dominate these error terms by the principal parts. All the remaining terms are strictly lower order and, hence, easily dominated. 

First, define the principal terms. To do so, it is also useful to recall, cf. \eqref{eq:Vt-lem}, \eqref{eq:Yt-def}, that  
\begin{subequations}\label{eq:YVt-rp}
\begin{align} 
\lim_{r\to \infty} \di\timefunc_a \YOp^a = 2, \label{eq:Yt-rp} \\
\lim_{r\to \infty} \frac{r^2}{M^{2}}\di\timefunc_a \VOp^a  = \CInHyperboloids \label{eq:CInHyperboloids-rp}. 
\end{align} 
\end{subequations} 
On the hyperboloids, let
\begin{subequations}
\begin{align}
\energyComponentAVPrincipal
={}& 2 \chi^2  r^\alpha |\VOp\varphi|^2,\\
\energyComponentAYPrincipal
={}& \CInHyperboloids \chi^2 r^{-2} \abs{\YOp \varphi}^2 ,
\label{eq:energyComponentAYPrincipal}\\
\energyComponentEYPrincipal
={}&  2 \chi^2 r^{-2} \abs{\hedtp\varphi}^2.
\end{align}
\end{subequations}
For $(i,X)\in$ $\{(2,\VOp),$ $(2,\YOp),$ 
$(3,\VOp),$ $(3,\YOp),$ 
$(4,\VOp),$ $(4,\YOp),$ 
$(5,\VOp),$ 
$(6,\VOp),$ $(6,\YOp),$ 
$(7,\VOp),$ $(7,\YOp),$ 
$(8,\VOp),$ $(8,\YOp),$ 
$(9,\VOp),$ $(9,\YOp)\}$ -that is for all $(i,X)$ for which $\energyComponentiXPrincipal$ has not been defined- define
\begin{align}
\energyComponentiXPrincipal ={}& 0 .
\end{align}
Define
\begin{align}
\energyComponentPrincipal
={}& \sum_{i\in\{1,\ldots,9\},X\in\{\VOp,\YOp\}} \energyComponentiX \nonumber\\
={}& \energyComponentAVPrincipal
+ \energyComponentAYPrincipal
+ \energyComponentEYPrincipal \nonumber\\
={}&2\chi^2  r^{\alpha}\abs{\VOp\varphi}^2 
+\CInHyperboloids \chi^2 r^{-2} \abs{\YOp\varphi}^2 
+2\chi^2r^{-2} \abs{\hedtp\varphi}^2 
.
\end{align}

There are some useful lower bounds to observe. First, note that since $r^\alpha$ can be taken to be larger than any given constant by taking $r$ sufficiently large, and since $\CInHyperboloids\geq \frac12 $, it follows from the Hardy lemma \ref{lem:HardyOnStautRc} that for any sufficiently small $\veps$ if $\rCutOff$ is sufficiently large, then 
\begin{align}
\int_{\StautRc} \chi^2 r^{-2} \abs{\varphi}^2 \diThreeVol 
\leq{}& (16+\veps)\int_{\StautRc} \chi^2 \abs{\VOp\varphi}^2 \diThreeVol
+\veps \int_{\StautRc} \chi^2r^{-2} \abs{\YOp\varphi}^2 \diThreeVol \nonumber\\
&+\int_{\StautRcR} \abs{\varphi}^2 \diThreeVol \nonumber\\
\leq{}& \int_{\StautRc} \energyComponentPrincipal \diThreeVol 
+\int_{\StautRcR} \abs{\varphi}^2 \diThreeVol . 
\label{eq:L2ByPrincipalEnergy}
\end{align}
Lemma \ref{lem:controlOfLetaOpInL2Sphere} controls the integral of $\abs{\LetaOp\varphi}^2$ on the sphere. Integrating in $r$ and then applying equation \eqref{eq:L2ByPrincipalEnergy}, one finds
\begin{align}
\int_{\StautRc} \chi^2  r^{-2}\abs{\mathcal{L}_\eta\varphi}^2 \diThreeVol 
\leq{}& \int_{\StautRc} \left(
  2\chi^2 r^{-2}\abs{\hedtp\varphi}^2
  +\chi^2  r^{-2} s^2\abs{\varphi}^2 
\right)\diThreeVol \nonumber\\
\leq{}&  (1+s^2) \int_{\StautRc} \energyComponentPrincipal \diThreeVol 
+s^2 \int_{\StautRcR} \abs{\varphi}^2 \diThreeVol .
\label{eq:LetaL2BoundInrp}
\end{align}
Furthermore, due to equation \eqref{eq:LxiToVOpYOpLeta} and since $|a|\leq M$, for $\rCutOff/M$ sufficiently large relative to $\CInHyperboloids$ and $s$, one has, for $r\geq \rCutOff$,
\begin{align}
\abs{\mathcal{L}_\xi\varphi}^2 
\leq{}& 3\abs{\VOp\varphi}^2 +\frac{3}{4}\abs{\YOp\varphi}^2 +\frac{1}{\CInHyperboloids (1+s^2)} \abs{\mathcal{L}_\eta\varphi}^2 .
\end{align}
Multiplying by $\chi^2r^{-2}$, integrating in $r$, using the bound \eqref{eq:LetaL2BoundInrp} to control the final term, the definitions of $\energyComponentAVPrincipal$ and $\energyComponentAYPrincipal$ to control the first two terms, using the largeness of $r^\alpha$ in $\energyComponentAVPrincipal$, and the factor of $\CInHyperboloids$ in $\energyComponentAYPrincipal$, one finds 
\begin{align}
\int_{\StautRc} \chi^2 r^{-2} \abs{\mathcal{L}_\xi\varphi}^2 \diThreeVol
\leq{}& \frac{4}{\CInHyperboloids} \int_{\StautRc} \energyComponentPrincipal \diThreeVol 
+\frac{1}{\CInHyperboloids} \int_{\StautRcR} \abs{\varphi}^2 \diThreeVol .
\label{eq:LxiL2BoundInrp}
\end{align}

Let
\begin{subequations}
\begin{align}
\energyComponentAVError
={}& (-2+(\di\timefunc_a \YOp^a)) \chi^2  r^\alpha |\VOp\varphi|^2,\\
\energyComponentAYError
={}& \chi^2 \left(
  (-\CInHyperboloids r^{-2}+(\di\timefunc_a \VOp^a)) 
  +(\di\timefunc_a \VOp^a)  r^{-\delta} 
\right)\abs{\YOp \varphi}^2 ,
\label{eq:rp:DefEAYError}\\
\energyComponentDVError
={}& (\di\timefunc_a \VOp^a) \chi^2 r^\alpha \frac12 \frac{1}{r^2+a^2} (b_0+c_0)\abs{\varphi}^2 ,\\
\energyComponentEVError
={}&(\di\timefunc_a \VOp^a)\chi^2 r^{\alpha}\frac{1}{r^2+a^2} |\hedtp\varphi|^2,
\label{eq:rp:DefEEV}\\
\energyComponentEYError
={}&  \chi^2\left(
  -\frac{2}{r^2}
  +\frac{\di\timefunc_a \YOp^a}{r^2+a^2}
  +(\di\timefunc_a \YOp^a)\frac{1}{r^{\delta}}\frac{1}{r^2+a^2}
\right)\abs{\hedtp\varphi}^2 ,
\label{eq:rp:DefEEYError} \\
\energyComponentFVError
={}& \energyComponentFVVxi +\energyComponentFVxixi,\\
\energyComponentFVVxi
={}& -(\di\timefunc_a\xi^a)\chi^2 f_1(\theta)\frac{r^\alpha}{r^2+a^2} \Re\left((\VOp\overline{\varphi})\LxiOp\varphi\right) ,\\
\energyComponentFVxixi
={}& \frac12(\di\timefunc_a\VOp^a)\chi^2f_1(\theta)\frac{r^\alpha}{r^2+a^2} |\LxiOp\varphi|^2 ,\\
\energyComponentGVError
={}& \energyComponentGVVeta +\energyComponentGVxieta ,\\
\energyComponentGVVeta
={}&-(\di\timefunc_a\xi^a)\frac12 \chi^2 f_2(\theta) \frac{r^\alpha}{r^2+a^2} \Re((\VOp\overline\varphi)\LetaOp\varphi) ,\\
\energyComponentGVxieta
={}& (\di\timefunc_a\VOp^a)\frac12 \chi^2 f_2(\theta) \frac{r^\alpha}{r^2+a^2}\Re((\LxiOp\overline\varphi) \LetaOp\varphi ) ,
\end{align}
\end{subequations}
and, turning to the terms that are harder to estimate, let
\begin{subequations}
\label{eq:rpEstimate:Energy:Harder}
\begin{align}
\energyComponentFYError
={}& \energyComponentFYYxi +\energyComponentFYxixi ,\\
\energyComponentFYYxi
={}& -(\di \timefunc_a \xi^a) \chi^2 \frac{1+ r^{-\delta}}{r^2+a^2} f_1(\theta) \Re((\YOp\bar\varphi)\mathcal{L}_\xi\varphi),\\
\energyComponentFYxixi
={}&(\di\timefunc_a\YOp^a)\frac12\chi^2 \frac{1+ r^{-\delta}}{r^2+a^2} f_1(\theta) \abs{\mathcal{L}_\xi\varphi}^2 ,\\
\energyComponentGYError
={}&\energyComponentGYYeta +\energyComponentGYxieta ,\\
\energyComponentGYYeta
={}& -(\di\timefunc_a\xi^a)\frac12\chi^2\frac{1+ r^{-\delta}}{r^2+a^2}f_2(\theta)\Re\left((\YOp\bar\varphi)\mathcal{L}_\eta\varphi \right),\\
\energyComponentGYxieta
={}&(\di\timefunc_a\YOp^a)\frac12\chi^2\frac{1+ r^{-\delta}}{r^2+a^2}f_2(\theta)\Re\left((\mathcal{L}_\xi\bar\varphi)\mathcal{L}_\eta\varphi\right) .
\end{align}
\end{subequations}
For $(i,X)\in$ $\{(2,\VOp),$ $(2,\YOp),$ 
$(3,\VOp),$ $(3,\YOp),$ 
$(4,\YOp),$ 
$(6,\VOp),$ $(6,\YOp),$ 
$(7,\VOp),$ $(7,\YOp),$ 
$(8,\VOp),$ $(8,\YOp),$ 
$(9,\VOp),$ $(9,\YOp)\}$ -that is for all $(i,X)$ for which $\energyComponentiXError$ has not been defined- let
\begin{align}
\energyComponentiXError ={}& 0 .
\end{align}

For $r$ sufficiently large, one has $|\di\timefunc_a\YOp^a|\leq 4$,  $|\di\timefunc_a\xi^a|\leq 2$, and $1+ r^{-\delta} \leq 2$. Independently of $r$, one has $|f_1(\theta)|\leq M^2$ and $|f_2(\theta)|\leq 2M$. Thus, from the previous bounds 
\begin{align}
\bigg\lvert\int_{\StautRc} &\left(
  \energyComponentFYYxi
  +\energyComponentFYxixi
  +\energyComponentGYYeta
  +\energyComponentGYxieta
\right) \diThreeVol \bigg\rvert\nonumber\\
\leq{}& \int_{\StautRc}\left(
  4r^{-2} \abs{\mathcal{L}_\xi\varphi}\abs{\YOp\varphi}
  +4r^{-2}\abs{\mathcal{L}_\xi\varphi}^2 
\right)\diThreeVol\nonumber\\
&+ \int_{\StautRc}\left(
  4r^{-2}\abs{\YOp\varphi}\abs{\mathcal{L}_\eta\varphi}
  +8r^{-2}\abs{\mathcal{L}_\xi\varphi}\abs{\mathcal{L}_\eta\varphi}
\right)\diThreeVol .
\end{align}
Every one of these terms has a factor of either $\abs{\LxiOp\varphi}$ or $\abs{\YOp\varphi}$, so that one obtains a factor of $\CInHyperboloids^{-1/2}$ either from the coefficient of $\abs{\YOp\varphi}^2$ in the definition of $\energyComponentAYPrincipal$ or from the bound on $\abs{\LxiOp\varphi}^2$ in inequality \eqref{eq:LxiL2BoundInrp}. Thus, from the Cauchy-Schwarz inequality, from introducing a factor of $\CInHyperboloids^{-1/2}$ on the $\LetaOp$ derivatives when applying the Cauchy-Schwarz inequality, and from equations \eqref{eq:energyComponentAYPrincipal}, \eqref{eq:LetaL2BoundInrp}, \eqref{eq:LxiL2BoundInrp}, one finds
\begin{align}
\bigg\lvert\int_{\StautRc} &\left(
  \energyComponentFYYxi
  +\energyComponentFYxixi
  +\energyComponentGYYeta
  +\energyComponentGYxieta
\right) \diThreeVol \bigg\rvert\nonumber\\
\leq{}&\int_{\StautRc}\left(
  \left(2+2\CInHyperboloids^{1/2}\right)\abs{\YOp\varphi}^2
  +\left(2+4+4\CInHyperboloids^{1/2}\right)\abs{\LxiOp\varphi}^2\right)r^{-2}\diThreeVol \nonumber\\
&+\int_{\StautRc}
  \left(\frac{2}{\CInHyperboloids^{1/2}}+\frac{4}{\CInHyperboloids^{1/2}}\right)\abs{\LetaOp\varphi}^2
r^{-2}\diThreeVol \nonumber\\
\leq{}& 
\left(
  \frac{2+2\CInHyperboloids^{1/2}}{\CInHyperboloids}
  +\frac{4(6+4\CInHyperboloids^{1/2})}{\CInHyperboloids}
  +\frac{6(1+s^2)}{\CInHyperboloids^{1/2}}
\right)
\int_{\StautRc} \energyComponentPrincipal \diThreeVol \nonumber\\
&+\left(
  \frac{6+4\CInHyperboloids^{1/2}}{\CInHyperboloids}
  +\frac{6s^2}{\CInHyperboloids^{1/2}}
\right)
\int_{\StautRcR}\abs{\varphi}^2\diThreeVol .
\label{eq:reasonForChoiceOfCInHyperboloids}
\end{align}
\index{C2CHyp@$\CInHyperboloids$}
Since $s^2$ is bounded by $9$, and since $\CInHyperboloids$ is chosen to be $10^{6}$ in definition \ref{def:timefunc0CInHyperboloids}, it follows that, for some constant $C$, on any hyperboloid there is the bound
\begin{align}
\bigg\lvert\int_{\StautRc} &\left(
  \energyComponentFYYxi
  +\energyComponentFYxixi
  +\energyComponentGYYeta
  +\energyComponentGYxieta
\right) \diThreeVol \bigg\rvert\nonumber\\
\leq{}& \frac12 \int_{\StautRc} \energyComponentPrincipal \diThreeVol 
+C\int_{\StautRcR}\abs{\varphi}^2\diThreeVol .
\end{align}

It can now be shown that the remaining error terms can be made arbitrarily small relative to $\energyComponentPrincipal$ by taking $r$ sufficiently large. One way to show this is to show that the term consists of a norm squared appearing in $\energyComponentPrincipal$ but with a lower exponent. For example, in $\energyComponentAVError$, there is a factor of $\abs{\VOp\varphi}^2$ with an exponent that vanishes at a rate of $r^{\alpha-1}$ (since $-2+\di\timefunc_a\YOp^a$ vanishes as $r^{-1}$), which decays faster than the $r^\alpha$ coefficient of $\abs{\VOp\varphi}^2$ in $\energyComponentPrincipal$. Another, similar, method is to show that the term involves the (real part of) the inner product of two terms involving $\varphi$, each of which appear in $\energyComponentPrincipal$, and that the coefficient of this inner product vanishes faster the geometric mean of the corresponding coefficients for the terms in $\energyComponentPrincipal$. For example, the term $\energyComponentFVVxi$ has a factor of $\Re((\VOp\overline\varphi)\LxiOp\varphi)$ multiplied by a coefficient that vanishes as $r^{\alpha-2}$. The geometric mean of two terms that decay with a particular exponent decays with an exponent that is given by the arithmetic mean. The energy $\energyComponentPrincipal$, dominates $r^\alpha\abs{\varphi}^2$ and $r^{-2}\abs{\mathcal{L}_\xi\varphi}^2$, and the exponents satisfy $\alpha-2<((\alpha)+(-2))/2$, so, by taking $r$ sufficiently large, one can ensure that $\energyComponentFVVxi$ is arbitrarily small relative to $\energyComponentPrincipal$. Thus, for all the error terms, it is simply a matter of checking the relevant exponents, which are given in the following table. 

\begin{center}
\begin{tabular}{lll}
Term&Exponent&Exponent from $\energyComponentPrincipal$\\
\hline
$(1,\VOp)$ &$\alpha-1$  &$\alpha$\\
$(1,\YOp)$ &$-\delta-2 $  &$-2 $\\
$(4,\VOp)$ &$(-2)+(\alpha-2)$  &$-2$\\
$(5,\VOp)$ &$(-2)+(\alpha-2)$  &$-2 $\\
$(5,\YOp)$ &$-\delta-2 $  &$-2 $\\
$(6,\VOp,(\VOp\xi))$ &$\alpha-2$ &$((\alpha)+(-2))/2$\\
$(6,\VOp,(\xi\xi))$ &$(-2)+(\alpha-2)$ &$-2$\\
$(7,\VOp,(\VOp\eta))$ &$\alpha-2$ &$((\alpha)+(-2))/2$\\
$(7,\VOp,(\xi\eta))$ &$(-2)+(\alpha-2)$ &$-2$
\end{tabular}
\end{center}

On the level sets of $\timefunc$, one has that $\Normal$ can be chosen to be $\di\timefunc$. Furthermore, one has $\LerayForm{\Normal}=\diThreeVol$. From this and the definitions in the previous paragraph, one finds for all $(i,X)$ that
\begin{align}
\int_{\StautRc} \Normal_a \momentumiX^a \LerayForm{\Normal}
={}& \int_{\StautRc} \left(
  \energyComponentiXPrincipal
  +\energyComponentiXError
\right)\diThreeVol .
\end{align} 
Thus, one can conclude 
\begin{subequations}
\begin{align}
\int_{\StauR} \energyComponentPrincipal \diThreeVol
\lesssim{}&\int_{\StauRc} \sum_{i\in\{1,\ldots,9\},X\in\{\VOp,\YOp\}} \Normal_a\momentumiX^a \LerayForm{\Normal}
+\norm{\varphi}_{W^{1}_{0}(\StauRcR)}^2 ,
\label{eq:rp:finalTimeEstimate}\\
\int_{\StauiRc} \sum_{i\in\{1,\ldots,9\},X\in\{\VOp,\YOp\}}\Normal_a\momentumiX^a \LerayForm{\Normal}
\lesssim{}& \int_{\StauiR} \energyComponentPrincipal \diThreeVol
+\norm{\varphi}_{W^{1}_{0}(\StauiRcR)}^2 .
\label{eq:rp:startingTimeEstimate}
\end{align}
\end{subequations}

\step{Treat the flux through $\Scritau$}
\label{step:rp:ScriFlux}
In this step, it is useful to treat $\Scritau$ as the limit as $r\rightarrow\infty$ of a sequence of surfaces given in hyperboloidal coordinates by $[\timefunc_1,\timefunc_2]\times\{r\}\times \Sphere$ but to think of this in the conformal geometry. 

The only non-vanishing $\momentumiX^a$ arise from $(i,X)\in$ 
$\{(1,\VOp),$ $(1,\YOp),$
$(4,\VOp),$
$(5,\VOp),$ $(5,\YOp),$
$(6,\VOp),$ $(6,\YOp),$
$(7,\VOp),$ $(7,\YOp)\}$. 
The normal to the surfaces of constant $r$ is $\Normal=\di r$, so $\Normal_a\YOp^a\sim -1$, $\Normal_a\VOp^a\sim1$, and $\Normal_a\xi^a=0$. 
From conformal regularity, one finds that $r^{\alpha-2}\abs{\varphi}_k^2\rightarrow 0$. 
Thus, 
\begin{align}
0={}& \int_{\Scritau} \Normal_a\momentumAV^a\LerayForm{\Normal} 
= \int_{\Scritau} \Normal_a\momentumEV^a\LerayForm{\Normal} 
= \int_{\Scritau} \Normal_a\momentumEY^a\LerayForm{\Normal} 
= \int_{\Scritau} \Normal_a\momentumDV^a\LerayForm{\Normal} \nonumber\\
={}& \int_{\Scritau} \Normal_a\momentumFV^a\LerayForm{\Normal} 
= \int_{\Scritau} \Normal_a\momentumGV^a\LerayForm{\Normal} 
= \int_{\Scritau} \Normal_a\momentumFY^a\LerayForm{\Normal} 
= \int_{\Scritau} \Normal_a\momentumGY^a\LerayForm{\Normal} ,
\end{align}
and the only non-vanishing term is
\begin{align}
\int_{\Scritau} \Normal_a\momentumAY^a \LerayForm{\Normal}
={}& \int_{\Scritau}  \abs{\YOp\varphi}^2 \diThreeVolScri \geq 0.
\end{align}

\step{Treat the remaining terms in the bulk via the Cauchy-Schwarz inequality}
\label{step:rp:treatRemainingBulk}
The same type of analysis as in step \ref{step:rp:TreatTheEnergyOnHyperboloids} can be used to show that the bulk error terms are all small relative to $\int_{\DtauRc}\bulkComponentPrincipal\diFourVol +\norm{\varphi}_{W^{1}_{0}(\DtauRcR)}^2$. The following table shows that the exponents satisfy the relevant bound, with $-\infty$ standing in when the error term decays faster than polynomially or is compactly supported. Note that many of the relevant exponents arise from the cancellation of leading-order terms. Note also that $(1,\YOp,(\YOp,\YOp))$ and $(1,\YOp,(\YOp,\eta))$ are used to denote $\bulkComponentAYYY$ and $\bulkComponentAYYeta$ respectively. 

\begin{center}
\begin{tabular}{lll}
Term&Exponent&Exponent from $\energyComponentPrincipal$\\
\hline
$(1,\VOp)$             & $-\infty $  & $\alpha-1 $ \\
$(2,\VOp)$             & $\alpha-2 $ & $\alpha-1 $ \\
$(1,\YOp,(\YOp,\YOp))$ & $-2$        & $-\delta-1 $ \\
$(1,\YOp,(\YOp,\eta))$ & $-3 $       & $((-\delta-1)+(\alpha-3))/2 $ \\
$(5,\VOp)$             & $\alpha-4$  & $\alpha-3 $ \\
$(5,\YOp)$             & $-3$        & $\alpha-3 $ \\
$(4,\VOp)$             & $\alpha-4$  & $\alpha-3$ \\
$(6,\YOp)$             & $-3$        & $-\delta-1$ \\
$(7,\YOp)$             & $-3$        & $-\delta-1$ \\
$(6,\VOp)$             & $\alpha-3$  & $-\delta-1$ \\
$(7,\VOp)$             & $\alpha-3$  & $((-\delta-1)+(\alpha-3))/2$ \\
$(3,\VOp)$             & $\alpha-3$  & $((\alpha-1)+(\alpha-3))/2$ \\
$(8,\VOp)$             & $\alpha-2$  & $((\alpha-1)+(-\delta-1))/2$ \\
$(2,\YOp)$             & $-1$        & $((-\delta-1)+(\alpha-1))/2$ \\
$(3,\YOp)$             & $-3$        & $((-\delta-1)+(\alpha-3))/2$ \\
$(8,\YOp)$             & $-2$        & $((-\delta-1)+(-\delta-1))/2$ 
\end{tabular}
\end{center}
For the $(2,\YOp)$ term to be controlled, it is necessary that $(-\delta+\alpha-2)/2>1$, which is why the proof has so far considered $\alpha>2\delta$. 

It remains to treat the $I_9$ terms. For any $\veps>0$, 
\begin{subequations}
\begin{align}
\lvert I_{9,\VOp } \rvert
\lesssim{}&  \veps \chi^2 r^{\alpha-1} |\VOp\varphi|^2 
+ \veps^{-1} \chi^2 r^{\alpha-3} |\vartheta| ,\\
\abs{I_{9,\YOp}}
\lesssim{}& \veps  \chi^2 r^{-1-\delta}\abs{\YOp\varphi}^2
+ \veps^{-1} \chi^2 r^{\delta-3}\abs{\vartheta}^2 . 
\end{align}
\end{subequations}
For $\veps$ sufficiently small, the first term on the right of each of these bounds is dominated by $\bulkComponentPrincipal$. Thus from the fact that all the error terms can be made small relative to the principal terms (plus some additional term for $r\in[\rCutOff-M,\rCutOff]$), one finds 
\begin{align}
\int_{\StauRc} &
  \sum_{i\in\{1,\ldots,9\},X\in\{\VOp,\YOp\}} \Normal_a\momentumiX^a \LerayForm{\Normal}
+\int_{\DtauR} \bulkComponentPrincipal \diFourVol
+\int_{\Scritau} \abs{\YOp\varphi}^2 \diThreeVolScri
\nonumber\\
\lesssim{}& \int_{\StauiRc} 
  \sum_{i\in\{1,\ldots,9\},X\in\{\VOp,\YOp\}} \Normal_a\momentumiX^a \LerayForm{\Normal}
+ \norm{\varphi}_{W^{1}_{0}(\DtauRcR)}^2 \nonumber\\
&+\int_{\DtauRc} r^{\alpha-3} |\vartheta| \diThreeVol .
\end{align}
From this, from, the estimates \eqref{eq:rp:startingTimeEstimate} and \eqref{eq:rp:finalTimeEstimate} and, from the fact that we can add an extra term $\norm{\hedt\varphi}_{W^{0}_{\alpha-3}(\DtauR)}^2$ to the left because of the relation \eqref{eq:hedt'hedtnormrelation}, it now follows that
\begin{align}
&\norm{r\VOp\varphi}_{W^{0}_{\alpha-2}(\StauR)}^2
+\norm{\varphi}_{W^1_{-2}(\StauR)}^2\nonumber\\
&+\norm{\varphi}_{W^{1}_{\alpha-3}(\DtauR)}^2
+\norm{M\YOp\varphi}_{W^{0}_{-1-\delta}(\DtauR)}^2\nonumber\\
&+\norm{\varphi}_{F^{0}(\Scritau)}^2 \nonumber\\
&{}\leq \CInrpWave \bigg(
\norm{r\VOp\varphi}_{W^{0}_{\alpha-2}(\StauiR)}^2
+\norm{\varphi}_{W^1_{-2}(\StauiR)}^2\nonumber\\
&\qquad\quad+\norm{\varphi}_{W^{1}_{0}(\DtauRcR)}^2 
  +\sum_{\timefunc\in\{\timefunc_1,\timefunc_2\}} \norm{\varphi}_{W^{1}_{\alpha}(\StautRcR)}^2 
  +\|\vartheta\|_{W^0_{\alpha-3}(\DtauRc)}^2 \bigg).
\end{align}
The term $\norm{M\YOp\varphi}_{W^{0}_{-1-\delta}(\DtauR)}^2$ can trivially be replaced by $\norm{M\YOp\varphi}_{W^{0}_{-1-2\delta}(\DtauR)}^2$. Doing so and making the rescaling $\delta\mapsto\delta/2$, one obtains the desired result \eqref{eq:GeneralrpSpinWave:Conclusion} for all $\alpha\in[\delta,2-\delta]$. 
\end{steps}
\end{proof}

\subsection{Spin-weighted wave equations in higher regularity}
\label{sec:EnergyMoraWaveHigherReg}

This section proves the analogue of the $r^p$-estimate for spin-weighted wave equations, from lemma \ref{lem:GeneralrpSpinWave}, but in higher regularity. 

\begin{lemma}[Higher-regularity $r^p$-estimates for waves in weighted energy spaces]
\label{lem:GeneralrpSpinWaveHighReg}
Under the same assumptions as in lemma \ref{lem:GeneralrpSpinWave} except that we now assume $\varphi$ has spin weight 
$ |s| \leq 2$, 
for  any $\ireg \in \Naturals$, there are constants $\rCutOffInrpWave=\rCutOffInrpWave(b_0,b_\phi,b_{\VOp})$ and $\CInrpWave=\CInrpWave(b_0,b_\phi,b_{\VOp})$ such that for all spin-weight $s$ scalars $\varphi$ and $\vartheta$, and if \eqref{eq:GeneralrpSpinWave:Assumption} is satisfied,
then for all $\rCutOff\geq\rCutOffInrpWave$,  $\timefunc_2\leq \timefunc_1 \leq \timefunc_0$  and $\alpha\in[\delta,2-\delta]$, there is 
\begin{align}
&\norm{ r\VOp\varphi}_{W^{\ireg}_{\alpha-2}(\StauR)}^2
+\norm{\varphi}_{W^{\ireg+1}_{-2}(\StauR)}^2\nonumber\\
&+\norm{\varphi}_{W^{\ireg+1}_{\alpha-3}(\DtauR)}^2
+\norm{M\YOp\varphi}_{W^{\ireg}_{-1-\delta}(\DtauR)}^2\nonumber\\
&+\norm{\varphi}_{F^{\ireg}(\Scritau)}^2 \nonumber\\
&{}\leq \CInrpWave \bigg(
\norm{ r\VOp\varphi}_{W^{\ireg}_{\alpha-2}(\StauiR)}^2
+\norm{\varphi}_{W^{\ireg+1}_{-2}(\StauiR)}^2\nonumber\\
&\qquad\quad+\norm{\varphi}_{W^{\ireg+1}_{0}(\DtauRcR)}^2 
  +\sum_{\timefunc\in\{\timefunc_1,\timefunc_2\}} \norm{\varphi}_{W^{\ireg+1}_{\alpha}(\StautRcR)}^2 
  +\|\vartheta\|_{W^{\ireg}_{\alpha-3}(\DtauRc)}^2 \bigg).
\label{eq:GeneralrpSpinWave:ConclusionHighReg}
\end{align}
\end{lemma}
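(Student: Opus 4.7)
The plan is to argue by induction on $\ireg$, the base case $\ireg=0$ being precisely lemma \ref{lem:GeneralrpSpinWave}. For the inductive step, I would commute the equation \eqref{eq:GeneralrpSpinWave:Assumption} with each of the four operators $\LxiOp, \LetaOp, \hedt, \hedt'$ and apply the inductive hypothesis to the resulting equations. Since the coefficients $b_\VOp, b_\phi, b_0$ are axisymmetric and depend only on $r$, and since these operators commute with $\VOp$ and $\YOp$ by \eqref{eq:CommutAnguDeri}, each commutation yields a new equation of the form \eqref{eq:GeneralrpSpinWave:Assumption} for the derivative, with source equal to the corresponding derivative of $\vartheta$ plus lower-order contributions. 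Commutation with $\hedt$ or $\hedt'$ shifts the spin-weight by $\pm 1$; the present assumption $|s|\leq 2$ keeps the shifted spin within the range $|s'|\leq 3$ required by lemma \ref{lem:GeneralrpSpinWave}. A direct calculation using \eqref{eq:commutatorofhedtandhedtprime} and the explicit form \eqref{eq:expandedSssss} of $\TMESOp_s$ shows that $\hedt\widehat\squareS_s - \widehat\squareS_{s+1}\hedt$ reduces to a first-order operator in $\LxiOp$ with $\theta$-dependent coefficients, which I would incorporate either into modified coefficients $b_\VOp', b_\phi', b_0'$ still satisfying the sign hypotheses $b_{\VOp,-1}'\geq 0$ and $b_{0,0}'+|s'|+s'\geq 0$, or as source terms absorbable by the inductive hypothesis.

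To obtain mixed derivatives involving the transverse operators $r\VOp$ and $M\YOp$, I would use the equation itself: solving \eqref{eq:GeneralrpSpinWave:Assumption} algebraically for $2(r^2+a^2)\YOp\VOp\varphi$ expresses this mixed second derivative in terms of $\TMESOp_s\varphi$, $\vartheta$, and first-order derivatives of $\varphi$, all of which are already controlled at the previous regularity. The commutator \eqref{eq:CommutatorofYandMathcalV} allows one to reorder $\YOp$ and $\VOp$ derivatives freely, with commutator remainders that are lower order in $r$-weight and therefore absorbable. Pure angular derivatives of high order can alternatively be recovered from $L^2$-control of $(\hedt\hedt')^k\varphi$ through the spherical elliptic estimate of lemma \ref{lem:Ellipticforsecondordersphericalangularop}. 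Summing the resulting estimates over all multi-indices of length at most $\ireg+1$ and using the monotonicity and interpolation properties of the weighted norms (lemma \ref{lem:Walphaknormsproperties}) yields \eqref{eq:GeneralrpSpinWave:ConclusionHighReg} at level $\ireg+1$. The flux norm $\norm{\varphi}_{F^{\ireg}(\Scritau)}^2$ is handled simultaneously because $\ScriOps=\{\hedt,\hedt',M\LxiOp\}$ is contained in the set of commutators employed, so the base flux estimate from \eqref{eq:GeneralrpSpinWave:Conclusion}, applied to the commuted equations, already provides the desired higher-order flux control.

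The main technical obstacle is the bookkeeping of commutator terms. At each step one must verify that the modified coefficients still satisfy the sign hypotheses of lemma \ref{lem:GeneralrpSpinWave}, that the $\theta$-dependent corrections arising from $[\hedt^{(\prime)},\TMESOp_s]$ split cleanly into admissible coefficient shifts and source terms absorbable by the inductive hypothesis, and that the lower-order remainders generated by $[\YOp,\VOp]$ and by the cutoff $\chi$ contribute only boundary terms supported on $\DtauRcR$ and $\StautRcR$ which appear on the right-hand side of \eqref{eq:GeneralrpSpinWave:ConclusionHighReg}. Tracking the weights consistently through the Cauchy-Schwarz estimates required to absorb these terms into the inductive hypothesis is the most delicate aspect of the argument.
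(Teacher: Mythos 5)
There is a genuine gap, in two places. First, your primary plan commutes with \emph{both} $\hedt$ and $\hedt'$ and asserts that the resulting zeroth-order coefficients still satisfy the structural hypothesis $b_{0,0}'+|s'|+s'\geq 0$ of lemma \ref{lem:GeneralrpSpinWave}. This fails for the ``wrong-sign'' angular operator: using \eqref{eq:commutatorofhedtandhedtprime}, commuting with $\hedt$ shifts $b_0\mapsto b_0+2s$ and the spin to $s+1$, so for $s=-2$ and $b_{0,0}=0$ (allowed by the hypothesis, since $|s|+s=0$) one gets $b_{0,0}'+|s'|+s'=-4<0$; symmetrically $\hedt'$ fails for positive spin. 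Nor can you salvage this by moving the offending zeroth-order term to the right-hand side: as a source it enters precisely through $\norm{\varphi}_{W^{0}_{\alpha-3}(\DtauRc)}^2$, which is the bulk quantity being proved, with no smallness or gain in $r$-weight, so it cannot be absorbed. The paper's proof therefore commutes only with the spin-appropriate first-order operator ($\hedt'$ for $s<0$, $\hedt$ for $s\geq0$), and recovers the remaining angular derivatives through the \emph{exact} second-order symmetries $\TMESOp_s$ (equivalently $\mathring{S}_s$ via \eqref{eq:expandedSssss}), $\LetaOp\LetaOp$, $M\LxiOp\LetaOp$, $M^2\LxiOp\LxiOp$, combined with the spherical elliptic estimate of lemma \ref{lem:Ellipticforsecondordersphericalangularop} and relation \eqref{eq:hedt'hedtnormrelation}. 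You mention this elliptic route only as an ``alternative,'' but it is in fact essential; your main route breaks at the sign condition.

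Second, your scheme never commutes with $r\VOp$, proposing instead to recover transverse derivatives by solving \eqref{eq:GeneralrpSpinWave:Assumption} for $2(r^2+a^2)\YOp\VOp\varphi$ and reordering with \eqref{eq:CommutatorofYandMathcalV}. That only yields \emph{mixed} $\YOp\VOp$ derivatives. The conclusion \eqref{eq:GeneralrpSpinWave:ConclusionHighReg}, however, requires compositions containing several $r\VOp$ factors with the strong weights: e.g.\ $(r\VOp)^2\varphi$ with weight $r^{\alpha-2}$ on $\StauR$ and $r^{\alpha-3}$ in $\DtauR$. These cannot be reconstructed from the equation (which contains no $\VOp\VOp$ term) nor from $\LxiOp$-, $\LetaOp$-, or angular-commuted estimates: rewriting $\VOp$ via \eqref{eq:LxiToVOpYOpLeta} costs two powers of $r$ relative to what those estimates control. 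The paper's Step 4 genuinely commutes with $r\VOp$, computes the new coefficients ($b_{0,r\VOp}=b_0+1$, etc.), re-verifies the three hypotheses of lemma \ref{lem:GeneralrpSpinWave} for the commuted equation \eqref{eq:GeneralrpSpinWave:AssumptionCommuteV}, and only then uses the equation to expand the $\YOp\VOp\varphi$ term inside the inhomogeneity $\vartheta_{r\VOp}$; a final step converts between $\{M\LxiOp,\hedt,\hedt',r\VOp\}$ and $\rescaledOps$ for $r\geq\rCutOff-M$. Without the $r\VOp$ commutation (and the accompanying triple induction controlling the commutator bookkeeping), the top-order part of \eqref{eq:GeneralrpSpinWave:ConclusionHighReg} is not reached.
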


\begin{proof}
For a given set of operators $\generalOps$, consider the estimate
\begin{align}
\sum_{|\mathbf{a}|\leq \ireg}\bigg(&\norm{\generalOps^{\mathbf{a}} r\VOp\varphi}_{W^{0}_{\alpha-2}(\StauR)}^2
+\norm{\generalOps^{\mathbf{a}}\varphi}_{W^{1}_{-2}(\StauR)}^2\nonumber\\
&+\norm{\generalOps^{\mathbf{a}}\varphi}_{W^{1}_{\alpha-3}(\DtauR)}^2
+\norm{\generalOps^{\mathbf{a}} M\YOp \varphi}_{W^{0}_{-1-\delta}(\DtauR)}^2\nonumber\\
&+\int_{\Scritau} M\abs{\generalOps^{\mathbf{a}}\LxiOp\varphi}^2 \diThreeVolScri \bigg)\nonumber\\
&{}\leq \CInrpWave \sum_{|\mathbf{a}|\leq \ireg}\bigg(
\norm{ r\VOp\generalOps^{\mathbf{a}}\varphi}_{W^{0}_{\alpha-2}(\StauiR)}^2
+\norm{\generalOps^{\mathbf{a}}\varphi}_{W^{1}_{-2}(\StauiR)}^2\nonumber\\
&\qquad\quad+\norm{\generalOps^{\mathbf{a}}\varphi}_{W^{1}_{0}(\DtauRcR)}^2 
  +\sum_{\timefunc\in\{\timefunc_1,\timefunc_2\}} \norm{\generalOps^{\mathbf{a}}\varphi}_{W^{1}_{\alpha}(\StautRcR)}^2 
  +\|\generalOps^{\mathbf{a}}\vartheta\|_{W^{0}_{\alpha-3}(\DtauRc)}^2 \bigg).
\label{eq:GeneralrpSpinWave:ConclusionHighRegGeneralOps}
\end{align}
In the following steps, the bound \eqref{eq:GeneralrpSpinWave:ConclusionHighRegGeneralOps} is proved for an increasingly large sequence of sets of operators until the estimate is proved for $\generalOps=\rescaledOps$, which completes the proof. 

\begin{steps}
\step{$\generalOps=\{M\LxiOp\}$} 
\label{step:rpHigherReg:xiCommutations} 
Since $M\LxiOp$ commutes through the spin-weighted wave equation \eqref{eq:GeneralrpSpinWave:Assumption}, any number of compositions of $M\LxiOp$ can be applied, and the original $r^p$ bound \eqref{eq:GeneralrpSpinWave:Conclusion} will hold with $\varphi$ and $\vartheta$ replaced by $(M\LxiOp)^\ii\varphi$ and $(M\LxiOp)^\ii\vartheta$, which proves the higher-regularity $r^p$ bound \eqref{eq:GeneralrpSpinWave:ConclusionHighRegGeneralOps} with $\generalOps=\{M\LxiOp\}$. 
\end{steps}

\step{$\generalOps=\ScriOps$ with at most one angular derivative}
\label{step:rpHigherReg:oneAngularCommutation} 
If the spin weight is negative, $s<0$, then, commuting the original spin-weighted wave equation in its expanded form \eqref{eq:ExpandSWWE} with $\hedtp$ and using the commutation relation \eqref{eq:commutatorofhedtandhedtprime}, one finds
\begin{align}
\left(
  2 (r^2+a^2) \YOp\VOp
  +{b}_{\VOp,\hedtp} \VOp
  + ({b}_{\phi,\hedtp}+c_{\phi})\LetaOp
  + ({b}_{0,\hedtp}+c_{0})
\right)\hedtp\varphi &\nonumber\\
+\left(
  - 2 \hedt\hedtp
  - f_1(\theta)\LxiOp{}\LxiOp{}
  - f_2(\theta) \LetaOp\LxiOp{}
  - f_3(\theta) \LxiOp{}
\right)\hedtp\varphi &
- \vartheta_{\hedtp} 
= 0 ,
\label{eq:ExpandSWWEcommuterhedt}
\end{align}
where
\begin{align}
{b}_{\VOp,\hedtp}={}& b_{\VOp}, 
\quad {b}_{\phi,\hedtp}={} b_{\phi}, 
\quad {b}_{0,\hedtp}={} b_0- 2(s-1) , \notag\\
\vartheta_{\hedtp}={}&\hedtp\vartheta-\frac{1}{\sqrt{2}}(\hedtp f_1(\theta))\LxiOp{}\LxiOp\varphi -\frac{1}{\sqrt{2}} (\hedtp f_3(\theta))\LxiOp\varphi ,
\label{eq:varthetahedt'}
\end{align}
and $c_\phi$, $c_0$, and the $f_i$ are given in equation \eqref{eq:ExpandSWWE:coefficients}. While in the case of $s\geq 0$, one can commute \eqref{eq:ExpandSWWE} with $\hedt$ and apply the commutation relation \eqref{eq:commutatorofhedtandhedtprime} to find that $\hedt \varphi$ satisfies an equation of the form \eqref{eq:ExpandSWWEcommuterhedt} with  $b_{\VOp,\hedtp}$, $b_{\phi,\hedtp}$, $b_0$ and $\vartheta_{\hedtp}$ replaced by
\begin{align}
{b}_{\VOp,\hedt}={}& b_{\VOp}, 
\quad {b}_{\phi,\hedt}={} b_{\phi}, 
\quad {b}_{0,\hedt}={} b_0 +2s, \notag\\
\vartheta_{\hedt}={}&\hedt\vartheta-\frac{1}{\sqrt{2}}(\hedt f_1(\theta))\LxiOp{}\LxiOp\varphi -\frac{1}{\sqrt{2}} (\hedt f_3(\theta))\LxiOp\varphi,
\label{eq:varthetahedt}
\end{align}
respectively. 

These are in the form of the spin-weighted wave equation \eqref{eq:GeneralrpSpinWave:Assumption} from the $r^p$ lemma \ref{lem:GeneralrpSpinWave}. It is clear that if the first two assumptions in lemma \ref{lem:GeneralrpSpinWave}, on the asymptotics of $b_{\VOp}$ and $b_{\phi}$, held for the original wave equation, then they hold for $b_{\VOp,\hedtp}$ and $b_{\phi,\hedtp}$ or for $b_{\VOp,\hedt}$ and $b_{\phi,\hedt}$ respectively. The scalars $\hedtp \varphi$ and $ \hedt\varphi$ have spin weight $s-1$ and $s+1$ respectively, and their spin weights lie in $\{-3,\cdots,3\}$. Furthermore, the leading-order parts of $b_{0,\hedtp}$ and $b_{0,\hedt}$ satisfy
\begin{align}
b_{0,\hedtp,0}+|s-1| +(s-1)
={}&b_{0,0} -2s+2 =(b_{0,0} +|s| + s) +2|s|+2 ,& \text{for}\ s<0,\notag\\
b_{0,\hedt,0}+\abs{s+1} +(s+1)
={}&b_{0,0}+4s+2 =(b_{0,0} +|s| +s) +2s+2 ,& \text{for}\ s\geq 0 ,
\end{align}
which means that if $b_{0,0} +|s|+s>0$, then $b_{0,\hedtp,0}+|s-1| +(s-1)>0$ and $b_{0,\hedt,0}+\abs{s+1} +(s+1)>0$. In particular, if $b_{0}$ from the original equation \eqref {eq:GeneralrpSpinWave:Assumption} satisfies assumption \eqref{Assumption:b0:Generalrp} from the $r^p$ lemma \ref{lem:GeneralrpSpinWave}, then so do $b_{0,\hedtp}$ from the commuted equation \eqref{eq:ExpandSWWEcommuterhedt} and and $b_{0,\hedt}$ from the analogue for $\hedt\varphi$. Thus, if the original spin-weighted wave equation \eqref {eq:GeneralrpSpinWave:Assumption} satisfies the hypotheses of the $r^p$ lemma \ref{lem:GeneralrpSpinWave}, then so do the $\hedtp$ or $\hedt$ commuted equations. 

Hence by applying the $r^p$ lemma \ref{lem:GeneralrpSpinWave}, the bound \eqref{eq:GeneralrpSpinWave:Conclusion} holds if we replace $\varphi$ and $\vartheta$ by $\hedtp \varphi$ and the sum of $\hedtp\vartheta$ and a $\bigOAnalytic(1)$ weight times at most two compositions of $M\LxiOp$ acting on $\varphi$. The terms involving compositions of $M\LxiOp$ acting on $\varphi$ can be estimated by the higher-regularity $r^p$ bound \eqref{eq:GeneralrpSpinWave:ConclusionHighRegGeneralOps} with $\generalOps=\{M\LxiOp\}$, which proves the higher regularity $r^p$ bound \eqref{eq:GeneralrpSpinWave:ConclusionHighRegGeneralOps} with $\generalOps=\sphereOps$ in the special case where the multiindex $\mathbf{a}$ is restricted so that there is at most one angular derivative and it is either $\hedtp$ if $s<0$ and $\hedt$ if $s\geq0$. 

\step{$\generalOps=\ScriOps$ without restriction on the number of angular derivatives} 
\label{step:rpHigherReg:AngularCommutations} 
Since any $\generalOp\in \{ M^2\LxiOp\LxiOp, M\LxiOp\LetaOp,\LetaOp\LetaOp,\TMESOp_{s}\}$ commutes with the homogeneous part of the wave equation \eqref{eq:GeneralrpSpinWave:Assumption}, the $r^p$ estimate \eqref{eq:GeneralrpSpinWave:ConclusionHighRegGeneralOps} follows trivially if we replace $\varphi$ and $\vartheta$ by $\generalOp\varphi$ and $\generalOp\vartheta$, respectively. In view of the relation \eqref{eq:expandedSssss} between $\TMESOp_{s}$ and $\mathring{S}_s$, the estimate \eqref{eq:GeneralrpSpinWave:ConclusionHighRegGeneralOps} holds if $\generalOps= \{ M^2\LxiOp\LxiOp, M\LxiOp\LetaOp,\LetaOp\LetaOp,\mathring{S}_{s}\}$. 

Consider now the higher-regularity $r^p$ bound \eqref{eq:GeneralrpSpinWave:ConclusionHighRegGeneralOps} with $\generalOps=\ScriOps$. First, consider the case where there is a sum up to an even order $2\ii$ of angular derivatives. By lemma \ref{lem:Ellipticforsecondordersphericalangularop}, the corresponding norms can be replaced by norms involving $\mathring{S}_s^\ii$, and such norms were already controlled in the previous paragraph. Now, consider the case where there is a sum up to an odd order $2\ii+1$ of angular derivatives. By the previous argument, all the terms of order up to $2\ii$ can be replaced by norms defined in terms of $\mathring{S}_s$. Since the lower-order terms are controlled, by equation \eqref{eq:hedt'hedtnormrelation} and the previous argument, the terms involving $2\ii+1$ derivatives can be controlled by terms involving lower-order terms and terms involving $\mathring{S}_s^\ii$ and either $\hedtp$ or $\hedt$ depending on whether $s<0$ or $s\geq0$. Such terms can be controlled by combining the arguments of the previous paragraph and step \ref{step:rpHigherReg:oneAngularCommutation}. 

Note that in step \ref{step:rpHigherReg:oneAngularCommutation} in equations \eqref{eq:varthetahedt'}-\eqref{eq:varthetahedt}, $\vartheta$ was replaced by the sum of one angular derivative acting on $\vartheta$ and a $\bigOAnalytic(1)$ coefficient of at most two compositions of $M\LxiOp$ acting on $\varphi$. Applying compositions of $M\LxiOp$, $\LetaOp$, or $\mathring{S}_s$ of total order $\ireg-1$ to an angular derivative of $\vartheta$ will give terms bounded by $\absHighOrder{\vartheta}{\ireg}{\ScriOps}^2$.  Similarly, applying compositions of $M\LxiOp$, $\LetaOp$, or $\mathring{S}_s$ of total order $\ireg-1$ to at most two compositions of $M\LxiOp$ acting on $\varphi$ will give terms bounded by $\abs{\ScriOps^{\mathbf{a}}\varphi}^2$, in which either $|\mathbf{a}|\leq\ireg-1$ or such that at least one term in $\ScriOps^{\mathbf{a}}$ is a $M\LxiOp$ derivative. In either case, by first proving the $r^p$ bound \eqref{eq:GeneralrpSpinWave:ConclusionHighRegGeneralOps} to order $\ireg$ with $\generalOps=\{M\LxiOp\}$ and then proving the bound with $\generalOps=\ScriOps$ with increasing orders $\ii\leq\ireg$, one finds that all the terms arising of the form $\abs{\ScriOps^{\mathbf{a}}\varphi}^2$ are controlled by earlier bounds.

\step{$\generalOps=\{M\LxiOp,\hedt,\hedtp,r\VOp\}$} 
Commuting the original wave equation \eqref{eq:ExpandSWWE} with $r\VOp$ and using the commutator relation \eqref{eq:CommutatorofYandMathcalV} for $\YOp$ and $\VOp$, one finds that $r\VOp \varphi$ satisfies
\begin{align}
\left(
  2 (r^2+a^2) \YOp\VOp
  +{b}_{\VOp,r\VOp} \VOp
  + ({b}_{\phi,r\VOp}+c_{\phi})\LetaOp
  + ({b}_{0,r\VOp}+c_0)
\right)(r\VOp \varphi) &\nonumber\\
+\left(
  - 2 \hedt\hedtp
  - f_1(\theta)\LxiOp{}\LxiOp{}
  - f_2(\theta) \LetaOp\LxiOp{}
  - f_3(\theta) \LxiOp{}
\right)(r\VOp \varphi) &
- {\vartheta}_{ r\VOp}
= 0 ,
\label{eq:ExpandSWWEcommuterV}
\end{align}
where
\begin{subequations}
\begin{align}
{b}_{\VOp,r\VOp}={}& b_{\VOp}+\frac{2(r^2+a^2)}{r}, 
\quad  {b}_{\phi,r\VOp}={} b_{\phi}-\frac{4ar}{r^2+a^2},
\quad {b}_{0,r\VOp}={} b_0 +1, \\
{\vartheta}_{ r\VOp}
={}&r\VOp\vartheta
-\frac{(r^2-a^2)(\Delta-2Mr)}{r^2+a^2}\YOp \VOp \varphi- \frac{r\Delta}{2(r^2+a^2)}\partial_r (b_{\phi}+c_{\phi})\LetaOp\varphi \notag\\
&-\left(\frac{r\Delta}{2(r^2+a^2)}\partial_r(r^{-1}b_{\VOp})+\frac{4Mr}{r^2+a^2}-\frac{2r^2+a^2}{r^2}\right)(r\VOp\varphi) 
- \frac{r\Delta}{2(r^2+a^2)}\partial_r (b_{0}+c_{0})\varphi
\label{eq:varthetarV}
\end{align}
\end{subequations}
and the $c_\phi$, $c_0$, and $f_i$ are again given in equation \eqref{eq:ExpandSWWE:coefficients}. 
The commuted wave equation \eqref{eq:ExpandSWWEcommuterV} can be rewritten as
\begin{align}
\widehat\squareS_s (r\VOp \varphi)
 +{b}_{\VOp,r\VOp}  \VOp(r\VOp \varphi)
 +{b}_{\phi,r\VOp} \LetaOp (r\VOp \varphi)
 +{b}_{0,r\VOp} (r\VOp \varphi)
={}&
 {\vartheta}_{ r\VOp} .
\label{eq:GeneralrpSpinWave:AssumptionCommuteV}
\end{align}

The $\YOp\VOp$ term in $\vartheta_{r\VOp}$ can be expanded using the spin-weighted wave equation that $\varphi$ is assumed to satisfy. Doing so, one finds that $\vartheta_{r\VOp}$ is the sum of $r\VOp$ applied to $\vartheta$ and a sum of terms given by $\bigOAnalytic(1)$ coefficients multiplied by terms of the form either $\sphereOps^{\mathbf{a}}\varphi$ with $|\mathbf{a}|\leq 2$ or $r\VOp\varphi$. 

Again, this is in the form of equation \eqref{eq:GeneralrpSpinWave:Assumption} from the $r^p$ lemma \ref{lem:GeneralrpSpinWave}, and again, it is clear that the first two assumptions in lemma \ref{lem:GeneralrpSpinWave}, on the asymptotics of $b_{\VOp}$ and $b_{\phi}$, hold for the commuted equation \eqref{eq:GeneralrpSpinWave:AssumptionCommuteV} if they held for the original equation \eqref{eq:GeneralrpSpinWave:Assumption}. The scalar $r\VOp\varphi$ has the same spin as $\varphi$, and the condition on the leading-order coefficient in $b_{0,r\VOp}$ is $0< b_{0,r\VOp,0}+\abs{s}+s$ $=b_{0,0}+\abs{s}+s+1$, so that assumption \eqref{Assumption:b0:Generalrp} from lemma \ref{sec:rpForSpinWeightedWaveEquations} holds for the commuted equation \eqref{eq:GeneralrpSpinWave:AssumptionCommuteV} if $0\leq b_{0,0}+\abs{s}+s$ holds, which was assumption \eqref{Assumption:b0:Generalrp} for the original equation. In particular, if one starts with a spin-weighted wave equation of the form \eqref{eq:GeneralrpSpinWave:Assumption} that satisfies the three hypotheses of the $r^p$ lemma \ref{lem:GeneralrpSpinWave}, then commuting with $r\VOp$ will give a new equation of the same form that also satisfies the three hypotheses. 

Thus, for any multiindex $\mathbf{a}$, when considering $\{M\LxiOp,\hedt,\hedtp,r\VOp\}^{\mathbf{a}}$, there will be some number of operators from $\ScriOps$ and some number of compositions of $r\VOp$. Since if $\varphi$ satisfies the hypotheses of the $r^p$ lemma \ref{lem:GeneralrpSpinWave}, then so does $r\VOp\varphi$, it follows by induction on the order of the composition of $r\VOp$ that each $\generalOps^\mathbf{a}\varphi$ (where $\generalOps=\{M\LxiOp,\hedt,\hedtp,r\VOp\}$) satisfies a spin-weighted wave equation satisfying the three hypotheses of the $r^p$ lemma \ref{lem:GeneralrpSpinWave}. 

It remains to treat the corresponding $\vartheta$ terms. From applying $r\VOp$, there is one term involving $r\VOp\vartheta$ and additional terms of the form either $\ScriOps^\mathbf{a}\varphi$ with $|\mathbf{a}|\leq 2$ or $r\VOp\varphi$. Recall from step \ref{step:rpHigherReg:oneAngularCommutation}, the terms arising from commutation with $\hedtp$ were either the $\hedtp\vartheta$ or $\ScriOps^{\mathbf{a}}\varphi$ with $|\mathbf{a}|\leq2$, and similarly for $\hedt$. Thus, from commuting $\{M\LxiOp,\hedt,\hedtp,r\VOp\}^{\mathbf{a}}$ through the spin-weighted wave equation \eqref{eq:GeneralrpSpinWave:Assumption}, the terms that arise are of either of the form $\{M\LxiOp,\hedt,\hedtp,r\VOp\}^{\mathbf{a}}\vartheta$ or of the form $\{M\LxiOp,\hedt,\hedtp,r\VOp\}^{\mathbf{b}}\varphi$. All such $\{M\LxiOp,\hedt,\hedtp,r\VOp\}^{\mathbf{b}}\varphi$ arise from the additional terms in equation \eqref{eq:varthetarV} from commuting with $r\VOp$, from the additional terms in equation \eqref{eq:varthetahedt} from commuting with $\hedt$, or from the additional terms in equation \eqref{eq:varthetahedt'} from commuting with $\hedtp$. In commuting $\{M\LxiOp,\hedt,\hedtp,r\VOp\}^{\mathbf{a}}$ through the spin-weighted wave equations, the operators can at most once be applied so that they generate terms arising in one of the three equations \eqref{eq:varthetahedt'}, \eqref{eq:varthetahedt}, or \eqref{eq:varthetarV}, with all other factors either being applied to $\varphi$ or to one of the coefficients. If the $\vartheta_{r\VOp}$ equation \eqref{eq:varthetarV} is applied, then either the number of $r\VOp$ terms is reduced or the total order is reduced. If the $\vartheta_{\hedtp}$ or $\vartheta_{\hedt}$ equation \eqref{eq:varthetahedt'} or \eqref{eq:varthetahedt} is applied, then the number of $r\VOp$ terms is unchanged, and either the number of angular derivatives is reduced or the total order is reduced. Thus, by applying a triple induction on total order, within that order of $\sphereOps$ derivatives, and within that order of $M\LxiOp$ derivatives, one obtains that the the $r^p$ estimate \eqref{eq:GeneralrpSpinWave:ConclusionHighRegGeneralOps} holds with $\generalOps=\{M\LxiOp,\hedt,\hedtp,r\VOp\}$. 

\step{$\generalOps=\rescaledOps$}
In the domain of consideration $r\geq\rCutOff-M$, the operator $M\YOp$ can be expanded in terms of $M\LxiOp$, $\hedt$, $\hedtp$, $r\VOp$ and, conversely, the operator $M\LxiOp$ can be expanded in terms of $M\YOp$, $\hedt$, $\hedtp$, $r\VOp$. The coefficients appearing in these expansions are all at most $\bigOAnalytic(1)$, which implies the equivalence of the norms generated by these two sets of operators. To complete the proof, note that, on $\Scri^+$, $r\VOp$ vanishes on conformally regular functions and that $M\YOp=2M\LxiOp$. 
\end{proof}

\subsection{Spin-weighted wave equations in the early region}

The following lemma allows norms on the hyperboloid $\Staut$ to be estimated in terms of norms on the hypersurface $\Stauini$, which extends to spacelike infinity. 

\begin{lemma}[Controlling the early region]
\label{lem:GeneralrpSpinWaveFar}
Under the same assumptions of Lemma \ref{lem:GeneralrpSpinWaveHighReg}, for any $\ireg \in \Naturals$, there are constants $\rCutOffInrpWave=\rCutOffInrpWave(b_0,b_\phi,b_{\VOp})$ and $\CInrpWave=\CInrpWave(b_0,b_\phi,b_{\VOp})$ such that, if $\varphi$ and $\vartheta$ are spin-weighted scalars satisfying \eqref{eq:GeneralrpSpinWave:Assumption}, 
then, for all $\rCutOff\geq\rCutOffInrpWave$, $\alpha\in[\delta,2-\delta]$, and $\timefunc\leq\timefunc_0$, 
\begin{align}
&\norm{ r\VOp\varphi}_{W^{\ireg}_{\alpha-2}(\StautR)}^2
+\norm{\varphi}_{W^{\ireg+1}_{-2}(\StautR)}^2\nonumber\\
&+\norm{\varphi}_{W^{\ireg+1}_{\alpha-3}(\DtautearlyR)}^2
+\norm{M\YOp\varphi}_{W^{\ireg}_{-1-\delta}(\DtautearlyR)}^2\nonumber\\
&+\norm{\varphi}_{F^{\ireg}(\Scritini)}^2 \nonumber\\
&\leq \CInrpWave \bigg(
\norm{\varphi}_{H^{\ireg+1}_{\alpha-1}(\Stauini)}^2
+\norm{\varphi}_{W^{\ireg+1}_{0}(\DtautearlyRcR)}^2
+\norm{\varphi}_{W^{\ireg+1}_{\alpha}(\StautRcR)}^2  
+\norm{\vartheta}_{W^{\ireg}_{\alpha-3}(\DtautearlyRc)}^2 \bigg).
\label{eq:GeneralrpSpinWave:ConclusionIntegratedPast}
\end{align}
\end{lemma}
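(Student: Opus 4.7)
The plan is to adapt the multiplier argument used in the proofs of lemmas \ref{lem:GeneralrpSpinWave} and \ref{lem:GeneralrpSpinWaveHighReg} by replacing the domain of integration $\DtauR$ with $\DtautearlyR$, whose past boundary is $\Stauini$ (asymptotic to spacelike infinity) rather than the hyperboloid $\StauiR$. Concretely, I would multiply the spin-weighted wave equation \eqref{eq:GeneralrpSpinWave:Assumption} by the multiplier $\chi^2 M^{1-\alpha} r^\alpha (\VOp \bar\varphi) +\chi^2 M(1+ M^\delta r^{-\delta})(\YOp\bar\varphi)$ weighted by $M^2/(r^2+a^2)$, take the real part, and integrate over $\DtautearlyR$. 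The bulk principal and error identities of step \ref{step:rp:treatPrincipalBulk} and step \ref{step:rp:treatRemainingBulk}, the favourable contributions to $\norm{\varphi}_{W^{1}_{\alpha-3}(\DtautearlyR)}^2+\norm{M\YOp\varphi}_{W^{0}_{-1-\delta}(\DtautearlyR)}^2$, the flux on the future slice $\StautR$, and the cut-off error supported on $\{\rCutOff-M\leq r\leq\rCutOff\}$ are all identical to their counterparts in lemma \ref{lem:GeneralrpSpinWave}. The only novelty lies in computing the multiplier flux on the new past boundary $\Stauini$ and on the portion $\Scritini$ of $\Scri^+$ that now lies in $\partial\DtautearlyR$.

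For the flux on $\Stauini$, since $\tHor = v - h/2$ the conormal is $\Normal = \di\tHor$, which satisfies $\Normal_a\YOp^a = h'(r)/2$ and $\Normal_a\VOp^a = 1 - \tfrac{\Delta h'(r)}{4(r^2+a^2)}$; both are bounded and, for large $r$, bounded away from zero (using lemma \ref{lem:h-prop}). Evaluating the momenta $\momentumAV^a, \momentumAY^a, \momentumEV^a, \momentumEY^a, \momentumDV^a, \momentumFV^a, \momentumFY^a, \momentumGV^a, \momentumGY^a$ from the proof of lemma \ref{lem:GeneralrpSpinWave} against $\Normal$ produces a flux density on $\Stauini$ that is pointwise dominated, after Cauchy--Schwarz absorption of the mixed inner products in the $I_6,I_7$ terms (whose coefficients $f_1(\theta),f_2(\theta)$ are bounded by $M^2, M$) and expansion of $\LxiOp$ through \eqref{eq:LxiToVOpYOpLeta}, by a constant times
\[
\chi^2 r^{\alpha} |\VOp\varphi|^2
 + \chi^2|\YOp\varphi|^2
 + \chi^2 r^{\alpha-2}(|\hedt\varphi|^2+|\hedt'\varphi|^2)
 + \chi^2 r^{\alpha-2}|\varphi|^2.
\]
Integration against $\diThreeVol$ and comparison with \eqref{eq:ininormdef} gives a bound by $\norm{\varphi}_{H^{1}_{\alpha-1}(\Stauini)}^2\lesssim\norm{\varphi}_{H^{\ireg+1}_{\alpha-1}(\Stauini)}^2$. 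The $\Scritini$ flux is non-negative and contributes the desired $\norm{\varphi}_{F^0(\Scritini)}^2$, exactly as in step \ref{step:rp:ScriFlux}.

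For the higher-regularity conclusion I would repeat the commutation scheme of the proof of lemma \ref{lem:GeneralrpSpinWaveHighReg}: first commute with $M\LxiOp$, then with $\hedt$ and $\hedt'$ (producing nonprincipal source terms bounded via \eqref{eq:varthetahedt'}--\eqref{eq:varthetahedt} and absorbed inductively), and finally with $r\VOp$ via \eqref{eq:ExpandSWWEcommuterV}. Each commuted scalar satisfies a wave equation of the form \eqref{eq:GeneralrpSpinWave:Assumption} whose coefficients still obey the three hypotheses of lemma \ref{lem:GeneralrpSpinWave}, so the previous paragraph applies to each. The initial flux matching is again straightforward: expanding any $\rescaledOps^{\mathbf{a}}$-derivative into $\unrescaledOps$-derivatives with powers of $r$ via Leibniz, and using the weight $r^{\alpha+2|\mathbf{a}|-2}$ in \eqref{eq:ininormdef}, yields $\norm{\rescaledOps^\mathbf{a}\varphi}_{\text{flux}}^2\lesssim \inienergy{\ireg+1}{\alpha-1}(\varphi)$ for every $|\mathbf{a}|\leq \ireg+1$.

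The main obstacle will be the flux identification on $\Stauini$. Unlike on $\Staui$, where the asymptotic properties \eqref{eq:Yt-rp}--\eqref{eq:CInHyperboloids-rp} of $\di\timefunc_a Y^a$ and $\di\timefunc_a V^a$ enabled the $\CInHyperboloids$-largeness absorption of the problematic $I_{6,\YOp},I_{7,\YOp}$ contributions in step \ref{step:rp:TreatTheEnergyOnHyperboloids}, on $\Stauini$ both $\Normal_aY^a$ and $\Normal_aV^a$ are $O(1)$ at infinity, so that smallness coming from $\CInHyperboloids^{-1/2}$ is no longer available. Fortunately the $H^{\ireg+1}_{\alpha-1}$ initial norm carries the weight $r^{\alpha}$ on $|\VOp\varphi|^2$ (and, via \eqref{eq:LxiToVOpYOpLeta}, on $|\LxiOp\varphi|^2$), which strictly dominates the weights of every error term generated by $\momentumFY^a$ and $\momentumGY^a$; consequently Cauchy--Schwarz alone closes the estimate on $\Stauini$ without any appeal to the largeness of $\CInHyperboloids$.
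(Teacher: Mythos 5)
Your proposal is correct and follows essentially the same route as the paper: it reuses the multiplier and commutation machinery of lemmas \ref{lem:GeneralrpSpinWave} and \ref{lem:GeneralrpSpinWaveHighReg} verbatim, the only modification being the flux on the new past boundary $\Stauini$, where $\Normal_a\VOp^a\sim 1\sim\Normal_a\YOp^a\sim\Normal_a\xi^a$ and a Cauchy--Schwarz upper bound by the density $r^{\alpha}\abs{\VOp\varphi}^2+\abs{\YOp\varphi}^2+r^{\alpha-2}(\abs{\hedt'\varphi}^2+\abs{\varphi}^2)$ yields control by $\norm{\varphi}_{H^{\ireg+1}_{\alpha-1}(\Stauini)}^2$, exactly as in the paper. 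Your closing remark that the $\CInHyperboloids$-largeness absorption is not needed on the past slice (only an upper bound is required there, the absorption being reserved for the future hyperboloid, which is unchanged) is also consistent with how the paper treats this point.
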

\begin{proof}
Throughout this proof, $\lesssim$ is used to mean $\lesC{b_0,b_\phi,b_{\VOp}}$, and we use mass normalization as in definition \ref{def:massnormalization}. The method for increasing the regularity that appeared in the proof of the higher regularity $r^p$ lemma \ref{lem:GeneralrpSpinWaveHighReg} applies in exactly the same way. Thus, it is sufficient to modify the proof of the original $r^p$ lemma \ref{lem:GeneralrpSpinWave}. The only change that must be made is in step \ref{step:rp:TreatTheEnergyOnHyperboloids}, where the energy on the $\StauiRc$ must be replaced by an energy on $\Stauini$. The energy densities $\energyComponentiX$ can be estimated following the same ideas appearing in the step \ref{step:rp:TreatTheEnergyOnHyperboloids} of the proof of the $r^p$ lemma \ref{lem:GeneralrpSpinWave}. The major change is that on the Cauchy slice $\Normal_a \VOp^a\sim 1$ instead of $M^2r^{-2}$. It remains the case that $\Normal_a\xi^a$ $\sim1$ $\sim\di\Normal_a\YOp^a$. Thus, one finds
\begin{align}
&\hspace{-4ex}\left|\int_{\Stauini} \sum_{i=1}^{9}\sum_{X\in\{\VOp,\YOp\}} \Normal_a \momentumiX^a \LerayForm{\Normal} \right|\nonumber\\
\lesssim{}& \int_{\Stauini} \bigg(
  M^{-\alpha}r^\alpha \abs{\VOp\varphi}^2 
  +\abs{\YOp\varphi}^2 
  +M^{-\alpha+2}r^{\alpha-2}\abs{\hedtp\varphi}^2 
  +M^{-\alpha+2}r^{\alpha-2} \abs{\varphi}^2 
\bigg)\diThreeVol \nonumber\\
\lesssim{}& \int_{\Stauini} 
\sum_{|\mathbf{a}|\leq 1} 
M^{-\alpha+2}r^{\alpha-2+2|\mathbf{a}|}\abs{\unrescaledOps^{\mathbf{a}}\varphi}^2
\diThreeVol .
\end{align}
The stated result now follows from the fact that, for any $\ireg$, 
\begin{align}
\int_{\Stauini} 
\sum_{|\mathbf{a}|\leq 1} 
\sum_{|\mathbf{b}|\leq\ireg}
M^{-\alpha+2}r^{\alpha-2+2|\mathbf{a}|}\abs{\unrescaledOps^{\mathbf{a}}\rescaledOps^{\mathbf{b}}\varphi}^2
\diThreeVol
\lesssim
\int_{\Stauini} 
\sum_{|\mathbf{a}|\leq \ireg+1} 
M^{-\alpha+2}r^{\alpha-2+2|\mathbf{a}|}\abs{\unrescaledOps^{\mathbf{a}}\varphi}^2
\diThreeVol ,
\end{align}
which completes the proof. 
\end{proof}

\section{The spin-weight \texorpdfstring{$-2$}{-2} Teukolsky equation}
\label{sec:spin-2TeukolskyEstimates}
In this section, we consider the field $\psibase[-2]$ of spin-weight $-2$ that solves the Teukolsky equation \eqref{eq:TeukolskyRegular-2}.

\subsection{Extended system}
\label{sec:ExtendedSystem}
This section introduces a collection $\{\psibase[-2]\}_{i=0}^4$ of conformally regular derivatives of $\psibase[-2]$, a collection of rescalings $\{\raddegphi{i}\}_{i=0}^4$ that are (depending on the index) divergent or vanishing at the horizon, shows that the $\raddegphi{i}$ satisfy a system of wave equations, and finally shows that the $W^{\ireg}_{\alpha}(\StauR)$ norms of the $\psibase[-2][i]$ and $\raddegphi{i}$ are equivalent for sufficiently large $\rCutOff$. 

\begin{definition}
\label{def:psi(i)-2}
Let $\psibase[-2]$ be a scalar of spin-weight $-2$. Define
\index{P3psibase-2i@$\psibase[-2][i]$}
\begin{align}
\psibase[-2][i]={}&\left ( \frac{a^2 + r^2}{M}\VOp \right )^i \psibase[-2], 
\quad \quad \quad 0\leq i\leq 4 . 
\label{eq:psi(i)-2-def}
\end{align}
\end{definition}

\begin{definition}
\label{def:raddegphi}
\index{P3phiCurlySpinMinus2Rad@$\raddegphi{i}$}
\standardMinusTwoHypothesisDefinePsibasei. 
Define
\begin{subequations}
\label{eq:def:raddegphi}
\begin{align}
\raddegphi{0}
={}& \Big(\frac{\Delta}{r^2 + a^2 }\Big)^{2} \psibase[-2][0] ,\\
\raddegphi{i}
={}& \frac{2}{M}\frac{(r^2+a^2)^2}{\Delta} \VOp\raddegphi{i-1}
&1\leq i\leq 4.
\end{align}
\end{subequations}
\end{definition}

\begin{remark}
Compared to the quantity introduced by Ma in \cite[Appendix A]{SiyuanMathesis}, which we denote here by $\hat\phi^{0,\textrm{Ma}}_{-2}$, we have $\raddegphi{0}=\sqrt{r^2+a^2}\bar{\kappa}_{1'}{}^2\kappa_{1}{}^{-2}\hat\phi^{0,\textrm{Ma}}_{-2}$  where the first factor $\sqrt{r^2+a^2}$ is to make the quantity nondegenerate at future null infinity and the other factor $\bar{\kappa}_{1'}{}^2\kappa_{1}{}^{-2}$ corresponds to a spin rotation of the frame.
\end{remark}

\begin{lemma} 
If $\psibase[-2]$ is a solution to \eqref{eq:TeukolskyRegular-2}, then the variables $\raddegphi{0},\dots,\raddegphi{4}$ satisfy the system
\begin{align}
\label{eq:rescaledwavesys5eqs}
\widehat\squareS_{-2}\left(\begin{array}{c}
\raddegphi{0}\\
\raddegphi{1}\\
\raddegphi{2}\\
\raddegphi{3}\\
\raddegphi{4}
\end{array}\right)={}&\mathbf{A}\left(\begin{array}{c}
\raddegphi{0}\\
\raddegphi{1}\\
\raddegphi{2}\\
\raddegphi{3}\\
\raddegphi{4}
\end{array}\right)+\mathbf{B}\LetaOp{}\left(\begin{array}{c}
\raddegphi{0}\\
\raddegphi{1}\\
\raddegphi{2}\\
\raddegphi{3}\\
\raddegphi{4}
\end{array}\right)
+\mathbf{C} \VOp\left(\begin{array}{c}
\raddegphi{0}\\
\raddegphi{1}\\
\raddegphi{2}\\
\raddegphi{3}\\
\raddegphi{4}
\end{array}\right),
\end{align}
with
\begin{subequations}
\begin{align}
\mathbf{A}={}&\left(\begin{array}{cc}
- \frac{4 r (M + r)}{a^2 + r^2} & \frac{4 M (M a^2 + a^2 r - 3 M r^2 + r^3)}{(a^2 + r^2)^2}\\
- \frac{6 r (a^4 + 3 M a^2 r + a^2 r^2 -  M r^3)}{M (a^2 + r^2)^2} & \frac{2 (a^4 - 12 M a^2 r - 2 a^2 r^2 + 4 M r^3 - 3 r^4)}{(a^2 + r^2)^2}\\
- \frac{6 a^2 (a^4 + 6 M a^2 r - 10 M r^3 -  r^4)}{M^2 (a^2 + r^2)^2} & - \frac{20 a^2 (M a^2 + a^2 r - 3 M r^2 + r^3)}{M (a^2 + r^2)^2}\\
- \frac{12 a^2 (3 M a^4 - 2 a^4 r - 24 M a^2 r^2 - 2 a^2 r^3 + 5 M r^4)}{M^3 (a^2 + r^2)^2} & \frac{2 a^2 (-13 a^4 + 82 M a^2 r - 30 M r^3 + 13 r^4)}{M^2 (a^2 + r^2)^2}\\
\frac{24 a^4 (a^4 + 30 M a^2 r - 34 M r^3 -  r^4)}{M^4 (a^2 + r^2)^2} & \frac{128 a^4 (M a^2 + a^2 r - 3 M r^2 + r^3)}{M^3 (a^2 + r^2)^2}
\end{array}\right.\nonumber\\
{}&\hspace{-6ex}\left.\begin{array}{ccc}
0 & 0 & 0\\
\frac{2 M (M a^2 + a^2 r - 3 M r^2 + r^3)}{(a^2 + r^2)^2} & 0 & 0\\
\frac{2 (a^4 - 12 M a^2 r - 2 a^2 r^2 + 4 M r^3 - 3 r^4)}{(a^2 + r^2)^2} & 0 & 0\\
- \frac{2 (20 M a^4 + 17 a^4 r - 69 M a^2 r^2 + 17 a^2 r^3 + 3 M r^4)}{M (a^2 + r^2)^2} & - \frac{4 r (M + r)}{a^2 + r^2} & 0\\
\frac{60 a^2 (- a^4 + 10 M a^2 r - 6 M r^3 + r^4)}{M^2 (a^2 + r^2)^2} & - \frac{40 a^2 (M a^2 + a^2 r - 3 M r^2 + r^3)}{M (a^2 + r^2)^2} & - \frac{4 (a^4 - 9 M a^2 r + a^2 r^2 + 7 M r^3)}{(a^2 + r^2)^2}
\end{array}\right),\\
\mathbf{B}={}&- \frac{2 a}{M^3 (a^2 + r^2)}\left(\begin{array}{ccccc}
4 M^3 r & 0 & 0 & 0 & 0\\
3 M^2 (a^2 -  r^2) & 2 M^3 r & 0 & 0 & 0\\
-12 M a^2 r & 4 M^2 (a^2 -  r^2) & 0 & 0 & 0\\
-12 a^2 (a^2 -  r^2) & -28 M a^2 r & 3 M^2 (a^2 -  r^2) & -2 M^3 r & 0\\
\frac{48 a^4 r}{M} & -40 a^2 (a^2 -  r^2) & -40 M a^2 r & 0 & -4 M^3 r
\end{array}\right),\\
\mathbf{C}={}&- \frac{4 (M a^2 + a^2 r - 3 M r^2 + r^3)}{\Delta}\left(\begin{array}{ccccc}
0 & 0 & 0 & 0 & 0\\
0 & 0 & 0 & 0 & 0\\
0 & 0 & 0 & 0 & 0\\
0 & 0 & 0 & 1 & 0\\
0 & 0 & 0 & 0 & 2
\end{array}\right).
\end{align}
\end{subequations}
\end{lemma}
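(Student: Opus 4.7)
The proof is a direct computation, organized by induction on $i$ and resting on the commutator identities from Section~\ref{sec:operators}. The plan is as follows. First, I would rewrite the Teukolsky equation \eqref{eq:TeukolskyRegular-2} for $\psibase[-2]$ as $\widehat\squareS_{-2}\psibase[-2]=\mathcal{F}_0$, where $\mathcal{F}_0$ involves only $V\psibase[-2]$, $Y\psibase[-2]$, $\LetaOp\psibase[-2]$, and $\psibase[-2]$ with coefficients rational in $r$. Since $\TMESOp_{-2}$ is built from $\hedt,\hedt',\LxiOp,\LetaOp$, multiplication by any function of $r$ commutes through $\TMESOp_{-2}$, so computing $\widehat\squareS_{-2}\raddegphi{0}$ reduces to computing $[\hROp_{-2},(\Delta/(a^2+r^2))^2]$ using the explicit form in Definition~\ref{def:TMEOperators} together with the Leibniz rule and the $[Y,V]$ identity \eqref{eq:CommutatorofYandMathcalV}. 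This yields the first row of the system \eqref{eq:rescaledwavesys5eqs}.

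Next, for $1\leq i\leq 4$, I would proceed by induction using the recursion $\raddegphi{i}=(2(a^2+r^2)^2/(M\Delta))\,V\raddegphi{i-1}$. Applying $\widehat\squareS_{-2}$ and using that $V$ commutes with $\hedt,\hedt',\LxiOp$ by \eqref{eq:CommutAnguDeri}, the task reduces to computing $[\hROp_{-2},V]$ together with $[\hROp_{-2},\,(a^2+r^2)^2/\Delta]$. Since $\hROp_{-2}=2(a^2+r^2)YV+\text{first-order}$, the first commutator is explicit via \eqref{eq:CommutatorofYandMathcalV} and produces terms proportional to $V\LetaOp\raddegphi{i-1}$ and $VY\raddegphi{i-1}$; the first contributes to the $\LetaOp$-column of $\mathbf{B}$, and the second is rewritten by applying the wave equation for $\raddegphi{i-1}$ supplied by the inductive hypothesis, feeding back into $\raddegphi{j}$ for $j\leq i$.

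After collecting terms and using $V\raddegphi{i-1}=(M\Delta/(2(a^2+r^2)^2))\,\raddegphi{i}$ to eliminate intermediate $V$-derivatives in favour of $\raddegphi{i}$, the right-hand side of $\widehat\squareS_{-2}\raddegphi{i}$ arranges into a linear combination of the $\raddegphi{j}$, $\LetaOp\raddegphi{j}$, and $V\raddegphi{j}$ with rational coefficients in $r$, $a$, $M$. The band structure of $\mathbf{A}$ (supported only in the first three columns for $i\leq 2$, and in the first four for $i=3,4$), the lower-triangular support of $\mathbf{B}$, and the fact that $\mathbf{C}$ vanishes except in its last two rows all reflect the coupling pattern forced by the induction together with the cancellation of horizon singularities arranged by the $\Delta/(a^2+r^2)$ rescalings in Definition~\ref{def:raddegphi}.

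The main obstacle is purely bookkeeping: the matrix entries are complicated rational functions of $r$, $a$, $M$, and carrying out the cancellations by hand is error-prone. As indicated in Section~\ref{sec:notandconv}, the authors employ computer algebra tools for the GHP formalism, so this reduces to mechanical verification rather than a conceptual difficulty.
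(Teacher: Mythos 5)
Your proposal is correct and follows essentially the same route as the paper: the paper obtains the first row by observing that the $(\Delta/(r^2+a^2))^2$ rescaling eliminates the $\YOp$ terms of \eqref{eq:TeukolskyRegular-2}, and then generates the remaining rows by repeated application of a single commutator identity for $\widehat\squareS_{s}$ acting on $\tfrac{(a^2+r^2)^2}{\Delta}\VOp\varphi$, which is precisely the combination of $[\hROp_{-2},\VOp]$, the commutator with the rescaling factor, and the $[\YOp,\VOp]$ identity \eqref{eq:CommutatorofYandMathcalV} (with the inductive substitution of the previous row and the computer-algebra bookkeeping) that you describe. The only cosmetic inaccuracy is your remark on the support of $\mathbf{A}$ (its last row also has a nonzero fifth-column entry), which is immaterial since the matrices are produced explicitly by the computation.
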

\begin{proof}
The rescaling in the variable $\raddegphi{0}$ eliminates the $\YOp$ terms of \eqref{eq:TeukolskyRegular-2} to yield the first row of the system.
Repeated application of the commutator
\begin{align}
\widehat\squareS_s\Bigl(\frac{(a^2 + r^2)^2}{\Delta}\VOp\varphi\Bigr)={}&\frac{(a^2 + r^2)^2}{\Delta}\VOp\widehat\squareS_s(\varphi)
 + \frac{4 a r}{a^2 + r^2}\LetaOp\bigl(\frac{(a^2 + r^2)^2}{\Delta}\VOp\varphi\bigr)\nonumber\\
& -  \frac{4 (M a^2 + a^2 r - 3 M r^2 + r^3)}{\Delta}\VOp\bigl(\frac{(a^2 + r^2)^2}{\Delta}\VOp\varphi\bigr)\nonumber\\
& + \frac{a (a -  r) (a + r)}{a^2 + r^2}\LetaOp\varphi
 -  \frac{2 (a^4 - 10 M a^2 r + 6 M r^3 -  r^4)}{\Delta}\VOp\varphi\nonumber\\
& + \frac{(2 M a^4 + a^4 r - 9 M a^2 r^2 + a^2 r^3 + M r^4) \varphi}{(a^2 + r^2)^2}
\end{align}
gives the remaining rows.
\end{proof}

\begin{lemma}
\label{lem:equiavelenceOfradphiAndraddegphi}
\standardMinusTwoHypothesisDefinePsibasei. 
Let $\{\raddegphi{i}\}_{i=0}^4$ be as in definition \ref{def:raddegphi}. Let $\ireg\in\Naturals$, $\beta\in\Reals$, $\rCutOff \geq 10M$ and $0\leq i\leq 4$. We have \begin{align}
\sum_{\iPigeonReg=0}^i \norm{\psibase[-2][\iPigeonReg]}_{W^{\ireg}_{\beta}(\StautR)}
\simC{}{}& \sum_{\iPigeonReg=0}^i \norm{\raddegphi{\iPigeonReg}}_{W^{\ireg}_{\beta}(\StautR)} .
\label{eq:radpsiAndraddegphiEquivalence}
\end{align}
Furthermore, for $\alpha\in[0,2]$, 
\begin{align}
\sum_{\iPigeonReg=0}^i &\left(
  \norm{r\VOp\radpsi{\iPigeonReg}}_{W^{\ireg-1}_{\alpha-2}(\StautR)}
  +\norm{\radpsi{\iPigeonReg}}_{W^{\ireg}_{-2}(\StautR)}
\right)\nonumber\\
&{}\simC{} \sum_{\iPigeonReg=0}^i \left(
  \norm{r\VOp\raddegphi{\iPigeonReg}}_{W^{\ireg-1}_{\alpha-2}(\StautR)}
  +\norm{\raddegphi{\iPigeonReg}}_{W^{\ireg}_{-2}(\StautR)}
\right) .
\label{eq:radpsiAndraddegphiEquivalenceWithV}
\end{align}
\end{lemma}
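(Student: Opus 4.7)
The plan is to exhibit an explicit triangular change of variables between the families $\{\psibase[-2][j]\}_{j=0}^{i}$ and $\{\raddegphi{j}\}_{j=0}^{i}$, whose coefficients are smooth functions of $r$ alone, uniformly bounded above (and, on the diagonal, below by positive constants) on $\{r\geq 10M\}$ together with all their $\rescaledOps$-derivatives, and then to pass the equivalence through the Leibniz rule. Write $f(r) = \Delta/(r^2+a^2)$; since $r_+\leq 2M$ in the subextreme range, one has $\Delta>0$ on $\{r\geq 10M\}$ and $f(r)\to 1$ as $r\to\infty$, so $f$ is bounded between two positive constants on this exterior region, with $r^k\partial_r^k f$ uniformly bounded for every $k$.

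First I would prove by induction on $i\in\{0,1,2,3,4\}$ an expansion
\[
\raddegphi{i} = \sum_{j=0}^{i} c_{ij}(r)\,\psibase[-2][j],
\]
with smooth coefficients $c_{ij}$ such that $c_{ii}$ is bounded above and below by positive constants and each $r^k\partial_r^k c_{ij}$ is uniformly bounded on $\{r\geq 10M\}$. The base case is $c_{00} = f^{2}$. For the inductive step I would use the defining recursion $\raddegphi{i} = (2(r^{2}+a^{2})^{2}/(M\Delta))\VOp\raddegphi{i-1}$, the identity $\VOp\psibase[-2][j] = (M/(r^{2}+a^{2}))\psibase[-2][j+1]$, and the fact that $\VOp c(r) = (\Delta/(2(r^{2}+a^{2})))c'(r)$ is again smooth and uniformly bounded with bounded derivatives whenever $c$ is. A direct computation yields the recursion $c_{ii}=(2/f)\,c_{i-1,i-1}$, which retains the required boundedness. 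Inverting the triangular system produces the reciprocal expansion $\psibase[-2][i] = \sum_{j=0}^{i} d_{ij}(r)\,\raddegphi{j}$ with $d_{ij}$ sharing the same regularity and boundedness properties.

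Next I would translate these expansions into the norm equivalence. Since each $c_{ij}$ depends on $r$ only, multiplication by $c_{ij}$ commutes with $\hedt$ and $\hedt'$, while $M\YOp c_{ij}$ and $r\VOp c_{ij}$ are smooth and uniformly bounded on $\{r\geq 10M\}$ together with all their $\rescaledOps$-derivatives. Iterating the Leibniz rule, one obtains the pointwise bound
\[
|\rescaledOps^{\mathbf{a}}(c_{ij}(r)\varphi)|^{2} \;\lesssim\; \sum_{|\mathbf{b}|\leq|\mathbf{a}|} |\rescaledOps^{\mathbf{b}}\varphi|^{2}
\]
uniformly on $\{r\geq 10M\}$. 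Integrating against any $r^{\beta}$-weight and summing over $\iPigeonReg\leq i$ yields both directions of \eqref{eq:radpsiAndraddegphiEquivalence}. For \eqref{eq:radpsiAndraddegphiEquivalenceWithV} I would apply the same argument after one extra $r\VOp$, expanding $r\VOp(c_{ij}(r)\varphi) = c_{ij}(r)\cdot r\VOp\varphi + (r\VOp c_{ij})(r)\,\varphi$: the first summand contributes to the $\norm{r\VOp\radpsi{\cdot}}_{W^{\ireg-1}_{\alpha-2}}$-piece, while the lower-order correction $(r\VOp c_{ij})\,\varphi$ is absorbed by the $\norm{\radpsi{\cdot}}_{W^{\ireg}_{-2}}$-piece after using the relation $\psibase[-2][j+1] = ((r^{2}+a^{2})/M)\VOp\psibase[-2][j]$ to trade any stray factor $\psibase[-2][j+1]$ for $(r^{2}+a^{2})/(Mr)\cdot r\VOp\psibase[-2][j]$ and noting that for $\alpha\in[0,2]$ the resulting $r$-weights are dominated by $r^{-2}$ on $\{r\geq 10M\}$.

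The main obstacle is book-keeping rather than analysis: one must verify the recursion defining the coefficients $c_{ij}$ (and its inverse $d_{ij}$) and check that the diagonal coefficients $c_{ii}=(2/f)\,c_{i-1,i-1}$ remain nondegenerate throughout all five levels. All of this reduces to the elementary fact that $f=\Delta/(r^{2}+a^{2})$ is uniformly bounded between positive constants on $\{r\geq 10M\}$ in the subextreme range, which keeps both the diagonal entries and their inverses uniformly controlled.
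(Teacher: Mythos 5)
Your proposal is correct and takes essentially the same route as the paper's proof: a triangular change of variables between $\{\psibase[-2][j]\}_{j\le i}$ and $\{\raddegphi{j}\}_{j\le i}$ with coefficients that are bounded (and, on the diagonal, nondegenerate) rational functions analytic in $1/r$ on $r\ge\rCutOff$, followed by the Leibniz rule, with the lower-order correction $(r\VOp c_{ij})\varphi$ absorbed into the $r^{-2}$-weighted norms since $(\alpha-2)-2\le -2$. The only point to record more carefully is that at the endpoint $\alpha=2$ the absorption needs the decay $r\VOp c_{ij}=\bigOAnalytic(r^{-1})$ rather than mere boundedness of $r\VOp c_{ij}$; your coefficients do satisfy this because they are analytic in $1/r$, which is precisely the paper's observation that $\VOp$ applied to such a prefactor is $\bigOAnalytic(r^{-2})$.
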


\begin{proof}
The first step is to prove that, given $\rCutOff\geq10M$ sufficiently large, each $\raddegphi{i}$ is a linear combination of the $\radpsi{\iPigeonReg}$ with $0\leq \iPigeonReg\leq i$ and coefficients that are analytic in $\rInv$ and vice versa. Let $\hatVOp=M^{-1}(r^2+a^2)\VOp$ and extend this analytically through $\rInv=0$. First, observe that $\Delta^2(r^2+a^2)^{-1}$ and its inverse are analytic in $\rInv$ on intervals corresponding to $r\geq\rCutOff$ and $\rInv$ not excessively negative. Second, observe that $\raddegphi{0}=\Delta^2(r^2+a^2)^{-2}\psibase[-2]$. Third, observe that $\psibase[-2][i]=\hatVOp\radpsi{i-1}$ and $\raddegphi{i}=2(\Delta(r^2+a^2)^{-1})^{-1}\hatVOp\raddegphi{i-1}$ for $1\leq i\leq 4$. Fourth, observe that if the operator $\hatVOp$ is applied to any function that is analytic in $\rInv$ on an interval extending through $\rInv=0$, then the result is also analytic in $\rInv$ on the same interval. The claim holds for $i=0$ from the first two observations. From the third and fourth observations and induction, the claim follows for $1\leq i\leq 4$.

Since the $\psibase[-2][i]$ and $\raddegphi{i}$ are linear combinations of each other with bounded coefficients (for $r\geq\rCutOff$, which prevents the divergence or vanishing of powers of $\Delta(r^2+a^2)^{-1}$), it follows that, for any $\alpha$,
\begin{align}
\sum_{\iPigeonReg=0}^i \norm{\radpsi{\iPigeonReg}}_{W^{0}_{\alpha}(\StautR)}
\sim{}& \sum_{\iPigeonReg=0}^i \norm{\raddegphi{\iPigeonReg}}_{W^{0}_{\alpha}(\StautR)} .
\end{align}
Since, for any $\alpha\in\Reals$, the operators $r\VOp$, $\YOp$, $\hedt$, and $\hedtp$ take $r^\alpha\bigOAnalytic(1)$ functions to $r^\alpha\bigOAnalytic(1)$ functions, the same estimate remains true when increasing the level of regularity from $0$ to $\ireg$. This proves estimate \eqref{eq:radpsiAndraddegphiEquivalence}.

Now consider estimate \eqref{eq:radpsiAndraddegphiEquivalenceWithV}. From estimate \eqref{eq:radpsiAndraddegphiEquivalence} with $\beta=-2$, there is the equivalence of $\sum_{\iip=0}^\ii\norm{\radpsi{\iip}}_{W^{\ireg}_{-2}(\StautR)}^2$ and $ \sum_{\iip=0}^\ii\norm{\raddegphi{\iip}}_{W^{\ireg}_{-2}(\StautR)}^2$. Observe that if a prefactor $f$ is analytic in $\rInv=r^{-1}$, then its $\VOp$ derivative is $\bigOAnalytic(r^{-2})$ and $r\VOp f$ is $\bigOAnalytic(r^{-1})$. Thus, when considering $r\VOp\sum_{\iip=0}^\ii\radpsi{\iip}$ and $r\VOp\sum_{\iip=0}^\ii\raddegphi{\iip}$ the difference is bounded by a linear combination of the $\radpsi{\iip}$ or of the $\raddegphi{\iip}$ each with coefficients decaying like $r^{-1}$. Since $(\alpha-2)-2\leq -2$, the lower-order terms arising from comparing $\sum_{\iip=0}^\ii\norm{r\VOp\radpsi{\iip}}_{W^{0}_{\alpha-2}(\StautR)}$ and $\sum_{\iip=0}^\ii\norm{r\VOp\raddegphi{\iip}}_{W^{0}_{\alpha-2}(\StautR)}$ are dominated by $\sum_{\iip=0}^\ii \norm{\raddegphi{\iip}}_{W^{0}_{-2}(\StautR)}\sim\sum_{\iip=0}^\ii \norm{\radpsi{\iip}}_{W^{0}_{-2}(\StautR)}$. The same holds after commuting with $r\VOp$, $\YOp$, $\hedt$, and $\hedtp$, which completes the proof.
\end{proof}

\subsection{Basic energy and Morawetz (BEAM) condition}
\label{sec:Assumptions}

\begin{definition}
\label{def:basichighorderenergy}
Let $\Sigma$ be a smooth, achronal hypersurface. Let $\Normal$ be a local map from $\Sigma$ to $T\mathcal{M}$ such that $\Normal$ is always normal to $\Sigma$. Let $\varphi$ be a spin-weighted scalar field. Define 
\index{E2Energy@$E^k_\Sigma$}
\index{D1di3Leray@$\LerayForm{\Normal}$}
\begin{align} \label{eq:E0def}
E^1_\Sigma(\varphi)
={}& M \int_{\Sigma}\left(
  (\Normal_aY^a)\abs{ \VOp\varphi}^2
  +(\Normal_a V^a)\abs{\YOp\varphi}^2
  +(\Normal_a(V^a+Y^a))r^{-2}(\abs{\hedt\varphi}^2+\abs{\hedtp\varphi}^2)
\right)\LerayForm{\Normal} ,
\end{align}
where $\LerayForm{\Normal}$ denotes a Leray form as in definition \ref{def:volumeForms}. 
Further, for $\ireg \in \Integers^+$, let
\begin{align}
E^{\ireg}_\Sigma(\varphi)
={}& \sum_{|\mathbf{a}| \leq \ireg-1} M^{2|\mathbf{a}|} E^1_\Sigma(\unrescaledOps^{\mathbf{a}} \varphi) .
\end{align}
\end{definition}

\begin{definition}
\label{def:Wknorm} 
Let $\timefunc_1, \timefunc_2 \in \Reals$, $\timefunc_1 < \timefunc_2$. Let $\varphi$ be a spin-weighted scalar. Define 
\begin{align} 
\BEAMbulk^1_{\timefunc_1, \timefunc_2}(\varphi) =  
\int_{\Dtau^{10M}} M^3 r^{-3} \sum_{|\mathbf{a}| = 1}\abs{\unrescaledOps^{\mathbf{a}} \varphi}^2 \diFourVol
  +\int_{\Dtau} M r^{-3} \abs{\varphi}^2 \diFourVol .
\end{align}
Further, for $\ireg \in \Integers^+$, let 
\begin{align}
\BEAMbulk^{\ireg}_{\timefunc_1, \timefunc_2}(\varphi)
={}& \sum_{|\mathbf{a}|\leq \ireg-1} M^{2|\mathbf{a}|} \BEAMbulk^1_{\timefunc_1,\timefunc_2}(\unrescaledOps^{\mathbf{a}} \varphi) .
\end{align}
\end{definition} 

We now introduce our main basic energy and Morawetz (BEAM) assumption. This is given in terms of the $\psibase[-2][i]$ for $i\in\{0,1,2\}$. Estimates of this form have been proved in the very slowly rotating case $|a|\ll M$ using a $3\times 3$ version of the system \eqref{eq:rescaledwavesys5eqs} \cite{2017arXiv170807385M}. Similar estimates have been proved using an equation for a variable similar to $\psibase[-2][2]$ and the equations relating $\psibase[-2][0]$ and $\psibase[-2][1]$ \cite{2017arXiv171107944D}.

\begin{definition}[BEAM condition for $\protect{\psibase[-2]}$]
\label{def:BEAM}
\standardMinusTwoHypothesisNotBEAMOnlyTwo. 
We shall say that the BEAM condition holds if for all sufficiently large $\ireg\in\Naturals$ and all $\timefunc_2\geq\timefunc_1\geq\timefunc_0$, 
\begin{align}
\sum_{i=0}^2 \left ( 
E^{\ireg}_{\Sigma_{\timefunc_2}}(\psibase[-2][i]) + 
\BEAMbulk^{\ireg}_{\timefunc_1, \timefunc_2}(\psibase[-2][i]) \right ) 
\lesssim{}& \sum_{i=0}^2 E^{\ireg}_{\Staui}(\psibase[-2][i])   .  
\end{align}
\end{definition}

\begin{definition}[Spin-weight $-2$ data norm on $\Stauit$]
\label{def:initialDataNormMinusTwoTest}
\standardHypothesisOnDelta. 
\standardMinusTwoHypothesisDefinePsibasei. 
For $\ireg \in \Integers^+$, the initial data norm for $\psibase[-2]$ on $\Stauini$, with regularity $\ireg$ is 
\index{I2initialDataMinusTwo@$\NEWinienergyminustwo{\ireg}$}
\begin{align}
\NEWinienergyminustwo{\ireg}
={}&   
\sum_{\ii=0}^{4}\left(
  \norm{\psibase[-2][\ii]}_{W^{\ireg}_{-2}(\Stauit)}^2
  +\norm{r\VOp\psibase[-2][\ii]}_{W^{\ireg -1}_{-\delta}(\Stauit)}^2
\right) .
\end{align}

\end{definition}

\subsection{Decay estimates}
\label{sec:-2:BasicDecay}

This section proves three results. The first is the boundedness of various weighted norms. These bounds are proved using the $r^p$ lemma \ref{lem:GeneralrpSpinWaveHighReg}. The second is a series of pointwise-in-$\timefunc$ decay-estimates for various energies. The third gives improved rates of decay when $\LxiOp^{\ij}$ is applied. Because of the form of the BEAM assumption, the components $\radpsi{\ii}$ for $\ii\in\{0,1,2\}$ are treated together. Further estimates are proved when $\ii=3$ and then $\ii=4$ are also included.

\begin{lemma}[\texorpdfstring{$r^p$}{rp} estimate for \texorpdfstring{$\radpsi{j}$}{psi j}]
\label{lem:rpestimatesforallradpsii}
\standardHypothesisOnDelta. 
\standardMinusTwoHypothesisNotBEAM. 
For $\iip\in\{0,\ldots,4\}$, define $\ell(\iip)=\max(0,\iip-2)$. 
Let $\ii\in\{2,3,4\}$ and $\alpha\in[\delta,2-\delta]$. 
\standardMinusTwoHypothesisBEAM.
If $\ireg\in\Naturals$ is sufficiently large, then 
for $\timefunc_2\geq\timefunc_1\geq\timefunc_0$, there is the bound
\begin{align}
\hspace{4ex}&\hspace{-4ex}\sum_{\iip=0}^{\ii} \Big(
  \norm{\radpsi{\iip}}_{W^{\ireg-\ell(\iip)}_{-2}(\Stau)}^2
  +\norm{ r\VOp\radpsi{\iip} }_{W^{\ireg -1-\ell(\iip)}_{\alpha-2}(\Stau)}^2
  +\norm{\radpsi{\iip}}_{W^{\ireg-1-\ell(\iip)}_{\alpha-3}(\Dtau)}^2
\Big)\notag\\
\lesssim {}& \sum_{\iip=0}^{\ii}\left(
  \norm{\radpsi{\iip}}_{W^{\ireg-\ell(\iip)}_{-2}(\Staui)}^2
  +\norm{r\VOp\radpsi{\iip}}_{W^{\ireg -1-\ell(\iip)}_{\alpha-2}(\Staui)}^2
\right) .
\label{eq:rpPsi234}
\end{align}
\end{lemma}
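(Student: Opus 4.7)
The plan is to transfer the estimate onto the rescaled variables $\raddegphi{\iip}$ via the norm equivalence of Lemma~\ref{lem:equiavelenceOfradphiAndraddegphi}, and then apply the higher-regularity $r^\alpha$ estimate of Lemma~\ref{lem:GeneralrpSpinWaveHighReg} component-wise to the closed extended system for $(\raddegphi{0},\ldots,\raddegphi{4})$ derived in Section~\ref{sec:ExtendedSystem}, absorbing the off-diagonal couplings by exploiting their structure together with the BEAM hypothesis in Definition~\ref{def:BEAM}. Writing the $j$-th equation in the form
\[
\widehat\squareS_{-2}\raddegphi{j}-\mathbf{C}_{j,j}\VOp\raddegphi{j}-\mathbf{B}_{j,j}\LetaOp\raddegphi{j}-\mathbf{A}_{j,j}\raddegphi{j}=\vartheta_j,\qquad \vartheta_j=\sum_{i\neq j}\bigl(\mathbf{A}_{j,i}\raddegphi{i}+\mathbf{B}_{j,i}\LetaOp\raddegphi{i}\bigr),
\]
a direct inspection of the matrices gives, with $s=-2$ and $|s|+s=0$, the three hypotheses of Lemma~\ref{lem:GeneralrpSpinWaveHighReg}: the diagonal $\VOp$-asymptotics yield $b_{\VOp,-1}^{(j)}\in\{0,0,0,4,8\}\geq 0$, the $\LetaOp$ coefficients are $M\bigOAnalytic(r^{-1})$, and the leading potentials satisfy $b_{0,0}^{(j)}\in\{4,6,6,4,0\}$, hence $b_{0,0}^{(j)}+|s|+s\geq 0$.

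The crucial structural observations are: (i) rows $j\in\{0,1,2\}$ of both $\mathbf{A}$ and $\mathbf{B}$ vanish in columns $3$ and $4$, so $(\raddegphi{0},\raddegphi{1},\raddegphi{2})$ already forms a closed subsystem and row $2$ has no upward coupling at all; (ii) every upward entry of $\mathbf{B}$ is identically zero, while upward entries of $\mathbf{A}$ (i.e.\ $i>j$) are $\bigOAnalytic(r^{-1})$; (iii) the self-angular coefficients $\mathbf{B}_{j,j}$ are $\bigOAnalytic(r^{-1})$. Only downward couplings $i<j$ can be $\bigOAnalytic(1)$. Applying Lemma~\ref{lem:GeneralrpSpinWaveHighReg} to $\raddegphi{j}$, each upward or self-$\LetaOp$ contribution to $\|\vartheta_j\|_{W^{\ireg}_{\alpha-3}(\DtauRc)}^2$ carries an extra $r^{-2}$ factor, so is bounded by $\rCutOff^{-2}$ times the Morawetz left-hand side of a partner $r^\alpha$ estimate and is absorbed on taking $\rCutOff$ large, which the lemma permits. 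The $\bigOAnalytic(1)$ downward contributions, although not absorbable by $\rCutOff$, involve only $\raddegphi{i}$ with $i<j$ and are treated triangularly: the $\raddegphi{0}$ estimate is closed since its source has no downward terms; that estimate is inserted into the $\raddegphi{1}$ estimate, then both into the $\raddegphi{2}$ estimate (which has no upward terms), closing the subsystem; finally $\raddegphi{3}$ and $\raddegphi{4}$, which couple only downward, are obtained in succession.

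The bounded-$r$ error terms $\|\raddegphi{j}\|_{W^{\ireg+1}_{0}(\DtauRcR)}^2$ and $\|\raddegphi{j}\|_{W^{\ireg+1}_{\alpha}(\StautRcR)}^2$ arising from Lemma~\ref{lem:GeneralrpSpinWaveHighReg} are controlled by the BEAM condition. On the compact annulus $r\in[\rCutOff-M,\rCutOff]$, the factor $\Delta/(r^2+a^2)$ is bounded above and below, so $\raddegphi{i}$ and $\psibase[-2][i]$ are equivalent pointwise together with all their derivatives. For $i\leq 2$ the BEAM hypothesis applied at a sufficiently large fixed regularity bounds these compact-region norms directly. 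For $i\in\{3,4\}$ the recursion $\raddegphi{i}=\tfrac{2(r^2+a^2)^2}{M\Delta}\VOp\raddegphi{i-1}$ (valid away from the horizon) is iterated $i-2$ times, so $\raddegphi{i}\big|_{\text{compact}}$ is bounded by $i-2$ additional $\VOp$-derivatives of $\psibase[-2][2]$; this is exactly the regularity loss encoded in $\ell(\iip)=\max(0,\iip-2)$, and is affordable since $\ireg$ is taken sufficiently large. Converting the data on $\StauiR$ back to the $\radpsi{\iip}$ variables through Lemma~\ref{lem:equiavelenceOfradphiAndraddegphi} yields \eqref{eq:rpPsi234}.

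The main obstacle is the closure of the coupled subsystem $(\raddegphi{0},\raddegphi{1},\raddegphi{2})$ in the presence of non-decaying downward couplings: it is only because row $2$ is free of upward entries and because the upward couplings in rows $0$ and $1$ are $\bigOAnalytic(r^{-1})$ that the triangular substitution, combined with the $\rCutOff^{-2}$ absorption of the weak upward terms, can close the estimate. The remaining subtlety is the regularity bookkeeping: since BEAM is assumed only on $\psibase[-2][i]$ with $i\leq 2$, the compact-region control of $\raddegphi{3}$ and $\raddegphi{4}$ costs one and two extra derivatives of $\psibase[-2][2]$ respectively, which is precisely what the index shift $\ell(\iip)$ records in the statement.
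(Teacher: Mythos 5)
Your proposal is correct and follows essentially the same route as the paper: component-wise application of the higher-regularity $r^p$ lemma \ref{lem:GeneralrpSpinWaveHighReg} to the system \eqref{eq:rescaledwavesys5eqs}, verification of the sign conditions from the diagonal entries of $\mathbf{A}$, $\mathbf{B}$, $\mathbf{C}$, absorption of the $\bigOAnalytic(r^{-1})$ superdiagonal couplings by taking $\rCutOff$ large (legitimate since the constants from Lemma \ref{lem:GeneralrpSpinWaveHighReg} are $\rCutOff$-independent), triangular substitution for the $\bigOAnalytic(1)$ subdiagonal couplings, control of the compact-region terms by the BEAM condition with the $\ell(\iip)$ regularity loss for $\iip\in\{3,4\}$ via the $\VOp$-recursion, and transfer between $\raddegphi{\iip}$ and $\radpsi{\iip}$ by Lemma \ref{lem:equiavelenceOfradphiAndraddegphi}. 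The only imprecision is calling the $\raddegphi{0}$ estimate ``closed'' on its own (it still carries the weak upward coupling to $\raddegphi{1}$, removed only at the final summed absorption step), but since you invoke exactly that absorption, the argument matches the paper's.
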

\begin{proof}
Consider the $\{\raddegphi{i}\}_{i=0}^4$ which are defined in definition \ref{def:raddegphi} and satisfy the $5$-component system \eqref{eq:rescaledwavesys5eqs}.
The central idea in this proof is to apply the (higher-regularity) $r^p$ lemma \ref{lem:GeneralrpSpinWaveHighReg} to each component of the $5$-component system \eqref{eq:rescaledwavesys5eqs}. To do so, it is necessary to relate the components of the matrices of coefficients $\mathbf{A}$, $\mathbf{B}$, and $\mathbf{C}$ in \eqref{eq:rescaledwavesys5eqs} to the coefficients $b_{\VOp}$, $b_{\eta}$, $b_0$ in the hypotheses of the $r^p$ lemma \ref{lem:GeneralrpSpinWaveHighReg}. The diagonal components of $\mathbf{A}$ all converge to nonpositive limits, so (when the corresponding $\raddegphi{\ii}$ terms are moved from the right of the equation to the left) the condition $b_{0,0}+|s|+s\geq0$ is always satisfied. The diagonal components of $\mathbf{B}$ are all $\bigOAnalytic(r^{-1})$. The diagonal components of $\mathbf{C}/r$ all converge to nonpositive limits, so the condition $b_{\VOp,-1}\geq 0$ always holds. The off-diagonal components of the $\mathbf{A}$, $\mathbf{B}$, $\mathbf{C}$ couple each $\raddegphi{\iip}$ to the other $\raddegphi{\ii}$, which can be treated as inhomogeneities $\vartheta$. There are no off-diagonal terms in $\mathbf{C}$, so these do not need to be treated. All the subdiagonal terms in $\mathbf{A}$ and $\mathbf{B}$ are $\bigOAnalytic(1)$. The only superdiagonal terms are the $(0,1)$ and $(1,2)$ components of $\mathbf{A}$, and these are both $\bigOAnalytic(r^{-1})$. Treating these  as inhomogeneities, $\vartheta$, will contribute $\norm{Mr^{-1}\raddegphi{i}}_{W^{\ireg}_{\alpha-3}(\Dtau)}^2$ terms on the right with $i\in\{1,2\}$; it is convenient to also add an $i=0$ term to the right. In the $r^p$ lemma \ref{lem:GeneralrpSpinWaveHighReg}, the $\Scri^+$ flux, and the spacetime integrals of $\YOp\raddegphi{i}$ are not needed to achieve the statement of the lemma and can be simply dropped. 
From all of this, the $r^p$ lemma \ref{lem:GeneralrpSpinWaveHighReg} implies, for each $\ii\in \{0,1,2,3,4\}$ and $\ireg$, there are constants $\rCutOff\geq 10M$ 
 and  $C$ 
 \footnote{In the applications of the $r^p$ lemma \ref{lem:GeneralrpSpinWaveHighReg} to each subequation of the system \eqref{eq:rescaledwavesys5eqs}, the $\rCutOffOtherBounds$ and $C$ for each $\ii$ is different, but we can take $\rCutOff$ and $C$ stated here to be the maximum value among the sets of different $\rCutOffOtherBounds$ and  different $C$, respectively, such that the estimate \eqref{eq:rpPsij:intermediateA} holds for all $\ii \in \{0,1,2,3,4\}$.}
 such that, with $\alpha\in[\delta,2-\delta]$, 
\begin{align}
&\norm{ r\VOp\raddegphi{\ii}}_{W^{\ireg}_{\alpha-2}(\StauR)}^2
+\norm{\raddegphi{\ii}}_{W^{\ireg+1}_{-2}(\StauR)}^2
+\norm{\raddegphi{\ii}}_{W^{\ireg+1}_{\alpha-3}(\DtauR)}^2\nonumber\\
&{}\leq \CInrpWave \bigg(
\norm{ r\VOp\raddegphi{\ii}}_{W^{\ireg}_{\alpha-2}(\StauiR)}^2
+\norm{\raddegphi{\ii}}_{W^{\ireg+1}_{-2}(\StauiR)}^2\nonumber\\
&\qquad\quad +\norm{\raddegphi{\ii}}_{W^{\ireg+1}_{0}(\DtauRcR)}^2
  + \sum_{\timefunc\in\{\timefunc_1,\timefunc_2\}} \norm{\raddegphi{\ii}}_{W^{\ireg+1}_{\alpha}(\StautRcR)}^2   \nonumber\\
&\qquad\quad   +\sum_{\iip=0}^{\ii-1} \|\raddegphi{\iip}\|_{W^{\ireg}_{\alpha-3}(\DtauRc)}^2
  +\sum_{\iip=0}^{\ii-1} \|\LetaOp\raddegphi{\iip}\|_{W^{\ireg}_{\alpha-3}(\DtauRc)}^2
  +\sum_{\iip=0}^2 \|Mr^{-1}\raddegphi{\iip}\|_{W^{\ireg}_{\alpha-3}(\DtauRc)}^2
 \bigg).
\label{eq:rpPsij:intermediateA}
\end{align}
From lemma \ref{lem:equiavelenceOfradphiAndraddegphi}, the $\raddegphi{\ii}$ may be replaced by $\radpsi{\ii}$. Furthermore, given an estimate of the form \eqref{eq:rpPsij:intermediateA} for some $\ii$ up to $n$, then, for $\ii=n+1$, one can control the terms involving the sum $\sum_{\iip=0}^{i-1}$
by the previous estimates,
at the expense of a further implicit constant. Furthermore, when making such a sum, for $\ii\geq2$ and $\delta\leq\alpha\leq2-\delta$, the integral over $\DtauRc$ in the final term can be divided into $\DtauRcR$ and $\DtauR$, with the integral over $\DtauRcR$ absorbed into the other integral over $\DtauRcR$, so that the final integral over $\DtauRc$ can be treated as an integral merely over $\DtauR$. Thus, using the trivial bound $\norm{\radpsi{\iip}}_{W^{\ireg}_{\alpha-3}(\DtauR)}^2\leq\norm{\radpsi{\iip}}_{W^{\ireg+1}_{\alpha-3}(\DtauR)}^2$, one finds, for $\ii\in\{2,3,4\}$, there is a constant $C$ such that
\begin{align}
&\sum_{\iip=0}^\ii \bigg(\norm{ r\VOp\radpsi{\iip}}_{W^{\ireg}_{\alpha-2}(\StauR)}^2
+\norm{\radpsi{\iip}}_{W^{\ireg+1}_{-2}(\StauR)}^2
+\norm{\radpsi{\iip}}_{W^{\ireg}_{\alpha-3}(\DtauR)}^2\bigg)\nonumber\\
&{}\leq C\bigg( \sum_{\iip=0}^{\ii} \bigg(
\norm{r\VOp\radpsi{\iip}}_{W^{\ireg}_{\alpha-2}(\StauiR)}^2
+\norm{\radpsi{\iip}}_{W^{\ireg+1}_{-2}(\StauiR)}^2\nonumber\\
&\qquad\quad +\norm{\radpsi{\iip}}_{W^{\ireg+1}_{\alpha}(\DtauRcR)}^2
  +\sum_{\timefunc\in\{\timefunc_1,\timefunc_2\}} \norm{\radpsi{\iip}}_{W^{\ireg+1}_{\alpha}(\StautRcR)}^2 \bigg) \nonumber\\
&\qquad\quad
  +\sum_{\iip=0}^2 \|Mr^{-1}\radpsi{\iip}\|_{W^{\ireg}_{\alpha-3}(\DtauR)}^2 \bigg).
\label{eq:rpPsij:intermediateB}
\end{align}

Consider $\ii=2$. Recall that the implicit constant in the bound \eqref{eq:rpPsij:intermediateB} is independent of $\rCutOff$. Thus, for $\ii=2$, by taking $\rCutOff$ sufficiently large, $Mr^{-1}$ can be taken sufficiently small relative to the implicit constant, and the $\norm{Mr^{-1}\radpsi{\ii}}_{W^{\ireg}_{\alpha-3}(\DtauR)}^2$ terms on the right can be absorbed into the $\norm{\radpsi{\ii}}_{W^{\ireg}_{\alpha-3}(\DtauR)}^2$ terms on the left. Because the energy $\sum_{\iip=0}^2 E^{\ireg}_{\Staut}(\radpsi{\iip})$ controls all derivatives, and because for $r\leq\rCutOff$, there is a constant $C(\rCutOff,p)$ such that $1\leq r^p\leq C(\rCutOff,p)$, one finds that, for any $\beta$, there is the bound $\norm{\radpsi{\ii}}_{W^{\ireg}_{\beta}(\Staut^{r_+,\rCutOff})}^2\lesssim C(\rCutOff,\beta) E^\ireg_{\Staut^{r_+,\rCutOff}}(\radpsi{\ii})$. Similarly, for $\ii=2$, the integrals over $\Dtau^{r_+,\rCutOff}$ in the bound \eqref{eq:rpPsij:intermediateB} can be controlled by $\sum_{i=0}^2E^{\ireg}_{\Staui}(\radpsi{\ii})$ if the BEAM condition from definition \ref{def:BEAM} holds. Thus, under these conditions, the claim of the lemma, inequality \eqref{eq:rpPsi234}, holds for $\ii=2$.

A similar argument holds for $\ii\in\{3,4\}$; however, it is no longer true that the energy appearing in the BEAM condition, $\sum_{\iip=0}^2 E^{\ireg}_{\Staut^{r_+,\rCutOff}}(\radpsi{\iip})$, controls $\norm{\radpsi{\ii}}_{W^{\ireg}_{\beta}(\Staut^{r_+,\rCutOff})}^2$. To overcome this, one can apply $r\VOp\in\rescaledOps$, so that, with $\ii\in\{3,4\}$, $\ell=\ii-2$, for any $\ireg,\beta$,
\begin{align}
\norm{\radpsi{\ii}}_{W^{\ireg-\ell}_{\beta}(\Staut^{r_+,\rCutOff})}^2
\lesC{\rCutOff,\beta}{} \sum_{\iip=0}^2 E^{\ireg}_{\Staut^{r_+,\rCutOff}}(\radpsi{\iip})
,
\end{align}
and similarly for the spacetime integral over $\Dtau^{r_+,\rCutOff}$ if the BEAM condition from definition \ref{def:BEAM} holds. Furthermore, 
\begin{align}
\sum_{\iip=0}^\ii \norm{\radpsi{\iip}}_{W^{\ireg-\ell(\iip)}_{\beta}(\Staut^{r_+,\rCutOff})}^2
\simC{\rCutOff,\beta}{} \sum_{\iip=0}^2 E^{\ireg}_{\Staut^{r_+,\rCutOff}}(\radpsi{\iip})
,
\end{align}
which is needed at $\timefunc=\timefunc_1$. From these and the
previous arguments, inequality \eqref{eq:rpPsi234} holds for $\ii=\{3,4\}$.
\end{proof}

\begin{lemma}[Decay estimates for $\radpsi{\ii}$]
\label{lem:EnerDecPsi234}
\standardHypothesisOnDelta. 
\standardMinusTwoHypothesisNotBEAM. 
For $\iip\in\{0,\ldots,4\}$, define $\ell(\iip)=\max(0,\iip-2)$. 
Let $\ii\in\{2,3,4\}$ and $\alpha\in[\delta,2-\delta]$. 
\standardMinusTwoHypothesisBEAM.
If $\ireg\in\Naturals$ is sufficiently large, then for $\timefunc\geq \timefunc_0$, there is the bound
\begin{align}
\sum_{\iip=0}^{\ii}&\Big(
  \norm{\radpsi{\iip}}_{W_{-2}^{\ireg+2(i-5)-\ell(\iip)}(\Staut)}^2
  +\norm{r\VOp\radpsi{\iip}}_{W_{\alpha-2}^{\ireg+2(i-5)-\ell(\iip)-1}(\Staut)}^2
  +\norm{\radpsi{\iip}}_{W_{\alpha-3}^{\ireg+2(i-5)-\ell(\iip)-1}(\Dtaut)}^2
\Big) \nonumber\\
\lesssim{}& \timefunc^{\alpha-10+9\delta+(2-2\delta)\ii} \NEWinienergyminustwo{\ireg} . 
\label{eq:rpPsij:BasictDecay}
\end{align}
\end{lemma}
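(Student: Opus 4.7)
The proof is a downward induction on $\ii \in \{4,3,2\}$ that couples the $r^p$-type energy and Morawetz bound \eqref{eq:rpPsi234} of lemma \ref{lem:rpestimatesforallradpsii} with the hierarchy-to-decay principle of lemma \ref{lem:hierarchyImpliesDecay}, each descending step in $\ii$ converting two derivatives of regularity into an extra $\timefunc^{-(2-2\delta)}$ of decay. Set $\gamma_\ii := (2-2\delta)(4-\ii)$ so that $\alpha - \alpha_2 - \gamma_\ii$, with $\alpha_2 := 2-\delta$, reproduces the exponent in \eqref{eq:rpPsij:BasictDecay}.

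For the base case $\ii = 4$, define, for $\iPigeonReg \in \Naturals$ and $\alpha \in [\delta, 2-\delta]$,
\[
F_4(\iPigeonReg, \alpha, \timefunc) := \sum_{\iip=0}^{4}\Bigl(\norm{\radpsi{\iip}}_{W^{\ireg - \iPigeonReg - \ell(\iip)}_{-2}(\Staut)}^2 + \norm{r\VOp\radpsi{\iip}}_{W^{\ireg - \iPigeonReg - 1 - \ell(\iip)}_{\alpha - 2}(\Staut)}^2\Bigr).
\]
Monotonicity and interpolation for $F_4$ come from lemma \ref{lem:Walphaknormsproperties}, and the energy-Morawetz hypothesis \eqref{eq:Rev:HierarchyToDecayReal:EvolutionHypothesis} follows from \eqref{eq:rpPsi234}: the Morawetz contribution $\sum_\iip \norm{\radpsi{\iip}}^2_{W^{k-1}_{\alpha-3}(\Dtau)}$ bounds the integral of $F_4(\iPigeonReg - 1, \alpha - 1, \cdot)/M$ after using the relation $r\VOp\radpsi{\iip} = Mr(a^2+r^2)^{-1}\radpsi{\iip+1}$ to pass from $\radpsi{\iip}$-weights to $r\VOp\radpsi{\iip}$-weights, with an interpolation between weights $-2$ and $\alpha-3$ when $\alpha > 1$. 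Applying lemma \ref{lem:hierarchyImpliesDecay} with $\alpha_2 = 2-\delta$, $\alpha_1 = \delta$, $\gamma = 0$ and $i \geq 2$ then yields $F_4(i-2, \alpha, \timefunc) \lesssim \timefunc^{\alpha - (2-\delta)}\NEWinienergyminustwo{\ireg}$, which is \eqref{eq:rpPsij:BasictDecay} at $\ii = 4$ with regularity loss $2 = 2(5-4)$.

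For the inductive step $\ii+1 \Rightarrow \ii$, specialize the inductive decay to $\alpha = \delta$ and to the $\iip = \ii + 1$ summand to obtain
\[
\norm{\radpsi{\ii+1}}_{W^{k}_{-2}(\Staut)}^2 \lesssim \timefunc^{-\gamma_\ii}\NEWinienergyminustwo{\ireg}.
\]
The identity $r\VOp\radpsi{\ii} = Mr(a^2+r^2)^{-1}\radpsi{\ii+1}$, together with the asymptotic $Mr(a^2+r^2)^{-1} = O(r^{-1})$, transfers this into $\norm{r\VOp\radpsi{\ii}}^2_{W^{k}_{\alpha_2 - 2}(\Staut)} \lesssim \timefunc^{-\gamma_\ii}\NEWinienergyminustwo{\ireg}$, the extra factor $r^{-2}$ absorbing the weight gap since $\alpha_2 < 2$. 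The remaining $\norm{\radpsi{\iip}}^2_{W^{k}_{-2}}$ summands with $\iip \leq \ii$ inherit the same decay directly from the inductive hypothesis via monotonicity, so $F_\ii(i, \alpha_2, \timefunc) \lesssim \timefunc^{-\gamma_\ii}\NEWinienergyminustwo{\ireg}$, which is the initial-decay hypothesis \eqref{eq:Rev:HierarchyToDecayReal:InitialDecay}. A second application of lemma \ref{lem:hierarchyImpliesDecay}, now at level $\ii$ with \eqref{eq:rpPsi234} at $\ii$ supplying the energy-Morawetz input and with initial-decay rate $\gamma_\ii$, produces decay $\timefunc^{\alpha - \alpha_2 - \gamma_\ii} = \timefunc^{\alpha - 10 + 9\delta + (2-2\delta)\ii}$ at the cost of two further derivatives, giving the regularity index $\ireg + 2(\ii - 5) - \ell(\iip)$.

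The main technical difficulty is propagating the weight-shift $r\VOp\radpsi{\ii} \leftrightarrow \radpsi{\ii+1}$ cleanly through each iteration while respecting the $\ell(\iip) = \max\{0, \iip-2\}$ penalty: the BEAM hypothesis (definition \ref{def:BEAM}) gives direct control only for $\iip \in \{0,1,2\}$, and extending to $\iip \in \{3,4\}$ via the commuted system \eqref{eq:rescaledwavesys5eqs} costs the extra $\iip - 2$ derivatives appearing in the index of \eqref{eq:rpPsij:BasictDecay}.
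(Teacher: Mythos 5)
Your argument is correct and is essentially the paper's own proof: the same functional $F$, the hierarchy supplied by \eqref{eq:rpPsi234} of lemma \ref{lem:rpestimatesforallradpsii}, and lemma \ref{lem:hierarchyImpliesDecay} applied first at $\ii=4$ (with $\gamma=0$) and then at $\ii=3,2$ after converting $r\VOp\radpsi{\ii}$ into $\radpsi{\ii+1}$ to seed the initial-decay hypothesis, losing two derivatives per step. The only point left implicit is the spacetime (Morawetz) term over $\Dtaut$ appearing in \eqref{eq:rpPsij:BasictDecay}, which, exactly as in the paper, follows from one further application of \eqref{eq:rpPsi234} combined with the energy decay just obtained.
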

\begin{proof}
The strategy of the proof is to apply the pigeonhole lemma \ref{lem:hierarchyImpliesDecay} to the $r^p$ bound \eqref{eq:rpPsi234}.
Let 
\begin{align}
F^\ii(\ireg,\alpha,\timefunc)
={}& \sum_{\iip=0}^{\ii}\Big(\Vert \radpsi{\iip} \Vert^2_{W_{-2}^{\ireg-\ell(\iip)}(\Staut)}
+\Vert r\VOp\radpsi{\iip} \Vert^2_{W_{\alpha-2}^{\ireg-\ell(\iip)-1}(\Staut)} \Big)
\end{align}
for $\alpha\geq \delta$ and $F(\ireg,\alpha,\timefunc)=0$ for $\alpha<\delta$. Here the $\ii$ denotes how many of the $\radpsi{\iip}$ are to be treated, $\ireg$ the level of regularity, and $\alpha$ the weight.

Observe that, since $r\VOp$ is in the set of operators used to define regularity $\rescaledOps$, and since $(\alpha+1)-3\geq -2$, one has that
\begin{align}
\Vert \radpsi{\ii} \Vert_{W^{(\ireg+1)-1}_{(\alpha+1)-3}(\Dtau)}^2
\gtrsim{}&\int_{\timefunc_1}^{\timefunc_2}\left(\Vert \radpsi{\ii} \Vert_{W_{-2}^{\ireg}(\Staut)}^2
+\Vert r\VOp\radpsi{\ii} \Vert_{W_{\alpha-2}^{\ireg-1}(\Staut)}^2 \right)\di\timefunc ,
\end{align}
Thus, the $r^p$ bound \eqref{eq:rpPsi234} can be written in the form
\begin{align}
F^\ii(\ireg,\alpha,\timefunc_2) +M^{-1} \int_{\timefunc_1}^{\timefunc_2} F^\ii(\ireg-1,\alpha-1,\timefunc)\di\timefunc
\lesssim{}& F^\ii(\ireg,\alpha,\timefunc_1)
\label{eq:rpPsi234:alphaHierarchy}
\end{align}
for $\alpha\in[\delta,2-\delta]$. This hierarchy of estimates is in the form treated by the pigeonhole lemma \ref{lem:hierarchyImpliesDecay}, and the assumptions \eqref{assump:HierarchyToDecay(1)} and \eqref{assump:HierarchyToDecay(2)} in the pigeonhole lemma \ref{lem:hierarchyImpliesDecay} are easily seen to be satisfied from lemma \ref{lem:Walphaknormsproperties}. 

Consider first the case $\ii=4$. From applying the pigeonhole lemma
 \ref{lem:hierarchyImpliesDecay} to the hierarchy \eqref{eq:rpPsi234:alphaHierarchy},
 one finds $F^4(\ireg-2,\alpha,\timefunc)\lesssim \timefunc^{\alpha-2+\delta} F^4(\ireg,2-\delta,\timefunc_0)$. Applying this decay estimate and the $r^p$ bound \eqref{eq:rpPsi234} a second time, one obtains the bound 
\begin{align} 
\sum_{\iip=0}^{4}\Vert \radpsi{\iip} \Vert^2_{W_{\alpha-3}^{\ireg-2-\ell(\iip)-1}(\Dtaut)}\lesssim \timefunc^{\alpha-2+\delta} F^4(\ireg,2-\delta,\timefunc_0). 
\end{align} 
A third application shows that $F^4(\ireg,2-\delta,\timefunc_0)$ is bounded by $\NEWinienergyminustwo{\ireg}$. This proves the desired inequality \eqref{eq:rpPsij:BasictDecay} in the case $\ii=4$.

Consider now lower $\ii$. Observing that $\radpsi{\ii+1}=M^{-1}(r^2+a^2)\VOp\radpsi{\ii}$, $r^2+a^2=r^2\bigOAnalytic(1)$, one finds 
\begin{align} 
\norm{r\VOp\radpsi{\ii}}_{W^{k-l(\ii)-1}_{-\delta}(\Staut)}^2 \leq \norm{\radpsi{\ii+1}}_{W^{k-l(\ii +1)}_{-2-\delta}(\Staut)}^2. 
\end{align} 
Additionally, one also has the trivial estimate 
\begin{align} 
\sum_{\iip=0}^{\ii}\norm{\radpsi{\iip}}_{W^{k-l(\iip)}_{-2}(\Staut)}^2 \leq \sum_{\iip=0}^{\ii+1}\norm{\radpsi{\iip}}_{W^{k-l(\iip)}_{-2}(\Staut)}^2. 
\end{align} 
Thus, one finds $F^{\ii}(\ireg,2-\delta,\timefunc)$ $\lesssim F^{\ii+1}(\ireg,\delta,\timefunc)$. In particular, $F^{3}(\ireg-2,2-\delta,\timefunc)$ $\lesssim F^{4}(\ireg-2,\delta,\timefunc)$ $\lesssim\timefunc^{-2+2\delta}\NEWinienergyminustwo{\ireg}$. Applying the pigeonhole lemma \ref{lem:hierarchyImpliesDecay} that treats hierarchies where the top energy is known to decay a priori, one finds $F^3(\ireg-4,\alpha,\timefunc)$ $\leq \timefunc^{\alpha-(2-\delta)-(2-2\delta)} \NEWinienergyminustwo{\ireg}$. The spacetime integral and estimate by the energy of the initial data are estimated in the same way as in the $\ii=4$ case, which proves inequality \eqref{eq:rpPsij:BasictDecay} in the case $\ii=3$. Observing $F^2(\ireg-4,2-\delta,\timefunc)$ $\lesssim F^3(\ireg-4,\delta,\timefunc)\lesssim \timefunc^{\alpha-4+3\delta}\NEWinienergyminustwo{\ireg}$ and iterating the same argument once more proves inequality \eqref{eq:rpPsij:BasictDecay} in the case $\ii=2$.
\end{proof}

We now prove decay estimates for time derivatives of $\psibase[-2][i]$. For $i\in\{2,3,4\}$, these are the strongest estimates that we obtain in this paper, but for $i\in\{0,1\}$, there is a further refinement in theorem \ref{thm:ImproEstipsi01}.

\begin{lemma}[Decay estimates for $\LxiOp^\ij\radpsi{\ii}$] 
\label{lem:ImproEnerDecaypartialt} 
\standardHypothesisOnDelta. 
\standardMinusTwoHypothesisNotBEAM.
For $\ii,\iip\in\{0,\ldots,4\}$, define $\ell(\ii,\iip)=2(i-5) -\max(0,\iip-2)$.
Let $\ii\in\{2,3,4\}$ and $\delta \leq \alpha\leq 2-\delta$. 
\standardMinusTwoHypothesisBEAM.
\begin{enumerate} 
\item 
If $\ireg,j\in\Naturals$ are such that $\ireg-3j$ is sufficiently large, then there are the energy and Morawetz estimates for $\timefunc\geq \timefunc_0$,
\begin{align}
\hspace{6ex}&\hspace{-6ex}\sum_{\iip=0}^{\ii}\Big(
  \norm{\LxiOp^\ij \radpsi{\iip}}^2_{W^{\ireg-3\ij-\ell(\ii,\iip)}_{-2}(\Staut)}
  +\norm{r\LxiOp^\ij \VOp\radpsi{\iip}}^2_{W^{\ireg-3\ij-\ell(\ii,\iip)-1}_{\alpha-2}(\Staut)}
  +\norm{\LxiOp^\ij \radpsi{\iip}}^2_{W^{\ireg-3\ij-\ell(\ii,\iip)-1}_{\alpha-3}(\Dtaut)}
\Big)\notag\\
\lesssim{}& \timefunc^{\alpha-10+9\delta+(2-2\delta)\ii-(2-2\delta)\ij}\NEWinienergyminustwo{\ireg} .
\label{eq:ImproEnerDecPsi234}
\end{align}
\item 
If $\ireg,j\in\Naturals$ are such that $\ireg -3\ij$ is sufficiently large, then 
there are the pointwise decay estimates for $\timefunc\geq \timefunc_0$
\begin{align}
\sum_{\iip=0}^{\ii}
  \absHighOrder{\LxiOp^\ij \radpsi{\iip}}{\ireg-3\ij-\ell(\ii,\iip)-7}{\rescaledOps}
\lesssim{}& r v^{-1} \timefunc^{-(1-\delta)(\frac{9}{2}+\ij-\ii)+\delta} (\NEWinienergyminustwo{\ireg})^{1/2} .
\label{eq:pointwisePsi234}
\end{align}
\end{enumerate} 
\end{lemma}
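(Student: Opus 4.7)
The plan is to commute $\LxiOp^\ij$ through the extended system \eqref{eq:rescaledwavesys5eqs}, iterate the pigeonhole argument of Lemma \ref{lem:EnerDecPsi234} to gain the additional factor $\timefunc^{-(2-2\delta)\ij}$ of decay, and then convert the energy bound of part~(1) into the pointwise bound of part~(2) via the anisotropic spacetime Sobolev estimate.

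First I would note that, since $\xi=\partial_v$ is Killing and the matrices $\mathbf{A},\mathbf{B},\mathbf{C}$ in \eqref{eq:rescaledwavesys5eqs} depend only on $r$ and $\theta$, the operator $\LxiOp^\ij$ commutes through the entire system; combined with Lemma \ref{lem:equiavelenceOfradphiAndraddegphi}, this gives that $\LxiOp^\ij\radpsi{\iip}$, $\iip=0,\dots,4$, satisfies the same equations. By the expansion \eqref{eq:LxiToVOpYOpLeta}, each factor of $\LxiOp$ is a bounded-coefficient combination of $\VOp$, $\YOp$ and $\LetaOp$, so the BEAM condition from Definition \ref{def:BEAM} propagates to $\LxiOp^\ij\psibase[-2][i]$ at the cost of three $\unrescaledOps$-derivatives per commutation. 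A direct application of Lemma \ref{lem:EnerDecPsi234} to $\LxiOp^\ij\radpsi{\iip}$ thus already yields the $\ij=0$ case of \eqref{eq:ImproEnerDecPsi234}.

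To extract the additional decay factor $\timefunc^{-(2-2\delta)\ij}$, I would induct on $\ij$, invoking the pigeonhole lemma \ref{lem:hierarchyImpliesDecay} with the initial-decay hypothesis \eqref{eq:Rev:HierarchyToDecayReal:InitialDecay} taken with $\gamma=(2-2\delta)\ij$. The necessary input is a decay rate for the top-weight energy $F^{\ii}(\ireg,\alpha_2,\timefunc)$ of $\LxiOp^{\ij+1}\radpsi{\iip}$, which I would extract from the inductive hypothesis by writing one copy of $\LxiOp$ via \eqref{eq:LxiToVOpYOpLeta} and absorbing the resulting $\VOp$, $\YOp$ and $\LetaOp$ terms into the energy of $\LxiOp^{\ij}\radpsi{\iip}$ at a lower weight, where the previous step has already been established. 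Each full pigeonhole run over $\alpha\in[\delta,2-\delta]$ then harvests exactly the span $(2-2\delta)$, in direct parallel with the $\ii$-iteration used in the proof of Lemma \ref{lem:EnerDecPsi234}, and the $3\ij$ loss of regularity is consumed by the commutations of step one. This is the main obstacle: arranging the induction so that each commutation with $\LxiOp$ gains the full width $(2-2\delta)$ rather than a single unit $\timefunc^{-1}$, which forces one to use the inductive hypothesis as a priori initial decay in \eqref{eq:Rev:HierarchyToDecayReal:InitialDecay} instead of restarting the hierarchy from boundedness.

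For part~(2) I would apply the anisotropic spacetime Sobolev estimate, Lemma \ref{lem:anisotropicSpaceTimeSobolev}, to $\LxiOp^\ij\radpsi{\iip}$ after commuting with $\ireg-3\ij-\ell(\ii,\iip)-7$ operators from $\rescaledOps$, obtaining
\begin{equation*}
\abs{r^{-1}\LxiOp^\ij\radpsi{\iip}}^2
\lesssim \norm{\LxiOp^\ij\radpsi{\iip}}_{W^3_{-3}(\Dtaut)}
\norm{\LxiOp^{\ij+1}\radpsi{\iip}}_{W^3_{-3}(\Dtaut)}.
\end{equation*}
Inserting the part~(1) bound with extremal weight $\alpha=\delta$ into each factor, using that $W^3_{-3}$ is controlled by $W^3_{\alpha-3}$ for $\alpha\geq 0$, and taking the geometric mean produces the rate $\timefunc^{-(1-\delta)(9/2+\ij-\ii)+\delta}$; multiplying by $r$ gives the spatial factor, and the improvement from $r$ to $rv^{-1}$ in \eqref{eq:pointwisePsi234} follows from $v=\timefunc+h(r)$ with $h(r)\sim 2r$ near $\Scri^+$, which permits trading one power of $\timefunc$ for one power of $v$ in the interior region $r\leq\timefunc$ (where $v\sim\timefunc$) while giving a neutral contribution in the exterior (where $v\sim r$).
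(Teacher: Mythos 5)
Your overall skeleton (commute $\LxiOp$, induct in $\ij$ by feeding an a priori decay of the top-weight energy into the pigeonhole lemma \ref{lem:hierarchyImpliesDecay}, then Sobolev for pointwise bounds) matches the paper, but the step that supplies that a priori decay has a genuine gap. You propose to bound the top-weight energy of $\LxiOp^{\ij+1}\radpsi{\iip}$ by writing one copy of $\LxiOp$ via \eqref{eq:LxiToVOpYOpLeta} and ``absorbing'' the result into lower-weight energies at level $\ij$. This fails for the $\YOp$-component: since $\LxiOp=\VOp+\tfrac{\Delta}{2(a^2+r^2)}\YOp-\tfrac{a}{a^2+r^2}\LetaOp$ with an $O(1)$ coefficient on $\YOp$, the term $\norm{r\VOp\LxiOp^{\ij+1}\radpsi{\iip}}_{W_{-\delta}}$ produces $\norm{M\YOp(r\VOp\LxiOp^{\ij}\radpsi{\iip})}_{W_{-\delta}}$ (up to harmless commutators), and the only available control of this quantity in the level-$\ij$ hierarchy is $\norm{r\VOp\LxiOp^{\ij}\radpsi{\iip}}_{W^{+1}_{-\delta}}$, i.e. the level-$\ij$ energy at the \emph{top} weight $\alpha=2-\delta$, which decays like $\timefunc^{-8+8\delta+\dots}$ rather than the needed $\timefunc^{-10+10\delta+\dots}$; neither the $W^{k}_{-2}$ energy nor the BEAM energy controls $\YOp$ derivatives at the weight $r^{-\delta}$. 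So each commutation gains nothing, and the induction does not close with the extra factor $\timefunc^{-(2-2\delta)\ij}$. The paper's proof avoids this precisely by using the Teukolsky equation itself: eliminating $\YOp$ from \eqref{eq:TeukolskyRegular-2} and isolating $r^{2}\VOp\LxiOp\psibase[-2]$, it expresses this dangerous mixed derivative as a combination of $(r\VOp)^2$, $r\VOp$, $r^{-1}\LetaOp(r\VOp)$, $\TMESOp_s$, $\LxiOp$ and zeroth-order terms with $\bigOAnalytic(1)$ coefficients, yielding \eqref{eq:ImproEnerDecPsi01234:Passjtoj+1}: the top-weight energy at level $\ij+1$ is bounded by $\norm{\LxiOp^{\ij}\radpsi{\iip}}^2_{W_{-2}}$, which does decay at the required rate. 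This use of the wave-equation structure (at the cost of the $3\ij$ regularity loss) is the missing idea.

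Part (2) as sketched also has a gap in the exterior. The anisotropic Sobolev estimate \eqref{eq:anisotropicSpaceTimeSobolev} with the Morawetz bounds at $\alpha=\delta$ gives $\abs{\LxiOp^{\ij}\radpsi{\iip}}\lesssim r\,\timefunc^{-(1-\delta)(\frac{11}{2}+\ij-\ii)}$, which suffices where $r\lesssim\timefunc$ (there $v\sim\timefunc$), but for $r\gg\timefunc$ the unbounded factor $r$ is not compensated: your claim that the exterior contribution is ``neutral'' would require $v\lesssim\timefunc$, which is false there. The paper therefore derives a second, $r$-uniform pointwise bound $\abs{\LxiOp^{\ij}\radpsi{\iip}}\lesssim\timefunc^{-(1-\delta)(\frac{9}{2}+\ij-\ii)}$ from the hypersurface Sobolev estimate \eqref{eq:SobolevOnStautHolder} with the energies at $\alpha=1\pm\delta$, and only then combines the two bounds using $v^{-1}\lesssim\min(r^{-1},\timefunc^{-1})$ to obtain the stated $rv^{-1}$ prefactor; you need this complementary estimate as well.
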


\begin{proof}
Observe that $\LxiOp$ is a symmetry of the Teukolsky equation \eqref{eq:TeukolskyRegular-2}. Furthermore, if $\psibase[-2]$ is replaced by $\LxiOp^\ij\psibase[-2]$, then the $\{\radpsi{\ii}\}$ in definition \ref{def:psi(i)-2} are replaced by $\LxiOp^\ij\psibase[-2][i]$. From the $r^p$ estimate \eqref{eq:rpPsi234} for $\LxiOp^\ij\psibase[-2]$, one has for $\delta\leq\alpha\leq2-\delta$, $\ij,\ireg\in\Naturals$, and $\ii\in\{2,3,4\}$,
\begin{align}
\hspace{4ex}&\hspace{-4ex}\sum_{\iip=0}^{\ii} \Big(
  \norm{\LxiOp^\ij\radpsi{\iip}}_{W^{\ireg-\ell(\iip)}_{-2}(\Stau)}^2
  +\norm{ r\VOp\LxiOp^\ij\radpsi{\iip} }_{W^{\ireg -1-\ell(\iip)}_{\alpha-2}(\Stau)}^2
  +\norm{\LxiOp^\ij\radpsi{\iip}}_{W^{\ireg-1-\ell(\iip)}_{\alpha-3}(\Dtau)}^2
\Big)\notag\\
\lesssim {}& \sum_{\iip=0}^{\ii}\left(
  \norm{\LxiOp^\ij\radpsi{\iip}}_{W^{\ireg-\ell(\iip)}_{-2}(\Staui)}^2
  +\norm{r\VOp\LxiOp^\ij\radpsi{\iip}}_{W^{\ireg -1-\ell(\iip)}_{\alpha-2}(\Staui)}^2
\right) .
\label{eq:ImproEnerDecPsi234:Hierarchy}
\end{align}
Similarly, the basic decay lemma \ref{lem:EnerDecPsi234} gives
\begin{align}
\sum_{\iip=0}^{\ii}&\Big(
  \norm{\LxiOp^\ij\radpsi{\iip}}_{W_{-2}^{\ireg+2(i-5)-\ell(\iip)}(\Staut)}^2
  +\norm{r\VOp\LxiOp^\ij\radpsi{\iip}}_{W_{\alpha-2}^{\ireg+2(i-5)-\ell(\iip)-1}(\Staut)}^2
  +\norm{\LxiOp^\ij\radpsi{\iip}}_{W_{\alpha-3}^{\ireg+2(i-5)-\ell(\iip)-1}(\Dtaut)}^2
\Big) \nonumber\\
\lesssim{}& \timefunc^{\alpha-10+9\delta+(2-2\delta)\ii} \NEWinienergyminustwo{\ireg+\ij} .
\end{align}

Rearranging the expansions \eqref{eq:V-Znaj-IEF}-\eqref{eq:Y-Znaj-IEF} for $\VOp$ and $\YOp$, for $r$ sufficiently large, one can write $\YOp$ as a weighted sum of $\LxiOp$, $\VOp$, and $r^{-2}\LetaOp$ all with $\bigOAnalytic(1)$ coefficients. Using this to eliminate $\YOp$ from the Teukolsky equation \eqref{eq:TeukolskyRegular-2}, rescaling equation \eqref{eq:rescaledWaveOperator} for $\widehat\squareS_{-2}$, and isolating the term $r^2\VOp\LxiOp\psibase[-2]$, one can write $r^2\VOp\LxiOp\psibase[-2]$ as a weighted sum of $(r\VOp)^2\psibase[-2]$, $r\VOp\psibase[-2]$, $r^{-1}\LetaOp (r\VOp)\psibase[-2]$, $r^{-1}\LetaOp\psibase[-2]$, $\TMESOp_{-2}\psibase[-2]$, $\LxiOp\psibase[-2]$, and $\psibase[-2]$ all with $\bigOAnalytic(1)$ coefficients. Rewriting $\LxiOp$ again as a weighted sum of $\YOp$, $\VOp$, and $r^{-2}\LetaOp$ all with $\bigOAnalytic(1)$ coefficients, one finds that $r^2\VOp\LxiOp\psibase[-2]$ can be written as a linear combination with $\bigOAnalytic(1)$ coefficients of $r^{-1}\LetaOp (r\VOp \psibase[-2])$ and terms of the form $X_2X_1\psibase[-2]$ with $X_1,X_2\in\rescaledOps\cup\{1\}$.  The commutator of the operator $M^{-1}(r^2+a^2)\VOp$ used to construct the $\radpsi{\ii}$ with any of the operators $r\VOp$, $\LxiOp$, $\LetaOp$, $\TMESOp_{-2}$, $1$, where $1$ is the identity operator, appearing in the expansion of the $r^2\VOp\LxiOp\psibase[-2]$ is in the span of $M^{-1}(r^2+a^2)\VOp$, $r\VOp$, and $1$. Thus, induction implies that a similar expansion exists for each of the $r^2\VOp\radpsi{\ii}$, but also involving the previous $\radpsi{\iip}$ with $\iip<\ii$. Thus,
\begin{align}
\sum_{\iip=0}^{\ii} \norm{r\VOp\LxiOp^{\ij+1}\radpsi{\iip}}_{W^{\ireg -2-\ell(\iip)}_{-\delta}(\Staut)}^2
\lesssim{}& \sum_{\iip=0}^{\ii} \norm{r^2\VOp\LxiOp^{\ij+1}\radpsi{\iip}}_{W^{\ireg-2-\ell(\iip)}_{-\delta-2}(\Staut)}^2 \nonumber\\
\lesssim{}&\sum_{\iip=0}^{\ii} \norm{\LxiOp^{\ij}\radpsi{\iip}}_{W^{\ireg-\ell(\iip)}_{-\delta-2}(\Staut)}^2  \nonumber\\
\lesssim{}&\sum_{\iip=0}^{\ii} \norm{\LxiOp^{\ij}\radpsi{\iip}}_{W^{\ireg-\ell(\iip)}_{-2}(\Staut)}^2 .
\end{align}
Since $\LxiOp$ is a linear combination of $\YOp$, $\VOp$ and $r^{-2}\LetaOp$ with $\bigOAnalytic(1)$ coefficients, one also finds
\begin{align}
 \sum_{\iip=0}^{\ii} \norm{\LxiOp^{\ij+1}\radpsi{\iip}}_{W^{\ireg-1-\ell(\iip)}_{-2}(\Staut)}^2
\lesssim{}& \sum_{\iip=0}^{\ii} \norm{\LxiOp^{\ij}\radpsi{\iip}}_{W^{\ireg-\ell(\iip)}_{-2}(\Staut)}^2 .
\end{align}
Combining these results, one finds
\begin{align}
\sum_{\iip=0}^{\ii} \norm{\LxiOp^{\ij+1}\radpsi{\iip}}_{W^{\ireg-1-\ell(\iip)}_{-2}(\Staut)}^2
+\sum_{\iip=0}^{\ii} \norm{r\VOp\LxiOp^{\ij+1}\radpsi{\iip}}_{W^{\ireg-2-\ell(\iip)}_{-\delta}(\Staut)}^2
\lesssim{}& \sum_{\iip=0}^{\ii} \norm{\LxiOp^{\ij}\radpsi{\iip}}_{W^{\ireg-\ell(\iip)}_{-2}(\Staut)}^2 .
\label{eq:ImproEnerDecPsi01234:Passjtoj+1}
\end{align}

With these preliminaries proved, one can now consider the proof of the energy and Morawetz estimate \eqref{eq:ImproEnerDecPsi234}. The $j=0$ case is proved in lemma \ref{lem:EnerDecPsi234}. If inequality \eqref{eq:ImproEnerDecPsi234} is known to hold for $\ij$, then inequality \eqref{eq:ImproEnerDecPsi01234:Passjtoj+1} implies for $\ii\in\{2,3,4\}$
\begin{align}
\sum_{\iip=0}^{\ii} \norm{\LxiOp^{\ij+1}\radpsi{\iip}}_{W^{\ireg+2(\ii-5)-3\ij-1-\ell(\iip)}_{-2}(\Staut)}^2
&+\sum_{\iip=0}^{\ii} \norm{r\VOp\LxiOp^{\ij+1}\radpsi{\iip}}_{W^{\ireg+2(\ii-5)-3\ij-2-\ell(\iip)}_{-\delta}(\Staut)}^2 \nonumber\\
\lesssim{}& \timefunc^{-10+10\delta+(2-2\delta)\ii -(2-2\delta)\ij} \NEWinienergyminustwo{\ireg} .
\label{eq:ImproEnerDecPsi01234:BoundAtTopOfHierarchy}
\end{align}
The hierarchy \eqref{eq:ImproEnerDecPsi234:Hierarchy} and the bound at the top of the hierarchy \eqref{eq:ImproEnerDecPsi01234:BoundAtTopOfHierarchy} provide the hypotheses necessary to apply the pigeonhole lemma \ref{lem:hierarchyImpliesDecay}, an application of which implies
\begin{align}
\sum_{\iip=0}^{\ii} \norm{\LxiOp^{\ij+1}\radpsi{\iip}}_{W^{\ireg+2(i-5)-3j-3-\ell(\iip)}_{-2}(\Staut)}^2
&+\sum_{\iip=0}^{\ii} \norm{r\VOp\LxiOp^{\ij+1}\radpsi{\iip}}_{W^{\ireg+2(i-5)-3j-4-\ell(\iip)}_{\alpha-2}(\Staut)}^2 \nonumber\\
\lesssim{}& \timefunc^{\alpha-12+11\delta+(2-2\delta)\ii -(2-2\delta)\ij} \NEWinienergyminustwo{\ireg} .
\end{align}
Writing $-12+11\delta-(2-2\delta)\ij=-10+9\delta-(2-2\delta)(\ij+1)$, one obtains inequality \eqref{eq:ImproEnerDecPsi234} for $\ij+1$, so inequality \eqref{eq:ImproEnerDecPsi234} holds for all $\ij\in\Naturals$ by induction.

From the Sobolev inequality \eqref{eq:SobolevOnStautHolder} with $\gamma=\delta$ and the energy estimate \eqref{eq:ImproEnerDecPsi234} with $\alpha=1+\delta$ and $\alpha=1-\delta$, one finds
\begin{align}
\sum_{\iip=0}^\ii \absHighOrder{\LxiOp^\ij\radpsi{\iip}}{\ireg'-3}{\rescaledOps}^2
\lesssim{}&  \timefunc^{-(1-\delta)(9+2\ij-2\ii)} \NEWinienergyminustwo{\ireg}.
\end{align}
Alternatively, having already established the limit as $\timefunc\rightarrow\infty$ is zero, one can now apply the anisotropic spacetime Sobolev inequality \eqref{eq:anisotropicSpaceTimeSobolev}. Applying this, the trivial bound $-3<-3+\delta$, and the Morawetz estimate \eqref{eq:ImproEnerDecPsi234} with $\alpha=\delta$, one finds
\begin{align}
\sum_{\iip=0}^{\ii} \absHighOrder{\LxiOp^\ij r^{-1}\radpsi{\iip}}{\ireg'-7}{\rescaledOps}^2
\lesssim{}& \sum_{\iip=0}^{\ii}
  \norm{\LxiOp^\ij r^{-1}\radpsi{\iip}}_{W^{\ireg'-4}_{-1}(\Dtaut)}^{1/2}
  \norm{\LxiOp^{\ij+1} r^{-1}\radpsi{\iip}}_{W^{\ireg'-4}_{-1}(\Dtaut)}^{1/2}\nonumber\\
\lesssim{}& \sum_{\iip=0}^{\ii}
  \norm{\LxiOp^\ij \radpsi{\iip}}_{W^{\ireg'-4}_{-3}(\Dtaut)}^{1/2}
  \norm{\LxiOp^{\ij+1} \radpsi{\iip}}_{W^{\ireg'-4}_{-3}(\Dtaut)}^{1/2}\nonumber\\
\lesssim{}& \sum_{\iip=0}^{\ii}
  \norm{\LxiOp^\ij \radpsi{\iip}}_{W^{\ireg'-4}_{-3+\delta}(\Dtaut)}^{1/2}
  \norm{\LxiOp^{\ij+1} \radpsi{\iip}}_{W^{\ireg'-4}_{-3+\delta}(\Dtaut)}^{1/2}\nonumber\\
\lesssim{}& \timefunc^{-(1-\delta)(11+2\ij-2\ii)}
  \NEWinienergyminustwo{\ireg} .
\end{align}
Combining the two pointwise estimates and observing $v^{-1}\lesssim\min(r^{-1},\timefunc^{-1})$ gives the desired estimate \eqref{eq:pointwisePsi234}.
\end{proof}

\subsection{Improved decay estimates} 
\label{sec:ImprPsi4}

In this section, we conclude with a statement of the best decay estimates that we derive for $\psibase[-2][i]$ and its time derivatives. For $i\in\{2,3,4\}$ this simply restates the results from lemma \ref{lem:ImproEnerDecaypartialt}. 
In the exterior region (where $r\geq \timefunc$) and for $\ii\in\{0,1\}$, we improve the $t$ decay for $\LxiOp^\ij \radpsi{\ii}$, and,
in the interior region (where $r < \timefunc$), we improve the $r$ decay for the same quantities.
This is done by rewriting the first two lines of \eqref{eq:rescaledwavesys5eqs} as an elliptic equation of $\raddegphi{i}$ with source terms each of which either contains at least one $\LxiOp$ derivative (which has extra $\timefunc^{-1+\delta}$ decay from lemma \ref{lem:ImproEnerDecaypartialt}) or have an extra $r^{-1}$ prefactor. We exploit this extra $\timefunc^{-1+\delta}$ decay and $r^{-1}$ prefactor in the source terms, and an elliptic estimate yields improved pointwise-in-$\timefunc$ decay estimates for $\LxiOp^\ij \radpsi{\ii}$ $(\ii=0,1)$ and their spacetime norms in different regions.

The decay estimates for all $\LxiOp^\ij \radpsi{\ii}$ $(i=0,\ldots,4)$ are as follows.

\begin{thm}[Decay estimates with improvements for $\radpsi{\ii}$] 
\label{thm:ImproEstipsi01}
\standardHypothesisOnDelta. 
\standardMinusTwoHypothesisNotBEAM. 
\standardMinusTwoHypothesisBEAM. 
There is a regularity constant $K$ such that the following holds. 
If $\ireg,\ij\in\Naturals$ are such that $\ireg-3\ij-K\geq0$, 
then with $k''=k-3j-K$,
\begin{enumerate}
\item \label{point:psiiExt}

In the exterior region where $r\geq \timefunc$, we have for $\ii\in\{0,\ldots,4\}$ and $\delta \leq \alpha\leq 2-\delta$ the energy and Morawetz estimates for
$\timefunc\geq \timefunc_0$
\begin{align}
\hspace{6ex}&\hspace{-6ex}\sum_{\iip=0}^{\ii}\Big(
  \norm{\LxiOp^\ij \radpsi{\iip}}^2_{W^{\ireg''}_{-2}(\Stautext)}
  +\norm{r\LxiOp^\ij \VOp\radpsi{\iip}}^2_{W^{\ireg''-1}_{\alpha-2}(\Stautext)}
  +\norm{\LxiOp^\ij \radpsi{\iip}}^2_{W^{\ireg''-1}_{\alpha-3}(\Dtautext)}
\Big)\notag\\
\lesssim{}& \timefunc^{\alpha-10+9\delta+(2-2\delta)\ii-(2-2\delta)\ij}\NEWinienergyminustwo{\ireg},
\label{eq:ImproEnerDecPsiiExt}
\end{align}
and pointwise decay estimates for 
$\timefunc\geq \timefunc_0$
\begin{align}
\sum_{\iip=0}^{\ii}
 \absHighOrder{\LxiOp^\ij \radpsi{\iip}}{\ireg''}{\rescaledOps}
\lesssim{}& r v^{-1} \timefunc^{-(1-\delta)(\frac{9}{2}+\ij-\ii)+\delta} (\NEWinienergyminustwo{\ireg})^{1/2} .
\label{eq:pointwisePsiiExt}
\end{align}

\item
\label{point:psiiInt}
In the interior region where $ r<\timefunc$, for $\ii\in\{2,3,4\}$, and $\delta \leq \alpha\leq 2-\delta$, there are the energy and Morawetz estimates for 
$\timefunc\geq \timefunc_0$ 
\begin{align}
\hspace{6ex}&\hspace{-6ex}\sum_{\iip=0}^{\ii}\Big(
  \norm{\LxiOp^\ij \radpsi{\iip}}^2_{W^{\ireg''}_{-2}(\Stautimeint)}
  +\norm{r\LxiOp^\ij \VOp\radpsi{\iip}}^2_{W^{\ireg''-1}_{\alpha-2}(\Stautimeint)}
  +\norm{\LxiOp^\ij \radpsi{\iip}}^2_{W^{\ireg''-1}_{\alpha-3}(\Dtautimeint)}
\Big)\notag\\
\lesssim{}& \timefunc^{\alpha-10+9\delta+(2-2\delta)\ii-(2-2\delta)\ij}\NEWinienergyminustwo{\ireg} ,
\label{eq:ImproEnerDecPsi234copy}
\end{align}
and pointwise decay estimates for 
$\timefunc\geq \timefunc_0$
\begin{align}
\sum_{\iip=0}^{\ii}
  \absHighOrder{\LxiOp^\ij \radpsi{\iip}}{\ireg''}{\rescaledOps}
\lesssim{}& r v^{-1} \timefunc^{-(1-\delta)(\frac{9}{2}+\ij-\ii)+\delta} (\NEWinienergyminustwo{\ireg})^{1/2}.
\label{eq:pointwisePsi234copy}
\end{align}
Moreover, we have for 
$\timefunc\geq \timefunc_0$ that
\begin{subequations}\label{eq:psi01ImproInt}
\begin{align}
\absHighOrder{\LxiOp^\ij \psibase[-2]}{\ireg''}{\rescaledOps} \lesssim{}&  r^{-1+2\delta} v^{-1}\timefunc^{-(1-\delta)(\frac{5}{2}+\ij)+\delta}(\NEWinienergyminustwo{\ireg})^{1/2},
\label{eq:psi0PWint2}\\
\norm{\pt^j \psibase[-2]}^2_{W_{\alpha+1-3\delta}^{ \ireg''}(\Dtautimeint)}
\lesssim {}&\timefunc^{-(6+2j)(1-\delta)+\alpha}\NEWinienergyminustwo{\ireg},
\label{eq:Psi0InteriorMoraDecay}\\
\absHighOrder{\LxiOp^\ij \radpsi{1}}{\ireg''}{\rescaledOps}
 \lesssim{}&  r^{\delta} v^{-1}\timefunc^{-(1-\delta)(\frac{5}{2}+\ij)+\delta}(\NEWinienergyminustwo{\ireg})^{1/2},
\label{eq:psi1PWint1}\\
\norm{\pt^j  \radpsi{1}}^2_{W_{\alpha-1-\delta}^{\ireg''}(\Dtautimeint)}
\lesssim {}&\timefunc^{-(6+2j)(1-\delta)+\alpha}\NEWinienergyminustwo{\ireg}.
\label{eq:Psi1InteriorMoraDecay}
\end{align}
\end{subequations}
\end{enumerate}
\end{thm}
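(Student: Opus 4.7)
The plan is to split the proof into three tasks: (i) the interior estimates \eqref{eq:ImproEnerDecPsi234copy}--\eqref{eq:pointwisePsi234copy} for $\ii\in\{2,3,4\}$; (ii) the exterior estimates \eqref{eq:ImproEnerDecPsiiExt}--\eqref{eq:pointwisePsiiExt} for all $\ii\in\{0,\ldots,4\}$; and (iii) the interior improvements \eqref{eq:psi01ImproInt}. Task (i) is immediate by restriction: both sides of \eqref{eq:ImproEnerDecPsi234copy} and \eqref{eq:pointwisePsi234copy} are dominated by the global analogues \eqref{eq:ImproEnerDecPsi234} and \eqref{eq:pointwisePsi234} from Lemma \ref{lem:ImproEnerDecaypartialt}.

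The main new ingredient is an angular-elliptic reformulation of the first two rows of the system \eqref{eq:rescaledwavesys5eqs}. Using $\widehat\squareS_{-2} = \hROp_s - \TMESOp_{-2}$ and Lemma \ref{lem:S-form} to isolate the spherical principal part $2\hedt\hedt'$, I would rewrite the $\ii=0,1$ equations in the schematic form
\begin{align*}
2\hedt\hedt'\raddegphi{\ii}
={}& 2(a^2+r^2)\YOp\VOp\raddegphi{\ii} + \sum_{\iip=0}^{\ii+1}\mathbf{A}_{\ii\iip}\raddegphi{\iip} + \mathbf{B}_{\ii\ii}\LetaOp\raddegphi{\ii} \\
& + (\text{terms involving at least one }\LxiOp),
\end{align*}
for $\ii\in\{0,1\}$. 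The key structural observation, visible from the matrices in \eqref{eq:rescaledwavesys5eqs}, is that every coupling coefficient on the right either carries an explicit $\LxiOp$ (gaining $\timefunc^{-(1-\delta)}$ by Lemma \ref{lem:ImproEnerDecaypartialt}) or an explicit $r^{-1}$ prefactor, while the term $2(a^2+r^2)\YOp\VOp\raddegphi{0}$ is, modulo lower order, comparable to $\raddegphi{1}$, and similarly $\YOp\VOp\raddegphi{1}$ to $\raddegphi{2}$. Inverting $2\hedt\hedt'$ on each sphere via Lemmas \ref{lem:Ellipticforsecondordersphericalangularop} and \ref{lem:ellip} (which supply a positive lower bound for the spin-weight $-2$ spherical Laplacian) yields an elliptic-in-angle estimate that upgrades control of the right-hand side to control of $\raddegphi{0}$ and $\raddegphi{1}$; the $\YOp\VOp$ and $\hedt\hedt'$ derivatives entering the Morawetz density can also be recovered from the equation.

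In the exterior region $r\geq \timefunc$, every $r^{-1}$ prefactor is bounded by $\timefunc^{-1}$, so the source contributions gain precisely the factor $\timefunc^{-(2-2\delta)}$ that separates consecutive $\ii$-levels in \eqref{eq:ImproEnerDecPsiiExt}. Combining this with the already-proved bounds \eqref{eq:ImproEnerDecPsi234} for $\ii\in\{2,3,4\}$ and performing downward induction on $\ii$ closes \eqref{eq:ImproEnerDecPsiiExt}; the pointwise estimate \eqref{eq:pointwisePsiiExt} then follows by the Sobolev estimate \eqref{eq:SobolevOnStautHolder} (with $\gamma=\delta$) and the anisotropic spacetime Sobolev estimate \eqref{eq:anisotropicSpaceTimeSobolev}, exactly as in the proof of Lemma \ref{lem:ImproEnerDecaypartialt}. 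The transition flux required to pass from $\Dtautext$ to $\Stautimeint$ is controlled by Lemma \ref{lem:Transitionfluxcontrolledbybulkexterior}. For the interior improvements \eqref{eq:psi01ImproInt}, the same elliptic identity is used, but now the $r^{-1}$ source prefactors yield $r$-weighted rather than $\timefunc$-weighted gains; integrating these weighted densities with weight $\alpha+1-3\delta$ (resp.\ $\alpha-1-\delta$) over $\Dtautimeint$ and applying the interior restriction of the Morawetz bounds for $\raddegphi{\iip}$ with $\iip\in\{2,3,4\}$ produces \eqref{eq:Psi0InteriorMoraDecay}--\eqref{eq:Psi1InteriorMoraDecay}. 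The corresponding pointwise bounds \eqref{eq:psi0PWint2}, \eqref{eq:psi1PWint1} follow by interpolating between the exterior pointwise bound evaluated on the transition surface $\Xi = \{r=\timefunc\}$ and the interior weighted energy, with the $r^{-1+2\delta}$ and $r^\delta$ prefactors arising from the H\"older-type interpolation in Lemma \ref{lem:SobolevOnStaut} (equation \eqref{eq:SobolevOnStautHolder}).

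The delicate point will be the simultaneous elliptic inversion at $\ii=0,1$: the $\mathbf{A}_{01}$ coupling is only $\bigOAnalytic(r^{-1})$, which is a genuine improvement but not a derivative gain, so a single-step inversion risks losing a full $\VOp$ derivative. I plan to treat the $(\raddegphi{0},\raddegphi{1})$ pair as a $2\times 2$ system whose angular principal part is block-diagonal with diagonal entries $2\hedt\hedt'$, inverting jointly and bootstrapping in regularity so that the repeated use of Lemma \ref{lem:ImproEnerDecaypartialt} to absorb $\LxiOp$-source terms consumes at most $3$ derivatives per $\ij$, which is absorbed into the buffer $\ireg \geq 3\ij + K$. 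The $r^{-1}$ structural factors must be tracked carefully through every commutator with operators in $\rescaledOps$ to ensure they are not lost when passing to higher regularity.
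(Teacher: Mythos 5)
Your overall plan coincides with the paper's: the estimates for $\ii\in\{2,3,4\}$ are restrictions of lemma \ref{lem:ImproEnerDecaypartialt}, and the improvements for $\ii\in\{0,1\}$ come from rewriting the first two rows of \eqref{eq:rescaledwavesys5eqs} as spherical elliptic equations whose sources gain either a $\LxiOp$ or an $r^{-1}$, with $r^{-1}\leq\timefunc^{-1}$ in the exterior and $r$-weight gains in the interior. However, there is a genuine gap at the central step, namely your treatment of the term $2(a^2+r^2)\YOp\VOp\raddegphi{\ii}$. Your claim that this term is ``modulo lower order, comparable to $\raddegphi{\ii+1}$'' is false: substituting $\VOp\raddegphi{\ii}=\tfrac{M\Delta}{2(r^2+a^2)^2}\raddegphi{\ii+1}$ leaves $\bigOAnalytic(r^{-1})M\raddegphi{\ii+1}+\tfrac{M\Delta}{r^2+a^2}\YOp\raddegphi{\ii+1}$, and the second piece is an \emph{ungained} radial derivative at level $\ii+1$; if it really entered the source as $\bigOAnalytic(1)\YOp\raddegphi{\ii+1}$ (or as $\bigOAnalytic(1)\raddegphi{\ii+1}$, as you assert), the elliptic inversion would return only the level-$(\ii+1)$ decay rate and no improvement at all — in particular the two-step gain needed for \eqref{eq:pointwisePsiiExt} at $\ii=0$ could not be reached. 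The improvement is only won after converting $\tfrac{\Delta}{2(a^2+r^2)}\YOp$ via the identity \eqref{eq:LxiToVOpYOpLeta} into $\LxiOp-\VOp+\tfrac{a}{a^2+r^2}\LetaOp$, so that every resulting term carries an explicit $\LxiOp$ or an explicit $r^{-1}$ against $\raddegphi{\ii+1}$, $r\VOp\raddegphi{\ii+1}$, $\LetaOp\raddegphi{\ii+1}$; this is exactly the content of the explicit equations \eqref{eq:Psi01EllipExt-explicit}--\eqref{eq:Psi01EllipExt} in the paper, and your proposal neither performs nor mentions this conversion.

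A second, related inaccuracy: your blanket statement that ``every coupling coefficient on the right either carries an explicit $\LxiOp$ or an explicit $r^{-1}$ prefactor'' fails for the downward couplings — the $\raddegphi{1}$ equation contains $\bigOAnalytic(1)\raddegphi{0}$ and $\bigOAnalytic(1)\LetaOp\raddegphi{0}$ with no gain. This is harmless but it dictates the order of the bootstrap: one must first improve $\raddegphi{0}$ (whose equation has only gained sources), then $\raddegphi{1}$ using the improved $\raddegphi{0}$, and then make a \emph{second} pass through the $\raddegphi{0}$ equation to reach the $\ii=0$ rate; a single ``downward induction on $\ii$'' starting at $\ii=1$, or a single joint $2\times2$ inversion, delivers only one step of gain for both components. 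Your proposed lower-triangular joint treatment handles the first issue, but you should state the zig-zag $0\to1\to0$ explicitly. Finally, for the interior pointwise bounds \eqref{eq:psi0PWint2} and \eqref{eq:psi1PWint1} the paper obtains the $r^{-1+2\delta}$ and $r^{\delta}$ prefactors directly from the elliptic estimates (the $r^{-1}$ source factors now improve $r$-decay rather than $\timefunc$-decay), not from an interpolation against data on the transition surface; your interpolation route via \eqref{eq:SobolevOnStautHolder} is not obviously able to produce these $r$-weights and would need justification, and the interior Morawetz bounds \eqref{eq:Psi0InteriorMoraDecay}--\eqref{eq:Psi1InteriorMoraDecay} additionally require the dyadic-in-time summation used in the paper to trade $\timefunc$-decay for the $r^{\alpha}$ weight.
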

\begin{proof}
We prove point \eqref{point:psiiExt} first. Note that the estimates \eqref{eq:ImproEnerDecPsiiExt} and \eqref{eq:pointwisePsiiExt} have been proven for $i=2,3,4$ in lemma \ref{lem:ImproEnerDecaypartialt}. In the $i=0,1$ cases, these estimates improve the pointwise-in-$\timefunc$ decay compared to the pointwise estimate \eqref{eq:pointwisePsi234} and Morawetz estimate \eqref{eq:ImproEnerDecPsi234}, hence they hold trivially if $r\leq 10M$ since then $\timefunc \leq 10M$ is finite. Therefore, we shall only consider the exterior region intersected with $r\geq 10M$. Starting from the first two lines of \eqref{eq:rescaledwavesys5eqs} and making use of \eqref{eq:rescaledWaveOperator}, we get the following elliptic equations with source terms on the right 
\begin{subequations}\label{eq:Psi01EllipExt-explicit}
\begin{align}
\hspace{6ex}&\hspace{-6ex}2\hedt\hedtp\raddegphi{0}
 - 4 \raddegphi{0}\nonumber\\
={}&-2 a\LetaOp\LxiOp\raddegphi{0}
 + 2 M\LxiOp\raddegphi{1}
 + \frac{2 M a}{a^2 + r^2}\LetaOp\raddegphi{1}
 + \frac{6 a r}{a^2 + r^2}\LetaOp\raddegphi{0}
 - 2 M\VOp\raddegphi{1}\nonumber\\
& - \tfrac{1}{4} \bigl(4 a^2 + 9 (\kappa_{1}{} -  \bar{\kappa}_{1'}{})^2\bigr)\LxiOp\LxiOp\raddegphi{0}
 - 6 (\kappa_{1}{} -  \bar{\kappa}_{1'}{})\LxiOp\raddegphi{0}
 -  \frac{3 (a^4 + a^2 r^2 - 2 M r^3) \raddegphi{0}}{(a^2 + r^2)^2}\nonumber\\
& -  \frac{2 M (M a^2 + a^2 r - 3 M r^2 + r^3) \raddegphi{1}}{(a^2 + r^2)^2},\\
\hspace{6ex}&\hspace{-6ex}2\hedt\hedtp\raddegphi{1}
 - 6 \raddegphi{1}\nonumber\\
={}&-2 a\LetaOp\LxiOp\raddegphi{1}
 + 2 M\LxiOp\raddegphi{2}
 + \frac{2 M a}{a^2 + r^2}\LetaOp\raddegphi{2}
 + \frac{2 a r}{a^2 + r^2}\LetaOp\raddegphi{1}
 + \frac{6 a (a^2 -  r^2)}{M (a^2 + r^2)}\LetaOp\raddegphi{0}\nonumber\\
& - 2 M\VOp\raddegphi{2}
 - \tfrac{1}{4} \bigl(4 a^2 + 9 (\kappa_{1}{} -  \bar{\kappa}_{1'}{})^2\bigr)\LxiOp\LxiOp\raddegphi{1}
 - 6 (\kappa_{1}{} - \bar{\kappa}_{1'}{})\LxiOp\raddegphi{1}\nonumber\\
& -  \frac{6 r (- a^4 - 3 M a^2 r -  a^2 r^2 + M r^3) \raddegphi{0}}{M (a^2 + r^2)^2}
 -  \frac{(7 a^4 - 20 M a^2 r + 7 a^2 r^2 + 6 M r^3) \raddegphi{1}}{(a^2 + r^2)^2}.
\end{align}
\end{subequations}
It is then manifest that
\begin{subequations}\label{eq:Psi01EllipExt}
\begin{align}\label{eq:Psi0EllipExt}
\hspace{4ex}&\hspace{-4ex}(2 \hedt \hedtp-4) \raddegphi{0}\notag\\
={}&
\bigOAnalytic(r^{-2})M^2\LetaOp\raddegphi{1}+\bigOAnalytic(r^{-1}) Mr\VOp\raddegphi{1}+\bigOAnalytic(r^{-1})M \raddegphi{1} +\bigOAnalytic(r^{-1})M\LetaOp\raddegphi{0}\notag\\
 &
+\bigOAnalytic(r^{-1})M\raddegphi{0}
+\bigOAnalytic(1) M\LetaOp\LxiOp\raddegphi{0}+\bigOAnalytic(1) M^2\LxiOp\LxiOp\raddegphi{0}+\bigOAnalytic(1) M\LxiOp\raddegphi{0}\notag\\
 &+\bigOAnalytic(1)M\LxiOp\raddegphi{1},\\
 \label{eq:Psi1EllipExt}
\hspace{4ex}&\hspace{-4ex}(2 \hedt \hedtp-6) \raddegphi{1}\notag\\
={}&
\bigOAnalytic(r^{-2}) M^2\LetaOp\raddegphi{2}+\bigOAnalytic(r^{-1}) Mr\VOp\raddegphi{2}+\bigOAnalytic(r^{-1})M \LetaOp\raddegphi{1}+\bigOAnalytic(r^{-1})M\raddegphi{1}\notag\\
 &+\bigOAnalytic(1)M \LetaOp\LxiOp\raddegphi{1}+\bigOAnalytic(1) M^2\LxiOp\LxiOp\raddegphi{1}+\bigOAnalytic(1) M\LxiOp\raddegphi{1}+\bigOAnalytic(1)M\LxiOp\raddegphi{2}\notag\\
 &+\bigOAnalytic(1)\raddegphi{0}+\bigOAnalytic(1)\LetaOp\raddegphi{0},
\end{align}
\end{subequations}
and commuting with $r\VOp$ gives 
\begin{subequations}\label{eq:rVPsi01EllipExt}
\begin{align}\label{eq:rVPsi0EllipExt}
\hspace{4ex}&\hspace{-4ex}(2 \hedt \hedtp-4) r\VOp\raddegphi{0}\notag\\
={}&
\bigOAnalytic(r^{-2})M^2\LetaOp r\VOp\raddegphi{1}
+\bigOAnalytic(r^{-2})M^2\LetaOp \raddegphi{1}
+\bigOAnalytic(r^{-1}) Mr\VOp(r\VOp\raddegphi{1})
+\bigOAnalytic(r^{-1}) Mr\VOp\raddegphi{1}
\notag\\
&
+\bigOAnalytic(r^{-1})M r\VOp\raddegphi{1}+\bigOAnalytic(r^{-1})M \raddegphi{1} +\bigOAnalytic(r^{-1})M\LetaOp r\VOp\raddegphi{0}\notag\\
&
+\bigOAnalytic(r^{-1})M\LetaOp \raddegphi{0}
+\bigOAnalytic(r^{-1})Mr\VOp\raddegphi{0}
+\bigOAnalytic(r^{-1})M\raddegphi{0}\notag\\
 &+\bigOAnalytic(1) M\LetaOp\LxiOp r\VOp\raddegphi{0}
 +\bigOAnalytic(1) M^2\LxiOp\LxiOp r\VOp\raddegphi{0}
 +\bigOAnalytic(1) M\LxiOp r\VOp\raddegphi{0}
 +\bigOAnalytic(1)M \LxiOp r\VOp\raddegphi{1},\\
 \label{eq:rVPsi1EllipExt}
\hspace{4ex}&\hspace{-4ex}(2 \hedt \hedtp -6) r\VOp\raddegphi{1}\notag\\
={}&
\bigOAnalytic(r^{-1}) Mr\VOp (r\VOp\raddegphi{2})
+\bigOAnalytic(r^{-1}) Mr\VOp \raddegphi{2}
+\bigOAnalytic(r^{-2}) M^2\LetaOp r\VOp\raddegphi{2}
+\bigOAnalytic(r^{-2}) M^2\LetaOp\raddegphi{2}\notag\\
&
+\bigOAnalytic(r^{-1})M \LetaOp r\VOp\raddegphi{1}
+\bigOAnalytic(r^{-1})M \LetaOp \raddegphi{1}
+\bigOAnalytic(r^{-1})Mr\VOp\raddegphi{1}
+\bigOAnalytic(r^{-1})M\raddegphi{1}\notag\\
 &+\bigOAnalytic(1)M \LetaOp\LxiOp r\VOp\raddegphi{1}
 +\bigOAnalytic(1) M^2\LxiOp\LxiOp r\VOp\raddegphi{1}
 +\bigOAnalytic(1) M\LxiOp r\VOp\raddegphi{1}
 +\bigOAnalytic(1)M\LxiOp r\VOp\raddegphi{2}\notag\\
 &+\bigOAnalytic(1)r\VOp\raddegphi{0}
 +\bigOAnalytic(r^{-1})M\raddegphi{0}
 +\bigOAnalytic(1)\LetaOp r\VOp\raddegphi{0}
+\bigOAnalytic(r^{-1})M\LetaOp \raddegphi{0}.
\end{align}
\end{subequations}
The operators that appear on the left-hand side of \eqref{eq:Psi01EllipExt} and \eqref{eq:rVPsi01EllipExt} are $2\hedt\hedtp-4$ and $2\hedt\hedtp-6$ which are second-order, self-adjoint elliptic operators. By an argument similar to the derivation of \eqref{eq:SphereHardyPosSpin}, one finds that $\hedt\hedtp\,$ has spectrum bounded above by $-(|s|+s)/2$. Thus, $-(2\hedt\hedtp-4)$ and $-(2\hedt\hedtp-6)$ have spectrum with a positive lower bound. 
On the right-hand sides of  both \eqref{eq:Psi0EllipExt} and \eqref{eq:rVPsi0EllipExt}, the source terms involving $\LxiOp$ derivatives have better $t^{-1+\delta}$ pointwise decay, and when obtaining pointwise, energy, and Morawetz estimates for the terms on the right-hand side, $r$ inverse coefficients will give $\timefunc$ inverse decay since $r\geq \timefunc$ in the exterior region. 
The properties of $2\hedt\hedtp-4$ allows us to apply an elliptic estimate to \eqref{eq:Psi0EllipExt}, and this together with the pointwise estimate \eqref{eq:pointwisePsi234} yields
\begin{align}\label{eq:psi0PWext1}
\absHighOrder{\LxiOp^\ij \psibase[-2]}{\ireg-17-3j}{\rescaledOps} \lesssim r v^{-1} \timefunc^{-(1-\delta)(7/2+\ij)+\delta}(\NEWinienergyminustwo{\ireg})^{1/2}.
\end{align}
Here, the nonzero $j$ cases come from the fact that $\pt$ is a symmetry of the systems \eqref{eq:Psi01EllipExt} and \eqref{eq:rVPsi01EllipExt}.
We can also obtain an energy and Morawetz estimate for $\delta\leq \alpha\leq 2-\delta$ from the energy and Morawetz estimate \eqref{eq:ImproEnerDecPsi234}  that
\begin{align}\label{eq:psi0Moraext1}
\hspace{4ex}&\hspace{-4ex}\norm{\LxiOp^\ij \psibase[-2]}^2_{W^{\ireg-10-3j}_{-2}(\Stautext)}
  +\norm{r\LxiOp^\ij \VOp\psibase[-2]}^2_{W^{\ireg-11-3j}_{\alpha-2}(\Stautext)}
  + \norm{\LxiOp^\ij   \psibase[-2] }^2_{W_{\alpha-3}^{ \ireg-11-3j}(\Dtautext)}\notag\\
\lesssim {}&\timefunc^{-(1-\delta)(8+2j)+\alpha-\delta}\NEWinienergyminustwo{\ireg}.
\end{align}
Substituting these two estimates into \eqref{eq:Psi1EllipExt} and \eqref{eq:rVPsi1EllipExt}, and arguing as above, this yields improved exterior estimates for $\pt^j \radpsi{1}$ for $\delta\leq \alpha\leq 2-\delta$:
\begin{subequations}
\begin{align}\label{eq:psi1PWext1}
\hspace{6ex}&\hspace{-6ex}\absHighOrder{\LxiOp^\ij \radpsi{1}}{\ireg-18-3j}{\rescaledOps} \lesssim{} r v^{-1} \timefunc^{-(1-\delta)(7/2+\ij)+\delta}(\NEWinienergyminustwo{\ireg})^{1/2},\\
\label{eq:psi1Moraext1}
\hspace{6ex}&\hspace{-6ex}\norm{\LxiOp^\ij \radpsi{1}}^2_{W^{\ireg-11-3j}_{-2}(\Stautext)}
  +\norm{r\LxiOp^\ij \VOp\radpsi{1}}^2_{W^{\ireg-12-3j}_{\alpha-2}(\Stautext)}
  +\norm {\LxiOp^\ij  \radpsi{1} }^2_{W_{\alpha-3}^{ \ireg-12-3j}(\Dtautext)}\notag\\
\lesssim {}&\timefunc^{-(1-\delta)(8+2j)+\alpha-\delta}\NEWinienergyminustwo{\ireg}.
\end{align}
\end{subequations}
The above two estimates together prove the $i=1$ case of the estimates \eqref{eq:ImproEnerDecPsiiExt} and \eqref{eq:pointwisePsiiExt}. 
From the preliminary estimates \eqref{eq:psi0PWext1} and \eqref{eq:psi0Moraext1} for $\LxiOp^j\psibase[-2]$, from estimates \eqref{eq:psi1PWext1} and \eqref{eq:psi1Moraext1} for $\LxiOp^j \radpsi{1}$, from equations \eqref{eq:Psi0EllipExt} and \eqref{eq:rVPsi0EllipExt}, and from elliptic estimates, there are the following improved estimates for $\LxiOp^j\psibase[-2]$
\begin{subequations}
\begin{align}
\label{eq:psi0PWext2}
\hspace{6ex}&\hspace{-6ex}\absHighOrder{\LxiOp^\ij \psibase[-2]}{\ireg-21-3j}{\rescaledOps} \lesssim{} r v^{-1} \timefunc^{-(1-\delta)(9/2+\ij)+\delta}(\NEWinienergyminustwo{\ireg})^{1/2},\\
\label{eq:psi1Moraext2}
\hspace{6ex}&\hspace{-6ex}\norm{\LxiOp^\ij \psibase[-2]}^2_{W^{\ireg-14-3j}_{-2}(\Stautext)}
  +\norm{r\LxiOp^\ij \VOp\psibase[-2]}^2_{W^{\ireg-15-3j}_{\alpha-2}(\Stautext)}
  +\norm{\LxiOp^\ij  \psibase[-2] }^2_{W_{\alpha-3}^{ \ireg-15-3j}(\Dtautext)}\notag\\
\lesssim {}&\timefunc^{-(1-\delta)(10+2j)+\alpha-\delta}\NEWinienergyminustwo{\ireg},
\end{align}
\end{subequations}
which is the $i=0$ case of \eqref{eq:ImproEnerDecPsiiExt} and \eqref{eq:pointwisePsiiExt}.

Let us turn to point \eqref{point:psiiInt} now. The estimates \eqref{eq:ImproEnerDecPsi234copy}  and
\eqref{eq:pointwisePsi234copy}  are proved in lemma \ref{lem:ImproEnerDecaypartialt}, so we consider only the estimates \eqref{eq:psi01ImproInt}. 
We note that these estimates only improve the $r$ decay compared to the pointwise estimate \eqref{eq:pointwisePsi234} and Morawetz estimate \eqref{eq:ImproEnerDecPsi234}, hence in the following proof we will restrict to $r\geq 10M$ region where the left-hand sides of \eqref{eq:Psi01EllipExt} are both strongly elliptic operators acting on the field.

From the pointwise estimate \eqref{eq:pointwisePsi234} and Morawetz estimate \eqref{eq:ImproEnerDecPsi234}, an elliptic estimate applied to \eqref{eq:Psi0EllipExt} gives that 
\begin{subequations}
\begin{align}
\label{eq:psi0PWint1}
\absHighOrder{\LxiOp^\ij \psibase[-2]}{\ireg-17-3j}{\rescaledOps} \lesssim{}& r^{\delta} v^{-1}\timefunc^{-(1-\delta)(\frac{5}{2}+\ij)+\delta}(\NEWinienergyminustwo{\ireg})^{1/2},\\
\label{eq:psi0Moraint12}
\Vert\pt^j  \psibase[-2] \Vert^2_{W_{-1-\delta}^{ \ireg-11-3j}(\Dtautimeint)}
\lesssim {}&\timefunc^{-(1-\delta)(6+2j)}\NEWinienergyminustwo{\ireg}.
\end{align}
\end{subequations}
Turning to \eqref{eq:Psi1EllipExt}, we make use of these estimates of $\pt^j \psibase[-2]$, the pointwise estimate \eqref{eq:pointwisePsi234}, and Morawetz estimate \eqref{eq:ImproEnerDecPsi234}, and obtain from elliptic estimates that
\begin{subequations}
\begin{align}
\absHighOrder{\LxiOp^\ij \radpsi{1}}{\ireg-18-3j}{\rescaledOps} \lesssim{}&  r^{\delta} v^{-1}\timefunc^{-(1-\delta)(\frac{5}{2}+\ij)+\delta}(\NEWinienergyminustwo{\ireg})^{1/2},\\
\Vert\pt^j  \radpsi{1} \Vert^2_{W_{-1-\delta}^{ \ireg-12-3j}(\Dtautimeint)}
\lesssim {}&\timefunc^{-(1-\delta)(6+2j)}\NEWinienergyminustwo{\ireg}.
\label{eq:psi1Moraint12}
\end{align}
\end{subequations}
Notice that the first estimate is exactly the estimate \eqref{eq:psi1PWint1}.
From the estimate \eqref{eq:psi1Moraint12}, it follows that for any $l\in {\Naturals}$ and $0\leq \alpha< (6+2j)(1-\delta)$,
\begin{align}
\Vert\pt^j  \radpsi{1} \Vert^2_{W_{\alpha-1-\delta}^{\ireg-12-3j}(\Omega^{\interior}_{2^l\timefunc,2^{l+1}\timefunc})}
\lesssim {}&(2^l\timefunc)^{-(6+2j)(1-\delta)+\alpha}\NEWinienergyminustwo{\ireg}.
\end{align}
Summing over these estimates from $l=0$ to $\infty$, this proves \eqref{eq:Psi1InteriorMoraDecay}.

In the same manner, we obtain the preliminary estimate for $\psibase[-2]$ that, for $0\leq \alpha< (6+2j)(1-\delta)$,
\begin{align}
\Vert\pt^j  \psibase[-2] \Vert^2_{W_{\alpha-1-\delta}^{\ireg-11-3j}(\Dtautimeint)}
\lesssim {}&\timefunc^{-(6+2j)(1-\delta)+\alpha}\NEWinienergyminustwo{\ireg}.
\label{eq:Psi0InteriorMoraDecayv1}
\end{align}
Substituting the pointwise estimates \eqref{eq:psi0PWint1} and \eqref{eq:psi1PWint1} and Morawetz estimates \eqref{eq:Psi0InteriorMoraDecayv1} and \eqref{eq:Psi1InteriorMoraDecay} back in to \eqref{eq:Psi0EllipExt}, we conclude from elliptic estimates that estimates \eqref{eq:psi0PWint2} and \eqref{eq:Psi0InteriorMoraDecay} hold.
\end{proof}

\section{The spin-weight \texorpdfstring{$+2$}{+2} Teukolsky equation} 
\label{sec:spin+2TeukolskyEstimates}
In this section, we consider the field $\psibase[+2]$ that satisfies the Teukolsky equation \eqref{eq:TeukolskyRegular+2}. 
In section \ref{sec:estimatesForTheMetric}, the condition that we need is that $\psibase[2]$ satisfies a pointwise decay condition. In definition \ref{ass:BEAMspin+2}, we introduce two BEAM conditions and the necessary pointwise decay condition. The goal of this section is to show that the first BEAM condition implies the second and that either of the BEAM conditions imply the pointwise decay condition. The second BEAM condition is proved in \cite{2017arXiv170807385M}. See also \cite{2017arXiv171107944D} for a related result. We expect that the stronger first BEAM condition should hold.

\subsection{Basic assumptions}
\label{sec:spin+2TeukolskyAssumptions}
Let us first introduce two different basic energy and Morawetz (BEAM) conditions and one pointwise condition.
\begin{definition}[BEAM conditions and pointwise condition for  $\protect{\psibase[+2]}$ ]
\label{ass:BEAMspin+2}
\standardPlusTwoHypothesisNotBEAM 
For a spin-weighted scalar $\varphi$ and $\ireg\in\Integers^{+}$, let the energies $E^{\ireg}_{\Staut}(\varphi)$ and $E^{\ireg}_{\Stauini}(\varphi)$  be defined as in definition \ref{def:basichighorderenergy}, and the spacetime integral $\BEAMbulk^{\ireg}_{\timefunc_1, \timefunc_2}[\varphi]$ be defined as in definition \ref{def:Wknorm}. Two BEAM conditions and one pointwise condition are defined to be that
\begin{enumerate}
\item (First BEAM condition) \label{point:BEAMspin+2withoutDelta}
for all sufficiently large $\ireg\in \Naturals$ and any $\timefunc_2\geq\timefunc_1\geq\timefunc_0$, 
\begin{align}
&\sum_{i=0}^2
 \left( E^{\ireg}_{\Stau}(M^{4-i}(r^2\YOp)^i (r^{-4}\psibase[+2]))+\BEAMbulk^{\ireg}_{\timefunc_1, \timefunc_2}(M^{4-i}(r^2\YOp)^i (r^{-4}\psibase[+2])) \right)\nonumber\\
&\quad \lesssim{} \sum_{i=0}^2 E^{\ireg}_{\Staui}(M^{4-i}(r^2\YOp)^i (r^{-4}\psibase[+2])).
\label{eq:BEAMassumptionSpin+2withoutDelta}
\end{align}

\item (Second BEAM condition) \label{point:BEAMspin+2withDelta}
there is a $\delta_0\in (0,1/2)$ such that for all sufficiently large $\ireg\in \Naturals$  and any $\timefunc_2\geq\timefunc_1\geq\timefunc_0$, 
\begin{align}
&\sum_{i=0}^1\left(
  E^{\ireg}_{\Stau}(M^{i+\frac{\delta_0}{2}}r^{-\frac{\delta_0}{2}}\YOp^i \psibase[+2])
  +\BEAMbulk^{\ireg}_{\timefunc_1, \timefunc_2}(M^{i+\frac{\delta_0}{2}}r^{-\frac{\delta_0}{2}}\YOp^i \psibase[+2]) 
\right)\nonumber\\
&+E^{\ireg}_{\Stau}(M^2\YOp^2 \psibase[+2])
  +\BEAMbulk^{\ireg}_{\timefunc_1, \timefunc_2}(M^2\YOp^2 \psibase[+2])  \nonumber\\
&\quad \lesssim{} \sum_{i=0}^1 E^{\ireg}_{\Staui}(M^{i+\frac{\delta_0}{2}}r^{-\frac{\delta_0}{2}}\YOp^i\psibase[+2]) + E^{\ireg}_{\Staui}(M^2\YOp^2\psibase[+2]) .
\label{eq:BEAMassumptionSpin+2}
\end{align}

\item (Pointwise condition)\label{condition:spin+2goestozero}
for all sufficiently large $\ireg\in \Naturals$, 
\begin{align}
\lim_{\timefunc\rightarrow\pm \infty}\big(\absScri{
\psibase[+2] }{\ireg} \big{|}_{\Scri^+}\big)\rightarrow 0 .
\end{align}
\end{enumerate}
\end{definition}

The pointwise condition \eqref{condition:spin+2goestozero} in definition \ref{ass:BEAMspin+2} is one of the basic assumptions used in section \ref{sec:estimatesForTheMetric}, and either of the two BEAM conditions in the above definition together with the assumption that $\inienergyplustwo{\ireg}$ is bounded are shown in theorem \ref{thm:DecayEstimatesSpin+2} to imply this pointwise condition.

\begin{remark} 
Compared to the quantities introduced by Ma in \cite[Appendix A]{SiyuanMathesis}, which are denoted here by $\hat\phi^{i,\textrm{Ma}}_{+2}$,
we have $\psibase[+2]=\tfrac{1}{4} (a^2+r^2)^{5/2} \kappa_{1}{}^2 \bar{\kappa}_{1'}{}^{-2}\hat\phi^{0,\textrm{Ma}}_{+2}$ where the first factor $\tfrac{1}{4} (a^2+r^2)^{5/2}$ is to make the quantity nondegenerate at future null infinity, and the other factor $ \kappa_{1}{}^2 \bar{\kappa}_{1'}{}^{-2}$ corresponds to a spin rotation of the frame.
The quantities $\hat\phi_{+2}^{i,\textrm{Ma}}$ $(i=0,1,2)$ and the quantity $\psibase[+2]$ are related by
\begin{align}
r\hat\phi_{+2}^{i,\textrm{Ma}}={}\sum_{j=0}^i\bigOAnalytic(1)(M^{-1}r^2 \YOp )^j (M^4r^{-4}\psibase[+2]).
\label{eq:relationofSpin+2andMaNotation}
\end{align}
\end{remark} 

As a preliminary, the following relations between the two BEAM conditions are useful.
\begin{lemma}
\label{lem:BEAMwithoutDeltaimpliesBEAMwithDeltaspin+2}
Let $0<\delta_0<1/2$ be fixed. The BEAM condition \eqref{point:BEAMspin+2withoutDelta} in definition \ref{ass:BEAMspin+2} implies BEAM condition \eqref{point:BEAMspin+2withDelta} in definition \ref{ass:BEAMspin+2}.
\end{lemma}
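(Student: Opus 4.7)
The plan is to invert the triangular algebraic relationship between the BEAM (1) and BEAM (2) quantities and verify that the weights in the corresponding energy and Morawetz integrands are compatible. Set $\Phi_i = M^{4-i}(r^2\YOp)^i(r^{-4}\psibase[+2])$ for $i=0,1,2$. A direct calculation using $\YOp=-\partial_r$ shows that, in mass-normalized units,
\begin{subequations}
\begin{align}
\psibase[+2] &= r^{4}\Phi_{0},\\
\YOp\psibase[+2] &= r^{2}\Phi_{1}-4r^{3}\Phi_{0},\\
\YOp^{2}\psibase[+2] &= \Phi_{2}-6r\Phi_{1}+12r^{2}\Phi_{0}.
\end{align}
\end{subequations}
Restoring factors of $M$, this expresses the three BEAM~(2) quantities $M^{\delta_{0}/2}r^{-\delta_{0}/2}\psibase[+2]$, $M^{1+\delta_{0}/2}r^{-\delta_{0}/2}\YOp\psibase[+2]$ and $M^{2}\YOp^{2}\psibase[+2]$ as linear combinations of $M^{4}\Phi_{0}$, $M^{3}\Phi_{1}$ and $M^{2}\Phi_{2}$ with coefficients of the form $M^{a}r^{b-\delta_{0}/2}$, where $b\leq 4$ in the top-order term for each~$i$.

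Next I would substitute these inversion formulas into $E^{\ireg}_{\Staut}$ and $\BEAMbulk^{\ireg}_{\timefunc_{1},\timefunc_{2}}$ of the BEAM~(2) variables and differentiate using $\VOp,\YOp,\hedt,\hedt'$. Since $\hedt$ and $\hedt'$ annihilate $r$, and since $\YOp(r)=-1$, $\VOp(r)=\Delta/(2(a^{2}+r^{2}))=\bigOAnalytic(1)$, each derivative produces two types of terms: a leading term where the derivative falls on $\Phi_{j}$ (reproducing the corresponding derivative in $E^{\ireg}_{\Staut}(\Phi_{j})$ times an $r$-power inherited from the inversion coefficient), and lower-order terms where the derivative lands on the $r$-coefficient and loses one $r$-power. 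The role of the weight $r^{-\delta_{0}/2}$ in BEAM (2) (for the $i=0,1$ entries) is precisely to ensure that the leading $r$-powers inherited from the inversion formulas fall just below the break-even exponent, so that the leading terms are integrable against the hyperboloidal measure in $E^{\ireg}_{\Staut}(\Phi_{j})$ and the bulk measure in $\BEAMbulk^{\ireg}_{\timefunc_{1},\timefunc_{2}}(\Phi_{j})$ provided by BEAM (1).

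The lower-order terms are then absorbed using a weighted one-dimensional Hardy inequality (lemma~\ref{lem:HardyOnStautRc}), applied on each hyperboloid to trade one $r$-power of decay for one $\VOp$-derivative, together with the bulk Morawetz control from BEAM (1) for the $r$-weight that $\Hardy$ introduces. Summing over the (at most two) lower-order generations and over the regularity index $\ireg$, with a small loss of derivatives, gives the desired estimate~\eqref{eq:BEAMassumptionSpin+2} in terms of~\eqref{eq:BEAMassumptionSpin+2withoutDelta}.

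The main obstacle is the bookkeeping of weights: verifying that the exponent $b-\delta_{0}/2$ arising from the inversion formulas in every term produced by $V,Y,\hedt,\hedt'$ differentiation lies strictly in the admissible range of Hardy-type estimates such as~\eqref{eq:HardyOnHypersurfaceBulkWeights}, and that the resulting constants do not degenerate as $\delta_{0}\downarrow 0$. This is the point at which the restriction $\delta_{0}\in(0,1/2)$ is needed, and where the hypothesis that $\ireg$ be sufficiently large enters (to allow the finite loss of derivatives in the Hardy step).
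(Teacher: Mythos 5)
There is a genuine gap: your argument never uses the Teukolsky equation \eqref{eq:TeukolskyRegular+2}, but the implication cannot be a purely algebraic/Hardy consequence of \eqref{eq:BEAMassumptionSpin+2withoutDelta}. Compare the weights. With the energy of definition \ref{def:basichighorderenergy} on a hyperboloid $\Staut$ (where $\di\timefunc_a\YOp^a\sim 1$, $\di\timefunc_a\VOp^a\sim M^2r^{-2}$), the first BEAM condition controls, at base order, quantities such as $\int\abs{\VOp(r^{-4}\psibase[+2])}^2$, while the second demands control of $\int r^{-\delta_0}\abs{\VOp\psibase[+2]}^2$, i.e.\ an improvement by roughly $r^{8-\delta_0}$ in the quadratic weight (similarly for the $\YOp$ and angular parts, and for the Morawetz bulks). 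Your claim that the inherited $r$-powers ``fall just below the break-even exponent'' so the leading terms are ``integrable against the hyperboloidal measure'' controlled by BEAM (1) is not correct: the energies are quadratic forms with fixed $r$-weights, and no rearrangement of the (correct) inversion formulas $\psibase[+2]=r^4\Phi_0$, etc., lets a norm with weight $r^{8-\delta_0}$ be dominated by one with weight $r^0$. The Hardy estimate of lemma \ref{lem:HardyOnStautRc} cannot close this either: it trades two powers of $r$ for a $\VOp$-derivative \emph{at the improved weight}, and that derivative term is exactly what BEAM (1) fails to control; iterating the Hardy step only pushes the missing weight onto higher derivatives, never producing it. Indeed, for an arbitrary spin-weighted scalar (not a solution), BEAM (1)-type bounds simply do not imply BEAM (2)-type bounds, so any proof must exploit the equation.

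The paper closes this gap by invoking a weighted, almost-critical energy/Morawetz estimate for the spin-$+2$ system in the far region — the adaptation of \cite[Proposition 3.1.2]{SiyuanMathesis} to the hyperboloidal foliation, estimate \eqref{eq:MoraLargerwithDeltaSpin+2}, using the relation \eqref{eq:relationofSpin+2andMaNotation} — which propagates the $r^{4-2i-\delta_0/2}$-weighted energies forward in time from the initial slice, at the cost of boundary and bulk terms supported in a bounded-$r$ region $[\rCutOff-M,\rCutOff]$; only those bounded-$r$ terms are then absorbed using BEAM (1), and the algebraic identities convert the result into the form \eqref{eq:BEAMassumptionSpin+2}. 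This also explains the structure of BEAM (2): its right-hand side contains the \emph{improved}-weight initial energies, so the content of the lemma is weight propagation via the equation, not weight creation from BEAM (1). Finally, the role of $\delta_0$ is to stay strictly below the borderline exponents of that weighted multiplier estimate (where constants degenerate as $\delta_0\downarrow 0$), not to ensure any integrability against the BEAM (1) measures; to repair your proof you would need to prove an analogue of \eqref{eq:MoraLargerwithDeltaSpin+2} (e.g.\ by an $r^p$-type multiplier argument applied to the equations satisfied by $(r^2\YOp)^i(r^{-4}\psibase[+2])$, in the spirit of lemma \ref{lem:GeneralrpSpinWaveHighReg} but with the larger weights), and only then combine it with your algebraic inversion.
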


\begin{proof}
For ease of presentation we will here use mass normalization as in definition~\ref{def:massnormalization}.
The lemma follows from adapting the proof of \cite[Proposition 3.1.2]{SiyuanMathesis} to our hyperboloidal foliation. By arguing in the same way as in the proof of \cite[Proposition 3.1.2]{SiyuanMathesis} except that the integration is over $\DtauRc$, and using the relation \eqref{eq:relationofSpin+2andMaNotation}, one finds that there exists a constant  $\rCutOff\geq 10M$ such that for any $\ireg\geq 1$,
\begin{align}
\hspace{6ex}&\hspace{-6ex}\sum_{i=0}^1\left(
  E^{\ireg}_{\StauR}(r^{4-2i-\frac{\delta_0}{2}}(r^2\YOp)^i (r^{-4}\psibase[+2]))
  +\int_{\DtauR} r^{-3} \sum_{|\mathbf{a}|\leq \ireg}\abs{\unrescaledOps^{\mathbf{a}}(r^{4-2i}(r^2\YOp)^i (r^{-4}\psibase[+2]))}^2 \diFourVol
\right)\nonumber\\
\lesssim{}&
  \sum_{i=0}^1\left(
  E^{\ireg}_{\StauiRc}(r^{4-2i-\frac{\delta_0}{2}}(r^2\YOp)^i (r^{-4}\psibase[+2]))
  + E^{\ireg}_{\StauRcR}(r^{4-2i-\frac{\delta_0}{2}}(r^2\YOp)^i (r^{-4}\psibase[+2]))\right)\nonumber\\
  &+\int_{\DtauRcR} r^{-1} \sum_{|\mathbf{a}|\leq \ireg}\abs{\unrescaledOps^{\mathbf{a}} (r^{4-2i}(r^2\YOp)^i (r^{-4}\psibase[+2]))}^2 \diFourVol .
  \label{eq:MoraLargerwithDeltaSpin+2}
\end{align}
The $\ireg >1$ case here follows from commuting the Killing symmetry $\LxiOp$ (which is timelike for $r\geq \rCutOff-M\geq 9M$) and elliptic estimates. Combining the BEAM condition \eqref{point:BEAMspin+2withoutDelta} with the above estimate \eqref{eq:MoraLargerwithDeltaSpin+2}, and from the following facts
\begin{subequations}
\begin{align}
r^{4-\frac{\delta_0}{2}} (r^{-4}\psibase[+2])={}&\bigOAnalytic(1)r^{-\frac{\delta_0}{2}}\psibase[+2],\\
r^{2-\frac{\delta_0}{2}}(r^2\YOp) (r^{-4}\psibase[+2])={}&\bigOAnalytic(1)r^{-\frac{\delta_0}{2}}\YOp\psibase[+2]+\bigOAnalytic(r^{-1})r^{-\frac{\delta_0}{2}}\psibase[+2],\\
(r^2\YOp)^2 (r^{-4}\psibase[+2])={}&\bigOAnalytic(1)\YOp^2\radpsizero{0
}+\bigOAnalytic(r^{-1+\frac{\delta_0}{2}})r^{-\frac{\delta_0}{2}}\YOp\psibase[+2]+\bigOAnalytic(r^{-2+\frac{\delta_0}{2}})r^{-\frac{\delta_0}{2}}\psibase[+2], 
\end{align}
\end{subequations}
the estimate \eqref{eq:BEAMassumptionSpin+2} is valid.
\end{proof}

\subsection{The estimates}
\label{sec:spin2TeukolskyEstimates}

This section uses the $r^p$ lemma \ref{lem:GeneralrpSpinWaveHighReg} to obtain decay estimates for $\psibase[+2]$. One can perform a rescaling to $\psibase[+2]$ as follows such that the governing equation of the new scalar can be put into the form of \eqref{eq:GeneralrpSpinWave:Assumption} with $\vartheta=0$, to which the $r^p$ lemma \ref{lem:GeneralrpSpinWaveHighReg} can be applied.

\begin{lemma}
Given a spin-weight $+2$ scalar $\psibase[+2]$ that satisfies equation \eqref{eq:TeukolskyRegular+2}, the quantity $\radpsizerores{0}$ defined by
\index{P3phihatSpinPlus2@$\radpsizerores{0}$}
\begin{align}
\radpsizerores{0}={}&\frac{(a^2 + r^2)^{2} \psibase[+2]}{\Delta^2}
\label{eq:DefSingularRadPsizero}
\end{align}
then satisfies
\begin{align}
\widehat\squareS_{+2}(\radpsizerores{0})={}&\frac{8 a r}{a^2 + r^2}\LetaOp\radpsizerores{0}
 -  \frac{8 (M a^2 + a^2 r - 3 M r^2 + r^3)}{\Delta}\VOp\radpsizerores{0}\nonumber\\
& + \frac{4 r (9 M a^2 + a^2 r - 7 M r^2 + r^3) \radpsizerores{0}}{(a^2 + r^2)^2}.
\label{eq:WaveEqofSingularRadPsizero}
\end{align}
\end{lemma}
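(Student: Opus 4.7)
The plan is to verify the lemma by direct substitution, exploiting the fact that the rescaling factor $f(r) := \Delta^2/(a^2+r^2)^2$ depends only on $r$. Writing $\psibase[+2] = f\,\radpsizerores{0}$ in equation \eqref{eq:TeukolskyRegular+2}, the goal is to compute the conjugated operator $f^{-1}\widehat\squareS_{+2}(f\,\cdot\,)$ and to read off the coefficients appearing in \eqref{eq:WaveEqofSingularRadPsizero}.

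First I would use the decomposition $\widehat\squareS_{+2} = \hROp_{+2} - \TMESOp_{+2}$ from \eqref{eq:rescaledWaveOperator}. Since the form \eqref{eq:expandedSssss} of $\TMESOp_{+2}$ involves only $\hedt,\hedt',\LxiOp,\LetaOp$, and since $f$ depends only on $r$, these operators annihilate $f$ and one has $\TMESOp_{+2}(f\radpsizerores{0}) = f\,\TMESOp_{+2}(\radpsizerores{0})$. All of the Leibniz correction terms therefore come from $\hROp_{+2}$, which by \eqref{eq:RsIntro} gives
\begin{align*}
\hROp_{+2}(f\radpsizerores{0})
={}& f\,\hROp_{+2}(\radpsizerores{0})
+ 2(a^2+r^2)\bigl[(\YOp\VOp f)\radpsizerores{0} + (\YOp f)\VOp\radpsizerores{0} + (\VOp f)\YOp\radpsizerores{0}\bigr],
\end{align*}
where the $\LetaOp$ and zeroth-order terms of $\hROp_{+2}$ produce no contribution because $\LetaOp f=0$.

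Next I would compute $\YOp f$, $\VOp f$, and $\YOp\VOp f$ explicitly. In the Znajek tetrad, \eqref{eq:Y-Znaj-IEF} gives $\YOp f = -f'(r)$, and \eqref{eq:V-Znaj-IEF} applied to the $r$-dependent function $f$ gives $\VOp f = \tfrac{\Delta}{2(a^2+r^2)}f'(r) = -\tfrac{\Delta}{2(a^2+r^2)}\YOp f$. A short calculation yields $f'(r) = 4M\Delta(r^2-a^2)/(a^2+r^2)^3$, whence $\VOp f = \bigl(2M(r^2-a^2)/(a^2+r^2)^2\bigr)f$. Substituting $\psibase[+2]=f\radpsizerores{0}$ into \eqref{eq:TeukolskyRegular+2} and dividing through by $f$ then expresses $\widehat\squareS_{+2}(\radpsizerores{0})$ as a linear combination of $\LetaOp\radpsizerores{0}$, $\VOp\radpsizerores{0}$, $\YOp\radpsizerores{0}$ and $\radpsizerores{0}$, with coefficients built from $\YOp f/f$, $\VOp f/f$, and $\YOp\VOp f/f$.

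The point of the particular choice $f=\Delta^2/(a^2+r^2)^2$ is that the coefficient of $\YOp\radpsizerores{0}$ vanishes: combining the $-\tfrac{4M(a^2-r^2)}{a^2+r^2}f\,\YOp\radpsizerores{0}$ contribution from the right-hand side of \eqref{eq:TeukolskyRegular+2} with the $2(a^2+r^2)(\VOp f)\YOp\radpsizerores{0}$ Leibniz contribution produces an exact cancellation (using the identity $2(a^2+r^2)\VOp f = \tfrac{4M(r^2-a^2)}{a^2+r^2}f$ established above). The remaining coefficients of $\VOp\radpsizerores{0}$ and $\radpsizerores{0}$ reduce by elementary algebra to the expressions stated in \eqref{eq:WaveEqofSingularRadPsizero}.

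The computation is entirely mechanical once the Leibniz expansion is written down; the only subtlety, and the step I would be most careful with, is the bookkeeping that produces the cancellation of the $\YOp$-derivative term and the simplification of the zeroth-order coefficient to $4r(9Ma^2+a^2r-7Mr^2+r^3)/(a^2+r^2)^2$. These are well suited to verification by computer algebra using the packages referenced in Section~\ref{sec:notandconv}.
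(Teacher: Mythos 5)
Your proposal is correct and is essentially the paper's (unwritten) proof, which is exactly this direct computation: since $f=\Delta^2/(a^2+r^2)^2$ depends on $r$ alone, $\TMESOp_{+2}$ commutes with multiplication by $f$ and all Leibniz corrections come from the $2(a^2+r^2)\YOp\VOp$ part of $\hROp_{+2}$. Your key identities $f'(r)=4M\Delta(r^2-a^2)/(a^2+r^2)^3$ and $2(a^2+r^2)\VOp f=\tfrac{4M(r^2-a^2)}{a^2+r^2}f$ check out, they do cancel the $\YOp\radpsizerores{0}$ term, and the remaining algebra indeed reproduces the stated coefficients $-8(Ma^2+a^2r-3Mr^2+r^3)/\Delta$ and $4r(9Ma^2+a^2r-7Mr^2+r^3)/(a^2+r^2)^2$.
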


Before proving weighted $r^p$ estimates for \eqref{eq:WaveEqofSingularRadPsizero}, we state some equivalent relations between the energy norms  of $\psibase[+2]$ and $\radpsizerores{0}$, which turn out to be useful in translating $r^p$ estimates of $\radpsizerores{0}$ to $r^p$ estimates of $\psibase[+2]$.

\begin{lemma}
\label{lem:radpsizeroAndradpsizeroresEquivalence}
\standardPlusTwoHypothesisNotBEAMNotSolu
Let $\radpsizerores{0}$ be as in equation \eqref{eq:DefSingularRadPsizero}. 
Let $k\in\Naturals$, $\beta\in\Reals$ and $\rCutOff\geq 10M$. There is the bound
\begin{align}
 \norm{\radpsizerores{0}}_{W^{k}_{\beta}(\StautR)}
\simC{}{}&  \norm{\psibase[+2]}_{W^{k}_{\beta}(\StautR)} .
\label{eq:radpsizeroAndradpsizeroresEquivalence}
\end{align}
Furthermore, for $\alpha\in[0,2]$ and $k\geq 1$,
\begin{align}
 \norm{r\VOp\radpsizerores{0}}_{W^{k-1}_{\alpha-2}(\StautR)}
  +\norm{\radpsizerores{0}}_{W^{k}_{-2}(\StautR)}
\simC{}{}&
  \norm{r\VOp\psibase[+2]}_{W^{k-1}_{\alpha-2}(\StautR)}
  +\norm{\psibase[+2]}_{W^{k}_{-2}(\StautR)}.
\label{eq:radpsizeroAndradpsizeroresEquivalenceWithV}
\end{align}
\end{lemma}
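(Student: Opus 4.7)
The plan is to follow the template established by the proof of lemma \ref{lem:equiavelenceOfradphiAndraddegphi}. First I would observe that the prefactor
\[
f(r) = \frac{(a^2+r^2)^2}{\Delta^2}
\]
and its inverse are both analytic in $R = 1/r$ on an interval extending through $R=0$, because
\[
\frac{\Delta}{a^2+r^2} = 1 - \frac{2MR}{1+a^2R^2}
\]
is analytic in $R$ with value $1$ at $R=0$, and is therefore bounded away from $0$ for $\rInv \in [-\epsilon,1/(10M)]$ with $\epsilon$ sufficiently small. Since $r \geq \rCutOff \geq 10M$ throughout, both $f$ and $f^{-1}$ are scalars of boost- and spin-weight zero with $f, f^{-1} = \bigOAnalytic(1)$. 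Moreover, each operator in $\rescaledOps = \{M\YOp, r\VOp, \hedt, \hedt'\}$ applied to $f$ (or $f^{-1}$) produces an $\bigOAnalytic(1)$ scalar, since $\hedt f = \hedt' f = 0$, $M\YOp f = -M\partial_r f = \bigOAnalytic(Mr^{-1})$, and $r\VOp f = \tfrac{r\Delta}{2(a^2+r^2)}\partial_r f = \bigOAnalytic(r^{-1})$; the same holds for any finite composition of such operators.

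For the first equivalence \eqref{eq:radpsizeroAndradpsizeroresEquivalence}, the Leibniz rule gives that for any multi-index $\mathbf{a}$ with $|\mathbf{a}|\leq k$, $\rescaledOps^{\mathbf{a}}(f\psibase[+2])$ is a finite linear combination of terms $c_{\mathbf{a},\mathbf{b}}\,\rescaledOps^{\mathbf{b}}\psibase[+2]$ with $|\mathbf{b}|\leq|\mathbf{a}|$ and bounded coefficients $c_{\mathbf{a},\mathbf{b}}$. Summing squares and integrating against the weight $M^{-\beta-1}r^\beta$ gives $\norm{\radpsizerores{0}}_{W^{k}_{\beta}(\StautR)}\lesssim\norm{\psibase[+2]}_{W^{k}_{\beta}(\StautR)}$; the reverse inequality follows by the identical argument with $f$ replaced by $f^{-1}$, applied to $\psibase[+2] = f^{-1}\radpsizerores{0}$. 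The weight $\beta$ plays no role in this argument.

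For \eqref{eq:radpsizeroAndradpsizeroresEquivalenceWithV}, I would expand
\[
r\VOp\radpsizerores{0} = f\,(r\VOp\psibase[+2]) + (r\VOp f)\,\psibase[+2].
\]
After commuting with further operators in $\rescaledOps$ to reach regularity $k-1$ and applying the previous step, the first summand contributes, modulo bounded coefficients, the desired principal term $\norm{r\VOp\psibase[+2]}_{W^{k-1}_{\alpha-2}(\StautR)}$. The second summand carries the extra decaying factor $r\VOp f = \bigOAnalytic(r^{-1})$, so its $W^{k-1}_{\alpha-2}(\StautR)$-norm squared is controlled by $\norm{\psibase[+2]}^2_{W^{k-1}_{\alpha-4}(\StautR)}$; the hypothesis $\alpha\leq 2$ gives $\alpha-4\leq -2$, so by monotonicity of the weight this is dominated by $\norm{\psibase[+2]}^2_{W^{k}_{-2}(\StautR)}$. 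Exchanging the roles of $\radpsizerores{0}$ and $\psibase[+2]$ via $\psibase[+2] = f^{-1}\radpsizerores{0}$ provides the reverse bound. The only real bookkeeping is the absorption of lower-order terms generated by differentiating $f$ or $f^{-1}$, and the range $\alpha\in[0,2]$ is precisely what is required for this; no genuine obstacle arises.
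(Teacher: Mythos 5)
Your proposal is correct and is essentially the paper's own argument: the paper simply notes that the equivalences follow by arguing as in lemma \ref{lem:equiavelenceOfradphiAndraddegphi} using the relation \eqref{eq:DefSingularRadPsizero}, and your write-up fleshes out exactly that argument (analyticity and boundedness of $(a^2+r^2)^2/\Delta^2$ and its inverse for $r\geq 10M$, Leibniz rule with $\bigOAnalytic(1)$ coefficients, and absorption of the $r\VOp f=\bigOAnalytic(r^{-1})$ lower-order term using $\alpha-4\leq-2$). The only cosmetic remark is that only $\alpha\leq 2$ is actually needed for the absorption, not the full restriction $\alpha\in[0,2]$.
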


\begin{proof}
These estimates follow easily by arguing in the same way as in lemma \ref{lem:equiavelenceOfradphiAndraddegphi} and taking into account the relation \eqref{eq:DefSingularRadPsizero}.
\end{proof}

Now we are ready to apply the $r^p$ lemma \ref{lem:GeneralrpSpinWaveHighReg} to equation \eqref{eq:WaveEqofSingularRadPsizero} and to state the $\alpha$-weighted estimate, which is a combination of the $r^p$ estimate for $\radpsizerores{0}$ and the BEAM estimate \eqref{point:BEAMspin+2withDelta} in definition \ref{ass:BEAMspin+2}.

\begin{lemma}[$r^p$ estimate for $\protect{\psibase[+2]}$]
\label{lem:rplemmaspin+2}
\standardPlusTwoHypothesis  
Then, for all sufficiently large $\ireg\in\Naturals$, any $0<\delta\leq \delta_0$, $\alpha\in[\delta,2-\delta]$ and $\timefunc_2\geq\timefunc_1\geq\timefunc_0$,
\begin{align}
\hspace{6ex}&\hspace{-6ex}\norm{\psibase[+2]}_{W^{\ireg +1}_{-2}(\Stau)}^2+\norm{ r\VOp\psibase[+2]}_{W^{\ireg}_{\alpha-2}(\Stau)}^2
 +E^{\ireg +1}_{\Stau}(M^{1+\frac{\delta_0}{2}}r^{-\frac{\delta_0}{2}}\YOp \psibase[+2])\nonumber\\
&+E^{\ireg +1}_{\Stau}(M^2\YOp^2 \psibase[+2])
+\norm{\psibase[+2]}_{W^{\ireg}_{\alpha-3}(\Dtau)}^2\nonumber\\
 \lesssim {}&\norm{\psibase[+2]}_{W^{\ireg +1}_{-2}(\Staui)}^2+\norm{ r\VOp\psibase[+2]}_{W^{\ireg}_{\alpha-2}(\Staui)}^2
 +E^{\ireg +1}_{\Staui}(M^{1+\frac{\delta_0}{2}}r^{-\frac{\delta_0}{2}}\YOp \psibase[+2])
+E^{\ireg +1}_{\Staui}(M^2\YOp^2 \psibase[+2]).
\label{eq:rpSpin+2}
\end{align}
\end{lemma}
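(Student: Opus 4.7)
The plan is to follow the same template as lemma \ref{lem:rpestimatesforallradpsii} used for $\psibase[-2]$: apply the higher-regularity $r^p$-lemma \ref{lem:GeneralrpSpinWaveHighReg} to a suitably rescaled version of $\psibase[+2]$, translate back to $\psibase[+2]$ via a norm equivalence, and close using the BEAM assumption to dominate the interior and transition-region error terms as well as to supply the $\YOp$-derivative energies appearing on the left-hand side.

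First, I would verify that the rescaled variable $\radpsizerores{0}$ defined in \eqref{eq:DefSingularRadPsizero} satisfies the three hypotheses of lemma \ref{lem:GeneralrpSpinWaveHighReg}. The wave equation \eqref{eq:WaveEqofSingularRadPsizero} is already in the form \eqref{eq:GeneralrpSpinWave:Assumption} with $\vartheta=0$, spin-weight $s=+2$, and coefficients whose leading asymptotics are $b_{\VOp,-1}=8$, $b_\phi=M\bigOAnalytic(r^{-1})$, and $b_{0,0}=-4$. Since $b_{\VOp,-1}\geq 0$, the $b_\phi$ has the required structure, and $b_{0,0}+|s|+s = -4+2+2 = 0\geq 0$ (the critical case), lemma \ref{lem:GeneralrpSpinWaveHighReg} applies and yields the $r^p$ estimate \eqref{eq:GeneralrpSpinWave:ConclusionHighReg} for $\radpsizerores{0}$ with $\vartheta=0$, for any $\rCutOff\geq\rCutOffInrpWave$ and $\alpha\in[\delta,2-\delta]$. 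Using the norm equivalences of lemma \ref{lem:radpsizeroAndradpsizeroresEquivalence}, this translates into an estimate for $\psibase[+2]$ controlling the first two terms and the last (Morawetz) term of \eqref{eq:rpSpin+2} restricted to $\StauR$ and $\DtauR$, with right-hand side consisting of analogous quantities on $\StauiR$ together with error terms supported on $\DtauRcR$ and on $\StautRcR$ at $\timefunc=\timefunc_1,\timefunc_2$.

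Second, by lemma \ref{lem:BEAMwithoutDeltaimpliesBEAMwithDeltaspin+2}, it suffices to assume the second BEAM condition \eqref{eq:BEAMassumptionSpin+2}. Because $r$ is bounded on the transition region $r\in[\rCutOff-M,\rCutOff]$, every $r$-weight there is comparable to unity up to constants depending only on $\rCutOff$, and therefore the bulk and boundary error terms above are uniformly dominated by the basic energies and Morawetz integrals $\sum_{i=0}^{1} E^{\ireg+1}_{\Sigma_{\timefunc_1}}(M^{i+\delta_0/2}r^{-\delta_0/2}\YOp^i\psibase[+2])$ and $E^{\ireg+1}_{\Sigma_{\timefunc_1}}(M^2\YOp^2\psibase[+2])$ after invoking BEAM. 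The same BEAM bulk simultaneously furnishes the missing interior contribution $r_+<r<\rCutOff-M$ of the Morawetz term on the left-hand side, and BEAM directly provides the two $\YOp$- and $\YOp^2$-derivative energies on $\Stau$ in terms of their initial values, yielding the remaining terms in \eqref{eq:rpSpin+2}.

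The main obstacle is simply bookkeeping: verifying that the weights generated by the cutoff in the $r^p$ argument (supported on $[\rCutOff-M,\rCutOff]$) are consistent with, and absorbed by, the BEAM integrals, and that the regularity indices match across the commutations with $\unrescaledOps$, $r\VOp$ and $\YOp$. This forces $\ireg$ to be taken ``sufficiently large'', at least as large as the thresholds in lemma \ref{lem:GeneralrpSpinWaveHighReg} plus the shifts needed to commute the BEAM hierarchy up to the order of the $\YOp^2\psibase[+2]$ energies, and forces $\delta\leq\delta_0$ so that the factor $r^{-\delta_0/2}$ in BEAM can dominate the factor $r^{-\delta}$ appearing in the lower endpoint of the $\alpha$-range of the $r^p$ estimate. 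With $\rCutOff$ fixed once and for all sufficiently large, adding the two estimates produces \eqref{eq:rpSpin+2}.
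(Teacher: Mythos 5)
Your proposal is correct and follows essentially the same route as the paper: reduce to the second BEAM condition via lemma \ref{lem:BEAMwithoutDeltaimpliesBEAMwithDeltaspin+2}, apply the higher-regularity $r^p$ lemma \ref{lem:GeneralrpSpinWaveHighReg} to $\radpsizerores{0}$ satisfying \eqref{eq:WaveEqofSingularRadPsizero} (with the same coefficient checks $b_{\VOp,-1}=8$, $b_\phi=M\bigOAnalytic(r^{-1})$, $b_{0,0}+|s|+s=0$), transfer to $\psibase[+2]$ via lemma \ref{lem:radpsizeroAndradpsizeroresEquivalence}, and add the BEAM estimate to absorb the bounded-$r$ error terms and supply the $\YOp$-derivative energies. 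Your bookkeeping of how BEAM dominates the $\DtauRcR$ and $\StautRcR$ terms is in fact slightly more explicit than the paper's own one-line conclusion.
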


\begin{proof}
 From lemma \ref{lem:BEAMwithoutDeltaimpliesBEAMwithDeltaspin+2}, we only need to prove this lemma under the assumption that BEAM condition \eqref{point:BEAMspin+2withDelta} from definition \ref{ass:BEAMspin+2} is satisfied. In the following, we assume that such an assumption holds. 

By putting equation \eqref{eq:WaveEqofSingularRadPsizero} into the form of \eqref{eq:GeneralrpSpinWave:Assumption}, we see that $\vartheta =0$ and the assumptions in lemma \ref{lem:GeneralrpSpinWaveHighReg} are satisfied with
\begin{align}
b_{\VOp, -1}={}&8>0, &b_{\phi}={}&M\bigOAnalytic(r^{-1}), & b_{0,0}+ 2+2 ={}&0, 
\end{align}
and the spin weight is $+2$.
Thus, we apply the $r^p$ lemma \ref{lem:GeneralrpSpinWaveHighReg} and obtain that for any $\ireg \in \Naturals$, $\timefunc_0\leq \timefunc_1 \leq\timefunc_2$, $0<\delta\leq \delta_0$ and $\alpha\in[\delta,2-\delta]$, there are constants $\rCutOff=\rCutOff(\ireg)\geq 10M$ and $\CInrpWave=\CInrpWave(\ireg) $ such that
\begin{align}
&\norm{ r\VOp\radpsizerores{0}}_{W^{\ireg}_{\alpha-2}(\StauR)}^2
+\norm{\radpsizerores{0}}_{W^{k+1}_{-2}(\StauR)}^2\nonumber\\
&+\norm{\hedtp\radpsizerores{0}}_{W^{\ireg}_{\alpha-3}(\DtauR)}^2
+\norm{\radpsizerores{0}}_{W^{\ireg}_{\alpha-3}(\DtauR)}^2\nonumber\\
&{}\leq \CInrpWave \bigg(
\norm{ r\VOp\radpsizerores{0}}_{W^{\ireg}_{\alpha-2}(\StauiR)}^2
+\norm{\radpsizerores{0}}_{W^{\ireg+1}_{-2}(\StauiR)}^2\nonumber\\
&\qquad\quad +\norm{\radpsizerores{0}}_{W^{\ireg+1}_{0}(\DtauRcR)}^2
  +\sum_{\timefunc\in\{\timefunc_1,\timefunc_2\}} \norm{\radpsizerores{0}}_{W^{1}_{\alpha}(\StautRcR)}^2 \bigg).
\label{eq:rpDegPsizeroSpin+2}
\end{align}
This is an $r^p$ estimate for $\radpsizerores{0}$.
From lemma \ref{lem:radpsizeroAndradpsizeroresEquivalence}, $\radpsizerores{0}$ can be replaced by $\psibase[+2]$ in this estimate. By adding this $r^p$ estimate of $\psibase[+2]$ to the assumed BEAM estimate \eqref{eq:BEAMassumptionSpin+2}, the estimate \eqref{eq:rpSpin+2} follows. 
\end{proof}

\begin{lemma}
\label{lem:EarlyRegionSpinPlus2}
Under the same assumptions of lemma \ref{lem:rplemmaspin+2},
the estimate \eqref{eq:rpSpin+2} holds as well if we replace the right-hand side by $\inienergyplustwoForDescent{\ireg+3}$ as in definition \ref{def:generalSpinorInitialDataNorm}. 
\end{lemma}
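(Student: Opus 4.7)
The plan is to reduce to lemma \ref{lem:rplemmaspin+2} by establishing the a priori bound
\begin{equation*}
\text{RHS of }\eqref{eq:rpSpin+2} \lesssim \inienergyplustwoForDescent{\ireg+3}
\end{equation*}
in the special case $\timefunc_1 = \timefunc_0$. The general case $\timefunc_1 \ge \timefunc_0$ then follows by one further application of lemma \ref{lem:rplemmaspin+2} with $\Sigma_{\timefunc_0}$ as the initial slice and $\Sigma_{\timefunc_1}$ as the terminal slice. The two central ingredients in establishing the bound are the early-region estimate of lemma \ref{lem:GeneralrpSpinWaveFar} and the equivalence in lemma \ref{lem:radpsizeroAndradpsizeroresEquivalence}.

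To control the first two terms on the right-hand side of \eqref{eq:rpSpin+2}, I would apply lemma \ref{lem:GeneralrpSpinWaveFar} to the rescaled variable $\radpsizerores{0}$ defined in \eqref{eq:DefSingularRadPsizero}. Equation \eqref{eq:WaveEqofSingularRadPsizero} fits the template \eqref{eq:GeneralrpSpinWave:Assumption} with $\vartheta = 0$ and with coefficients satisfying $b_{\VOp,-1} = 8 > 0$, $b_\phi = M\bigOAnalytic(r^{-1})$, and $b_{0,0} + |s| + s = 0$ for $s=+2$, which verifies all three hypotheses of that lemma. The conclusion, translated back to $\psibase[+2]$ via the equivalence \eqref{eq:radpsizeroAndradpsizeroresEquivalenceWithV}, controls $\norm{\psibase[+2]}_{W^{\ireg+1}_{-2}(\Sigma_{\timefunc_0})}^2 + \norm{r\VOp\psibase[+2]}_{W^{\ireg}_{\alpha-2}(\Sigma_{\timefunc_0})}^2$ by $\norm{\psibase[+2]}_{H^{\ireg+1}_{\alpha-1}(\Stauini)}^2$, which in turn is bounded by $\inienergyplustwoForDescent{\ireg+3}$ thanks to $\alpha - 1 \le 1$ and monotonicity of $H^k_\alpha$ in both the weight and the regularity index.

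For the two BEAM-type terms $E^{\ireg+1}_{\Sigma_{\timefunc_0}}(M^{1+\delta_0/2}r^{-\delta_0/2}\YOp\psibase[+2])$ and $E^{\ireg+1}_{\Sigma_{\timefunc_0}}(M^2\YOp^2\psibase[+2])$, I would commute the Teukolsky equation \eqref{eq:TeukolskyRegular+2} once and twice with $\YOp$, using the commutator identity \eqref{eq:CommutatorofYandMathcalV}. The resulting spin-weighted wave equations for $\YOp\psibase[+2]$ and $\YOp^2\psibase[+2]$ again take the form \eqref{eq:GeneralrpSpinWave:Assumption}, and produce inhomogeneities made only of lower-order derivatives of $\psibase[+2]$ that are already controlled by the previous step. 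Applying lemma \ref{lem:GeneralrpSpinWaveFar} to each, with $\alpha \in [\delta, 2-\delta]$ chosen so that the outgoing weight on $\Sigma_{\timefunc_0}$ matches the $r^{-\delta_0/2}$ prefactor (legal since $\delta \le \delta_0 < 1/2$) or the $M^2$ prefactor respectively, produces a bound by $\inienergyplustwoForDescent{\ireg+3}$. The passage from $\ireg$ in lemma \ref{lem:rplemmaspin+2} to $\ireg + 3$ accommodates the two $\YOp$ commutations together with the one level of loss built into lemma \ref{lem:GeneralrpSpinWaveFar}.

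The main obstacle I expect is verifying that the $\YOp$-commuted Teukolsky equations genuinely retain the sign conditions required by lemma \ref{lem:GeneralrpSpinWaveFar}, in particular $b_{\VOp,-1} \ge 0$ and $b_{0,0} + |s| + s \ge 0$, and that the commutator-generated source terms decay at $\Scri^+$ fast enough to be absorbed by the $\psibase[+2]$-estimate of the previous step. If a direct commutation with $\YOp$ fails to preserve the requisite structure, an alternative is to first rescale $\YOp\psibase[+2]$ and $\YOp^2\psibase[+2]$ by appropriate powers of $\Delta/(r^2+a^2)$ before applying lemma \ref{lem:GeneralrpSpinWaveFar}, in the spirit of the hierarchy $\{\raddegphi{i}\}$ built in definition \ref{def:raddegphi} for the spin-weight $-2$ problem.
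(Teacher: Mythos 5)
Your treatment of the first two terms on the right of \eqref{eq:rpSpin+2} is essentially the paper's: apply lemma \ref{lem:GeneralrpSpinWaveFar} to $\radpsizerores{0}$ defined in \eqref{eq:DefSingularRadPsizero}, whose equation \eqref{eq:WaveEqofSingularRadPsizero} fits the template \eqref{eq:GeneralrpSpinWave:Assumption}, translate back via lemma \ref{lem:radpsizeroAndradpsizeroresEquivalence}, and bound $\norm{\psibase[+2]}_{H^{\ireg+1}_{\alpha-1}(\Stauini)}^2$ by the initial data norm using $\alpha\leq 2-\delta$. You do, however, silently discard the remaining terms on the right of \eqref{eq:GeneralrpSpinWave:ConclusionIntegratedPast} supported on $r\leq\rCutOff$, as well as the portion $r_+\leq r\leq\rCutOff$ of the norms on $\Stauit$ (the equivalence of lemma \ref{lem:radpsizeroAndradpsizeroresEquivalence} only holds for $r\geq\rCutOff$); these are handled in the paper by standard exponential-growth estimates on a bounded spacetime region, a minor but necessary step.

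The genuine gap is in your treatment of $E^{\ireg+1}_{\Stauit}(M^{1+\delta_0/2}r^{-\delta_0/2}\YOp\psibase[+2])$ and $E^{\ireg+1}_{\Stauit}(M^{2}\YOp^{2}\psibase[+2])$. Your plan to commute \eqref{eq:TeukolskyRegular+2} with $\YOp$ once and twice and re-apply lemma \ref{lem:GeneralrpSpinWaveFar} is not carried out, and as stated it does not go through: equation \eqref{eq:TeukolskyRegular+2} itself, and a fortiori its $\YOp$-commuted versions, contain first-order $\YOp$ terms, which are not admissible in the template \eqref{eq:GeneralrpSpinWave:Assumption}; this is exactly why the rescaling \eqref{eq:DefSingularRadPsizero} was introduced, and for the commuted quantities you would need to construct and verify a whole rescaled hierarchy (signs of $b_{\VOp,-1}$ and $b_{0,0}+|s|+s$, admissible weights on the commutator sources), which you leave as an acknowledged obstacle. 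None of this machinery is needed. Since $M\YOp$ belongs to the operator set $\rescaledOps$, the two $\YOp$-energies on $\Stauit$ are dominated by higher-order rescaled Sobolev norms of $\psibase[+2]$ itself,
\begin{align*}
E^{\ireg+1}_{\Stauit}\bigl(M^{1+\frac{\delta_0}{2}}r^{-\frac{\delta_0}{2}}\YOp\psibase[+2]\bigr)
+E^{\ireg+1}_{\Stauit}\bigl(M^{2}\YOp^{2}\psibase[+2]\bigr)
\lesssim \norm{r\VOp\psibase[+2]}_{W^{\ireg+2}_{-\delta}(\Stauit)}^{2}
+\norm{\psibase[+2]}_{W^{\ireg+3}_{-2}(\Stauit)}^{2},
\end{align*}
and the right-hand side is controlled by the very early-region estimate you established in your first step, simply applied with the regularity index raised by two. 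This one-line observation is the paper's argument and is the sole reason the initial data norm appears at regularity $\ireg+3$ in $\inienergyplustwoForDescent{\ireg+3}$; it is not an accounting of $\YOp$ commutations plus a loss in lemma \ref{lem:GeneralrpSpinWaveFar} as you suggest. Replacing your commutation scheme by this observation (and adding the bounded-region estimates above) closes the proof.
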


\begin{proof}
For ease of presentation we will here use mass normalization as in definition~\ref{def:massnormalization}.
To prove this result, we just need to show the following estimate which bounds the norms on $\Stauit$ by those on $\Stauini$:
\begin{align}
&\norm{\psibase[+2]}_{W^{\ireg +1}_{-2}(\Stauit)}^2+\norm{ r\VOp\psibase[+2]}_{W^{\ireg}_{\alpha-2}(\Stauit)}^2
\nonumber\\
& +E^{\ireg +1}_{\Stauit}(r^{-\frac{\delta_0}{2}}\YOp \psibase[+2])
+E^{\ireg +1}_{\Stauit}(\YOp^2 \psibase[+2]) \lesssim \inienergyplustwoForDescent{\ireg+3}.
\label{eq:plus2EarlyRegion}
\end{align}

Applying lemma \ref{lem:GeneralrpSpinWaveFar} to the spin-weighted wave equation \eqref{eq:WaveEqofSingularRadPsizero} in the early region, and from the relation between $\radpsizerores{0}$ and $\psibase[+2]$ norms in lemma \ref{lem:radpsizeroAndradpsizeroresEquivalence}, it follows that for $\alpha\in [\delta,2-\delta]$, there is a constant $\rCutOff=\rCutOff(\ireg)\geq 10M$ such that 
\begin{align}
\hspace{4ex}&\hspace{-4ex}
  \norm{r\VOp\psibase[+2]}_{W^{\ireg}_{\alpha-2}(\StauitR)}^2
  +\norm{\psibase[+2]}_{W^{\ireg+1}_{-2}(\StauitR)}^2
  +\norm{\psibase[+2]}_{W^{\ireg+1}_{\alpha-3}(\DtauitearlyR)}^2
\nonumber\\
\lesssim{}&
\norm{\psibase[+2]}_{H^{\ireg+1}_{\alpha-1}(\Stauini)}
+ \norm{\psibase[+2]}_{W^{\ireg+1}_{0}(\DtauitearlyRcR)}^2
+  \norm{\psibase[+2]}_{W^{\ireg+1}_{-\delta}(\StauitRcR)}^2.
\label{eq:rpInTheEarlyRegionplus2}
\end{align}
Since $\rCutOff$ is bounded, $\norm{\psibase[+2]}_{W^{\ireg+1}_{0}(\DtauitearlyRcR)}^2$
and   $\norm{\psibase[+2]}_{W^{\ireg+1}_{-\delta}(\StauitRcR)}^2$ are both bounded by a multiple of an initial norm $\inienergyplustwoForDescent{\ireg+1}$, by standard exponential growth estimates for wave-like equations. For the same reason, the sum of 
 $\norm{r\VOp\psibase[+2]}_{W^{\ireg}_{\alpha-2}(\Stauit^{r_+,\rCutOff})}^2$
 and $\norm{\psibase[+2]}_{W^{\ireg+1}_{-2}(\Stauit^{r_+,\rCutOff})}^2$ is bounded by $\inienergyplustwoForDescent{\ireg+1}$ as well. For the first term on the right of \eqref{eq:rpInTheEarlyRegionplus2}, since $\alpha\leq 2-\delta$, it holds that
\begin{align}
\int_{\Stauini} \sum_{|\mathbf{a}|\leq \ireg+1}r^{\alpha+2|\mathbf{a}|-2}\abs{\unrescaledOps^{\mathbf{a}}\psibase[+2]}^2 \diThreeVol 
\leq {}& \int_{\Stauini} \sum_{|\mathbf{a}|\leq \ireg+1}r^{-\delta+2|\mathbf{a}|}\abs{\unrescaledOps^{\mathbf{a}}\psibase[+2]}^2 \diThreeVol \nonumber\\
\lesssim {}& \inienergyplustwoForDescent{\ireg+1}.
\end{align} 
Thus, for any $\alpha\in [\delta,2-\delta]$,
\begin{align}
\norm{r\VOp\psibase[+2]}_{W^{\ireg}_{\alpha-2}
(\Stauit)}^2
+\norm{\psibase[+2]}_{W^{\ireg+1}_{-2}(\Stauit)}^2
\lesssim{}&
\inienergyplustwoForDescent{\ireg+1}.
\label{eq:rpInTheEarlyRegionplus2v2}
\end{align}
In addition, since $M\YOp$ belongs to the operator set $\rescaledOps$,
\begin{align}
E^{\ireg +1}_{\Stauit}(r^{-\frac{\delta_0}{2}}\YOp \psibase[+2])
+E^{\ireg +1}_{\Stauit}(\YOp^2 \psibase[+2]) 
\lesssim {}& \norm{r\VOp\psibase[+2]}_{W^{\ireg+2}_{-\delta}
(\Stauit)}^2
+\norm{\psibase[+2]}_{W^{\ireg+3}_{-2}(\Stauit)}^2\nonumber\\
\lesssim {}&\inienergyplustwoForDescent{\ireg+3},
\end{align}
where the second step follows from \eqref{eq:rpInTheEarlyRegionplus2v2}.
The above two estimates together imply the inequality \eqref{eq:plus2EarlyRegion}, which then completes the proof.
\end{proof}

\begin{thm}[Decay estimates for $\LxiOp^j\protect{\psibase[+2]}$]
\label{thm:DecayEstimatesSpin+2}
\standardPlusTwoHypothesis 
 Assume furthermore that $\inienergyplustwoForDescent{\ireg}$ is finite for all $\ireg \in \Naturals$. 
Under these conditions:  
\begin{enumerate}
\item
 the pointwise condition \eqref{condition:spin+2goestozero} in definition \ref{ass:BEAMspin+2} holds; 
\item
  furthermore, there is a regularity constant $K$ such that for all $\ij\in\Naturals$, sufficiently large $\ireg-K-3\ij$, $0<\delta\leq \delta_0$, $\delta \leq \alpha\leq 2-\delta$, and $\timefunc\geq \timefunc_0$, there are the energy and Morawetz estimates  
\begin{align}
\hspace{6ex}&\hspace{-6ex}\norm{\LxiOp^\ij\psibase[+2]}_{W^{\ireg -K-7\ij}_{-2}(\Staut)}^2+\norm{ r\VOp\LxiOp^\ij\psibase[+2]}_{W^{\ireg-K-1-7\ij}_{\alpha-2}(\Staut)}^2
+\norm{\LxiOp^\ij\psibase[+2]}_{W^{\ireg-K-1-7\ij}_{\alpha}(\Dtaut)}^2 \notag\\
\lesssim {}&\timefunc^{\alpha -2+\delta-(2-2\delta)\ij}\inienergyplustwoForDescent{\ireg} ,
\label{eq:EnergyDecaySpin+2Explicit}
\end{align}
and pointwise decay estimates
\begin{align}
  \absHighOrder{\LxiOp^\ij \psibase[+2]}{\ireg-K-7\ij}{\rescaledOps}
\lesssim{}& r v^{-1} \timefunc^{-(1-\delta)(\frac{1}{2}+\ij)+\delta} (\inienergyplustwoForDescent{\ireg})^{1/2}.
\label{eq:pointwiseSpin+2}
\end{align}
\end{enumerate}
\end{thm}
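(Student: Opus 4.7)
The plan is to combine the weighted $r^p$-estimate of lemma \ref{lem:rplemmaspin+2} with the early-region control of lemma \ref{lem:EarlyRegionSpinPlus2}, feed the resulting weighted hierarchy into the pigeonhole principle \ref{lem:hierarchyImpliesDecay} to obtain the base ($j=0$) energy and Morawetz decay, and then bootstrap by commuting with the Killing symmetry $\LxiOp$ to obtain the higher-$j$ rates. The pointwise estimate \eqref{eq:pointwiseSpin+2} and the vanishing condition at $\Scri^+$ will follow at the end from the Sobolev embeddings of section \ref{sec:BasicEstimates}.

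Concretely, since $\LxiOp$ commutes with the Teukolsky operator \eqref{eq:TeukolskyRegular+2}, each $\LxiOp^{\ij} \psibase[+2]$ satisfies the same equation and inherits the $r^p$-estimate \eqref{eq:rpSpin+2}, while lemma \ref{lem:EarlyRegionSpinPlus2} bounds its value at $\timefunc = \timefunc_0$ by $\inienergyplustwoForDescent{\ireg+3}$. Setting $F(\ireg,\alpha,\timefunc)$ equal to (a localized version of) the left-hand side of \eqref{eq:rpSpin+2} restricted to $\Staut$, this places us in the setting of lemma \ref{lem:hierarchyImpliesDecay} with $\alpha$ running in $[\delta, 2-\delta]$; the monotonicity and interpolation hypotheses are verified by lemma \ref{lem:Walphaknormsproperties}. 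Applying the pigeonhole lemma and then reinserting the resulting time-decay back into \eqref{eq:rpSpin+2} once more produces the $j=0$ case of \eqref{eq:EnergyDecaySpin+2Explicit}.

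For $\ij \geq 1$ I would use the relation \eqref{eq:LxiToVOpYOpLeta} to express $\LxiOp$ as an $\bigOAnalytic(1)$ combination of $\YOp$, $\VOp$, and $r^{-2}\LetaOp$, yielding a transfer estimate of the form
\[
\norm{\LxiOp^{\ij+1}\psibase[+2]}_{W^{\ireg-1}_{-2}(\Staut)}^2 + \norm{r\VOp\LxiOp^{\ij+1}\psibase[+2]}_{W^{\ireg-2}_{-\delta}(\Staut)}^2 \lesssim \norm{\LxiOp^{\ij}\psibase[+2]}_{W^{\ireg}_{-2}(\Staut)}^2,
\]
in direct analogy with \eqref{eq:ImproEnerDecPsi01234:Passjtoj+1}. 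Combined with the previously established decay rate for $\LxiOp^{\ij} \psibase[+2]$ at the top weight $\alpha = 2-\delta$ and one further application of the pigeonhole lemma \ref{lem:hierarchyImpliesDecay}, this improves the rate by $\timefunc^{-(2-2\delta)}$ per step at the cost of a bounded loss of derivatives; induction on $\ij$ yields \eqref{eq:EnergyDecaySpin+2Explicit}. The pointwise bound \eqref{eq:pointwiseSpin+2} then follows from the H\"older-type hyperboloidal Sobolev estimate \eqref{eq:SobolevOnStautHolder} applied on each $\Staut$, which produces the $r$ factor and gives the bound away from large $\timefunc$, combined with the anisotropic spacetime Sobolev inequality \eqref{eq:anisotropicSpaceTimeSobolev} applied to $r^{-1}\LxiOp^{\ij}\psibase[+2]$, which produces the $v^{-1}$ factor once time decay of both $\LxiOp^{\ij}$ and $\LxiOp^{\ij+1}$ Morawetz norms has been established. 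Finally, the pointwise condition \eqref{condition:spin+2goestozero} is obtained by restricting \eqref{eq:pointwiseSpin+2} with $\ij = 0$ to $\Scri^+$ (equivalently, multiplying through by $r^{-1}$ and taking $r \to \infty$) and using $\timefunc^{-(1-\delta)/2 + \delta} \to 0$.

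The main obstacle I anticipate is purely bookkeeping: tracking the regularity loss through the pigeonhole step, through each $\LxiOp$ commutation (which in view of the $r\VOp$ term in the transfer estimate costs more than one derivative per iteration), and through the Sobolev step, so that the total loss can be absorbed into the constant $K$ and the linear $7\ij$ term in the statement. This is entirely analogous to the treatment in the proof of lemma \ref{lem:ImproEnerDecaypartialt} and introduces no new analytic ingredient beyond what is already available in section \ref{sec:weightedenergy}.
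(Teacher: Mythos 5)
Your plan for point (2) — the $r^p$ hierarchy of lemma \ref{lem:rplemmaspin+2} plus the early-region bound of lemma \ref{lem:EarlyRegionSpinPlus2}, fed into the pigeonhole lemma \ref{lem:hierarchyImpliesDecay}, followed by $\LxiOp$-commutation with a transfer estimate and a final Sobolev step — is essentially the paper's argument, and the pointwise estimate \eqref{eq:pointwiseSpin+2} is obtained exactly as you describe (hyperboloidal Sobolev \eqref{eq:SobolevOnStautHolder} with $\alpha=1\pm\delta$, then the anisotropic inequality \eqref{eq:anisotropicSpaceTimeSobolev} applied to $r^{-1}\LxiOp^{\ij}\psibase[+2]$). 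One point you should make precise: the transfer estimate you display does not follow from the algebraic relation \eqref{eq:LxiToVOpYOpLeta} alone. The bound $\norm{r\VOp\LxiOp^{\ij+1}\psibase[+2]}_{W^{\ireg-2}_{-\delta}(\Staut)}^2\lesssim\norm{\LxiOp^{\ij}\psibase[+2]}_{W^{\ireg}_{-2}(\Staut)}^2$ gains almost two powers of $r$ relative to a naive counting, and this gain comes from using the Teukolsky equation \eqref{eq:TeukolskyRegular+2} itself: one substitutes $\YOp$ via \eqref{eq:LxiToVOpYOpLeta} into the principal term $2(r^2+a^2)\YOp\VOp$ of $\widehat\squareS_{+2}$ and isolates $r^2\VOp\LxiOp\psibase[+2]$, expressing it through $(r\VOp)^2$, $r\VOp$, $\TMESOp_s$, $\LetaOp$, $\LxiOp$ and zeroth-order terms. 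Your appeal to the analogy with \eqref{eq:ImproEnerDecPsi01234:Passjtoj+1} suggests you have this in mind, but as written the justification is incomplete.

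The genuine gap is in point (1). The pointwise condition \eqref{condition:spin+2goestozero} is a two-sided limit, $\timefunc\rightarrow\pm\infty$ along $\Scri^+$, and your argument — restricting \eqref{eq:pointwiseSpin+2} with $\ij=0$ to $\Scri^+$ — only covers $\timefunc\rightarrow+\infty$, since the decay estimates are proved only for $\timefunc\geq\timefunc_0$ and say nothing about the past portion of $\Scri^+$. The $\timefunc\rightarrow-\infty$ direction requires a separate argument: because the early-region $r^p$ estimate (lemma \ref{lem:GeneralrpSpinWaveFar}) is an application of Stokes' theorem, one may replace $\Stauini$ by its tail $\Stauini\cap\{r>r(\timefunc)\}$, where $r(\timefunc)\sim-\timefunc$ is the radius at which $\Staut$ meets $\Stauini$; the resulting control of $\norm{r\VOp\psibase[+2]}_{W^{\ireg}_{-1}(\Staut)}^2+\norm{\psibase[+2]}_{W^{\ireg+1}_{-2}(\Staut)}^2$ by $\norm{\psibase[+2]}_{H^{\ireg+1}_{0}(\Stauini\cap\{r>r(\timefunc)\})}^2$, combined with the radial Sobolev-type argument from the proof of lemma \ref{lem:SobolevOnStaut} (estimate \eqref{eq:SobolevOnStaut:intermediatestep}) and the pointwise control of lemma \ref{lem:Ik+1alphaenergydominatesPkalphapointwise} on $\Stauini$, bounds $\absScri{\psibase[+2]}{\ireg}^2\big|_{\Scri^+}$ at time $\timefunc$ by the tail norm on $\Stauini$, which tends to zero as $\timefunc\rightarrow-\infty$ precisely because the weighted norm $\inienergyplustwoForDescent{\ireg}$ is assumed finite. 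Without some version of this past-directed argument, point (1) of the theorem is not established by your proposal.
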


\begin{proof}
For ease of presentation we will here use mass normalization as in definition~\ref{def:massnormalization}. 

First, consider the limits along $\Scri^+$ as $\timefunc\rightarrow\pm\infty$. Let $r(\timefunc)$ denote the value of $r$ corresponding to the intersection of $\Stauini$ and $\Staut$. For $\rCutOff$ fixed and $\timefunc$ sufficiently negative, we have that $r(\timefunc) > \rCutOff$ and $r(\timefunc) \sim -\timefunc$. Recall that the proof of the $r^p$ lemma \ref{lem:GeneralrpSpinWaveFar} is based on an application of Stokes' theorem, so we may replace $\Stauini$ by $\Stauini\cap\{r>r(\timefunc)\}$. The region under consideration is $r>\rCutOff$, so we may drop all the terms supported on $r\in[\rCutOff-M,\rCutOff]$, which gives
\begin{align}
\norm{r\VOp\psibase[+2]}_{W^{\ireg}_{-1}(\Staut)}^2
+\norm{\psibase[+2]}_{W^{\ireg+1}_{-2}(\Staut)}^2
\lesssim{}&\norm{\psibase[+2]}_{H^{\ireg+1}_{0}(\Stauini\cap\{r>r(\timefunc)\})}^2 . 
\end{align}
From adapting the proof of the Sobolev lemma \ref{lem:SobolevOnStaut} on $\Staut$, in particular from estimate \eqref{eq:SobolevOnStaut:intermediatestep}, one finds
\begin{align}
\lim_{r\rightarrow\infty}\int_{\Sphere} \abs{\psibase[+2](\timefunc,r,\omega)}_{\ireg}^2 \diTwoVol
\lesssim{}& \left(\norm{r\VOp\psibase[+2]}_{W^{\ireg}_{-1}(\Stau)}^2
+\norm{\psibase[+2]}_{W^{\ireg+1}_{-2}(\Stau)}^2\right) 
+\int_{\Sphere} \abs{\psibase[+2](\timefunc,r(\timefunc),\omega)}_{\ireg}^2 \diTwoVol .
\end{align}
Adapting the bound on $\psibase[+2]$ on $\Stauini$ in lemma \ref{lem:Ik+1alphaenergydominatesPkalphapointwise} and applying the previous estimate on the energy on $\Staut$, one finds
\begin{align}
\absScri{\psibase[+2] }{\ireg}^2 \big{|}_{\Scri^+} 
={}&\lim_{r\rightarrow\infty}\int_{\Sphere} \abs{\psibase[+2](\timefunc,r,\omega)}_{\ireg}^2 \diTwoVol
\lesssim \norm{\psibase[+2]}_{H^{\ireg+1}_{0}(\Stauini\cap\{r>r(\timefunc)\})}^2 ,
\end{align}
which goes to zero as $\timefunc\rightarrow-\infty$. (In fact, this argument gives a rate, but we do not need to calculate the rate for  the pointwise condition \eqref{condition:spin+2goestozero}.) 
As $\timefunc\rightarrow\infty$, the pointwise decay estimates \eqref{eq:pointwiseSpin+2} implies that $\lim_{\timefunc\rightarrow  \infty}\big(\absScri{\psibase[+2] }{\ireg} \big{|}_{\Scri^+}\big)\rightarrow 0$ holds for any $\ireg\in\Naturals$, and hence the first claim holds. 

Based on the above discussion and 
from lemma \ref{lem:rplemmaspin+2}, to prove this theorem, we only need to show the second claim under the assumption that the conclusions of lemma \ref{lem:rplemmaspin+2} are valid. For a general spin-weighted scalar $\varphi$, define
\begin{subequations}
\label{eq:EnergyandBulkinrppsizero}
\begin{align}
\EnergyinrpEstimatepsizero(\varphi,\ireg,\alpha,\timefunc)
={}&\begin{cases}
  \norm{\varphi}_{W^{\ireg -2}_{-2}(\Staut)}^2 +\norm{ r\VOp\varphi}_{W^{\ireg-3}_{\alpha-2}(\Staut)}^2
+E^{\ireg -2}_{\Staut}(r^{-\frac{\delta_0}{2}}\YOp \varphi)
+E^{\ireg -2}_{\Staut}(\YOp^2 \varphi)&\text{if $\alpha\in[\delta,2-\delta]$}\\
0&\text{if $\alpha<\delta$} 
\end{cases}\\
\BulkinrpEstimatepsizero(\varphi,\ireg,\alpha,\timefunc)
={}&\norm{\varphi}_{W^{\ireg}_{\alpha-2}(\Staut)}^2.
\end{align}
\end{subequations}
To prove the energy and Morawetz estimate \eqref{eq:EnergyDecaySpin+2Explicit}, we shall prove
\begin{align}
\hspace{4ex}&\hspace{-4ex}\EnergyinrpEstimatepsizero(\LxiOp^{\ij}\psibase[+2],\ireg-6-7\ij,\alpha,\timefunc)+\int_{\timefunc}^{\infty} \BulkinrpEstimatepsizero(\LxiOp^{\ij}\psibase[+2],\ireg-9-7\ij,\alpha-1,\timefunc')\di \timefunc' \nonumber\\
\lesssim {}&\timefunc^{\alpha -2+\delta-(2-2\delta)\ij}\inienergyplustwoForDescent{\ireg} .
\label{eq:EnergyDecaySpin+2}
\end{align}
Estimate \eqref{eq:rpSpin+2} in lemma \ref{lem:rplemmaspin+2} can be stated as, for $\alpha\in[\delta,2-\delta]$, 
\begin{align}
\EnergyinrpEstimatepsizero(\psibase[+2],\ireg+3,\alpha,\timefunc_2) +\int_{\timefunc_1}^{\timefunc_2} \BulkinrpEstimatepsizero(\psibase[+2],\ireg,\alpha-1,\timefunc)\di \timefunc \lesssim \EnergyinrpEstimatepsizero(\psibase[+2],\ireg+3,\alpha,\timefunc_1),
\label{eq:rpSpin+2shortform}
\end{align}
and 
note from \eqref{eq:EnergyandBulkinrppsizero} that
\begin{align}
\BulkinrpEstimatepsizero(\psibase[+2],\ireg,\alpha-1,\timefunc) \gtrsim \EnergyinrpEstimatepsizero(\psibase[+2],\ireg,\alpha-1,\timefunc),
\end{align}
hence  for any $\otherinireg\leq \ireg \in \Naturals$, $\delta \leq \alpha \leq 2-\delta$, and $\timefunc_0\leq \timefunc_1\leq \timefunc_2$,
\begin{align}
\EnergyinrpEstimatepsizero(\psibase[+2],\ireg+3,\alpha,\timefunc_2) +\int_{\timefunc_1}^{\timefunc_2} \EnergyinrpEstimatepsizero(\psibase[+2],\ireg,\alpha-1,\timefunc)\di \timefunc
\lesssim \EnergyinrpEstimatepsizero(\psibase[+2],\ireg+3,\alpha,\timefunc_1).
\label{eq:HierarchyofrpSpin+2}
\end{align}
This can be put into the form of \eqref{eq:Rev:HierarchyToDecayReal:EvolutionHypothesis} by taking $D=\gamma=0$ and performing the following replacement
\begin{align}
\delta \mapsto {}& \alpha_1, & 2-\delta \mapsto {}&\alpha_2,&  \EnergyinrpEstimatepsizero(\psibase[+2],\ireg+3,\alpha,\timefunc) \mapsto {} &F(\lfloor \tfrac{\ireg+3}{3}\rfloor,\alpha,\PigeonTime) .
\end{align}
An application of lemma \ref{lem:hierarchyImpliesDecay} then yields for $\alpha\in[\delta,2-\delta]$, 
\begin{align}
\EnergyinrpEstimatepsizero(\psibase[+2],\ireg-6,\alpha,\timefunc)
+\int_{\timefunc}^{\infty} \BulkinrpEstimatepsizero(\psibase[+2],\ireg-9,\alpha-1,\timefunc')\di \timefunc'
\lesssim {}&\timefunc^{\alpha -2+\delta}\EnergyinrpEstimatepsizero(\psibase[+2],\ireg,2-\delta,\timefunc_0).
\label{eq:EnergyDecaySpin+2:j=0}
\end{align}
From lemma \ref{lem:EarlyRegionSpinPlus2} (or estimate \eqref{eq:plus2EarlyRegion}), it holds that
\begin{align}
\EnergyinrpEstimatepsizero(\psibase[+2],\ireg,2-\delta,\timefunc_0) \lesssim {}&\inienergyplustwoForDescent{\ireg} ,
\end{align}
hence this proves the $j=0$ case of \eqref{eq:EnergyDecaySpin+2}.

We prove the general $\ij$ case of  \eqref{eq:EnergyDecaySpin+2} by induction. Assume that estimate \eqref{eq:EnergyDecaySpin+2} holds for $\ij=\ij'$, so that
\begin{align}
\EnergyinrpEstimatepsizero(\LxiOp^{\ij'}\psibase[+2],\ireg-6-7\ij',\delta,\timefunc)
\lesssim {}&\timefunc^{ -2+2\delta-(2-2\delta)\ij'}\inienergyplustwoForDescent{\ireg} .
\label{eq:EnergyDecaySpin+2jcase}
\end{align}
Since $\LxiOp$ is a symmetry of \eqref{eq:TeukolskyRegular+2}, it holds that for any $\ij \in \Naturals$, $\ireg$ sufficiently large, $\delta$ sufficiently small, $\alpha\in[\delta,2-\delta]$, and $\timefunc_2\geq\timefunc_1\geq\timefunc_0$,
\begin{align}
\EnergyinrpEstimatepsizero(\LxiOp^{\ij}\psibase[+2],\ireg+3,\alpha,\timefunc_2) 
+\int_{\timefunc_1}^{\timefunc_2} \BulkinrpEstimatepsizero(\LxiOp^{\ij}\psibase[+2],\ireg,\alpha-1,\timefunc)\di \timefunc 
\lesssim \EnergyinrpEstimatepsizero(\LxiOp^{\ij}\psibase[+2],\ireg+3,\alpha,\timefunc_1).
\label{eq:rpLxiSpin+2}
\end{align}
One can argue similarly to the proof of lemma \ref{lem:ImproEnerDecaypartialt} to obtain better decay estimates for $\LxiOp^{\ij}\psibase[+2]$ as follows. Rescaling equation \eqref{eq:rescaledWaveOperator} for $\widehat\squareS_{+2}$, we can isolate the term $r^2\VOp\LxiOp\psibase[+2]$ from  \eqref{eq:TeukolskyRegular+2} and write $r^2\VOp\LxiOp\psibase[+2]$ as a weighted sum of $(r\VOp)^2\psibase[+2]$, $r\VOp\psibase[+2]$, $r^{-1}\LetaOp (r\VOp\psibase[+2])$, $r^{-1}\LetaOp\psibase[+2]$, $\TMESOp_s\psibase[+2]$, $\LxiOp\psibase[+2]$, and $\psibase[+2]$ all with $\bigOAnalytic(1)$ coefficients.
Therefore,
\begin{align}
\norm{r\VOp\LxiOp^{\ij'+1}\psibase[+2]}_{W^{\ireg -2}_{-\delta}(\Staut)}^2
\lesssim{}& \norm{r^2\VOp\LxiOp^{\ij'+1}\psibase[+2]}_{W^{\ireg-2}_{-\delta-2}(\Staut)}^2
\lesssim \norm{\LxiOp^{\ij'}\psibase[+2]}_{W^{\ireg}_{-2}(\Staut)}^2,
\end{align}
which furthermore implies
\begin{align}
\EnergyinrpEstimatepsizero(\LxiOp^{\ij'+1}\psibase[+2],\ireg-7-7\ij',2-\delta,\timefunc) \lesssim \EnergyinrpEstimatepsizero(\LxiOp^{\ij'}\psibase[+2],\ireg-6-7\ij',\delta,\timefunc)
\lesssim \timefunc^{ -2+2\delta-(2-2\delta)\ij'}\inienergyplustwoForDescent{\ireg} .
\end{align}
A repeated application of lemma \ref{lem:hierarchyImpliesDecay} as above  to
\eqref{eq:rpLxiSpin+2} but with $\ij\mapsto \ij'+1$ and $\ireg\mapsto \ireg-10-7\ij'$ then yields
\begin{align}
\hspace{6ex}&\hspace{-6ex}\EnergyinrpEstimatepsizero(\LxiOp^{\ij'+1}\psibase[+2],\ireg-7-7\ij'-6,\alpha,\timefunc)
+\int_{\timefunc}^{\infty} \BulkinrpEstimatepsizero(\LxiOp^{\ij'+1}\psibase[+2],\ireg-7-7\ij'-9,\alpha-1,\timefunc')\di \timefunc'\nonumber\\
\lesssim {}&\timefunc^{\alpha -2+\delta-2+2\delta-(2-2\delta)\ij'}\inienergyplustwoForDescent{\ireg}.
\end{align}
This proves the $\ij=\ij'+1$ case of \eqref{eq:EnergyDecaySpin+2}, which completes the induction and justifies the estimate \eqref{eq:EnergyDecaySpin+2} for general $\ij \in \Naturals$ cases and hence the estimate \eqref{eq:EnergyDecaySpin+2Explicit}.

As to the pointwise decay estimates, the proof is the same as the one for lemma \ref{lem:ImproEnerDecaypartialt}.
From the Sobolev inequality \eqref{eq:SobolevOnStautHolder} with $\gamma=\delta$ and the energy estimate \eqref{eq:EnergyDecaySpin+2} with $\alpha=1+\delta$ and $\alpha=1-\delta$, one finds
\begin{align}
\absHighOrder{\LxiOp^\ij\psibase[+2]}{\ireg-11-7\ij}{\rescaledOps}^2
\lesssim{}& \left(\EnergyinrpEstimatepsizero(\LxiOp^{\ij}\psibase[+2],\ireg-6-7\ij,1+\delta,\timefunc)\EnergyinrpEstimatepsizero(\LxiOp^{\ij}\psibase[+2],\ireg-6-7\ij,1-\delta,\timefunc)\right)^{\frac12}\nonumber\\
\lesssim {}&\timefunc^{-(1-\delta)(1+2\ij)} \inienergyplustwoForDescent{\ireg}.
\end{align}

Alternatively, having already established the limit as $\timefunc\rightarrow\infty$ is zero, one can now apply the anisotropic spacetime Sobolev inequality \eqref{eq:anisotropicSpaceTimeSobolev} and the Morawetz estimate \eqref{eq:EnergyDecaySpin+2} with $\alpha=\delta$ to obtain
\begin{align}
\absHighOrder{\LxiOp^\ij r^{-1}\psibase[+2]}{\ireg-19-7\ij}{\rescaledOps}^2
\lesssim{}&
  \norm{\LxiOp^\ij r^{-1}\psibase[+2]}_{W^{\ireg-16-7\ij}_{-1}(\Dtaut)}^{1/2}
  \norm{\LxiOp^{\ij+1} r^{-1}\psibase[+2]}_{W^{\ireg-16-7\ij}_{-1}(\Dtaut)}^{1/2}\nonumber\\
\lesssim{}&
  \norm{\LxiOp^\ij \psibase[+2]}_{W^{\ireg-16-7\ij}_{-3+\delta}(\Dtaut)}^{1/2}
  \norm{\LxiOp^{\ij+1} \psibase[+2]}_{W^{\ireg-9-7(\ij+1)}_{-3+\delta}(\Dtaut)}^{1/2}\nonumber\\
\lesssim{}& \timefunc^{-(1-\delta)(3+2\ij)}
  \inienergyplustwoForDescent{\ireg} .
\end{align}
Combining the two pointwise estimates and observing $v^{-1}\lesssim\min(r^{-1},\timefunc^{-1})$ give the desired pointwise decay estimate \eqref{eq:pointwiseSpin+2}.
\end{proof}

\section{The metric and core connection coefficients}
\label{sec:estimatesForTheMetric}
We shall now use the results presented in Sections \ref{sec:spin-2TeukolskyEstimates} and \ref{sec:spin+2TeukolskyEstimates} to prove pointwise, energy, and Morawetz estimates for linearized gravity from the transport form of the equations of linearized gravity in ORG gauge, derived in section \ref{sec:transporteq}. We shall work in terms of the compactified hyperboloidal coordinate system  $(\timefunc,R,\theta,\phi)$ where $\timefunc$ is the hyperboloidal time introduced in section \ref{sec:foliation}, $R = 1/r$, and $\theta,\phi$ are the angular coordinates in the ingoing Eddington-Finkelstein coordinate system. We shall sometimes use the notation $\omega = (\theta,\phi)$. In terms of this coordinate system, future null infinity $\Scri^+$ is located at $R=0$. For our considerations here, we may without loss of generality consider compactly supported initial data, in which case the solution of the Teukolsky equation is smooth at $\Scri^+$ in the compactified hyperboloidal coordinate system, cf. section \ref{sec:ConformalRegularity}. 

\begin{definition}
\label{def:linearizedEinsteinFields}
A set of linearized Einstein fields is defined to consist of the following: 
\begin{enumerate}
\item a linearized metric $\delta g_{ab}$,  
\item linearized metric components $G_{0i'}$ from section \ref{sec:connectcomp}, 
\item linearized connection and connection coefficients from section \ref{sec:connectcomp}, 
\item linearized curvature components from \eqref{eq:extreme-lin},  
\item rescaled linearized curvature components $\psibase[-2]$ and $\psibase[+2]$ from definition \ref{def:psibase}, and
\item the core quantities $\hat{\sigma}'$, $\widehat{G}_2$, $\hat{\tau}'$, $\widehat{G}_1$, $\hat{\beta}'$, and $\widehat{G}_0$ from definition \ref{def:BoostWeightZeroQuantities}. 
\end{enumerate}
\end{definition}

\begin{definition}
\label{def:BEAMLinearisedEinsteinSolution}
An outgoing BEAM solution of the linearized Einstein equation is defined to be be a set of linearized Einstein fields as in definition \ref{def:linearizedEinsteinFields} such that
\begin{enumerate}
\item $\delta g_{ab}$ satisfies the linearized Einstein equation \eqref{eq:linEin} in the outgoing radiation gauge \eqref{eq:ORGcond}, 
\item $\psibase[-2]$ satisfies the BEAM condition from definition \ref{def:BEAM}, 
\item $\psibase[+2]$ satisfies the pointwise decay condition, point \ref{condition:spin+2goestozero} of definition \ref{ass:BEAMspin+2}. 
\end{enumerate}
\end{definition}

\subsection{Expansions at infinity and transport equations}
\label{sec:expansionAtInfinity} 

In this section, we introduce expansions at null infinity. These are Taylor expansions in $r^{-1}$, with the coefficients being functions on null infinity. One notable novel feature of our approach is that the functions on null infinity not only decay in time but also integrate to zero along null infinity and their iterated integrals also decay and integrate to zero. As shown in later sections, the Teukolsky variable $\psibase[-2]$ has such an expansion as a consequence of the Teukolsky-Starobinsky Identity. 

\begin{definition}
\label{def:AntiDerivOper} Let $f$ be a spin-weighted scalar on $\Scri^+$ which decays sufficiently rapidly at $i_0$, and define 
\begin{align}
(\AntiDeriv f)(\timefunc,\omega)
={}&\int_{-\infty}^\timefunc f(\timefunc',\omega)\di \timefunc' .
\end{align}
For a non-negative integer $\ii$,  define $\AntiDeriv^\ii$ by 
\begin{align}
\AntiDeriv^\ii={}&\AntiDeriv\circ\AntiDeriv^{\ii-1} ,
\end{align}
with $\AntiDeriv^0$ the identity operator. 
\end{definition}

It is now possible to define an expansion at null infinity. This depends on a level of regularity $\EkNoArg$, an order of the expansion $\ElNoArg$, an order $\EmNoArg$ up to which the expansion terms vanish, a weight parameter $\EalphaNoArg$, and a positive constant $D^2$. In the case that $\EmNoArg=\ElNoArg+1$, then all the terms in the expansion vanish, and the scalar is estimated solely by the remainder term. 
Conditions \eqref{eq:expansion:remainderBound}-\eqref{eq:expansion:expansionTermsDecay} are boundedness and decay conditions, but, under condition \eqref{eq:expansion:integrability},  the expansion terms  and their iterated time integrals  integrate to zero. 

\begin{definition}[$(\EkNoArg,\ElNoArg,\EmNoArg,\EalphaNoArg,\EDNoArg^2)$ expansion]
\label{def:asymptoticExpansion}
Let $\EkNoArg,\ElNoArg,\EmNoArg\in\Naturals$ be such that $0\leq \EmNoArg\leq \ElNoArg+1$. Let $\EalphaNoArg\in\Reals$. Let $\EDNoArg>0$. 

In the exterior region where $r\geq \timefunc$, a spin-weighted scalar $\varphi$ is defined to have a $(\EkNoArg,\ElNoArg,0,\EalphaNoArg,\EDNoArg^2)$ expansion if, for $\ii\in\{0,\ldots,\ElNoArg\}$, there are functions $\varphi_\ii$ on $\Scri^+$ and there is a function $\varphi_{\rem;\ElNoArg}$ in the exterior such that 
\begin{subequations}\label{eq:8.3}
\begin{align}
&\forall (t,r,\omega):& 
\varphi(t,r,\omega) 
={} \sum_{i=0}^\ElNoArg \frac{\rInv^i}{i!} \varphi_i(t,\omega)
&+\varphi_{\rem;\ElNoArg}(t,r,\omega) , 
\label{eq:expansion:expansionExists}\\
&&\norm{\varphi_{\rem;\ElNoArg}}_{W^{k}_{\EalphaNoArg-3} (\Dtauitext)}^2\lesssim{}& D^2 ,
\label{eq:expansion:remainderBound}\\
&&\norm{\varphi_{\rem;\ElNoArg}}_{W^k_{\EalphaNoArg-3} (\Dtauearly)}^2\lesssim{}& D^2  ,
\label{eq:expansion:remainderBoundInThePast}\\ 
&\forall \timefunc\in\Reals, \forall i\in\{0,\ldots,\ElNoArg\}:&
\int_{\Sphere}\absScri{\varphi_i(\timefunc,\omega) }{k}^2 \diTwoVol
\lesssim{}& D^2 \langle\timefunc\rangle^{2i-\EalphaNoArg+1},
\label{eq:expansion:expansionTermsDecay}\\
&\forall \omega\in\Sphere, 0\leq i < j\leq \ElNoArg+1, |\mathbf{a}|\leq \EkNoArg :&
\lim_{\timefunc \to \infty} (\mathbf{I}^{j-i} \ScriOps^{\mathbf{a}}\varphi_i)(\timefunc,\omega) ={}& 0 .
 \label{eq:expansion:integrability}
\end{align}
\end{subequations}
If, furthermore, for $\EmNoArg\in\Integers^+$, the expansion terms up to order $\EmNoArg-1\geq 0$ vanish, i.e.
\begin{align}
&\forall \timefunc\in\Reals, \forall i\in\{0,\ldots,m-1\}:&
\varphi_i(\timefunc,\omega)=0,
\label{eq:vanishingorderassumption}
\end{align}
then we say $\varphi$ has a $(\EkNoArg,\ElNoArg,\EmNoArg,\EalphaNoArg,\EDNoArg^2)$ expansion.
\end{definition}

Because $\YOp\timefunc = h'(r)$,  when trying to solve $\YOp\varphi=\varrho$ in terms of expansions from null infinity, one finds that the expansion coefficients for $\varphi$ are coupled through the expansion coefficients in $h'(r)$. The following lemma handles this coupling. 

\begin{lemma}
\label{lem:algebraicTransportExpansion}
Given any $l\in \Naturals$, for $k\in\{0,\ldots,l\}$, define $a_k$ and $b_k(R)$ to be such that 
\begin{align}
\frac{1}{h'(r)}={}& \sum_{k=0}^l a_k R^k + b_l(R)R^{l+1},
\label{eq:algebraicTransport:h'Expansion}
\end{align}
and define $b_{-1}(R)=1/h'(r)$. 

Let $\varrho$ and $\varphi$ be spin-weighted scalars.  
 Let $\varphi_{\initial}$ be a spin-weighted scalar on $\Sigma_{\initial}$. 

If $\varphi$ solves
\begin{subequations}
\begin{align}
\YOp\varphi
={}&\varrho ,\\
\varphi |_{\Sigma_{\initial}}
={}& \varphi_{\rm {init}} 
\end{align} 
\end{subequations}
with $\varrho$ having the expansion 
\begin{equation}
\label{eq:varrhoexpansion}
\varrho = \sum_{i=0}^j \frac{R^i}{i!} \varrho_i(\timefunc,\omega) + \varrho_{\rem;j},
\end{equation} 
then $\varphi$ is given by
\begin{equation}
\label{eq:varphiexpansion}
\varphi = \sum_{i=0}^{j-1} \frac{R^i}{i!}\varphi_i(\timefunc,\omega) + \varphi_{\rem;j-1},
\end{equation} 
where \index{P3phiCurlyiphiCurlyremj@$\varphi_i, \varphi_{\rem;j}$}\index{P3phiCurlyTildeiphiCurlyTilderemj@$\tilde\varphi_i, \tilde\varphi_{\rem;j}$}
\begin{subequations}
\begin{align}
\varphi_i(\timefunc,\omega) 
={}&  \sum_{k=0}^{i} \frac{i! a_{i-k}}{k!} \tilde\varphi_k(\timefunc,\omega), \qquad 0\leq i\leq j-1, 
\label{eq:varphii}\\
\varphi_{\rem;j-1}
={}&\sum_{i=0}^j \frac{b_{j-i-1}(R)}{i!} R^{j}\tilde\varphi_i(\timefunc,\omega) 
+ \tilde\varphi_{\rem;j}
+ \tilde\varphi_{\homo;j} ,
\label{eq:varphirem}\\
\tilde\varphi_0(\timefunc,\omega)
 ={}&   \AntiDeriv \varrho_0(\timefunc,\omega), \label{eq:rhotildephi0integral}\\
\tilde\varphi_i(\timefunc,\omega)
 ={}& \AntiDeriv \bigl (\varrho_i(\timefunc,\omega)- \sum_{k=0}^{i-1} \frac{ (i-1)i!a_{i-k-1}}{k!}\tilde\varphi_k(\timefunc,\omega)\bigr),\qquad 1\leq i\leq j,
 \label{eq:rhotildephiiintegral}
\end{align}
\end{subequations}
$\tilde\varphi_{\rem;j}$ is the solution of 
\begin{align}
\label{eq:Ytildevarphirem}
\YOp\tilde\varphi_{\rem;j} 
={}& \varrho_{\rem;j}-\sum_{i=0}^j \frac{1}{i!}(b'_{j-i-1}(R)R+jb_{j-i-1}(R))R^{j+1}\tilde\varphi_i(\timefunc,\omega) ,\\
\tilde\varphi_{\rem;j}|_{\Sigma_{\initial}}
={}& 0 ,
\label{eq:tildevarphiremainderinitialcondition}
\end{align}
and $\tilde{\varphi}_{\homo;j}$ is the solution of 
\begin{align}
Y\tilde{\varphi}_{\homo;j} ={}& 0 ,
\label{eq:Ytildevarphihomopart}\\
\tilde{\varphi}_{\homo;j} (\timefunc_{\initial}(r),r,\omega)
={}& \varphi_{\initial} (r,\omega) 
-\sum_{i=0}^j \frac{R^i}{i! h'(r)}\tilde{\varphi}_i(\timefunc_{\initial}(r),\omega) ,
\label{eq:algebriacTransport:tildeInitialData}
\end{align}
where $\timefunc_{\initial}(r)=\timefunc_0-h(r)/2$ is the value of $\timefunc$ on $\Sigma_{\initial}$ at $r$. 
\end{lemma}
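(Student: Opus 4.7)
The plan is to verify the decomposition by direct substitution into $Y\varphi=\varrho$, organized around a key identity that re-sums the ansatz into a cleaner form. In the hyperboloidal coordinates $(\timefunc,R,\theta,\phi)$, we have $Y = h'(r)\partial_\timefunc + R^2\partial_R$ from \eqref{eq:YOptRthetaphi}, and I would exploit the fact that $1/h'(r) = \sum_{k=0}^{m}a_k R^k + b_m(R) R^{m+1}$ is valid for \emph{every} order $m$ (since $h'$ is smooth and bounded away from zero on $[r_+,\infty)$ by lemma \ref{lem:h-prop}, so the $a_k$ are fixed Taylor coefficients regardless of the truncation level $l$).

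First I would establish the bookkeeping identity
\[
\sum_{i=0}^{j-1}\frac{R^i}{i!}\varphi_i
+\sum_{i=0}^{j}\frac{b_{j-i-1}(R)R^{j}}{i!}\tilde\varphi_i
= \sum_{i=0}^{j}\frac{R^i}{i!\,h'(r)}\,\tilde\varphi_i.
\]
This follows by writing, for each $0\le i\le j-1$, $1/h'(r) = \sum_{k=0}^{j-1-i} a_k R^k + b_{j-1-i}(R) R^{j-i}$, and reindexing $n=k+i$: the polynomial contribution collapses exactly to $\sum_{n=0}^{j-1}\frac{R^n}{n!}\sum_{i=0}^{n}\frac{n!\,a_{n-i}}{i!}\tilde\varphi_i = \sum_{n=0}^{j-1}\frac{R^n}{n!}\varphi_n$ by definition \eqref{eq:varphii}, while the $b$-remainder collects to the claimed sum (noting that the $i=j$ term contributes only to the $b$-sum via $b_{-1}=1/h'$).

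With this identity in hand, $\varphi = (1/h'(r))\sum_{i=0}^{j}\frac{R^i}{i!}\tilde\varphi_i + \tilde\varphi_{\rem;j} + \tilde\varphi_{\homo;j}$, so I compute $Y$ summand by summand: since $h'\cdot(1/h')=1$, one has $Y(R^i\tilde\varphi_i/(i!\,h')) = \frac{R^i}{i!}\partial_\timefunc\tilde\varphi_i + \tfrac{1}{i!}\bigl(iR^{i+1}/h' + R^{i+2}\partial_R(1/h')\bigr)\tilde\varphi_i$. Substituting the definitions \eqref{eq:rhotildephi0integral}-\eqref{eq:rhotildephiiintegral}, the time-derivative part yields $\sum_{i=0}^j\frac{R^i}{i!}\varrho_i$ minus the double sum $\sum_{i=1}^j R^i\sum_{k=0}^{i-1}\frac{(i-1)a_{i-k-1}}{k!}\tilde\varphi_k$. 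I then expand the spatial-derivative part using $1/h' = \sum_{k\ge 0}a_k R^k + b_{j-i-1}(R)R^{j-i}$ with the truncation chosen per summand so that the $a_k$-polynomial portion, after reindexing, precisely cancels that double sum order-by-order in $R$, while the $b$-portion produces exactly the source $-\sum_{i=0}^j\frac{1}{i!}\bigl(b'_{j-i-1}(R)R + jb_{j-i-1}(R)\bigr)R^{j+1}\tilde\varphi_i$ appearing on the right of \eqref{eq:Ytildevarphirem}. What remains of $Y\varphi$ is $\sum_{i=0}^j\frac{R^i}{i!}\varrho_i + Y\tilde\varphi_{\rem;j} + Y\tilde\varphi_{\homo;j}$, which equals $\varrho$ by \eqref{eq:varrhoexpansion}, \eqref{eq:Ytildevarphirem}, and \eqref{eq:Ytildevarphihomopart}. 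The initial condition check is then immediate: on $\Sigma_{\initial}$ the remainder $\tilde\varphi_{\rem;j}$ vanishes by \eqref{eq:tildevarphiremainderinitialcondition}, so $\varphi|_{\Sigma_{\initial}} = \sum_{i=0}^j \tfrac{R^i}{i!\,h'(r)}\tilde\varphi_i(\timefunc_{\initial}(r),\omega) + \tilde\varphi_{\homo;j}|_{\Sigma_{\initial}}$, and the prescription \eqref{eq:algebriacTransport:tildeInitialData} is designed exactly so that this sum equals $\varphi_{\initial}$.

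The main obstacle is the combinatorial bookkeeping in matching the two contributions in Step 2: showing that the $a_{i-k-1}$ coming from $\partial_\timefunc\tilde\varphi_i$ and the $ia_k$ plus $R^2 \partial_R(a_k R^k)$ coming from the spatial derivative of $1/h'$ produce identical double sums after the correct reindexing. The factor $(i-1)$ in the recursion \eqref{eq:rhotildephiiintegral} and the factor $i$ from $R^2\partial_R R^i = iR^{i+1}$ together with a shift by one in the expansion of $R^2 F'(R) = \sum k a_k R^{k+1} + \cdots$ must be tracked carefully to verify cancellation; once this combinatorial identity is established, the rest of the proof is routine bookkeeping.
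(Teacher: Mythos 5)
Your proposal is correct and follows essentially the same route as the paper's proof: both work with the ansatz $\varphi=\sum_{i=0}^{j}\tfrac{R^{i}}{i!\,h'(r)}\tilde\varphi_i+\tilde\varphi_{\rem;j}+\tilde\varphi_{\homo;j}$, compute $\YOp$ of it using the truncation of $1/h'$ at order $j-i-1$ in each summand, and use the reindexing identity (your ``bookkeeping identity'', which the paper performs at the end when gathering powers of $R$) so that the $a_k$-polynomial part cancels the double sum coming from \eqref{eq:rhotildephiiintegral} while the $b$-part is absorbed by \eqref{eq:Ytildevarphirem}. Apart from a harmless sign-bookkeeping looseness in describing how the $b$-portion matches the source in \eqref{eq:Ytildevarphirem}, the argument is the paper's.
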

\begin{proof}
Make an ansatz
\begin{equation}
\varphi = \sum_{i=0}^j \frac{R^i}{i!h'(r)}\tilde\varphi_i(\timefunc,\omega) + \tilde\varphi_{\rem;j}+\tilde{\varphi}_{\homo;j} .
\label{eq:varphiexpansiontildequantities}
\end{equation} 
This gives
\begin{equation}
\YOp\varphi = \sum_{i=0}^j\biggl( \YOp\Bigl(\frac{R^i}{i!h'(r)}\Bigr)\tilde\varphi_i(\timefunc,\omega)+\frac{R^i}{i!}\partial_\timefunc\tilde\varphi_i(\timefunc,\omega)\biggr) + \YOp\tilde\varphi_{\rem;j} +\YOp\tilde{\varphi}_{\homo;j}.
\label{eq:Yvarphiexpand}
\end{equation} 
We set $l=j-i-1$ in \eqref{eq:algebraicTransport:h'Expansion} and calculate
\begin{align}
\YOp\left(\frac{R^i}{h'(r)}\right)={}& 
\sum_{k=0}^l a_k (i+k)R^{i+k+1}
+(b'_l(R)R+(i+l+1)b_l(R))R^{i+l+2}.
\end{align}
Substituting this into \eqref{eq:Yvarphiexpand} gives
\begin{align}
\YOp\varphi 
={}& \sum_{i=0}^j\frac{R^i}{i!}\partial_\timefunc\tilde\varphi_i(\timefunc,\omega) 
+ \sum_{i=0}^j \sum_{k=0}^{j-i-1} \frac{a_k (i+k)}{i!}R^{i+k+1}\tilde\varphi_i(\timefunc,\omega)\nonumber\\
&+\sum_{i=0}^j \frac{1}{i!}(b'_{j-i-1}(R)R+jb_{j-i-1}(R))R^{j+1}\tilde\varphi_i(\timefunc,\omega)
 + \YOp\tilde\varphi_{\rem;j}+\YOp\tilde{\varphi}_{\homo;j}\nonumber\\
={}& \sum_{i=0}^j\frac{R^i}{i!}\partial_\timefunc\tilde\varphi_i(\timefunc,\omega) 
+ \sum_{i=1}^j \sum_{k=0}^{i-1} \frac{a_{i-k-1} (i-1)}{k!}R^{i}\tilde\varphi_k(\timefunc,\omega)\nonumber\\
&+\sum_{i=0}^j \frac{1}{i!}(b'_{j-i-1}(R)R+jb_{j-i-1}(R))R^{j+1}\tilde\varphi_i(\timefunc,\omega)
 + \YOp\tilde\varphi_{\rem;j}+\YOp\tilde{\varphi}_{\homo;j}.
\label{eq:algebraicTransport:intermediate}
\end{align}

If one now imposes conditions \eqref{eq:rhotildephi0integral} and \eqref{eq:rhotildephiiintegral} on the $\tilde{\varphi}_i$, then 
\begin{subequations}
\begin{align}
 \varrho_0(\timefunc,\omega)
 ={}&  \partial_\timefunc\tilde\varphi_0(\timefunc,\omega), \\
 \varrho_i(\timefunc,\omega)
 ={}&  \partial_\timefunc\tilde\varphi_i(\timefunc,\omega) + \sum_{k=0}^{i-1} \frac{a_{i-k-1} (i-1)i!}{k!}\tilde\varphi_k(\timefunc,\omega),\qquad 1\leq i\leq j .
\end{align}
\end{subequations}
If one further imposes that $\tilde{\varphi}_{\rem;l}$ and $\tilde{\varphi}_{\homo;j}$ satisfy the differential equations \eqref{eq:Ytildevarphirem} and \eqref{eq:Ytildevarphihomopart} respectively, then equation \eqref{eq:algebraicTransport:intermediate} becomes $Y\varphi=\varrho$. If one imposes the initial conditions \eqref{eq:tildevarphiremainderinitialcondition} on $\tilde\varphi_{\rem;j}$ and \eqref{eq:algebriacTransport:tildeInitialData}  on $\tilde{\varphi}_{\homo;j}$, then one finds that $\varphi$ satisfies the initial condition $\varphi|_{\Sigma_{\initial}}=\varphi_{\initial}$. 

Now applying the expansion \eqref{eq:algebraicTransport:h'Expansion} for $(h')^{-1}$ with $l=j-i-1$ in equation \eqref{eq:varphiexpansiontildequantities}, gathering like powers of $R$, and putting the $R^l$ term with the remainder term, one finds that
\begin{align}
\varphi 
={}& \sum_{i=0}^j  \sum_{k=0}^{j-i-1} \frac{a_k}{i!}  R^{i+k}\tilde\varphi_i(\timefunc,\omega) + \sum_{i=0}^j \frac{b_{j-i-1}(R)}{i!} R^{j}\tilde\varphi_i(\timefunc,\omega) + \tilde\varphi_{\rem;j} +\tilde{\varphi}_{\homo;j}\nonumber\\
={}& \sum_{i=0}^{j-1}  \sum_{k=0}^{i} \frac{a_{i-k}}{k!}  R^{i}\tilde\varphi_k(\timefunc,\omega) + \sum_{i=0}^j \frac{b_{j-i-1}(R)}{i!} R^{j}\tilde\varphi_i(\timefunc,\omega) + \tilde\varphi_{\rem;j} +\tilde{\varphi}_{\homo;j} .
\end{align} 
By comparing this expansion with the expansion \eqref{eq:varphiexpansion}, we finally get \eqref{eq:varphii} and \eqref{eq:varphirem}. 
\end{proof}

\begin{lemma}[Propagation of expansions]
\label{lem:YintExpansionEst}
\standardHypothesisOnDelta.  
Let $\varphi$ and $\varrho$ be spin-weighted scalars, 
and let $\varphi_{\initial}$ be a spin-weighted scalar on $\Sigma_{\initial}$. 
Let $\Ek{\varrho},\El{\varrho},\Em{\varrho}\in\Naturals$, $\Ealpha{\varrho}>0$, and $\ED{\varrho}>0$ be such that $\El{\varrho}\geq1$ and $2\El{\varrho}+3+\delta\leq \Ealpha{\varrho} \leq 2\El{\varrho}+4-\delta$. 
If  $\varphi$ solves
\begin{subequations}
\begin{align}
\YOp\varphi
={}&\varrho ,\\
\varphi |_{\Sigma_{\initial}}
={}& \varphi_{\rm {init}} 
\end{align} 
\end{subequations}
and $\varrho$ has a $(\Ek{\varrho},\El{\varrho},\Em{\varrho},\Ealpha{\varrho},\ED{\varrho}^2)$ expansion, 
then the following hold:
\begin{enumerate}
\item With
\begin{subequations}
\begin{align}
\Ek{\varphi} ={}& \Ek{\varrho} ,\\
\El{\varphi} ={}& \El{\varrho} -1 ,\\
\Em{\varphi} ={}& \min(\Em{\varrho},\El{\varphi}+1) ,\\
\Ealpha{\varphi} ={}& \Ealpha{\varrho} -2 -\delta , \\
\ED{\varphi}^2 ={}& \ED{\varrho}^2 + \inienergy{\Ek{\varphi}+1}{2\El{\varphi}+3}(\varphi) ,
\end{align}
\end{subequations}
$\varphi$ has a $(\Ek{\varphi},\El{\varphi},\Em{\varphi},\Ealpha{\varphi},\ED{\varphi}^2)$ expansion.

\item For any $\Lxitimes\in \{0,1\}$, and $\timefunc\geq \timefunc_0$, $\varphi$ satisfies 
\begin{align}
\norm{\LxiOp^{\Lxitimes}\varphi}_{W^{\Ek{\varphi}-\Lxitimes}_{\Ealpha{\varphi}-2} (\Boundtext)}^2
\lesC{\El{\varphi}}{}& \ED{\varphi}^2 \timefunc^{-2\Lxitimes}
\label{eq:LxiOpvarphitransitionregionesti}
\end{align}
and in the exterior region where $r\geq \timefunc$ that 
\begin{subequations}
\label{eq:variouspointwiseforvarphigeneral}
\begin{align}
\text{for }& \Em{\varrho}\leq \El{\varrho}, &\absRescaled{\varphi}{\Ek{\varphi}-3}^2 \lesC{\El{\varphi}}{}& \ED{\varphi}^2r^{-2\Em{\varrho}}\timefunc^{-\Ealpha{\varphi}+1+2\Em{\varrho}},
\label{eq:pointwiseDescentGeneralCase}
\\
\text{for }& \Em{\varrho}= \El{\varrho}+1, &\absRescaled{\varphi}{\Ek{\varphi}-3}^2 \lesC{\El{\varphi}}{}& \ED{\varphi}^2r^{-\Ealpha{\varphi}+1} .
\label{eq:pointwiseDescentSpecialCase}
\end{align}
\end{subequations}
\end{enumerate}
\end{lemma}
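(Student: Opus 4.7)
The plan is to apply Lemma \ref{lem:algebraicTransportExpansion} to write $\varphi$ as an expansion plus remainder, then verify the five conditions of Definition \ref{def:asymptoticExpansion} for this expansion, and finally deduce the transition region and pointwise estimates from the resulting bounds via a spherical Sobolev inequality.

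First, I would apply Lemma \ref{lem:algebraicTransportExpansion} with $j=\El{\varrho}$ to obtain the formulas \eqref{eq:varphii}--\eqref{eq:algebriacTransport:tildeInitialData} that identify the expansion coefficients $\varphi_i$ as linear combinations of $\tilde\varphi_k=\AntiDeriv(\varrho_k-\text{lower order})$ for $0\leq k\leq i$, with $\Em{\varphi}=\min(\Em{\varrho},\El{\varphi}+1)$ following inductively from the vanishing-order assumption \eqref{eq:vanishingorderassumption} on $\varrho$. The decay bound \eqref{eq:expansion:expansionTermsDecay} for $\varphi_i$ then follows from repeatedly integrating the corresponding decay bounds for $\varrho_i$; specifically, $\int_{\Sphere}\abs{\varrho_k}_{\Ek{\varrho}}^2\di^2\mu\lesssim \ED{\varrho}^2\langle\timefunc\rangle^{2k-\Ealpha{\varrho}+1}$ with $2k-\Ealpha{\varrho}+1\leq 2\El{\varrho}-\Ealpha{\varrho}+1\leq-2-\delta$ by hypothesis, so $\AntiDeriv$ is well-defined on $\varrho_k$, picks up a factor of $\langle\timefunc\rangle$ in $L^2(\Sphere)$, and yields $\langle\timefunc\rangle^{2k-\Ealpha{\varrho}+2}\leq\langle\timefunc\rangle^{2i-\Ealpha{\varphi}+1-\delta}$ once the accounting $\Ealpha{\varphi}=\Ealpha{\varrho}-2-\delta$ is used. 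The integrability condition \eqref{eq:expansion:integrability} follows from the same argument since one further application of $\AntiDeriv$ on a $\langle\timefunc\rangle^{-2-\delta}$-decaying quantity gives a limit zero at infinity.

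Next, the remainder bound \eqref{eq:expansion:remainderBound}--\eqref{eq:expansion:remainderBoundInThePast} is obtained by splitting $\varphi_{\rem;\El{\varphi}}$ according to \eqref{eq:varphirem} into three pieces. The algebraic piece $\sum_{i=0}^{\El{\varrho}}\tfrac{b_{\El{\varrho}-i-1}(R)}{i!}R^{\El{\varrho}}\tilde\varphi_i$ is bounded in $W^{\Ek{\varphi}}_{\Ealpha{\varphi}-3}(\Dtauitext)$ by a constant times $\ED{\varphi}^2$ using the $L^2(\Sphere)$ bounds just obtained on $\tilde\varphi_i$, the weight $R^{\El{\varrho}}\lesssim\timefunc^{-\El{\varphi}-1}$ valid on $\Dtauitext$, and integrating against $r^{\Ealpha{\varphi}-3}$. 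The homogeneous piece $\tilde\varphi_{\homo;\El{\varrho}}$ is bounded via the $\YOp$ estimate of Lemma \ref{lem:TransEqGeneralRegionMora} applied to \eqref{eq:Ytildevarphihomopart}--\eqref{eq:algebriacTransport:tildeInitialData}, whose initial data is controlled on $\Stauini$ by $\varphi_{\initial}$ and by the spatial traces of the $\tilde\varphi_i$; these absorb into $\inienergy{\Ek{\varphi}+1}{2\El{\varphi}+3}(\varphi)$. The remainder piece $\tilde\varphi_{\rem;\El{\varrho}}$ is likewise controlled by Lemma \ref{lem:TransEqGeneralRegionMora} applied to \eqref{eq:Ytildevarphirem}, since the source term has the right weight: $\varrho_{\rem;\El{\varrho}}$ is in $W^{\Ek{\varrho}}_{\Ealpha{\varrho}-3}$ by assumption, and the tail $R^{\El{\varrho}+1}\tilde\varphi_i$ gains one more power of $R$ and so fits inside $W^{\Ek{\varphi}}_{\Ealpha{\varphi}-1}$.

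Finally, the transition estimate \eqref{eq:LxiOpvarphitransitionregionesti} is obtained from Lemma \ref{lem:Transitionfluxcontrolledbybulkexterior} applied to $\LxiOp^{\Lxitimes}\varphi$, noting that $\LxiOp$ commutes with $\YOp$, so $\LxiOp\varphi$ satisfies a transport equation with source $\LxiOp\varrho$, and $\LxiOp\varrho$ has expansion coefficients $\partial_\timefunc\varrho_i$ which decay one power faster in $\langle\timefunc\rangle$ than $\varrho_i$. The pointwise estimates \eqref{eq:variouspointwiseforvarphigeneral} come from combining the expansion \eqref{eq:varphiexpansion} with the spherical Sobolev estimate of Lemma \ref{lem:spherica-Sobolev}: in the case $\Em{\varrho}\leq\El{\varrho}$, the lowest non-vanishing term is $\tfrac{R^{\Em{\varrho}}}{\Em{\varrho}!}\varphi_{\Em{\varrho}}$, whose pointwise norm is controlled by $\ED{\varphi}r^{-\Em{\varrho}}\timefunc^{\Em{\varrho}-\Ealpha{\varphi}/2+1/2}$, and the remainder contributes $\ED{\varphi}r^{(-\Ealpha{\varphi}+1)/2}$, which in the exterior region $r\geq\timefunc$ is dominated by the expansion term; in the case $\Em{\varrho}=\El{\varrho}+1$, only the remainder term survives, giving \eqref{eq:pointwiseDescentSpecialCase} directly. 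The main obstacle is the bookkeeping for the decay rates through the antiderivative step: one must check that at every level $k\leq\El{\varrho}$ the assumed bound $\Ealpha{\varrho}\geq 2\El{\varrho}+3+\delta$ is strong enough to make $\AntiDeriv\varrho_k$ decay at infinity, and that the loss $2+\delta$ in the weight passing from $\Ealpha{\varrho}$ to $\Ealpha{\varphi}$ exactly balances the one power of $R$ lost in truncation together with the $\delta$-loss from the Hardy-type arguments underlying Lemma \ref{lem:TransEqGeneralRegionMora}.
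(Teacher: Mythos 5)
Your overall architecture matches the paper's: decompose $\varphi$ via lemma \ref{lem:algebraicTransportExpansion}, estimate the $\tilde\varphi_i$, the remainder $\tilde\varphi_{\rem;\El{\varrho}}$ via lemma \ref{lem:TransEqGeneralRegionMora}, the homogeneous piece from its data on $\Stauini$, and then Sobolev for the pointwise bounds. However, there is a genuine gap in your treatment of the antiderivative step. You claim that because $\int_{\Sphere}\absScri{\varrho_k}{\Ek{\varrho}}^2\diTwoVol\lesssim \ED{\varrho}^2\langle\timefunc\rangle^{2k-\Ealpha{\varrho}+1}$ with exponent $\leq -2-\delta$, the operator $\AntiDeriv$ ``picks up a factor of $\langle\timefunc\rangle$'', and that the integrability condition \eqref{eq:expansion:integrability} for $\varphi$ ``follows from the same argument since one further application of $\AntiDeriv$ on a $\langle\timefunc\rangle^{-2-\delta}$-decaying quantity gives a limit zero at infinity.'' This is false as stated: $\AntiDeriv f(\timefunc)=\int_{-\infty}^{\timefunc}f$ converges, as $\timefunc\to+\infty$, to the total integral $\int_{-\infty}^{\infty}f$, which decay alone does not force to vanish. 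Generically your $\tilde\varphi_i$ would tend to a nonzero limit on $\Scri^+$, and then neither the decay condition \eqref{eq:expansion:expansionTermsDecay} for the $\varphi_i$, nor \eqref{eq:expansion:integrability} for $\varphi$, nor the transition and pointwise estimates would hold. The correct mechanism, used in the paper, is to invoke the integrability hypothesis \eqref{eq:expansion:integrability} that $\varrho$'s expansion carries (with one extra power of $\AntiDeriv$ available because $\El{\varphi}=\El{\varrho}-1$), which lets one rewrite $\tilde\varphi_i(\timefunc)=-\int_{\timefunc}^{\infty}(\cdots)\di\timefunc'$ and only then extract decay in $\timefunc$; your proposal never uses this hypothesis, and without it the whole forward-in-time bookkeeping collapses.

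A second, related gap concerns the pieces you treat by energy methods alone. For the homogeneous part $\tilde\varphi_{\homo;\El{\varrho}}$ you propose lemma \ref{lem:TransEqGeneralRegionMora}, but that only yields uniform-in-time weighted bounds from the data on $\Stauini$; it does not produce the pointwise decay in $(\timefunc+2r)$ needed for \eqref{eq:pointwiseDescentSpecialCase}, nor the extra $\timefunc^{-2}$ for $\LxiOp\tilde\varphi_{\homo;\El{\varrho}}$ in \eqref{eq:LxiOpvarphitransitionregionesti}. The paper instead exploits constancy along the $\YOp$-characteristics, tracing back to $\Stauini$ where $\tilde r\sim\timefunc+2r$, and converts pointwise data control via $\inipointwise{\Ek{\varphi}}{2\El{\varphi}+3}\lesssim\inienergy{\Ek{\varphi}+1}{2\El{\varphi}+3}$ (lemma \ref{lem:Ik+1alphaenergydominatesPkalphapointwise}) — this is also why $\ED{\varphi}^2$ carries one extra derivative. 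Similarly, your justification of the $\timefunc^{-2\Lxitimes}$ gain in \eqref{eq:LxiOpvarphitransitionregionesti} (``$\LxiOp\varrho$ has coefficients decaying one power faster'') covers only the expansion coefficients; for $\tilde\varphi_{\rem;\El{\varrho}}$ the improvement comes from using the transport equation to trade $\LxiOp$ for $\YOp$ plus terms carrying two extra inverse powers of $r$, and then using $r\geq\timefunc$ in the exterior — a step absent from your sketch. These mechanisms need to be supplied for the stated rates to follow.
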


\begin{proof}
For ease of presentation, throughout this proof, we use mass normalization as in definition~\ref{def:massnormalization} and use $\lesssim$ to mean $\lesC{\El{\varphi}}$. 
Since, by assumption, $\varrho$ has an expansion, one can apply lemma \ref{lem:algebraicTransportExpansion} to obtain an expansion for $\varphi$. In the following, for simplicity, we use $\EkNoArg$ to denote $\Ek{\varphi}=\Ek{\varrho}$.

\begin{steps}
\step{Treat the $\tilde{\varphi}_i$} We first show in this step that
\begin{subequations}
\label{eq:tildephiinductionassump}
\begin{align}
&\forall \timefunc\in \Reals, \Lxitimes\in\{0,1\}, i\in\{0,\ldots, \El{\varrho}\},& \int_{\Sphere}\absScri{\LxiOp^{\Lxitimes}\tilde\varphi_{i}(\timefunc,\omega)}{\EkNoArg-\Lxitimes}^2\diTwoVol
\lesssim{}&\ED{\varrho}^2\langle\timefunc\rangle^{2i-\Ealpha{\varrho}+3-2\Lxitimes},
\label{eq:tildephiinductionassumpEstimate}\\
&\forall \omega\in \Sphere, 0\leq i<j\leq \El{\varrho},|\mathbf{a}|\leq k,&\lim_{\timefunc \to \infty} (\mathbf{I}^{j-i} \ScriOps^{\mathbf{a}} \tilde\varphi_i)(\timefunc,\omega)
={}&0,   
\end{align}
\end{subequations}
and
\begin{align}
&\forall \timefunc \in \Reals, \forall i\in\{0,\ldots,\Em{\varrho}-1\},& \tilde\varphi_i={}&0.&\hspace{20ex}
\label{eq:Firstexpansiontermsvanish}
\end{align}

From \eqref{eq:rhotildephi0integral} and \eqref{eq:rhotildephiiintegral}, it is clear that \eqref{eq:Firstexpansiontermsvanish} holds, and hence \eqref{eq:tildephiinductionassump} holds true for $i\in\{0,\ldots,\Em{\varrho}-1\}$. Furthermore, if $\Em{\varrho}=\El{\varrho}+1$, all the $\{\tilde\varphi_i\}_{i=0}^{\El{\varrho}}$ vanish, and \eqref{eq:tildephiinductionassump} is manifestly valid.
Hence, we only need to prove \eqref{eq:tildephiinductionassump} below for $\Em{\varrho}\leq i\leq \El{\varrho}$.
 
The remaining $\Em{\varrho}\leq i\leq \El{\varrho}$ cases are treated by induction. First, consider the $i=\Em{\varrho}$ case. Since $\Ealpha{\varrho}>2\El{\varrho}+3\geq 2\El{\varphi}+3$, the expression \eqref{eq:rhotildephiiintegral} for $\tilde{\varphi}_i$ and the integrability and decay conditions \eqref{eq:8.3} for $\varrho_{\Em{\varrho}}$ give, for any $\timefunc\geq \timefunc_0$, 
\begin{subequations}
\begin{align}
\ScriOps^{\mathbf{a}}\tilde\varphi_{\Em{\varrho}}(\timefunc,\omega)={}&\int_{-\infty}^\timefunc \ScriOps^{\mathbf{a}}\varrho_{\Em{\varrho}}(\timefunc',\omega)\di \timefunc'=-\int_\timefunc^\infty \ScriOps^{\mathbf{a}}\varrho_{\Em{\varrho}}(\timefunc',\omega)\di \timefunc',\\
\ScriOps^{\mathbf{a}}\LxiOp\tilde\varphi_{\Em{\varrho}}(\timefunc,\omega)={}& \ScriOps^{\mathbf{a}}\varrho_{\Em{\varrho}}(\timefunc,\omega),\\
\lim_{\timefunc \to \infty} (\mathbf{I}^{j} \ScriOps^{\mathbf{a}} \tilde\varphi_{\Em{\varrho}})(\timefunc,\omega)={}&\lim_{\timefunc \to \infty} (\mathbf{I}^{j+1} \ScriOps^{\mathbf{a}}\varrho_{\Em{\varrho}})(\timefunc,\omega)=0,  
\qquad 0 \leq j \leq \El{\varrho}-\Em{\varrho},  |\mathbf{a}|\leq \EkNoArg ,
\end{align}
\end{subequations}
and, for any $\timefunc\geq \timefunc_0$, 
\begin{align}
\int_{\Sphere}\absScri{\LxiOp\tilde\varphi_{\Em{\varrho}}(\timefunc,\omega)}{\EkNoArg}^2\diTwoVol
\leq{}&
 \int_{\Sphere} \absScri{\varrho_{\Em{\varrho}}(\timefunc,\omega)}{\EkNoArg}^2 \diTwoVol \nonumber\\
\lesssim {}& \ED{\varrho}^2\timefunc^{-\Ealpha{\varrho} +1+2\Em{\varrho}},\\
\int_{\Sphere}\absScri{\tilde\varphi_{\Em{\varrho}}(\timefunc,\omega)}{\EkNoArg}^2\diTwoVol
\leq{}&
 \int_{\Sphere}\left(\int_\timefunc^\infty \absScri{\varrho_{\Em{\varrho}}(\timefunc',\omega)}{\EkNoArg}\di\timefunc'\right)^2 \diTwoVol \nonumber\\
\leq{}&
 \left(\int_\timefunc^\infty \left(\int_{\Sphere}\absScri{\varrho_{\Em{\varrho}}(\timefunc',\omega)}{\EkNoArg}^2\diTwoVol\right)^{1/2} \di\timefunc'\right)^2 \nonumber\\
\lesssim {}& \left(\ED{\varrho}\int_\timefunc^\infty (\timefunc')^{-\Ealpha{\varrho}/2+1/2+\Em{\varrho}}\di \timefunc'\right)^2 \nonumber\\
\lesssim {}& \ED{\varrho}^2\timefunc^{-\Ealpha{\varrho}+3+2\Em{\varrho}}
\label{eq:tildevarphim+1esti}
\end{align}
where the second step of \eqref{eq:tildevarphim+1esti} follows from Minkowski's integral inequality.
Similarly, for $\timefunc\leq -\timefunc_0$ and $\Lxitimes\in\{0,1\}$, one has $\int_{\Sphere}\absScri{\LxiOp^{\Lxitimes}
\tilde\varphi_{\Em{\varrho}}(\timefunc,\omega)}{\EkNoArg-\Lxitimes}^2\diTwoVol \lesssim \ED{\varrho}^2 \abs{\timefunc}^{-\Ealpha{\varrho} +3+2\Em{\varrho}-2\Lxitimes}$, and, for $\timefunc\in[-\timefunc_0,\timefunc_0]$, one has that $\int_{\Sphere}\absScri{\LxiOp^{\Lxitimes}\tilde\varphi_{\Em{\varrho}}(\timefunc,\omega)}{\EkNoArg-\Lxitimes}\diTwoVol$ is bounded. These prove the $i=\Em{\varrho}$ case of \eqref{eq:tildephiinductionassump}.

For induction, let $l'\leq \El{\varrho}$, and suppose that the estimates \eqref{eq:tildephiinductionassump} hold for $\Em{\varrho}\leq i\leq l'-1$.
From the expression \eqref{eq:rhotildephiiintegral} for the $\tilde\varphi_i$, the decay and integrability conditions for $\varrho_i$, the assumption that $\Ealpha{\varrho} >2\El{\varrho}+3$, and the inductive hypothesis, one finds that, for any $\Em{\varrho} \leq i \leq l'\leq \El{\varrho}$, $\Lxitimes\in \{0,1\}$, and $\timefunc\geq \timefunc_0$, 
\begin{align}
\ScriOps^{\mathbf{a}}\tilde\varphi_i(\timefunc,\omega)
 ={}&\int_{-\infty}^\timefunc \biggl (\ScriOps^{\mathbf{a}}\varrho_i(\timefunc',\omega)
 - \sum_{j=0}^{i-1} \frac{a_{i-j-1} (i-1)i!}{j!}\ScriOps^{\mathbf{a}}\tilde\varphi_j(\timefunc',\omega)
 \biggr) \di \timefunc'\nonumber\\
 ={}&\int_\timefunc^{\infty} \biggl (\ScriOps^{\mathbf{a}}\varrho_i(\timefunc',\omega)- \sum_{j=0}^{i-1} \frac{a_{i-j-1} (i-1)i!}{j!}\ScriOps^{\mathbf{a}}\tilde\varphi_j(\timefunc',\omega)\biggr) \di \timefunc',\\
 \hspace{13ex}&\hspace{-13ex}
\int_{\Sphere}\absScri{\LxiOp^{\Lxitimes}\tilde\varphi_{i}(\timefunc,\omega)}{\EkNoArg-\Lxitimes}^2\diTwoVol \nonumber\\
\leq{}&
 \int_{\Sphere}\Biggl(\int_\timefunc^\infty \biggl(\absScri{\LxiOp^{\Lxitimes}\varrho_i(\timefunc',\omega)}{\EkNoArg-\Lxitimes} +\sum_{j=0}^{i-1} \frac{a_{i-j-1} (i-1)i!}{j!}\absScri{\LxiOp^{\Lxitimes}\tilde\varphi_j(\timefunc',\omega)}{\EkNoArg-\Lxitimes}\biggr)
 \di\timefunc'\Biggr)^2 \diTwoVol \nonumber\\
\lesssim{}&
 \Biggl(\int_\timefunc^\infty \Biggl(\int_{\Sphere}
 \biggl(\absScri{\LxiOp^{\Lxitimes}\varrho_i(\timefunc',\omega)}{\EkNoArg-\Lxitimes}^2+\sum_{j=0}^{i-1}\absScri{\LxiOp^{\Lxitimes}\tilde\varphi_j(\timefunc',\omega)}{\EkNoArg-\Lxitimes}^2 \biggr)
 \diTwoVol\Biggr)^{1/2} \di\timefunc'\Biggr)^2 \nonumber\\
\lesssim{}&\Biggl(\ED{\varrho}\int_\timefunc^{\infty} \biggl ((\timefunc')^{i-\Ealpha{\varrho}/2+1/2}+ \sum_{j=0}^{i-1} (\timefunc')^{j-\Ealpha{\varrho}/2+3/2-\Lxitimes}\biggr) \di \timefunc'\Biggr)^2\nonumber\\
\lesssim{}& \ED{\varrho}^2\timefunc^{2i-\Ealpha{\varrho}+3-2\Lxitimes}. 
\end{align}
Similarly, for $\timefunc\leq -\timefunc_0$, one finds $\int_{\Sphere}\absScri{\LxiOp^{\Lxitimes}\tilde\varphi_{i}(\timefunc,\omega)}{\EkNoArg-\Lxitimes}^2\diTwoVol$ $\lesssim \ED{\varrho}^2\abs{\timefunc}^{2i-\Ealpha{\varrho}+3-2\Lxitimes}$, and, for $\timefunc\in[-\timefunc_0,\timefunc_0]$, one has that $\int_{\Sphere}\absScri{\LxiOp^{\Lxitimes}\tilde\varphi_{i}(\timefunc,\omega)}{\EkNoArg-\Lxitimes}^2\diTwoVol$ is bounded. These together imply
\begin{align}
&\forall \timefunc\in\Reals, \forall i\in\{\Em{\varrho},\ldots,l'\}, \forall \Lxitimes\in \{0,1\}:&
\int_{\Sphere}\absScri{\LxiOp^{\Lxitimes}\tilde\varphi_{i}(\timefunc,\omega)}{\EkNoArg-\Lxitimes}^2\diTwoVol \lesssim{}& \ED{\varrho}^2\langle\timefunc\rangle^{2i-\Ealpha{\varrho}+3-2\Lxitimes}.
\label{eq:tildevarphiiesti}
\end{align}
Furthermore, for $i$ satisfying $\Em{\varrho}\leq i\leq j\leq l'\leq \El{\varrho}$, one finds
\begin{align}
\lim_{\timefunc \to \infty} (\mathbf{I}^{j-i} \ScriOps^{\mathbf{a}}\tilde\varphi_i)(\timefunc,\omega)
={}&\lim_{\timefunc \to \infty} (\mathbf{I}^{j-i+1} \ScriOps^{\mathbf{a}}\tilde\varrho_i)(\timefunc,\omega)
- \sum_{\iip=0}^{i-1} \frac{a_{i-\iip-1} (i-1)i!}{\iip!}\lim_{\timefunc \to \infty} (\mathbf{I}^{j-i+1}\ScriOps^{\mathbf{a}}\tilde\varphi_{\iip})(\timefunc,\omega)=0.
\end{align}
Thus, by induction, the $\tilde{\varphi}_i$ satisfy \eqref{eq:tildephiinductionassump} for $\Em{\varrho}\leq i\leq \El{\varrho}$. This then completes the proofs of
\eqref{eq:tildephiinductionassump} and 
\eqref{eq:Firstexpansiontermsvanish}.

Next, we consider the estimates of the flux and bulk integrals of $\tilde\varphi_i$. Since $\Ealpha{\varphi}<\Ealpha{\varrho}-2<2\El{\varrho}+2$, the operators in $\rescaledOps$ are linear combinations of operators in $\ScriOps$ and $R\partial_R$ with coefficients $\bigOAnalytic(1)$, and $R\partial_R$ commutes with the operators in $\ScriOps$, the above implies, for any $0\leq i\leq \El{\varrho}$, $\Lxitimes\in \{0,1\}$, and  $\timefunc'\geq\timefunc_0$,
\begin{align}
\norm{ r^{-\El{\varrho}}\LxiOp^{\Lxitimes}\tilde\varphi_i}_{W^{\EkNoArg-\Lxitimes}_{\Ealpha{\varphi}-3}(\Dtautprimeext)}^2
\lesssim{}& \sum_{j=0}^{\EkNoArg-\Lxitimes}\int_{\timefunc'}^\infty \int_{\timefunc}^\infty \int_{\Sphere} \left(r^{\Ealpha{\varphi}-3} \absScri{(R\partial_R)^j (r^{-\El{\varrho}}\LxiOp^{\Lxitimes}\tilde\varphi_i) }{{\EkNoArg-\Lxitimes}-j}^2\right)\diTwoVol\di r \di\timefunc
\nonumber\\
\lesssim{}& \int_{\Dtautprimeext} r^{\Ealpha{\varphi}-3-2\El{\varrho}} \absRescaled{\LxiOp^{\Lxitimes}\tilde\varphi_i}{\EkNoArg-\Lxitimes}^2 \diFourVol
\nonumber\\
\lesssim{} & \ED{\varrho}^2 \int_{\timefunc'}^\infty \int_{\timefunc}^\infty r^{\Ealpha{\varphi}-3-2\El{\varrho}} \timefunc^{2i-\Ealpha{\varrho} +3-2\Lxitimes}  \di r\di\timefunc \nonumber\\
\lesssim{}& \ED{\varrho}^2 (\timefunc')^{-\delta-2\Lxitimes}.
\end{align}
By the same argument, it follows that
\begin{align}
\norm{ r^{-\El{\varrho}}\LxiOp^{\Lxitimes}\tilde\varphi_i }_{W^{\EkNoArg-\Lxitimes}_{\Ealpha{\varphi}-2}(\Boundtprimeext)}^2
\lesssim{}&\sum_{j=0}^{\EkNoArg-\Lxitimes}\int_{\timefunc'}^{\infty} \int_{\Sphere} \left(r^{\Ealpha{\varphi}-2} \absScri{(R\partial_R)^j (r^{-\El{\varrho}}\LxiOp^{\Lxitimes}\tilde\varphi_i) }{{\EkNoArg-\Lxitimes}-j}^2\right)\Big\vert_{r=\timefunc} \diTwoVol\di\timefunc\nonumber\\
\lesssim{}& \int_{\timefunc'}^{\infty} \int_{\Sphere} \timefunc^{\Ealpha{\varphi}-2-2\El{\varrho}} \absScri{ \LxiOp^{\Lxitimes}\tilde\varphi_i }{\EkNoArg-\Lxitimes}^2 \diTwoVol\di\timefunc,
\end{align}
where in the last step we used the fact that $\tilde\varphi_i$ is independent of $r$.
Hence, for any $0\leq i\leq \El{\varrho}$, $\Lxitimes\in \{0,1\}$,  and $\timefunc'\geq\timefunc_0$ 
\begin{align}
\norm{ r^{-\El{\varrho}}\LxiOp^{\Lxitimes}\tilde\varphi_i }_{W^{\EkNoArg-\Lxitimes}_{\Ealpha{\varphi}-2}(\Boundtprimeext)}^2
\lesssim{}& \int_{\timefunc'}^{\infty} \ED{\varrho}^2 \timefunc^{\Ealpha{\varphi}-2-2\El{\varrho}}  \timefunc^{2i-\Ealpha{\varrho}+3-2\Lxitimes} \di\timefunc \nonumber\\
\lesssim{} &\ED{\varrho}^2 (\timefunc')^{-\delta-2\Lxitimes} . 
\end{align} 
Gathering together these estimates for $\tilde\varphi_i$,  we obtain for any $0\leq i\leq \El{\varrho}$, $\Lxitimes\in \{0,1\}$, and $\timefunc\geq\timefunc_0$, 
\begin{align}
\norm{ r^{-\El{\varrho}}\LxiOp^{\Lxitimes}\tilde\varphi_i }_{W^{\EkNoArg-\Lxitimes}_{\Ealpha{\varphi}-2}(\Boundtext)}^2
+\norm{ r^{-\El{\varrho}}\LxiOp^{\Lxitimes}\tilde\varphi_i}_{W^{\EkNoArg-\Lxitimes}_{\Ealpha{\varphi}-3}(\Dtautext)}^2
\lesssim{} &\ED{\varrho}^2 \timefunc^{-\delta-2\Lxitimes} . 
\label{eq:phiiTildeTransitionAndMorawetzExterior}
\end{align} 
Similarly, for any $0\leq i\leq \El{\varrho}$, $\Lxitimes\in \{0,1\}$, and $\timefunc\geq\timefunc_0$, 
\begin{align}
\Vert r^{-\El{\varrho}-1}\LxiOp^{\Lxitimes}\tilde\varphi_i \Vert_{W^{\EkNoArg-\Lxitimes}_{\Ealpha{\varphi}+2-3}(\Dtautext)}^2
\lesssim{}& \ED{\varrho}^2 \timefunc^{-\delta-2\Lxitimes} .  
\label{eq:phiiTildeMorawetzExteriorRescaled}
\end{align} 

\step{Treat the $\varphi_i$} If $\Em{\varrho}\geq \El{\varrho}$, it follows from  \eqref{eq:Firstexpansiontermsvanish} that $\tilde{\varphi}_i=0$ for any $0\leq i\leq \Em{\varrho}-1$ and hence formula \eqref{eq:varphii} implies $\varphi_i=0$ for all $i\in \{0,\ldots,\El{\varrho}-1\}$. Instead, if $\Em{\varrho}\leq \El{\varrho}-1$, it follows from equations \eqref{eq:Firstexpansiontermsvanish} and \eqref{eq:varphii} that $\varphi_i=0$ for any $i\in \{0,\ldots,\Em{\varrho}-1\}$. Therefore, in either case, $\varphi_i=0$ for any $i\in \{0,\ldots,\Em{\varphi}-1\}$ and any $(\timefunc, \omega)$. This proves condition \eqref{eq:vanishingorderassumption}.

  For any $\Em{\varrho}\leq i \leq \El{\varrho}-1=\El{\varphi}$, $\timefunc\in\Reals$, and $\Lxitimes\in \{0,1\}$, since $\Ealpha{\varrho} >2\El{\varrho}+3$, equations \eqref{eq:varphii}, \eqref{eq:tildephiinductionassump}, and \eqref{eq:Firstexpansiontermsvanish} can be used to obtain 
\begin{subequations}
\begin{align}
\int_{\Sphere}\absScri{\LxiOp^{\Lxitimes}\varphi_{i}(\timefunc,\omega)}{\EkNoArg-\Lxitimes}^2\diTwoVol 
\lesssim{}& \sum_{j=0}^{i}  \int_{\Sphere}\absScri{\LxiOp^{\Lxitimes}\tilde\varphi_{j}(\timefunc,\omega)}{\EkNoArg-\Lxitimes}^2\diTwoVol
\lesssim{} \ED{\varrho}^2\sum_{j=0}^{i}\langle\timefunc\rangle^{2j-\Ealpha{\varphi}+1-\delta-2\Lxitimes}\nonumber\\
\lesssim{}&\ED{\varrho}^2 \langle\timefunc\rangle^{2i-\Ealpha{\varrho}+3-2\Lxitimes},
\label{eq:Pointwisevarphiiexpansion} \\
\lim_{\timefunc \to \infty} (\mathbf{I}^{j-i} \ScriOps^{\mathbf{a}}\varphi_i)(\timefunc,\omega) 
={}&  \sum_{i'=0}^{i} \frac{i! a_{i-i'}}{(i')!} \lim_{\timefunc \to \infty} (\mathbf{I}^{j-i} \ScriOps^{\mathbf{a}}\tilde\varphi_{i'})(\timefunc,\omega)=0 , \qquad  \Em{\varrho} \leq i < j\leq \El{\varrho}.
\label{eq:Integrabilityvarphiiexpansion}
\end{align}
\end{subequations}
In particular, the estimate \eqref{eq:Pointwisevarphiiexpansion} holds for any $0\leq i \leq \El{\varphi}$. These together verify the conditions \eqref{eq:expansion:expansionTermsDecay} and \eqref{eq:expansion:integrability}.

The estimates for $\tilde\varphi_i$ in the above step, together with the uniform boundedness of the coefficients $\frac{i! a_{i-k}}{k!}$ in the expression \eqref{eq:varphii} of $\varphi_i$, imply that for any  $0\leq i \leq \El{\varphi}$, $\Lxitimes\in \{0,1\}$ and $\timefunc\geq \timefunc_0$,
\begin{align}
\label{eq:phiiTransitionAndMorawetz}
\norm{ r^{-i}\LxiOp^{\Lxitimes}\varphi_i }_{W^{\EkNoArg-\Lxitimes}_{\Ealpha{\varphi}-2}(\Boundtext)}^2
+\norm{ r^{-i}\LxiOp^{\Lxitimes}\varphi_i}_{W^{\EkNoArg-\Lxitimes}_{\Ealpha{\varphi}-3}(\Dtautext)}^2
\lesssim{} &\ED{\varrho}^2 \timefunc^{-\delta-2\Lxitimes} . 
\end{align}

\step{Treat $\tilde{\varphi}_{\rem;\El{\varrho}}$} 
Since each $b'_{j-i-1}(R)R+jb_{j-i-1}(R)$ is uniformly bounded, from estimates \eqref{eq:TransIngoingMoraext} and \eqref{eq:TransIngoingMorafar} in lemma \ref{lem:TransEqGeneralRegionMora} about transport equations, one finds that, for any  $\Lxitimes\in \{0,1\}$ and $\timefunc_2\geq \timefunc_1\geq \timefunc_0$,
\begin{align}
\hspace{4ex}&\hspace{-4ex}
\Vert \tilde\varphi_{\rem;\El{\varrho}} \Vert_{W^{\EkNoArg}_{\Ealpha{\varphi}-2} (\Boundext)}^2
+ \norm{\tilde{\varphi}_{\rem;\El{\varrho}}}_{W^{\EkNoArg}_{\Ealpha{\varphi}-2}(\Stauext)}^2
+ \norm{\tilde{\varphi}_{\rem;\El{\varrho}}}_{W^{\EkNoArg}_{\Ealpha{\varphi}-3}(\Dtauext)}^2 \nonumber\\
\lesssim{}&
\norm{\tilde{\varphi}_{\rem;\El{\varrho}}}_{W^{\EkNoArg}_{\Ealpha{\varphi}-2}(\Stauiext)}^2
+ \norm{{\varrho}_{\rem;\El{\varrho}}}_{W^{\EkNoArg}_{(\Ealpha{\varphi}+2)-3}(\Dtauext)}^2
+ \sum_{i=0}^{\El{\varrho}} \norm{r^{-{\El{\varrho}}-1} \tilde{\varphi}_i }_{W^{\EkNoArg}_{(\Ealpha{\varphi}+2)-3}(\Dtauext)}^2,
\label{eq:tildevarphiremainderinDtauv1}\\
\hspace{4ex}&\hspace{-4ex}
 \norm{\tilde{\varphi}_{\rem;\El{\varrho}}}_{W^{\EkNoArg}_{\Ealpha{\varphi}-2}(\Stauitext)}^2
+ \norm{\tilde{\varphi}_{\rem;\El{\varrho}}}_{W^{\EkNoArg}_{\Ealpha{\varphi}-3}(\Dtauearly)}^2 \nonumber\\
\lesssim{}&
\norm{\tilde{\varphi}_{\rem;\El{\varrho}}}_{W^{\EkNoArg}_{\Ealpha{\varphi}-2}(\Sigma_{\initial})}^2
+ \norm{{\varrho}_{\rem;\El{\varrho}}}_{W^{\EkNoArg}_{(\Ealpha{\varphi}+2)-3}(\Dtauearly)}^2
+ \sum_{i=0}^{\El{\varrho}} \norm{r^{-{\El{\varrho}}-1} \tilde{\varphi}_i }_{W^{\EkNoArg}_{(\Ealpha{\varphi}+2)-3}(\Dtauearly)}^2 .
\label{eq:tildevarphiremainderinOmegainitialt0}
\end{align}
From the assumption that $\Ealpha{\varphi}+2<\Ealpha{\varrho} $, there is the bound $\norm{\varrho_{\rem;\El{\varrho}}}^2_{W^{\EkNoArg}_{(\Ealpha{\varphi}+2)-3}(\Dtauext)} \lesssim \ED{\varrho}^2$ for the second term on the right of \eqref{eq:tildevarphiremainderinDtauv1}. The third term on the right of \eqref{eq:tildevarphiremainderinDtauv1} are bounded by $\ED{\varrho}^2$ in estimate \eqref{eq:phiiTildeMorawetzExteriorRescaled}. Thus, one finds, for any  $\timefunc\geq \timefunc_0$,
\begin{align}
\hspace{4ex}&\hspace{-4ex}
\Vert\tilde\varphi_{\rem;\El{\varrho}} \Vert_{W^{\EkNoArg}_{\Ealpha{\varphi}-2} (\Boundtext)}^2
+ \norm{\tilde{\varphi}_{\rem;\El{\varrho}}}_{W^{\EkNoArg}_{\Ealpha{\varphi}-2}(\Stautext)}^2
+ \norm{\tilde{\varphi}_{\rem;\El{\varrho}}}_{W^{\EkNoArg}_{\Ealpha{\varphi}-3}(\Dtautext)}^2 \nonumber\\
\lesssim{}&
\ED{\varrho}^2 + \norm{\tilde{\varphi}_{\rem;\El{\varrho}}}_{W^{\EkNoArg}_{\Ealpha{\varphi}-2}(\Stauitext)}^2 .
\label{eq:tildevarphiremainderinOmegatit2}
\end{align}

From the assumption that $\varrho$ has a $(\Ek{\varrho},\El{\varrho},\Em{\varrho},\Ealpha{\varrho},\ED{\varrho}^2)$ expansion and estimates \eqref{eq:tildevarphiiesti} and \eqref{eq:phiiTildeTransitionAndMorawetzExterior} for $\tilde{\varphi}_i$, it follows that
\begin{subequations}
\label{eq:varrhoremainderandtildevarphiiinfarregion}
\begin{align}
 \norm{{\varrho}_{\rem;\El{\varrho}}}_{W^{\EkNoArg}_{(\Ealpha{\varphi}+2)-3}(\Dtauearly)}^2
 \lesssim{}&\ED{\varrho}^2,\\
 \Vert
{\varrho}_{\rem;\El{\varrho}} \Vert^2_{W^{\EkNoArg}_{(\Ealpha{\varphi}+2)-3} (\Dtautext)}\lesssim{}& \ED{\varrho}^2 ,
\label{eq:varrhoremainderbulkestiexterior}\\
 \sum_{i=0}^{\El{\varrho}} \norm{r^{-{\El{\varrho}}-1} \tilde{\varphi}_i }_{W^{\EkNoArg}_{(\Ealpha{\varphi}+2)-3}(\Dtauearly)}^2 \lesssim{}&  \ED{\varrho}^2 ,\\
 \sum_{i=0}^{\El{\varrho}}\left(\Vert
r^{-{\El{\varrho}}-1} \tilde{\varphi}_i \Vert^2_{W^{\EkNoArg}_{(\Ealpha{\varphi}+2)-2} (\Boundtext)}
+\Vert
r^{-{\El{\varrho}}-1} \tilde{\varphi}_i  \Vert^2_{W^{\EkNoArg}_{(\Ealpha{\varphi}+2)-3} (\Dtautext)}\right) \lesssim{}& \ED{\varrho}^2 .
\label{eq:phiiTildeTransitionAndMorawetzExteriorSummed}
\end{align}
\end{subequations}
Moreover, it holds that $r\sim -\timefunc$ on $\Stauini$, and, since $\Ealpha{\varphi}<\Ealpha{\varrho}-2<2\El{\varrho}+2$, it follows that 
\begin{subequations}
\label{eq:varrhoremainder:initialenergy}
\begin{align}
\sum_{i=0}^{\El{\varrho}} \norm{r^{-i} \varrho_i }_{W^{\EkNoArg-1}_{\Ealpha{\varphi}}(\Stauini)}^2 \lesssim{}&  \ED{\varrho}^2\int_{r_+}^{\infty}  r^{\Ealpha{\varphi}}r^{-2i}r^{2i-\Ealpha{\varrho} +1} \di r\notag\\
\lesssim {}& \ED{\varrho}^2,\\
\sum_{i=0}^{\El{\varrho}}\norm{ R^{{\El{\varrho}}+1}\tilde\varphi_i(\timefunc,\omega)}_{W^{\EkNoArg-1}_{\Ealpha{\varphi}}(\Sigma_{\initial})}^2 \lesssim{}&
\ED{\varrho}^2\sum_{i=0}^{\El{\varrho}}\int_{r_+}^{\infty}  r^{\Ealpha{\varphi}}r^{-2{\El{\varrho}}-2}r^{2i-\Ealpha{\varrho} +3} \di r\notag\\
\lesssim {}& \ED{\varrho}^2.
\end{align}
\end{subequations}
Since $\tilde{\varphi}_{\rem;\El{\varrho}}$ vanishes on $\Sigma_{\initial}$ by assumption, all the derivatives tangent to  $\Sigma_{\initial}$  of $\tilde{\varphi}_{\rem;\El{\varrho}}$ also vanish. Each of the operators in $\rescaledOps$ on $\Sigma_{\initial}$  can be written as a sum of the tangential derivatives and $\bigOAnalytic(1) r\YOp$. Therefore, we have from the expression \eqref{eq:tildevarphiremainderinitialcondition} and estimates \eqref{eq:varrhoremainder:initialenergy} that
\begin{align}
\norm{\tilde{\varphi}_{\rem;\El{\varrho}}}_{W^{\EkNoArg}_{\Ealpha{\varphi}-2}(\Sigma_{\initial})}^2 \lesssim {}& \norm{\YOp\tilde{\varphi}_{\rem;\El{\varrho}}}_{W^{\EkNoArg-1}_{\Ealpha{\varphi}}(\Sigma_{\initial})}^2\notag\\
\lesssim {}&\norm{\varrho_{\rem;{\El{\varrho}}}}_{W^{\EkNoArg-1}_{\Ealpha{\varphi}}(\Sigma_{\initial})}^2
+\sum_{i=0}^{\El{\varrho}}\norm{ R^{{\El{\varrho}}+1}\tilde\varphi_i(\timefunc,\omega)}_{W^{\EkNoArg-1}_{\Ealpha{\varphi}}(\Sigma_{\initial})}^2\notag\\
\lesssim {}& \ED{\varrho}^2.
\label{eq:tildevarphiremainderinitialenergy}
\end{align}
Combining estimates \eqref{eq:tildevarphiremainderinDtauv1},  \eqref{eq:tildevarphiremainderinOmegainitialt0},   \eqref{eq:tildevarphiremainderinOmegatit2}, \eqref{eq:varrhoremainderandtildevarphiiinfarregion}, and \eqref{eq:tildevarphiremainderinitialenergy} gives that,
for any  $\timefunc\geq \timefunc_0$,
\begin{subequations}
\begin{align}
\Vert \tilde\varphi_{\rem;\El{\varrho}} \Vert_{W^{\EkNoArg}_{\Ealpha{\varphi}-2} (\Boundtext)}^2
+ \norm{\tilde{\varphi}_{\rem;\El{\varrho}}}_{W^{\EkNoArg}_{\Ealpha{\varphi}-2}(\Stautext)}^2
+ \norm{\tilde{\varphi}_{\rem;\El{\varrho}}}_{W^{\EkNoArg}_{\Ealpha{\varphi}-3}(\Dtautext)}^2
\lesssim{}&
\ED{\varrho}^2,
\label{eq:tildevarphiremainderinDtauzeroorder}\\
 \norm{\tilde{\varphi}_{\rem;\El{\varrho}}}_{W^{\EkNoArg}_{\Ealpha{\varphi}-2}(\Stauitext)}^2
+ \norm{\tilde{\varphi}_{\rem;\El{\varrho}}}_{W^{\EkNoArg}_{\Ealpha{\varphi}-3}(\Dtauearly)}^2
\lesssim{} &
\ED{\varrho}^2 .
\end{align}
An application of lemma \ref{lem:Transitionfluxcontrolledbybulkexterior} together with estimate \eqref{eq:varrhoremainderbulkestiexterior} implies
\begin{align}
\Vert \varrho_{\rem;\El{\varrho}} \Vert^2_{W^{\EkNoArg-1}_{(\Ealpha{\varphi}+2)-2} (\Boundtext)}
\lesssim\Vert
\varrho_{\rem;\El{\varrho}} \Vert^2_{W^{\EkNoArg}_{(\Ealpha{\varphi}+2)-3} (\Dtautext)}\lesssim \ED{\varrho}^2. \label{eq:transitionfluxcontrolledbybulk}
\end{align}
Hence, from the assumption, equation \eqref{eq:Ytildevarphirem}, and estimates \eqref{eq:varrhoremainderandtildevarphiiinfarregion}, we have, for any  $\timefunc\geq \timefunc_0$,
\begin{align}
\hspace{4ex}&\hspace{-4ex}\Vert
\YOp\tilde\varphi_{\rem;\El{\varrho}} \Vert^2_{W^{\EkNoArg-1}_{\Ealpha{\varphi}-2} (\Boundtext)}
+\Vert
\YOp\tilde\varphi_{\rem;\El{\varrho}} \Vert^2_{W^{\EkNoArg}_{\Ealpha{\varphi}-3} (\Dtautext)}\nonumber\\
\lesssim{}&\timefunc^{-2}\left(\Vert
\YOp\tilde\varphi_{\rem;\El{\varrho}} \Vert^2_{W^{\EkNoArg-1}_{(\Ealpha{\varphi}+2)-2} (\Boundtext)}
+\Vert
\YOp\tilde\varphi_{\rem;\El{\varrho}} \Vert^2_{W^{\EkNoArg}_{(\Ealpha{\varphi}+2)-3} (\Dtautext)}\right)\nonumber\\
\lesssim{}& \ED{\varrho}^2\timefunc^{-2},
\label{eq:tildevarphiremainderinDtauYOp}
\end{align}
\end{subequations}
which follows from \eqref{eq:phiiTildeTransitionAndMorawetzExteriorSummed}, \eqref{eq:transitionfluxcontrolledbybulk},
and the fact that $r\geq \timefunc$ in the exterior region.
It then holds that, for any  $\timefunc\geq \timefunc_0$,
\begin{align}
\hspace{6ex}&\hspace{-6ex}\Vert
\LxiOp\tilde\varphi_{\rem;\El{\varrho}} \Vert^2_{W^{\EkNoArg-1}_{\Ealpha{\varphi}-2} (\Boundtext)}
+\Vert
\LxiOp\tilde\varphi_{\rem;\El{\varrho}} \Vert^2_{W^{\EkNoArg-1}_{\Ealpha{\varphi}-3} (\Dtautext)}\nonumber\\
\lesssim {}&\Vert
\YOp\tilde\varphi_{\rem;\El{\varrho}} \Vert^2_{W^{\EkNoArg-1}_{\Ealpha{\varphi}-2} (\Boundtext)}
+\Vert
\YOp\tilde\varphi_{\rem;\El{\varrho}} \Vert^2_{W^{\EkNoArg-1}_{\Ealpha{\varphi}-3} (\Dtautext)}\nonumber\\
&+\Vert
\tilde\varphi_{\rem;\El{\varrho}} \Vert^2_{W^{\EkNoArg}_{\Ealpha{\varphi}-2-2} (\Boundtext)}
+\Vert
\tilde\varphi_{\rem;\El{\varrho}} \Vert^2_{W^{\EkNoArg}_{\Ealpha{\varphi}-2-3} (\Dtautext)}\nonumber\\
\lesssim{}& \ED{\varrho}^2 \timefunc^{-2}.
\end{align}
Hence, together with \eqref{eq:tildevarphiremainderinDtauzeroorder}, this implies,
for any  $\Lxitimes\in \{0,1\}$ and $\timefunc\geq \timefunc_0$,
\begin{align}
\Vert \LxiOp^{\Lxitimes}\tilde\varphi_{\rem;\El{\varrho}} \Vert_{W^{\EkNoArg-\Lxitimes}_{\Ealpha{\varphi}-2} (\Boundtext)}^2
+ \norm{\LxiOp^{\Lxitimes}\tilde{\varphi}_{\rem;\El{\varrho}}}_{W^{\EkNoArg-\Lxitimes}_{\Ealpha{\varphi}-3}(\Dtautext)}^2
\lesssim{}&
\ED{\varrho}^2 \timefunc^{-2\Lxitimes}.
\label{eq:tildevarphiremainderinDtau}
\end{align}
For any $\timefunc\geq \timefunc_0 +1$, there exists an $i\in \Naturals$ such that $\timefunc \in [\timefunc_0 + 2^i, \timefunc_0+ 2^{i+1}]$. We apply the mean-value principle to the first term of \eqref{eq:tildevarphiremainderinDtau}, with the time interval replaced by $ [\timefunc_0 + 2^i, \timefunc_0+ 2^{i+1}]$,  to conclude there exists a $\timefunc_{(i)}\in [\timefunc_0 + 2^i, \timefunc_0+ 2^{i+1}]$ such that
\begin{align}
\int_{\Sphere}\absRescaled{\timefunc_{(i)}^{\frac{\Ealpha{\varphi}-2}{2}}\tilde\varphi_{\rem;\El{\varrho}} (\timefunc_{(i)},\timefunc_{(i)},\omega)}{\EkNoArg}^2 \diTwoVol \lesssim \ED{\varrho}^2 (\timefunc_0 + 2^i)^{-1}\lesssim \ED{\varrho}^2\timefunc^{-1}.
\end{align}
From fundamental theorem of calculus,
\begin{align}
\int_{\Sphere}\absRescaled{\timefunc^{\frac{\Ealpha{\varphi}-2}{2}}\tilde\varphi_{\rem;\El{\varrho}} (\timefunc,\timefunc,\omega)}{\EkNoArg-1}^2 \diTwoVol
\lesssim {}&\int_{\Sphere}\absRescaled{\timefunc_{(i)}^{\frac{\Ealpha{\varphi}-2}{2}}\tilde\varphi_{\rem;\El{\varrho}} (\timefunc_{(i)},\timefunc_{(i)},\omega)}{\EkNoArg-1}^2 \diTwoVol \nonumber\\
&
+\Vert \LxiOp\tilde\varphi_{\rem;\El{\varrho}} \Vert_{W^{\EkNoArg-1}_{\Ealpha{\varphi}-2} (\Boundtext)}^2
+\Vert \tilde\varphi_{\rem;\El{\varrho}} \Vert_{W^{\EkNoArg}_{\Ealpha{\varphi}-2-2} (\Boundtext)}^2\nonumber\\
\lesssim{}& \ED{\varrho}^2\timefunc^{-1}.
\end{align}
Similarly  we have for $\timefunc\in[\timefunc_0,\timefunc_0+1]$ that
$\int_{\Sphere}\absRescaled{\tilde\varphi_{\rem;\El{\varrho}} (\timefunc,\timefunc,\omega)}{\EkNoArg-1}^2 \diTwoVol
\lesssim  \ED{\varrho}^2 $. Therefore, for any $\timefunc \geq \timefunc_0$,
\begin{align}
\int_{\Sphere}\absRescaled{\timefunc^{\frac{\Ealpha{\varphi}-1}{2}}\tilde\varphi_{\rem;\El{\varrho}} (\timefunc,\timefunc,\omega)}{\EkNoArg-1}^2 \diTwoVol
\lesssim {}& \ED{\varrho}^2.
\end{align}
Notice from \eqref{eq:SobolevOnStaut:intermediatestep} and \eqref{eq:tildevarphiremainderinDtauzeroorder}, we have in the exterior region that, for any $\timefunc \geq \timefunc_0$,
\begin{align}
\hspace{4ex}&\hspace{-4ex}\int_{\Sphere}\absRescaled{r^{\frac{\Ealpha{\varphi}-1}{2}}\tilde\varphi_{\rem;\El{\varrho}}(\timefunc, r,\omega)}{\EkNoArg-1}^2 \diTwoVol \nonumber\\
\lesssim{}&\int_{\Sphere}\absRescaled{r^{\frac{\Ealpha{\varphi}-1}{2}}\tilde\varphi_{\rem;\El{\varrho}}(\timefunc, \timefunc,\omega)}{\EkNoArg-1}^2 \diTwoVol
 +\norm{\tilde{\varphi}_{\rem;\El{\varrho}}}_{W^{\EkNoArg}_{\Ealpha{\varphi}-2}(\Stautext)}^2\nonumber\\
 \lesssim{}& \ED{\varrho}^2 .
\end{align}
From lemma \ref{lem:spherica-Sobolev}, the following pointwise estimates then hold for any $\timefunc \geq \timefunc_0$ in the exterior region
\begin{align}
\absRescaled{\tilde\varphi_{\rem;\El{\varrho}}}{\EkNoArg-3}^2 \lesssim{}& \ED{\varrho}^2 r^{-(\Ealpha{\varphi}-1)}.
\end{align}

\step{Treat $\tilde{\varphi}_{\homo;{\El{\varrho}}}$}
 Given a point $p\in \Dtauext$  with coordinates $(\timefunc,r,\omega)$, let $\gamma$ denote the integral curve along $Y$ through the point. The value of $\LxiOp^{\Lxitimes}\tilde{\varphi}_{\homo;{\El{\varrho}}}$ is constant along $\gamma$, so its value at $p$ is equal to its value at the intersection of $\gamma$ and $\Sigma_{\initial}$. Since the rates of change of $-\timefunc$ and $r$ are comparable along $\gamma$, it follows that the coordinates $(\tilde\timefunc,\tilde{r},\tilde\omega)$ of the intersection of $\gamma$ and $\Sigma_{\initial}$ satisfy $-\tilde\timefunc \sim \tilde{r} \sim \timefunc+2r$. From the decay rates for $\LxiOp^{\Lxitimes}\varphi_{\initial}$ and  $\LxiOp^{\Lxitimes}\tilde\varphi_i$, and since $\Ealpha{\varrho}-3 <2\El{\varrho}+1=2\El{\varphi}+3$ and $\Ealpha{\varphi}=\Ealpha{\varrho}-2-\delta$, one finds for any $\Lxitimes\in \{0,1\}$, 
\begin{align}
\int_{\Sphere}\abs{\LxiOp^{\Lxitimes}\tilde{\varphi}_{\homo;{\El{\varrho}}}}^2\diTwoVol
\lesssim{}& (\timefunc +2r)^{-\Ealpha{\varrho}+3-2\Lxitimes}
\inipointwise{\Ek{\varphi}}{\Ealpha{\varrho}-3}(\varphi)
+\ED{\varrho}^2 (\timefunc +2r)^{-\Ealpha{\varrho}+3-2\Lxitimes}\nonumber\\
\lesssim{}& (\timefunc +2r)^{-\Ealpha{\varphi}+1-\delta-2\Lxitimes}
\left(\inipointwise{\Ek{\varphi}}{2\El{\varphi}+3}(\varphi)
+\ED{\varrho}^2 \right) .
\end{align}
 The quantity $\inipointwise{\Ek{\varphi}}{2\El{\varphi}+3}(\varphi)$  in the above estimates can be replaced by $\inienergy{\Ek{\varphi}+1}{2\El{\varphi}+3}(\varphi)$ from lemma \ref{lem:Ik+1alphaenergydominatesPkalphapointwise}, implying that
\begin{align}
\int_{\Sphere}\abs{\LxiOp^{\Lxitimes}\tilde{\varphi}_{\homo;{\El{\varrho}}}}^2\diTwoVol
\lesssim{}&(\timefunc +2r)^{-\Ealpha{\varphi}+1-\delta-2\Lxitimes}
\ED{\varphi}^2 .
\end{align}
Applying a $Y$ derivative to $\LxiOp^{\Lxitimes}\tilde{\varphi}_{\homo;{\El{\varrho}}}$ gives zero. Differentiating along $rV$ or applying $\hedt$ or $\hedtp$ corresponds to differentiating along a vector of length $r$ on the initial data. Since derivatives decay one power faster, this means that $\int_{\Sphere}\absRescaled{\LxiOp^{\Lxitimes}\tilde{\varphi}_{\homo;{\El{\varrho}}}}{\EkNoArg-\Lxitimes}^2\diTwoVol$ decays at the same rate, although the constant depends on the $\EkNoArg$ norm, i.e.  for any $\Lxitimes\in \{0,1\}$,
\begin{align}
\label{eq:tildephiHomoDecay}
\int_{\Sphere}\absRescaled{\LxiOp^{\Lxitimes}\tilde{\varphi}_{\homo;{\El{\varrho}}}}{\EkNoArg-\Lxitimes}^2\diTwoVol 
\lesssim{}& (\timefunc +2r)^{-\Ealpha{\varphi}+1-\delta-2\Lxitimes}
\ED{\varphi}^2  .
\end{align}
 As with the $\LxiOp^{\Lxitimes}\tilde\varphi_i$, since $\Ealpha{\varphi}<\Ealpha{\varrho}-2$, one finds that, for any  $\Lxitimes\in \{0,1\}$ and  $\timefunc'\geq\timefunc_0$,
\begin{subequations}
\begin{align}
\norm{\LxiOp^{\Lxitimes}\tilde\varphi_{\homo;{\El{\varrho}}}}_{W^{\EkNoArg-\Lxitimes}_{\Ealpha{\varphi}-3}(\Dtautext)}^2
\lesssim{}& \int_{\timefunc}^\infty \int_{\timefunc'}^\infty \ED{\varphi}^2  r^{\Ealpha{\varphi}-3} (\timefunc' +2r)^{-\Ealpha{\varphi}+1-\delta-2\Lxitimes}   \di r\di\timefunc' \nonumber\\
\lesssim{}& \ED{\varphi}^2  \timefunc^{-\delta-2\Lxitimes} , \\
\norm{\LxiOp^{\Lxitimes}\tilde\varphi_{\homo;{\El{\varrho}}}}_{W^{\EkNoArg-\Lxitimes}_{\Ealpha{\varphi}-2}(\Boundtext)}^2
\lesssim{}& \int_{\timefunc}^\infty \ED{\varphi}^2  r^{\Ealpha{\varphi}-2} (3r)^{-\Ealpha{\varphi}+1-\delta-2\Lxitimes}\di\timefunc' \nonumber\\
\lesssim{}& \ED{\varphi}^2  \timefunc^{-\delta-2\Lxitimes},\\
\norm{\LxiOp^{\Lxitimes}\tilde\varphi_{\homo;{\El{\varrho}}}}_{W^{\EkNoArg-\Lxitimes}_{\Ealpha{\varphi}-2}(\Stautext)}^2
\lesssim{}& \int_{\timefunc}^\infty \ED{\varphi}^2  r^{\Ealpha{\varphi}-2} (\timefunc +2r)^{-\Ealpha{\varphi}+1-\delta-2\Lxitimes}\di r \nonumber\\
\lesssim{}& \ED{\varphi}^2  \timefunc^{-\delta-2\Lxitimes},\\
\norm{\LxiOp^{\Lxitimes}\tilde\varphi_{\homo;{\El{\varrho}}}}_{W^{\EkNoArg-\Lxitimes}_{\Ealpha{\varphi}-3}(\Dtauearly)}^2
\lesssim{}& \int_{-\infty}^{\timefunc_0} \int_{\abs{\timefunc'}}^\infty \ED{\varphi}^2  r^{\Ealpha{\varphi}-3} (\timefunc' +2r)^{-\Ealpha{\varphi}+1-\delta-2\Lxitimes}\di r \di \timefunc' \nonumber\\
\lesssim{}& \ED{\varphi}^2  .
\end{align}
\end{subequations}

\step{Treat $\varphi_{\rem;\El{\varphi}}$}
One can combine the results for the $\{\LxiOp^{\Lxitimes}\tilde{\varphi}_i\}_{i=0}^{\El{\varrho}}$, for $\LxiOp^{\Lxitimes}\tilde{\varphi}_{\rem;\El{\varrho}}$, and for $\LxiOp^{\Lxitimes}\tilde{\varphi}_{\homo;{\El{\varrho}}}$. Combining these bounds with uniform bounds on $b_{j-i-1}(R)$, and noticing $\El{\varphi}=\El{\varrho}-1$, one finds, for any $\Lxitimes\in \{0,1\}$ and $\timefunc\geq\timefunc_0$,
\begin{subequations}
\begin{align}
\norm{\LxiOp^{\Lxitimes}\varphi_{\rem;\El{\varphi}}}_{W^{\EkNoArg-\Lxitimes}_{\Ealpha{\varphi}-2}(\Boundtext)}^2
+\norm{\LxiOp^{\Lxitimes}\varphi_{\rem;\El{\varphi}}}_{W^{\EkNoArg-\Lxitimes}_{\Ealpha{\varphi}-3}(\Dtautext)}^2 
\lesssim{}& \ED{\varphi}^2  \timefunc^{-2\Lxitimes} ,
\label{eq:phiRemainderTransitionAndMorawetz}\\
\norm{\varphi_{\rem;\El{\varphi}}}_{W^{\EkNoArg}_{\Ealpha{\varphi}-3}(\Dtauearly)}^2
\lesssim {}&  \ED{\varphi}^2  .
\end{align}
\end{subequations}
From lemma \ref{lem:spherica-Sobolev} and rewriting $r\VOp$ using equation \eqref{eq:ddRasVOpstructure}, the estimates of the $L^2(\Sphere)$ norm of $\{\LxiOp^{\Lxitimes}\tilde{\varphi}_i\}_{i=0}^{\El{\varrho}}$ and  $\LxiOp^{\Lxitimes}\tilde{\varphi}_{\homo;{\El{\varrho}}}$ in inequalities \eqref{eq:tildephiinductionassumpEstimate} and \eqref{eq:tildephiHomoDecay} imply that in the exterior region, for any $\timefunc\geq \timefunc_0$, $i\in\{\Em{\varrho}+1,\ldots,{\El{\varrho}}\}$, and $n\in\Naturals$,
\begin{align}
\absRescaled{  R^{n}\tilde\varphi_i}{\EkNoArg-2}^2
\lesssim{}&\sum_{j=0}^{\EkNoArg-2}\absScri{  (r\VOp)^j (R^{n}\tilde\varphi_i)}{\EkNoArg-2-j}^2 \nonumber\\
\lesssim{}&\sum_{j=0}^{\EkNoArg-2}\absScri{  (R\partial_R)^j (R^{n})\tilde\varphi_i}{\EkNoArg-2-j}^2 \nonumber\\
\lesssim{}&\int_{\Sphere}R^{2n}\absScri{\tilde\varphi_{i}(\timefunc,\omega)}{\EkNoArg}^2\diTwoVol \nonumber\\
\lesssim{}&\ED{\varrho}^2 R^{2n}\timefunc^{2i-\Ealpha{\varrho}+3},
\label{eq:pointwisetildevarphii}\\
\absRescaled{\tilde{\varphi}_{\homo;{\El{\varrho}}}}{\EkNoArg-2}^2
\lesssim{}& \ED{\varphi}^2  R^{\Ealpha{\varrho} -3} .
\label{eq:pointwisetildevarphihomogeneous}
\end{align}
Together with the pointwise estimates of $\tilde{\varphi}_{\rem;\El{\varrho}}$ and the uniform boundedness of $b_{\El{\varrho}-i-1}(R)$, it follows that in the exterior region, for any  $\timefunc\geq \timefunc_0$,
\begin{subequations}
\begin{align}
\text{for}\  \Em{\varrho}\leq{}& \El{\varrho}, &\absRescaled{\varphi_{\rem;\El{\varphi}}}{\EkNoArg-3}^2 \lesssim{}& \ED{\varphi}^2 r^{-2\El{\varrho}}\timefunc^{-{\Ealpha{\varphi}+1}+2\El{\varrho}},\\
\text{for}\  \Em{\varrho}={}& \El{\varrho}+1, &\absRescaled{\varphi_{\rem;\El{\varphi}}}{\EkNoArg-3}^2 \lesssim{}& \ED{\varphi}^2 r^{-{\Ealpha{\varphi}+1}}.
\end{align}
\end{subequations}

\step{Treat $\varphi$}
Combining inequalities \eqref{eq:phiiTransitionAndMorawetz} and \eqref{eq:phiRemainderTransitionAndMorawetz} for the $\{\LxiOp^{\Lxitimes}{\varphi}_i\}_{i=0}^{\El{\varphi}}$ and $\LxiOp^{\Lxitimes}{\varphi}_{\rem;\El{\varphi}}$ gives, for any $\Lxitimes\in \{0,1\}$ and $\timefunc\geq \timefunc_0$,
\begin{align}
\norm{\LxiOp^{\Lxitimes}\varphi}_{W^{\EkNoArg-\Lxitimes}_{\Ealpha{\varphi}-2} (\Boundtext)}^2
\lesssim{}&\sum_{i=0}^{\El{\varphi}}\norm{r^{-i}\LxiOp^{\Lxitimes}\varphi_i}_{W^{\EkNoArg-\Lxitimes}_{\Ealpha{\varphi}-2} (\Boundtext)}^2
+\norm{\varphi_{\rem;\El{\varphi}}}_{W^{\EkNoArg-\Lxitimes}_{\Ealpha{\varphi}-2} (\Boundtext)}^2\nonumber\\
\lesssim{}& \ED{\varphi}^2  \timefunc^{-2\Lxitimes} .
\end{align}
For any $i\leq \Em{\varrho}-1$, $\varphi_i=0$, and for any $\Em{\varrho}\leq i\leq \El{\varrho}-1=\El{\varphi}$, we have from \eqref{eq:pointwisetildevarphii} with $n=i$ and the uniform boundedness of $\{a_i\}_{i=0}^{\El{\varphi}}$ that in the exterior region, for any $\timefunc\geq \timefunc_0$,
\begin{align}
\absRescaled{  R^i\varphi_i}{\EkNoArg-2}^2
\lesssim{}\sum_{j=0}^{i}\absRescaled{  R^i\tilde\varphi_j}{\EkNoArg-2}^2
\lesssim\sum_{j=\Em{\varphi}+1}^{i} r^{-2i}\timefunc^{2j-\Ealpha{\varrho} +3}
\lesssim \ED{\varrho}^2 R^{2i}\timefunc^{2i-\Ealpha{\varrho} +3}.
\end{align}
We have then from the pointwise estimate for $\varphi_{\rem;\El{\varphi}}$, the fact that the $\{\varphi_i\}_{i=0}^{\Em{\varrho}-1}$ vanish and the above pointwise estimates for $\{\varphi_i\}_{i=\Em{\varrho}}^{\El{\varphi}}$ that in the exterior region, for any $\timefunc\geq \timefunc_0$,
\begin{subequations}
\begin{align}
\text{for}\  \Em{\varrho}\leq{}& \El{\varrho}, &\absRescaled{\varphi}{\EkNoArg-3}^2 \lesssim{}& \ED{\varphi}^2 r^{-2\Em{\varrho} }\timefunc^{-\Em{\varphi}+1+2\Ealpha{\varrho} },\\
\text{for}\ \Em{\varrho}={}& \El{\varrho}+1, &\absRescaled{\varphi}{\EkNoArg-3}^2 \lesssim{}& \ED{\varphi}^2 r^{-\Ealpha{\varphi}+1} .
\end{align}
\end{subequations}
Therefore, we conclude that  $\varphi$ has a $(\Ek{\varrho},\El{\varrho},\Em{\varphi},\Ealpha{\varphi},\ED{\varphi}^2)$ expansion, and, for any  $\Lxitimes\in \{0,1\}$ and $\timefunc\geq \timefunc_0$, the estimates \eqref{eq:LxiOpvarphitransitionregionesti} and \eqref{eq:variouspointwiseforvarphigeneral} hold true. 
\end{steps}
\end{proof}

\begin{lemma}[Transformations of expansions]
\label{lem:transformationofexpansion}
Let $\EkNoArg,\ElNoArg,\EmNoArg\in\Naturals$ be such that $0\leq \EmNoArg\leq \ElNoArg+1$. Let $\EalphaNoArg$ be such that $2\ElNoArg+3<\EalphaNoArg<2\ElNoArg+4$. Let $\EDNoArg>0$. Let $\varrho$ be a spin-weighted scalar.

\begin{enumerate}
\item If $0\leq \EkNoArg'\leq \EkNoArg$, $0\leq \EmNoArg ' \leq \EmNoArg $ and $\varrho$ has a $(\EkNoArg,\ElNoArg,\EmNoArg,\EalphaNoArg,\EDNoArg^2)$ expansion, then $\varrho$ has a $(\EkNoArg',\ElNoArg,\EmNoArg',\EalphaNoArg,\EDNoArg^2)$ expansion.
\label{pt:trivialEmbedding}
\item If $\varrho_1$ and $\varrho_2$ both have $(\EkNoArg,\ElNoArg,\EmNoArg,\EalphaNoArg,\EDNoArg^2)$ expansions, then $\varrho_1+\varrho_2$ has a $(\EkNoArg,\ElNoArg,\EmNoArg,\EalphaNoArg,\EDNoArg^2)$ expansion.
\label{pt:sumsOfExpansions}
\item Let $n\in\Integers$, $n+ \EmNoArg\geq 0$, and $n+\ElNoArg\geq 0$. Let $f$ be a homogeneous rational function of $r$, $\sqrt{r^2+a^2}$, $\kappa_1$, and $\bar{\kappa}_{1'}$ of degree $-n$ that has no singularities for $\rInv\in[0,\rCutOff^{-1}]$.
Then there is a constant $C_f>0$ such that if $\varrho$ has a $(\EkNoArg,\ElNoArg,\EmNoArg,\EalphaNoArg,\EDNoArg^2)$ expansion, then $f\varrho$ has a $(\EkNoArg,\ElNoArg+n,\EmNoArg+n,\EalphaNoArg+2n,C_f\EDNoArg^2)$ expansion.
\label{pt:reweightingExpansion}
\item  If $\varrho$ has a $(\EkNoArg,\ElNoArg,\EmNoArg,\EalphaNoArg,\EDNoArg^2)$ expansion and has spin-weight $s$, then $\tau\varrho$ and $\bar\tau'\varrho$ have $(\EkNoArg,\ElNoArg+2,\EmNoArg+2,\EalphaNoArg+4,\EDNoArg^2)$ expansions and have spin-weight $s+1$, and $\bar\tau\varrho$ and $\tau'\varrho$ have $(\EkNoArg,\ElNoArg+2,\EmNoArg+2,\EalphaNoArg+4,\EDNoArg^2)$ expansions and have spin-weight $s-1$.
\label{pt:respinExpansion}
\item If $\varrho$ has a $(\EkNoArg,\ElNoArg,\EmNoArg,\EalphaNoArg,\EDNoArg^2)$ expansion and has spin-weight $s$, then $\kappa_1\edt\varrho$ has a $(\EkNoArg-1,\ElNoArg,\EmNoArg,\EalphaNoArg,\EDNoArg^2)$ expansion and has spin-weight $s+1$, and $\bar\kappa_{1'}\edtp\varrho$ has a $(\EkNoArg-1,\ElNoArg,\EmNoArg,\EalphaNoArg,\EDNoArg^2)$ expansion and has spin-weight $s-1$.
\label{pt:hedtExpansion}
\end{enumerate}
\end{lemma}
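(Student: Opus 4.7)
The proof proceeds by direct verification of each claim against the five defining conditions of definition \ref{def:asymptoticExpansion}, namely the existence of the expansion \eqref{eq:expansion:expansionExists}, the remainder bounds \eqref{eq:expansion:remainderBound}--\eqref{eq:expansion:remainderBoundInThePast}, the decay of the coefficients \eqref{eq:expansion:expansionTermsDecay}, the integrability condition \eqref{eq:expansion:integrability}, and the vanishing of leading coefficients \eqref{eq:vanishingorderassumption}. Parts \eqref{pt:trivialEmbedding} and \eqref{pt:sumsOfExpansions} are immediate: for \eqref{pt:trivialEmbedding}, the same expansion data $\{\varrho_i\}_{i=0}^{\ElNoArg}$ and remainder $\varrho_{\rem;\ElNoArg}$ witness the weaker class, since $\absHighOrder{\cdot}{\EkNoArg'}{\generalOps} \leq \absHighOrder{\cdot}{\EkNoArg}{\generalOps}$ when $\EkNoArg' \leq \EkNoArg$ and the vanishing condition $\EmNoArg'\leq \EmNoArg$ is a strict weakening; for \eqref{pt:sumsOfExpansions}, the expansions add coefficient-wise and every bound follows from the triangle inequality.

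For \eqref{pt:reweightingExpansion}, the key observation is that any homogeneous rational combination of $r$, $\sqrt{r^2+a^2}$, $\kappa_1$, $\bar\kappa_{1'}$ of degree $-n$ factors as $f = R^n g(R,\omega)$ where $g$ is smooth on $[0,\rCutOff^{-1}]\times \Sphere$ with $g$ and all its $R$-derivatives uniformly bounded; this is visible from $\kappa_1 = -\tfrac{1}{3}R^{-1}(1-iaR\cos\theta)$ and the analogous expressions. Taylor expanding $g$ in $R$ to order $\ElNoArg+n-i$ for each $i$, multiplying by $\varrho = \sum_{i=\EmNoArg}^{\ElNoArg} \frac{R^i}{i!}\varrho_i + \varrho_{\rem;\ElNoArg}$, and collecting powers of $R$ produces the new expansion coefficients $(f\varrho)_j$ for $n+\EmNoArg\leq j\leq n+\ElNoArg$ together with a new remainder absorbing the $\bigOAnalytic(R^{n+\ElNoArg+1})$ tail. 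The vanishing of the first $n+\EmNoArg$ coefficients follows from the vanishing of the first $\EmNoArg$ coefficients of $\varrho$, and the weight shift $\alpha\to\alpha+2n$ is consistent with the change of measure, since $\|R^n g\varrho_{\rem;\ElNoArg}\|_{W^k_{\alpha+2n-3}}^2$ $\sim \int r^{\alpha-3}|\varrho_{\rem;\ElNoArg}|_k^2$, controlled by $\|\varrho_{\rem;\ElNoArg}\|_{W^k_{\alpha-3}}^2$. Part \eqref{pt:respinExpansion} is a specialization: in the Znajek tetrad, $\tau=-ia\sin\theta/(9\sqrt{2}\kappa_1^2)$ factors as $R^2\tilde\tau$ with $\tilde\tau$ smooth and bounded on $[0,\rCutOff^{-1}]\times\Sphere$, and the argument of \eqref{pt:reweightingExpansion} applies verbatim, carrying the spin-weight $+1$ of $\tau$; the other three spin coefficients are handled identically.

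For \eqref{pt:hedtExpansion}, the plan is to use the identity \eqref{eq:relationhedtandedt}, rewritten as
\begin{equation*}
3\kappa_1\edt\varrho = \hedt\varrho + 9\kappa_1^2\tau\,\LxiOp\varrho - 3s\,\kappa_1\tau\,\varrho ,
\end{equation*}
and to show that each of the three terms on the right admits a $(\EkNoArg-1,\ElNoArg,\EmNoArg,\EalphaNoArg,\EDNoArg^2)$ expansion, so that part \eqref{pt:sumsOfExpansions} gives the result. The term $\hedt\varrho$ has coefficients $\hedt\varrho_i$ and the bounds transfer directly with one fewer order of regularity since $\hedt\in\ScriOps$. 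The term $\LxiOp\varrho$ has coefficients $\LxiOp\varrho_i = M^{-1}(M\LxiOp)\varrho_i$, and the estimate $\absHighOrder{M\LxiOp\varrho_i}{\EkNoArg-1}{\ScriOps}\leq \absHighOrder{\varrho_i}{\EkNoArg}{\ScriOps}$ yields a $(\EkNoArg-1,\ElNoArg,\EmNoArg,\EalphaNoArg,\EDNoArg^2)$ expansion; the prefactor $\kappa_1^2\tau = -ia\sin\theta/(9\sqrt{2})$ has degree zero and is a bounded spin-weight $+1$ factor, so by the same logic as \eqref{pt:respinExpansion} the class is preserved. The term $\kappa_1\tau\varrho$ has $\kappa_1\tau$ of degree $-1$, giving by \eqref{pt:reweightingExpansion}--\eqref{pt:respinExpansion} a $(\EkNoArg,\ElNoArg+1,\EmNoArg+1,\EalphaNoArg+2,\EDNoArg^2)$ expansion, which can be absorbed into the claimed $(\EkNoArg-1,\ElNoArg,\EmNoArg,\EalphaNoArg,\EDNoArg^2)$ class by moving its top-order term into the remainder; this is legitimate because for $r\geq M$ the weighted-norm inequality $\|\cdot\|_{W^{\EkNoArg-1}_{\alpha-3}}\lesssim \|\cdot\|_{W^{\EkNoArg-1}_{\alpha-1}}$ holds. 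The analogue for $\bar\kappa_{1'}\edt'$ uses \eqref{eq:relationhedtprimeandedtprime}.

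The main technical obstacle is the verification of the integrability condition \eqref{eq:expansion:integrability} for the $\LxiOp\varrho$ contribution in \eqref{pt:hedtExpansion}: for $i<j\leq\ElNoArg$ one must control $\lim_{\timefunc\to\infty}\mathbf{I}^{j-i}\ScriOps^{\mathbf{a}}(\LxiOp\varrho_i)$. Using $\mathbf{I}\circ\LxiOp=\mathrm{id}$ on functions vanishing at $\timefunc=-\infty$ (which is guaranteed for each $\varrho_i$ by the decay \eqref{eq:expansion:expansionTermsDecay} combined with the assumption $\EalphaNoArg>2\ElNoArg+3$), one reduces the condition for $\LxiOp\varrho_i$ at level $j-i$ to the condition for $\varrho_i$ at level $j-i-1$, which is part of the hypothesis on $\varrho$ when $j-i\geq 1$. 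The case $j=i+1$ requires that $\mathbf{I}^0\ScriOps^{\mathbf{a}}\LxiOp\varrho_i = \ScriOps^{\mathbf{a}}\LxiOp\varrho_i$ tends to zero, which follows from the strict decay $\absScri{\LxiOp\varrho_i}{\EkNoArg-1}^2\lesssim \langle\timefunc\rangle^{2i-\EalphaNoArg-1}$ together with the assumption $\EalphaNoArg>2\ElNoArg+3>2i+1$. Once this bookkeeping is in place, all five parts follow.
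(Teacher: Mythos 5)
Your proposal is correct and follows essentially the same route as the paper's proof: parts \eqref{pt:trivialEmbedding}--\eqref{pt:sumsOfExpansions} by direct verification, part \eqref{pt:reweightingExpansion} by factoring out the homogeneity degree and multiplying the $R$-expansion of the coefficient against that of $\varrho$, part \eqref{pt:respinExpansion} from the explicit form of $\tau,\tau'$ in the Znajek tetrad, and part \eqref{pt:hedtExpansion} via the decomposition of $\kappa_1\edt\varrho$ into $\hedt\varrho$, $\kappa_1^2\tau\LxiOp\varrho$, and $\kappa_1\tau\varrho$ using \eqref{eq:relationhedtandedt}--\eqref{eq:relationhedtprimeandedtprime}. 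Your write-up is in fact somewhat more explicit than the paper's (the absorption of the $\kappa_1\tau\varrho$ term and the integrability bookkeeping for $\LxiOp\varrho_i$), and those extra steps are sound.
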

\begin{proof}
If $\EkNoArg'\leq \EkNoArg$ and $\EmNoArg'\leq \EmNoArg$, then the condition to have a $(\EkNoArg,\ElNoArg,\EmNoArg,\EalphaNoArg,\EDNoArg^2)$ expansion is strictly stronger than the condition to have a $(\EkNoArg',\ElNoArg,\EmNoArg',\EalphaNoArg,\EDNoArg^2)$ expansion, so the former implies the latter, which implies point \ref{pt:trivialEmbedding}.

Point \ref{pt:sumsOfExpansions} follows directly from summing the expansions, summing the bounds, and noting the linearity in both the integrability condition  \eqref{eq:expansion:integrability} and the vanishing condition \eqref{eq:vanishingorderassumption}.

Now consider point \ref{pt:reweightingExpansion}. Observe that if $\varrho$ has a $(\EkNoArg,\ElNoArg,\EmNoArg,\EalphaNoArg,\EDNoArg^2)$ expansion, then $\vartheta=r^{-n}\varrho$ has a $(\EkNoArg,\ElNoArg+n,\EmNoArg+n,\EalphaNoArg+2n,\EDNoArg^2)$ expansion, where $\vartheta_i=0$ for $i\leq n+\EmNoArg$, $\vartheta_i=\varrho_{i-\EmNoArg}$ for $i>n+\EmNoArg$, and $\vartheta_{\rem;\ElNoArg+n}=r^{-n}\varrho_{\rem;\ElNoArg}$. Thus, it is sufficient to show that if $f$ is a homogeneous rational function of degree $0$ and $\varrho$ has a $(\EkNoArg,\ElNoArg,\EmNoArg,\EalphaNoArg,\EDNoArg^2)$ expansion, then $f\varrho$ has a $(\EkNoArg,\ElNoArg,\EmNoArg,\EalphaNoArg,\EDNoArg^2)$ expansion. Expanding $f$ as an order $\ElNoArg$ power series in $\rInv$ and multiplying the expansions for $f$ and $\varrho$ together, one obtains an order $\ElNoArg$ expansion for $f\varrho$. Because $f$ is rational with no singularities on $\rInv=0$, each of the expansion terms in $f$ are smooth functions of the spherical coordinates alone. Thus, the expansion terms for $f\varrho$ have the same decay and $\timefunc$-integrability conditions as $\varrho$. The remainder term for $f$ decays as $r^{-\ElNoArg-1}$. The remainder term for $f\varrho$ consists of products of expansion terms of $f$ and of $\varrho$, of expansion terms of $f$ and the remainder for $\varrho$, of the remainder for $f$ and the expansion terms of $\varrho$, and of the remainder term for $f$ and the remainder term for $\varrho$. The expansion terms for $f$ and the remainder are all homogeneous rational functions without singularities in the region under consideration and with a characteristic rate of decay. Since $f$ is $\timefunc$ independent, $\LxiOp(f\varrho)=f\LxiOp\varrho$ and similarly for higher derivatives. All four types of products in the expansion of $f\varrho$ will have bounded integrals for $\timefunc\leq\timefunc_0$ when integrated over $\Dtautext$. Thus, all the conditions for a $(\EkNoArg,\ElNoArg,\EmNoArg,\EalphaNoArg,\EDNoArg^2)$ expansion are satisfied.

In point \ref{pt:respinExpansion}, the claim about the spin weight follows from properties of products of spin-weighted quantities. The bounds can be calculated in the Znajek tetrad using the argument from the previous paragraph and the fact that, in the Znajek tetrad, $\tau$ and $\tau'$ is $a\sin\theta$ times a homogeneous rational function in $\kappa_1$ and $\bar\kappa_{1'}$ of degree $-2$.

Similarly, in point \ref{pt:hedtExpansion}, the claim about spin follows from the fact that $\kappa_1$ and $\bar\kappa_{1'}$ are spin-weight zero quantities, and $\edt$ and $\edtp$ are spin $+1$ and $-1$ operators. The bounds follow from the relations \eqref{eq:relationhedtprimeandedtprime} and \eqref{eq:relationhedtprimeandedtprime} that $\kappa_1\edt\varrho$ is a linear combination of $\hedt\varrho$, $\kappa_1^2\tau\LxiOp\varrho$, and $\kappa_1\tau\varrho$ and $\bar\kappa_{1'}\edtp\varrho$ is a linear combination of $\hedtp\varrho$, $\bar\kappa_{1'}\bar{\tau}\LxiOp\varrho$, and $\bar\kappa_{1'}\bar{\tau}\varrho$, and the fact that the operators $\hedt$, $\hedtp$, and $\LxiOp$ are in $\rescaledOps$, the number of which is measured by $\EkNoArg$.
\end{proof}

\subsection{Integration on \texorpdfstring{$\Scri^+$}{Scri+} and the Teukolsky-Starobinsky identity}
\label{sec:Scri-int}
In this subsection and the following one, we show that $\psibase[-2]$ has an expansion at infinity. This subsection focuses on showing that the leading-order terms in the expansion of $\psibase[-2]$ satisfies the integrability condition \eqref{eq:expansion:integrability} on null infinity. The following subsection treats the remaining decay conditions and bounds on the remainder terms. 
\begin{definition}[Taylor expansion at $\Scri^+$]
\label{def:scri-tayl}
Let $\psibase[-2], \psibase[+2]$ be as in definition \ref{def:linearizedEinsteinFields}. Working in the compactified hyperboloidal coordinate system $(\timefunc, R,\theta, \phi)$ and restricting to the Znajek tetrad,
let the spin-weighted scalars $A_i$, $i=0,\dots,3$, $B_0$ on $\Scri^+$ be the Taylor coefficients of $\psibase[-2], \psibase[+2]$ defined by
\begin{subequations}
\begin{align}
A_i ={}& \partial_R^i \psibase[-2] \big{|}_{\Scri^+}, \quad i=0,\dots,3,  \\
B_0 ={}& \psibase[+2] \big{|}_{\Scri^+}  ,
\end{align}
\end{subequations}
and let $A_{\rem;3}, B_{\rem;0}$ be the corresponding remainder terms such that
\begin{subequations} \label{eq:scriexp-radpsi}
\begin{align}
\psibase[-2]={}&\sum_{i=0}^3 \frac{R^i}{i!} A_i(\timefunc,\omega) + A_{\rem;3},\\
\psibase[+2]={}&B_0+ B_{\rem;0}.
\end{align}
\end{subequations}
\end{definition}

\begin{lemma}
\label{lem:intA=0}
Let $\mathbf{a}$ be a multiindex. With $A_i$, $i=0,\dots,3$, $B_0$ as in definition \ref{def:scri-tayl}, assume that
\begin{subequations}\label{eq:AB-limits}
\begin{align}\label{eq:B-limits}
\lim_{\timefunc\to -\infty}  \pt^j \ScriOps^{\mathbf{a}} B_{0} = \lim_{\timefunc\to \infty} \pt^j \ScriOps^{\mathbf{a}} B_{0} = 0, \quad j=0,\dots,4 \\
\intertext{and}
\label{eq:A-limits}
\lim_{\timefunc\to -\infty} \pt^j \ScriOps^{\mathbf{a}} A_{0} = \lim_{\timefunc \to \infty} \pt^j \ScriOps^{\mathbf{a}} A_{0} = 0, \quad j=0,\dots, 4.
\end{align}
\end{subequations}
Then with $\AntiDeriv$ defined as in definition \ref{def:AntiDerivOper},
\begin{equation}\label{eq:intA=0}
\lim_{\timefunc\to \infty}  \AntiDeriv^j  \ScriOps^{\mathbf{a}} A_{0} = 0, \quad j=1,\dots, 4.
\end{equation}
\end{lemma}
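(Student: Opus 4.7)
The plan is to restrict the Teukolsky--Starobinsky identity \eqref{eq:7.39} to $\Scri^{+}$ and then successively integrate in $\timefunc$, exploiting the vanishing boundary conditions in \eqref{eq:AB-limits} to invert $\LxiOp$-derivatives, and finally to invert $\hedt^{4}$ on the sphere. In the compactified hyperboloidal coordinates, equation \eqref{eq:YOptRthetaphi} together with the expansion of $h'$ in \eqref{eq:hprim-explicit} gives $\YOp|_{R=0}=2\LxiOp$, while $r/(a^{2}+r^{2})$ vanishes at $R=0$. Substituting the Taylor expansions of definition \ref{def:scri-tayl} into \eqref{eq:7.39} and evaluating at $R=0$ therefore yields the identity on $\Scri^{+}$
\begin{equation}\label{eq:TSIonScriProof}
\hedt^{4}A_{0} \;=\; -3M\,\LxiOp\bar{A}_{0}\;-\;\sum_{k=1}^{4}\binom{4}{k}\ringtau^{k}\hedt^{4-k}\LxiOp^{k}A_{0}\;+\;4\,\LxiOp^{4}B_{0}.
\end{equation}

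Next I would apply $\ScriOps^{\mathbf{a}}$ to \eqref{eq:TSIonScriProof}. Each operator in $\ScriOps=\{\hedt,\hedt',M\LxiOp\}$ commutes with $\hedt^{4}$ up to bounded spin-weight-dependent scalar corrections (from \eqref{eq:commutatorofhedtandhedtprime}) and commutes with $\ringtau$ up to bounded coefficients (using \eqref{eq:tau0ids} and smoothness of $\ringtau$ on $\Sphere$); every such correction still produces a term in which $A_{0}$ appears under an extra $\LxiOp$ or under angular derivatives of the form already present. I would then apply $\AntiDeriv^{j}$ for $j\in\{1,\dots,4\}$ and use $\AntiDeriv[\LxiOp f]=f-\lim_{\timefunc\to-\infty}f$ repeatedly. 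Since hypothesis \eqref{eq:AB-limits} ensures these $-\infty$ limits vanish for every $\ScriOps^{\mathbf{a}}$-derivative of $A_{0}$ and $B_{0}$, each $\AntiDeriv^{j}\LxiOp^{m}$ collapses to $\LxiOp^{m-j}$ when $m\geq j$ and to $\AntiDeriv^{j-m}$ otherwise. After this manipulation the right-hand side consists of pure $\ScriOps$-derivatives of $A_{0}$ or $B_{0}$ together with expressions of the form $\AntiDeriv^{j'}\ScriOps^{\mathbf{b}}A_{0}$ with $1\leq j'<j$, while the left-hand side becomes $\hedt^{4}\AntiDeriv^{j}\ScriOps^{\mathbf{a}}A_{0}$.

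Taking $\timefunc\to\infty$, the pure-derivative terms vanish directly by \eqref{eq:AB-limits}, and the $\AntiDeriv^{j'}\ScriOps^{\mathbf{b}}A_{0}$ terms vanish by induction on $j$, with the base case $j=1$ involving only the first type. Hence $\lim_{\timefunc\to\infty}\hedt^{4}\AntiDeriv^{j}\ScriOps^{\mathbf{a}}A_{0}=0$ for $j=1,\dots,4$. To conclude, I would note that since $A_{0}$ has spin-weight $-2$ its spin-weighted spherical harmonic expansion is supported in $\ell\geq 2$, and iterating the eigenvalue identity \eqref{eq:eigenvalueOfhedt} through the intermediate spins $-1,0,1$ shows that $\hedt^{4}$ has strictly positive spectrum on this space; consequently $\hedt^{4}:L^{2}_{-2}(\Sphere)\to L^{2}_{+2}(\Sphere)$ is a bounded bijection with bounded inverse. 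Because the uniform bounds underlying \eqref{eq:AB-limits} upgrade the pointwise convergence of the right-hand side to $L^{2}(\Sphere)$-convergence via dominated convergence, inverting $\hedt^{4}$ and applying Sobolev embedding on $\Sphere$ yields \eqref{eq:intA=0}. The main bookkeeping obstacle is tracking the commutators with $\ringtau$ and between $\hedt$ and $\hedt'$, but since every such commutator produces only strictly lower-order terms the induction absorbs them cleanly.
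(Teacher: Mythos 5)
Your overall strategy is the same as the paper's: restrict the Teukolsky--Starobinsky identity \eqref{eq:7.39} to $\Scri^+$ (your restricted identity, with the coefficient $4\LxiOp^4 B_0$, agrees with the paper's), integrate in $\timefunc$ using the vanishing limits at $-\infty$ to collapse $\AntiDeriv\circ\LxiOp$, take $\timefunc\to\infty$ using the hypotheses, and invert $\hedt^4$ via its strictly positive spectrum on spin-weight $-2$ fields; general $\mathbf{a}$ is handled by commuting $\ScriOps^{\mathbf{a}}$ through, as in the paper's induction on $\abs{\mathbf{a}}$. (Your phrase ``bounded bijection $L^2_{-2}\to L^2_{+2}$'' is not literally right for a fourth-order operator, but the fact you actually use---injectivity with eigenvalues $\tfrac{(l+2)!}{4(l-2)!}$ bounded away from zero, hence a bounded inverse---is correct and is exactly the paper's ``trivial kernel'' step.)

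There is, however, a concrete flaw in your induction bookkeeping. When $\mathbf{a}$ contains an $\hedt'$ factor, the commutator $[\hedt^4,\ScriOps^{\mathbf{a}}]$ does not produce terms ``with an extra $\LxiOp$ or angular derivatives of the form already present'': since $[\hedt,\hedt']\varphi=-s\varphi$, one has $[\hedt^4,\hedt']=c\,\hedt^3$, so $[\hedt^4,\ScriOps^{\mathbf{a}}]A_0$ is a sum of terms $c\,\ScriOps^{\mathbf{c}}A_0$ with $\abs{\mathbf{c}}=\abs{\mathbf{a}}+2$ and with \emph{no} additional $\LxiOp$. After applying $\AntiDeriv^{j}$ these appear as $\AntiDeriv^{j}\ScriOps^{\mathbf{c}}A_0$ at the \emph{same} level $j$ and higher angular order; they are neither ``pure $\ScriOps$-derivatives'' covered by \eqref{eq:AB-limits} nor of the form $\AntiDeriv^{j'}\ScriOps^{\mathbf{b}}A_0$ with $j'<j$. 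Consequently your claim that the $j=1$ base case involves only pure-derivative terms is false for such $\mathbf{a}$, and induction on $j$ alone does not close. A valid repair is available within your framework: either organize as the paper does (first prove the $\mathbf{a}=0$ case for all $j$, then induct over multi-indices, with the hypotheses understood to hold for all multi-indices---which both arguments need anyway, since already at $\mathbf{a}=0$, $j=1$ the terms $\ringtau^k\hedt^{4-k}\LxiOp^{k-1}A_0$ require vanishing limits of angular derivatives of $A_0$); or induct on the number of $\hedt'$ factors in $\mathbf{a}$, which strictly decreases under $[\hedt^4,\hedt']\propto\hedt^3$; or prove the $\mathbf{a}=0$ statement in $H^N(\Sphere)$ for every $N$ and use ellipticity of $\hedt^4$, so that all purely angular multi-indices follow at once and multi-indices containing $M\LxiOp$ reduce to lower $j$ or to the hypotheses. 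You should also record that applying $\ScriOps^{\mathbf{a}}$ to the $-3M\LxiOp\bar A_0$ term produces the conjugated multi-index acting on $A_0$, so the hypotheses must be invoked for that multi-index as well.
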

\begin{proof}
We first prove the statement for $\mathbf{a} = 0$. Passing to the Znajek tetrad,  we may replace $\LxiOp$ by $\partial_\timefunc$ for spin-weighted scalars on $\Scri^+$. Equation \eqref{eq:TSIpsihat} yields, after using the expression \eqref{eq:YOptRthetaphi} for $Y$ and taking the limit $R\rightarrow 0$, that on $\Scri^+$,
\begin{align} \label{eq:new-ABScri}
\hedt^{4}A_0={}&-3 M \partial_\timefunc (\bar{A}_0)
  - \sum_{k=1}^4\binom{4}{k}  \ringtau^k \hedt^{4-k} \partial_\timefunc{}^k A_0
  +  4  \partial_\timefunc^4 B_0.
\end{align}

Integrating \eqref{eq:new-ABScri} $j$ times from $\timefunc=-\infty$, we have by \eqref{eq:AB-limits}
\begin{align}
\hedt^{4}\AntiDeriv^j A_0={}&-3 M \AntiDeriv^j \partial_\timefunc (\bar{A}_0)
  - \sum_{k=1}^4\binom{4}{k}  \ringtau^k \hedt^{4-k} \AntiDeriv^j \partial_\timefunc^k A_0
  +  4  \AntiDeriv^j \partial_\timefunc^4 B_0, \quad j=1,\dots,4.
\end{align}
From definition \ref{def:AntiDerivOper} we have that for a function $f$ satisfying \eqref{eq:AB-limits},
\begin{align}
\partial_\timefunc \AntiDeriv f ={}& \AntiDeriv \partial_\timefunc  f = f .
\end{align}
For $j=1$ we have
\begin{align}
\label{eq:OneIntegrationTSI}
\hedt^{4}\AntiDeriv A_0={}&-3 M  \bar{A}_0
  - \sum_{k=1}^4\binom{4}{k}  \ringtau^k \hedt^{4-k} \partial_\timefunc^{k-1} A_0
  +  4   \partial_\timefunc^3 B_0 .
\end{align}
Recall that $A$, and hence $A_0$ has spin-weight $-2$. 
Acting on a spin-weighted spherical harmonic ${}_{-2} Y_{l m}$, we have
\begin{equation}
\hedt^4 {}_{-2} Y_{l m} 
=\frac{(l+2)!}{4(l-2)!} {}_{+2} Y_{l m} ,
\end{equation}
and hence, since we may restrict to considering $l \geq 2$, we find that the operator $\hedt^4$ has trivial kernel when acting on fields of spin-weight $-2$.
Taking the limit $\timefunc\to \infty$ on both sides of \eqref{eq:OneIntegrationTSI}, and after using \eqref{eq:AB-limits} and the fact that $\hedt^4$ has trivial kernel on spin-weighted functions on $\Sphere$ with spin-weight $-2$, this gives the statement for $j=1$.
For $j=2,\dots, 4$, the statement can be proven in a similar manner, using induction with $j=1$ as base. This proves the lemma for $\mathbf{a} = 0$.

We prove the lemma for $\mathbf{a} \ne 0$ by induction on $\abs{\mathbf{a}}$, with $\mathbf{a} = 0$ as base.   Thus, let $k \geq 1$ be an integer, and assume the lemma is proved for $\abs{\mathbf{a}} = k-1$. Applying $\ScriOps^{\mathbf{a}}$ to both sides of \eqref{eq:new-ABScri} yields
\begin{align} \label{eq:scriOpAB}
\hedt^4 \ScriOps^{\mathbf{a}} A_0 = {}& [ \hedt^4, \ScriOps^{\mathbf{a}} ] A_0 - 3M \partial_\timefunc (\ScriOps^{\mathbf{a}} \bar{A}_0 )
- \sum_{k=1}^4\binom{4}{k}  \ringtau^k \hedt^{4-k} \partial_\timefunc{}^k \ScriOps^{\mathbf{a}}A_0 \nonumber\\
& - \sum_{k=1}^4 \binom{4}{k} \partial_\timefunc^k [ \ScriOps^{\mathbf{a}}, \ringtau^k \hedt^{4-k} ] A_0 + 4 \partial_\timefunc^4 \ScriOps^{\mathbf{a}} B_0.
\end{align}
The commutators on the right-hand side of \eqref{eq:scriOpAB} can be evaluated by noting that $\partial_\timefunc$ commutes with $\hedt$ and making use of the identities \eqref{eq:tau0ids} and the commutation formula \eqref{eq:commutatorofhedtandhedtprime}. By the induction hypothesis, we have that each term on the right-hand side satisfies the assumptions of the lemma. Therefore, we can proceed as above and inductively prove \eqref{eq:intA=0} for $j=1,\dots,4$. This completes the proof of the lemma.
\end{proof}

\begin{lemma}
\label{lem:Ailim+infty}
Let $\mathbf{a}$ be a multiindex, and let the assumptions in lemma \ref{lem:intA=0} hold.
Assume that
\begin{equation}\label{eq:Ailim-infty}
\lim_{\timefunc \to -\infty} \ScriOps^{\mathbf{a}} A_i (\timefunc,\omega) = 0, \quad i=0,\dots,3.
\end{equation}
Then
\begin{equation}\label{eq:Ailim+infty}
\lim_{\timefunc\to \infty} \AntiDeriv^j \ScriOps^{\mathbf{a}} A_i(\timefunc,\omega) = 0, \quad i=0,\dots,3,  \quad j=0,\dots,4-i .
\end{equation}
\end{lemma}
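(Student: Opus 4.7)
The plan is to derive a hierarchy of transport equations on $\Scri^+$ for the Taylor coefficients $A_0, A_1, A_2, A_3$ and then to lift the conclusion of lemma \ref{lem:intA=0} (which treats $A_0$) to the higher $A_i$ by induction on $i$. For the base case $i=0$, the case $j=0$ follows from the assumption \eqref{eq:A-limits} already imposed by lemma \ref{lem:intA=0}, and the cases $j=1,\ldots,4$ are exactly the conclusion of lemma \ref{lem:intA=0} itself.

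To obtain the transport equations, I would work in the compactified hyperboloidal coordinate system and Taylor-expand the Teukolsky equation \eqref{eq:TeukolskyRegular-2} in $R$ about $R=0$, using the explicit expressions \eqref{eq:ROperatorsThroughScri} for $\hROp_{-2}$, \eqref{eq:SOp-coords} for $\TMESOp_{-2}$, and \eqref{eq:tRthetaphi} for the rescaled null vectors. The key algebraic point is that at order $R^n$ the coefficient of $\partial_\timefunc A_{n+1}$, arising from the $\partial_\timefunc\partial_R\psi$ term in $\hROp_{-2}$, is the nonvanishing constant $-2/n!$; matching coefficients at orders $n=0,1,2$ therefore produces three equations of the form
\begin{align*}
\partial_\timefunc A_{n+1} = \mathcal{L}_n[A_0,\ldots, A_n],
\end{align*}
where $\mathcal{L}_n$ is a linear differential operator involving $\partial_\timefunc$, $\partial_\theta$, and $\partial_\phi$ with smooth coefficients on $\Sphere$, and crucially containing at most two $\partial_\timefunc$ derivatives. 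At $n=0$, for example, one obtains
\begin{align*}
2\partial_\timefunc A_1 = 4\CInHyperboloids M^2 \partial_\timefunc^2 A_0 + 4a\partial_\timefunc\partial_\phi A_0 - \TMESOp_{-2}A_0 - 8M\partial_\timefunc A_0 + 4A_0.
\end{align*}

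For the inductive step $i\to i+1$, apply $\ScriOps^{\mathbf{a}}$ to the transport equation for $A_{i+1}$ and commute past $\mathcal{L}_n$ using \eqref{eq:CommutAnguDeri} and \eqref{eq:commutatorofhedtandhedtprime}, so that the right-hand side becomes a linear combination of terms $c(\omega)\partial_\timefunc^n \ScriOps^{\mathbf{a}'}A_k$ with $k\leq i$, $n\in\{0,1,2\}$, and $|\mathbf{a}'|\leq|\mathbf{a}|+2$. The hypothesis $\lim_{\timefunc\to-\infty}\ScriOps^{\mathbf{a}}A_{i+1}=0$ permits integrating once to write $\ScriOps^{\mathbf{a}}A_{i+1}(\timefunc) = \AntiDeriv(\text{RHS})(\timefunc)$, whence $\AntiDeriv^j\ScriOps^{\mathbf{a}}A_{i+1} = \AntiDeriv^{j+1}(\text{RHS})$. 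Each resulting summand has the form $c(\omega)\AntiDeriv^{j+1-n}\ScriOps^{\mathbf{a}'}A_k$ when $n\leq j+1$, or $c(\omega)\partial_\timefunc^{n-j-1}\ScriOps^{\mathbf{a}'}A_k$ when $n>j+1$. The first is handled by the inductive hypothesis, since the required condition $j+1-n\leq 4-k$ reduces to $j\leq 3-i$, which is exactly the range that must be treated. The second is handled by differentiating the transport equation for $A_k$ in $\timefunc$ and invoking the already established limits for $A_{k-1},\ldots, A_0$, which simultaneously supplies the decay of $\partial_\timefunc^m A_k$ at $+\infty$ needed at the next induction step.

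The principal obstacle will be the careful bookkeeping of the time-derivative orders in $\mathcal{L}_n$: if any term in $\mathcal{L}_n$ contained more than two $\partial_\timefunc$ derivatives, the four antiderivatives of $A_0$ supplied by lemma \ref{lem:intA=0} would be insufficient to control the iterated antiderivatives required at level $i$. A secondary subtlety is that the pointwise decay $\partial_\timefunc^m A_k \to 0$ at $+\infty$ for $k\geq 1$ is not a direct hypothesis but must be obtained recursively from the transport equations themselves, so the inductive statement must be strengthened slightly to encode these auxiliary decay statements alongside the antiderivative limits.
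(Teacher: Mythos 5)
Your proposal follows essentially the same route as the paper's proof: Taylor-expanding the Teukolsky equation \eqref{eq:TeukolskyRegular-2} at $\Scri^+$ to obtain a strictly lower-triangular transport system $\partial_\timefunc A_i = \mathbf{L}_i{}^k A_k$ (the paper's \eqref{eq:bigAsys}) whose entries are symmetry operators containing at most two $\partial_\timefunc$ derivatives, integrating from $\timefunc=-\infty$ using \eqref{eq:Ailim-infty}, and inducting on $i$ and on $|\mathbf{a}|$ with lemma \ref{lem:intA=0} and the limits \eqref{eq:A-limits} supplying the base case, the antiderivative count $j+1-n\leq 4-k$ being exactly what closes the argument. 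The only blemish is the sign of the $M\partial_\timefunc A_0$ term in your sample $n=0$ equation (the paper has $+4M\partial_\timefunc A_0$ in \eqref{eq:bigAsys}), which is immaterial to the structural argument.
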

\begin{proof}
We first consider the case $\mathbf{a} = 0$. Taylor expanding the Teukolsky equation \eqref{eq:TeukolskyRegular-2} at $\Scri^+$ and using \eqref{eq:scriexp-radpsi} gives a recursive set of equations for $\pt A_i$, $i=0,\dots,3$, which after passing to the Znajek tetrad takes the form
\begin{subequations} \label{eq:bigAsys}
\begin{align}
\partial_{\timefunc} A_{1}{}={}&2 A_{0}{}
 + 4 M \partial_{\timefunc} A_{0}{}
 + 2 \CInHyperboloids M^2 \partial_{\timefunc} \partial_{\timefunc} A_{0}{}
 + 2 a \partial_{\timefunc} \partial_{\phi} A_{0}{}
 -  \tfrac{1}{2} \TMESOp_{-2}(A_{0}{}),\\
\partial_{\timefunc} A_{2}{}={}&- M A_{0}{}
 + 3 A_{1}{}
 + 2 (4 - \CInHyperboloids) M^2 \partial_{\timefunc} A_{0}{}
 + 4 M \partial_{\timefunc} A_{1}{}
 + 2 \CInHyperboloids M^2 \partial_{\timefunc} \partial_{\timefunc} A_{1}{}
 + 4 M a \partial_{\timefunc} \partial_{\phi} A_{0}{}\nonumber\\
& + 2 a \partial_{\timefunc} \partial_{\phi} A_{1}{}
 + a \partial_{\phi} A_{0}{}
 -  \tfrac{1}{2} \TMESOp_{-2}(A_{1}{})
 + (16 M^3 - 4 M a^2 -  \tfrac{1}{6} H^{(3)}(0))\partial_{\timefunc} \partial_{\timefunc} A_{0}{},\\
\partial_{\timefunc} A_{3}{}={}&-3 a^2 A_{0}{}
 + 3 A_{2}{}
 + (16 M^2 - 2 a^2) \partial_{\timefunc} A_{1}{}
 + 4 M \partial_{\timefunc} A_{2}{}
 + 2 \CInHyperboloids M^2 \partial_{\timefunc} \partial_{\timefunc} A_{2}{}\nonumber\\
& + 4 M^2 a(4  - \CInHyperboloids) \partial_{\timefunc} \partial_{\phi} A_{0}{}
 + 8 M a \partial_{\timefunc} \partial_{\phi} A_{1}{}
 + 2 a \partial_{\timefunc} \partial_{\phi} A_{2}{}
 + 4 a \partial_{\phi} A_{1}{}
 -  \tfrac{1}{2} \TMESOp_{-2}(A_{2}{})\nonumber\\
& + (32 M^3 - 8 M a^2 -  \tfrac{1}{3} H^{(3)}(0))\partial_{\timefunc} \partial_{\timefunc} A_{1}{}
 + (16 M^3 - 4 \CInHyperboloids M^3 - 12 M a^2 + \tfrac{1}{6} H^{(3)}(0))\partial_{\timefunc} A_{0}{}\nonumber\\
& + (64 M^4 - 4 \CInHyperboloids^2 M^4 - 32 M^2 a^2 + 4 \CInHyperboloids M^2 a^2 -  \tfrac{1}{12} H^{(4)}(0))\partial_{\timefunc} \partial_{\timefunc} A_{0}{}.
\end{align}
\end{subequations}
The system \eqref{eq:bigAsys} is of the form
\begin{align} \label{eq:A=LA}
\partial_t A_i = \mathbf{L}_i{}^k A_k
\end{align}
where, by inspection, $\mathbf{L}_i{}^j$ is a strictly lower-triangular matrix of operators on $\Scri^+$ with entries which are linear combinations of symmetry operators of order up to two of the Teukolsky equation, i.e. $\TMESOp_{-2}$, $\partial_t^2, \partial_t \partial_\phi, \partial_t, \partial_\phi$ and constants. The coefficients are bounded constants and depend only on $M,a,\CInHyperboloids$, and the Taylor terms $H^{3}(0)$ and $H^{4}(0)$,  where $H$ is given by \eqref{eq:H(R)} and \eqref{eq:hprim-explicit}.

From \eqref{eq:A=LA} we get the recursion relation
\begin{equation}\label{eq:Arecur}
A_i(\timefunc,\omega) = \lim_{\timefunc\to -\infty} A_{i} (\timefunc,\omega) + \int_{-\infty}^\timefunc \sum_{k=0}^{i-1} \mathbf{L}_i{}^k A_k (\timefunc',\omega) \di \timefunc', \quad i=1,2,3.
\end{equation}
Lemma \ref{lem:intA=0} shows that the $i=0$ case of \eqref{eq:Ailim+infty} is valid.
We consider the case $i=1$. From \eqref{eq:Arecur} and \eqref{eq:Ailim-infty} we have
\begin{align}\label{eq:A1inTMEscri}
\lim_{\timefunc\to \infty} \AntiDeriv^j A_1 ={}& \int_{-\infty}^{\infty} \mathbf{L}_1{}^0 \AntiDeriv^j A_0 (\timefunc',\omega) \di \timefunc' .
\end{align}
From lemma \ref{lem:intA=0}, the right of \eqref{eq:A1inTMEscri} vanishes for $j=0,1,2,3$, yielding the $i=1$ case of \eqref{eq:Ailim+infty} is valid.
Repeating this argument proves the statement for $i=2,3$ in the case $\mathbf{a} = 0$.

For the general case, we use induction on $\abs{\mathbf{a}}$. Let $m$ be a positive integer, and assume the statement has been proved for multiindices of length $\abs{\mathbf{a}} \leq m-1$. Apply $\ScriOps^{\mathbf{a}}$ to both sides of the Teukolsky equation, and Taylor expand the result at $\Scri^+$. This yields a version of system \eqref{eq:bigAsys} for $\ScriOps^{\mathbf{a}} A_i$, $i=0,\dots,3$, which again has the form
\begin{align} \label{eq:lowertriang}
\partial_t \ScriOps^{\mathbf{a}} A_i = \mathbf{L}_{i}{}^k \ScriOps^{\mathbf{a}}A_k + [\ScriOps^{\mathbf{a}} , \mathbf{L}_{i}{}^k ] A_k .
\end{align}
The last term in \eqref{eq:lowertriang} can be expressed in terms of $\ScriOps^{\mathbf{b}} A_k$ with $\abs{\mathbf{b}} \leq m-1$ and $k<i$.  This means that we can argue as above and use the fact that the system $\mathbf{L}_{i}{}^k$ is strictly lower triangular, to get the statement for $\abs{\mathbf{a}} = m$. This completes the proof of the lemma.
\end{proof}

\subsection{Expansion for the spin-weight \texorpdfstring{$-2$}{-2} Teukolsky scalar}
\label{sec:-2Expansion}
Here, we complete the proof that $\psibase[-2]$ has an expansion, which we do in lemma \ref{lem:TMEHasAnExpansion}. The first lemma treats the early region as a preliminary case, since the results in section \ref{sec:spin-2TeukolskyEstimates} focused on late times. 
\begin{lemma}[Control of the Teukolsky scalar at and prior to $\Stauit$] \label{lem:idata}
\standardMinusTwoHypothesisDefinePsibasei. 
There is a regularity constant $K$ such that the following holds. Let $j,k\in\Naturals$ such that $k-j- K$ is sufficiently large.
\standardMinusTwoHypothesisBEAM.
Let $\NEWinienergyminustwo{k}$ be as in definition \ref{def:initialDataNormMinusTwoTest}, and
let $\iniEnergyMinusTwoForDescent{\ireg}$ be as in definition
\ref{def:generalSpinorInitialDataNorm}.
\begin{enumerate}
\item
\begin{align}
\NEWinienergyminustwo{\ireg-4}
+\sum_{i=0}^4\norm{\radpsi{\ii}}_{W^{\ireg-4}_{-1-}(\Dtauearly)}^2
\lesssim{}&
\iniEnergyMinusTwoForDescent{\ireg} .
\label{eq:psi-2:pastIntegrabilityNew}
\end{align}
\item For $\ii\in\{0,\ldots,4\}$, 
and for any $\timefunc\leq\timefunc_0$,
\begin{align}
\int_{\Sphere_{\timefunc,\infty}}\abs{\LxiOp^{\ij}\radpsi{\ii}}_{\ireg-\ij-K,\ScriOps}^2\diTwoVol
\lesssim{}& \langle\timefunc\rangle^{-9+2\ii-2\ij+}
\norm{\psibase[-2]}_{H^{\ireg}_{9}(\Stauini)}^2 .
\label{eq:psi-2:pastSphereDecay-NEW}
\end{align}
\end{enumerate}
\end{lemma}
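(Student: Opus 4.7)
The strategy for both estimates is to apply the early-region $r^p$-lemma (lemma \ref{lem:GeneralrpSpinWaveFar}) iteratively to the $5\times 5$ wave system \eqref{eq:rescaledwavesys5eqs} for the rescaled scalars $\{\raddegphi{i}\}_{i=0}^4$, mirroring the proof of lemma \ref{lem:rpestimatesforallradpsii} but on the early region $\Dtauearly$ rather than $\Dtau$. The equivalence of the $\raddegphi{i}$ and $\psibase[-2][i]$ norms from lemma \ref{lem:equiavelenceOfradphiAndraddegphi} lets us translate freely between the two.

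For the first estimate, apply lemma \ref{lem:GeneralrpSpinWaveFar} to each row of the system in turn, choosing $\alpha=2-\delta$ and treating the subdiagonal coupling terms in $\mathbf{A}$, $\mathbf{B}$, $\mathbf{C}$ as inhomogeneities. The diagonal coefficients satisfy the hypotheses of the $r^p$-lemma exactly as verified in the proof of lemma \ref{lem:rpestimatesforallradpsii}. The resulting bounds on $\norm{\raddegphi{i}}_{W^{\ireg+1}_{-2}(\Stauitext)}^2$, $\norm{r\VOp\raddegphi{i}}_{W^{\ireg}_{-\delta}(\Stauitext)}^2$ and $\norm{\raddegphi{i}}_{W^{\ireg+1}_{\alpha-3}(\Dtauearly)}^2$ are controlled by $\norm{\psibase[-2]}_{H^{\ireg+1}_{1-\delta}(\Stauini)}^2$ plus compactly supported correction terms disposed of by a standard energy estimate, together with contributions from the previously controlled $\raddegphi{j}$ with $j<i$. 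Since $1-\delta\leq 9$ and $\ireg$ is taken sufficiently large, this is dominated by $\iniEnergyMinusTwoForDescent{\ireg}$. Iterating through $i=0,\ldots,4$ loses four orders of regularity (paralleling the loss in lemma \ref{lem:rpestimatesforallradpsii}), yielding the bound on $\NEWinienergyminustwo{\ireg-4}$ and the $W^{\ireg-4}_{-1-}(\Dtauearly)$ bulk contribution after trivial weight monotonicity.

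For the second estimate, the cross-section bounds on $\Scri^+$ for $\timefunc\leq\timefunc_0$ are extracted from the same $r^p$ framework, now exploiting the full weight~$9$ in $\inienergy{\ireg}{9}(\psibase[-2])$. Repeating the argument above on the truncated region $\Dtautearly$ with $\timefunc \leq \timefunc_0$ gives a flux estimate $\norm{\LxiOp^j\radpsi{i}}_{F^{\ireg-j-K}(\Scritini)}^2\lesssim \langle\timefunc\rangle^{-9+2i-2j+}\norm{\psibase[-2]}_{H^{\ireg}_9(\Stauini)}^2$: the $+2i$ comes from the $r^{2i}$ amplification built into $\psibase[-2][i]$, the $-2j$ from the fact that each $\LxiOp$ derivative costs one power of $\langle\timefunc\rangle$ once combined with Sobolev trace to the sphere (as in lemmas \ref{lem:SobolevOnStaut} and \ref{lem:EnerDecPsi234}), and the base rate $-9$ reflects the identification $r \sim |\timefunc|$ along $\Stauini$ at large $r$, so that the weight $r^9$ in $\iniEnergyMinusTwoForDescent{\ireg}$ converts directly into the stated $|\timefunc|^{-9}$ decay at past null infinity. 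A mean-value argument in thin $\timefunc$-strips combined with the Sobolev trace from the bulk to $\Scri^+$ then promotes this flux bound to a pointwise-in-$\timefunc$ bound on cross-sections.

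The main obstacle is the bookkeeping for weights throughout the iterated argument: one must verify at each of the five iterations that the initial-data weight $9$ in $\iniEnergyMinusTwoForDescent{\ireg}$ absorbs the weight shift caused by the $(r^2+a^2)\VOp/M$ rescaling in $\psibase[-2][i]$ (worth up to $8$ for $i=4$) with enough margin left over for the $-\delta$ slack of the $r^p$-lemma, and that the dyadic/mean-value extraction of time decay in part~(2) is compatible with the asymptotic regime of $\Stauini$ near $i_0$.
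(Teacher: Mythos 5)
Your part (1) follows essentially the same route as the paper: apply the early-region $r^p$ estimate (lemma \ref{lem:GeneralrpSpinWaveFar}) to the system \eqref{eq:rescaledwavesys5eqs} with $\alpha=2-\delta$, pass between $\raddegphi{\ii}$ and $\radpsi{\ii}$ via lemma \ref{lem:equiavelenceOfradphiAndraddegphi}, absorb the bounded-region terms by crude energy estimates, and reindex; that part is fine.

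Part (2), however, has a genuine gap. You claim that ``repeating the argument'' on the truncated region yields a flux bound on $\Scritini$ at the rate $\langle\timefunc\rangle^{-9+2\ii-2\ij+}$ uniformly in $\ii\in\{0,\ldots,4\}$, with the $+2\ii$ explained by the $r^{2\ii}$ weight built into $\radpsi{\ii}$. But the BEAM hypothesis (definition \ref{def:BEAM}) and the $r^p$ hierarchy control $\radpsi{0},\radpsi{1},\radpsi{2}$ only \emph{jointly}: truncating $\Stauini$ to $\{r>r(\timefunc)\}$ with $r(\timefunc)\sim|\timefunc|$ and taking $\alpha=\delta$ gives, for $\ii\in\{2,3,4\}$, a flux rate $|\timefunc|^{-10+2\ii+}$ for the whole bundle $\sum_{\iip\le\ii}$, so the best this mechanism yields for $\radpsi{0}$ and $\radpsi{1}$ is the $\ii=2$ rate $|\timefunc|^{-6+}$ (improved by $-2\ij$ under $\LxiOp$-commutation), which is far short of the $|\timefunc|^{-10+}$ and $|\timefunc|^{-8+}$ needed to reach \eqref{eq:psi-2:pastSphereDecay-NEW} for $\ii=0,1$. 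The extra decay for small $\ii$ does not come from weight bookkeeping at all: in the paper it comes from restricting the elliptic identities \eqref{eq:Psi01EllipExt-explicit} to $\Scri^+$, observing that the resulting lower-triangular operator with diagonal entries $2\hedt\hedt'-4$ and $2\hedt\hedt'-6$ is invertible on spin-weight $-2$ spheres, and thereby expressing $\radpsi{0},\radpsi{1}$ on $\Scri^+$ in terms of $\LxiOp$-derivatives of themselves and of $\radpsi{2}$, each of which carries the additional $\timefunc$-decay; iterating this elliptic trade twice for $\radpsi{0}$ is what produces the uniform rate. Your proposal contains no substitute for this step, so the $\ii=0,1$ cases of \eqref{eq:psi-2:pastSphereDecay-NEW} are not established. (Two smaller points: the flux-squared rate one should aim for is $-10+2\ii-2\ij$, since the fundamental-theorem-of-calculus/Cauchy--Schwarz passage from the $F$-flux to the sphere norm gains one power of $\timefunc$; and the $-2\ij$ gain comes from $\LxiOp$ commuting with the Teukolsky operator together with the $r$-weights per derivative in $H^{\ireg}_{9}(\Stauini)$ on $\{r>r(\timefunc)\}$, not from a Sobolev trace.)
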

\begin{proof}
Consider estimating norms on $\Staut$ by those on $\Stauini$ for $\timefunc\leq\timefunc_0$. The basic estimate on $\radpsi{\ii}$ in lemma \ref{lem:rpestimatesforallradpsii} can essentially be repeated. In particular, from lemma \ref{lem:GeneralrpSpinWaveFar} on spin-weighted wave equations in the early region, from the $5$-component system \eqref{eq:rescaledwavesys5eqs}, and from the relation between $\raddegphi{\ii}$ and $\radpsi{\ii}$ norms in lemma \ref{lem:equiavelenceOfradphiAndraddegphi}, it follows that there is a constant $\rCutOffInrpWave$ such that, for all $\alpha\in[\delta,2-\delta]$,  $\rCutOff\geq\rCutOffInrpWave$, and $\timefunc\leq \timefunc_0$,
\begin{align}
\sum_{i=0}^4&\left(
  \norm{r\VOp\radpsi{\ii}}_{W^{\ireg}_{\alpha-2}(\StautR)}^2
  +\norm{\radpsi{\ii}}_{W^{\ireg+1}_{-2}(\StautR)}^2
  +\norm{\radpsi{\ii}}_{W^{\ireg+1}_{\alpha-3}(\DtautearlyR)}^2
\right)\nonumber\\
\lesssim{}&
\sum_{i=0}^4
\norm{\radpsi{\ii}}_{H^{\ireg+1}_{\alpha-1}(\Stauini)}^2 \nonumber\\
&+ \sum_{i=0}^4 \norm{\radpsi{\ii}}_{W^{\ireg+1}_{0}(\DtautearlyRcR)}^2
+ \sum_{i=0}^4 \norm{\radpsi{\ii}}_{W^{\ireg+1}_{-\delta}(\StautRcR)}^2 \nonumber\\
&+\sum_{i=0}^4 \norm{M r^{-1}\radpsi{\ii}}_{W^{\ireg+1}_{\alpha-3}(\DtautearlyRc)}^2.
\label{eq:rpInTheEarlyRegion}
\end{align}
 To treat the last term on the right-hand side, note that we can take $\rCutOff$ sufficiently large such that  $\norm{Mr^{-1}\radpsi{\ii}}_{W^{\ireg+1}_{\alpha-3}(\DtautearlyR)}^2$ can be absorbed into the $\norm{\radpsi{\ii}}_{W^{\ireg+1}_{\alpha-3}(\DtautearlyR)}^2$ terms on the left,  leaving $\norm{Mr^{-1}\radpsi{\ii}}_{W^{\ireg+1}_{\alpha-3}(\DtautearlyRcR)}^2$.
For all $\timefunc\leq\timefunc_0$, since $\rCutOff$ is bounded, the terms
\begin{align}
\norm{\radpsi{\ii}}_{W^{\ireg+1}_{\alpha-2}(\StautRcR)}^2, \quad  \norm{\radpsi{\ii}}_{W^{\ireg+1}_{\alpha-3}(\DtautearlyRcR)}^2
\end{align}
can be bounded by a multiple of the initial norm
\begin{align}
\norm{\radpsi{\ii}}_{H^{\ireg+1}_{\alpha-1}(\Stauini)}^2,
\end{align}
by standard exponential growth estimates for wave-like equations. Similarly, since $\Dtautearly\cap\{r\leq\rCutOff\}$ is bounded in spacetime, standard exponential growth estimates can be used to bound the energy on the upper boundary. Thus,
\begin{align}
\sum_{i=0}^4&\left(
  \norm{r\VOp\radpsi{\ii}}_{W^{\ireg}_{\alpha-2}(\Staut)}^2
  +\norm{\radpsi{\ii}}_{W^{\ireg+1}_{-2}(\Staut)}^2
  +\norm{\radpsi{\ii}}_{W^{\ireg+1}_{\alpha-3}(\Dtautearly)}^2
\right)\nonumber\\
\lesssim{}&
 \sum_{i=0}^4
\norm{\radpsi{\ii}}_{H^{\ireg+1}_{\alpha-1}(\Stauini)}^2 .
\label{eq:(0.9)}
\end{align}
In particular, with $\alpha=2-\delta$, and recalling the $\radpsi{\ii}$ are related via derivatives with an $r^2$ weight, but the
norms $\norm{\varphi}_{H^\ireg_\alpha(\Stauini)}^2$
are based on an $r^1$ weight for each derivative, \eqref{eq:(0.9)} for $\timefunc=\timefunc_0$ yields \begin{align}
\NEWinienergyminustwo{\ireg+1}
+\sum_{i=0}^4\norm{\radpsi{\ii}}_{W^{\ireg+1}_{-1-\delta}(\Dtauearly)}^2
\lesssim{}& \sum_{i=0}^4
\norm{\radpsi{\ii}}_{H^{\ireg+1}_{1-\delta}(\Stauini)}^2
\lesssim{}
\norm{\psibase[-2]}_{H^{\ireg+5}_{9-\delta}(\Stauini)}^2 . \label{eq:intermediatepsi-2}
\end{align}
Reindexing and using the notation introduced in definition \ref{def:generalSpinorInitialDataNorm}
gives \eqref{eq:psi-2:pastIntegrabilityNew}.

For $\timefunc$ negative and large in absolute value, it is possible to prove stronger estimates by combining the ideas in the proofs of decay for the spin-weight $-2$ Teukolsky scalar in section \ref{sec:spin-2TeukolskyEstimates} and of decay for the spin-weight $+2$ Teukolsky scalar as $\timefunc\rightarrow-\infty$ in the proof of theorem \ref{thm:DecayEstimatesSpin+2}. In particular, for $\ii\in\{2,3,4\}$, one follows the proof of lemma \ref{lem:EnerDecPsi234}, and, for $\ii\in\{0,1\}$, one follows that of theorem \ref{thm:ImproEstipsi01} in the case of the exterior region.

Let $\norm{\radpsi{i}}_{F^\ireg(\Scritini)}^2$ be as in definition \ref{def:generalSpinorScriNorm}. As in the proof of theorem \ref{thm:DecayEstimatesSpin+2}, let $r(\timefunc)$ denote the value of $r$ corresponding to the intersection of $\Stauini$ and $\Staut$, and recall that, for $\rCutOff$ fixed and $\timefunc$ sufficiently negative, we have that $r(\timefunc) > \rCutOff$ and $r(\timefunc) \sim -\timefunc$. Recall that the proof of \eqref{eq:rpInTheEarlyRegion} and \eqref{eq:(0.9)} is based on lemma \ref{lem:GeneralrpSpinWaveFar} and in particular an application of Stokes' theorem, and hence we may add a term of the form
\begin{align}
\sum_{i=0}^4 \norm{\radpsi{i}}_{F^\ireg(\Scritini)}^2
\end{align}
 on the left of \eqref{eq:(0.9)} and replace $\Stauini$ by $\Stauini^{r(\timefunc)} = \Stauini \cap \{r > r(\timefunc)\}$. Further, the resulting inequality holds with
the summation over $\ii\in\{0,\ldots,4\}$ replaced by summation over $\iip\in\{0,\ldots,\ii\}$ for $\ii\in\{2,3,4\}$.
We now have, for $\alpha\in[\delta,2-\delta]$,
\begin{align}
\sum_{\iip=0}^\ii \norm{\radpsi{\iip}}_{F^\ireg(\Scritini)}^2
\lesssim{}& \sum_{\iip=0}^\ii
\norm{\radpsi{\iip}}_{H^{\ireg+1}_{\alpha-1}(\Stauini^{r(\timefunc)})}^2 \nonumber \\
\lesssim{}&
\norm{\psibase[-2]}_{H^{\ireg+1+\ii}_{\alpha+2\ii-1}(\Stauini^{r(\timefunc)})}^2 \label{eq:(0.14)} .
\end{align}
With $\alpha = 2-\delta$ and $\ii=4$, we get the weight $9-\delta$ as in \eqref{eq:psi-2:pastIntegrabilityNew}.
On the other hand, taking $\alpha=\delta$, one finds, for $i \in \{2,3,4\}$,
\begin{align}
\sum_{\iip=0}^\ii \norm{\radpsi{\iip}}_{F^\ireg(\Scritini)}^2
\lesssim{}&
\norm{\psibase[-2]}_{H^{\ireg+1+\ii}_{\delta+2\ii-1}(\Stauini\cap\{r>r(\timefunc)\})}^2
\nonumber\\
\lesssim{}& r(\timefunc)^{-10+2\ii+2\delta}
\norm{\psibase[-2]}_{H^{\ireg+1+\ii}_{9-\delta}(\Stauini \cap\{r > r(t)\})}^2 \nonumber \\
\lesssim{}&|\timefunc|^{-10+2\ii+2\delta} \norm{\psibase[-2]}_{H^{\ireg+1+\ii}_{9-\delta}(\Stauini)}^2. \label{eq:(0.15)}
\end{align}
The case $\ii = 2$ of \eqref{eq:(0.15)} gives, after renaming $\iip$ to $\ii$,  the estimate
\begin{align}
\norm{\radpsi{\ii}}_{F^\ireg(\Scritini)}^2
\lesssim{}&|\timefunc|^{-6+2\delta} \norm{\psibase[-2]}_{H^{\ireg+3}_{9-\delta}(\Stauini)}^2, \quad \text{for $\ii\in\{0,1,2\}$}.
\end{align}
Since $\LxiOp$ commutes with the Teukolsky equation, but each derivative is weighted with $r^{-1}$ in $L^2$ in the definition of $\norm{\psibase[-2]}_{H^{\ireg+1+\ii}_{9-\delta}(\Stauini)}$,
one obtains an improved decay rate for $\LxiOp^\ij\radpsi{\ii}$,
\begin{align} \label{eq:xi^j-est}
\sum_{i=0}^2\norm{\LxiOp^\ij \radpsi{\ii}}_{F^\ireg(\Scritini)}^2
\lesssim{}& r(\timefunc)^{-6+2\delta}
\norm{\LxiOp^\ij\psibase[-2]}_{H^{\ireg+3}_{9-\delta}(\Stauini \cap\{r > r(t)\})}^2 \nonumber \\
\lesssim{}&|\timefunc|^{-6-2\ij +2\delta} \norm{\psibase[-2]}_{H^{\ireg+\ij + 3}_{9-\delta}(\Stauini)}^2, \quad \text{for $j \in \Naturals$.}
\end{align}
Restricting the system \eqref{eq:Psi01EllipExt-explicit} to $\Scri^+$, using \eqref{eq:Psi01EllipExt}, and taking $\rInv=0$, one finds a system of the form
\begin{align}\label{eq:LAB}
L \begin{pmatrix} \radpsi{0} \\ \radpsi{1} \end{pmatrix} = A \LxiOp \begin{pmatrix} \radpsi{0} \\ \radpsi{1} \end{pmatrix} + B \begin{pmatrix} 0 \\ \LxiOp \radpsi{2} \end{pmatrix} ,
\end{align}
with
\begin{align}
L  = \begin{pmatrix} L^{(0)} & 0 \\
6  + 6 \tfrac{a}{M} \LetaOp  & L^{(1)} \end{pmatrix}
\end{align}
and
\begin{align}
L^{(i)} = 2 \hedt \hedtp -2(2+i), \quad i=0,1
\end{align}
and where $A$ is a matrix of operators of maximal order $1$ involving $\LxiOp, \LetaOp$, and constants, and with bounded, $\timefunc$-independent coefficients, and where $B \in \Reals$. In particular, the first row of system \eqref{eq:LAB} is of the form
\begin{align} \label{eq:LAB0}
L^{(0)} \radpsi{0} = A^{(00)} \LxiOp \radpsi{0} + A^{(01)} \LxiOp \radpsi{1}
\end{align}
where $A^{(00)}, A^{(01)}$ are operators of maximal order $1$ involving $\LxiOp$, $\LetaOp$ and constants, with bounded $\timefunc$-independent coefficients.

The operators $L^{(i)}$, $i=0,1$ are invertible on Sobolev spaces on the cross-sections
$\Sphere_\tau = \Scri^+\cap \{\timefunc = \tau\}$. Thus, since $L$ is lower triangular and the off-diagonal term has maximal order $1$ with the first order part involving only derivatives tangent to $S_\timefunc$, we find that $L$ is invertible on Sobolev spaces on $\Sphere_\timefunc$. In particular, we have
\begin{align}
\left \Vert \begin{pmatrix} \radpsi{0} \\ \radpsi{1} \end{pmatrix} \right \Vert_{L^2(\Sphere_\timefunc)} \lesssim
\left \Vert L \begin{pmatrix} \radpsi{0} \\ \radpsi{1} \end{pmatrix} \right \Vert_{L^2(\Sphere_\timefunc)} .
\end{align}
This estimate yields corresponding estimates for the semi-norms $F^k(\Scritini)$. Using \eqref{eq:xi^j-est} and \eqref{eq:LAB}, this gives
\begin{align} \label{eq:xi^j-est01}
\norm{\LxiOp^\ij \radpsi{\ii}}_{F^{\ireg-\ij-K}(\Scritini)}^2
\lesssim{}&|\timefunc|^{-8-2\ij +2\delta} \norm{\psibase[-2]}_{H^{\ireg}_{9-\delta}(\Stauini)}^2, \quad \text{for $i \in \{0,1\}$, $j\in \Naturals$.}
\end{align}
Finally, using \eqref{eq:LAB0} and \eqref{eq:xi^j-est01} gives
\begin{align} \label{eq:xi^j-est0}
\norm{\LxiOp^\ij \radpsi{0}}_{F^{\ireg-\ij-K}(\Scritini)}^2
\lesssim{}&|\timefunc|^{-10-2\ij +2\delta} \norm{\psibase[-2]}_{H^{\ireg}_{9-\delta}(\Stauini)}^2, \quad \text{for $j\in \Naturals$.}
\end{align}
In particular, with $j=0$, we have
\begin{align} \label{eq:xi^j-est0,j=0}
\norm{\radpsi{0}}_{F^{\ireg-K}(\Scritini)}^2
\lesssim{}&|\timefunc|^{-10 +2\delta} \norm{\psibase[-2]}_{H^{\ireg}_{9-\delta}(\Stauini)}^2 .  \end{align}
Reindexing and using $\radpsi{0} = \psibase[-2]$ gives
\begin{align} \label{eq:xi^j-est0,final}
\norm{\LxiOp^j \psibase[-2]}_{F^{\ireg-\ij-K}(\Scritini)}^2
\lesssim{}&|\timefunc|^{-10 -2j+2\delta} \norm{\psibase[-2]}_{H^{\ireg}_{9-\delta}(\Stauini)}^2 .
\end{align}

From the fundamental theorem of calculus, the Cauchy-Schwarz inequality, and the definition of the $F^k$ norm, one finds
\begin{align}
\norm{\LxiOp^{j+1} \psibase[-2][i]}_{W^{\ireg-j-K}(\Sphere)}^2
\lesssim{}& \int_{\Scritini} \absHighOrder{\LxiOp^{j+1} \psibase[-2][i]}{\ireg-j-K}{\sphereOps} \absHighOrder{\LxiOp^{j+2} \psibase[-2][i]}{\ireg-j-K}{\sphereOps}\diThreeVol \nonumber\\
\lesssim{}&\norm{\LxiOp^j \psibase[-2][i]}_{F^{\ireg-j-K}(\Scritini)}
\norm{\LxiOp^{j+1} \psibase[-2][i]}_{F^{\ireg-j-K}(\Scritini)}\nonumber\\
\lesssim{}&|\timefunc|^{-11 -2j+2i+2\delta} \norm{\psibase[-2][i]}_{H^{\ireg}_{9-\delta}(\Stauini)}^2 .
\end{align}
Taking the square root and applying the fundamental theorem of calculus again, one finds
\begin{align}
\norm{\LxiOp^{j} \psibase[-2][i]}_{W^{\ireg-j-K}(\Sphere)}
\lesssim{}& |\timefunc|^{-9/2 -j+i+\delta} \norm{\psibase[-2][i]}_{H^{\ireg}_{9-\delta}(\Stauini)} ,
\end{align}
which gives \eqref{eq:psi-2:pastSphereDecay-NEW}.
\end{proof}



\begin{lemma}[The Teukolsky scalar $\protect{\psibase[-2]}$ has an expansion]
\label{lem:TMEHasAnExpansion}
\standardMinusTwoHypothesisNotBEAM.
\standardPlusTwoHypothesisNotBEAM There is a regularity constant $K$ such that the following holds.
\standardMinusTwoHypothesisBEAM.
Assume
\standardPlusTwoHypothesisPointwiseCondition.
Let
$\ireg \in \Naturals$ such that $\ireg-K$ is sufficiently large,
and let
$\Ealpha{\psibase[-2]}=10-$.

Then $\psibase[-2]$ has a $(\ireg-K,3,0,\Ealpha{\psibase[-2]},\EDNoArg^2)$  expansion where $\EDNoArg^2=\iniEnergyMinusTwoForDescent{\ireg}$.
\end{lemma}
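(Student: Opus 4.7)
My plan is to take the Taylor coefficients $\varphi_i = A_i = \partial_R^i\psibase[-2]|_{\Scri^+}$ for $i \in \{0,\ldots,3\}$ together with the remainder $\varphi_{\rem;3} = A_{\rem;3}$ from definition \ref{def:scri-tayl} as the data of the expansion. A density argument reduces to smooth, compactly supported initial data, for which lemma \ref{lem:ConformalExtension} guarantees conformal regularity of $\psibase[-2]$, so that the $A_i$ and $A_{\rem;3}$ are well-defined in the compactified hyperboloidal chart.

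First I would establish the decay bound \eqref{eq:expansion:expansionTermsDecay} for the $A_i$. Expanding $\VOp$ in $R$ near $\Scri^+$ using $H(R) = 2 + 4MR + (8-2\CInHyperboloids)M^2R^2 + O(R^3)$ yields the key cancellation $(1+a^2R^2) - HR^2\Delta/2 = \CInHyperboloids M^2 R^2 + O(R^3)$. This produces a triangular algebraic system on $\Scri^+$ relating $\radpsi{i}|_{\Scri^+}$ to the $A_j$ and their $\LxiOp, \LetaOp$ derivatives with $j\leq i$, of which the first non-trivial instance reads
\begin{align*}
A_1 = 2\CInHyperboloids M^2\LxiOp A_0 + 2a\partial_\phi A_0 - 2M\radpsi{1}|_{\Scri^+}.
\end{align*}
Inverting iteratively expresses each $A_i$ as a linear combination of $\LxiOp^p \ScriOps^{\mathbf{b}}\radpsi{i'}|_{\Scri^+}$ with $i'\leq i$. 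The pointwise estimates \eqref{eq:pointwisePsiiExt} of theorem \ref{thm:ImproEstipsi01}, evaluated on $\Scri^+$ where $r/v\to 1$, then yield after squaring and integrating over $\Sphere$ the desired $\int_{\Sphere}\absScri{A_i(\timefunc,\omega)}{\ireg-K}^2\diTwoVol \lesssim \iniEnergyMinusTwoForDescent{\ireg}\langle\timefunc\rangle^{2i-9+\delta}$.

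The integrability condition \eqref{eq:expansion:integrability} I would obtain by direct appeal to lemma \ref{lem:Ailim+infty}. Its hypotheses require (i) the limits \eqref{eq:AB-limits} on $A_0$ and $B_0$ as $\timefunc\to\pm\infty$, which feed through lemma \ref{lem:intA=0} and the Teukolsky--Starobinsky identity \eqref{eq:7.39}, and (ii) the vanishing \eqref{eq:Ailim-infty} of $\ScriOps^{\mathbf{a}}A_i$ at $\timefunc\to-\infty$. The limits for $B_0$ at $\pm\infty$ are exactly the pointwise condition \eqref{condition:spin+2goestozero} assumed on $\psibase[+2]$; the limit for $A_0$ at $+\infty$ follows from the decay just established, and at $-\infty$ from lemma \ref{lem:idata}, specifically \eqref{eq:psi-2:pastSphereDecay-NEW} together with the spherical Sobolev embedding \eqref{eq:sobineq}. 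Condition (ii) for $i\in\{1,2,3\}$ then follows from the same early-region decay of $\LxiOp^j\radpsi{i'}|_{\Scri^+}$ combined with the triangular system above.

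The main technical work lies in the remainder bound $\Vert A_{\rem;3}\Vert^2_{W^{\ireg-K}_{7-\delta}(\Dtauitext)} + \Vert A_{\rem;3}\Vert^2_{W^{\ireg-K}_{7-\delta}(\Dtauearly)} \lesssim \iniEnergyMinusTwoForDescent{\ireg}$. Applying the one-dimensional $L^2$ Taylor lemma \ref{thm:Taylor-est} with $n=3$ and $\alpha = 1-\delta \in (-1,1)$ in the variable $R$ at each fixed $(\timefunc,\omega)$, and transforming back via $r=1/R$, reduces this bound to controlling the integral of $r^{-1-\delta}|\partial_R^4\psibase[-2]|^2$ over the same regions. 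Using $\partial_R = r^2(\YOp - H\LxiOp)$ iteratively, $\partial_R^4\psibase[-2]$ is rewritten as a linear combination of $\rescaledOps^{\mathbf{a}}\psibase[-2]$ and $\radpsi{i}$ with $i\leq 4$, with conformally regular coefficients. The Morawetz estimates \eqref{eq:ImproEnerDecPsiiExt} of theorem \ref{thm:ImproEstipsi01} summed over $i\in\{0,\ldots,4\}$ provide the uniform-in-$\timefunc$ bound on the $\Dtauitext$ contribution (together with the BEAM control of the compact-$r$ part), while lemma \ref{lem:idata}, specifically \eqref{eq:psi-2:pastIntegrabilityNew}, handles $\Dtauearly$; commutation with $\rescaledOps^{\mathbf{a}}$ for $|\mathbf{a}|\leq \ireg-K$ then yields the higher-order statement. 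The hard part will be the weight bookkeeping in this last step: the value $\alpha=10-\delta$ saturates the available $r^p$ decay for $\ell=3$, so one must verify that commutator losses between $\partial_R^4$ and $\rescaledOps^{\mathbf{a}}$ stay within the budget provided by the Taylor $L^2$ lemma, and that the large factors of $\CInHyperboloids=10^6$ arising both here and in the triangular system of step 2 are absorbed into the implicit constants.
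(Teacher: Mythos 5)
Your proposal is correct and follows essentially the same route as the paper's proof: the $L^2$ Taylor lemma \ref{thm:Taylor-est} plus the exterior Morawetz bounds of theorem \ref{thm:ImproEstipsi01} and the early-region bound \eqref{eq:psi-2:pastIntegrabilityNew} for the remainder, the pointwise estimates \eqref{eq:pointwisePsiiExt} together with \eqref{eq:psi-2:pastSphereDecay-NEW} for the decay of the expansion terms, and lemmas \ref{lem:intA=0}--\ref{lem:Ailim+infty} for the integrability condition. Your explicit triangular system on $\Scri^+$ is just the restriction to $R=0$ of the relation \eqref{eq:partialRiradpsiExpansion} the paper uses between $\partial_R^i\psibase[-2]$ and derivatives of the $\psibase[-2][i']$, so the two arguments coincide in substance.
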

\begin{proof}
Throughout the proof, $K$ denotes a regularity constant that may vary from line to line.
Before considering $\psibase[-2]$, for a general spin-weighted scalar $\varphi$, consider Taylor expansions in the $\rInv$ variable. Recall Taylor's expansion lemma \ref{thm:Taylor-est}. In particular, consider $n\in\Naturals$, $A>0$, $f=f(R)\in C^{n+1}([-\epsilon, A])$ for some $\epsilon>0$, and $P_n$ the order $n$ Taylor polynomial in $R$ about $R=0$. Observe that $\di R=-r^{-2}\di r$,  so if the $L^2$ norms are defined in terms of $\di r$, we get from Taylor's expansion lemma \ref{thm:Taylor-est} that for $-1 < \beta < 1$, the following Taylor remainder estimate: \begin{align}
\norm{r^{n+\beta/2} (f-P_n)}_{L^2((1/A,\infty))}
\lesC{n,\beta}{}&\norm{ r^{\beta/2-1} f^{(n+1)}}_{L^2((1/A,\infty))}.
\label{eq:expansionremainderL2controlledbyhighorderderiL2}
\end{align}

Consider now a spin-weighted scalar $\varphi$ defined in the Kerr exterior and $j\in\Naturals$. The Taylor remainder estimate implies that if there is the expansion
\begin{equation}
\varphi = \sum_{i=0}^j \frac{R^i}{i!} \varphi_i(\timefunc,\omega) + \varphi_{\rem;j},
\end{equation}
we get, for $-1 < \beta < 1$,
\begin{align}
\norm{ \varphi_{\rem;j}}_{W^{0}_{2j+\beta}(\Dtautext)}
&{}={}\norm{ r ^{j+\beta/2}\varphi_{\rem;j}}_{W^{0}_0(\Dtautext)}\nonumber\\
&{}\lesC{j,\beta}{}
\norm{ r^{\beta/2-1}(\partial_R)^{j+1}\varphi}_{W^{0}_0(\Dtautext)}\nonumber\\
&{}\lesC{j,\beta}{}\norm{(\partial_R)^{j+1}\varphi}_{W^{0}_{\beta-2}(\Dtautext)}.
\end{align}
Substituting $\beta=\alpha-1$, one finds, for $0 < \alpha < 2$,
\begin{align}
\norm{ \varphi_{\rem;j}}_{W^{0}_{2j-1+\alpha}(\Dtautext)}
\lesC{j,\alpha}{}&
\norm{(\partial_R)^{j+1}\varphi}_{W^{0}_{\alpha-3}(\Dtautext)}.
\label{eq:generalremainderbulkestimate}
\end{align}
Commuting with the $\rescaledOps$ operators only introduces lower-order terms, so that, for $0 < \alpha < 2$ and $k\in\Naturals$,
\begin{align}
\norm{ \varphi_{\rem;j}}_{W^{\ireg}_{2j-1+\alpha}(\Dtautext)}
\lesC{j,\alpha}{}&
\norm{(\partial_R)^{j+1}\varphi}_{W^{\ireg}_{\alpha-3}(\Dtautext)}.
\label{eq:generalremainderbulkestimateWithRegularity}
\end{align}
Arguing in a similar way, we have for $0 < \alpha < 2$ and $k\in\Naturals$,
\begin{align}
\norm{ \varphi_{\rem;j}}_{W^{\ireg}_{2j-1+\alpha}(\Dtauearly)}
\lesC{j,\alpha}{}&
\norm{(\partial_R)^{j+1}\varphi}_{W^{\ireg}_{\alpha-3}(\Dtauearly)}.
\label{eq:generalremainderbulkestimateEarlyregion}
\end{align}

Now, consider the existence of an expansion for $\psibase[-2]$. From the expansion of $\partial_R$ in \eqref{eq:ddRasVOpstructure} and the expansion of $\LxiOp$ in \eqref{eq:LxiToVOpYOpLeta}, for $r\geq10M$ and $i\in\{0,\ldots,4\}$, one has, with $\mathbb{X}=\{MY,\LetaOp\}$,
\begin{align}
\label{eq:partialRiradpsiExpansion}
\abs{\partial_R^i \psibase[-2]}
\lesssim{}&\sum_{l=0}^i \sum_{|\mathbf{a}|\leq i-l}M^{i}\bigOAnalytic(1)
\abs{\mathbb{X}^{\mathbf{a}}\radpsi{l}} .
\end{align}
Since for bounded $r$, in particular for $r\in [r_+,10M]$, one has $\partial_R$ is in the span of $\YOp$, $\VOp$, $\LetaOp$, one finds that, for all $r$, equation \eqref{eq:partialRiradpsiExpansion} remains valid.
Since the $\partial_R^i\psibase[-2]$ exist for $i\in\{0,\ldots,4\}$, the following Taylor expansion exists
\begin{align}
\psibase[-2]=\sum_{i=0}^3\frac{R^i}{i!}(\psibase[-2])_i+(\psibase[-2])_{\rem;3}.
\end{align}
This proves condition \eqref{eq:expansion:expansionExists} in the definition of the expansion.

Using estimate \eqref{eq:ImproEnerDecPsiiExt} in theorem \ref{thm:ImproEstipsi01} and estimate \eqref{eq:psi-2:pastIntegrabilityNew} in lemma \ref{lem:idata}, and taking $\alpha=2-\delta \in (0,2)$, one finds for any $\timefunc\geq \timefunc_0$,
\begin{align}
\norm{ (\partial_R)^4\psibase[-2]}^2_{W^{\ireg-K}_{-1-\delta}(\Dtautext)}
\lesssim{}&\sum_{i=0}^4\norm{  \radpsi{i}}^2_{W^{\ireg-K}_{-1-\delta}(\Dtautext)}
\lesssim{}\NEWinienergyminustwo{\ireg-4}
\lesssim{} \iniEnergyMinusTwoForDescent{\ireg}.
\end{align}
Therefore, from the Taylor remainder bound \eqref{eq:generalremainderbulkestimate}, we conclude, for any $\timefunc\geq \timefunc_0$,
\begin{align}
\norm{ (\psibase[-2]){}_{\rem;3}}^2_{W^{\ireg-K}_{7-\delta}(\Dtautext)}
\lesssim{}&
\norm{(\partial_R)^4\psibase[-2]}^2_{W^{\ireg-K}_{-1-\delta}(\Dtautext)}
\lesssim  \iniEnergyMinusTwoForDescent{\ireg},
\end{align}
from which it follows that, letting
\begin{align}
\Ealpha{\psibase[-2]}=10-11\delta
\label{eq:alpha_1-2}
\end{align}
and noting that $(10-11\delta)-3 <7-\delta$, we have, for any $\timefunc\geq \timefunc_0$,
\begin{align}
\norm{ (\psibase[-2]){}_{\rem;3}}^2_{W^{\ireg-K}_{\Ealpha{\psibase[-2]}-3}(\Dtautext)}
\lesssim{}&
 \iniEnergyMinusTwoForDescent{\ireg},
\label{eq:radphiremest-v2}
\end{align}
which proves the remainder condition \eqref{eq:expansion:remainderBound} in the definition of an expansion.

In the region $\Dtauearly$, using the bound \eqref{eq:generalremainderbulkestimateEarlyregion}, using the estimates \eqref{eq:generalremainderbulkestimateEarlyregion} and \eqref{eq:psi-2:pastIntegrabilityNew}, and taking $\alpha=2-\delta$, we conclude
\begin{align}
\norm{ (\psibase[-2]){}_{\rem;3}}^2_{W^{\ireg-K}_{7-\delta}(\Dtauearly)}
\lesssim{}&
\norm{(\partial_R)^4\psibase[-2]}^2_{W^{\ireg-K}_{-1-\delta}(\Dtauearly)}\nonumber\\
\lesssim{}&\sum_{i=0}^4\norm{\radpsi{\ii}}_{W^{\ireg-K}_{-1-\delta}(\Dtauearly)}^2\nonumber\\
\lesssim{}&  \iniEnergyMinusTwoForDescent{\ireg}.
\end{align}
Hence, by letting $\Ealpha{\psibase[-2]}$ be as in equation \eqref{eq:alpha_1-2} and noting $(10-11\delta)-3<7-\delta$, it follows that
\begin{align}
\norm{ (\psibase[-2]){}_{\rem;3}}^2_{W^{\ireg-K}_{\Ealpha{\psibase[-2]}-3}(\Dtauearly)}
\lesssim{}&
 \iniEnergyMinusTwoForDescent{\ireg},
\label{eq:radphiremest-earlyregion}
\end{align}
 and this proves  condition \eqref{eq:expansion:remainderBoundInThePast} in the definition of an expansion.

Consider the Taylor expansion terms $(\psibase[-2])_i$ for $i\in\{0,\ldots,3\}$. From the pointwise bound on the $\psibase[-2][i]$ in inequality \eqref{eq:pointwisePsiiExt}, one finds that there are the decay estimates, for $i\in\{0,\ldots,3\}$ and $\timefunc\geq \timefunc_0$,
\begin{align}
\int_{\Sphere}\absScri{
(\psibase[-2])_i(\timefunc,\omega) }{\ireg-K}^2 \diTwoVol
\lesssim{}& \timefunc^{-9+2i+11\delta}\iniEnergyMinusTwoForDescent{\ireg} .
\label{eq:psi-2Expansion:radpsiToExpansionRelation-NEW}
\end{align}
For $\timefunc\geq\timefunc_0$, this proves that there are the decay estimates for the expansion terms, which is condition \eqref{eq:expansion:expansionTermsDecay}. For $\timefunc\leq\timefunc_0$, the same argument applies using the decay estimate in equation \eqref{eq:psi-2:pastSphereDecay-NEW}. Thus, condition \eqref{eq:expansion:expansionTermsDecay} holds for all $\timefunc\in\Reals$.

From the assumptions on $\psibase[+2]$ and $\psibase[-2]$, lemmas \ref{lem:intA=0} and \ref{lem:Ailim+infty} can be applied, and equation \eqref{eq:Ailim+infty} states the vanishing of the integral along $\Scri^+$ of the $\psibase[-2][i]$. From the relation between the $\psibase[-2][i]$ and the $(\psibase[-2])_i$ in equation \eqref{eq:partialRiradpsiExpansion},  it follows that the integral of the $(\psibase[-2])_i$ along $\Scri^+$ vanishes. Thus, condition \eqref{eq:expansion:integrability} holds, and $\psibase[-2]$ has the desired expansion.
\end{proof} 


\subsection{Estimates in the exterior region} 
\label{sec:exteriorest}
This section proves decay estimates for $\hat\sigma'$, $\widehat{G}_{2}$, $\hat\tau'$, $\widehat{G}_{1}$, $\hat\beta'$, and $\widehat{G}_{0}$ in the exterior region $r\geq t$. The proof consists of three major components. First, $\psibase[-2]$ has an expansion by lemma \ref{lem:TMEHasAnExpansion}. Second, the scalars $\hat\sigma',\widehat{G}_{2},\hat\tau',\widehat{G}_{1},\hat\beta',\widehat{G}_{0}$ are related to each other by the transport equations in the first-order formulation of the Einstein equation in lemma \ref{lem:TransportSystem}. Third, lemma \ref{lem:YintExpansionEst} states that if the source for a transport equation has an expansion, then the solution has an expansion and satisfies decay estimates. Thus, iterating through $\hat\sigma'$, $\widehat{G}_{2}$, $\hat\tau'$, $\widehat{G}_{1}$, $\hat\beta'$, and $\widehat{G}_{0}$ one finds each of these has an expansion and decays. 
The exterior region and the geodesics along which we integrate are illustrated in figure \ref{fig:Estimates:NPnExt}; see also figure \ref{fig:FoliationHyperbolic} for related regions. 

The following indices are useful in this iteration process. These indices are such that, for $\varphi$, $\Espin{\varphi}$ is the spin weight of $\varphi$, and $\El{\varphi}$ and $\Em{\varphi}$ will be the $\ElNoArg$ and $\EmNoArg$ arguments in the expansion of $\varphi$.

\begin{definition}
\label{def:alpha1Values}
Define
\index{S1svarphi@$\Espin{\cdot}$}
\index{L1lvarphi@$\El{\cdot}$}
\index{M1mvarphi@$\Em{\cdot}$}
\begin{subequations}
\begin{align}
\Espin{\psibase[-2]} ={}& -2, &
\El{\psibase[-2]} ={}& 3, &
\Em{\psibase[-2]} ={}& 0, 
\\
\Espin{\hat\sigma'} ={}& -2, &
\El{\hat\sigma'} ={}& 2, &
\Em{\hat\sigma'} ={}& 0, 
\\
\Espin{\widehat{G}_{2}} ={}& -2, &
\El{\widehat{G}_{2}} ={}& 1, &
\Em{\widehat{G}_{2}} ={}& 0, 
\\
\Espin{\hat\tau'} ={}& -1, &
\El{\hat\tau'} ={}& 3, &
\Em{\hat\tau'} ={}& 2, 
\\
\Espin{\widehat{G}_{1}} ={}& -1, &
\El{\widehat{G}_{1}} ={}& 0, &
\Em{\widehat{G}_{1}} ={}& 0, 
\\
\Espin{\hat\beta'} ={}& -1, &
\El{\hat\beta'} ={}& 2, &
\Em{\hat\beta'} ={}& 3, 
\\
\Espin{\widehat{G}_{0}} ={}& 0, &
\El{\widehat{G}_{0}} ={}& 1, &
\Em{\widehat{G}_{0}} ={}& 2 .
\end{align}
\end{subequations}
For all $\beta\in \mathbb{R}$ we also define $\Espin{M^\beta \varphi}=\Espin{\varphi}, \El{M^\beta \varphi}=\El{\varphi}, \Em{M^\beta \varphi}=\Em{\varphi}$.
\end{definition}

\begin{definition}[Initial data norms]
\label{def:setOfFieldsAndInitialDataNorm}
Define the following set of dimensionless fields
\index{P4Phi@$\setOfFields$}
\begin{align}
\setOfFields={}& \{M\psibase[-2],\hat\sigma',M^{-1}\widehat{G}_{2},M\hat\tau',M^{-2}\widehat{G}_{1},\hat\beta',M^{-1}\widehat{G}_{0}\} .
\end{align}
 For any $\ireg\in\Naturals$, define
\index{I2Ikinit@$\inienergynoalpha{\ireg}$}
\begin{align}
\inienergynoalpha{\ireg}[\setOfFields]
={}&\sum_{\varphi\in \setOfFields}\inienergy{\ireg}{2\El{\varphi}+3}(\varphi) .
\end{align}
\end{definition}

\begin{lemma}[Exterior estimates]
\label{lem:allquantitiesextestimates}
Consider an outgoing BEAM solution of the linearized Einstein equation satisfying as in definition \ref{def:BEAMLinearisedEinsteinSolution}. There is a regularity constant $K$ such that the following hold. Let $\ireg\in\Naturals$ such that $\ireg- K$ is sufficiently large. For $\varphi\in\setOfFields$, 
\begin{enumerate}
\item $\varphi$ has a $(\ireg-K,\El{\varphi},\Em{\varphi},2\El{\varphi}+4-,\inienergynoalpha{\ireg}[\setOfFields])$ expansion, 
\item for $\Lxitimes\in \{0,1\}$ and $\timefunc\geq \timefunc_0$, 
\begin{align}
\norm{\LxiOp^{\Lxitimes}\varphi}_{W^{\ireg-K-\Lxitimes}_{2\El{\varphi}+2-} (\Boundtext)}^2
\lesssim{}&  \timefunc^{-2\Lxitimes}
\inienergynoalpha{\ireg}[\setOfFields] ,
\label{eq:allquantitiestransitionregionesti}
\end{align}
\item and, for $\timefunc\geq\timefunc_0$ and $r\geq \timefunc$, 
\begin{align}
\absRescaled{\varphi}{\ireg-K}^2 
\lesssim& 
r^{-2\Em{\varphi}}
\timefunc^{2\Em{\varphi}-3-2\El{\varphi}+}
\inienergynoalpha{\ireg}[\setOfFields] .
\label{eq:pointwiseLinearGravityExterior}
\end{align}
\end{enumerate}
\end{lemma}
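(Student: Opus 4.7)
The plan is to propagate the expansion of $\psibase[-2]$ established in lemma \ref{lem:TMEHasAnExpansion} through the chain of transport equations \eqref{eq:TransEqTildeSigma}-\eqref{eq:TransEqG00} of lemma \ref{lem:TransportSystem}, iteratively applying the propagation lemma \ref{lem:YintExpansionEst} and using lemma \ref{lem:transformationofexpansion} at each step to bookkeep how multiplication by a power of $r$, by a spin coefficient, or by an angular derivative modifies the expansion indices $(\El{},\Em{},\alpha)$. Once the $(\ireg-K,\El{\varphi},\Em{\varphi},2\El{\varphi}+4-,\inienergynoalpha{\ireg}[\setOfFields])$ expansion is established for each $\varphi \in \setOfFields$, the transition estimate \eqref{eq:allquantitiestransitionregionesti} and the pointwise estimate \eqref{eq:pointwiseLinearGravityExterior} follow immediately from conclusions \eqref{eq:LxiOpvarphitransitionregionesti} and \eqref{eq:variouspointwiseforvarphigeneral} of lemma \ref{lem:YintExpansionEst}.

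Concretely, for $\hat\sigma'$ the coefficient $\bar\kappa_{1'}/\sqrt{r^2+a^2}$ on the right-hand side of \eqref{eq:TransEqTildeSigma} is a homogeneous rational function of degree zero, so by lemma \ref{lem:transformationofexpansion}(\ref{pt:reweightingExpansion}) the source inherits the $(\ireg-K,3,0,10-)$ expansion of $\psibase[-2]$, and lemma \ref{lem:YintExpansionEst} lowers $\El{}$ by one and $\alpha$ by $2+\delta$ to give the required $(\ireg-K,2,0,8-)$ expansion for $\hat\sigma'$. Exactly the same argument applied to \eqref{eq:TransEqG20} yields the $(\ireg-K,1,0,6-)$ expansion for $\widehat{G}_2$. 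For $\hat\tau'$ the angular derivative $\kappa_1\edt\hat\sigma'$ in \eqref{eq:TransEqHatTau} is rewritten via \eqref{eq:relationhedtandedt} as a linear combination of $\hedt\hat\sigma'$, $\kappa_1^3\tau\LxiOp\hat\sigma'$, and $\kappa_1^2\tau\hat\sigma'$; combined with the degree $-2$ prefactor $1/\bar\kappa_{1'}^2$ and using lemma \ref{lem:transformationofexpansion}(\ref{pt:reweightingExpansion})-(\ref{pt:respinExpansion})-(\ref{pt:hedtExpansion}), the source is shown to have a $(\ireg-K-1,4,2,12-)$ expansion, so that lemma \ref{lem:YintExpansionEst} yields the desired $(\ireg-K-1,3,2,10-)$ expansion for $\hat\tau'$. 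The three remaining steps for $\widehat{G}_1$, $\hat\beta'$, and $\widehat{G}_0$ proceed analogously, with each source decomposed via lemma \ref{lem:transformationofexpansion}(\ref{pt:sumsOfExpansions}) into summands whose minimum expansion indices match those needed so that the output of lemma \ref{lem:YintExpansionEst} reproduces exactly the values in definition \ref{def:alpha1Values}.

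The main technical obstacle is the weight bookkeeping in the final three steps, particularly for $\widehat{G}_0$ whose source in \eqref{eq:TransEqG00} involves both $\widehat{G}_1$ and $\hat\beta'$ (plus their complex conjugates) with different spin weights and different expansion indices, so that one has to check that every summand produces the same target $\El{}$ after application of lemma \ref{lem:transformationofexpansion}. A secondary subtlety is that for $\hat\beta'$ and $\widehat{G}_0$ the target $\Em{\varphi}$ equals $\El{\varphi}+1$, meaning every expansion coefficient at null infinity vanishes and all pointwise control in the exterior comes from the remainder bound \eqref{eq:pointwiseDescentSpecialCase}. This vanishing is automatic from the formula $\Em{\varphi}=\min(\Em{\varrho},\El{\varphi}+1)$ in lemma \ref{lem:YintExpansionEst}, which in turn follows from the integration representation \eqref{eq:rhotildephi0integral}-\eqref{eq:rhotildephiiintegral} in lemma \ref{lem:algebraicTransportExpansion}.

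Finally, it is worth noting, cf. remark \ref{rem:reasonforchoicesofquantities}, that $\widehat{G}_0$ and $\widehat{G}_1$ are normalized so that linearized mass $\delta M$ and linearized angular momentum per unit mass $\delta a$ appear as constants of integration in \eqref{eq:TransEqG00} and \eqref{eq:TransEqG10}; the hypothesis that $\inienergynoalpha{\ireg}[\setOfFields]$ is finite, incorporating decay of the initial data for these scalars in an appropriately weighted norm, enforces $\delta M=\delta a=0$, so the homogeneous $\tilde\varphi_{\homo;j}$ contributions from lemma \ref{lem:algebraicTransportExpansion} produce no non-radiating obstruction and are controlled by the initial-data term in $\ED{\varphi}^2$ of lemma \ref{lem:YintExpansionEst}.
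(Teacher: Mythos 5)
Your proposal follows essentially the same route as the paper: propagate the expansion of $\psibase[-2]$ from lemma \ref{lem:TMEHasAnExpansion} through the transport hierarchy \eqref{eq:TransEqsKerr} by iterating lemmas \ref{lem:YintExpansionEst} and \ref{lem:transformationofexpansion}, then read off \eqref{eq:allquantitiestransitionregionesti} and \eqref{eq:pointwiseLinearGravityExterior} from the conclusions of lemma \ref{lem:YintExpansionEst}, and your index bookkeeping matches definition \ref{def:alpha1Values}. One small correction: for $\hat\beta'$ and $\widehat{G}_0$ the dichotomy in \eqref{eq:variouspointwiseforvarphigeneral} is governed by the source indices, which satisfy $\Em{\varrho}\leq\El{\varrho}$, so it is the general bound \eqref{eq:pointwiseDescentGeneralCase} (not \eqref{eq:pointwiseDescentSpecialCase}) that applies even though all expansion coefficients of $\hat\beta'$ and $\widehat{G}_0$ vanish — this still gives exactly the rates claimed in \eqref{eq:pointwiseLinearGravityExterior}.
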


\begin{proof}
For ease of presentation we will here use mass normalization as in definition~\ref{def:massnormalization}, and throughout this proof, $K$ denotes a regularity constant that may vary from line to line.
The overall strategy of this proof is to use the expansion for $\psibase[-2]$ from lemma \ref{lem:TMEHasAnExpansion}, the hierarchy of transport equations \eqref{eq:TransEqsKerr}, lemma \ref{lem:YintExpansionEst} to conclude that solutions of the transport equation have expansions if the source does, and lemma \ref{lem:transformationofexpansion} when a transformation of the expansions for the source is required. The details now follow.  

From lemma \ref{lem:TMEHasAnExpansion}, $\psibase[-2]$ has a $(\ireg-K,3,0,10-,\ED{\psibase[-2]}^2)$ expansion where 
\begin{align}
\ED{\psibase[-2]}^2
=\iniEnergyMinusTwoForDescent{\ireg}
\leq \inienergynoalpha{\ireg}[\setOfFields].
\end{align} 
The decay estimates of the transition flux for $\varphi=\psibase[-2]$ follow from integrating the pointwise estimates \eqref{eq:pointwisePsiiExt} on $\Boundtext$ and making use of \eqref{eq:psi-2:pastIntegrabilityNew}, while the pointwise decay estimates \eqref{eq:pointwiseLinearGravityExterior} for $\varphi=\psibase[-2]$ follow directly from the estimates \eqref{eq:pointwisePsiiExt} and \eqref{eq:psi-2:pastIntegrabilityNew}. 

Consider $\hat\sigma'$. The transport equation \eqref{eq:TransEqTildeSigma} states that 
\begin{align}
\YOp(\hat{\sigma}')=- \frac{12 \bar{\kappa}_{1'}{} \psibase[-2]}{\sqrt{r^2 + a^2}}.
\end{align} 
The factor $\frac{\bar{\kappa}_{1'}}{\sqrt{r^2+a^2}}$ is a rational function in $\bar{\kappa}_{1'}/\sqrt{r^2+a^2}$ of homogeneous degree $0$. Thus, lemma \ref{lem:transformationofexpansion} implies that $- \frac{12 \bar{\kappa}_{1'}{} \psibase[-2]}{\sqrt{r^2 + a^2}}$ also has a $(\ireg-K,3,0,10-,\inienergynoalpha{\ireg}[\setOfFields])$ expansion. Thus, lemma \ref{lem:YintExpansionEst} implies that $\hat\sigma'$ has a $(\ireg-K,3-1,0,(10-2)-,\inienergynoalpha{\ireg}[\setOfFields])$ expansion, that is a $(\ireg-K,2,0,8-,\inienergynoalpha{\ireg}[\setOfFields])$ expansion. 

Consider $\widehat{G}_{2}$. The transport equation \eqref{eq:TransEqG20} states \begin{align}
\YOp(\widehat{G}_2)=- \tfrac{2}{3} \hat{\sigma}'.
\end{align} 
From this, lemma \ref{lem:YintExpansionEst} implies that $\widehat{G}_{2}$ has a $(\ireg-K,2-1,0,(8-2)-,\inienergynoalpha{\ireg}[\setOfFields])$ expansion, that is a $(\ireg-K,1,0,6-,\inienergynoalpha{\ireg}[\setOfFields])$ expansion.

Consider $\hat\tau'$. The transport equation \eqref{eq:TransEqHatTau} states 
\begin{align}
\YOp(\hat{\tau}')
={}& - \frac{\kappa_{1}{} (\edt - 2 \tau + 2 \bar{\tau}')\hat{\sigma}'}{6 \overline{\kappa}_{1'}{}^2} \nonumber\\
={}&\frac{1}{6\bar{\kappa}_{1'}^2}\left(
  \kappa_{1}\edt\hat\sigma'
  -2\tau\kappa_1\hat\sigma'
  +2\bar{\tau}'\kappa_1\hat\sigma'
\right) .
\label{eq:TransEqHatTauAgain}
\end{align}
The operator $\kappa_{1}\edt\hat\sigma'$ can be expanded in terms of $\rescaledOps$ with rational coefficients of order at most $0$. Thus, $\kappa_{1}\edt\hat\sigma'$ has an expansion with indices $(\ireg-K,2,0,8-,\inienergynoalpha{\ireg}[\setOfFields])$ $=(\ireg-K,2,0,8-,\inienergynoalpha{\ireg}[\setOfFields])$. The terms $-2\tau\kappa_1\hat\sigma'$ and $2\bar{\tau}'\kappa_1\hat\sigma'$ have similar expansions, where the regularity index can be trivially lowered to match that for $\kappa_{1}\edt\hat\sigma'$. The coefficient $\bar{\kappa}_{1'}^{-2}$ has homogeneous degree $-2$. Thus, from the expansion for $\hat\sigma'$, one finds that the right-hand side of equation \eqref{eq:TransEqHatTauAgain} has an expansion with indices $(\ireg-K,2+2,0+2,8+4-,\inienergynoalpha{\ireg}[\setOfFields])$ $=(\ireg-K,4,2,12+4-,\inienergynoalpha{\ireg}[\setOfFields])$. Thus, $\hat\tau'$ has an expansion with indices $(\ireg-K,4-1,2,(12-2)-,\inienergynoalpha{\ireg}[\setOfFields])$ $=(\ireg-K,3,2,10-,\inienergynoalpha{\ireg}[\setOfFields])$. 

Consider $\widehat{G}_{1}$. The transport equation \eqref{eq:TransEqG10} states
\begin{align}
\YOp(\widehat{G}_1)={}&\frac{2 \kappa_{1}{}^2 \overline{\kappa}_{1'}{}^2 \hat{\tau}'}{r^2}
 + \frac{\kappa_{1}{}^2 \overline{\kappa}_{1'}{} (\edt -  \tau + \bar{\tau}')\widehat{G}_2}{2 r^2} .
\end{align}
The term involving $\hat\tau'$ has an expansion with indices $(\ireg-K,3-2,2-2,(10-4)-,\inienergynoalpha{\ireg}[\setOfFields])$ $=(\ireg-K,1,0,6-,\inienergynoalpha{\ireg}[\setOfFields])$. The term with $\widehat{G}_{2}$ has an expansion with indices $(\ireg-K,1,0,6-,\inienergynoalpha{\ireg}[\setOfFields])$ $=(\ireg-K,1,0,6-,\inienergynoalpha{\ireg}[\setOfFields])$. Thus, the right-hand side has an expansion with indices $(\ireg-K,1,0,6-,\inienergynoalpha{\ireg}[\setOfFields])$, and $\widehat{G}_{1}$ has an expansion with indices $(\ireg-K,1-1,0,(6-2)-,\inienergynoalpha{\ireg}[\setOfFields])$ $=(\ireg-K,0,0,4-,\inienergynoalpha{\ireg}[\setOfFields])$.

Consider $\hat\beta'$. The transport equation \eqref{eq:TransEqHatBeta'} states 
\begin{align}
\YOp(\hat{\beta}')
={}&\frac{r \widehat{G}_1}{6 \kappa_{1}{}^2 \overline{\kappa}_{1'}{}^2}
 + \frac{\kappa_{1}{} \tau \widehat{G}_2}{6 \overline{\kappa}_{1'}{}^2} . 
\end{align}
The term involving $\widehat{G}_{1}$ has an expansion with indices $(\ireg-K,0+3,0+3,4+6-,\inienergynoalpha{\ireg}[\setOfFields])$ $=(\ireg-K,3,3,10-,\inienergynoalpha{\ireg}[\setOfFields])$. The term involving $\widehat{G}_{2}$ has an expansion with indices $(\ireg-K,1+3,0+3,6+6-,\inienergynoalpha{\ireg}[\setOfFields])$ $=(\ireg-K,4,3,12-,\inienergynoalpha{\ireg}[\setOfFields])$. The first of these is more restrictive. Thus, the right-hand side has a $(\ireg-K,3,3,10-,\inienergynoalpha{\ireg}[\setOfFields])$ expansion, and $\hat\beta'$ has an expansion with indices $(\ireg-K,3-1,3,(10-2)-,\inienergynoalpha{\ireg}[\setOfFields])$ $=(\ireg-K,2,3,8-,\inienergynoalpha{\ireg}[\setOfFields])$. 

Finally, consider $\widehat{G}_{0}$. The transport equation \eqref{eq:TransEqG00} states 
\begin{align}
\YOp(\widehat{G}_0)={}&-  \frac{(\edt -  \tau)\widehat{G}_1}{3 \kappa_{1}{}}
 - \frac{\tau \widehat{G}_1}{r}
 -  \frac{\bar{\tau} \overline{\widehat{G}_1}}{r}
 + \frac{2 \kappa_{1}{}^2 \overline{\kappa}_{1'}{} (\edt -  \bar{\tau}')\hat{\beta}'}{r^2}
 -  \frac{(\edtp -  \bar{\tau})\overline{\widehat{G}_1}}{3 \overline{\kappa}_{1'}{}}
 + \frac{2 \kappa_{1}{} \overline{\kappa}_{1'}{}^2 (\edtp -  \tau')\overline{\hat{\beta}'}}{r^2} . 
\end{align}
Complex conjugation does not change the indices in an expansion. The terms involving $\widehat{G}_{1}$ have an additional level of regularity and coefficients with homogeneous degree $-2$, so that the expansion has indices $(\ireg-K,0+2,0+2,4+4-,\inienergynoalpha{\ireg}[\setOfFields])$ $=(\ireg-K,2,2,8-,\inienergynoalpha{\ireg}[\setOfFields])$. The terms involving $\hat\beta'$ also have one derivative but coefficients with homogeneous degree $0$, so that the expansion has indices $(\ireg-K,2,3,8-,\inienergynoalpha{\ireg}[\setOfFields])$ $=(\ireg-K,2,3,8-,\inienergynoalpha{\ireg}[\setOfFields])$. Thus, the right-hand side has an expansion with indices $(\ireg-K,2,2,8-,\inienergynoalpha{\ireg}[\setOfFields])$, and $\widehat{G}_{0}$ has an expansion with indices $(\ireg-K,2-1,2,(8-2)-,\inienergynoalpha{\ireg}[\setOfFields])$ $=(\ireg-K,1,2,6-,\inienergynoalpha{\ireg}[\setOfFields])$. 

For each of $\hat\sigma'$, $\hat{G}_2$, $\hat\tau'$, $\hat{G}_1$, $\hat\beta'$, $\hat{G}_0$, lemma \ref{lem:YintExpansionEst} was applied to obtain the expansion. This lemma also gives estimates for the integral on $\Boundtext$ and for pointwise norms. The pointwise bound \eqref{eq:pointwiseDescentSpecialCase} is stronger than the bound \eqref{eq:pointwiseDescentGeneralCase}, so in all cases, one can apply the bound \eqref{eq:pointwiseDescentGeneralCase}. (Because of this observation, it is not necessary to track which of the two bounds holds, although a carefully tracking of this would reveal that the bound \eqref{eq:pointwiseDescentSpecialCase} never holds in this argument.)
\end{proof}

\subsection{Estimates in the interior region}
\label{sec:interiorEst}
In this section, we prove decay in the interior region $r<t$. To do so, we cannot use the expansion at infinity. To obtain decay estimates here, there are several key ideas. First, we integrate the hierarchy of transport equations along ingoing null geodesics again. Second, for the variable itself, we use the value at the transition hypersurface $r=t$ as the initial value. Third, we use the estimates for the previous variable in the hierarchy to control the source term. The interior region and the geodesics along which we integrate are illustrated in figure \ref{fig:Estimates:NPnInt}, and the bulk region above the surface swept out by the geodesics is illustrated in figure \ref{fig:FoliationHyperbolic}. 

\begin{lemma}
\label{lem:YintInteriorEsti}
Let $\varphi$ and $\varrho$ be spin-weighted scalars which solve
\begin{align}
\YOp\varphi
={}&\varrho ,
\label{eq:ingoingtransport}
\end{align}
and let $0 \leq \alphalo < \alphahi$ be given. Let $k\in\Integers^+$. Let $D\geq0$. 

Assume that for all $\timefunc \geq \timefunc_0 + h(\timefunc_0)$, $\alpha\in[\alphalo, \alphahi]$, and $q \in \{0,1\}$, $\varphi$ and $\varrho$ satisfy
\begin{subequations} \label{eq:varrhotransitionregionesti}
\begin{align}
\norm{\LxiOp^{\Lxitimes}\varphi}_{W^{k-\Lxitimes}_{\alpha} (\Boundtext)}^2
\lesssim{}& D^2 \timefunc^{\alpha-\alphahi-2\Lxitimes} , \label{eq:varrhotransitionregionesti-varphi} \\
\norm{\LxiOp^{\Lxitimes}\varrho}_{W^{k-\Lxitimes}_{\alpha+1}(\Dtautimenear)}^2
\lesssim{}& D^2 \timefunc^{\alpha-\alphahi-2\Lxitimes} , \label{eq:varrhotransitionregionesti-varrho}
\end{align}
\end{subequations}
then, for all  $\alphalo \leq \alpha \leq \alphahi$,
the following holds. For all $q \in \{0,1\}$ and $\timefunc\geq\timefunc_0 + h(\timefunc_0)$,
\begin{subequations}
\begin{align}
\norm{\LxiOp^{\Lxitimes}\varphi}_{W^{k-\Lxitimes}_{\alpha}(\Stautimeint)}^2
+ \norm{\LxiOp^{\Lxitimes}\varphi}_{W^{k-\Lxitimes}_{\alpha-1}(\Dtautimenear)}^2
\lesssim{}& D^2
\timefunc^{\alpha-\alphahi-2\Lxitimes} ,
\label{eq:varphiinteriorregionesti} 
\end{align}
and, if $k\geq 4$, then for all $\timefunc\geq \timefunc_0 + h(\timefunc_0)$ and $(\timefunc,r,\omega)\in \Omega^{\near}_{\timefunc,\infty}$,
\begin{align}
\absRescaled{\varphi(\timefunc,r,\omega)}{k-4} \lesssim{}& D^2 r^{-\frac{\alpha}{2}}\timefunc^{-\frac{\alphahi+1-\alpha}{2}}.
\label{eq:pointwisevarphigeneralinterior}
\end{align}
\end{subequations}
\end{lemma}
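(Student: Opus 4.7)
The plan is to recognise the statement as a direct consequence of the ingoing transport estimate already proved in lemma \ref{lem:TransEqGeneralRegionMora}, combined with a Sobolev embedding on $\Staut$. First, I would apply the near-region estimate \eqref{eq:TransIngoingMoranearregion} to $\YOp\varphi=\varrho$ with $\gamma=\alpha$. The geometric setup matches: the ingoing cone terminating at $\Omega^{\near}_{\timefunc,\infty}$ has its upper null boundary lying on $\Boundtingoing$, which is a subset of $\Boundtext$ once $\timefunc\geq \timefunc_0+h(\timefunc_0)$. This yields
\begin{align*}
\norm{\varphi}_{W^{k}_{\alpha}(\Stautimeint)}^2 + \norm{\varphi}_{W^{k}_{\alpha-1}(\Dtautnear{\timefunc})}^2 \lesssim \norm{\varphi}_{W^{k}_{\alpha}(\Boundtingoing)}^2 + \norm{\varrho}_{W^{k}_{\alpha+1}(\Dtautnear{\timefunc})}^2.
\end{align*}
Inserting the hypotheses \eqref{eq:varrhotransitionregionesti-varphi} and \eqref{eq:varrhotransitionregionesti-varrho} at $\Lxitimes=0$ bounds both right-hand side terms by $D^2\timefunc^{\alpha-\alphahi}$, giving the $\Lxitimes=0$ case of \eqref{eq:varphiinteriorregionesti}.

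Second, since $[\LxiOp,\YOp]=0$ by \eqref{eq:CommutatorofYandMathcalV}, the scalar $\LxiOp\varphi$ satisfies the analogous transport equation $\YOp(\LxiOp\varphi)=\LxiOp\varrho$. Repeating stage one with $(\varphi,\varrho,k)\mapsto(\LxiOp\varphi,\LxiOp\varrho,k-1)$ and applying the hypotheses at $\Lxitimes=1$ produces the $\Lxitimes=1$ case of \eqref{eq:varphiinteriorregionesti}, completing the energy step.

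For the pointwise bound \eqref{eq:pointwisevarphigeneralinterior}, I would apply the Sobolev inequality \eqref{eq:SobolevOnStautWithgammazero} to the rescaled scalar $r^{\alpha/2}\varphi$. Commuting the radial weight through the operators of $\rescaledOps$ generates only lower-order terms that are absorbed by the energy estimate, leaving the pointwise bound in terms of $\norm{\varphi}_{W^{k-1}_{\alpha-1}(\Stautimeint)}^2$, which by the previous step is bounded by $D^2\timefunc^{(\alpha-1)-\alphahi}=D^2\timefunc^{-(\alphahi+1-\alpha)}$. Taking square roots yields the stated pointwise estimate. The main technical obstacle is that the Sobolev step naturally wants the weight $\alpha-1$ to lie in $[\alphalo,\alphahi]$, whereas the statement only assumes $\alpha\in[\alphalo,\alphahi]$; I would handle this either by interpolating between the two endpoint cases $\alpha=\alphalo$ and $\alpha=\alphahi$ of the energy estimate via Hölder's inequality, or by exploiting the relation $r\sim\timefunc$ on the transition surface $\Xi$ to convert the one-unit weight shift into an equivalent change of $\timefunc$ factor, both of which preserve the desired balance between $r^{-\alpha/2}$ and $\timefunc^{-(\alphahi+1-\alpha)/2}$.
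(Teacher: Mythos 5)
Your energy step \eqref{eq:varphiinteriorregionesti} is essentially the paper's argument: apply the near-region transport estimate \eqref{eq:TransIngoingMoranearregion}, bound the data on the transition surface by hypothesis \eqref{eq:varrhotransitionregionesti-varphi} using $\timefunc_{\ingoingcone}(\timefunc)\sim\timefunc$, bound the source by \eqref{eq:varrhotransitionregionesti-varrho}, and repeat for $\LxiOp\varphi$ since $\LxiOp$ commutes with $\YOp$. That part is fine (note only that $\Boundtext\subseteq\Boundtingoing$, not the reverse, so you invoke the hypothesis at the slightly earlier time $\timefunc_{\ingoingcone}(\timefunc)$, which is harmless).

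The pointwise step, however, has a genuine gap. Your plan is to apply the hypersurface Sobolev estimate \eqref{eq:SobolevOnStautWithgammazero} to $r^{\alpha/2}\varphi$, which reduces \eqref{eq:pointwisevarphigeneralinterior} to an energy bound at the shifted weight $\alpha-1$, i.e.\ $\norm{\varphi}_{W^{k}_{\alpha-1}(\Stautimeint)}^2\lesssim D^2\timefunc^{\alpha-1-\alphahi}$. This is only available from \eqref{eq:varphiinteriorregionesti} when $\alpha-1\geq\alphalo$, and neither of your proposed repairs covers $\alpha\in[\alphalo,\alphalo+1)$: interpolation of the energy (or of the resulting pointwise bounds) can never produce weights below $\alphalo$, and the substitution $r\sim\timefunc$ is valid only on the transition surface — in the interior one has $r\leq\timefunc$, so the bound $r^{-\alphalo-1}\timefunc^{\alphalo-\alphahi}$ obtained from the bottom weight does \emph{not} imply $r^{-\alpha}\timefunc^{\alpha-1-\alphahi}$ near the horizon, where the latter has strictly more $\timefunc$-decay. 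This failure matters in practice: in lemma \ref{lem:EnerMoraAllquantitiesestiint} the pointwise estimate is used precisely at $\alpha=\alphalow{\varphi}$, the bottom of the admissible range. The paper's proof gets the missing power of $\timefunc$ by a different mechanism: after checking that $r^{\alpha/2}\varphi\to0$, it applies the anisotropic spacetime Sobolev inequality \eqref{eq:anisotropicSpaceTimeSobolev} over $\Dtautnear{\timefunc}$, which bounds $\absRescaled{r^{\alpha/2}\varphi}{k-4}^2$ by the product $\norm{\varphi}_{W^{k-1}_{\alpha-1}(\Dtautnear{\timefunc})}\norm{\LxiOp\varphi}_{W^{k-1}_{\alpha-1}(\Dtautnear{\timefunc})}$; the two bulk norms decay like $\timefunc^{\alpha-\alphahi}$ and $\timefunc^{\alpha-\alphahi-2}$ by \eqref{eq:varphiinteriorregionesti} at the \emph{same} weight $\alpha$, and their geometric mean gives exactly $\timefunc^{-(\alphahi+1-\alpha)}$. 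This pairing with $\LxiOp\varphi$ is the reason the statement assumes the $\Lxitimes=1$ hypotheses and loses four derivatives rather than three; without it, your argument cannot reach the stated rate for $\alpha<\alphalo+1$.
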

\begin{remark}
For $\timefunc \geq \timefunc_0 + h(\timefunc_0)$, $\varphi$ is determined in $\Omega^{\near}_{\timefunc,\infty}$ by $\varrho$ and $\varphi \big{|}_{\Xi_{\timefunc_{\ingoingcone}(\timefunc),\infty}}$.
\end{remark}
\begin{proof}
For ease of presentation we will here use mass normalization as in definition~\ref{def:massnormalization}.
For $\timefunc \geq \timefunc_0 + h(\timefunc_0)$, inequality \eqref{eq:TransIngoingMoranearregion} gives
\begin{align}
\hspace{4ex}&\hspace{-4ex}
\norm{\LxiOp^{\Lxitimes}\varphi}_{W^{k-\Lxitimes}_{\alpha}(\Sigma^{\interior}_{\timefunc})}^2
+ \norm{\LxiOp^{\Lxitimes}\varphi}_{W^{k-\Lxitimes}_{\alpha-1}(\Dtautnear{\timefunc})}^2 \nonumber\\
\lesssim{}&\norm{\LxiOp^{\Lxitimes}\varphi}_{W^{k-\Lxitimes}_{\alpha} (\Xi_{\timefunc_{\ingoingcone}(\timefunc),\infty})}^2
+ \norm{\LxiOp^{\Lxitimes}\varrho}_{W^{k-\Lxitimes}_{\alpha+1}(\Dtautnear{\timefunc})}^2 \nonumber\\
\lesssim{}& D^2 \timefunc^{\alpha-\alphahi-2\Lxitimes} ,
\label{eq:varphiinteriorgeneral}
\end{align}
which proves \eqref{eq:varphiinteriorregionesti}. Here we have used assumption
\eqref{eq:varrhotransitionregionesti} and $\timefunc_{\ingoingcone}(\timefunc) \sim \timefunc$.

The estimate \eqref{eq:SobolevOnStautWithgammazero} gives for $\timefunc > \timefunc_0 + h(\timefunc_0)$,
\begin{align}
\absRescaled{r^{\frac{\alpha}{2}}\varphi}{k-3}^2\lesssim{}& \norm{r^{\frac{\alpha}{2}}\varphi}_{W^{k}_{-1}(\Sigma_{\timefunc}^{\interior})}^2
\lesssim \norm{\varphi}_{W^{k}_{-1+\alpha}(\Sigma_{\timefunc}^{\interior})}^2 .
\end{align}
Since $-1+\alpha < \alpha$, we find, in view of \eqref{eq:varphiinteriorregionesti} that $r^{\frac{\alpha}{2}} \varphi$ tends to zero as $\timefunc \nearrow \infty$.
We can therefore apply lemma \ref{lem:anisotropicSpaceTimeSobolev} which gives
\begin{align}
\absRescaled{r^{\frac{\alpha}{2}}\varphi}{k-4}^2\lesssim {}& \norm{r^{\frac{\alpha}{2}}\varphi}_{W^{k-1}_{-1}(\Omega_{\timefunc,\infty}^{\interior})}
\norm{r^{\frac{\alpha}{2}}\LxiOp\varphi}_{W^{k-1}_{-1}(\Omega_{\timefunc,\infty}^{\interior})}
\nonumber\\
\leq {}&
\norm{r^{\frac{\alpha}{2}}\varphi}_{W^{k-1}_{-1}(\Omega_{\timefunc,\infty}^{\near})}
\norm{r^{\frac{\alpha}{2}}\LxiOp\varphi}_{W^{k-1}_{-1}(\Omega_{\timefunc,\infty}^{\near})}
\nonumber\\
\lesssim{}&\norm{\varphi}_{W^{k-1}_{\alpha-1}(\Dtautnear{\timefunc})}
\norm{\LxiOp\varphi}_{W^{k-1}_{\alpha-1}(\Dtautnear{\timefunc})} ,
\end{align}
which using \eqref{eq:varphiinteriorregionesti} proves \eqref{eq:pointwisevarphigeneralinterior}.
\end{proof}

\begin{lemma}
\label{lem:EnerMoraAllquantitiesestiint}
There is a regularity constant $K$ such that the following holds.
Consider an outgoing BEAM solution of the linearized Einstein equation as in definition \ref{def:BEAMLinearisedEinsteinSolution}, with regularity $\ireg\in \Naturals$ and $\ireg-K $ sufficiently large. For $\varphi\in\setOfFields\backslash\{M\psibase[-2]\}$, let $\Em{\varphi}$, $\El{\varphi}$ be as in definition \ref{def:alpha1Values}, and set
\begin{align}
\alphalow{\varphi} ={}& 2\max\{\Em{\varphi}-1,0\}{}+, & \alphahigh{\varphi} ={}& 2\El{\varphi}+{2-} . 
\label{eq:defAlphalowAlphahigh}
\end{align}
The following hold for $\varphi\in\setOfFields\backslash\{M\psibase[-2]\}$ and $\timefunc \geq \timefunc_0$.
\begin{enumerate}
\item \label{point:EnergMora-int}
For $\Lxitimes\in \{0,1\}$ and $\alpha\in[\alphalow{\varphi},\alphahigh{\varphi}]$,
there are energy and Morawetz estimates in the interior region
\begin{subequations}\label{eq:EnerMoraAllquantitiesestiint}
\begin{align}
\norm{\LxiOp^{\Lxitimes}\varphi}_{W^{\ireg-K-\Lxitimes}_{\alpha}(\Stautimeint)}^2
\lesssim{}&
\inienergynoalpha{\ireg}[\setOfFields]
\timefunc^{\alpha-\alphahigh{\varphi}-2\Lxitimes} ,  \label{eq:EnerMora-int-varphi-Sigma}\\
\norm{\LxiOp^{\Lxitimes}\varphi}_{W^{\ireg-K-\Lxitimes}_{\alpha-1}(\Dtautimenear)}^2
\lesssim{}&
\inienergynoalpha{\ireg}[\setOfFields]
\timefunc^{\alpha-\alphahigh{\varphi}-2\Lxitimes} . \label{eq:EnerMora-int-varphi-Omega}
\end{align}
\end{subequations}
\item \label{point:pointwise-int}
There are pointwise-in-time estimates in the interior region, for $(\timefunc,r,\omega)\in\Dtauitnear$, 
\begin{align}
\absRescaled{\varphi(\timefunc,r,\omega)}{\ireg-K}^2
\lesssim{}&r^{-\alphalow{\varphi}}
 \timefunc^{-(\alphahigh{\varphi}+1) + \alphalow{\varphi}}
\inienergynoalpha{\ireg}[\setOfFields].
\label{eq:pointwiseAllquantitiesestiint}
\end{align}
\end{enumerate}
\end{lemma}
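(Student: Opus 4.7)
The approach is to iterate through the hierarchy of transport equations from lemma \ref{lem:TransportSystem} in the order $\hat\sigma' \to \widehat{G}_2 \to \hat\tau' \to \widehat{G}_1 \to \hat\beta' \to \widehat{G}_0$, applying the interior transport estimate lemma \ref{lem:YintInteriorEsti} at each step. The exterior estimate \eqref{eq:allquantitiestransitionregionesti} in lemma \ref{lem:allquantitiesextestimates} plays the role of "initial data" on the ingoing transition surface $\Boundtext$, and the interior decay of $\psibase[-2]$ from theorem \ref{thm:ImproEstipsi01}\eqref{point:psiiInt} serves as the base case for controlling the source of the first transport equation.

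For each $\varphi \in \setOfFields \setminus \{M\psibase[-2]\}$ with $\YOp\varphi = \varrho$, I would verify the two hypotheses of lemma \ref{lem:YintInteriorEsti} with $\alphahi = \alphahigh{\varphi}$ and $\alphalo = \alphalow{\varphi}$: the transition-region estimate \eqref{eq:varrhotransitionregionesti-varphi} on $\varphi$ is exactly \eqref{eq:allquantitiestransitionregionesti} with $\alpha = 2\El{\varphi}+2-$; the bulk estimate \eqref{eq:varrhotransitionregionesti-varrho} on the source $\varrho$ follows by inserting the interior estimates \eqref{eq:EnerMora-int-varphi-Omega} already established (by induction) for the earlier fields in the hierarchy, together with the interior estimates for $\psibase[-2]$ when $\varphi = \hat\sigma'$. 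Lemma \ref{lem:YintInteriorEsti} then yields \eqref{eq:EnerMoraAllquantitiesestiint} for the full range $\alpha \in [\alphalow{\varphi},\alphahigh{\varphi}]$, and the pointwise estimate \eqref{eq:pointwiseAllquantitiesestiint} follows by setting $\alpha = \alphalow{\varphi}$ in \eqref{eq:pointwisevarphigeneralinterior}, after which one squares and uses $\inienergynoalpha{\ireg}[\setOfFields]$ as $D^2$.

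The main obstacle is matching weights when estimating the source $\varrho$ in each step. Each $\varrho$ is a linear combination of previously-controlled quantities multiplied by GHP coefficients such as $\kappa_1$, $\bar\kappa_{1'}$, $\tau$, $\tau'$, and angular edth operators, some of which are homogeneous rational functions of degree $-2$ or $-4$ in $r$ (since $\kappa_1 \sim -r/3$ at infinity, factors like $\kappa_1^{-2}\bar\kappa_{1'}^{-2}$ actually \emph{enhance} the large-$r$ weight). One must check, summand by summand, that the interior decay of each contributing field, after accounting for the shift induced by the GHP coefficient, is at least the rate $\tau^{\alpha-\alphahigh{\varphi}-2\Lxitimes}$ demanded by \eqref{eq:varrhotransitionregionesti-varrho} with weight $\alpha+1$. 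The calibration of $\alphahigh{\varphi} = 2\El{\varphi}+2-$ and $\alphalow{\varphi} = 2\max\{\Em{\varphi}-1,0\}+$ in \eqref{eq:defAlphalowAlphahigh} is made precisely so that this matching succeeds: $\alphahigh{\varphi}$ is inherited from the exterior transition decay, while $\alphalow{\varphi}$ reflects the vanishing order $\Em{\varphi}$ at $\Scri^+$ that was propagated through the hierarchy in the expansion indices of definition \ref{def:alpha1Values}. Handling the factor $\hedt$ in the source for $\hat\tau'$ and $\widehat{G}_0$ requires using $\kappa_1 \hedt$, which is a dimensionless operator belonging to $\rescaledOps$ (up to lower-order terms), so it does not cost $r$-decay.

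Once this matching is verified for $\hat\sigma'$ (whose source is $\psibase[-2]$ with known interior decay), the induction proceeds mechanically: the output \eqref{eq:EnerMora-int-varphi-Omega} at one level provides exactly the input \eqref{eq:varrhotransitionregionesti-varrho} needed at the next level, since each transport equation gains one derivative and shifts the relevant weights by $2$. Point \eqref{point:pointwise-int} for $\widehat{G}_0$ requires handling the $\edt'\overline{\hat\beta'}$ term, which by complex conjugation has the same expansion indices as $\hedt\hat\beta'$, so no new difficulty arises. The overall regularity loss $K$ is the sum of fixed losses at each of the six steps (a few derivatives for the Sobolev embedding in lemma \ref{lem:YintInteriorEsti}, plus the losses already absorbed into the exterior lemma), so the final $K$ is a universal regularity constant.
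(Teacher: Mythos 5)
Your proposal follows essentially the same route as the paper's proof: the same iteration through the transport hierarchy via lemma \ref{lem:YintInteriorEsti}, with the transition-flux estimates \eqref{eq:allquantitiestransitionregionesti} supplying hypothesis \eqref{eq:varrhotransitionregionesti-varphi}, the interior decay of $\psibase[-2]$ (estimate \eqref{eq:Psi0InteriorMoraDecay}) starting the induction, and the pointwise bound obtained from \eqref{eq:pointwisevarphigeneralinterior} at $\alpha=\alphalow{\varphi}$. The case-by-case weight matching you flag as the main obstacle is exactly what the paper carries out, including the intersection of admissible ranges (e.g.\ restricting the lower endpoint for $\widehat{G}_1$), so your outline is correct and matches the paper's argument.
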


\begin{proof}
For ease of presentation, we use mass normalization as in definition~\ref{def:massnormalization}. Furthermore, the regularity constant $K$ can vary from term to term, not merely, line to line. 
We put all the equations of the system \eqref{eq:TransEqsKerr} into the form of \eqref{eq:ingoingtransport}, and denote the corresponding right-hand side of each equation of $\varphi\in \{\hat{\sigma}', \widehat{G}_2, \hat{\tau}', \widehat{G}_1, \hat{\beta}', \widehat{G}_0\}$ by $\varrho[\varphi]$. The general strategy is to use estimate \eqref{eq:Psi0InteriorMoraDecay} for $\LxiOp^\ij \psibase[-2]$, estimates \eqref{eq:allquantitiestransitionregionesti} for the transition flux, and lemma \ref{lem:YintInteriorEsti} applied to each transport equation in the system \eqref{eq:TransEqsKerr} to iteratively conclude that estimates \eqref{eq:EnerMoraAllquantitiesestiint} and \eqref{eq:pointwiseAllquantitiesestiint} are valid. Since the part of the interior region $\{r \leq \timefunc\}$ to the future of $\Stauini$ and to the past of $\Sigma_{\timefunc_0 + h(\timefunc_0)}$ is compact, we can without loss of generality state our estimates in terms of $\inienergynoalpha{\ireg}[\setOfFields]$. We will now discuss the proof of the energy and Morawetz estimate \eqref{eq:EnerMoraAllquantitiesestiint} for each of the fields and comment on the proof of the pointwise estimate \eqref{eq:pointwiseAllquantitiesestiint} at the end of the proof. 

For $\psibase[-2]$, define
\begin{align}
\alphalow{\psibase[-2]} ={}& 2+, & \alphahigh{\psibase[-2]} ={}& 2\El{\psibase}+{2-} .
\end{align}
Observe that $\alphalow{\psibase[-2]}$ does not conform to the formula for $\alphalow{\varphi}$ given in equation \eqref{eq:defAlphalowAlphahigh}. 
For ease of reference, for $\varphi \in \setOfFields$, the values of $\alphalow{\varphi}$ and $\alphahigh{\varphi}$ are given in the following table: 
\begin{center}
\begin{tabular}{c | c  c}
$\varphi$ & $\alphalow{\varphi}$ & $\alphahigh{\varphi}$ \\
\hline
$\psibase[-2]$ & $2+$ & $8-$ \\
$\hat{\sigma}'$ & $0+$ & $6-$ \\
$\widehat{G}_2$ & $0+$ & $4-$  \\
$\hat{\tau}'$ & $2+$ & $8-$ \\
$\widehat{G}_1$ & $0+$ & $2-$ \\
$\hat{\beta}'$ & $4+$ & $6-$ \\
$\widehat{G}_0$ & $2+$ & $4-$
\end{tabular} .
\end{center}

In applying lemma \ref{lem:YintInteriorEsti}, we shall freely make use of the fact that since $r=\timefunc$ on the transition surface $\Xi$, \eqref{eq:allquantitiestransitionregionesti} can be restated in the form with explicit time decay, that is as estimate \eqref{eq:varrhotransitionregionesti-varphi} with the range of weights $\alphalow{\varphi} \leq \alpha \leq \alphahigh{\varphi}$, for $\varphi \in \setOfFields\backslash\{\psibase[-2]\}$.

\subsubsection*{The case $\psibase[-2]$}
From \eqref{eq:Psi0InteriorMoraDecay}, we get after a straightforward change of parameters, using $6- = \alphahigh{\psibase[-2]}-2$,
 for $\alphalow{\psibase[-2]} \leq  \alpha \leq \alphahigh{\psibase[-2]}$,
\begin{align}
\label{eq:psibase-2InteriorDecay}
\norm{\LxiOp^{\Lxitimes}  \psibase[-2]}^2_{W_{\alpha-1}^{ \ireg-K-\Lxitimes}(\Dtautimeint)}
\lesssim {}&\timefunc^{\alpha-\alphahigh{\psibase[-2]}-2\Lxitimes}
\inienergynoalpha{\ireg}[\setOfFields] . 
\end{align}
which is \eqref{eq:EnerMora-int-varphi-Sigma}.

\subsubsection*{The case $\hat\sigma'$}
From estimate \eqref{eq:allquantitiestransitionregionesti},
we get hypothesis \eqref{eq:varrhotransitionregionesti-varphi} for $\alphalow{\hat{\sigma}'} \leq \alpha \leq  \alphahigh{\hat{\sigma}'}$. From \eqref{eq:psibase-2InteriorDecay}, for $ \varrho[\hat{\sigma}']=f\psibase[-2] $ with $f=\bigOAnalytic(1)$,
 we get, after a reparametrization, hypothesis \eqref{eq:varrhotransitionregionesti-varrho} for the same range of weights. An application of lemma \ref{lem:YintInteriorEsti} proves point \ref{point:EnergMora-int} for $\hat\sigma'$.

\subsubsection*{The case $\widehat{G}_2$} The argument for the $\widehat{G}_2$ follows exactly the same pattern, which establishes point \ref{point:EnergMora-int} for $\widehat{G}_2$.

\subsubsection*{The case $\hat{\tau}'$} From the transport equation \eqref{eq:TransEqHatTau}, we have 
\begin{align}
\Vert \LxiOp^{\Lxitimes} \varrho[\hat{\tau}'] \Vert_{W^{k-K-\Lxitimes}_{\alpha+1}(\Dtautimeint)}^2 \lesssim
\Vert \LxiOp^{\Lxitimes} \hat{\sigma}' \Vert_{W^{k-K-\Lxitimes}_{\alpha-3}(\Dtautimeint)}^2 .
\end{align}
Making the substitution $\alpha-3 = \alpha + 1 - 2\Em{\hat{\tau}'}= \beta -1$, or $\beta = \alpha - 2(\Em{\hat{\tau}'}-1)$, we find using estimate \eqref{eq:EnerMora-int-varphi-Omega} for $\hat{\sigma}'$, after a reparametrization, that $\varrho[\hat{\tau}']$ satisfies hypothesis \eqref{eq:varrhotransitionregionesti-varrho} for the range of weights $\alphalow{\hat{\tau}'} \leq \alpha \leq \alphahigh{\hat{\tau}'}$, where
\begin{subequations} \label{eq:range-int-tau}
\begin{align}
\alphalow{\hat{\tau}'} = \alphalow{\hat{\sigma}'} + 2(\Em{\hat{\tau}'}-1) = 2+ ,\\
\alphahigh{\hat{\tau}'} = \alphahigh{\hat{\sigma}'} + 2(\Em{\hat{\tau}'}-1)  = 8- .
\end{align}
\end{subequations}
On the other hand, we have that estimate \eqref{eq:varrhotransitionregionesti-varphi} holds for the range $0+ \leq \alpha \leq \alphahigh{\hat{\tau}'}$. We may thus apply lemma \ref{lem:YintInteriorEsti} for the intersection of these ranges, $\alphalow{\hat{\tau}'} \leq \alpha \leq \alphahigh{\hat{\tau}'}$ to prove point \ref{point:EnergMora-int} for $\hat{\tau}'$.

\subsubsection*{The case $\widehat{G}_1$}
We have that
\begin{align}
\Vert \LxiOp^{\Lxitimes} \varrho[\widehat{G}_1] \Vert_{W^{k-K-\Lxitimes}_{\alpha+1}(\Dtautimeint)}^2
\lesssim
\Vert \LxiOp^{\Lxitimes} \hat{\tau}' \Vert_{W^{k-K-\Lxitimes}_{\alpha+5}(\Dtautimeint)}^2
+ \Vert \LxiOp^{\Lxitimes} \widehat{G}_2 \Vert_{W^{k-K-\Lxitimes}_{\alpha+1}(\Dtautimeint)}^2 .
\end{align}
We consider the second term on the right-hand side first. Writing $\alpha+1 = \beta-1$ and using estimate \eqref{eq:EnerMora-int-varphi-Omega} for $\widehat{G}_2$, we have, after a reparametrization,
\begin{align}
\Vert \LxiOp^{\Lxitimes} \widehat{G}_2 \Vert_{W^{k-K-\Lxitimes}_{\alpha+1}(\Dtautimeint)}^2 \lesssim
\timefunc^{\alpha-(\alphahigh{\widehat{G}_2} -2)-2\Lxitimes}
\inienergynoalpha{\ireg}[\setOfFields]
\end{align}
for $\alphalow{\widehat{G}_2}-2 \leq \alpha \leq \alphahigh{\widehat{G}_2}-2$. Here we must restrict the lower limit to zero, which yields the range $0+\leq \alpha \leq 2-$. For the first term, from the estimates for $\hat{\tau}'$, we get after the substitution $\alpha\mapsto\alpha+6$, 
\begin{align}
\Vert \LxiOp^{\Lxitimes} \hat{\tau}' \Vert_{W^{k-K-\Lxitimes}_{\alpha+5}(\Dtautimeint)}^2
\lesssim \timefunc^{\alpha-(\alphahigh{\hat{\tau}'}-6)-2\Lxitimes}
\inienergynoalpha{\ireg}[\setOfFields]
\end{align}
for the range $\alphalow{\hat{\tau}'}-6 \leq \alpha \leq \alphahigh{\hat{\tau}'}-6$, which is $-4+\leq\alpha\leq2-$, which is less restrictive than the one arising from $\widehat{G}_2$. Thus, we find that estimate \eqref{eq:varrhotransitionregionesti-varrho} holds for $\varrho[\widehat{G}_1]$ for the range of weights $\alphalow{\widehat{G}_1} \leq \alpha \leq \alphahigh{\widehat{G}_1}$ with $\alphalow{\widehat{G}_1} = 0+$, $\alphahigh{\widehat{G}_1} = 2-$. This proves point \ref{point:EnergMora-int} for $\widehat{G}_1$.

\subsubsection*{The case $\hat{\beta}'$} We have
\begin{align}
\Vert \varrho[\hat{\beta}'] \Vert_{W^{k-K-\Lxitimes}_{\alpha+1}(\Dtautimeint)}^2 \lesssim
\Vert \LxiOp^{\Lxitimes} \widehat{G}_1 \Vert_{W^{k-K-\Lxitimes}_{\alpha-5}(\Dtautimeint)}^2
+ \Vert \LxiOp^{\Lxitimes} \widehat{G}_2 \Vert_{W^{k-K-\Lxitimes}_{\alpha-5}(\Dtautimeint)}^2 .
\end{align}
Making the substitution $\alpha-5 = \beta-1$, we get estimates for the ranges $\alphalow{\widehat{G}_1}+4 < \alpha < \alphahigh{\widehat{G}_1}+4$, and $\alphalow{\widehat{G}_2}+4 \leq \alpha \leq \alphahigh{\widehat{G}_2}+4$, respectively. Here the case $\widehat{G}_1$ gives the more restrictive range, and we find that estimate \eqref{eq:varrhotransitionregionesti-varrho} holds for $\varrho[\hat{\beta}']$ for the range $\alphalow{\hat{\beta}'} \leq \alpha \leq \alphahigh{\hat{\beta}'}$ with $\alphalow{\hat{\beta}'} = 2(\Em{\hat{\beta}'}-1)+ = 4+$, $\alphahigh{\hat{\beta}'} = 2\El{\hat{\beta}'} +2- = 6-$. This proves point \ref{point:EnergMora-int} for $\hat{\beta}'$.

\subsubsection*{The case $\widehat{G}_0$} We have
\begin{align}
\Vert \varrho[\widehat{G}_0] \Vert_{W^{k-K-\Lxitimes-1}_{\alpha+1}(\Dtautimeint)}^2 \lesssim
\Vert \LxiOp^{\Lxitimes} \widehat{G}_1 \Vert_{W^{k-K-\Lxitimes}_{\alpha-3}(\Dtautimeint)}^2
+ \Vert \LxiOp^{\Lxitimes} \hat{\beta}' \Vert_{W^{k-K-\Lxitimes}_{\alpha+1}(\Dtautimeint)}^2 .
\end{align}
Proceeding as above yields for the first term
\begin{align}
\Vert \LxiOp^{\Lxitimes} \widehat{G}_1 \Vert_{W^{k-K-\Lxitimes}_{\alpha-3}(\Dtautimeint)}^2 \lesssim
\timefunc^{\alpha-(\alphahigh{\widehat{G}_1} +2)-2\Lxitimes}
\inienergynoalpha{\ireg}[\setOfFields]
\end{align}
for the range $\alphalow{\widehat{G}_1}+2 \leq \alpha \leq \alphahigh{\widehat{G}_1}+2$, i.e. $2+ \leq \alpha \leq 4-$.
Analogously, for the second term we get for the range $\alphalow{\hat{\beta}'}-2 \leq \alpha \leq \alphahigh{\hat{\beta}'}-2$, i.e. $2+ \leq \alpha \leq 4-$,
\begin{align}
\Vert \LxiOp^{\Lxitimes} \hat{\beta}' \Vert_{W^{k-K-\Lxitimes}_{\alpha+1}(\Dtautimeint)}^2 \lesssim
\timefunc^{\alpha-(\alphahigh{\hat{\beta}'} -2)-2\Lxitimes}
\inienergynoalpha{\ireg}[\setOfFields]
\end{align}
This proves \eqref{eq:varrhotransitionregionesti-varrho} for the range $\alphalow{\widehat{G}_0} \leq \alpha \leq \alphahigh{\widehat{G}_0}$, with $\alphalow{\widehat{G}_0} = 2(\Em{\widehat{G}_0}-1) += 2+$, $\alphahigh{\widehat{G}_0} = 2\El{\widehat{G}_0} + 2- = 4-$, and hence completes the proof of point \ref{point:EnergMora-int} for $\varphi \in \setOfFields\backslash\{\psibase[-2]\}$.

It remains to consider point \ref{point:pointwise-int}. For $\varphi \in \setOfFields\backslash\{\psibase[-2]\}$, this follows from estimate \eqref{eq:pointwisevarphigeneralinterior} with $\alpha=\alphalow{\varphi}$.
\end{proof}


\subsection{Proof of the main theorems \ref{thm:mainintro} and \ref{thm:BEAMintro}}
\label{sec:ProofMainThms}
This section completes the proofs of the theorems from the introduction.

\begin{proof}[Proof of theorem \ref{thm:BEAMintro}]
If $\delta g$ satisfies the linearized Einstein equation in the outgoing radiation gauge and satisfies the basic decay condition of definition \ref{def:BEAM-intro}, then it corresponds to an outgoing BEAM solution of the linearized Einstein equation as in definition \ref{def:BEAMLinearisedEinsteinSolution}. Thus, lemmas \ref{lem:allquantitiesextestimates} and \ref{lem:EnerMoraAllquantitiesestiint} can be applied. These yield that, for $\varphi\in\{\widehat{G}_i\}_{i=0}^2$, and $\ireg \in \Naturals$ sufficiently large,
\begin{align}
\abs{\varphi}^2
\lesssim{}&\begin{cases}
r^{-2\Em{\varphi}}\timefunc^{2\Em{\varphi}-3-2\El{\varphi}+}
\inienergynoalpha{\ireg-2}[\setOfFields]
&\text{if $r\geq t$},\\
r^{-2\max\{\Em{\varphi}-1,0\}-}\timefunc^{-(2\El{\varphi}+3)+2\max\{\Em{\varphi}-1,0\}+}\inienergynoalpha{\ireg-2}[\setOfFields]
&\text{if $r\leq t$} .
\end{cases}
\label{eq:decayRateOfDeboostedMetricComponents}
\end{align}
Equation \eqref{eq:BoostWeightZeroQuantities} relates the $\widehat{G}_i$ to the $G_{i0'}$ by a rescaling by some rational factor that grows as a particular power in $r$, which will be denoted by $p[\varphi]$ in this paragraph. From definition \ref{def:alpha1Values} and equation \eqref{eq:BoostWeightZeroQuantities}, the relevant parameters are given in the following table:
\begin{center}
\begin{tabular}{l|ccc}
$\varphi$           &$m$&$\ell$&$p$\\
\hline
$\widehat{G}_2$&$0$&$1$     &$1$\\
$\widehat{G}_1$&$0$&$0$     &$2$\\
$\widehat{G}_0$&$2$&$1$     &$1$
\end{tabular} .
\end{center}
Thus, one finds, in the exterior region,
\begin{subequations}
\label{eq:metricDecayExt}
\begin{align}
\abs{G_{20'}}^2
\lesssim{}& r^{-2}\timefunc^{-5+}\inienergynoalpha{\ireg-2}[\setOfFields] ,\\
\abs{\uplambda^{-1}G_{10'}}^2
\lesssim{}& r^{-4}\timefunc^{-3+}\inienergynoalpha{\ireg-2}[\setOfFields] ,\\
\abs{\uplambda^{-2}G_{00'}}^2
\lesssim{}& r^{-6}\timefunc^{-1+}\inienergynoalpha{\ireg-2}[\setOfFields] ,
\end{align}
\end{subequations}
and, in the interior region,
\begin{subequations}
\label{eq:metricDecayInt}
\begin{align}
\abs{G_{20'}}^2
\lesssim{}& r^{-2-} \timefunc^{-5+}\inienergynoalpha{\ireg-2}[\setOfFields] ,\\
\abs{\uplambda^{-1}G_{10'}}^2
\lesssim{}& r^{-4-} \timefunc^{-3+}\inienergynoalpha{\ireg-2}[\setOfFields] ,\\
\abs{\uplambda^{-2}G_{00'}}^2
\lesssim{}& r^{-4-}\timefunc^{-3+}\inienergynoalpha{\ireg-2}[\setOfFields] .
\end{align}
\end{subequations}

Recall that the fields $\varphi \in \setOfFields$ are defined in definition \ref{def:setOfFieldsAndInitialDataNorm} in terms of the linearized metric $\delta g_{ab}$ and its derivatives up to second order as specified in section \ref{sec:connectcomp}. From these definitions, the definition of the initial data norm $\inienergynoalpha{\ireg-2}[\setOfFields]$ in definition \ref{def:setOfFieldsAndInitialDataNorm}, and the definition of $\Vert \delta g \Vert_{H^{\ireg}_{7}(\Sigma_{\initial})}^2$ in equation \eqref{eq:Hkbeta-def}, it is straightforward to verify that
\begin{align}
\inienergynoalpha{\ireg-2}[\setOfFields] \lesssim \Vert \delta g \Vert_{H^{\ireg}_{7}(\Sigma_{\initial})}^2 .
\end{align}
This completes the proof of theorem \ref{thm:BEAMintro}.
\end{proof}

\begin{proof}[Proof of theorem \ref{thm:mainintro}]
From \cite{2017arXiv170807385M}, it is known that, for $|a|/M$ sufficiently small and $\ireg\in \Naturals$ sufficiently large, the BEAM condition for $\psibase[-2]$ from definition \ref{def:BEAM} holds, and, also, the BEAM condition \ref{point:BEAMspin+2withDelta} for $\psibase[+2]$ from definition \ref{ass:BEAMspin+2} holds. Moreover, there is a bound $\inienergyplustwoForDescent{\ireg-2}\lesssim \Vert \delta g \Vert_{H^{\ireg}_{7}(\Sigma_{\initial})}^2$, which is finite by assumption. 
Thus, theorem \ref{thm:DecayEstimatesSpin+2} implies, for $|a|/M$ sufficiently small, the pointwise condition \ref{condition:spin+2goestozero} for $\psibase[+2]$ from definition \ref{ass:BEAMspin+2} holds. The BEAM condition from definition \ref{def:BEAM} for $\psibase[-2]$ and the pointwise condition \ref{condition:spin+2goestozero} from definition \ref{ass:BEAMspin+2} for $\psibase[+2]$ imply the basic decay conditions of definition \ref{def:BEAM-intro}. 
We now define 
\begin{equation} \label{eq:lamdg}
|\delta g|^2 = |\uplambda^{-2}G_{00'}|^2 + |\uplambda^{-1}G_{10'}|^2 + |G_{20'}|^2 . 
\end{equation}
Since the basic decay conditions of definition \ref{def:BEAM-intro} holds for $|a|/M$ sufficiently small, theorem \ref{thm:BEAMintro} immediately implies 
\begin{align}
\label{eq:mainGHP}
|\delta g|
\lesssim{}& r^{-1} \timefunc^{-3/2+\epsilon} \Vert \delta g \Vert_{H^{\ireg}_{7}(\Sigma_{\initial})}^2 ,
\end{align}
which completes the proof. 
\end{proof}


\subsection*{Acknowledgements}
We are grateful to Steffen Aksteiner, Jinhua Wang, and Bernard F. Whiting for helpful discussions. Lars Andersson thanks the Royal Institute of Technology (KTH), Stockholm, where a significant portion of the work was done,  for hospitality and support. Pieter Blue thanks the Albert Einstein Institute (AEI) Potsdam for hospitality and support. 
Siyuan Ma is supported by the National Natural Science Foundation of China under Grant No. 12288201 and the ERC grant ERC-2016 CoG 4399 725589 EPGR.

\appendix

\section{Field equations} \label{sec:fieldeq} 
\subsection{Linearized Einstein vacuum equations}
\label{sec:fieldeqGeneral} 
In this appendix, we give the component form in GHP notation of the linearized Einstein field equations which are used in this paper.
The structure equations \eqref{eq:QoppaToGCov} in general take the form
\begin{subequations}\label{eq:Aspincoeff}
\begin{align}
\tilde{\beta}={}&\tfrac{1}{4} (\tho + 2 \rho -  \bar{\rho})G_{12'}
 -  \tfrac{1}{4} (\thop + \rho' - 2 \bar{\rho}')G_{01'}
 -  \tfrac{1}{4} (\edt + 2 \tau - 2 \bar{\tau}')G_{11'}
 + \tfrac{1}{4} (\edtp -  \bar{\tau} + \tau')G_{02'}
 + \tfrac{1}{16} \edt \slashed{G},\\
\tilde{\beta}'={}&- \tfrac{1}{4} (\tho + \rho - 2 \bar{\rho})G_{21'}
 + \tfrac{1}{4} (\thop + 2 \rho' -  \bar{\rho}')G_{10'}
 + \tfrac{1}{4} (\edt + \tau -  \bar{\tau}')G_{20'}
 -  \tfrac{1}{4} (\edtp - 2 \bar{\tau} + 2 \tau')G_{11'}\nonumber\\
& + \tfrac{1}{16} \edtp \slashed{G},\label{eq:Abetap}\\
\tilde{\epsilon}={}&\tfrac{1}{4} (\tho + 2 \rho - 2 \bar{\rho})G_{11'}
 -  \tfrac{1}{4} (\thop + \rho' -  \bar{\rho}')G_{00'}
 -  \tfrac{1}{4} (\edt + 2 \tau -  \bar{\tau}')G_{10'}
 + \tfrac{1}{4} (\edtp - 2 \bar{\tau} + \tau')G_{01'}
 + \tfrac{1}{16} \tho \slashed{G},\label{eq:Aepsilon}\\
\tilde{\epsilon}'={}&- \tfrac{1}{4} (\tho + \rho -  \bar{\rho})G_{22'}
 + \tfrac{1}{4} (\thop + 2 \rho' - 2 \bar{\rho}')G_{11'}
 + \tfrac{1}{4} (\edt + \tau - 2 \bar{\tau}')G_{21'}
 -  \tfrac{1}{4} (\edtp -  \bar{\tau} + 2 \tau')G_{12'}\nonumber\\
& + \tfrac{1}{16} \thop \slashed{G},\label{eq:Aepsilonp}\\
\tilde{\kappa}={}&\tfrac{1}{2} (\tho - 2 \bar{\rho})G_{01'}
 -  \tfrac{1}{2} (\edt -  \bar{\tau}')G_{00'},\\
\tilde{\kappa}'={}&\tfrac{1}{2} (\thop - 2 \bar{\rho}')G_{21'}
 -  \tfrac{1}{2} (\edtp -  \bar{\tau})G_{22'},\label{eq:Akappap}\\
\tilde{\rho}={}&- \tfrac{1}{2} G_{00'} \rho'
 + \tfrac{1}{2} G_{01'} \tau'
 + \tfrac{1}{2} (\tho - 2 \bar{\rho})G_{11'}
 -  \tfrac{1}{2} (\edt -  \bar{\tau}')G_{10'}
 -  \tfrac{1}{8} \tho \slashed{G},\label{eq:Arho}\\
\tilde{\rho}'={}&- \tfrac{1}{2}\rho G_{22'} 
 + \tfrac{1}{2}\tau G_{21'} 
 + \tfrac{1}{2} (\thop - 2 \bar{\rho}')G_{11'}
 -  \tfrac{1}{2} (\edtp -  \bar{\tau})G_{12'}
 -  \tfrac{1}{8} \thop \slashed{G},\label{eq:Arhop}\\
\tilde{\sigma}={}&\tfrac{1}{2} (\tho -  \bar{\rho})G_{02'}
 -  \tfrac{1}{2} (\edt - 2 \bar{\tau}')G_{01'},\\
\tilde{\sigma}'={}&\tfrac{1}{2} (\thop -  \bar{\rho}')G_{20'}
 -  \tfrac{1}{2} (\edtp - 2 \bar{\tau})G_{21'},\label{eq:Asigmap}\\
\tilde{\tau}={}&- \tfrac{1}{2}\rho' G_{01'} 
 + \tfrac{1}{2}\tau' G_{02'} 
 + \tfrac{1}{2} (\tho -  \bar{\rho})G_{12'}
 -  \tfrac{1}{2} (\edt - 2 \bar{\tau}')G_{11'}
 -  \tfrac{1}{8} \edt \slashed{G},\label{eq:Atau}\\
\tilde{\tau}'={}&- \tfrac{1}{2}\rho G_{21'} 
 + \tfrac{1}{2}\tau G_{20'} 
 + \tfrac{1}{2} (\thop -  \bar{\rho}')G_{10'}
 -  \tfrac{1}{2} (\edtp - 2 \bar{\tau})G_{11'}
 -  \tfrac{1}{8} \edtp \slashed{G}.\label{eq:Ataup}
\end{align}
\end{subequations}
The linearized vacuum Einstein equations \eqref{eq:DivQop11Cov} and \eqref{eq:CurlDgQop31Cov}  are
\begin{subequations}
\begin{align}
0={}&- (\tho -  \rho -  \bar{\rho})\tilde{\epsilon}'
 + (\tho -  \rho -  \bar{\rho})\tilde{\rho}'
 -  (\thop -  \rho' -  \bar{\rho}')\tilde{\epsilon}
 + (\thop -  \rho' -  \bar{\rho}')\tilde{\rho}
 + (\edt -  \tau -  \bar{\tau}')\tilde{\beta}'\nonumber\\
& -  (\edt -  \tau -  \bar{\tau}')\tilde{\tau}'
 + (\edtp -  \bar{\tau} -  \tau')\tilde{\beta}
 -  (\edtp -  \bar{\tau} -  \tau')\tilde{\tau},\label{eq:AEinsteinA}\\
0={}&(\thop -  \rho')\tilde{\sigma}
 -  (\edt -  \tau)\tilde{\tau}
 -  \tfrac{1}{2} G_{02'} \Psi_{2}
 + 2 \tau \tilde{\beta},\\
0={}&(\thop -  \rho')\tilde{\beta}
 + (\edt -  \tau)\tilde{\epsilon}'
 + G_{12'} \Psi_{2}
 -  \tau \tilde{\rho}' 
 -  \rho' \tilde{\tau},\label{eq:AEinsteinC}\\
0={}&- (\thop -  \rho')\tilde{\rho}'
 + (\edt -  \tau)\tilde{\kappa}'
 -  \tfrac{1}{2} \Psi_{2} G_{22'} 
 + 2 \tilde{\epsilon}' \rho',\\
0={}&- (\tho -  \rho)\tilde{\rho}
 + (\edtp -  \tau')\tilde{\kappa}
 -  \tfrac{1}{2} \Psi_{2} G_{00'} 
 + 2 \tilde{\epsilon} \rho,\\
0={}&- \tfrac{1}{2} (\tho -  \rho + \bar{\rho})\tilde{\tau}
 + \tfrac{1}{2} (\thop -  \rho' + \bar{\rho}')\tilde{\kappa}
 -  \tfrac{1}{2} (\edt -  \tau + \bar{\tau}')\tilde{\rho}
 + \tfrac{1}{2} (\edtp + \bar{\tau} -  \tau')\tilde{\sigma}
 -  \tfrac{1}{2} \Psi_{2} G_{01'}
 + \tilde{\beta} \rho
 + \tilde{\epsilon} \tau,\\
0={}&(\tho -  \rho)\tilde{\beta}'
 + (\edtp -  \tau')\tilde{\epsilon}
 + \Psi_{2} G_{10'} 
 -  \tau' \tilde{\rho} 
 -  \rho \tilde{\tau}',\\
0={}&\tfrac{1}{2} (\tho -  \rho + \bar{\rho})\tilde{\epsilon}'
 + \tfrac{1}{2} (\thop -  \rho' + \bar{\rho}')\tilde{\epsilon}
 + \tfrac{1}{2} (\edt -  \tau + \bar{\tau}')\tilde{\beta}'
 + \tfrac{1}{2} (\edtp + \bar{\tau} -  \tau')\tilde{\beta}
 + \Psi_{2} G_{11'} 
 -  \tfrac{1}{2} \rho \tilde{\rho}'
 -  \tfrac{1}{2} \rho' \tilde{\rho}\nonumber\\
& -  \tfrac{1}{2} \tau \tilde{\tau}'
 -  \tfrac{1}{2} \tau' \tilde{\tau},\label{eq:AEinsteinH}\\
0={}&(\tho -  \rho)\tilde{\sigma}'
 -  (\edtp -  \tau')\tilde{\tau}'
 -  \tfrac{1}{2}\Psi_{2} G_{20'} 
 + 2\tau' \tilde{\beta}',\label{eq:AEinsteinI}\\
0={}&\tfrac{1}{2} (\tho -  \rho + \bar{\rho})\tilde{\kappa}'
 -  \tfrac{1}{2} (\thop -  \rho' + \bar{\rho}')\tilde{\tau}'
 + \tfrac{1}{2} (\edt -  \tau + \bar{\tau}')\tilde{\sigma}'
 -  \tfrac{1}{2} (\edtp + \bar{\tau} -  \tau')\tilde{\rho}'
 -  \tfrac{1}{2} \Psi_{2} G_{21'} 
 + \rho' \tilde{\beta}' 
 + \tau' \tilde{\epsilon}'.\label{eq:AEinsteinJ}
\end{align}
\end{subequations}
The remaining Ricci relations \eqref{eq:CurlQop31Cov} are
\begin{subequations}
\begin{align}
\vartheta \Psi_{0}={}&(\tho -  \bar{\rho})\tilde{\sigma}
 -  (\edt -  \bar{\tau}')\tilde{\kappa},\\
\vartheta \Psi_{1}={}&\tfrac{1}{2} (\tho + \rho -  \bar{\rho})\tilde{\beta}
 + \tfrac{1}{4} (\tho + \rho -  \bar{\rho})\tilde{\tau}
 -  \tfrac{1}{4} (\thop + 3 \rho' -  \bar{\rho}')\tilde{\kappa}
 -  \tfrac{1}{2} (\edt + \tau -  \bar{\tau}')\tilde{\epsilon}
 -  \tfrac{1}{4} (\edt + \tau -  \bar{\tau}')\tilde{\rho}\nonumber\\
& + \tfrac{1}{4} (\edtp -  \bar{\tau} + 3 \tau')\tilde{\sigma},\\
\vartheta \Psi_{2}={}&- \tfrac{1}{4} \Psi_{2} \slashed{G}
 -  \tfrac{1}{3} (\tho + 2 \rho -  \bar{\rho})\tilde{\epsilon}'
 -  \tfrac{1}{6} (\tho + 2 \rho -  \bar{\rho})\tilde{\rho}'
 -  \tfrac{1}{3} (\thop + 2 \rho' -  \bar{\rho}')\tilde{\epsilon}
 -  \tfrac{1}{6} (\thop + 2 \rho' -  \bar{\rho}')\tilde{\rho}\nonumber\\
& + \tfrac{1}{3} (\edt + 2 \tau -  \bar{\tau}')\tilde{\beta}'
 + \tfrac{1}{6} (\edt + 2 \tau -  \bar{\tau}')\tilde{\tau}'
 + \tfrac{1}{3} (\edtp -  \bar{\tau} + 2 \tau')\tilde{\beta}
 + \tfrac{1}{6} (\edtp -  \bar{\tau} + 2 \tau')\tilde{\tau},\\
\vartheta \Psi_{3}={}&- \tfrac{1}{4} (\tho + 3 \rho -  \bar{\rho})\tilde{\kappa}'
 + \tfrac{1}{2} (\thop + \rho' -  \bar{\rho}')\tilde{\beta}'
 + \tfrac{1}{4} (\thop + \rho' -  \bar{\rho}')\tilde{\tau}'
 + \tfrac{1}{4} (\edt + 3 \tau -  \bar{\tau}')\tilde{\sigma}'\nonumber\\
& -  \tfrac{1}{2} (\edtp -  \bar{\tau} + \tau')\tilde{\epsilon}'
 -  \tfrac{1}{4} (\edtp -  \bar{\tau} + \tau')\tilde{\rho}',\\
\vartheta \Psi_{4}={}&(\thop -  \bar{\rho}')\tilde{\sigma}'
 -  (\edtp -  \bar{\tau})\tilde{\kappa}'. \label{eq:APsi4}
\end{align}
\end{subequations}
We also have the commutator relations \eqref{eq:DivQop31Cov}
\begin{subequations}
\begin{align}
0={}&2 (\tho -  \rho -  \bar{\rho})\tilde{\beta}
 -  (\tho + \rho -  \bar{\rho})\tilde{\tau}
 + (\thop -  \rho' -  \bar{\rho}')\tilde{\kappa}
 - 2 (\edt -  \tau -  \bar{\tau}')\tilde{\epsilon}
 + (\edt + \tau -  \bar{\tau}')\tilde{\rho}
 -  (\edtp -  \bar{\tau} -  \tau')\tilde{\sigma},\\
0={}&- (\tho -  \bar{\rho})\tilde{\rho}'
 + (\thop -  \bar{\rho}')\tilde{\rho}
 + (\edt -  \bar{\tau}')\tilde{\tau}'
 -  (\edtp -  \bar{\tau})\tilde{\tau}
 + 2 \rho \tilde{\epsilon}' 
 - 2 \rho' \tilde{\epsilon} 
 - 2 \tau \tilde{\beta}' 
 + 2 \tau' \tilde{\beta},\label{eq:ACommutatorB}\\
0={}&- (\tho -  \rho -  \bar{\rho})\tilde{\kappa}'
 - 2 (\thop -  \rho' -  \bar{\rho}')\tilde{\beta}'
 + (\thop + \rho' -  \bar{\rho}')\tilde{\tau}'
 + (\edt -  \tau -  \bar{\tau}')\tilde{\sigma}'
 + 2 (\edtp -  \bar{\tau} -  \tau')\tilde{\epsilon}'\nonumber\\
& -  (\edtp -  \bar{\tau} + \tau')\tilde{\rho}',\label{eq:ACommutatorC}
\end{align}
\end{subequations}
and reality conditions $\bar{\slashed{\Qop}}{}_{A'A}=\slashed{\Qop}{}_{AA'}$
\begin{align} \label{eq:Areality}
\overline{\tilde{\epsilon}} -  \overline{\tilde{\rho}}={}&\tilde{\epsilon}
 -  \tilde{\rho},&
\overline{\tilde{\beta}} -  \overline{\tilde{\tau}}={}&\tilde{\beta}'
 -  \tilde{\tau}',&
\overline{\tilde{\beta}'} -  \overline{\tilde{\tau}'}={}&\tilde{\beta}
 -  \tilde{\tau},&
\overline{\tilde{\epsilon}'} -  \overline{\tilde{\rho}'}={}&\tilde{\epsilon}'
 -  \tilde{\rho}'.
\end{align}
Furthermore, the linearized vacuum Bianchi equations \eqref{eq:VacuumLinBianchiCov} take the form 
\begin{subequations}
\begin{align}
0={}&(\thop -  \rho')\vartheta \Psi_{0}
 -  (\edt - 4 \tau)\vartheta \Psi_{1}
 -  \tfrac{3}{2} \Psi_{2} \rho G_{02'}
 - 3 \Psi_{2} \tilde{\sigma}
 + \tfrac{3}{2} \Psi_{2} \tau G_{01'},\\
0={}&(\thop - 2 \rho')\vartheta \Psi_{1}
 -  (\edt - 3 \tau)\vartheta \Psi_{2}
 + 3 G_{12'} \Psi_{2} \rho
 + \tfrac{3}{2} \Psi_{2} \rho' G_{01'}
 - 3 G_{11'} \Psi_{2} \tau
 -  \tfrac{3}{2} \Psi_{2} \tau' G_{02'}
 + 3 \Psi_{2} \tilde{\tau},\\
0={}&(\thop - 3 \rho')\vartheta \Psi_{2}
 -  (\edt - 2 \tau)\vartheta \Psi_{3}
 -  \tfrac{3}{2} \Psi_{2} \rho G_{22'}
 - 3 \Psi_{2} \rho' G_{11'}
 - 3 \Psi_{2} \tilde{\rho}'
 + \tfrac{3}{2} \Psi_{2} \tau G_{21'}
 + 3 \Psi_{2} \tau' G_{12'},\\
0={}&(\thop - 4 \rho')\vartheta \Psi_{3}
 -  (\edt -  \tau)\vartheta \Psi_{4}
 + 3 \Psi_{2} \tilde{\kappa}'
 + \tfrac{3}{2} \Psi_{2} \rho' G_{21'}
 -  \tfrac{3}{2} \Psi_{2} \tau' G_{22'},\\
0={}&(\tho - 4 \rho)\vartheta \Psi_{1}
 -  (\edtp -  \tau')\vartheta \Psi_{0}
 + 3 \Psi_{2} \tilde{\kappa}
 + \tfrac{3}{2} \Psi_{2} \rho G_{01'}
 -  \tfrac{3}{2} \Psi_{2} \tau G_{00'},\\
0={}&(\tho - 3 \rho)\vartheta \Psi_{2}
 -  (\edtp - 2 \tau')\vartheta \Psi_{1}
 - 3 \Psi_{2} \rho G_{11'}
 -  \tfrac{3}{2} \Psi_{2} \rho' G_{00'}
 - 3 \Psi_{2} \tilde{\rho}
 + 3 \Psi_{2} \tau G_{10'}
 + \tfrac{3}{2} \Psi_{2} \tau' G_{01'},\\
0={}&(\tho - 2 \rho)\vartheta \Psi_{3}
 -  (\edtp - 3 \tau')\vartheta \Psi_{2}
 + \tfrac{3}{2} \Psi_{2} \rho G_{21'}
 + 3 \Psi_{2} \rho' G_{10'}
 -  \tfrac{3}{2} \Psi_{2} \tau G_{20'}
 - 3 \Psi_{2} \tau' G_{11'}
 + 3 \Psi_{2} \tilde{\tau}',\\
0={}&(\tho -  \rho)\vartheta \Psi_{4}
 -  (\edtp - 4 \tau')\vartheta \Psi_{3}
 -  \tfrac{3}{2} \Psi_{2} \rho' G_{20'}
 - 3 \Psi_{2} \tilde{\sigma}'
 + \tfrac{3}{2} \Psi_{2} \tau' G_{21'}.
\end{align}
\end{subequations}

\subsection{Linearized Einstein field equations in ORG}
\label{sec:LinGraSysteminORG}
A calculation using the relations \eqref{eq:ORGcondGHP} and \eqref{eq:ORGlinconnectioncoeffConseq} yields the following lemma, which we state for completeness. Observe however that the proof of lemma~\ref{lem:TransportSystem} is directly referring to the equations in appendix~\ref{sec:fieldeqGeneral}.

\begin{lemma}
Under the ORG condition the vacuum linearized Einstein equations can be organized as the transport equations 
\begin{subequations}
\label{eq:linearizedEinsteinEquationInORG}
\begin{align}
\thop G_{00'}={}&-4 \tilde{\epsilon}
 + 2 \tilde{\rho}
 - 2 \overline{\tilde{\rho}}
 - 2 G_{10'} \tau
 - 2 G_{01'} \bar{\tau}
 + G_{01'} \tau'
 + G_{10'} \bar{\tau}',\\
(\thop -  \rho')G_{01'}={}&- G_{02'} \bar{\tau}
 + 2 \overline{\tilde{\tau}'},\\
(\thop -  \rho')G_{02'}={}&2 \overline{\tilde{\sigma}'},\\
(\thop -  \bar{\rho}')G_{10'}={}&- G_{20'} \tau
 + 2 \tilde{\tau}',\\
(\thop -  \bar{\rho}')G_{20'}={}&2 \tilde{\sigma}',\\
(\thop -  \rho' + \bar{\rho}')\tilde{\tau}'={}&2 \tilde{\beta}' \rho'
 + (\edt -  \tau + \bar{\tau}')\tilde{\sigma}',\label{eq:linearizedEinsteinEquationInORGtauprime}\\
(\thop - 2 \rho' -  \bar{\rho}')\tilde{\beta}'={}&\rho' \tilde{\tau}'
 -  \bar{\rho}' \tilde{\tau}'
 + (\edt -  \tau)\tilde{\sigma}',\label{eq:linearizedEinsteinEquationInORGbetaprime}\\
(\thop -  \bar{\rho}')\tilde{\sigma}'={}&\vartheta \Psi_{4},\\
(\thop -  \rho' -  \bar{\rho}')\tilde{\rho}={}&\tilde{\epsilon} \rho'
 + \overline{\tilde{\epsilon}} \rho'
 + \tfrac{1}{2} G_{00'} \rho' \bar{\rho}'
 + 2 \tilde{\beta}' \tau
 + \tfrac{1}{2} G_{10'} \bar{\rho}' \tau
 -  G_{01'} \bar{\rho}' \tau'
 -  \tfrac{1}{2} G_{02'} \bar{\tau} \tau'
 + \tau' \overline{\tilde{\tau}'}\nonumber\\
& -  (\edt -  \bar{\tau}')\tilde{\tau}',\\
(\thop + \bar{\rho}')\tilde{\kappa}={}&\tfrac{5}{4} G_{01'} \Psi_{2}
 + \frac{G_{01'} \bar\Psi_{2} \overline{\kappa}_{1'}{}}{4 \kappa_{1}{}}
 - 2 \tilde{\beta} \rho
 -  \tfrac{3}{2} G_{01'} \bar{\rho} \rho'
 - 2 \tilde{\epsilon} \tau
 -  \tfrac{1}{2} G_{00'} \bar{\rho}' \tau
 + \tfrac{1}{2} G_{02'} \rho \tau'\nonumber\\
& + G_{02'} \bar{\rho} \tau'
 + \tfrac{1}{2} \tau' (\edt -  \tau -  \bar{\tau}')G_{01'}
 + (\edt -  \tau + \bar{\tau}')\tilde{\rho}
 -  (\edtp + \bar{\tau} - 2 \tau')\tilde{\sigma}
 -  \tfrac{1}{2} \rho' \edt G_{00'},\\
(\thop - 2 \rho')\tilde{\epsilon}={}&\tilde{\beta}' \tau
 -  \tilde{\beta} \bar{\tau}
 -  \tilde{\beta} \tau'
 -  \tfrac{1}{2} G_{01'} \rho' \tau'
 + \tfrac{1}{2} G_{02'} \tau'^2
 -  \tilde{\beta}' \bar{\tau}'
 -  (\edt -  \tau -  \bar{\tau}')\tilde{\tau}',\\
(\thop -  \rho')\tilde{\beta}={}&- \tfrac{1}{2} G_{01'} \rho'^2
 + \tfrac{1}{2} G_{02'} \rho' \tau',\\
(\thop -  \rho')\tilde{\sigma}={}&\tfrac{3}{4} G_{02'} \Psi_{2}
 -  \frac{G_{02'} \bar\Psi_{2} \overline{\kappa}_{1'}{}}{4 \kappa_{1}{}}
 + \tfrac{1}{2} G_{02'} \rho \rho'
 -  \tfrac{1}{2} G_{02'} \bar{\rho} \rho'
 - 2 \tilde{\beta} \tau
 -  \tfrac{1}{2} \rho' (\edt + \tau)G_{01'}\nonumber\\
& + \tfrac{1}{2} \tau' \edt G_{02'},\\
(\thop - 4 \rho')\vartheta \Psi_{3}={}&(\edt -  \tau)\vartheta \Psi_{4},\\
(\thop - 3 \rho')\vartheta \Psi_{2}={}&(\edt - 2 \tau)\vartheta \Psi_{3},\\
(\thop - 2 \rho')\vartheta \Psi_{1}={}&(\edt - 3 \tau)\vartheta \Psi_{2},\\
(\thop -  \rho')\vartheta \Psi_{0}={}&\tfrac{3}{2} G_{02'} \Psi_{2} \rho
 + 3 \Psi_{2} \tilde{\sigma}
 -  \tfrac{3}{2} G_{01'} \Psi_{2} \tau
 + (\edt - 4 \tau)\vartheta \Psi_{1},
\end{align}
\end{subequations}
together with the set
\begin{subequations}
\begin{align}
\tilde{\beta}={}&- \tfrac{1}{2} G_{01'} \rho'
 + \tfrac{1}{2} G_{01'} \bar{\rho}'
 -  \tfrac{1}{2} \overline{\tilde{\tau}'}
 + \tfrac{1}{4} (\edtp + \tau')G_{02'},\\
\tilde{\beta}'={}&\tfrac{1}{2} G_{10'} \rho'
 + \tfrac{1}{2} \tilde{\tau}'
 + \tfrac{1}{4} (\edt -  \bar{\tau}')G_{20'},\\
\tilde{\kappa}={}&\tfrac{1}{2} (\tho - 2 \bar{\rho})G_{01'}
 -  \tfrac{1}{2} (\edt -  \bar{\tau}')G_{00'},\\
\tilde{\rho}={}&- \tfrac{1}{2} G_{00'} \rho'
 + \tfrac{1}{2} G_{01'} \tau'
 -  \tfrac{1}{2} (\edt -  \bar{\tau}')G_{10'},\\
\tilde{\sigma}={}&\tfrac{1}{2} (\tho -  \bar{\rho})G_{02'}
 -  \tfrac{1}{2} (\edt - 2 \bar{\tau}')G_{01'},\\
\tilde{\tau}={}&- \tfrac{1}{2} G_{01'} \rho'
 + \tfrac{1}{2} G_{02'} \tau',\\
(\tho -  \rho)\tilde{\rho}={}&- \tfrac{1}{2} G_{00'} \Psi_{2}
 + 2 \tilde{\epsilon} \rho
 + (\edtp -  \tau')\tilde{\kappa},\\
(\tho - 2 \rho -  \bar{\rho})\tilde{\beta}={}&- \tfrac{1}{2} G_{01'} (\Psi_{2} + \rho \rho' -  \bar{\rho} \rho')
 + \tilde{\kappa} \bar{\rho}'
 + \tfrac{1}{2} G_{02'} (\rho -  \bar{\rho}) \tau'
 + (\edt -  \bar{\tau}')\tilde{\epsilon}
 + (\edtp -  \tau')\tilde{\sigma}
 -  \edt \tilde{\rho},\\
(\tho -  \rho)\tilde{\beta}'={}&- G_{10'} \Psi_{2}
 + \tilde{\rho} \tau'
 + \rho \tilde{\tau}'
 -  (\edtp -  \tau')\tilde{\epsilon},\\
(\tho -  \rho)\tilde{\sigma}'={}&\tfrac{1}{2} G_{20'} \Psi_{2}
 - 2 \tilde{\beta}' \tau'
 + (\edtp -  \tau')\tilde{\tau}', \label{eq:Athosigma'} \\
0={}&\tilde{\epsilon} (\rho' + \bar{\rho}')
 -  \rho' \tilde{\rho}
 -  (\edt -  \bar{\tau}')\tilde{\tau}'
 + (\edtp - 2 \tau')\tilde{\beta}
 + \edt \tilde{\beta}',\\
\vartheta \Psi_{0}={}&(\tho -  \bar{\rho})\tilde{\sigma}
 -  (\edt -  \bar{\tau}')\tilde{\kappa},\\
\vartheta \Psi_{1}={}&- \tilde{\kappa} \rho'
 + \tilde{\sigma} \tau'
 + (\tho -  \bar{\rho})\tilde{\beta}
 -  (\edt -  \bar{\tau}')\tilde{\epsilon},\\
\vartheta \Psi_{2}={}&-2 \tilde{\epsilon} \rho'
 + 2 \tilde{\beta} \tau'
 + (\edt -  \bar{\tau}')\tilde{\tau}',\\
\vartheta \Psi_{3}={}&2 \tilde{\beta}' \rho'
 + (\rho' -  \bar{\rho}') \tilde{\tau}'
 + \edt \tilde{\sigma}', \label{eq:APsi3-ORG}\\
0={}&3 \Psi_{2} \tilde{\kappa}
 + \tfrac{3}{2} G_{01'} \Psi_{2} \rho
 -  \tfrac{3}{2} G_{00'} \Psi_{2} \tau
 + (\tho - 4 \rho)\vartheta \Psi_{1}
 -  (\edtp -  \tau')\vartheta \Psi_{0},\\
0={}&- \tfrac{3}{2} G_{00'} \Psi_{2} \rho'
 - 3 \Psi_{2} \tilde{\rho}
 + 3 G_{10'} \Psi_{2} \tau
 + \tfrac{3}{2} G_{01'} \Psi_{2} \tau'
 + (\tho - 3 \rho)\vartheta \Psi_{2}
 -  (\edtp - 2 \tau')\vartheta \Psi_{1},\\
0={}&3 G_{10'} \Psi_{2} \rho'
 -  \tfrac{3}{2} G_{20'} \Psi_{2} \tau
 + 3 \Psi_{2} \tilde{\tau}'
 + (\tho - 2 \rho)\vartheta \Psi_{3}
 -  (\edtp - 3 \tau')\vartheta \Psi_{2},\\
0={}&- \tfrac{3}{2} G_{20'} \Psi_{2} \rho'
 - 3 \Psi_{2} \tilde{\sigma}'
 + (\tho -  \rho)\vartheta \Psi_{4}
 -  (\edtp - 4 \tau')\vartheta \Psi_{3}, \label{eq:AG2ethPsi3}
\end{align}
\end{subequations}
and the reality conditions
\begin{align} 
\overline{\tilde{\epsilon}} -  \overline{\tilde{\rho}}={}&\tilde{\epsilon}
 -  \tilde{\rho},&
\overline{\tilde{\beta}} -  \overline{\tilde{\tau}}={}&\tilde{\beta}'
 -  \tilde{\tau}',&
\overline{\tilde{\beta}'} -  \overline{\tilde{\tau}'}={}&\tilde{\beta}
 -  \tilde{\tau}.
\end{align}
\end{lemma}

\section{Linearized parameters in ORG} \label{sec:linpara}
In this appendix, we present examples of linearized metrics corresponding to varying the parameters $M$ and $a$ in ORG. 
The fact that we obtain decay estimates in theorems \ref{thm:mainintro} and \ref{thm:BEAMintro} imply that the class of initial data we consider must exclude such solutions. As noted in remark \ref{rem:1.7}, the formulas for the linear $M$ and $a$ perturbations illustrate the fact that these perturbations fall off too slowly for the initial data norm to be finite. 

\subsection{Linearized mass} \label{sec:LinMass}
Performing a variation $\delta M$ of the mass parameter of the Kerr metric in Eddington-Finkelstein coordinates yields 
\begin{align}
\delta g_{ab}={}&- \frac{4 n_{a} n_{b} r}{\Sigma}\delta M,
\end{align}
which satisfies the ORG condition. 
Thus, we have in the Znajek tetrad, the only non-vanishing metric component is $G_{00'}=-4 r \Sigma^{-1}\delta M$.
The only non-vanishing components of the linearized connection, as in equation \eqref{eq:tildespincoeff}, and linearized curvature are
\begin{align}
\tilde{\epsilon}={}&\frac{1}{9 \sqrt{2} \kappa_{1}{}^2}\delta M,&
\tilde{\kappa}={}&\frac{i \sqrt{2} a r \sin\theta}{9 \kappa_{1}{}^2 \Sigma}\delta M,&
\tilde{\rho}={}&- \frac{\sqrt{2} r}{3 \kappa_{1}{} \Sigma}\delta M,&
\vartheta \Psi_{2}={}&\frac{\delta M}{27 \kappa_{1}{}^3}.
\end{align}
The rescaled metric components are
\begin{align}
\widehat{G}_2={}&0,&
\widehat{G}_1={}&0,&
\widehat{G}_0={}&- \tfrac{2}{81}\delta M. \label{eq:G0hatlinmass} 
\end{align}

\subsection{Linearized angular momentum} 
Performing a variation $\delta a$ of the angular momentum parameter per unit mass $a$ of the Kerr metric in Eddington-Finkelstein coordinates and transforming to ORG gauge\footnote{The generator for the transformation is $\nu_a=- \frac{\sqrt{2} a  r \sin^2\theta}{\Sigma}n_{a}
- \tfrac{i}{\sqrt{2}} \sin\theta m_{a}
 +  \tfrac{i}{\sqrt{2}} \sin\theta\overline{m}_ {a}
 = - a  \cos\theta \sin\theta(d\theta)_ {a} 
 -   r \sin^2\theta(d\phi)_{a}.$} yields in the Znajek tetrad 
 the non-vanishing components
\begin{align}
G_{00'}={}&\frac{4 M a (1 + \cos^2\theta) r}{\Sigma^2}\delta a,&
G_{01'}={}&- \frac{2i M r \sin\theta}{3 \overline{\kappa}_{1'}{} \Sigma}\delta a,&
G_{10'}={}&\frac{2i M r \sin\theta}{3 \kappa_{1}{} \Sigma}\delta a.
\label{eq:LinAngMomentumG}
\end{align}
The non-vanishing components of the linearized connection and curvature are
\begin{subequations}
\begin{align}
\tilde{\beta}={}&\frac{i M  \sin\theta}{6 \sqrt{2} \kappa_{1}{} \Sigma}\delta a,&
\tilde{\beta}'={}&\frac{i M \sin\theta (\kappa_{1}{} + 2 \overline{\kappa}_{1'}{})}{6 \sqrt{2} \kappa_{1}{}^2 \Sigma}\delta a,\\
\tilde{\tau}={}&- \frac{i M  r \sin\theta}{\sqrt{2} \Sigma^2}\delta a,&
\tilde{\tau}'={}&\frac{i M  \sin\theta}{27 \sqrt{2} \kappa_{1}{}^3}\delta a,\\
\tilde{\sigma}={}&- \frac{M a  r \sin^2\theta}{3 \sqrt{2} \kappa_{1}{} \Sigma^2}\delta a,&
\vartheta \Psi_{1}={}&\frac{i M  (a^2 + r^2) \sin\theta}{486 \kappa_{1}{}^6}\delta a,\\
\vartheta \Psi_{2}={}&\frac{M  (a + i r\cos\theta)}{81 \kappa_{1}{}^5}\delta a,&
\vartheta \Psi_{3}={}&- \frac{i M  \sin\theta}{54 \kappa_{1}{}^4}\delta a.&
\end{align}
\end{subequations}
The rescaled metric components are
\begin{align}
\widehat{G}_2={}&0,&
\widehat{G}_1={}&- \tfrac{1}{81}i \sqrt{2} M  \sin\theta\delta a,&
\widehat{G}_0={}&\frac{2 M a  (1 + \cos^2\theta)}{81 \Sigma}\delta a.
\label{eq:LinAngMomentumGhat}
\end{align}

\section{An alternative form of the Teukolsky equations} \label{sec:DeBoostingTME}
In this appendix, we derive a deboosted, rescaled version of the Teukolsky equations we are using in the paper. Concretely, we prove in this appendix the Teukolsky equations \eqref{eq:TeukolskyRegular-2} and \eqref{eq:TeukolskyRegular+2} in Lemma \ref{lem:TeukolskyRegular}.

We begin by defining a class of Teukolsky operators.

\begin{definition}
\index{S5squareSpqrs@$\squareS_{p,q,j,k}$}
For a field $\phi$ with weight $\{p,q\}$ and a set of parameters $(j,k)\in\Integers^2$, we define the Teukolsky operator
\begin{align}
\squareS_{p,q,j,k}(\phi)={}&18 \kappa_{1}{} \bar{\kappa}_{1'}{} \bigl(\tho -  (p -  j) \rho -  (1 + q -  k) \bar{\rho}\bigr)\bigl(\thop + (j-1) \rho ' + k \bar{\rho}'\bigr)\phi\nonumber\\
& - 18 \kappa_{1}{} \bar{\kappa}_{1'}{} \bigl(\edt -  (p -  j) \tau + (k - 1) \bar{\tau}'\bigr)\bigl(\edtp -  (q -  k) \bar{\tau} + (j-1) \tau '\bigr)\phi\nonumber\\
& - 9 (p-2) q \Psi_{2} \kappa_{1}{}^2 \phi
 - 9 (p-2) (p -1) \Psi_{2} \kappa_{1}{} \bar{\kappa}_{1'}{} \phi
 - 9 q^2 \bar\Psi_{2} \kappa_{1}{} \bar{\kappa}_{1'}{} \phi\nonumber\\
& - 9 (p-1) q \bar\Psi_{2} \bar{\kappa}_{1'}{}^2 \phi
 + 18 q \kappa_{1}{} \bar{\kappa}_{1'}{} (\bar{\rho} \rho ' -  \bar{\rho} \bar{\rho}' -  \bar{\tau} \bar{\tau}' + \tau ' \bar{\tau}') \phi .
\end{align}
\end{definition}
The equations \eqref{eq:TME:spin-2} can be written in terms of $\squareS_{p,q,j,k}$ by moving $\kappa_{1}$ out and using GHP commutators in the second equation. This yields
\begin{subequations}
\label{eq:TMEDeBoostedStep0}
\begin{align}
\squareS_{4,0,0,0}(\vartheta \Psi_{0})={}&0,\\
\squareS_{-4,0,-4,0}(\vartheta \Psi_{4})={}&0.
\end{align}
\end{subequations}

We next rescale and deboost these equations.
\begin{lemma}
The Teukolsky equations \eqref{eq:TME:spin-2} can be written in terms of the boost-weight zero quantities $\uplambda^{-2}\kappa_{1}{}^4\vartheta \Psi_{0} $ and $\uplambda^2 \vartheta \Psi_{4}$ as
\begin{subequations}
\label{eq:TMEDeBoostedStep1}
\begin{align}
\squareS_{2,-2,2,-2}(\uplambda^{-2}\kappa_{1}{}^4\vartheta \Psi_{0})={}&-8 r\mathcal{L}_{\xi}{}(\uplambda^{-2}\kappa_{1}{}^4\vartheta \Psi_{0})
 - 4 (M -  r)\YOp(\uplambda^{-2}\kappa_{1}{}^4\vartheta \Psi_{0}),\\
 \label{eq:TMEDeBoostedStep1:Psi4}
\squareS_{-2,2,-2,2}(\uplambda^2 \vartheta \Psi_{4})={}&8 r\mathcal{L}_{\xi}{}(\uplambda^2 \vartheta \Psi_{4})
 + 4 (M -  r)\YOp(\uplambda^2 \vartheta \Psi_{4}).
\end{align}
\end{subequations}
\end{lemma}
\begin{proof}
 Within this proof, we take $(p,q,j,k)\in\Integers^4$, $n\in\Integers$, and a GHP scalar with type $\{p,q\}$. 
Observing the scaling relation 
\begin{align}
\squareS_{p,q,j,k}(\kappa_{1}{}^n \varphi)={}&\kappa_{1}{}^n\squareS_{p,q, j - n,k}(\varphi),
\end{align}
we deduce the following alternative form of the Teukolsky equations \eqref{eq:TMEDeBoostedStep0} 
\begin{subequations}
\label{eq:TMEJustRescaled}
\begin{align}
\squareS_{4,0,4,0}(\kappa_{1}{}^4 \vartheta \Psi_{0})={}&0,\\
\squareS_{-4,0,-4,0}(\vartheta \Psi_{4})={}&0.
\end{align}
\end{subequations}

Since we only work with boost-weight zero quantities, we shall deboost the above equations. To do this it is convenient to extend the $\YOp$ and $\mathcal{L}_{\xi}$ operators to work on a $\{p,q\}$-weighted scalar $\phi$ as
\begin{subequations}
\begin{align}
\YOp\varphi={}&\sqrt{2} \uplambda \thop \varphi,\\
\mathcal{L}_{\xi}{}\varphi={}&-3 \kappa_{1}{} \rho ' \tho \varphi
 + 3 \kappa_{1}{} \rho \thop \varphi
 + 3 \kappa_{1}{} \tau ' \edt \varphi
 - 3 \kappa_{1}{} \tau \edtp \varphi
 + \tfrac{3}{2} p \Psi_2 \kappa_{1}{} \varphi
 + \tfrac{3}{2} q \bar\Psi_2 \bar{\kappa}_{1'}{} \varphi.
\end{align}
\end{subequations}
The general Teukolsky operator interacts with a boost scaling $\uplambda^n$ as follows
\begin{align}
\squareS_{n + p,n + q,j,k}(\uplambda^n \varphi)={}&4 n \uplambda^n r\mathcal{L}_{\xi}{}\varphi
 - 2 n \uplambda^n (r - M)\YOp\varphi
 -  \tfrac{2}{3} n \uplambda^n (r- M) (\frac{n + p -  j}{\kappa_{1}{}} + \frac{n + q -  k}{\bar{\kappa}_{1'}{}}) \varphi\nonumber\\
& + \uplambda^n\squareS_{p,q,j-n,k-n}(\varphi).
\end{align}
With this formula,  the Teukolsky equations \eqref{eq:TMEJustRescaled} are deboosted to the equations \eqref{eq:TMEDeBoostedStep1}.
\end{proof}

Finally, we provide a proof for Lemma \ref{lem:TeukolskyRegular}.

\begin{proof}[Proof of Lemma \ref{lem:TeukolskyRegular}]
For a spin-weighted scalar $\varphi$ with spin weight $s$, its $\{p,q\}$ weight is $\{s,-s\}$. We use the commutator \eqref{eq:CommutatorofYandMathcalV} to 
express $\squareS_{s,-s,s,-s}$ in terms of the $\YOp$, $\VOp$, $\mathcal{L}_{\eta}$ and $\TMESOp_s$ operators when acting on such a scalar $\varphi$
\begin{align}\label{eq:TMEIntermediate1}
\squareS_{s,- s,s,- s}\varphi={}&- \frac{2 a r}{a^2 + r^2}\mathcal{L}_{\eta}\varphi
 - 2 r\VOp\varphi
 + 2 (a^2 + r^2)\YOp\VOp\varphi
 + \frac{r \Delta}{a^2 + r^2}\YOp\varphi
 -  \TMESOp_s\varphi.
\end{align}
By Definition \ref{def:TMEOperators}, a straightforward and simple calculation yields the following relation between the operators $\widehat\squareS_{s}$ and $\squareS_{s,- s,s,- s}$:
\begin{align}
\widehat\squareS_{s}(\sqrt{a^2 + r^2}\varphi)={}&\sqrt{a^2 + r^2}\squareS_{s,- s,s,- s}\varphi.
\end{align}
As a consequence, for $\psibase[-2]= \tfrac{1}{2}\sqrt{a^2 + r^2}\uplambda^{2} \vartheta \Psi_{4}$, we obtain $\widehat\squareS_{s}\psibase[-2]=\tfrac{1}{2}\sqrt{a^2 + r^2}\squareS_{s,- s,s,- s}(\uplambda^{2} \vartheta \Psi_{4})$. Substituting this back into the Teukolsky equation \eqref{eq:TMEDeBoostedStep1:Psi4}, we then derive the alternative form \eqref{eq:TeukolskyRegular-2} of Teukolsky equation. The other Teukolsky equation \eqref{eq:TeukolskyRegular+2} for $\psibase[+2]$ is deduced in a similar manner.
\end{proof}

\begin{proof}[Proof of Lemma \ref{lem:TSIRegular}]
The definition \ref{def:VY-cov} together with the relations \eqref{eq:GHPkappa} gives the relations for any $\{p,q\}$-weighted scalar $\varphi$
\begin{subequations}
\begin{align}
\thop \bigl(\uplambda (a^2 + r^2)^{-1/2}\varphi\bigr)={}&\tfrac{1}{\sqrt{2}}(a^2 + r^2)^{-1/2}\Bigl(\YOp+ \frac{r}{a^2 + r^2}\Bigr)\varphi,\\
\edt (\kappa_{1}{}^k\uplambda^{-2}\varphi)={}&\tfrac{1}{3}\uplambda^{-2}\kappa_{1}{}^{k-1} \bigl(3 \kappa_{1}{}\edt - 3 (k-2) \kappa_{1}{} \tau \bigr)\varphi.
\end{align}
\end{subequations}
Using the definition \ref{def:psibase} we get
\begin{align}
0={}&  -  (3 \kappa_{1}{}\edt + 3 \kappa_{1}{} \tau)(3\kappa_{1}{}\edt)(3 \kappa_{1}{}\edt - 3 \kappa_{1}{} \tau)(3 \kappa_{1}{}\edt - 6 \kappa_{1}{} \tau)\psibase[-2]\nonumber\\
&-3 M\mathcal{L}_{\xi}{}\overline{\psibase[-2]}
 + \frac{1}{4} \Bigl(\frac{r}{a^2 + r^2} + \YOp\Bigr)^4\psibase[+2].
\end{align}
Translating to the $\hedt$ operator yields
\begin{align}
0={}&  -  \bigl(\hedt + \ringtau\mathcal{L}_{\xi}{}\bigr)^4\psibase[-2]-3 M\mathcal{L}_{\xi}{}\overline{\psibase[-2]}
 + \frac{1}{4} \Bigl(\frac{r}{a^2 + r^2} + \YOp\Bigr)^4\psibase[+2].
\end{align}
A binomial expansion gives equation \eqref{eq:TSIpsihat}.
\end{proof}




\input{\jobname.ind}


\newcommand{\apj}{ApJ}
\newcommand{\mnras}{Mon. Not. R. Astron Soc.}
\newcommand{\aap}{Astron. Astrophys.}


%

\bigskip

\end{document}